\setlist{topsep = .5em,itemsep = 0.2em}
\theoremstyle{plain} \newtheorem{theorem}{Theorem}[chapter]
\theoremstyle{plain} \newtheorem{corollary}[theorem]{Corollary}
\theoremstyle{plain} \newtheorem{lemma}[theorem]{Lemma}
\theoremstyle{plain} \newtheorem{proposition}[theorem]{Proposition}
\theoremstyle{remark}\newtheorem{remark}[theorem]{Remark}
\theoremstyle{plain} \newtheorem{conjecture}[theorem]{Conjecture}
\theoremstyle{plain} \newtheorem{fact}[theorem]{Fact}
\theoremstyle{plain} 
\theoremstyle{plain} \newtheorem{Exc}{Exercise}[chapter]
\newenvironment{exo}{\vspace{.2cm}\begin{Exc}\normalfont}{\end{Exc} \vspace{.05cm}}
\theoremstyle{definition} \newtheorem{definition}[theorem]{Definition}
\newcommand{\calC}{\mathcal{C}}
\newcommand{\calD}{\mathcal{D}}
\newcommand{\calE}{\mathcal{E}}
\newcommand{\calF}{\mathcal{F}}
\newcommand{\calG}{\mathcal{G}}
\newcommand{\calH}{\mathcal{H}}
\newcommand{\calL}{\mathcal{L}}
\newcommand{\calQ}{\mathcal{Q}}
\newcommand{\calR}{\mathcal{R}}
\newcommand{\calW}{\mathcal{W}}
\newcommand{\bfS}{\mathbf{S}}
\newcommand{\bfT}{\mathbf{T}}
\newcommand{\bbC}{\mathbb{C}}
\newcommand{\bbE}{\mathbb{E}}
\newcommand{\bbL}{\mathbb{L}}
\newcommand{\bbN}{\mathbb{N}}
\newcommand{\bbP}{\mathbb{P}}
\newcommand{\bbR}{\mathbb{R}}
\newcommand{\bbS}{\mathbb{S}}
\newcommand{\bbT}{\mathbb{T}}
\newcommand{\bbZ}{\mathbb{Z}}
\newcommand{\sfn}{{\sf n}}
\newcommand{\sfC}{{\sf C}}
\newcommand{\sfD}{{\sf D}}
\newcommand{\sfN}{{\sf N}}
\newcommand{\ep}{\varepsilon}
\newcommand{\eps}{\varepsilon}
\newcommand{\la}{\lambda}
\newcommand{\La}{\Lambda}
\newcommand{\ind}{\boldsymbol 1}
\newcommand{\Rect}{{\rm Rect}}
\newcommand{\balpha}{\boldsymbol \alpha}
\newcommand\lra{\leftrightarrow}
\newcommand\xlra{\xleftrightarrow}
\newcommand\concel[2]{\ooalign{$\hfil#1\mkern0mu/\hfil$\crcr$#1#2$}}  % used in above
\newcommand\nxlra[1]{\mathrel{\mathpalette\concel{\xleftrightarrow{#1}}}}
\renewcommand{\comment}[1]{}
\newcommand{\ol}{{\includegraphics[scale=0.2]{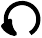}}}
\title{Exploring the phase transition of planar FK-percolation}
\author{Ioan Manolescu\addtocounter{footnote}{0}\thanks{University of Fribourg}}
\begin{document}
\maketitle
\setcounter{tocdepth}{1}
\tableofcontents

\begin{abstract}
These lecture notes accompany a mini-course given by the author at the 2023 CIME summer school ``Statistical Mechanics and Stochastic PDEs''
in Cetraro, Italy. %I thank the organisers F. Caravenna, R. Sun and N. Zygouras for this meeting and for giving me the opportunity to present this work. 

	The aim of these notes is to give a quick introduction to FK-percolation (also called random-cluster model), focusing on certain recent results about the phase transition of the two dimensional model, namely its continuity or discontinuity depending on the cluster weight~$q$, and the asymptotic rotational invariance of the critical phase (when the phase transition is continuous). 
	As such, the main focus is on FK-percolation on~$\bbZ^2$ with~$q \geq 1$, but we do mention some important results valid for general dimension. 
	We purposefully avoid results specific to the case $q=2$ (i.e. the FK-Ising model), and focus on generic values of $q$.
	To favour quick access to recent results, the style is minimal, with certain proofs omitted or left as exercises. 
	
	As a {\em mise en bouche}, the first chapter treats Bernoulli percolation (also called i.i.d.\ percolation, corresponding to the case~$q = 1$), which is arguably the easiest of the FK-percolation models to define and study. We could not resist the temptation to include here the very elegant proof of the sharpness of the phase transition in general dimension due to Duminil-Copin and Tassion~\cite{DumTas16}. The role of this part is also to highlight the several levels of understanding of the phase transition of a statistical mechanics models: non-triviality, sharpness, order of the phase transition, fine properties of the critical phase. The following chapters discuss these topics for the two-dimensional FK-percolation model.  
	
	Chapter~\ref{ch:2intro_FK} is a quick introduction to FK-percolation (in general dimension), presenting its basic properties and a summary of results not contained in this work. 
	
	The following chapters are specific to two dimensions. Chapter~\ref{ch:3dichotomy} contains a celebrated result by Duminil-Copin, Sidoravicius and Tassion~\cite{DumSidTas17} which establishes a {\em dichotomy} between two types of phase transition: continuous and discontinuous. The identification of the critical point and the sharpness of the phase transition follow easily. 
	
	In Chapter~\ref{ch:4cont_v_discont}, we identify the type of phase transition depending on the cluster-weight~$q$: continuous for~$1 \leq q\leq 4$ and discontinuous for~$q >4$. The strategy employed here is to relate FK-percolation to the six-vertex model via the Baxter--Kelland--Wu (BKW) correspondence, then to compute certain quantities in the six-vertex model using its transfer-matrix representation. Eigenvalues of the transfer matrix may be estimated using the Bethe ansatz, then translated by the BKW correspondence into estimates of probabilities of events in the FK-percolation model, which ultimately allow us to determine the type of phase transition. We do not detail here how the Bethe ansatz applies to the six-vertex model and how the afore-mentioned estimates are obtained, but focus on their interpretation on the FK-percolation side. 
	
	Finally, in Chapter~\ref{ch:5rotational_inv}, we examine the case~$1 \leq q\leq 4$, when the phase transition is continuous and we may speak of a critical phase. We present two powerful results from~\cite{DumKozKra20} on the (potential) scaling limit of the critical model: namely that it is invariant under rotations by any angle~$\theta \in [0,2\pi]$ and that it is universal among certain isoradial graphs (see Section~\ref{sec:5isoradial} for precise definitions). The original paper is highly technical, and this chapter aims to present a more streamlined version of the proofs, while leaving out certain details.
\end{abstract}

\paragraph{Acknowledgements}
I thank the organisers F. Caravenna, R. Sun and N. Zygouras of the CIME summer school for organising the meeting and giving me the opportunity to present this work. 
I am grateful to Maran Mohanarangan, Dmitry Krachun and Piet Lammers for discussions and comments on these notes.

\chapter{Bernoulli percolation: the basics}\label{sec:percolation}

The simplest instance of FK-percolation is Bernoulli percolation (corresponding to cluster weight~$q = 1$), where edges are open or closed independently of each other. This chapter illustrates the concepts and phenomena valid for more general FK-percolation models in this simpler setting.

\section{Definitions}

Fix a graph~$G = (V,E)$. In this chapter,~$G$ will be the hyper-cubic lattice~$\bbZ^d$ or subgraphs of it, but for now we can consider general graphs. 

\begin{definition}
	For~$p \in (0,1)$, let~$(\omega(e))_{e \in E}$ be i.i.d.\ Bernoulli random variables of parameter~$p$. 
	Write~$\bbP_p$ for the law of~$\omega$, it is a measure on~$\Omega = \{0,1\}^E$. 
	
	We identify the {\em configuration}~$\omega$ with the subgraph of~$G$ with vertices~$V$ and edges~$\{e \in E: \omega(e) = 1\}$.
\end{definition}

We call an edge~$e$ with~$\omega(e) = 1$ {\em open} (in the configuration~$\omega$), or {\em closed } if~$\omega(e) = 0$. 
We will also identify~$\omega$ with the subset of~$E$ formed of the open edges. 
Connections in~$\omega$ will be denoted by~$\lra$, or~$\xlra{\omega}$ when~$\omega$ needs to be specified. 

\paragraph{Percolation: existence of infinite cluster.}
When studying percolation, the questions of interest revolve around the geometry of the connected components (or {\em clusters}) of~$\omega$, specifically the large ones. When~$G = \bbZ^d$, the most basic question is whether~$\omega$ contains an infinite cluster; it it does, we way that $\omega$ {\em percolates}. 

For~$G = \bbZ^d$ with~$d \geq 1$, it is immediate that, for any~$p$, the measure~$\bbP_p$ is tail-trivial.
As such
\begin{align*}
	\bbP_p[\text{there exists an infinite cluster}] \in \{0,1\} \qquad \text{ for all~$p \in (0,1)$}.
\end{align*}
Furthermore, the existence of an infinite cluster under~$\bbP_p$ is equivalent to the positivity of 
\begin{align*}
	\theta(p) := \bbP_p[0 \text{ is in an infinite cluster}] =\bbP_p[0 \lra \infty],
\end{align*}
with~$0 \lra \infty$ being an abbreviation for the fact that the cluster of~$0$ is infinite. 

\paragraph{Monotonicity in~$p$, definition of~$p_c$.}

The measures~$(\bbP_p)_{p \in (0,1)}$ may be coupled in an increasing fashion as follows. Let~$P$ be the probability measure on~$[0,1]^E$ produced by sampling i.i.d.\ uniforms~$(U_e)_{e\in E}$ on~$[0,1]$, and set 
\begin{align*}
\omega_p(e) = \begin{cases}
	0 &\text{ if~$U_e \leq 1-p$}\\
	1 &\text{ if~$U_e > 1-p$},
\end{cases}
\qquad \text{ for all~$e \in E$ and~$p \in (0,1)$.}
\end{align*}
Then~$\omega_p$ has law~$\bbP_p$ for all~$p$ and 
\begin{align}\label{eq:increasingpp}
	\omega_p(e) \leq \omega_{p'}(e)\qquad \text{ for all~$e \in E$ and~$p\leq p'$,~$P$-a.s.}
\end{align}
Due to these properties, we call~$P$ an {\em increasing coupling} of the measures~$\bbP_p$.

More generally,~\eqref{eq:increasingpp} defines a natural partial order on~$\{0,1\}^E$ 
and we will abbreviate~\eqref{eq:increasingpp} as~$\omega_p \leq \omega_{p'}$.
An event is called {\em increasing} if its indicator function is increasing for this partial order. 
In other words, increasing events are events which are stable under the addition of open edges.
Examples include~$\{\text{there exists an infinite cluster}\}$,~$\{0\lra\infty\}$ and~$\{x \lra y\}$ for any two points~$x,y \in V$.   

Due to the increasing coupling above, the probabilities of increasing events are non-decreasing functions of~$p$. 
In particular~$p \mapsto \theta(p)$ is a non-decreasing function.

\begin{definition}
	The {\em critical point} (or point of phase transition) of Bernoulli percolation~$p_c = p_c(\bbZ^d) \in [0,1]$ is defined by 
	\begin{align*}
	p_c = \sup \{ p \in (0,1) : \theta(p) = 0\}= \inf \{ p \in (0,1) : \theta(p) > 0\}.
	\end{align*}
\end{definition}

The equality of the two expressions defining~$p_c$ is due to the monotonicity of~$\theta(p)$. 
Moreover, as discussed above, we immediately conclude that 
\begin{align*}
	\bbP_p[\text{there exists an infinite cluster}] 
	= \begin{cases}
	0 \text{ if~$p < p_c$}\\
	1 \text{ if~$p > p_c$}.
	\end{cases}
\end{align*}

\paragraph{Fundamental questions.}
The questions that come to mind next are the following, in increasing order of difficulty. 
\begin{itemize}
	\item[(1)] Is the phase transition non-trivial, i.e., do we have~$0 < p_c < 1$? 
	\item[(2)] How do clusters behave away from~$p_c$? 

		In the {\em sub-critical phase}~$p<p_c$, all clusters are finite and we expect them to exhibit {\em exponential decay of radii}:
		\begin{align}\label{eq:sharp_sub_crit}
		\bbP_p[0 \lra \partial \La_n] \leq e^{-c(p) n} \qquad \text{ for all~$n$ and~$p<p_c$}, 
		\end{align}
		where~$c(p) > 0$ is a constant depending on~$p$,~$\La_n := \{-n,\dots, n\}^d$ is seen as a subgraph of~$\bbZ^d$
		and~$\partial \La_n$ is the set of vertices in~$\La_n$ with neighbours outside. 

		In the {\em super-critical phase}~$p>p_c$, there exists at least one infinite cluster; 
		we expect it to be unique and all other clusters to have an exponential decay of radii:
		\begin{align}\label{eq:sharp_super_crit}
		\bbP_p[0 \lra \partial \La_n\text{ but } 0\nxlra{} \infty]  \leq e^{-c(p) n} \qquad \text{ for all~$n$ and~$p>p_c$}, 
		\end{align}
		where~$c(p) > 0$ is a constant depending on~$p$. 
		
	\item[(3)] Does there exist an infinite cluster at~$p_c$? Or equivalently, do we have~$\theta(p_c) > 0$? 
	The same question may be rephrased as whether the phase transition is continuous ($\theta(p_c) = 0$) or discontinuous ($\theta(p_c) > 0$).
	For~$d = 3$, this remains one of the main open questions in the field. 
	
	\item[(4)] If the phase transition is continuous (and therefore all clusters are finite at~$p_c$), 
	what is the rate of decay of~$\bbP_{p_c}[0 \lra \partial \La_n]$ as~$n\to \infty$? 
	Furthermore, what is the geometry of the large clusters under the critical measure? 
\end{itemize}

In the rest of the chapter, we will answer the first two questions for Bernoulli percolation on $\bbZ^d$ for general $d \geq 1$, and
the third for Bernoulli percolation on $\bbZ^2$.  
The same questions will be considered for FK-percolation in the following chapters.

\section{Non-triviality of~$p_c$}

Throughout this section, we will work on~$\bbZ^d$ with~$d \geq 2$ (in the case of~$d = 1$, we trivially have~$p_c = 1$; see Exercise~\ref{exo:p_c=1}). 
The goal of this section is to prove the following.

\begin{theorem}\label{thm:pc_nontrivial_perco}
	For all~$d \geq 2$, we have~$0 < p_c < 1$. 
\end{theorem}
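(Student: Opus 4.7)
The plan is to prove the two inequalities $p_c > 0$ and $p_c < 1$ by separate arguments, both classical.

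For the lower bound $p_c > 0$, I would run a first-moment (union-bound) argument on self-avoiding paths. If $0 \lra \partial \La_n$, then there exists an open self-avoiding path of length at least $n$ starting at $0$. The number of self-avoiding nearest-neighbour paths of length $n$ rooted at the origin in $\bbZ^d$ is at most $2d(2d-1)^{n-1}$, since the first step has $2d$ choices and each subsequent step has at most $2d-1$. Each fixed such path is open in $\omega$ with probability $p^n$, so by the union bound
\begin{align*}
\bbP_p[0 \lra \partial \La_n] \leq 2d(2d-1)^{n-1} p^n = \tfrac{2d}{2d-1}\, [(2d-1)p]^n.
\end{align*}
Whenever $p < 1/(2d-1)$, the right-hand side tends to $0$ as $n\to\infty$. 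Since $\{0\lra\infty\} = \bigcap_n \{0 \lra \partial \La_n\}$, monotone convergence gives $\theta(p)=0$ for all such $p$, whence $p_c \geq 1/(2d-1) > 0$.

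For the upper bound $p_c < 1$, I would first reduce to the planar case. The subgraph of $\bbZ^d$ consisting of vertices whose last $d-2$ coordinates vanish is isomorphic to $\bbZ^2$, and any infinite open cluster inside this slab is also an infinite open cluster in $\bbZ^d$. Consequently $p_c(\bbZ^d) \leq p_c(\bbZ^2)$, and it suffices to prove $p_c(\bbZ^2) < 1$. For this I would invoke the classical Peierls contour argument on the dual lattice $(\bbZ^2)^\ast = \bbZ^2 + (\tfrac12,\tfrac12)$. Declare a dual edge closed if the primal edge it crosses is closed in $\omega$. Planar duality implies that the cluster of $0$ in $\omega$ is finite if and only if $0$ is surrounded by a closed dual circuit. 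The number of dual circuits of length $n$ surrounding $0$ is bounded by $n \cdot 3^{n-1}$: the circuit must cross the dual half-line $\{(k+\tfrac12,\tfrac12) : k \geq 0\}$, the crossing edge lies at $x$-coordinate at most $n$ (at most $n$ choices), and starting from this edge the circuit is a self-avoiding closed walk with at most $3$ continuations at each of the remaining $n-1$ steps. Each such circuit is closed with probability $(1-p)^n$, so
\begin{align*}
\bbP_p[0 \text{ is in a finite cluster}] \leq \sum_{n \geq 4} n\cdot 3^{n-1}(1-p)^n.
\end{align*}
For $p$ sufficiently close to $1$ the series converges and is strictly less than $1$, hence $\theta(p) > 0$ for such $p$, giving $p_c(\bbZ^2) < 1$.

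The main conceptual point is the Peierls step: one must justify that the boundary of a finite cluster in $\bbZ^2$ is indeed a closed dual circuit (a planar-topological fact specific to $d=2$) and then get a fast enough combinatorial bound on such contours to dominate $(1-p)^n$. The rest is essentially a geometric-series computation and the monotonic embedding $\bbZ^2 \hookrightarrow \bbZ^d$, both routine.
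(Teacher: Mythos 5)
Your proposal is correct. The lower bound is exactly the paper's Proposition~\ref{prop:peierls1}: a union bound over the at most $2d(2d-1)^{n-1}$ self-avoiding paths of length $n$ from the origin, each open with probability $p^n$, giving exponential decay and hence $p_c \geq \tfrac{1}{2d-1}$.

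For the upper bound, you take a genuinely different (and more classical) route than the paper, although both are Peierls-type duality arguments. You count closed dual circuits surrounding $0$ directly: at most $n\cdot 3^{n-1}$ circuits of length $n$, each closed with probability $(1-p)^n$, so the expected number of surrounding closed circuits is less than $1$ once $p$ is close enough to $1$, yielding $\theta(p)>0$; together with the embedding $\bbZ^2\hookrightarrow\bbZ^d$ this gives $p_c(\bbZ^d)<1$. The paper's Proposition~\ref{prop:peierls2} avoids counting contours altogether: it observes that a dual circuit surrounding $\La_r$ must cross the half-axis at some dual point $(k+\tfrac12,\tfrac12)$ with $k\geq r$ and force that dual point's cluster to have $L^\infty$-radius at least $k$, then reuses Proposition~\ref{prop:peierls1} applied to the dual configuration (whose parameter is $1-p<\tfrac13$ when $p>\tfrac23$) to bound this by $e^{-ck}$, sums over $k\geq r$, and finally glues $0$ to $\La_r$ by a finite-energy step (forcing all edges of $\La_r$ open). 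The paper's implementation buys the explicit bound $p_c\leq\tfrac23$ and, as a byproduct, the super-critical exponential decay of finite-cluster radii for $d=2$, $p>\tfrac23$; your direct contour count is self-contained (it does not need the exponential-decay proposition as input) but only yields ``$p$ sufficiently close to $1$''. Your two flagged points --- that the outer boundary of a finite cluster in $\bbZ^2$ is a closed dual circuit, and the $n\cdot 3^{n-1}$ contour bound --- are indeed the only things requiring care, and both are standard; nothing in your argument fails.
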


Both bounds use the celebrated Peierls argument, named after the German-British physicist Rudolf Peierls. 
%\im{DK: the Peierls argument refers to using the separation lines in Ising. I think it is simply the idea of energy v entropy estimated roughly}
This argument, most clearly illustrated in the proof of Proposition~\ref{prop:peierls1}, is a generic way of identifying trivial behaviour for models in perturbative regimes (that is, when the parameters are close to their extremes). 
It studies the competition between energy and entropy using coarse estimates.

\begin{proof}[Proof of Theorem~\ref{thm:pc_nontrivial_perco}]
	The proof follows directly from Proposition~\ref{prop:peierls1}, which shows that~$p_c \geq \frac{1}{2d-1}$
	and Proposition~\ref{prop:peierls2} which shows~$p_c\leq 2/3$.  
\end{proof}

\subsection{Lower bound on~$p_c$}\label{sec:peierls_lb}

\begin{proposition}\label{prop:peierls1}
	For all~$d \geq 2$ and~$p <  \frac{1}{2d-1}$, there exists~$c(p) > 0$ such that 
	\begin{align*}
		\bbP_p[0\lra \partial \La_n] \leq e^{-c(p) n} \qquad \text{ for all~$n\geq 1$}.
	\end{align*}
\end{proposition}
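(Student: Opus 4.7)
The plan is to use a standard union bound over self-avoiding paths, which is the prototypical Peierls-style competition between energy (the cost $p^k$ of opening $k$ edges) and entropy (the number of candidate paths).

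First, I would observe that the event $\{0 \lra \partial \La_n\}$ forces the existence of an open self-avoiding path from $0$ to $\partial \La_n$. Any such path has graph length at least $n$, since the graph distance from $0$ to $\partial \La_n$ in $\bbZ^d$ equals $n$. Thus
\begin{align*}
\bbP_p[0 \lra \partial \La_n] \leq \sum_{k \geq n} \sum_{\gamma} \bbP_p[\gamma \text{ is open}],
\end{align*}
where the inner sum ranges over self-avoiding paths $\gamma$ of length $k$ starting at $0$. By independence, each path of length $k$ is open with probability $p^k$.

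Next I would bound the number of self-avoiding paths of length $k$ starting at $0$ by $2d(2d-1)^{k-1}$: the first edge offers $2d$ directions, and each subsequent edge offers at most $2d-1$ choices since the walk cannot immediately backtrack (self-avoidance forbids even more, but this crude bound suffices, which is precisely the spirit of Peierls). Substituting gives
\begin{align*}
\bbP_p[0 \lra \partial \La_n] \leq \sum_{k \geq n} 2d(2d-1)^{k-1} p^k = \frac{2d}{2d-1}\sum_{k \geq n} \bigl[(2d-1)p\bigr]^k.
\end{align*}
Under the hypothesis $p < \frac{1}{2d-1}$, the ratio $r := (2d-1)p$ is strictly less than $1$, so the geometric series converges and yields
\begin{align*}
\bbP_p[0 \lra \partial \La_n] \leq \frac{2d}{2d-1} \cdot \frac{r^n}{1-r} = C(p)\, e^{-c(p) n},
\end{align*}
with $c(p) = -\log r > 0$, which is the desired bound.

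There is no real obstacle here: the only point worth care is the combinatorial count of self-avoiding paths, which is why one phrases the bound with $(2d-1)^{k-1}$ rather than the naive $(2d)^k$; without this gain, one would only obtain $p_c \geq 1/(2d)$, and more importantly the critical threshold $p = 1/(2d-1)$ of the argument would be lost. The proof is otherwise a textbook union bound and requires no large-deviation machinery.
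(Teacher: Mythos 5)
Your proof is correct and follows essentially the same Peierls-type union bound as the paper; the only (immaterial) difference is that you sum over open self-avoiding paths of every length $k \geq n$ and use a geometric series, whereas the paper restricts to paths of length exactly $n$, so that $|A_n|\,p^n$ already gives the bound. In both cases the harmless constant prefactor $\tfrac{2d}{2d-1}$ (times $\tfrac{1}{1-r}$ in your version) is absorbed by slightly adjusting $c(p)$.
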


\begin{proof}
	Let~$A_n$ be the set of simple paths on~$\bbZ^d$ of length~$n$ (i.e., containing~$n$ edges) starting from~$0$. 
	Observe that, for all~$n$, 
	\begin{align*}
		\bbP_p[0\lra \partial \La_n] 
		&\leq \bbP_p[\exists \gamma \in A_n \text{ formed only of open edges}] \\
		&\leq \bbE_p[\#\{ \gamma \in A_n \text{ formed only of open edges}\}] \\
		&= \sum_{\gamma \in A_n} \bbP_p[\text{all edges of $\gamma$ are open}\}] \\
		&= |A_n| \cdot p^{n}.
	\end{align*}
	Finally, it is immediate to check that~$|A_n| \leq 2d \cdot (2d-1)^{n-1}$. 
	Inserting this estimate in the above, we obtain the desired conclusion. 
\end{proof}

\subsection{Duality of percolation}\label{sec:duality}

For the upper bound on~$p_c$, we will work with the model in two dimensions. 
The advantage of the two dimensional setting is the dual model, which we define here. 

The dual of~$\bbZ^2$ is the lattice~$(\bbZ^2 )^* = \bbZ^2 + (\frac12,\frac12)$. 
Each face of~$\bbZ^2$ contains a vertex of~$(\bbZ^2)^*$ at its centre;
each edge~$e$ of~$\bbZ^2$ has a dual edge~$e^*$ intersecting it and joining the two faces separated by~$e$. 
Duality may be defined for any planar graph; we only focus here on~$\bbZ^2$ for convenience. 

If~$\omega \in \{0,1\}^E$ denotes a percolation configuration on~$\bbZ^2$, we define its dual configuration~$\omega^*$ by 
\begin{align*}
\omega^*(e^*) =  1 - \omega(e) \qquad \text{ for all~$e \in E$}.
\end{align*}

The following observations are immediate but essential.

\begin{fact}\label{fact:duality_perco}
	If~$\omega$ is sampled according to~$\bbP_p$, then~$\omega^*$  has law~$\bbP_{1-p}$. 	
\end{fact}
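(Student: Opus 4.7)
The plan is to unpack the definition of $\omega^*$ and check that the two defining properties of $\bbP_{1-p}$ (i.e.\ that edge states are i.i.d.\ Bernoulli with parameter $1-p$) are inherited from the corresponding properties of $\bbP_p$ via the bijection $e \mapsto e^*$ between edges of $\bbZ^2$ and edges of $(\bbZ^2)^*$.

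More concretely, I would first observe that $e \mapsto e^*$ is a bijection from the edge set $E$ of $\bbZ^2$ to the edge set $E^*$ of $(\bbZ^2)^*$, so the map $\omega \mapsto \omega^*$ defined by $\omega^*(e^*) = 1 - \omega(e)$ is a measurable bijection from $\{0,1\}^E$ to $\{0,1\}^{E^*}$. It therefore suffices to identify the pushforward of $\bbP_p$ under this map. Since $(\omega(e))_{e \in E}$ are i.i.d.\ Bernoulli($p$), the variables $(1 - \omega(e))_{e \in E}$ are i.i.d.\ Bernoulli($1-p$); re-indexing by $e^* \in E^*$ (using the bijection) shows that $(\omega^*(e^*))_{e^* \in E^*}$ are i.i.d.\ Bernoulli($1-p$), which is exactly the characterisation of $\bbP_{1-p}$ on $\{0,1\}^{E^*}$.

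There is essentially no obstacle here: the content is purely bookkeeping once one notices that $X \sim \mathrm{Bernoulli}(p)$ implies $1-X \sim \mathrm{Bernoulli}(1-p)$, and that independence is preserved by applying this coordinatewise. The only thing worth flagging is a pedantic point about the ambient measurable space: $\omega$ lives on $\{0,1\}^E$ while $\omega^*$ lives on $\{0,1\}^{E^*}$, so the statement ``$\omega^*$ has law $\bbP_{1-p}$'' tacitly uses the canonical identification of Bernoulli percolation on $\bbZ^2$ and on its translate $(\bbZ^2)^* = \bbZ^2 + (\tfrac12,\tfrac12)$.
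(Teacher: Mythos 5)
Your argument is correct and is exactly the routine verification the paper leaves implicit (the paper states this Fact as an immediate observation without proof): under the edge bijection $e \mapsto e^*$, complementing i.i.d.\ Bernoulli($p$) variables yields i.i.d.\ Bernoulli($1-p$) variables, and the remark about identifying $(\bbZ^2)^*$ with a translate of $\bbZ^2$ is the right pedantic caveat. Nothing is missing.
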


Moreover, the finite clusters of~$\omega$ are surrounded by circuits of~$\omega^*$, and vice versa. 
We will generally call everything that has to do with the percolation~$\omega^*$ or the lattice~$(\bbZ^2)^*$ {\em dual}, 
while objects related to~$\omega$ and~$\bbZ^2$  are called {\em primal}. See Figure~\ref{fig:duality_perco} for an illustration. 

\begin{figure}
\begin{center}
\includegraphics[width = 0.35\textwidth]{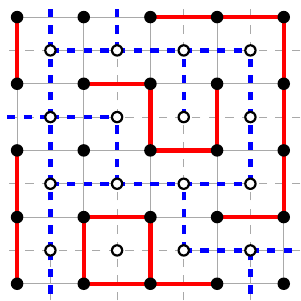}
	\caption{A piece of a square lattice (solid vertices and edges) and its dual (hollow vertices, dashed edges).
	The primal open edges of~$\omega$ are red, those of~$\omega^*$ are blue. 
	Note that the cluster of the central primal vertex is finite and surrounded by a blue circuit.}
	\label{fig:duality_perco}
\end{center}
\end{figure}

\subsection{Upper bound for~$p_c$}\label{sec:peierls2}

\begin{proposition}\label{prop:peierls2}
	For all~$d\geq 2$ and~$p > 2/3$, 
	\begin{align*}
		\bbP_p[0\lra  \infty] > 0.
	\end{align*}
\end{proposition}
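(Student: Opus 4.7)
The plan is to first reduce to $d=2$ and then run a Peierls-type argument on the dual circuits surrounding a sufficiently large box. For the reduction, note that $\bbZ^2$ embeds in $\bbZ^d$ as the subgraph on $\{(x_1,x_2,0,\dots,0)\}$ with induced edges, and the restriction of $\omega\sim\bbP_p$ to this subgraph is Bernoulli$(p)$ percolation on $\bbZ^2$. Any infinite cluster of $0$ in this copy of $\bbZ^2$ is a fortiori one in $\bbZ^d$, so it suffices to prove $\bbP_p[0\lra\infty]>0$ on $\bbZ^2$ whenever $p>2/3$.

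Working on $\bbZ^2$, I would fix an integer $K$, to be chosen large at the end, and introduce the two events
\begin{align*}
A_K &= \{\text{every edge with both endpoints in $\La_K$ is open}\}, \\
B_K &= \{\text{no self-avoiding circuit in $\omega^*$ encloses $\La_K$}\}.
\end{align*}
On $A_K\cap B_K$, the primal cluster of $0$ contains all of $\La_K$, and by planar duality (Fact~\ref{fact:duality_perco}) the absence of a surrounding dual open circuit forces this cluster to be infinite; hence $A_K\cap B_K\subseteq\{0\lra\infty\}$. Moreover, every dual edge that can lie on a circuit enclosing $\La_K$ is dual to a primal edge with at least one endpoint outside $\La_K$, so $A_K$ and $B_K$ depend on disjoint collections of primal edges and are therefore independent under~$\bbP_p$.

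The heart of the argument is a first-moment Peierls bound on $\bbP_p[B_K^c]$. A self-avoiding dual circuit enclosing $\La_K$ has length at least $8K+4$, and the number $N_n^K$ of such circuits of length $n$ satisfies $N_n^K \leq C\,n\,3^{n}$ for some absolute constant~$C$: one selects, say, the leftmost dual edge of the circuit that crosses the half-line $\{(x,0):x>K\}$ (at most $O(n)$ choices), and then bounds the number of self-avoiding walks of length $n$ emanating from that edge by $4\cdot 3^{n-1}$, exactly as in the proof of Proposition~\ref{prop:peierls1} specialised to $d=2$. Since by Fact~\ref{fact:duality_perco} each fixed dual circuit of length $n$ is open in $\omega^*$ with probability $(1-p)^n$, a union bound gives
\begin{align*}
\bbP_p[B_K^c] \leq \sum_{n\geq 8K+4} N_n^K\,(1-p)^n \leq \sum_{n\geq 8K+4} C\,n\,\bigl(3(1-p)\bigr)^{n}.
\end{align*}
The hypothesis $p>2/3$ is precisely the condition $3(1-p)<1$, so this tail series converges and vanishes as $K\to\infty$.

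Choosing $K$ large enough that $\bbP_p[B_K^c]\leq 1/2$ and invoking the independence of $A_K$ and $B_K$ yields
\begin{align*}
\bbP_p[0\lra\infty] \geq \bbP_p[A_K\cap B_K] = \bbP_p[A_K]\cdot\bbP_p[B_K] \geq \tfrac{1}{2}\,p^{|E(\La_K)|} > 0.
\end{align*}
The most delicate step is the combinatorial count of contours, but it directly parallels the path count used in Proposition~\ref{prop:peierls1}, and the threshold $2/3$ appears for exactly the same reason: it is the value of $p$ at which the crude connective-constant estimate $2d-1=3$ (in dimension~$2$) balances the factor $(1-p)^{-1}$.
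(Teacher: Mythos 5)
Your proof is correct and follows essentially the same route as the paper: reduce to $d=2$, use duality and a Peierls bound on dual circuits surrounding a large box, then force the box to be open and use independence to conclude $\bbP_p[0\lra\infty]>0$. The only cosmetic difference is that you count contours of length $n$ directly via the $C\,n\,3^{n}$ bound, whereas the paper bounds the probability that the dual cluster of a point on the positive axis has large radius by applying Proposition~\ref{prop:peierls1} to the dual model at parameter $1-p<\tfrac13$; both yield the same geometric tail and the same threshold $p>2/3$.
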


\begin{proof}
	Since~$\bbZ^2$ is a subgraph of~$\bbZ^d$ for any~$d \geq 2$, we have~$p_c(\bbZ^d) \leq p_c(\bbZ^2)$ for all~$d\geq 2$. 
	Thus, we focus on~$\bbZ^2$ for the rest of the proof. 
	
	Fix some~$r \geq 0$. 
	For~$\La_r$ not to be connected to infinity, there needs to exist a dual circuit in~$\omega^*$ surrounding~$\La_r$. 
	This circuit intersects the axis~$\bbN \times \{0\}$ at some point~$(k+\tfrac12,0)$ with~$k\geq r$ 
	and needs to extend to~$L^\infty$-distance at least~$k$ from the point~$(k+\tfrac12,\tfrac12)$.
	We conclude that 
	\begin{align*}
		\bbP_p[\La_r\nxlra{}\infty] 
		&\leq \sum_{k \geq r} \bbP_p\big[\text{cluster of~$(k+\tfrac12,\tfrac12)$ in~$\omega^*$ has~$L^\infty$-radius at least~$k$}\big] \\
		&= \sum_{k \geq r} \bbP_{1-p}[0\lra \partial \La_{k}]\\
		&\leq \sum_{k \geq r} e^{-c k} = \tfrac{1}{1 - e^{-c}} \, e^{-c r},
%		 \sum_{\gamma \in A_k} \bbP_{1-p}[\gamma\text{ is formed only of open edges}\}] \\
%		&= \sum_{k\geq r} 4 \cdot 3^{k-1} \cdot (1-p)^k.
	\end{align*}
	where the equality is due to duality and translation invariance and the last inequality is due to Proposition~\ref{prop:peierls1}. 
	Indeed, by assumption,~$1 - p < \frac13$, and Proposition~\ref{prop:peierls1} provides a constant~$c >0$, independent of~$k$ or~$r$, satisfying the above.
	Then, we may choose~$r= r(p)$ such that 
	\begin{align*}
		\bbP_p[\La_r\lra\infty] \geq \tfrac12.
	\end{align*}
	Finally, the event above is independent of the configuration inside~$\La_r$. As the probability that all edges of~$\La_r$ are open is positive, we conclude that  
	$$ \bbP_p[0\lra\infty] \geq \bbP_p[\La_r\lra\infty \text{ and all edges of~$\La_r$ open}] \geq \tfrac12\bbP_p[\text{all edges of~$\La_r$ open}] >0.$$
\end{proof}

As a byproduct of the proof, we also find that, for~$d= 2$ and~$p >  \frac23$, there exists~$c(p) > 0$ such that 
\begin{align*}
	\bbP_p[0\lra \partial \La_n \text{ but } 0\nxlra{} \infty] \leq e^{-c(p) n} \qquad \text{ for all~$n\geq 1$}.
\end{align*}

\section{Sharpness of the phase transition}

We generally consider a percolation measure to have trivial large scale behaviour if the relevant observables (namely connection probabilities) converge exponentially fast to their limits. 
As a consequence, we call the phase transition of Bernoulli percolation {\em sharp} if for all~$p \neq p_c$,~$\bbP_p[0\lra \partial \La_n]$ converges exponentially fast as~$n\to\infty$ to~$\theta(p)$. Here we will be concerned with {\em sub-critical sharpness}, which states the above exponential convergence for all~$p < p_c$.

\begin{theorem}\label{thm:sharpness_perco}
	Fix~$d\geq 2$. For all~$p < p_c$ there exists~$c(p) >0$ such that 
	\begin{align}\label{eq:exp_decay_perco}
		\bbP_p[0\lra \partial \La_n] \leq e^{-c(p) n} \qquad \text{ for all~$n\geq 1$}.
	\end{align}
\end{theorem}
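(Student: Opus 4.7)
The plan is to follow the Duminil-Copin--Tassion strategy~\cite{DumTas16} referenced in the introduction: introduce an auxiliary threshold $\tilde p_c$, defined through a one-parameter family of finite-volume quantities, and prove both that exponential decay holds below $\tilde p_c$ and that $\theta(p)>0$ above $\tilde p_c$. This forces $\tilde p_c = p_c$ and yields the theorem.

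For each finite $S \subset \bbZ^d$ with $0 \in S$, let $\Delta S$ denote the set of edges of $\bbZ^d$ with exactly one endpoint in $S$, and define
\begin{align*}
\varphi_p(S) := p \sum_{\substack{(x,y) \in \Delta S \\ x \in S}} \bbP_p\big[0 \xlra{\,S\,} x\big],
\end{align*}
where $\{0 \xlra{S} x\}$ is the event that $0$ and $x$ are connected using only edges with both endpoints in $S$. Set
\begin{align*}
\tilde p_c := \sup\big\{p \in (0,1) : \exists \text{ finite } S \ni 0 \text{ with } \varphi_p(S) < 1 \big\}.
\end{align*}
The proof reduces to two claims: (i) $\bbP_p[0 \lra \partial \La_n]$ decays exponentially for every $p < \tilde p_c$, and (ii) $\theta(p)>0$ for every $p > \tilde p_c$. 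Together they give $\tilde p_c = p_c$ and hence the theorem.

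For (i), fix $p < \tilde p_c$ and $S \ni 0$ finite with $\varphi_p(S) < 1$, and pick $L$ with $S \subset \La_L$. For $n > L$, decompose $\{0 \lra \partial \La_n\}$ according to the first edge $(x,y) \in \Delta S$ (with $x \in S$) through which an exploration of the cluster of $0$ inside $S$ leaves $S$. Edges outside $S$ not touched by this exploration remain i.i.d.\ Bernoulli$(p)$; together with translation invariance and monotonicity, this yields
\begin{align*}
\bbP_p[0 \lra \partial \La_n] \leq \varphi_p(S) \cdot \bbP_p[0 \lra \partial \La_{n-L}],
\end{align*}
and iteration gives exponential decay with rate at least $-\log \varphi_p(S)/L > 0$.

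For (ii), set $\theta_n(p) := \bbP_p[0 \lra \partial \La_n]$ and apply Russo's formula to the increasing event $\{0 \lra \partial \La_n\}$, which expresses $\theta_n'(p)$ as an expected count of pivotal edges. The clever move is to rewrite this count using the random set $\mathcal{S} := \{v \in \La_n : v \nxlra{} \partial \La_n\}$: on the event $\{0 \nxlra{} \partial \La_n\}$, $\mathcal{S}$ is a finite set containing $0$, the closed pivotal edges for $\{0 \lra \partial \La_n\}$ are exactly the edges $(x,y) \in \Delta \mathcal{S}$ with $x \in \mathcal{S}$ and $0 \xlra{\mathcal{S}} x$, and by assumption $\varphi_p(\mathcal{S}) \geq 1$. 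Grouping over the possible values of $\mathcal{S}$ and summing across scales produces, after routine manipulations, a differential inequality of the form
\begin{align*}
\theta_n'(p) \geq \frac{n}{p(1-p)\sum_{k=0}^{n-1}\theta_k(p)}\bigl(1-\theta_n(p)\bigr),
\end{align*}
valid for $p > \tilde p_c$. Integrating between $\tilde p_c$ and any $p_0 > \tilde p_c$ then forces $\liminf_n \theta_n(p_0) > 0$, so $\theta(p_0)>0$.

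The main obstacle is step~(ii): engineering the random set $\mathcal{S}$ so that Russo's formula combined with the bound $\varphi_p(\mathcal{S}) \geq 1$ yields a differential inequality sharp enough to exclude $\theta(p_0)=0$ strictly above $\tilde p_c$. In particular, the right scale in $n$ must appear (through the partial sum $\Sigma_n(p) := \sum_{k<n}\theta_k(p)$) for the integration argument to succeed; by contrast, step~(i) is a rather direct renewal-type iteration once $\varphi_p(S)$ has been introduced.
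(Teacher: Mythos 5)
Your strategy is exactly the one the paper uses (the Duminil-Copin--Tassion argument): the same quantity $\varphi_p(S)$ (up to an immaterial normalisation --- the paper counts boundary vertices without the factor $p$ and accordingly uses the threshold $1/2$ rather than $1$), the same auxiliary threshold $\tilde p_c$, the same renewal iteration for $p<\tilde p_c$ (the paper's Lemma~\ref{lem:phiS_exp}), and the same Russo-plus-exploration argument for $p>\tilde p_c$, where the paper's set $\sfC$ of edges connected to $\partial\La_n$ and the component $S$ of $0$ in its complement play the role of your $\mathcal{S}$ (Lemma~\ref{lem:phiS_supercrit}).

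The one step that would fail as written is the displayed differential inequality in step (ii). The argument you describe --- Russo's formula, conditioning on $\mathcal{S}=\{v:\, v\nxlra{}\partial\La_n\}$, identifying the closed pivotal edges with the boundary edges of $\mathcal{S}$ whose inner endpoint is connected to $0$ inside $\mathcal{S}$, and invoking $\varphi_p(\mathcal{S})\geq1$ --- yields an inequality of the form $\theta_n'(p)\;\geq\; c_p\,\inf_{S\ni 0}\varphi_p(S)\,\bigl(1-\theta_n(p)\bigr)\;\geq\; c_p\,\bigl(1-\theta_n(p)\bigr)$ for $p>\tilde p_c$, with $c_p$ a harmless explicit prefactor ($1/p$ with your normalisation, $1/p(1-p)$ in the original); there is no factor $n/\Sigma_n(p)$, and none can be extracted from the bound $\varphi_p(\mathcal{S})\geq1$ by ``routine manipulations''. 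The factor $n/\Sigma_n$ belongs to the OSSS-based proof of Duminil-Copin--Raoufi--Tassion (where it arises from the revealment of a randomised algorithm and the inequality carries an additional factor $\theta_n$), which is a genuinely different argument. Fortunately the correction only simplifies your task: the inequality without $n/\Sigma_n$ integrates directly (Gr\"onwall applied to $\log\tfrac{1}{1-\theta_n}$) to give $\theta(p_0)>0$ for every $p_0>\tilde p_c$, uniformly in $n$, exactly as in the paper's proof; so your closing remark that ``the right scale in $n$ must appear through $\Sigma_n$'' is a red herring for this route.
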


The proof given below is beautiful and surprisingly simple; it is taken from~\cite{DumTas16}.

\subsection{Derivatives of increasing events}

Let~$A$ be an increasing event. We say that an edge~$e$ is {\em pivotal} for~$A$ (in a configuration~$\omega$) if
$\omega \cup \{e\} \in A$ but~$\omega\setminus\{e\} \notin A$.

\begin{proposition}[Russo's formula]\label{prop:pivs}
Suppose that~$A$ is an increasing event depending only on finitely many edges. Then~$p \mapsto \bbP_p[A]$ is a~$\calC^\infty$ function and 
\begin{align}\label{eq:pivs}
	\frac{d \bbP_p[A]}{dp} = \sum_{e \in E} \bbP_p[e \text{ is pivotal for~$A$}] =  \bbE_p[\#\text{ pivotal edges for~$A$}].
\end{align}
\end{proposition}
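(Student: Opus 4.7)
The plan is to exploit the fact that $A$ depends on only a finite edge set $F \subset E$ to reduce everything to a polynomial identity. First I would write
$$\bbP_p[A] = \sum_{\eta \in A} p^{|\eta|} (1-p)^{|F| - |\eta|},$$
where the sum runs over configurations $\eta \in \{0,1\}^F$ belonging to $A$ and $|\eta|$ counts its open edges. This displays $\bbP_p[A]$ as a polynomial in $p$, giving the $\calC^\infty$ regularity on $(0,1)$ for free.

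For the derivative I would introduce the auxiliary multi-parameter product measure $\bbP_{\mathbf{p}}$, under which each edge $e$ is open independently with its own probability $p_e$. Then $\bbP_{\mathbf{p}}[A]$ is affine in each $p_e$. Fixing $e \in F$ and setting $A^{+}_e = \{\omega \cup \{e\} \in A\}$ and $A^{-}_e = \{\omega \setminus \{e\} \in A\}$, both depending only on the edges other than $e$, independence of $\omega(e)$ from the remaining coordinates yields
$$\bbP_{\mathbf{p}}[A] = p_e \, \bbP_{\mathbf{p}}[A^{+}_e] + (1-p_e)\, \bbP_{\mathbf{p}}[A^{-}_e].$$
Because $A$ is increasing, $A^{-}_e \subseteq A^{+}_e$, and differentiating in $p_e$ gives
$$\frac{\partial \bbP_{\mathbf{p}}[A]}{\partial p_e} = \bbP_{\mathbf{p}}[A^{+}_e] - \bbP_{\mathbf{p}}[A^{-}_e] = \bbP_{\mathbf{p}}[A^{+}_e \setminus A^{-}_e] = \bbP_{\mathbf{p}}[e \text{ is pivotal for } A].$$

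To conclude, I would recover $\bbP_p[A]$ by setting $p_{e'} = p$ for every $e'$, and apply the chain rule to obtain $\frac{d}{dp}\bbP_p[A] = \sum_{e \in F} \bbP_p[e \text{ is pivotal for } A]$. Since edges outside $F$ cannot be pivotal for $A$, the sum extends trivially to all of $E$, yielding~\eqref{eq:pivs}. There is no genuine obstacle in this argument; the only slightly delicate point is justifying that the derivative of $\bbP_p[A]$ distributes as a sum of edge-wise contributions, which the multi-parameter reformulation handles cleanly by turning the statement into an elementary computation on multilinear polynomials.
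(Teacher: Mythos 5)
Your argument is correct, but it takes a different route from the paper. The paper works entirely with the one-parameter polynomial $\bbP_p[A] = \sum_\omega p^{|\omega|}(1-p)^{|\omega^c|}\ind_{\{\omega \in A\}}$, differentiates it directly in $p$, rewrites the derivative as a sum over edges of terms $\big(\tfrac1p \ind_{\{\omega(e)=1\}} - \tfrac1{1-p}\ind_{\{\omega(e)=0\}}\big)\bbP_p[\omega]\ind_{\{\omega\in A\}}$, and then pairs the configurations $\omega\cup\{e\}$ and $\omega\setminus\{e\}$: the pairs lying both inside or both outside $A$ cancel, and the pivotal pairs contribute exactly $\bbP_p[\omega\cup\{e\}] + \bbP_p[\omega\setminus\{e\}]$. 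You instead pass to the multi-parameter product measure $\bbP_{\mathbf p}$, use the affine dependence $\bbP_{\mathbf p}[A] = p_e\,\bbP_{\mathbf p}[A^+_e] + (1-p_e)\,\bbP_{\mathbf p}[A^-_e]$ to identify each partial derivative with a pivotality probability (here the increasing hypothesis is what turns $\bbP[A^+_e]-\bbP[A^-_e]$ into the probability of the event $A^+_e\setminus A^-_e$, which does not depend on $\omega(e)$), and conclude via the chain rule along the diagonal $p_e \equiv p$. This is the classical Margulis--Russo argument; it is arguably cleaner than the paper's configuration-pairing computation and has the added benefit of giving the edge-by-edge (inhomogeneous) version of the formula for free, whereas the paper's direct computation stays within the single-parameter family and requires no auxiliary measure. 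Both proofs correctly use that only the finitely many edges on which $A$ depends can be pivotal, and both get smoothness from the polynomial expression, so there is nothing missing in your write-up.
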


\begin{proof}
	Suppose that~$A$ depends only on the edges of~$\La_n$ for some~$n\geq1$. Consider the percolation restricted to~$\La_n$ and write~$|\omega|$ for the number of open edges and~$|\omega^c|$ for the number of closed edges of a configuration~$\omega$. 
	Then 
	\begin{align*}
\bbP_p[A] = \sum_\omega p^{|\omega|}(1-p)^{|\omega^c|}\ind_{\{\omega \in A\}}. 
	\end{align*}
	Differentiating this, we find, 
	\begin{align*}
		\frac{d \bbP_p[A]}{dp} 
		&= \sum_\omega \Big(|\omega| p^{|\omega|-1}(1-p)^{|\omega^c|}- |\omega^c| p^{|\omega|}(1-p)^{|\omega^c|-1}\big)\ind_{\{\omega \in A\}}\\ 
		&=  \sum_e \sum_\omega \Big(\tfrac1p \ind_{\{\omega(e) = 1\}} - \tfrac1{1-p}\ind_{\{\omega(e) = 0\}}\Big) \bbP_p[\omega] \ind_{\{\omega \in A\}}.
	\end{align*}
	For~$\omega$ such that~$\omega\cup\{e\}$ and $\omega\setminus\{e\}$ are both in $A$, the contribution of these two configuration to the above sum is~$0$. 
	The same is true when both~$\omega\cup\{e\}$ and $\omega\setminus\{e\}$ are outside of $A$. 
	When~$\omega\cup\{e\} \in A$, but~$\omega\setminus\{e\} \notin A$, the contribution of the two configurations to the above is
	$$\tfrac1p   \bbP_p[\omega \cup \{e\}] = \bbP_p[\omega \cup \{e\}] + \bbP_p[\omega \setminus \{e\}].$$ 
	This concludes the proof. 
\end{proof}

\subsection{The crucial quantity~$\varphi_p(S)$}

For a finite, connected set~$S$ of edges, at least one of which is adjacent to~$0$, write 
\begin{align*}
	\partial S = \{ u \in V: \, u \text{ is adjacent to edges inside and outside of~$S$}\}. 
\end{align*}
It is also allowed to take~$S = \emptyset$, in which case we set~$\partial S = \{0\}$. 
Let 
\begin{align*}
	\varphi_p(S) =  \bbE_p\big[\#\{ u \in \partial S: 0 \xlra{S} u\}  \big]. 
\end{align*}
Above,~$\xlra{S}$ refers to connections using only edges of~$S$. See Figure~\ref{fig:phiS} for an illustration.

The following two lemmas will imply Theorem~\ref{thm:sharpness_perco} directly. 
The proofs of the lemmas are deferred to the following two sections. 

\begin{lemma}\label{lem:phiS_exp}
	For any~$p \in (0,1)$,~$n\geq1$ any finite, connected set of edges~$S$ as above
	\begin{align}\label{eq:phiS_exp}
		\bbP_p[0\lra \partial \La_n] \le \varphi_p(S)^{\lfloor n/{\rm diam}(S)\rfloor}.
	\end{align}
\end{lemma}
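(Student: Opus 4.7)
The plan is to derive a one-step recursion of the form
\[
\bbP_p[0 \lra \partial\La_n] \le \varphi_p(S)\cdot \bbP_p[0 \lra \partial\La_{n-\mathrm{diam}(S)}]
\]
and then iterate it $\lfloor n/\mathrm{diam}(S)\rfloor$ times, using $\bbP_p[\,\cdot\,]\le 1$ at the base of the recursion.

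To establish the one-step bound, I would explore the cluster $\calC = \calC_S(0)$ of $0$ obtained by using only edges of $S$. The event $\{\calC = C\}$ depends only on the states of edges in $S$ (a subset of them must be open to form $C$, and every edge of $S$ with exactly one endpoint in $C$ must be closed). By independence of the coordinates of $\omega$, conditionally on $\{\calC = C\}$ the edges of $E\setminus S$ are still i.i.d.\ Bernoulli$(p)$. Suppose $n > \mathrm{diam}(S)$. Since $\mathrm{diam}(C)\le \mathrm{diam}(S) < n$, the cluster $C$ does not reach $\partial\La_n$, so any open path from $0$ to $\partial\La_n$ must leave $C$ through an open edge not in $S$; in particular it must traverse some vertex $u\in C\cap\partial S$ and then reach $\partial\La_n$ using only edges outside $S$. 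A union bound over $u$ therefore gives
\[
\bbP_p[0 \lra \partial\La_n \mid \calC = C] \le \sum_{u\in C\cap\partial S}\bbP_p\big[u \lra \partial\La_n \text{ in } \omega_{|E\setminus S}\big].
\]
Each term on the right is dominated by $\bbP_p[u \lra \partial\La_n]$ (adding open edges of $S$ only helps connections), and by translation invariance, together with the fact that every $u\in\partial S$ lies at $L^\infty$-distance at most $\mathrm{diam}(S)$ from $0$, this is in turn bounded by $\bbP_p[0\lra\partial\La_{n-\mathrm{diam}(S)}]$.

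Averaging over $C$ and recognising $\bbE_p[|\calC\cap\partial S|] = \varphi_p(S)$ yields the desired recursion. Iterating $k = \lfloor n/\mathrm{diam}(S)\rfloor$ times produces $\varphi_p(S)^k\cdot \bbP_p[0\lra\partial\La_{n-k\,\mathrm{diam}(S)}] \le \varphi_p(S)^k$, which is \eqref{eq:phiS_exp}. (If $n \le \mathrm{diam}(S)$, then $k=0$ and the bound is the trivial $\bbP_p[\cdot]\le 1$.)

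The only real subtlety is the conditional independence step: one must be careful that exploring $\calC_S(0)$ reveals only the states of edges in $S$, so that edges of $E\setminus S$ remain independent Bernoulli$(p)$ under the conditional law. This is the key (and standard) point that makes the union bound and the subsequent translation-invariant domination legitimate; everything else is bookkeeping.
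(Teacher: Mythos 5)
Your overall strategy is the paper's: explore the cluster $\sfC$ of $0$ in $\omega\cap S$, condition on its realisation $C$, union bound over $u\in\partial S\cap C$, use independence and translation invariance to get the one-step recursion with $\varphi_p(S)$, and iterate. However, one step as written is wrong. You claim that, beyond the last vertex $u\in C\cap\partial S$, the open path ``reach[es] $\partial\La_n$ using only edges outside $S$'', and you then bound the conditional probability by $\sum_{u}\bbP_p[u\lra\partial\La_n\text{ in }\omega_{|E\setminus S}]$. Only the \emph{first} edge leaving $u$ is forced to lie outside $S$ (an open $S$-edge at $u$ would have its other endpoint in $C$); after that the path may perfectly well use open edges of $S$ whose endpoints lie outside $C$, since the conditioning $\{\sfC=C\}$ leaves such edges unconstrained. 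Your displayed inequality is therefore not only unjustified but can actually fail: take $S$ to be all edges of a large box $\La_m$ and $C$ a path from $0$ to a single boundary vertex $u\in\partial S$; under the conditioning $C$ is open, so the left-hand side is at least the single right-hand term $\bbP_p[u\lra\partial\La_n\text{ in }E\setminus S]$, and strictly exceeds it because connections from $u$ may re-enter $\La_m$ away from $C$, which the right-hand side forbids.

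The repair is exactly the paper's formulation: the event implied by $\{0\lra\partial\La_n\}$ is that some $u\in\partial S\cap C$ satisfies $u\xlra{\omega\setminus C}\partial\La_n$, i.e.\ $u$ is connected to $\partial\La_n$ by an open path avoiding the cluster $C$ (not avoiding all of $S$). This event depends only on edges not in $C$, and on those edges the conditioning $\{\sfC=C\}$ either forces closure (the $S$-edges adjacent to $C$) or leaves them i.i.d.\ Bernoulli$(p)$; since the event is increasing, its conditional probability is at most its unconditional probability, which is at most $\bbP_p[u\lra\partial\La_n]\le\bbP_p[0\lra\partial\La_{n-{\rm diam}(S)}]$ by translation invariance, as $u$ is within distance ${\rm diam}(S)$ of $0$. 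With that substitution, your averaging over $C$ to recover $\varphi_p(S)$, the iteration, and the treatment of the degenerate case $n\le{\rm diam}(S)$ all go through as you wrote them. Note also that the real subtlety is not, as you say at the end, the independence of the edges of $E\setminus S$ under the conditioning, but the fact that the conditioning acts on the relevant (larger) set of edges only through \emph{negative} information.
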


\begin{lemma}\label{lem:phiS_supercrit}
	For any~$p \in (0,1)$ and~$n\geq 1$. 
	\begin{align}\label{eq:phiS_supercrit}
		\frac{d \bbP_p[0\lra \partial \La_n]}{d p}  \geq \inf_S \varphi_p(S) \cdot (1 - \bbP_p[0\lra \partial \La_n]), 
%		\geq \inf_S \varphi_p(S) \cdot (1 - \theta(p)),
	\end{align}
	where the infimum is over all finite connected sets of edges~$S$ as above.
\end{lemma}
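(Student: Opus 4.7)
The plan is to combine Russo's formula with an exploration of the cluster of $\partial\La_n$ (rather than of $0$, as one might first attempt). By Russo's formula (Proposition~\ref{prop:pivs}) applied to the increasing event $A = \{0 \lra \partial \La_n\}$, I have $f'(p) = \sum_e \bbP_p[e \text{ pivotal}]$; since a closed pivotal edge forces $\omega \in A^c$, this gives the lower bound $f'(p) \geq \bbE_p[\#\{e \text{ closed pivotal}\}]$.

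The main idea is to define, on $A^c$, the random edge set $\calS(\omega) := \{e \in E : \text{both endpoints in } \calS_V(\omega)\}$, where $\calS_V(\omega)$ is the $\bbZ^d$-connected component of $0$ in $\bbZ^d \setminus \calB(\omega)$ and $\calB(\omega)$ is the cluster of $\partial \La_n$ in $\omega$. Since $\partial \La_n \subset \calB$, the vertex set $\calS_V$ lies in $\La_n \setminus \partial \La_n$ and is finite and $\bbZ^d$-connected, so $\calS$ is a finite connected edge set adjacent to $0$ as in the lemma (or $\calS = \emptyset$). The crucial step — and the main obstacle — is the independence property: conditionally on $\{\calS = S\}$, the configuration $\omega|_S$ still has law $\bbP_p$. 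This follows by decomposing $\{\calS = S\}$ as the disjoint union, over all $B$ containing the $\bbZ^d$-vertex boundary of $V(S) \cup \{0\}$ and disjoint from it, of the events $\{\calB = B\}$; each such event is measurable with respect to the edges incident to $B$, which are disjoint from the edges in $S$.

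Finally, on $A^c$, an edge $e = \{x,y\}$ is closed pivotal iff, say, $x \in \partial \calS$ with $0 \xlra{\calS} x$ while $y \in \calB$ (such $e$ is automatically closed, as every edge from $\calS_V$ to $\calB$ is closed by construction). Taking expectation, conditioning on $\{\calS = S\}$, and invoking the independence property to evaluate $\bbP_p[0 \xlra{S} x \mid \calS = S] = \bbP_p[0 \xlra{S} x]$, one obtains
\[
f'(p) \geq \bbE_p[\#\{e \text{ closed pivotal}\}] = \sum_S \bbP_p[\calS = S] \sum_{x \in \partial S} d_{\mathrm{out}}^{S}(x)\, \bbP_p[0 \xlra{S} x],
\]
where $d_{\mathrm{out}}^{S}(x) \geq 1$ counts the $\bbZ^d$-neighbors of $x$ outside $V(S) \cup \{0\}$. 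Using $d_{\mathrm{out}}^{S}(x) \geq 1$, the right-hand side is at least $\sum_S \bbP_p[\calS = S]\,\varphi_p(S) \geq \inf_S \varphi_p(S) \cdot \bbP_p[A^c] = \inf_S \varphi_p(S)\,(1-f(p))$, which is the lemma.
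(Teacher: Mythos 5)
Your proof is correct and follows essentially the same route as the paper: Russo's formula, lower-bounding by closed pivotal edges, conditioning on (a set determined by) the cluster of $\partial\La_n$ so that the configuration on the component $S$ of $0$ in its complement is unbiased, and recognising the resulting expectation as $\varphi_p(S)$ before taking the infimum. Your explicit decomposition of $\{\calS = S\}$ into events $\{\calB = B\}$ measurable with respect to edges off $S$ is just a slightly more detailed rendering of the paper's remark that conditioning on $\sfC = C$ gives no information on the edges in $S$.
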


One should think of~\eqref{eq:phiS_supercrit} as stating that 
\begin{align*}
\frac{d \theta(p)}{d p}  \geq \inf_S \varphi_p(S) \cdot (1 - \theta(p)), 
\end{align*}
even though the derivative is not well-defined.

\begin{figure}
\begin{center}
\includegraphics[width = 0.45\textwidth, page = 4]{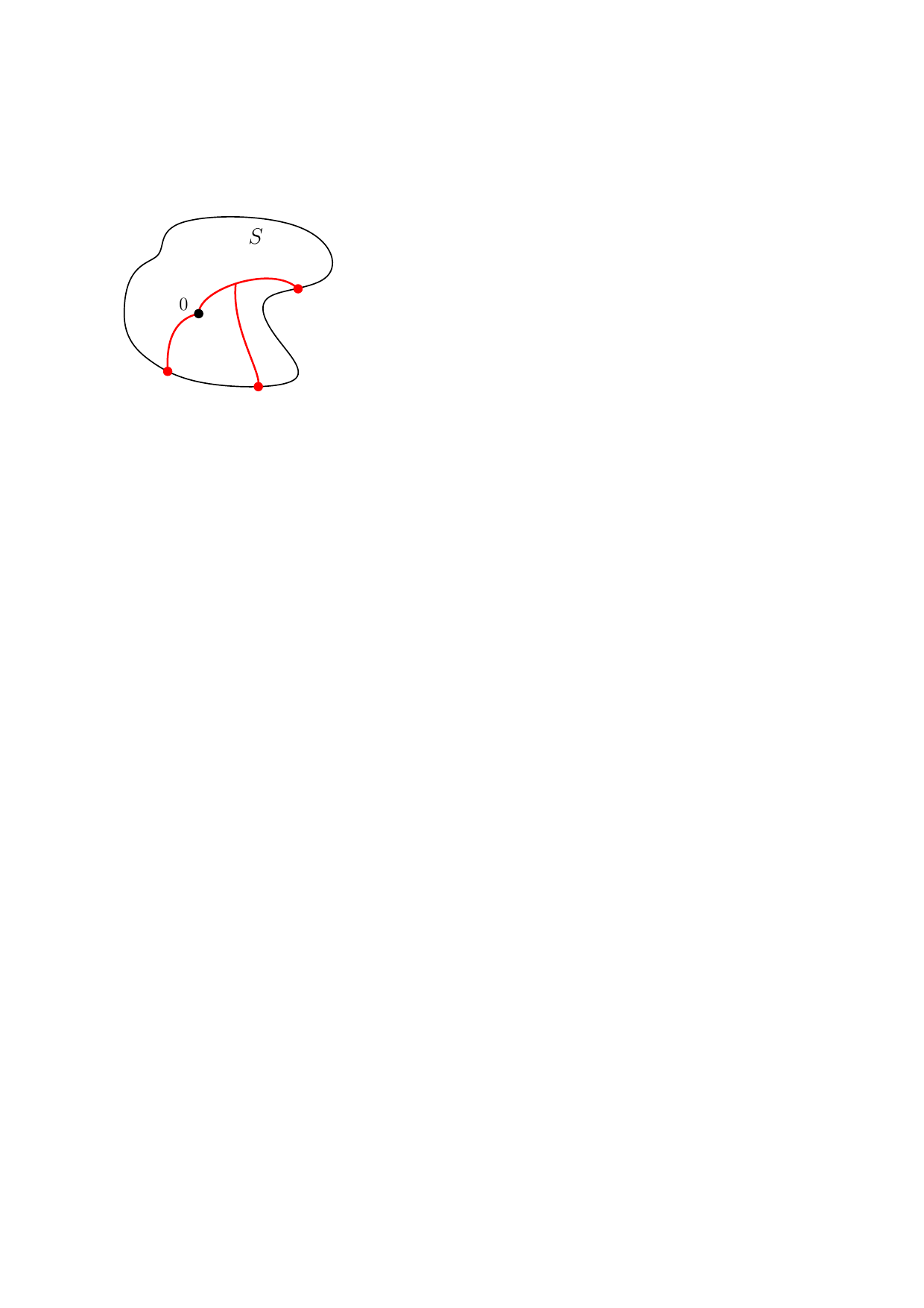}\qquad\qquad 
\includegraphics[width = 0.35\textwidth, page = 2]{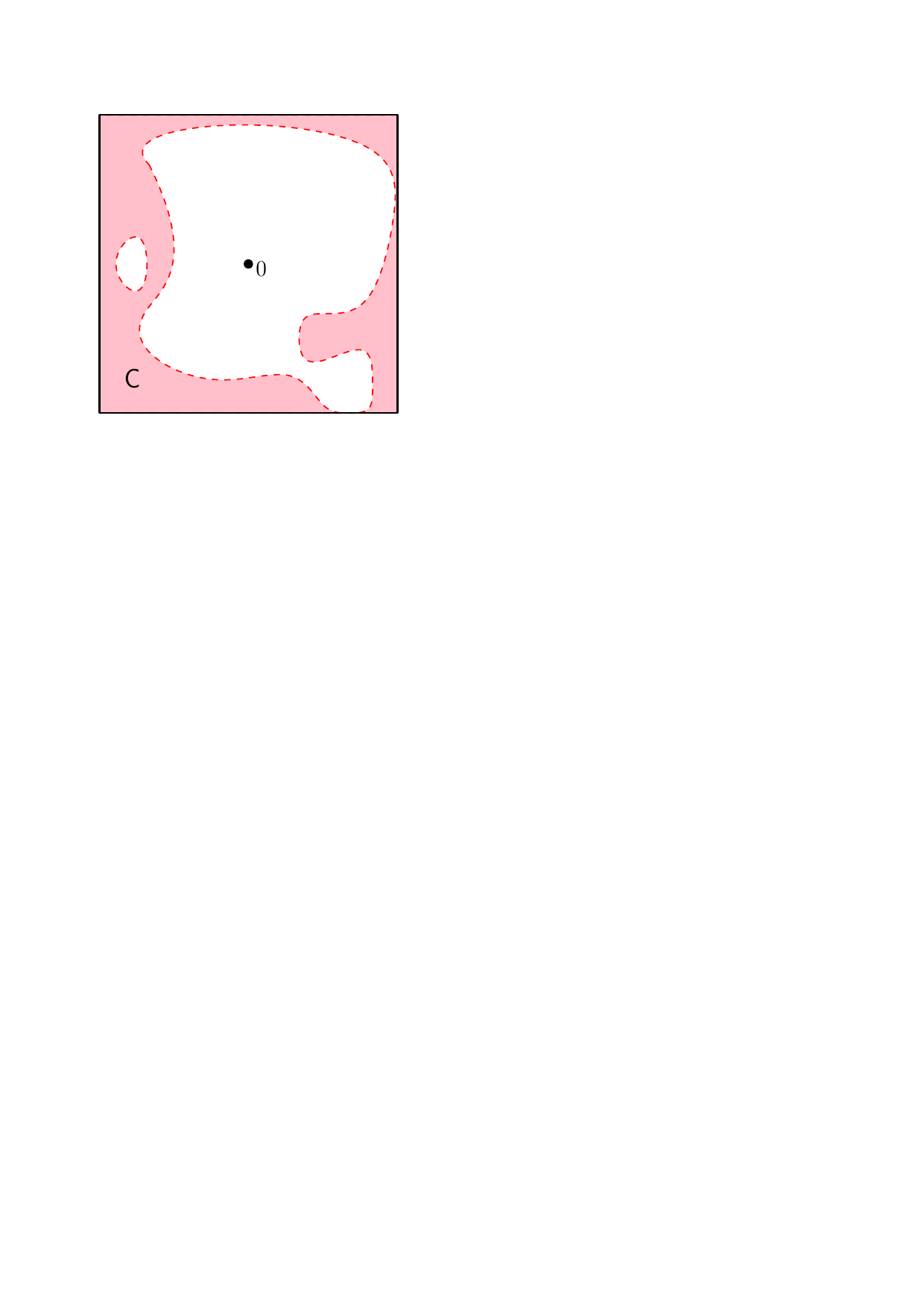}
	\caption{{\em Left:} An illustration of the argument in the proof of Lemma~\ref{lem:phiS_exp}: once the cluster~$\sfC$ of~$0$ in~$S$ has been explored, for~$0$ to be connected to distance~$n$, one of the red vertices needs to be connected to distance~$n-{\rm diam}(S)$ {\em in the complement of~$\sf C$}.
	{\em Right:} The white region containing~$0$ is the set of edges~$S$. Here~$0$ is connected to four vertices on~$\partial S$.
	When proving Lemma~\ref{lem:phiS_supercrit},~$S$ denotes the connected component of~$0$ in the complement of the cluster of~$\partial \La_n$.}
	\label{fig:phiS}
\end{center}
\end{figure}

\begin{proof}[Proof of Theorem~\ref{thm:sharpness_perco}]
	Set
	\begin{align*}
		\tilde p_c = \inf \{ p: \inf_S \varphi_p(S) \geq 1/2\}.
	\end{align*}
	
	Then, for~$p < \tilde p_c$, there exists~$S$ such that~$\varphi_p(S) < 1/2$. Fix one such~$S$. 
	Applying Lemma~\ref{lem:phiS_exp}, we conclude that, for all~$n \geq {\rm diam}(S)$, 
	\begin{align*}
		\bbP_p[0\lra \partial \La_n] \le 2^{-\lfloor n/{\rm diam}(S)\rfloor} \leq e^{-c(p) n}.
	\end{align*}
	The above may be extended to all values of~$n\geq 1$ by potentially modifying the value of the constant~$c(p) > 0$ 
	
	Conversely, for~$p > \tilde p_c$, we claim the existence of an infinite cluster. 
	First, notice that~$\inf_S \varphi_p(S)$ is increasing in~$p$, and therefore~$\inf_S \varphi_p(S) \geq 1/2$ for all~$p > \tilde p_c$. 
	Moreover, either~$\theta(p) \geq 1/2$ or, for any~$u\in (\tilde p_c, p]$,~\eqref{eq:phiS_supercrit} implies that 
	\begin{align*}
		\frac{d \bbP_u[0\lra \partial \La_n]}{d u}  \geq \inf_S \varphi_u(S) \cdot (1 -\bbP_u[0\lra \partial \La_n]) \geq \tfrac18,
	\end{align*}
	as long as~$n$ is large enough that so that~$\bbP_u[0\lra \partial \La_n] \leq \frac34~$. 
	The existence of such an~$n$ is guaranteed by the fact that~$\theta(p) <1/2$. 
	Integrating the above and taking~$n$ to infinity, we conclude that 
	\begin{align*}
		\theta(p)  \geq \tfrac18 (p-\tilde p_c) > 0.
	\end{align*}
	
	The two cases above allow us to conclude that~$p_c = \tilde p_c$ and therefore that~\eqref{eq:exp_decay_perco} holds for all~$p < p_c$. 
\end{proof}

\subsection{The sub-critical regime via~$\varphi_p(S)$: proof of  Lemma~\ref{lem:phiS_exp}}

Fix~$S$ and~$n \geq 1$. The lemma is only meaningful when~$\varphi_p(S) < 1$, so when~$S$ contains all edges incident to~$0$. We suppose this henceforth. 
Moreover, we may also take~$n \geq {\rm diam}(S)$, otherwise the statement is trivial. 

Write~$\sfC$ for the connected component of~$0$ in the configuration~$\omega \cap S$.  
Observe that, if~$0 \lra \partial \La_n$, then there exists~$u \in \partial S \cap \sfC$ such that 
$u$ is connected to~$\partial \La_n$ outside of~$\sfC$. 
In particular,~$u$ needs to be connected in~$\omega\setminus \sfC$ to a point at a distance at least~$n - {\rm diam}(S)$ of itself.
This event has probability at most~$\bbP_p[0\lra \partial \La_{n-{\rm diam}(S)}]$.
Conditioning on~$\sfC$, applying a union bound, then summing over the possible realisations~$C$ of~$\sfC$, we find 
\begin{align*}
	\bbP_p[0\lra \partial \La_{n}] 
	&\leq \sum_{C} \bbP_p[\sfC = C] \sum_{u \in \partial S \cap C} \bbP_p[u\xlra{\omega \setminus C} \partial \La_{n}\,|\, \sfC = C]\\
	&\leq \sum_{C} \bbP_p[\sfC = C] \cdot | \partial S \cap C| \cdot \bbP_p[0\lra \partial \La_{n-{\rm diam}(S)}]\\
	&=\varphi_p(S)\cdot \bbP_p[0\lra \partial \La_{n-{\rm diam}(S)}].
\end{align*}
Iterating the above proves~\eqref{eq:phiS_exp}.
\hfill $\square$

\begin{remark}
	Applying the argument above to~$S$ formed of the~$2d$ edges incident to~$0$, 
	we (almost) retrieve the Peierls argument of Proposition~\ref{prop:peierls1}.
\end{remark}

\subsection{The super-critical regime via~$\varphi_p(S)$: proof of Lemma~\ref{lem:phiS_supercrit}}

%Here we prove Lemma~\ref{lem:phiS_supercrit}. 
Fix~$n \geq1$. Recall from~\eqref{eq:pivs} that~$\frac{d \bbP_p[0\lra \partial \La_n]}{dp}$ is equal to the expected number of pivotals for the event~$\{0\lra \partial \La_n\}$. Pivotals may be open or closed; we will lower bound here the number of closed pivotals. 
We will work exclusively on~$\La_n$, and therefore restrict ourselves to the edges in this graph. 

Write~$\sf C$ for the set of edges connected to~$\partial \La_n$ and~$\partial_{\rm ext}\sf C$ for all edges of~$\La_n$ adjacent to, but not contained in~$\sf C$. 
If~$\sf C$ contains~$0$ (that is, contains an edge incident to~$0$) then~$0\lra \partial \La_n$ occurs, and there are no closed pivotals. 
When~$\sf C$ does not contain~$0$, let~$S$ denote the connected component of~$0$ in~$\La_n \setminus ({\sf C} \cup \partial_{\rm ext}{\sf C})$ --- see Figure~\ref{fig:phiS} (right side). 
Notice that any vertex~$u$ of~$\partial S$ is separated from~$\sf C$ by a closed edge which belongs to~$\partial_{\rm ext}{\sf C}$. Moreover, when~$u \lra 0$ (a connection which necessarily occurs in~$S$), the edge separating~$u$ from~$\sf C$ is a closed pivotal for~$\{0\lra \partial \La_n\}$. 
Thus  
\begin{align*}
\frac{d \bbP_p[0\lra \partial \La_n]}{dp} 
&\geq \sum_{C} \bbE_p[\#\{ u \in \partial S: 0 \xlra{S} u\}| \sfC = C]\cdot  \bbP_p[\sfC = C]\\
&= \sum_{C}\varphi_p(S)\cdot  \bbP_p[\sfC = C]\\
&\geq \inf_S\varphi_p(S)\cdot  \bbP_p[0 \nxlra{} \partial \La_n],
\end{align*}
where the sum is over all possible realisations~$C$ of~$\sfC$ not containing~$0$ and where~$S$ is determined by~$C$. 
In the equality, we used the fact that the conditioning on~$\sfC = C$ only gives information on the edges of~$C$ and~$\partial_{\rm ext}C$, but not on those in~$S$. The spatial independence of Bernoulli percolation is essential here. 
%
%Finally, conclude with the simple observation that~$\bbP_p[0 \nxlra{} \partial \La_n] \geq 1-\theta(p)$.
\hfill~$\square$

\section{Complement: uniqueness of the infinite cluster}

This section is not essential for the rest of the notes, but contains a beautiful argument which we could not resist presenting. 
The reader only interested in the two dimensional case may skip this section. 

We focus here on the super-critical phase, that is, when~$\theta(p) > 0$. A more basic question than the super-critical sharpness~\eqref{eq:sharp_super_crit} 
concerns the number of infinite clusters. The following result by Burton and Keane is a robust way of proving that, when an infinite cluster exists, it is a.s.\ unique. 

\begin{theorem}[Burton--Keane~\cite{BurKea89}]\label{thm:Burton-Keane}
	Fix~$d \geq 1$. Then, for all~$p \in (0,1)$ with~$\theta(p) > 0$, 
	\begin{align*}
		&\bbP_p[\text{there exists exactly one infinite cluster}] = 1.
	\end{align*}
\end{theorem}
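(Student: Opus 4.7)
My plan follows the classical Burton--Keane strategy: combine ergodicity, finite energy, and a density-versus-boundary contradiction. First I would note that $\bbP_p$ is invariant and ergodic under the translations of $\bbZ^d$, so the number $N \in \{0,1,\dots,\infty\}$ of infinite clusters is a.s.\ deterministic; by hypothesis $\theta(p)>0$, so $N \geq 1$.

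To rule out $N=k$ for some finite $k \geq 2$, I would use a finite-energy modification. For $n$ large enough, with positive probability the box $\La_n$ intersects at least two distinct infinite clusters. Conditional on this, opening every edge inside $\La_n$ merges these clusters and strictly reduces the total number of infinite clusters. Since Bernoulli percolation has positive conditional probability for any local configuration inside $\La_n$, this contradicts $\bbP_p[N=k]=1$.

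The hard part, and the core of the proof, is to exclude $N=\infty$. Following Burton and Keane, a vertex $x$ is a \emph{trifurcation} if $x$ belongs to an infinite cluster, exactly three edges incident to $x$ are open, and removing them splits the cluster of $x$ into three distinct infinite components. Assuming $N=\infty$, a finite-energy argument analogous to the previous one gives $\bbP_p[0 \text{ is a trifurcation}] =: \alpha > 0$: one chooses $n$ large enough that three distinct infinite clusters meet $\partial \La_n$ with positive probability, then re-wires the edges inside $\La_n$ so that $0$ is joined to exactly three of them by disjoint paths and has exactly three open edges. By translation invariance, the expected number of trifurcations in $\La_n$ is then at least $\alpha\, |\La_n|$, which grows like $n^d$.

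The final step, which I expect to be the main technical obstacle, is the deterministic combinatorial bound in the opposite direction. Inside any finite box $\La_n$, the trifurcations of a single infinite cluster $\calC$ form the internal nodes of a tree-like structure whose leaves lie in $\partial \La_n \cap \calC$: each trifurcation $x \in \La_n \cap \calC$ induces a non-trivial three-way partition of $\partial \La_n \cap \calC$ according to which of the three branches at $x$ each boundary vertex is reached from, and these partitions are coherent with a tree on the trifurcations and the boundary points, with all internal degrees at least $3$. An elementary induction on such a tree (number of leaves exceeds the number of internal nodes of degree $\geq 3$ by at least $2$) yields
\begin{align*}
	\#\{\text{trifurcations in } \La_n\, \text{within } \calC\} \leq |\partial \La_n \cap \calC| - 2,
\end{align*}
and summing over clusters gives $\#\{\text{trifurcations in } \La_n\} \leq |\partial \La_n| = O(n^{d-1})$. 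Taking expectations and comparing with the density bound $\alpha\, n^d$ forces a contradiction for large $n$. Hence $N=\infty$ is impossible, and combined with the previous step we conclude $N=1$ a.s.
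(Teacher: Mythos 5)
Your proposal is correct and follows essentially the same route as the paper: ergodicity to make the number of infinite clusters deterministic, a finite-energy merging argument to exclude $2 \leq N < \infty$, and the trifurcation density-versus-boundary contradiction to exclude $N = \infty$. The only cosmetic differences are that you merge just two clusters rather than all of them (both suffice), and you phrase the combinatorial bound via coherent partitions/trees where the paper prunes $\omega \cap \La_k$ to a forest whose leaves lie on $\partial \La_k$; these are the same counting argument.
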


\begin{proof}
Write~$N = N(\omega)$ for the number of infinite clusters in~$\omega$. 
Notice that~$N$ is invariant under any shift of~$\omega$, and the ergodicity of the measures~$\bbP_p$ under translations implies that~$N$ is~$
\bbP_p$-a.s.\ constant. 
Our goal is to prove that this constant is either~$0$ or~$1$. 
We will do so by first excluding the possibility that~$1 < N <\infty$, then showing that~$N \neq \infty$. 
\medskip

The former is easy. Indeed, assuming that~$p$ is such that~$\bbP_p[N = n] = 1$ for some~$2 \leq n <\infty$, we may find~$k \geq 1$ such that 
\begin{align*}
\bbP_p[\text{all infinite clusters intersect~$\La_k$}  ] \geq 1/2.
\end{align*}
Note that the event above is independent of the configuration in~$\La_k$. It follows that
\begin{align*}
\bbP_p[\text{all infinite clusters intersect~$\La_k$ and all edges of~$\La_k$ are open}  ] \geq \tfrac12 p^{|\La_k|} > 0.
\end{align*}
Finally, under the event above,~$N =1$, which contradicts the assumption that~$N = n \geq 2$ a.s.
\medskip 

Let us now exclude the possibility of having infinitely many infinite clusters. This requires a much more subtle argument --- one which may fail in certain cases (see Exercise~\ref{exo:Galton-Watson}). Assume now that~$p$ is such that~$\bbP_p[N =  \infty] = 1$. 

Call a point~$u \in \bbZ^d$ a {\em trifurcation} if it is connected to~$\infty$ by three distinct connected components of~$\omega \setminus \{u\}$. 
A local surgery argument similar to the one that allowed us to fuse all the clusters into a single one in the previous part shows that, under our assumption, 
\begin{align*}
	\bbP_p[\text{$u$ is a trifurcation}] = \bbP_p[\text{$0$ is a trifurcation}] > 0,
\end{align*}
for all~$u \in \bbZ^d$, where the equality is due to the invariance of~$\bbP_p$ under translations. 

As a consequence, if we write~$T_k$ for the number of trifurcations in~$\La_k$, we conclude that 
\begin{align*}
	\bbE_p [T_k] = |\La_k| \cdot \bbP_p[\text{$0$ is a trifurcation}],
\end{align*}
which in turn implies the existence of a constant~$c  > 0$ such that 
\begin{align}\label{eq:volume_trif}
	\bbP_p \big[T_k \geq c |\La_k|\big] \geq c,
\end{align}
for all~$k \geq 1$. 

This will come into contradiction with the following lemma, whose proof will be discussed at the end of the section. 

\begin{lemma}\label{lem:trifurcations}
	For any configuration~$\omega$ and any~$k \geq 1$, there exists at least~$T_k(\omega)$ points on~$\partial \La_k$. 
\end{lemma}

Lemma~\ref{lem:trifurcations} indeed contradicts~\eqref{eq:volume_trif}, since it states that a.s.,	~$T_k \leq |\partial \La_k| <c |\La_k|$ for~$k$ large enough. We use here the fact that~$|\partial \La_k| / |\La_k| \to 0$, which is the hallmark of an {\em amenable} graph. 
\end{proof}

\begin{proof}[Proof of Lemma~\ref{lem:trifurcations}]
	The idea of this lemma is to consider a graph obtained from~$\omega \cap \La_k$ by repeatedly removing edges. 
	First, remove edges contained in cycles of~$\omega \cap \La_k$ in arbitrary order until no cycles remain. Then repeatedly remove all edges of~$\omega \cap \La_k$ that have an endpoint of degree~$1$ strictly inside~$\La_k$.

	These procedures transform~$\omega \cap\La_k$ into a forest~$F(\omega)$ whose leaves are all contained in~$\partial \La_k$. 
	Moreover, all trifurcations of~$\omega$ contained in~$\La_k$ have degree at least~$3$ in~$F(\omega)$; indeed, the property of being a trifurcation is not affected by the passage from~$\omega$ to~$F(\omega)$. 
	
	Finally, it remains to prove by induction that the number of leaves of a forest is larger than the number of vertices of degree at least~$3$.
\end{proof}

\section{Critical planar percolation and RSW theory}\label{sec:RSW_perco}

We now focus on Bernoulli percolation on $\bbZ^2$, where the duality of the lattice allows us to identify the critical point as~$p_c =1/2$. 

Indeed, recall from Section~\ref{sec:duality} that on~$\bbZ^2$, if~$\omega \sim \bbP_{1/2}$, then~$\omega^*$ is also a percolation with parameter~$1/2$, but on the dual graph, which in the case of $\bbZ^2$ is identical to the primal graph. This is a strong indication that~$p_c=1/2$. In this section, we will confirm this prediction and study the critical phase of two-dimensional percolation. 

\begin{theorem}[Critical percolation on~$\bbZ^2$]\label{thm:poly_pc}
	The critical point of Bernoulli percolation on~$\bbZ^2$ is~$p_c = 1/2$. 
	
	Moreover, there exists~$\alpha >0$ such that 
	\begin{align}\label{eq:poly_pc}
		\tfrac1{2} n^{-1}\leq \bbP_{1/2}[0\lra \partial \La_n] \leq n^{-\alpha} \qquad \forall n\geq 1.
	\end{align}
	In particular,~$\theta(p_c) = 0$. 
\end{theorem}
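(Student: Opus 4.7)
The plan is to combine self-duality, a Zhang-type planarity argument, and Russo--Seymour--Welsh (RSW) theory. First I would use planar duality and Fact~\ref{fact:duality_perco} to show that, at $p=1/2$, the probability of a horizontal primal crossing of a near-square rectangle such as $R_n = \llbracket 0,n\rrbracket \times \llbracket 0,n-1\rrbracket$ is $\geq 1/2$: in such a rectangle exactly one of ``primal horizontal crossing'' and ``dual vertical crossing'' of the associated dual rectangle occurs, and at $p=1/2$ the $\pi/2$-rotation of $\bbZ^2$ together with Fact~\ref{fact:duality_perco} maps these two events into a pair with the same law, so each has probability $\geq 1/2$. A union bound over the $n$ possible starting vertices on the left side of $R_n$, together with translation invariance and the fact that the right side sits at $L^\infty$-distance $n$ from each of them, then yields
\begin{align*}
\bbP_{1/2}\big[0\lra \partial\La_n\big] \geq \tfrac{1}{2n}.
\end{align*}
In turn this forces $p_c \leq 1/2$: if $p_c > 1/2$, Theorem~\ref{thm:sharpness_perco} applied at $p=1/2 < p_c$ would give exponential decay of the same quantity, contradicting the above.

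Next I would prove $\theta(1/2) = 0$ via Zhang's argument; combined with $p_c\leq 1/2$ this yields $p_c = 1/2$ and $\theta(p_c) = 0$. Suppose for contradiction $\theta(1/2) > 0$. Theorem~\ref{thm:Burton-Keane} produces $\bbP_{1/2}$-a.s.\ a unique primal infinite cluster, and by Fact~\ref{fact:duality_perco} the same holds for the dual. Let $E^L_n, E^R_n, E^T_n, E^B_n$ be the increasing events that the corresponding side of $\La_n$ is joined to infinity by a primal open path using only edges outside $\La_n$, and similarly define the dual events $E^{L*}_n,\dots,E^{B*}_n$. All eight events have the same probability by lattice symmetries; their union has probability tending to $1$ as $n\to\infty$ since an infinite primal cluster a.s.\ produces an unbounded primal path outside $\La_n$. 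The FKG (Harris) inequality applied to the decreasing complements gives
\begin{align*}
\bbP_{1/2}\big[(E^L_n)^c\big]^4 \leq \bbP_{1/2}\big[(E^L_n)^c \cap (E^R_n)^c \cap (E^T_n)^c \cap (E^B_n)^c\big] \xrightarrow[n\to\infty]{} 0,
\end{align*}
so each of the eight events has probability tending to $1$. For $n$ large, all eight hold simultaneously, and uniqueness forces a primal open path in $\bbZ^2\setminus\La_n$ joining the left and right sides of $\La_n$, together with a dual open path in the exterior of the dual of $\La_n$ joining the top and bottom sides. By planarity the two arcs must cross, which is impossible since a primal edge and its dual cannot both be open.

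For the polynomial upper bound in \eqref{eq:poly_pc} I would invoke RSW theory. The uniform lower bound on near-square crossings at $p=1/2$ from the first step is the input: RSW upgrades it to a uniform lower bound on horizontal crossings of rectangles of any fixed aspect ratio, e.g.\ $2n\times n$. Gluing four such long crossings with the FKG inequality produces a dual open circuit in the annulus $\La_{2n}\setminus\La_n$ with probability $\geq c > 0$ uniformly in $n$, and such a circuit disconnects $0$ from $\partial\La_{2n}$. Exploring from outside and conditioning on the outermost dual circuit in the annulus, then using the spatial independence of Bernoulli percolation on the edges strictly inside, one gets
\begin{align*}
\bbP_{1/2}\big[0\lra\partial\La_{2n}\big] \leq (1-c)\,\bbP_{1/2}\big[0\lra\partial\La_n\big],
\end{align*}
and iterating on dyadic scales yields $\bbP_{1/2}[0\lra\partial\La_{2^k}] \leq (1-c)^k$, i.e.\ the desired bound $n^{-\alpha}$ with $\alpha = -\log_2(1-c) > 0$.

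The main obstacle is RSW itself: deducing uniform crossings of long rectangles from those of squares is the technical heart of the argument and, unlike the other pieces, does not follow from self-duality plus a one-line FKG manipulation. I would therefore either import RSW as a black box or sketch the classical gluing argument based on conditioning on the lowest horizontal crossing of a sub-rectangle, leaving the detailed geometry to a reference.
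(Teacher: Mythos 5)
Your proposal follows essentially the same route as the paper: the lower bound via self-duality of near-square crossings plus a union bound, the upper bound via RSW (Theorem~\ref{thm:RSW_perco}) and FKG to build dual circuits in dyadic annuli combined with independence across annuli, and $p_c\le 1/2$ from Theorem~\ref{thm:sharpness_perco}; the paper then reads $\theta(1/2)=0$, hence $p_c\ge 1/2$, directly off the polynomial upper bound. Your additional Zhang/Burton--Keane step is therefore redundant, and as written it has a small gap: uniqueness of the infinite cluster only provides a primal connection between the two lateral arms somewhere in $\bbZ^2$, not one avoiding $\La_n$, so the ``two arcs must cross'' conclusion needs the usual extra care (e.g.\ forcing the edges inside $\La_n$ to be closed by finite energy, in the spirit of Exercise~\ref{exo:Zhangs}); since your RSW upper bound already yields $\theta(1/2)=0$, this imprecision does not affect the validity of the overall proof.
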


\begin{remark}\label{rem:p_c=1/2_quick_proof}
	From Theorem~\ref{thm:sharpness_perco} we deduce that~$\bbP_{p}[0\lra \partial \La_n]$ decays exponentially fast for~$p< 1/2$. 
	Duality allows to also deduce the super-critical sharpness~\eqref{eq:sharp_super_crit}. 
\end{remark}

The core of Theorem~\ref{thm:poly_pc} is~\eqref{eq:poly_pc}; 
the other statements follow readily using the sharpness of the phase transition \eqref{eq:sharp_sub_crit}.
The lower bound is a relatively straightforward consequence of self-duality. 
More interesting is the upper bound, which makes use of the so-called Russo--Seymour--Welsh theory (RSW), the cornerstone of the study of the critical phase of 2D percolation. 

\begin{theorem}[RSW for Bernoulli percolation]\label{thm:RSW_perco}
	There exists a constant~$c > 0$ such that, for all~$n\geq 1$, 
	\begin{align}\label{eq:RSW_perco}
		\bbP_{1/2}\big[[-n,n] \times [0,n] \text{ contains open crossing between its left and right sides}\big] \geq c.
	\end{align}
\end{theorem}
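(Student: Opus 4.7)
The plan is to proceed in two stages. First, I would establish that the probability of a horizontal open crossing of an $n \times n$ square is bounded below by a universal constant $c_0 > 0$, using self-duality at $p = 1/2$. Second, I would bootstrap this square bound to a crossing of the $2n \times n$ rectangle $[-n, n] \times [0, n]$ via an RSW-type gluing argument based on the FKG inequality and planar topology.

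For the first stage, Fact~\ref{fact:duality_perco} gives that at $p = 1/2$ the dual configuration $\omega^*$ has the same law as $\omega$. For a suitable near-square rectangle $R$, planar duality implies that exactly one of the following two events occurs: an open primal horizontal crossing of $R$, or a dual-open vertical crossing of the corresponding dual region $R^*$. The $\pi/2$-rotation symmetry of $\bbZ^2$ makes $R^*$ (after rotation) isomorphic to $R$, and together with the equality in law of $\omega$ and $\omega^*$, the two events are equiprobable, each of probability exactly $1/2$. Up to the half-unit shift between primal and dual lattices, which forces $R$ to be slightly asymmetric rather than a genuine square, this yields a constant $c_0 > 0$ such that any $n \times n$ square has horizontal open-crossing probability at least $c_0$. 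By horizontal/vertical symmetry of $\bbZ^2$, the same bound holds for vertical crossings.

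For the second stage, I would fix $n \geq 1$ and consider two overlapping squares $A = [0, n] \times [0, n]$ and $B = [n/2, 3n/2] \times [0, n]$ with overlap strip $T = [n/2, n] \times [0, n]$. Write $H_A, H_B$ for the events of horizontal open crossings of $A$ and $B$, and $V_T$ for the event of a vertical open crossing of $T$. All three events are increasing, and Stage~1 (together with translation invariance) gives $\bbP_{1/2}[H_A], \bbP_{1/2}[H_B], \bbP_{1/2}[V_T] \geq c_0$. Applying the FKG inequality for Bernoulli percolation,
\begin{align*}
\bbP_{1/2}[H_A \cap H_B \cap V_T] \geq c_0^{\,3}.
\end{align*}
On this event, any horizontal open crossing of $A$ restricted to $T$ is a path connecting the left and right sides of $T$, and must therefore share an open vertex with any vertical open crossing of $T$; the same applies to a crossing of $B$. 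Concatenating the three paths produces an open horizontal crossing of $[0, 3n/2] \times [0, n]$, an aspect-ratio $3{:}2$ rectangle. One further iteration (using overlapping copies of this new rectangle, or equivalently three overlapping squares joined by two vertical overlap crossings) extends the aspect ratio past $2$, yielding the desired lower bound for $[-n, n] \times [0, n]$.

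The substance of the proof lies in the topological gluing step of Stage~2: one must verify rigorously on the discrete lattice that a horizontal crossing of $A$ restricted to $T$ must meet any vertical crossing of $T$ in a common open vertex. This is a Jordan-curve-type argument applied to the two paths together with the boundary arcs of $T$, and is the genuinely interesting ingredient of RSW; everything else reduces to self-duality, FKG, and lattice symmetries. A secondary nuisance is the half-unit shift in Stage~1, which prevents exact self-duality for a true square and forces one to pass between $n \times n$ and $n \times (n-1)$ rectangles to extract~$c_0$. Neither issue is deep, but both require careful bookkeeping.
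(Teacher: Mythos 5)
Your Stage~2 contains a circular step that undoes the proof. You take $A=[0,n]\times[0,n]$, $B=[n/2,3n/2]\times[0,n]$ and the overlap $T=[n/2,n]\times[0,n]$, and you assert that Stage~1 plus translation invariance gives $\bbP_{1/2}[V_T]\geq c_0$, where $V_T$ is a top-to-bottom crossing of $T$. But $T$ is an $\tfrac n2\times n$ rectangle and $V_T$ crosses it in its \emph{long} direction: this is a hard-direction crossing of a rectangle of aspect ratio $2$, which, after rescaling, is exactly the statement you are trying to prove. The self-duality estimate of Stage~1 only controls crossings of (near-)squares, hence easy-direction or square crossings; it says nothing about a vertical crossing \emph{confined to the thin strip}~$T$. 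If instead you let the vertical crossing live in a full $n\times n$ square (where Stage~1 does apply), it may exit $T$, and then the Jordan-curve intersection argument with the portion of the horizontal crossing of $A$ inside $T$ breaks down. This is precisely the obstruction that makes RSW a theorem rather than an exercise: FKG gluing lets you pass from aspect ratio $\rho>1$ to arbitrary aspect ratio, but not from squares to any $\rho>1$, and your proposal supplies no new idea for that first step.

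The paper's proof is built to get around exactly this point. From the self-duality estimate~\eqref{eq:cross_square} and the reflection symmetry of the square it first obtains, with uniformly positive probability, a ``diagonal'' connection from~${\sf BottomLeft}$ to~${\sf TopRight}$ in each of the two adjacent squares $[-n,0]\times[0,n]$ and $[0,n]\times[0,n]$ (the square-root-type bound $1/4$ plus FKG). It then conditions on the topmost such path $\Gamma_L$ in the left square and the bottommost $\Gamma_R$ in the right square, and constructs a quad $\mathcal Q$ bounded by these paths and their images under the reflection $\sigma$ exchanging primal and dual lattices; since $\mathcal Q$ is $\sigma$-symmetric and the conditioning reveals no edges inside it, self-duality gives probability at least $1/2$ of connecting $\Gamma_L$ to $\Gamma_R$ inside $\mathcal Q$, which produces the crossing of $[-n,n]\times[0,n]$. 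That symmetric-domain step is the missing ingredient in your argument: to salvage your two-stage plan you need a device of this kind (or one of the original Russo--Seymour--Welsh lemmas of~\cite{Rus78,SeyWel78}) to first reach some aspect ratio strictly greater than $1$; only after that is your FKG iteration legitimate.
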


Before proving the above, we mention a feature of Bernoulli percolation called {\em positive association} (also sometimes called FKG inequality) 
which will be discussed in more detail and proved in Section~\ref{sec:monotonicity}. 
For now, we simply state that, for any~$p \in (0,1)$, if~$A$ and~$B$ are both increasing events (or both decreasing events), 
then 
\begin{align}\label{eq:FKG_perco}
	\bbP_{p}[A\cap B] \geq\bbP_{p}[A] \bbP_{p}[B]. 
\end{align}
This property is curial to the proof of Theorem~\ref{thm:RSW_perco}, as well as for its applications. 

The proof below is not the shortest, nor the most general proof of \eqref{eq:RSW_perco}, 
but it is similar to that used for FK-percolation in Chapter~\ref{ch:3dichotomy}. 
The first proof of this type of inequality appeared in~\cite{Rus78,SeyWel78}; 
a number of variants of this type of inequality with different assumptions of independence, 
positive association and symmetries appeared in \cite{BolRio10,GriMan13,DumTas19,KSTas23} to quote only a few.

\begin{proof}[Proof of Theorem~\ref{thm:RSW_perco}]
	Fix $n\geq 1$. 
	Due to the self-duality of the model and the observation that any rectangle either contains a left-right crossing in~$\omega$, 
	or a top-bottom crossing in~$\omega^*$, we find that,
	\begin{align}\label{eq:cross_square}
		\bbP_{1/2}\big[\{0\}\times [0,n] \text{ connected to }\{n+1\}\times [0,n] \text{ inside } [0,n+1] \times [0,n]\big]
		= \tfrac12.
	\end{align}
	See also Figure~\ref{fig:RSW_perco} for an explanation.
 	One should see this as an estimate of the probability of crossing a {\em square}. The whole point of the theorem is to lengthen these crossings so as to cross rectangles of some aspect ratio strictly greater than~$1$. Indeed, once this is done, the aspect ratio may be increased by repeated applications of the positive association inequality~\eqref{eq:FKG_perco}. 
	We will use here the expression ``uniformly positive'' to mean bounded below by a positive constant independent of~$n$. 
	
	\begin{figure}
	\begin{center}
		\includegraphics[width = 0.3\textwidth]{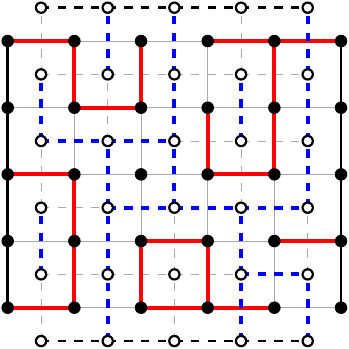} \hspace{.1\textwidth}
		\includegraphics[width = 0.3\textwidth]{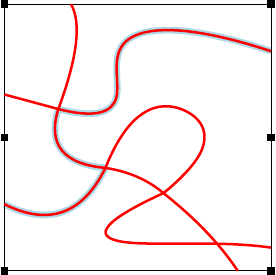}
		\caption{{\em Left: } an~$(n + 1) \times n$ rectangle is either crossed horizontally in the primal model or vertically in the dual. 
		At the self-dual point, these two events have equal probability~$1/2$.\newline
		{\em Right:} By symmetry, the probability to connect the lower half of the left side 
		${\sf BottomLeft}$  to the right side of the square is at least~$1/4$. 
		The same holds for connections between the~${\sf TopRight}$ and the left side.
		Combining these two crossings with a vertical one ensures the existence of a connection between 
		${\sf BottomLeft}$ and~${\sf TopRight}$.}
	\label{fig:RSW_perco}
	\end{center}
	\end{figure}
	
	Write~${\sf BottomLeft} = \{0\} \times [0,n/2]$ and~${\sf TopLeft} = \{0\} \times [n/2,n]$
	and~${\sf BottomRight}$ and~${\sf TopRight}$ for their translates by~$(n,0)$; see Figure~\ref{fig:RSW_perco} for an illustration.	
	We then claim that 
	\begin{align}\label{eq:BLTR}
		\bbP_{1/2}\big[{\sf BottomLeft} \xlra{[0,n]^2}{\sf TopRight}\big] \text{ is uniformly positive}.
	\end{align}
	Indeed, by symmetry with respect to vertical reflections and \eqref{eq:cross_square},
	\begin{align*}
		\bbP_{1/2}\big[{\sf BottomLeft} \xlra{[0,n]^2} \{n\} \times [0,n]\big] \ge \tfrac14\quad \text{ and } \quad
		\bbP_{1/2}\big[\{0\} \times [0,n] \xlra{[0,n]^2} {\sf TopRight}\big] \ge \tfrac14.
	\end{align*}
	When the two events above occur and~$[0,n]^2$ is crossed from top to bottom, then
	${\sf BottomLeft}$ is connected to~${\sf TopRight}$. Applying~\eqref{eq:FKG_perco},~\eqref{eq:cross_square} and the bounds above, 
	we conclude that the probability in~\eqref{eq:BLTR} is bounded below by~$\tfrac12\cdot\tfrac14\cdot\tfrac14$, and therefore is uniformly positive. 
	
	Consider now the two squares~$[-n,0]\times [0,n]$ and~$[0,n]\times [0,n]$. 
	Assume that in each of them~${\sf BottomLeft}$ is connected to~${\sf TopRight}$.
	These connections occur with uniformly positive probability. 
	When they do, write~$\Gamma_L$ for the topmost path realising the first connection 
	and~$\Gamma_R$ for the lowest path realising the second; see Figure~\ref{fig:RSW_perco2}. 
	
	Note that these ``highest'' and ``lowest'' paths are indeed well-defined, 
	and that the event~$\Gamma_L = \gamma_L$ for some potential realisation~$\gamma_L$ 
	depends on the states of the edges on and above~$\gamma_L$, but not on those below. 
	
	We now claim that, for any potential realisations~$\gamma_L$ and~$\gamma_R$ or $\Gamma_L$ and $\Gamma_R$, we have 
	\begin{align}\label{eq:GG}
		\bbP_{1/2}\big[\Gamma_L \xlra{[-n,n]\times [0,n]} \Gamma_R\,\big|\, \Gamma_L = \gamma_L \text{ and }\Gamma_R = \gamma_R\big] \geq 1/2.
	\end{align}
	Indeed, let~$\sigma$ be the horizontal reflection with respect to the axis~$\{0\}\times\bbR$ composed with the shift by~$(\tfrac12,\tfrac12)$. 
	Consider the topological rectangle~$\mathcal Q$ bounded by~$\gamma_R$,~$\sigma(\gamma_R)$,~$\gamma_L$ and~$\sigma(\gamma_L)$ (see Figure~\ref{fig:RSW_perco2} for an illustration and additional details). 
	Notice that all edges inside~$\mathcal Q$ are unaffected by the conditioning~$\Gamma_L = \gamma_L \text{ and }\Gamma_R = \gamma_R$.
	Moreover,~$\mathcal Q$ is symmetric with respect to~$\sigma$ and that it either contains a primal crossing between~$\gamma_L$ and~$\gamma_R$, or a dual crossing between~$\sigma(\gamma_R)$ and~$\sigma(\gamma_L)$. 
	Since~$\sigma$ maps one such crossing on the opposite one, we conclude that 
	\begin{align}
		\bbP_{1/2}\big[\gamma_L \xlra{\mathcal Q} \gamma_R\big] \geq 1/2.
	\end{align}
	Inserting the above into~\eqref{eq:GG}, then averaging over~$\gamma_L$ and~$\gamma_R$ that realise the connections in 
	$[-n,0]\times [0,n]$ and~$[0,n]\times [0,n]$, we obtain
	\begin{align*}
		&\bbP_{1/2}\big[[-n,n] \times [0,n] \text{ contains an open crossing between its left and right sides}\big]  \\
		&\qquad\geq \tfrac12
		\bbP_{1/2}\big[{\sf BottomLeft} \xlra{[0,n]^2} {\sf TopRight} \big]^2> 0
	\end{align*}
	uniformly in~$n$. 
\end{proof}

\begin{figure}
\begin{center}
	\includegraphics[width = 0.65\textwidth]{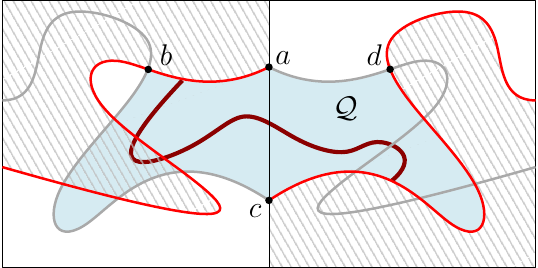}
	\caption{In each square~${\sf BottomLeft}$ is connected to~${\sf TopRight}$. 
	The left and right red paths are the topmost and bottommost, respectively, paths realising these connections in a configuration~$\omega$.
	The grey paths are their  reflections~$\sigma$ with respect to the medial line; by construction, the red and grey paths intersect. 
	When orienting the left red path from left to right, call~$b$ its last intersection point with the left grey path, and~$a$ its endpoint on the medial line. 
	Let~$d = \sigma(b)$ and~$c$ be the endpoint of the lower red path on the medial line. 
	\newline
	The blue~$\sigma$-symmetric domain~$\mathcal Q$ is bounded by the arcs~$(ab)$ and~$(cd)$ of the red paths, and their grey images through~$\sigma$.
	When sampling an independent configuration~$\tilde \omega$ in~$\mathcal Q$, the probability that~$(ab)$ and~$(cd)$ are connected is~$1/2$. 
	The explored regions (hashed) may intersect~$\mathcal Q$, however the boundary of any such intersection is an open path. 
	If one completes the explored configuration~$\omega$ by pasting~$\tilde\omega$ in the unexplored parts of~$\mathcal Q$, 
	then whenever~$\mathcal Q$ is crossed in~$\tilde \omega$, it is also crossed in~$\omega$, 
	leading ultimately to a left-right crossing of the~$2n\times n$ rectangle. }
	\label{fig:RSW_perco2}
\end{center}
\end{figure}

\begin{proof}[Proof of Theorem~\ref{thm:poly_pc}]
	We start by proving~\eqref{eq:poly_pc}. 
	The lower bound is a simple consequence of~\eqref{eq:cross_square}. Indeed, by the union bound we have
	\begin{align*}
	\tfrac12\leq\!\!\! \sum_{x \in \{0\} \times [0,n]}\!\!\!  \bbP_{1/2}\big[x \text{ connected to }\{n+1\}\times [0,n]\big]\leq (n+1) \bbP_{1/2}\big[0\lra \partial \La_{n+1}\big]. 
	\end{align*}
	Dividing by~$n+1$ provides the desired bound. 
	
	We turn to the upper bound. 
	By combining several crossings of translates and rotations by~$\pi/2$ of rectangles of the form~$[-n,n] \times [0,n]$ and using \eqref{eq:FKG_perco}, we find that 
	\begin{align*}
	\bbP_{1/2}\big[ \La_n \text{ surrounded by an open circuit contained in } \La_{2n}\big] \geq c
	\end{align*}
	for all~$n\geq 1$ and some constant~$c > 0$ independent of~$n$. 
	Moreover, the same holds for the dual model. Observe that for~$k\geq1$, if~$0 \lra \partial \La_{2^k}$, then none of the annuli 
	$\La_{2^{j+1}} \setminus \La_{2^j}$ for~$0\leq j <k$ may contain a circuit in~$\omega^*$. Finally, the configurations in these annuli are independent, and we conclude that
	\begin{align*}
		\bbP_{1/2}\big[ 0 \lra \La_{2^k} \big] 
		\leq \prod_{0\leq j<k}
		\big(1 - \bbP_{1/2}[ \La_{2^j} \text{ surrounded by a circuit in } \omega^* \cap \La_{2^{j+1}}]\big)
		\leq (1-c)^k.
	\end{align*}
	This provides the upper bound of~\eqref{eq:poly_pc} for~$n = 2^k$. The general bound may be obtained by inclusion of events and by adapting the exponent~$\alpha$. 
	
	We turn to the other statements in the theorem. 
	As a consequence of the upper bound in~\eqref{eq:poly_pc},~$\theta(1/2)= 0$, and therefore~$p_c \geq 1/2$.
	Conversely, Theorem~\ref{thm:sharpness_perco} and the lower bound in~\eqref{eq:poly_pc} imply that~$p_c \leq 1/2$. 
	Thus~$p_c = 1/2$.
\end{proof}

Let us close this section by mentioning that the polynomial bounds~\eqref{eq:poly_pc} on the connection probability form~$0$ to~$\partial \La_n$ are just one of the consequences of the RSW theory. 
Indeed, the RSW theory proved to be instrumental in the understanding of the critical phase of two-dimensional percolation models. 

It is expected that the contours of clusters of critical percolation on a rescaled lattice~$\delta \bbZ^2$ converge, as~$\delta \to 0$, 
to a certain random family of curves on~$\bbR^2$ known as CLE$_6$. This is one of the central conjectures in percolation theory; it was only proved for a variant of the model called {\em site} percolation on the triangular lattice in~\cite{Smi01,CamNew06}.
The RSW theory essentially states that the family of cluster contours of critical percolation behaves qualitatively like CLE$_6$, but is not sufficiently precise to actually identify the scaling limit, or indeed prove that such a limit exists. It does however show that sub-sequential scaling limits exist and that they are locally finite.

\section{References and further results}

\paragraph{First proof of~$p_c(\bbZ^2) = 1/2$.}
The first derivation of the critical point of Bernoulli percolation on~$\bbZ^2$ is due to Kesten~\cite{Kes80}. 
At the time, the sharpness of the phase transition was not known, so~\eqref{eq:poly_pc} only implied~$p_c \geq 1/2$. 
To prove both the sharpness of the phase transition and the fact that~$p_c \leq 1/2$, one may show ``by hand'' a sharp-threshold result. 
See Exercise~\ref{exo:p_c=1/2K} for such a proof. 

\paragraph{Sub-critical sharpness.}
The original proof of the sharpness of the phase transition for Bernoulli percolation in general dimension was obtained in~\cite{Men86,AizBar87}. 
We presented here a simpler proof due to Duminil-Copin and Tassion~\cite{DumTas16a}. 
Finally, a beautiful new proof was obtained by Vanneuville~\cite{Van22}; in addition to its elegance, this proof is remarkable as it directly implies the exponential decay of the volume of the cluster of~$0$, rather than just its radius. 
The exponential decay of the volume may also be deduced from that of the radius through renormalisation arguments. 

\paragraph{Super-critical sharpness.}
Up to now, we only discussed the sub-critical sharpness, that is, the exponential decay of connection probabilities in the sub-critical regime. 
Recall that in the super-critical regime, we also expect trivial large-scale behaviour, in that connection probabilities converge exponentially to their limits (see~\eqref{eq:sharp_super_crit} and Exercise~\ref{exo:supercrit_sharpness}).
This indeed the case for  Bernoulli percolation in any dimension. 

\begin{theorem}\label{thm:supercrit_sharpness}
	Fix~$d\geq 2$ and consider Bernoulli percolation on~$\bbZ^d$. Then, for all~$p > p_c$, there exists~$c(p) > 0$ such that 
	\begin{align*}
		\bbP_p[0\lra \partial \La_n \text{ but } 0\nxlra{} \infty] \leq e^{-c(p) n} \qquad \text{ for all~$n\geq 1$}.
	\end{align*}
\end{theorem}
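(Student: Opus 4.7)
The plan is to split according to dimension. In $d=2$ the duality of Section~\ref{sec:duality} reduces the problem to sub-critical sharpness, while for $d\geq 3$ a block renormalisation argument is needed.

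For $d=2$ and $p>p_c=1/2$, the dual parameter $1-p$ is strictly sub-critical, so by Fact~\ref{fact:duality_perco} and Theorem~\ref{thm:sharpness_perco} there is a constant $c=c(p)>0$ with $\bbP_{1-p}[0\lra\partial\La_k]\leq e^{-ck}$ for all $k\geq 1$. On the event $\{0\lra\partial\La_n,\,0\nxlra{}\infty\}$ the finite cluster of $0$ is surrounded by a dual circuit in $\omega^*$, and since this cluster reaches $\partial\La_n$, the circuit has $L^\infty$-radius at least $n$ from the origin. Mirroring the union bound at the end of the proof of Proposition~\ref{prop:peierls2}, I would sum over the dual vertex $(k+\tfrac12,\tfrac12)$ at which the circuit crosses $\bbN\times\{0\}$: starting from this vertex the dual path must reach $L^\infty$-distance at least $\max(k,n-k)\geq n/2$, and so
\begin{align*}
	\bbP_p\bigl[0\lra\partial\La_n,\,0\nxlra{}\infty\bigr]\leq \sum_{k\geq 0}\bbP_{1-p}\bigl[0\lra\partial\La_{\max(k,n-k)}\bigr]\leq C e^{-cn/2}.
\end{align*}

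For $d\geq 3$ duality is unavailable and the natural strategy is a coarse-graining. Fix $p>p_c$, a large scale $L\geq 1$, and partition $\bbZ^d$ into cubes $B_x=x+[0,L)^d$ for $x\in L\bbZ^d$. Call $B_x$ \emph{good} if the fattened cube $3B_x$ contains a unique cluster of diameter $\geq L$ touching each of its $2d$ faces. Two ingredients together show that, for $L$ large, the good-cube process stochastically dominates a very supercritical Bernoulli site percolation on $L\bbZ^d$: (a) the Grimmett--Marstrand slab theorem, i.e.\ $p>p_c(\bbZ^d)$ implies $p>p_c(\bbZ^2\times\{0,\dots,k\}^{d-2})$ for some finite $k$, which combined with two-dimensional RSW in slabs produces the required crossings with probability tending to $1$ as $L\to\infty$; and (b) the uniqueness of the infinite cluster (Theorem~\ref{thm:Burton-Keane}), which lets one glue these crossings into a single cluster inside each $3B_x$. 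A Liggett--Schonmann--Stacey-type lemma then upgrades the finite-range-dependent good-cube process to a stochastic lower bound by independent Bernoulli site percolation with parameter $\tilde p(L)\to 1$.

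On the event $\{0\lra\partial\La_n,\,0\nxlra{}\infty\}$ the good-cube cluster of the origin is finite of diameter $\gtrsim n/L$, hence separated from the infinite good-cube cluster by a $(d-1)$-dimensional surface of bad cubes enclosing $0$ at that scale. A Peierls count on such surfaces, combined with the arbitrarily small density of bad cubes, yields the required bound $\bbP_p[\,\cdot\,]\leq e^{-c(p)n}$. The main obstacle is ingredient~(a) above, the Grimmett--Marstrand slab reduction: this is a non-trivial renormalisation argument of its own, using sprinkling, positive association and the uniqueness of the infinite cluster, and is where the qualitative supercriticality $p>p_c$ is first converted into quantitative crossing estimates. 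Once it is in hand, the block construction and the Peierls count are fairly mechanical.
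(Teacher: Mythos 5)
Your two-dimensional argument is exactly the route the paper has in mind: Theorem~\ref{thm:supercrit_sharpness} is stated there without proof, with the $d=2$ case explicitly attributed to duality plus sub-critical sharpness (and left as an exercise), and your execution of it is correct --- for $p>1/2$ the dual parameter is strictly sub-critical, the dual circuit surrounding the finite cluster of $0$ must cross $\bbN\times\{0\}$ at some $(k+\tfrac12,\tfrac12)$ with $k\ge 0$ and, since it surrounds both $0$ and a point of $\partial\La_n$, must have $L^\infty$-radius at least roughly $\max(k,n-k)\ge n/2$ about that point, so summing the bound of Theorem~\ref{thm:sharpness_perco} over $k$ gives the exponential decay.

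For $d\ge 3$ the paper likewise gives no proof and only records that the key ingredient is the Grimmett--Marstrand theorem; your renormalisation sketch is the standard implementation of that pointer, so at the level of strategy you and the paper agree. One step of your sketch is, however, not correct as stated: from $\{0\lra\partial\La_n,\ 0\nxlra{}\infty\}$ you cannot conclude that ``the good-cube cluster of the origin is finite of diameter $\gtrsim n/L$'' and hence enclosed by a surface of bad cubes --- the cluster of $0$ may perfectly well traverse bad cubes, so the origin's good-block cluster can be tiny (or its block bad) even though the cluster has radius $n$. What the event does force, with a suitably strong notion of good block (a unique crossing cluster of the enlarged box $3B_x$ to which every cluster of diameter at least $L$ in $3B_x$ is attached, with crossing clusters of adjacent good blocks glued), is that each of the $\gtrsim n/L$ blocks carrying a large piece of the cluster of $0$ is either bad or a good block lying outside the infinite good-block cluster. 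The exponential bound then comes from estimating, at renormalised density close to $1$, the probability that a $*$-connected family of $\gtrsim n/L$ blocks around the origin avoids the infinite good-block cluster; this requires combining a density bound on the bad blocks in the family with a Peierls count over the bad $*$-surfaces surrounding the finite good-block clusters met along the way (or arguing directly in slabs with sprinkling, as in the classical treatment). This is repairable and standard, but it is precisely where the ``fairly mechanical'' part of your sketch still needs an argument; a smaller quibble is that the crossings inside slabs come from Grimmett--Marstrand-type constructions with sprinkling rather than from an RSW theory, which is not available in slabs.
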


In two dimensions, this theorem is easily deduced from the sub-critical sharpness of the dual model. 
For dimensions~$d \geq 3$, the key ingredient in this proof is the celebrated Grimmett--Marstrand theorem~\cite{GriMar90},
which states that the critical point of Bernoulli percolation on a slab~$\bbS_k = \bbZ^2 \times \{0,\dots, k\}^{d-2}$
tends to that of~$\bbZ^d$ when~$k \to \infty$. 
%At the time of writing, the equivalent result is not available for FK-percolation with~$q >1$. 

\section{Exercises: Bernoulli percolation}	

\noindent Observe that~$p_c$ may be defined on any vertex-transitive graph in the same way as on~$\bbZ^d$. 

\begin{exo}\label{exo:p_c=1}
	Show that~$p_c(\bbZ) =1$. 
\end{exo}

\begin{exo}
	Show that for the ``ladder'' graph~$\bbZ \times \{0,1\}$,~$p_c =1$. 
\end{exo}

\begin{exo}\label{exo:Galton-Watson}
	Let~$T_d$ denote the~$d + 1$-regular tree (with the root having degree~$d$ rather than~$d+1$). Prove that~$p_c (T_d)= \frac1d$ and observe that the Peierls argument works all the way up to~$p_c$.
	Deduce that the phase transition is sharp in this case. Prove that for~$p > p_c$, there exists a.s.\ infinitely many infinite clusters on~$T_d$. 

	Prove that~$\bbP_{p_c} (T_d)= 0$. 
	Show that~$\bbP_{p_c}[0 \text{ is connected to distance~$n$}] = n^{-\alpha_1 + o(1)}$ for an exponent~$\alpha_1 >0$ to be determined.\smallskip\\
	{\em Hint:} interpret the cluster of the root as a Galton-Watson process. 
\end{exo}

\begin{exo}
	Show that~$p_c(\bbZ^d)$ is decreasing in~$d$. Show that it is strictly decreasing. Show that~$p_c(\bbZ^d) \to 0$ as~$d\to \infty$. 
\end{exo}

%
%\begin{exo}\label{exo:p_c>1/2}
%	For~$d = 2$, use the self-duality of percolation to prove that, for all~$n\geq 1$,
%	\begin{align*}
%		\bbP_{1/2}\big[\{0\}\times [0,n] \text{ connected to }\{n+1\}\times [0,n] \text{ inside } [0,n+1] \times [0,n]\big]
%		= \tfrac12.
%	\end{align*}
%	Use the sharpness of the phase transition to conclude that~$p_c (\bbZ^2)\geq \frac12$. 
%\end{exo}
%

\begin{exo}\label{exo:right_cont}
	Show that~$p \mapsto \theta(p)$ is right-continuous. \smallskip\\
	{\em Hint:}~$\theta(p)$ is the decreasing limit of the continuous and increasing functions~$p \mapsto \bbP_p[0 \lra \partial \La_n]$.
\end{exo}

\begin{exo}\label{exo:left_cont}
	Consider Bernoulli percolation on~$\bbZ^d$. The goal of this exercice is to prove continuity of~$p\mapsto\theta(p)$ for all~$p\neq p_c$. 
	Recall from Exercise~\ref{exo:right_cont} that this function is right-continuous. Thus, we only need to prove left-continuity for~$p \neq p_c$. \smallskip 
	
	Fix~$p$ and assume that~$\theta(p) > \lim_{u \nearrow p}\theta(u)$. In particular~$\theta(p) > 0$ and~$p \geq p_c$. 
	\begin{itemize}
		\item[(a)] Consider the increasing coupling~$P$ of Bernoulli percolation using uniforms~$(U_e)_{e \in E}$. Argue that our assumption implies that
		\begin{align*}
			P[0\xlra{\omega_p} \infty \text{ but } 0\nxlra{\omega_u} \infty \text{ for all~$u < p$}] > 0. 
		\end{align*}
		\item[(b)] Argue that, conditionally on~$\omega_p$,~$(U_e)_{e \in \omega_p}$ are i.i.d.\ uniforms on~$[1-p,1]$. 
		Conclude that a.s.\ for any~$n$ there exists~$u < p$ such that~$\omega_u = \omega_p$ on~$\La_n$. 
		\item[(c)] Call a vertex~$v$ {\em fragile} (for some configuration~$(U_e)_e$) if~$v\xlra{\omega_p} \infty$ but~$v\nxlra{\omega_u} \infty$ for all~$u < p$.
		Prove that if~$0$ is fragile, then a.s.\ all vertices of the infinite cluster of~$\omega_p$ are fragile. 
		\item[(d)] Using the uniqueness of the infinite cluster for $\bbP_u$, deduce that, for any~$u < p$, $$\bbP_u[\text{there exists no infinite cluster}] > 0.$$ Show that this implies~$p \leq p_c$ and conclude.  
	\end{itemize}
\end{exo}

\begin{exo}
	Prove Theorem~\ref{thm:supercrit_sharpness} for Bernoulli percolation on~$\bbZ^2$ using the sub-critical sharpness, duality and the fact that~$p_c = 1/2$. 
\end{exo}

\begin{exo}\label{exo:supercrit_sharpness}
	Consider Bernoulli percolation on~$\bbZ^d$ and~$p > p_c(\bbZ^d)$. Using the super-critical sharpness (Theorem~\ref{thm:supercrit_sharpness}) and the uniqueness of the infinite cluster (Theorem~\ref{thm:Burton-Keane}), prove that 
	\begin{align*}
	\theta(p)^2 \leq \bbP_p[x \lra y] \leq \theta(p)^2 + e^{-c(p) \|x- y\|} \qquad \text{ for all~$x,y \in V$}, 
	\end{align*}
	for some~$c(p) > 0$. 
\end{exo}

\begin{exo}(Compute~$p_c(\bbZ^2)$ without RSW)\label{exo:Zhangs}
Consider Bernoulli percolation on~$\bbZ^2$. 
The goal of this exercise is to prove that~$p_c = 1/2$ using Theorems~\ref{thm:sharpness_perco} and~\ref{thm:Burton-Keane}, but without the use of the RSW theory. 
We will also obtain here that~$\theta(p_c) = 0$. 
\begin{itemize}
\item[(a)] Use the duality observation \eqref{eq:cross_square} and Theorem~\ref{thm:sharpness_perco} to conclude that~$p_c \leq 1/2$. 
\end{itemize}
The rest of the exercise is dedicated to proving that~$p_c \geq 1/2$. We proceed by contradiction and assume that~$p_c <1/2$ and therefore that~$\theta(1/2) > 0$.
The reasoning below is sometimes referred to as {\em Zhang's argument}. 
\begin{itemize}
\item[(b)] For~$p \in (0,1)$, if~$A_1,\dots, A_k$ are increasing events of equal $\bbP_p$-probability, show that
\begin{align}\label{eq:square_root_trick}
	\bbP_p[A_1] \geq 1-\big(1- \bbP_p\big[\bigcup_{j=1}^k A_j\big]\big)^{1/k}.
\end{align}
The above is called the {\em square-root trick} and is a direct consequence of the FKG property~\eqref{eq:FKG_perco}. 
For~$k$ fixed, it may be used to argue that~$\bbP_p[A_1]$ is close to~$1$ when~$\bbP_p[\bigcup_{j=1}^k A_j]$ is close to~$1$. 
\item[(c)] From $\theta(1/2) > 0$, deduce that for any~$\eps > 0$ there exists~$N$ such that 
\begin{align}
	\bbP_{1/2}\big[\{N\} \times [-N,N] \xlra{\La_N^c} \infty \big]	\geq 1- \eps. 
\end{align}
\item[(d)] Let~$\calE_N$ be the event that the left and right sides of~$\La_N$ are connected to~$\infty$ in~$\La_N^c$ by primal open paths, 
while the top and bottom sides of~$\La_N$ are connected to~$\infty$ in~$\La_N^c$  by dual open paths. 
Use point (c) to conclude that, under the assumption~$\theta(1/2) > 0$,
\begin{align}
	\bbP_{1/2}[\calE_N]	 \to 1 \qquad \text{ as~$N\to\infty$}.
\end{align}
\item[(e)] Argue that when~$\calE_N$ occurs, either~$\omega$ or~$\omega^*$ contains at least two infinite clusters. Use Theorem~\ref{thm:Burton-Keane} to conclude that 
$\theta(1/2) =0$ and~$p_c = 1/2$. 
\end{itemize}
\end{exo}

\begin{figure}
\begin{center}
\includegraphics[width = 0.6\textwidth]{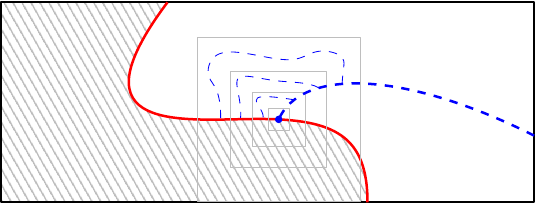}
\caption{The red path is the leftmost top-bottom crossing of the rectangle. 
A point on it connected by a dual path to the right side of the rectangle is a pivotal for the top-bottom crossing event. 
When one such point exists, one may use concentric dyadic annuli to create a logarithmic number of pivotals.}
\label{fig:Kesten_pivs}
\end{center}
\end{figure}

\begin{exo}(Compute~$p_c(\bbZ^2)$ using Kesten's tools)\label{exo:p_c=1/2K}
Consider Bernoulli percolation on~$\bbZ^2$. 
The goal of this exercise is to prove that~$p_c = 1/2$ and the sub-critical sharpness without using Theorem~\ref{thm:sharpness_perco}. 
Observe that~\eqref{eq:poly_pc} is a consequence of self-duality and the RSW theory; it does not use the fact that~$p_c = 1/2$. 
Moreover, it implies that~$p_c \geq 1/2$. 
Write~${\rm Cross}_v(n)$ for the event that the rectangle~$[-2n,2n] \times [0,n]$ contains a vertical open crossing. 
\begin{itemize}
\item[(a)] Show the existence of a constant~$c > 0$ such that if~$p$ and~$n$ satisfy
\begin{align*}
	\bbP_p[\La_n \lra \partial \La_{2n}] \leq c,
\end{align*}
then~$\bbP_p[0 \lra \partial \La_{k n}] \leq e^{-c k }$. \smallskip\\
{\em Hint:} use a ``renormalised'' Peierls argument, where a path from~$0$ to~$\partial \La_{k n}$ is shown to cross a number proportional to~$k$
of disjoint but adjacent annuli~$\La_{2n}(x) \setminus \La_n(x)$ with~$x \in n\bbZ^2$. Alternatively, see Proposition~\ref{prop:finite_size_ad}.
\item[(b)] Prove the existence of a constant~$c >0$ such that, for all~$p\leq \frac12$,
\begin{align}
	\bbE_p\big[\# \text{ pivotals for }{\rm Cross}_v(n)\,\big|\, {\rm Cross}_v(n) \big] \geq c \log n
\end{align}
{\em Hint:} When~${\rm Cross}_v(n)$ occurs, condition on the leftmost crossing, then create pivotals by constructing a dual cluster that touches this crossing coming from the right.
See Figure~\ref{fig:Kesten_pivs} for inspiration. 
\item[(c)] Conclude that for~$p< 1/2$, 
\begin{align}
\bbP_p[{\rm Cross}_v(n)] \to 0\qquad \text{ as~$n\to\infty$}. 
\end{align}
Using point (a), conclude that~$\bbP_p$ exhibits exponential decay of cluster radii. 
\item[(d)] Expressing this result in the dual model, prove that~$\theta(p) >0$ for any~$p >1/2$. 
\end{itemize}
\end{exo}

%\im{Could add exercises on
%1. prove that $\bbP[0\lra \partial \La_n]  \geq c n^{-1/2 + c}$ using a union bound on the points on the medial line of a square. Even without BK we may prove that 
%$\bbP[0\lra \partial \La_n]\bbP[0\xlra{\text left half-plane} \partial \La_n] \geq c n$. Then one may use RSW to deduce that the second term is smaller than $n^{-c} \bbP[0\lra \partial \La_n]$. \\
%}

\chapter{FK-percolation: the basics}\label{ch:2intro_FK}

This chapter contains a very brief introduction to FK-percolation (also called the random-cluster model) on~$\bbZ^d$. 
Full proof and additional details may be found in~\cite{Gri06, Dum13, Dum20}.

\section{FK-percolation on finite graphs \& monotonicity}\label{sec:monotonicity}

We  start by defining the FK-percolation measure on finite graphs.
Fix some finite subgraph~$G = (V,E)$ of~$\bbZ^d$. Define its boundary by
$$\partial G = \{ v \in V(G) : \text{ incident to at least one edge outside of~$G$}\}.$$
A boundary condition~$\xi$ on~$G$ is a partition of~$\partial G$. Vertices in the same set of the partition are said to be {\em wired} together. 

Two specific boundary conditions play a special role, these are the {\em free boundary conditions}, denoted by~$\xi = 0$, where no vertices are wired together, and the {\em wired boundary conditions}, denoted by~$\xi =1$, where all vertices are wired together. 

\begin{definition}
 For~$G$ as above,~$q \geq 1$,~$p \in (0,1)$ and a boundary condition~$\xi$ on~$G$, define the FK-percolation measure~$\phi_{G,p,q}^\xi$ as the probability measure on~$\{0,1\}^E$ with 
 \begin{align*}
	\phi_{G,p,q}^\xi [\omega] = \frac{1}{Z_{G,p,q}^\xi} p^{|\omega|}(1-p)^{|E\setminus\omega|}q^{k(\omega^\xi)}, 
\end{align*}
where~$k(\omega^\xi)$ is the number of connected components of~$\omega$ where all vertices of each component of~$\xi$ are considered connected. The constant~$Z_{G,p,q}^\xi = \sum_\omega p^{|\omega|}(1-p)^{|E\setminus\omega|}q^{k(\omega^\xi)}$ is chosen so that~$\phi_{G,p,q}^\xi$ is a probability measure; it is called the {\em partition function} of the model. 
\end{definition}

Observe that Bernoulli percolation is a particular case of the above, obtained when~$q = 1$. 
For~$q \neq 1$, edges are not independent under~$\phi_{G,p,q}^\xi$; we call this a {\em dependent percolation} model. The questions of interest remain the same as in the Bernoulli case. 

The first and most basic property of FK-percolation is the Spatial Markov property.
\begin{proposition}[Spatial Markov property]
	For $G$ as above, a subgraph~$H$ of~$G$, $p\in (0,1)$, $q \geq 1$  and $\xi$ a boundary conditions on $G$, 
	\begin{align}\label{eq:SMP}
	\phi_{G,p,q}^\xi [\omega \text{ on } H \, |\,\omega \text{ on } G \setminus H  ] 
	= 	\phi_{H,p,q}^\zeta [\omega \text{ on } H]\tag{SMP}
	\end{align}
	where~$\zeta$ is the boundary condition induced by~$\omega^\xi$ on~$G \setminus H$, 
	that is, the wiring produced by~$\omega^\xi \setminus H$ between the vertices of~$\partial H$.
\end{proposition}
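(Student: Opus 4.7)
The plan is to establish (SMP) by a direct computation from the definition of $\phi_{G,p,q}^\xi$. Fix an arbitrary ``outside'' configuration $\eta \in \{0,1\}^{E(G)\setminus E(H)}$; note that $\zeta$ is entirely determined by $\eta$ together with $\xi$, so I can treat it as a function of these data. The goal is to show that, for any $\omega = \omega_H \cup \eta$ with $\omega_H \in \{0,1\}^{E(H)}$, the unnormalised weight
\[
w(\omega) := p^{|\omega|}(1-p)^{|E\setminus\omega|}q^{k(\omega^\xi)}
\]
factorises as $w(\omega) = w_H(\omega_H)\cdot w_{\mathrm{out}}(\eta)$, where $w_H(\omega_H) := p^{|\omega_H|}(1-p)^{|E(H)\setminus\omega_H|} q^{k_H((\omega_H)^\zeta)}$ and $w_{\mathrm{out}}$ depends only on $\eta$ and $\xi$.

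The edge-weight part factorises immediately along the decomposition $E(G) = E(H) \sqcup (E(G)\setminus E(H))$. The heart of the argument is the cluster-count identity
\[
k(\omega^\xi) = k_H\bigl((\omega_H)^\zeta\bigr) + F_{\mathrm{out}}(\eta,\xi),
\]
where $F_{\mathrm{out}}(\eta,\xi)$ does not depend on $\omega_H$. To prove this, I would split the clusters of $\omega^\xi$ in $G$ into those meeting $V(H)$ and those entirely contained in $V(G)\setminus V(H)$; the latter are determined by $\eta$ and $\xi$ alone and contribute the term $F_{\mathrm{out}}(\eta,\xi)$. For the former, I would show that the equivalence relation on $V(H)$ induced by ``belonging to the same $\omega^\xi$-cluster in $G$'' is exactly the equivalence relation generated by the edges of $\omega_H$ together with the partition $\zeta$ on $\partial H$. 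Indeed, any $\omega^\xi$-path in $G$ between two vertices of $V(H)$ can be cut at each entry into and exit from $V(H)$: each external segment has its two endpoints in $\partial H$ and, by the very definition of $\zeta$, these endpoints lie in the same $\zeta$-class; each internal segment is an $\omega_H$-path. Counting the equivalence classes of this induced relation on $V(H)$ therefore yields precisely $k_H((\omega_H)^\zeta)$.

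Once the factorisation $w(\omega) = w_H(\omega_H)\, w_{\mathrm{out}}(\eta)$ is in hand, the $w_{\mathrm{out}}$ factor cancels between numerator and conditional normalisation:
\[
\phi_{G,p,q}^\xi[\omega_H \mid \eta] \;=\; \frac{w_H(\omega_H)\, w_{\mathrm{out}}(\eta)}{\sum_{\omega'_H} w_H(\omega'_H)\, w_{\mathrm{out}}(\eta)} \;=\; \frac{w_H(\omega_H)}{\sum_{\omega'_H} w_H(\omega'_H)} \;=\; \phi_{H,p,q}^\zeta[\omega_H],
\]
which is exactly (SMP). The only non-routine obstacle is the cluster-count identity: it is a purely combinatorial statement about how $\omega^\xi$-connectivity restricted to $V(H)$ is encoded by $\omega_H$ together with the induced wiring $\zeta$, and the care lies in handling the boundary condition $\xi$ on $\partial G$ correctly when following external paths between vertices of $\partial H$.
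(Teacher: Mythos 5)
Your proposal is correct, and it is precisely the ``direct computation'' that the paper omits: factorise the weight over $E(H)$ and $E(G)\setminus E(H)$, with the cluster-count identity $k(\omega^\xi)=k_H((\omega_H)^\zeta)+F_{\mathrm{out}}(\eta,\xi)$ doing the real work, and then cancel the outside factor against the conditional normalisation. Your justification of that identity is sound: clusters avoiding $V(H)$ are determined by $\eta$ and $\xi$ alone, while each cluster meeting $V(H)$ intersects it in exactly one class of the relation generated by $\omega_H$ and $\zeta$, since every external excursion of a connecting path has its endpoints in $\partial H$ and these are $\zeta$-wired by the very definition of the induced boundary condition.
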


The proof is a direct computation which we omit. 
We turn to the question of monotonicity. Let us make a brief detour to address the general topic  of monotonicity of measures. 

\paragraph{Ordering of measures: generalities.}

There are two ways to view {\em stochastic ordering}. Consider two probability measures~$\mu,\nu$ on~$\{0,1\}^E$, where~$E$ denotes some finite set. 
We say that~$\mu \leq_{\rm st} \nu$  ($\nu$ stochastically dominates~$\mu$) if the two following equivalent conditions are satisfied:
\begin{itemize}
	\item[(a)] there exists a probability measure~$P$ producing two configurations~$\omega, \omega'$ such that~$\omega$ has law~$\mu$, 
	$\omega'$ has law~$\nu$,
	and~$\omega(e) \leq\omega'(e)$ for all~$e \in E$~$P$-a.s.\ We call~$P$ an increasing coupling of~$\mu$ and~$\nu$;
	\item[(b)]~$\mu[A] \leq \nu[A]$ for all increasing events~$A$.
\end{itemize}
The equivalence of (a) and (b) is the content of {\em Strassen's theorem}~\cite{Lin02}. 

A second, related notion is that of positive association. We say that~$\mu$ is {\em positively associated} if  
\begin{align*}
	\mu[A\cap B]\geq \mu[A]\mu[B]
\end{align*} for all increasing events~$A,B$. 
This may be understood as~$\mu[\cdot\,|\, A]\geq_{\rm st} \mu$, for any increasing event~$A$.

The following criteria are particularly convenient for proving stochastic monotonicity and positive association. 

\begin{theorem}[Holley \& FKG lattice condition] 
	 For positive measures~$\mu$,~$\nu$ on~$\{0,1\}^E$, 
	\begin{itemize}
	\item[(i)] if~$\mu(\omega \cap \omega')\nu(\omega \cup \omega') \geq \mu(\omega)\nu(\omega')$ for all~$\omega,\omega'\in \{0,1\}^E$,
	then~$\mu \leq_{\rm st} \nu$;\smallskip 
	\item[(ii)] if~$\mu(\omega \cap \omega')\mu(\omega \cup \omega') \geq \mu(\omega)\mu(\omega')$ for all~$\omega,\omega'\in \{0,1\}^E$,
	then~$\mu$ is positively associated. 
	\end{itemize}
	Moreover, it suffices to check these conditions for~$\omega$ and~$\omega'$ that differ only for at most two edges. 
\end{theorem}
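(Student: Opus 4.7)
The plan is to prove (i) first by constructing an explicit monotone coupling, then deduce (ii) from (i) via a conditioning trick, and finally handle the reduction to pairs differing in at most two edges by an induction/factorisation argument.

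For (i), I would build a continuous-time Markov chain on pairs $(\omega,\omega')\in\{0,1\}^E\times\{0,1\}^E$ satisfying $\omega\le\omega'$, whose first marginal is a heat-bath dynamics for $\mu$ and whose second marginal is a heat-bath dynamics for $\nu$. Concretely, at each step pick an edge $e$ and a uniform $U\in[0,1]$, then set $\omega(e)=\mathbf 1\{U\le p_1\}$ and $\omega'(e)=\mathbf 1\{U\le p_2\}$, where $p_1,p_2$ are the conditional probabilities under $\mu,\nu$ respectively that $e$ is open given the rest of the current configurations. Monotonicity is preserved exactly when $p_1\le p_2$, which is the one-edge version of Holley's hypothesis $\mu(\omega\cap\omega')\nu(\omega\cup\omega')\ge\mu(\omega)\nu(\omega')$. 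Since $\mu$ and $\nu$ are positive, both marginal chains are irreducible with the correct invariant measures, so running the coupling in stationarity yields a pair with laws $\mu$ and $\nu$ satisfying $\omega\le\omega'$ almost surely. This is exactly the criterion (a) for~$\mu\le_{\rm st}\nu$.

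For (ii), I would apply (i) with $\nu(\cdot):=\mu(\cdot\mid A)=\mu(\cdot)\mathbf 1_A(\cdot)/\mu(A)$ for an arbitrary increasing event $A$ of positive $\mu$-measure. The Holley condition for this pair reduces to
\begin{align*}
\mu(\omega\cap\omega')\mu(\omega\cup\omega')\mathbf 1_A(\omega\cup\omega')\ge\mu(\omega)\mu(\omega')\mathbf 1_A(\omega'),
\end{align*}
which follows from the FKG lattice condition on $\mu$ together with the fact that $\omega\cup\omega'\ge\omega'$ and $\mathbf 1_A$ is increasing. Part (i) then yields $\mu\le_{\rm st}\mu(\cdot\mid A)$, so $\mu(B\mid A)\ge\mu(B)$ for every increasing $B$, which is the asserted positive association.

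For the final reduction, I would argue by induction on the Hamming distance $k=|\omega\triangle\omega'|$. The cases $k\le 2$ are given, and for $k\ge 3$ I would choose two edges $e_1,e_2\in\omega\triangle\omega'$ and interpolate: introduce an intermediate configuration agreeing with $\omega'$ outside $\{e_1,e_2\}$ but with those two coordinates lowered to match $\omega\cap\omega'$. Applying the two-edge inequality to the pair restricted to $\{e_1,e_2\}$ (with the remaining coordinates frozen) and the induction hypothesis to the resulting pair at Hamming distance $k-2$, multiplying the two inequalities telescopes to the required bound on $\mu(\omega\cap\omega')\nu(\omega\cup\omega')$ versus $\mu(\omega)\nu(\omega')$. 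The main obstacle here will be verifying that the multiplicative cancellations line up correctly --- equivalently, that the induction step truly produces the right chain of intermediate configurations --- because the presence of two distinct measures $\mu,\nu$ in part~(i) requires slightly more care than in part~(ii) where the factorisation is symmetric.
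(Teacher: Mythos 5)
Your arguments for (i) and (ii) themselves are sound and are exactly the route the notes point to: a monotone coupling of heat-bath (Glauber) dynamics for Holley, then positive association by applying (i) to $\mu$ reweighted by an increasing function. Two points of care, though. First, the inequality that keeps the coupled pair ordered is \emph{not} a ``one-edge version'' of the hypothesis: the current configurations $\xi\le\zeta$ of the two chains may differ on many edges, and what you need is $\mu(\xi\cup\{e\})\,\nu(\zeta\setminus\{e\})\le\mu(\xi\setminus\{e\})\,\nu(\zeta\cup\{e\})$, i.e.\ the Holley hypothesis applied to the pair $(\xi\cup\{e\},\,\zeta\setminus\{e\})$, which is at arbitrary Hamming distance. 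Second, in (ii) the measure $\nu=\mu(\cdot\,|\,A)$ is not positive; either apply (i) to $\nu_\eps\propto\mu\cdot(\ind_A+\eps)$ and let $\eps\to0$, or check that the upper chain is irreducible on the up-set $A$. Both fixes are routine.

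The genuine gap is the two-edge reduction for part (i), and it is not just a matter of making the cancellations line up: the induction step that lowers the Hamming distance requires a two-edge inequality for \emph{one measure alone} (a lattice inequality for $\mu$ or for $\nu$), and no such inequality occurs among the two-edge instances of the Holley hypothesis, which all mix $\mu$ and $\nu$. In fact the restricted hypothesis is genuinely insufficient. On $E=\{e,f,g\}$, write configurations as triples $(\omega(e),\omega(f),\omega(g))$, take $M$ large and $\delta$ small, and let $\mu\propto u$, $\nu\propto v$ with
\[
\begin{array}{lll}
u(0,0,0)=u(0,1,0)=u(0,0,1)=u(0,1,1)=1, & u(1,0,0)=M, & u(1,1,0)=u(1,0,1)=u(1,1,1)=\delta,\\
v(0,0,0)=v(0,1,0)=v(0,0,1)=1, & v(1,0,0)=v(1,1,0)=v(1,0,1)=v(1,1,1)=M, & v(0,1,1)=M/\delta.
\end{array}
\]
One checks that every inequality $u(\omega\cap\omega')\,v(\omega\cup\omega')\ge u(\omega)\,v(\omega')$ with $|\omega\,\triangle\,\omega'|\le 2$ holds (the critical ones, those involving $v(0,1,1)=M/\delta$ on the right, hold with equality because $u(1,1,0)=u(1,0,1)=u(1,1,1)=\delta$), and these inequalities are unaffected by normalisation. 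Yet for the increasing event $A=\{\omega(e)=1\}$ one has $\mu[A]=\tfrac{M+3\delta}{M+4+3\delta}$ and $\nu[A]=\tfrac{4M}{4M+3+M/\delta}$, so for $M$ large and $\delta$ small $\mu[A]>\nu[A]$ and $\mu\not\leq_{\rm st}\nu$. So no argument can derive (i) from its two-edge instances alone; the clause in the statement must, for (i), be read with an extra hypothesis.

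The standard checkable criterion (this is the form in Grimmett's book) is: the one-edge comparison for all $\omega,e$, \emph{together with} the FKG lattice condition, on pairs of edges, for at least one of the two measures. With that, your induction closes in a clean way: the lattice condition for $\nu$ makes $\zeta\mapsto\nu(\zeta\cup\{e\})/\nu(\zeta\setminus\{e\})$ non-decreasing coordinate by coordinate, so for $\xi\le\zeta$ this ratio dominates $\nu(\xi\cup\{e\})/\nu(\xi\setminus\{e\})$, which dominates the corresponding $\mu$-ratio by the one-edge comparison; this is exactly the inequality needed in the coupling (symmetrically if $\mu$ satisfies the lattice condition). For part (ii) your reduction is correct as proposed: with $\mu=\nu$ the two-edge inequality needed at each step of the telescoping is precisely the hypothesis, so the two-edge lattice condition implies the full one, and then (i) with the full Holley condition applies.
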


The proof of the above is a beautiful use of the Glauber dynamics; we direct the reader to~\cite{Gri10} for details. 
It is worth mentioning that the condition in (ii) (sometimes called the FKG lattice condition) is stronger than positive association; 
measures satisfying it are sometimes called monotonic and have additional convenient properties.

\paragraph{Monotonicity properties of FK-percolation.}
Let us see how the properties above apply to FK-percolation. 

\begin{proposition}\label{prop:FKFKG}
	For a finite subgraph~$G$ of~$\bbZ^d$,~$q \geq 1$,~$p \in (0,1)$ and a boundary condition~$\xi$ on~$G$
	\begin{itemize}
	\item[(i)] 	$\phi_{G,p,q}^\xi$ is positively associated:
		\begin{align}\label{eq:FKGFK}
			\phi_{G,p,q}^\xi[A \cap B] \geq			\phi_{G,p,q}^\xi[A ]\cdot\phi_{G,p,q}^\xi[ B] \qquad \text{ for all~$A,B$ increasing}. \tag{FKG}
		\end{align}
	\item[(ii)] 	for~$p'\geq p$ and~$\zeta \geq \xi$ (in the sense that any vertices wired in~$\xi$ are also wired in~$\zeta$), 
\begin{align}\label{eq:Mon}
	\phi_{G,p,q}^\xi \leq_{\rm st} \phi_{G,p',q}^\zeta.\tag{Mon}
\end{align}
In other words,~$\phi_{G,p,q}^\xi$ is increasing in~$p$ and~$\xi$. 
\end{itemize}
\end{proposition}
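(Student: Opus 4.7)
The strategy is to invoke the Holley criterion for stochastic ordering and the FKG lattice condition for positive association, both of which reduce the global comparison of measures to a pointwise inequality among configurations. Crucially, the Holley criterion reduces the verification to pairs $\omega,\omega'$ differing on at most two edges, which makes the analysis essentially combinatorial. The key observation I would isolate at the outset is that the energy part $p^{|\omega|}(1-p)^{|E\setminus\omega|}$ behaves multiplicatively and that $|\omega\cap\omega'|+|\omega\cup\omega'|=|\omega|+|\omega'|$, so for part (i) the $p,(1-p)$ factors cancel completely in the FKG inequality. The whole weight of the proof is therefore carried by the cluster-counting exponent.

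The central lemma I would state is the supermodularity of the cluster-counting function: for any boundary condition $\xi$ and any $\omega,\omega'\in\{0,1\}^E$,
\begin{align*}
k(\omega^\xi) + k({\omega'}^\xi) \;\leq\; k\bigl((\omega\cap\omega')^\xi\bigr) + k\bigl((\omega\cup\omega')^\xi\bigr).
\end{align*}
The slickest route is to prove the one-edge increment inequality: if $\eta\subseteq\eta'$ and $e\in E$, then $k(\eta^\xi\cup\{e\}) - k(\eta^\xi) \leq k({\eta'}^\xi\cup\{e\}) - k({\eta'}^\xi)$. This is immediate once we note that the increment on the left equals $-1$ or $0$ according as the endpoints of $e$ are disconnected or connected in $\eta^\xi$, and any connection that exists in $\eta^\xi$ persists in ${\eta'}^\xi$. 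Iterating this one-edge increment inequality over the edges of $(\omega\cup\omega')\setminus(\omega\cap\omega')$ yields the supermodularity. Alternatively, since the Holley criterion only requires two-edge symmetric differences, I could simply enumerate the finitely many topological cases, but the one-edge argument is cleaner and works globally.

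With supermodularity in hand, both statements fall out. For (i), the FKG lattice condition becomes $q^{k(\omega\cap\omega')^\xi)+k((\omega\cup\omega')^\xi)}\geq q^{k(\omega^\xi)+k({\omega'}^\xi)}$, which follows from supermodularity precisely because $q\geq 1$ (this is where the restriction on $q$ enters). For (ii), I would first treat the case of equal boundary conditions, verifying the Holley inequality
\begin{align*}
\phi_{G,p,q}^\xi(\omega\cap\omega')\,\phi_{G,p',q}^\xi(\omega\cup\omega') \;\geq\; \phi_{G,p,q}^\xi(\omega)\,\phi_{G,p',q}^\xi(\omega');
\end{align*}
up to normalisation constants, the contribution of the cluster-weight factor is again controlled by supermodularity, and the $p,p'$ factors give the ratio $\bigl(\tfrac{p'(1-p)}{p(1-p')}\bigr)^{m}\geq 1$ for some $m\geq 0$, which holds since $p'\geq p$. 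For the monotonicity in boundary conditions, the cleanest move is to encode a wiring $\xi$ by gluing the vertices of each block of $\xi$ together (equivalently, adding auxiliary always-open edges joining them in an augmented graph); going from $\xi$ to a larger $\zeta$ then amounts to further identifying vertices, and the same Holley-type verification, using supermodularity of $k$ on the contracted graph, yields $\phi_{G,p,q}^\xi\leq_{\mathrm{st}}\phi_{G,p,q}^\zeta$. Combining the two steps gives (ii) in full.

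The main technical obstacle is the supermodularity of $k$: it is the only step that is genuinely combinatorial rather than formal, and it is exactly where the hypothesis $q\geq 1$ becomes essential. Everything else is bookkeeping: checking that the $p$-exponents cancel or align correctly, and that boundary conditions can be absorbed into the edge set. I would therefore devote the bulk of the write-up to a careful statement and proof of the one-edge increment inequality, and relegate the remaining steps to short computations.
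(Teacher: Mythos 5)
Your proposal is correct and follows exactly the route the paper intends: it states the Holley and FKG lattice criteria immediately before the proposition and leaves the verification (supermodularity of $\omega \mapsto k(\omega^\xi)$ via the one-edge increment, cancellation of the $p$-factors, and the $\bigl(\tfrac{p'(1-p)}{p(1-p')}\bigr)^{m}\geq 1$ comparison) to the reader, which is precisely what you supply. The only point to state a touch more carefully is the comparison of boundary conditions: there the Holley check needs the mixed inequality $k_{G^\xi}(\omega\cap\omega')+k_{G^\zeta}(\omega\cup\omega')\geq k_{G^\xi}(\omega)+k_{G^\zeta}(\omega')$, but your one-edge argument extends verbatim since any connection in $\eta^\xi$ persists in ${\eta'}^{\zeta}$ when $\eta\subseteq\eta'$ and $\zeta\geq\xi$.
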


For the above, it is crucial that~$q\geq1$. This is the main reason why the regime~$0<q <1$ is much less understood than~$q\geq 1$. 
 
An immediate consequence is  that the free and wired boundary conditions produce the minimal and maximal measures respectively: 
\begin{align*}
	\phi_{G,p,q}^0 \le_{\rm st} \phi_{G,p,q}^\xi \le_{\rm st} \phi_{G,p,q}^1 \qquad \text{ for any b.c.~$\xi$}. 
\end{align*}

Proposition~\ref{prop:FKFKG}, together with the Spatial Markov property~\eqref{eq:SMP} will be used often, sometimes in implicit ways; 
the novice reader will need some time to discover the full strength of these tools combined. 
Most often they are used as follows. If $H$ is a finite subgraph of $G$, 
\begin{align}
	\phi_{H,p,q}^0 \le_{\rm st} \phi_{G,p,q}^\xi\big|_H \le_{\rm st} \phi_{H,p,q}^1 \qquad \text{ for any b.c.~$\xi$},
	\label{eq:pushing_bc}
\end{align}
where~$|_H$ indicates the restriction to~$H$. Indeed, by~\eqref{eq:SMP}, the middle measure is a mixture of measures $\phi_{H,p,q}^\zeta$ for different boundary conditions $\xi$, 
for all of which the inequalities hold due to \eqref{eq:Mon}. Additionally,~\eqref{eq:pushing_bc} also applies when the middle measure is conditioned on events depending on the edges in $G\setminus H$.

To illustrate~\eqref{eq:Mon}, let us compute the probabilities for an edge to be open in the simplest setting: when~$G$ is formed of a single edge~$e$.
\begin{align}\label{eq:one_edge_open}
	\phi_{\{e\},p,q}^\xi[e \text{ open}]	= \begin{cases}
		p &\text{ if~$\xi = 1$} \\
		\frac{p}{p + (1-p)q}& \text{ if~$\xi = 0$}.
	\end{cases}	
\end{align}
Notice that the probabilities are indeed increasing in~$p$ and the boundary conditions. 
From the above, we deduce the following domination of and by Bernoulli percolation. 
	
\begin{corollary}\label{cor:perco_domination}
	For~$G$,~$q$,~$p$ and~$\xi$ as above, 
	\begin{align}\label{eq:perco_domination}
		\bbP_{\frac{p}{p + (1-p)q}} \leq_{\rm st} \phi_{G,p,q}^\xi \leq_{\rm st} \bbP_{p},
	\end{align}
	where~$\bbP_\cdot$ denotes the Bernoulli percolation on~$G$. 
\end{corollary}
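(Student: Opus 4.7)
The plan is to reduce both inequalities to pointwise bounds on the conditional probability that a single edge $e$ is open given the rest of the configuration, and then invoke a standard monotonicity criterion (Holley's theorem, already quoted in the chapter) to conclude.

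First, I would use the Spatial Markov property~\eqref{eq:SMP} applied to the subgraph $H=\{e\}$: conditionally on the configuration $\omega$ on $E\setminus\{e\}$, the state of $e$ is distributed as $\phi_{\{e\},p,q}^{\zeta}$ for some boundary condition $\zeta$ on the two endpoints of $e$. Only two cases can occur: either the endpoints of $e$ are already connected in $\omega^\xi$ restricted to $E\setminus\{e\}$ (so $\zeta$ is the wired b.c.~$1$), or they are not (so $\zeta$ is the free b.c.~$0$). From the explicit formula~\eqref{eq:one_edge_open} we therefore obtain, for every $\omega \in \{0,1\}^{E\setminus\{e\}}$,
\begin{align*}
\frac{p}{p+(1-p)q}\;\leq\;\phi_{G,p,q}^\xi\!\left[\omega(e)=1\,\middle|\,\omega|_{E\setminus\{e\}}\right]\;\leq\;p,
\end{align*}
with the upper value attained in the ``already connected'' case and the lower value otherwise. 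This is the crucial input; it uses $q\geq 1$ precisely to order the two expressions in~\eqref{eq:one_edge_open}.

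Next, I would deduce the stochastic orderings from these single-edge bounds. One clean route is Holley's criterion (Theorem stated in Section~\ref{sec:monotonicity}), which only needs to be checked on pairs $\omega,\omega'$ differing on a single edge $e$: writing $\omega_0,\omega_1$ for the configurations agreeing with $\omega$ off $e$ and with $\omega(e)=0,1$, the criterion for $\bbP_{p'} \leq_{\rm st} \phi_{G,p,q}^\xi$ reduces exactly to
\begin{align*}
\frac{\phi_{G,p,q}^\xi[\omega_1]}{\phi_{G,p,q}^\xi[\omega_0]+\phi_{G,p,q}^\xi[\omega_1]} \;\geq\; p' = \frac{p}{p+(1-p)q},
\end{align*}
which is the lower bound above. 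The symmetric computation gives the upper bound $\bbP_p$. An equally good alternative, more hands-on, is to build the coupling directly by running a single-site Glauber dynamics for $\phi_{G,p,q}^\xi$ in parallel with i.i.d.\ Bernoulli resamplings at rates $p$ and $p'$, and using the pointwise bound above to verify monotonicity of the coupled chain is preserved at every update.

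I do not expect any serious obstacle: the only delicate point is to check that $\zeta\in\{0,1\}$ are the only two possible induced boundary conditions on the two endpoints of $e$, which is immediate since a partition of a two-element set has only these two refinements. Everything else is bookkeeping. The essential ingredient, as always for FK-percolation, is the hypothesis $q\geq 1$, without which~\eqref{eq:one_edge_open} would not yield the correct ordering of the two conditional probabilities.
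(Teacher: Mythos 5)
Your proposal is correct and is essentially the proof the paper has in mind: the paper's (sketched) argument is exactly the observation that all conditional single-edge probabilities lie between $\frac{p}{p+(1-p)q}$ and $p$, exploited either through a sequential-sampling coupling or through Holley's criterion. One small caution: as stated in the paper, Holley's criterion must be checked on pairs differing in at most \emph{two} edges, not one; but because one of your two measures is a product measure, the two-edge inequality reduces to the same single-edge conditional bound (with the state of the second edge absorbed into the conditioning), so your argument goes through unchanged.
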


\begin{proof}
	We do not give a full proof, but limit ourselves to mentioning that~$\phi_{G,p,q}^\xi$ may be obtained by sequentially sampling edges, using coin tosses with probabilities that depend on the edge and the previously sampled edges. Throughout the process, all coin tosses have parameters between~$\frac{p}{p + (1-p)q}$ and~${p}$. 
	
	Alternatively, one may use the Holley criterion. 
\end{proof}

Finally, let us mention that \eqref{eq:one_edge_open} also illustrates the {\em finite energy} principle, which states that, for any fixed~$p \in (0,1)$,
the probability that an edge is open is bounded away from~$0$ and~$1$, uniformly in the state of all other edge. 
This is often used repeatedly for edges in a finite set to perform local modifications, with only a limited multiplicative impact on the probabilities of the configurations.

\paragraph{Russo's formula and sharp-threshold inequalities.}

As for Bernoulli percolation, it will be useful to consider the derivatives of the probabilities of events as functions of~$p$. 

\begin{proposition}\label{prop:russo_FK}
	Let~$G$ be a finite subgraph of~$\bbZ^d$,~$q \geq 1$,~$p \in (0,1)$ and~$\xi$ a boundary condition on~$G$.
	Then, for any increasing event~$A$, 
	\begin{align}\label{eq:russo_FK}
	\tfrac{{\rm d}}{{\rm d}p}\phi_{G,p,q}^\xi[A]
	=\tfrac1{p(1-p)} \sum_{e \in E}\big( \phi_{G,p,q}^\xi[A \cap \{\omega(e) = 1\}] - \phi_{G,p,q}^\xi[A ]\phi_{G,p,q}^\xi[\omega(e)=1] \big).
	\end{align}
\end{proposition}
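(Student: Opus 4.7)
The plan is a direct differentiation of the explicit formula for $\phi_{G,p,q}^\xi[A]$, written as the ratio of two sums over configurations. Since the partition function $Z_{G,p,q}^\xi$ now depends on $p$ (unlike in the Bernoulli case, where the factorization over edges trivialized the differentiation), the key is to extract a logarithmic derivative and recognize a covariance.

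First I would write $\phi_{G,p,q}^\xi[A] = \frac{1}{Z} \sum_{\omega \in A} w(\omega)$, with $w(\omega) = p^{|\omega|}(1-p)^{|E|-|\omega|} q^{k(\omega^\xi)}$. A short computation gives
\begin{align*}
\frac{d}{dp}\log w(\omega) = \frac{|\omega|}{p} - \frac{|E|-|\omega|}{1-p} = \frac{|\omega| - p|E|}{p(1-p)},
\end{align*}
and this is the only place $p$ enters $w(\omega)$ (the cluster-weight factor is $p$-independent).

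Next, applying the quotient rule to $\phi_{G,p,q}^\xi[A] = \sum_{\omega\in A}w(\omega)/Z$ and substituting the above identity, I obtain
\begin{align*}
\tfrac{d}{dp}\phi_{G,p,q}^\xi[A]
= \tfrac{1}{p(1-p)}\Big(\phi_{G,p,q}^\xi\bigl[\mathbf{1}_A(|\omega| - p|E|)\bigr] - \phi_{G,p,q}^\xi[A]\,\phi_{G,p,q}^\xi[|\omega| - p|E|]\Big).
\end{align*}
The constant $-p|E|$ inside each expectation cancels between the two terms, leaving the covariance $\tfrac{1}{p(1-p)}\mathrm{Cov}_{\phi_{G,p,q}^\xi}(\mathbf{1}_A, |\omega|)$.

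Finally, I decompose $|\omega| = \sum_{e\in E}\mathbf{1}_{\{\omega(e)=1\}}$ and use linearity to split the covariance into a sum over edges, which produces exactly the right-hand side of~\eqref{eq:russo_FK}. No step is truly an obstacle: the whole proof is a one-line calculation once the $p$-dependence of $Z$ is handled cleanly via the logarithmic derivative. The only thing worth flagging is that, although the statement restricts to increasing $A$ (so that the resulting covariance is non-negative by \eqref{eq:FKGFK}, mirroring the Bernoulli case), the identity itself holds for any event $A$ depending on finitely many edges.
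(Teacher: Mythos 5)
Your proof is correct and follows the same route as the paper, which simply writes out the ratio formula for $\phi_{G,p,q}^\xi[A]$ and says "differentiating yields the result"; you have filled in exactly that differentiation (logarithmic derivative of the weight, cancellation of the $-p|E|$ term, and the edge-by-edge decomposition of the covariance). Your closing remark that the identity holds for any event, with monotonicity of $A$ only needed to make the covariance non-negative via \eqref{eq:FKGFK}, is also accurate.
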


The summand in the right hand side of~\eqref{eq:russo_FK} is the covariance between the state of the edge~$e$ and the event~$A$ under the measure~$\phi_{G,p,q}^\xi$; write it simply~${\rm Cov}(\omega(e), A)$. 
Thus, the sum is the covariance between~$A$ and the total number of open edges. 

It may be useful to keep in mind that this covariance may also be interpreted as the {\em influence} of the edge~$e$ on~$A$.%, or of~$A$ on the state of the edge~$e$: 
\begin{align*}
%{\rm Cov}(A, \{\omega(e))
\phi_{G,p,q}^\xi[A \cap \{\omega(e) = 1\}] - \phi_{G,p,q}^\xi[A ]\phi_{G,p,q}^\xi[\omega(e)]&\\
 = \phi_{G,p,q}^\xi[\omega(e)](1-\phi_{G,p,q}^\xi[\omega(e)])& \big(\phi_{G,p,q}^\xi[A | \omega(e) = 1] - \phi_{G,p,q}^\xi[A | \omega(e) = 0]\big),%&= \phi_{G,p,q}^\xi[A](1-\phi_{G,p,q}^\xi[A]) \big(\phi_{G,p,q}^\xi[\omega(e) = 1 | A] - \phi_{G,p,q}^\xi[\omega(e) = 1 |A^c]\big)
\end{align*}
where the first two terms on the right-hand side are bounded away from~$0$ uniformly in~$p \in(0,1)$, away from~$0$ and~$1$. 
In the above, we write~$\phi_{G,p,q}^\xi[\omega(e)]$ for~$\phi_{G,p,q}^\xi[\omega(e)=1]$ since~$\phi_{G,p,q}^\xi$ is used as an expectation. 
We use the phrase ``uniformly in~$p \in(0,1)$, away from~$0$ and~$1$'' to mean uniformly in~$p \in (\eps,1-\eps)$ for any~$\eps >0$. Both conventions will be used repeatedly hereafter.

\begin{proof}
	We have 
	\begin{align*}
		\phi_{G,p,q}^\xi[A] =  \frac{1}{Z_{G,p,q}^\xi}\sum_{\omega \in A} p^{|\omega|}(1-p)^{|E\setminus\omega|}q^{k(\omega^\xi)}.
	\end{align*}
	Differentiating the above yields the desired result. 
	% for~$A$ as well as for the full state-space~$\Omega$ instead of~$A$ to obtain the desired result. 	
\end{proof}

The following result, combined with~\eqref{eq:russo_FK}, 
will allow us to prove a {\em sharp-threshold} behaviour for certain connection probabilities. 
We state it here as it will be used in Section~\ref{sec:3conseq_phase_trans} to prove the sharpness of the phase transition of FK-percolation in two dimensions. 

\begin{theorem}[\cite{GraGri11}]\label{thm:BKKKL}
	Fix a finite subgraph~$G=(V,E)$ of~$\bbZ^d$,~$q \geq 1$,~$p \in (0,1)$ and a boundary condition~$\xi$ on~$G$.
	Then, for any event~$A$
	\begin{align}\label{eq:BKKKL}
 \sum_{e \in E} {\rm Cov}(A,\omega(e)) \geq c\,  \phi_{G,p,q}^\xi[A] (1 - \phi_{G,p,q}^\xi[A]) \log\frac{1}{\max_e {\rm Cov}(A,\omega(e))}
 \end{align}
 where~$c > 0$ is a constant depending only on~$p$ and~$q$, bounded away from~$0$ uniformly in~$p \in(0,1)$, away from~$0$ and~$1$. 
\end{theorem}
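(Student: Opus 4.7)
The inequality is a Kahn–Kalai–Linial-type (BKKKL) sharp-threshold bound. Its classical form is known for Bernoulli product measures; the plan is to transfer it to $\phi_{G,p,q}^\xi$ in three stages: (a) realise the FK law as a deterministic function of i.i.d.\ uniforms via a sequential sampling scheme, (b) apply the classical product-measure BKKKL to the resulting function, and (c) compare product-space influences back to the FK covariances by exploiting finite energy and positive association.

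\textbf{Encoding and product-space BKKKL.} Fix an arbitrary ordering $e_1,\dots,e_m$ of $E$ and let $(U_e)_{e\in E}$ be i.i.d.\ uniforms on $[0,1]$ under the product measure $\lambda$. Define $\omega$ sequentially by
\[ \omega(e_i) := \mathbf{1}\{U_{e_i} \leq \pi_i(\omega_{<i})\}, \qquad \pi_i(\omega_{<i}) := \phi_{G,p,q}^\xi[\omega(e_i) = 1 \mid \omega_{<i}]. \]
Then $\omega \sim \phi_{G,p,q}^\xi$, and $\mathbf{1}_A = f(U)$ for a measurable $f:[0,1]^m \to \{0,1\}$. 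Using \eqref{eq:one_edge_open} together with \eqref{eq:SMP} and \eqref{eq:Mon}, each $\pi_i(\omega_{<i})$ is forced to lie in the compact sub-interval $[p/(p+(1-p)q),\ p] \subset (0,1)$. The classical Talagrand/BKKKL inequality applied to $f$ under $\lambda$ then yields
\[ \sum_e I_e^\lambda(f) \;\geq\; c'\,{\rm Var}_\lambda(f)\cdot\log\frac{1}{\max_e I_e^\lambda(f)}, \]
where $I_e^\lambda(f) := \lambda[f(U) \neq f(U^{(e)})]$ and $U^{(e)}$ denotes $U$ with the $e$-th coordinate independently resampled. Moreover ${\rm Var}_\lambda(f) = \phi_{G,p,q}^\xi[A]\big(1 - \phi_{G,p,q}^\xi[A]\big)$, which is exactly the desired prefactor.

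\textbf{Main obstacle: comparing $I_e^\lambda(f)$ with ${\rm Cov}(A,\omega(e))$.} The conclusion \eqref{eq:BKKKL} will follow once I establish the upper bound
\[ I_e^\lambda(f) \;\leq\; C(p,q)\cdot {\rm Cov}(A,\omega(e)) \]
with $C(p,q)$ uniform for $p$ in compact subsets of $(0,1)$: plugged into the previous display and with $C(p,q)$ absorbed inside the logarithm, this yields the claim. The identity ${\rm Cov}(A,\omega(e)) = \phi_{G,p,q}^\xi[\omega(e)=1]\,\phi_{G,p,q}^\xi[\omega(e)=0]\,\big(\phi_{G,p,q}^\xi[A\mid\omega(e)=1] - \phi_{G,p,q}^\xi[A\mid\omega(e)=0]\big)$ together with finite energy realises the covariance, up to a bounded factor, as the conditional FK-influence of $e$ on $A$. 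The product-space influence captures an analogous single-edge effect, but is potentially inflated by a cascade: resampling $U_{e_i}$ flips $\omega(e_i)$, and that flip can propagate to $\omega(e_j)$, $j>i$, via the dependence of $\pi_j$ on $\omega_{<j}$. Controlling this cascade is the principal technical difficulty; it is handled by a careful combination of \eqref{eq:FKGFK} and finite energy, showing that the cumulative propagation probability is bounded by a constant depending only on $p$ and $q$, which is exactly what is needed to conclude.
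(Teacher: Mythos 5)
The paper itself gives no proof of this theorem (it defers to \cite[Thm.~5.1]{GraGri11}), and your transfer strategy --- sequential encoding of $\phi:=\phi^\xi_{G,p,q}$ by i.i.d.\ uniforms, product-space BKKKL, then comparison of influences with covariances --- is exactly the Graham--Grimmett route, so the skeleton is right. The problem is that the decisive step, $I_e^\lambda(f)\le C(p,q)\,{\rm Cov}(A,\omega(e))$, is only asserted, and the mechanism you sketch for it (``the cumulative propagation probability is bounded by a constant'') is not viable. Resampling $U_{e_i}$ and flipping $\omega(e_i)$ changes the conditional probabilities $\pi_j$ of \emph{all} later edges whose clusters are affected (for $q>1$, think of opening an edge that merges two large clusters), and the probability that the cascade alters at least one later coordinate is in general of order one; no finite-energy or FKG estimate makes the cascade unlikely. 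What actually makes the comparison true is different and uses monotonicity essentially: since FK-percolation with $q\ge1$ satisfies the FKG lattice condition, each $\pi_j$ is increasing in the past configuration, so your encoding is a monotone map and the cascade triggered by opening $e_i$ (with the downstream uniforms shared) only raises later coordinates. For an \emph{increasing} $A$ this gives, conditionally on $\omega_{<i}=\sigma$,
\begin{align}
\lambda\big[f(U)\neq f(U^{(e_i)})\,\big|\,\sigma\big]
= 2\,\pi_i(\sigma)\big(1-\pi_i(\sigma)\big)\Big(\phi\big[A\,\big|\,\sigma,\omega(e_i)=1\big]-\phi\big[A\,\big|\,\sigma,\omega(e_i)=0\big]\Big)
= 2\,{\rm Cov}\big(A,\omega(e_i)\,\big|\,\sigma\big),
\end{align}
while ${\rm Cov}(A,\omega(e_i)) = \bbE\big[{\rm Cov}(A,\omega(e_i)\mid\omega_{<i})\big] + {\rm Cov}\big(\phi[A\mid\omega_{<i}],\,\pi_i(\omega_{<i})\big)$, and the cross term is nonnegative by \eqref{eq:FKGFK} because both entries are increasing functions of $\omega_{<i}$. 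Hence $I_{e_i}^\lambda(f)\le 2\,{\rm Cov}(A,\omega(e_i))$ --- no propagation estimate at all. Your write-up never invokes the monotonicity of the coupling, and without it this step has no proof.

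Two further points, one substantive and one cosmetic. Substantive: the comparison above --- and the theorem in its covariance form --- requires $A$ increasing (as it is in every application in these notes, even though the statement says ``any event''). For general events it is false: take $q=1$, $p=1/2$ and $A$ the parity of the number of open edges; then ${\rm Cov}(A,\omega(e))=0$ for every $e$ while $\phi[A](1-\phi[A])=1/4$, so neither \eqref{eq:BKKKL} nor your claimed bound $I_e^\lambda\le C\,{\rm Cov}$ can hold. You should restrict to increasing $A$ and use this hypothesis where it is needed. Cosmetic: the $[0,1]^E$ BKKKL theorem is stated with fiber (geometric) influences, not the resampling influences $I_e^\lambda$; in your encoding each fiber in direction $e_i$ is a two-valued step function with threshold $\pi_i(\sigma)\in[p/(p+(1-p)q),\,p]$ bounded away from $0$ and $1$, so the two notions are comparable up to constants depending on $p,q$ --- this deserves a sentence but is easily repaired. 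The genuine gap is the unproven influence--covariance comparison together with the incorrect mechanism proposed for it.
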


In~\eqref{eq:BKKKL}, the left hand side is 	$p(1-p)\tfrac{{\rm d}}{{\rm d}p}\phi_{G,p,q}^\xi[A]$ (see~\eqref{eq:russo_FK}), 
while $\phi_{G,p,q}^\xi[A] (1 - \phi_{G,p,q}^\xi[A])$ is the variance of~$\ind_A$ under the measure~$\phi_{G,p,q}^\xi$.
Thus, if no edge has a large covariance with~$A$ and if $\phi_{G,p,q}^\xi[A]$ is neither close to $0$, nor to $1$, 
then~\eqref{eq:BKKKL} states that the derivative of~$p \mapsto \phi_{G,p,q}^\xi[A]$ is large. 
It follows that~$\phi_{G,p,q}^\xi[A]$ quickly transitions from being close to~$0$ to being close to~$1$ as~$p$ increases.
We say it exhibits a sharp-threshold. 

We will not prove Theorem~\ref{thm:BKKKL}, but direct the reader to~\cite[Theorem 5.1]{GraGri11} for details. 
This type of inequality first appeared in~\cite{KahKalLin88} for product measures on~$\{0,1\}^E$ (which is to say for Bernoulli percolation) 
and was then extended to product measures on~$[0,1]^E$ in~\cite{BouKahKal92}.
Finally,~\cite{GraGri06,GraGri11} explained how to transfer such inequalities to monotonic measures such as FK-percolation. 

Other related results also allow to deduce sharp-threshold behaviour, most notably the OSSS inequality of~\cite{DumRaoTas19}.

\section{Infinite-volume measures}

%Another consequence of Proposition~\ref{prop:FKFKG} is the monotonicity of the measures in~$G$. 
%Specifically, if~$H$ denotes a subgraph of~$G$, then 
%\begin{align}\label{eq:pushing_bc}
%	\phi_{H,p,q}^0 \leq_{\rm st} \phi_{G,p,q}^0\big|_H\qquad \text{ and }\qquad 	\phi_{H,p,q}^1 \geq_{\rm st} \phi_{G,p,q}^1\big|_H,
%\end{align}
%where~$|_H$ indicates the restriction to~$H$. 
%Indeed, if we focus on the first inequality,~$\phi_{G,p,q}^0\big|_H$ is a mixture of measures~$\phi_{H,p,q}^\xi$, with~$\xi$ being potential boundary conditions induced on~$H$ by the configuration on~$G\setminus H$. All these measures dominate~$\phi_{H,p,q}^0$, which implies~\eqref{eq:pushing_bc}.
The monotone {\em pushing} of boundary conditions~\eqref{eq:pushing_bc} allows to deduce that, for $H$ a subgraph of some finite graph $G$, 
\begin{align}\label{eq:pushing_bc}
	\phi_{H,p,q}^0 \leq_{\rm st} \phi_{G,p,q}^0\big|_H\qquad \text{ and }\qquad 	\phi_{H,p,q}^1 \geq_{\rm st} \phi_{G,p,q}^1\big|_H,
\end{align}
This in turn allows one to construct infinite-volume measures as monotone thermodynamical limits. 

\begin{fact}\label{fact:infinite_vol}
	For all~$p \in (0,1)$ and~$q\geq1$, the following limits exist for the weak convergence\footnote{We say that~$\mu_n \to \nu$ if, for any event~$A$ depending on a finite set of edges,~$\mu_n(A) \to \nu(A)$. }
	\begin{align}
	 	\phi_{p,q}^0= \lim_{n\to\infty}\phi_{\Lambda_n,p,q}^0 \qquad\text{ and }\qquad 
		\phi_{p,q}^1 = \lim_{n\to\infty}\phi_{\Lambda_n,p,q}^1.
	\end{align}
	The same limits are obtained for general graphs~$G_n$ that increase to~$\bbZ^d$. 
	Furthermore, both limits above are translation-invariant and ergodic probability measures on~$\{0,1\}^{E(\bbZ^d)}$.
\end{fact}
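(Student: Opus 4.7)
The proof naturally splits into three stages: build both limits by monotone convergence on cylinder events, deduce translation invariance from independence of the approximating sequence, and finally establish ergodicity --- the main obstacle --- via extremality in the set of FK-Gibbs measures.

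For the existence of $\phi_{p,q}^0$, I fix a finite edge set $F$ and study $\phi_{\Lambda_n,p,q}^0$ restricted to $F$ for all $n$ with $F\subset E(\Lambda_n)$. By \eqref{eq:pushing_bc} this sequence is stochastically non-decreasing in $n$, so for any increasing cylinder event $A$ depending only on $F$, the numerical sequence $\phi_{\Lambda_n,p,q}^0[A]$ is non-decreasing in $[0,1]$ and hence converges. Calling the limit $\phi_{p,q}^0[A]$, extending to the algebra of cylinder events on $F$ by inclusion--exclusion, and invoking Kolmogorov's extension produces the measure $\phi_{p,q}^0$ on $\{0,1\}^{E(\bbZ^d)}$. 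The wired case is identical, with the sequence now stochastically non-increasing. For an arbitrary exhausting sequence $(G_n)$, given a finite $F$ I would pick integers $k(n)\leq k'(n)$ with $F\subset\Lambda_{k(n)}\subset G_n\subset\Lambda_{k'(n)}$ and $k(n)\to\infty$, and then use \eqref{eq:pushing_bc} twice to sandwich $\phi_{G_n,p,q}^0|_F$ stochastically between $\phi_{\Lambda_{k(n)},p,q}^0|_F$ and $\phi_{\Lambda_{k'(n)},p,q}^0|_F$, both of which tend to $\phi_{p,q}^0|_F$.

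Translation invariance is then immediate: for any $\tau\in\bbZ^d$, the sequence $(\tau+\Lambda_n)$ also exhausts $\bbZ^d$, and invariance of the FK weights under lattice translations gives $\tau_*\phi_{\Lambda_n,p,q}^0=\phi_{\tau+\Lambda_n,p,q}^0$; taking $n\to\infty$ via the previous step yields $\tau_*\phi_{p,q}^0=\phi_{p,q}^0$, and the wired case is identical.

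The hard part is ergodicity. My plan is to route through extremality in the Gibbs formalism. First, the finite-volume spatial Markov property \eqref{eq:SMP} passes to the limit and shows that $\phi_{p,q}^0$ satisfies the DLR equations: conditioned on the configuration outside any finite box $\Lambda$, its law on $\Lambda$ is $\phi_{\Lambda,p,q}^\xi$ with the induced boundary condition. Second, $\phi_{p,q}^0$ is the stochastic minimum of the convex set of such FK-Gibbs measures, because for any Gibbs measure $\mu$ the DLR decomposition inside $\Lambda_n$ presents $\mu|_F$ as an average over boundary conditions on $\partial\Lambda_n$, each of which dominates the free one by \eqref{eq:Mon}, and passing to the limit gives $\phi_{p,q}^0\leq_{\rm st}\mu$. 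Being the stochastic minimum forces $\phi_{p,q}^0$ to be an extreme point of this convex set: any decomposition $\phi_{p,q}^0=\tfrac12(\mu_1+\mu_2)$ with Gibbs $\mu_i\geq_{\rm st}\phi_{p,q}^0$ forces $\mu_i[A]=\phi_{p,q}^0[A]$ on every increasing event $A$, hence $\mu_1=\mu_2=\phi_{p,q}^0$. Extremal Gibbs measures are tail-trivial by a classical DLR argument (conditioning on a non-trivial tail event would produce a proper convex decomposition into two Gibbs measures), and a translation-invariant tail-trivial measure on $\{0,1\}^{E(\bbZ^d)}$ is ergodic under the shift action. The wired case is dual.
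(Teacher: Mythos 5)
Your first two stages (existence of the limits via monotonicity of increasing cylinder events and \eqref{eq:pushing_bc}, the sandwiching argument for a general exhaustion $(G_n)$, and translation invariance) are correct and standard; note that the paper states this Fact without proof, deferring to the references, so the comparison below is with the argument used there. The gap is in your ergodicity stage, at its very first step: the assertion that ``the finite-volume spatial Markov property \eqref{eq:SMP} passes to the limit and shows that $\phi_{p,q}^0$ satisfies the DLR equations''. This is not a routine limit exchange, because the random-cluster specification is not quasilocal: the boundary condition induced on a finite box $\Lambda$ by the outside configuration is a non-local function of that configuration, so weak convergence of $\phi_{\Lambda_n,p,q}^0$ does not let you pass the conditional laws to the limit. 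Concretely, under $\phi_{\Lambda_n,p,q}^0$ the conditional law on $\Lambda$ given the outside is $\phi_{\Lambda,p,q}^{\xi_n}$ with $\xi_n$ the wiring produced by open paths in $\Lambda_n\setminus\Lambda$; monotonicity of $\xi_n$ gives one inequality in the limit, but the matching upper bound only produces the wiring in which all vertices of $\partial\Lambda$ joined to infinity outside $\Lambda$ are wired ``through infinity''. Identifying this with the genuine DLR wiring is exactly the delicate point: it requires controlling the infinite clusters of the configuration restricted to $\Lambda^c$, and the known proof (Grimmett, \emph{The Random-Cluster Model}, Thm.~4.31) runs through the almost-sure uniqueness of the infinite cluster. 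Inside these notes that is also logically uncomfortable, since the uniqueness statement (Theorem~\ref{thm:Burton-Keane_FK}) is itself usually justified via the ergodicity you are trying to prove; Burton--Keane can be run with translation invariance and finite energy only, but you would have to say so. As written, the hardest point of your whole argument is hidden in one asserted sentence, and every later step ((b) minimality, (c) extremality, (d) tail-triviality of extremal Gibbs measures, (e) tail-trivial plus translation-invariant implies ergodic --- all of which are fine) depends on it.

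The references avoid DLR entirely and prove tail triviality directly with the finite-volume tools you already have. For an increasing cylinder event $B$ depending on $\Lambda_m$ and a cylinder event $A$ depending on finitely many edges outside $\Lambda_n$ ($n\geq m$), \eqref{eq:SMP} and \eqref{eq:Mon} give $\phi_{\Lambda_N,p,q}^0[B\mid A]\geq \phi_{\Lambda_n,p,q}^0[B]$ for all large $N$, since conditionally on the edges outside $\Lambda_n$ the law on $\Lambda_n$ is some $\phi_{\Lambda_n,p,q}^\zeta\geq_{\rm st}\phi_{\Lambda_n,p,q}^0$. Letting $N\to\infty$ and extending to all $A\in\mathcal{F}_{\Lambda_n^c}$ by a monotone-class argument yields $\phi_{p,q}^0[B\cap A]\geq \phi_{\Lambda_n,p,q}^0[B]\,\phi_{p,q}^0[A]$. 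If $A$ is a tail event this holds for every $n$, so $\phi_{p,q}^0[B\cap A]\geq \phi_{p,q}^0[B]\,\phi_{p,q}^0[A]$; applying the same bound with $A^c$ in place of $A$ and summing forces both inequalities to be equalities, so $A$ is independent of every increasing cylinder event, hence (by a $\pi$-system argument) of the whole $\sigma$-algebra, giving $\phi_{p,q}^0[A]\in\{0,1\}$. The wired case is identical with the inequality reversed, and your step (e) then yields ergodicity. I would recommend replacing your DLR detour by this argument, or else proving the DLR membership honestly, which is a substantially longer task.
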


These infinite-volume measures satisfy the so-called Dobrushin--Lanford--Ruelle (DLR) condition, which essentially states that the Spatial Markov property also holds in infinite-volume. The DLR formalism allows one to define the general notion of infinite-volume measures (or Gibbs measures). In that context, Fact~\ref{fact:infinite_vol} states the existence of infinite-volume measures. We will not go further in the DLR formalism in these notes. \smallskip

It is generally not clear whether~$\phi_{p,q}^0$ and~$\phi_{p,q}^1$ are equal. In other words, when sending the boundary conditions to infinity, it is unclear whether they still manage to influence local events. 

It is a direct consequence of Proposition~\ref{prop:FKFKG} that 
\begin{align}\label{eq:phi_ordering0}
	\phi_{p,q}^0 \le_{\rm st} \phi_{p,q}^1 \qquad\text{ and } \qquad \phi_{p,q}^i \le_{\rm st} \phi_{p',q}^i \quad \text{ for all~$p<p'$ and~$i \in\{0,1\}$}.
\end{align}
Also, any limits (or infinite-volume measures) of~$\phi_{\Lambda_n,p,q}^{\xi_n}$ for sequences of boundary conditions~$\xi_n$ are always sandwiched between~$\phi_{p,q}^0$ and~$\phi_{p,q}^1$.

Observe that we have not yet managed to compare~$\phi_{p,q}^1$ and~$\phi_{p',q}^0$ for $p < p'$. The following result allows us to do this. 

\begin{proposition}\label{prop:phi_ordering}
	Fix~$q \geq 1$. There exist at most countably many values of~$p \in (0,1)$ such that~$\phi_{p,q}^0 \neq \phi_{p,q}^1$. 
	As a consequence, for all~$p < p'$,
	\begin{align}\label{eq:phi_ordering}
		\phi_{p,q}^0 \le_{\rm st} \phi_{p,q}^1 \le_{\rm st} \phi_{p',q}^0.
	\end{align}
\end{proposition}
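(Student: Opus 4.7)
The plan is to use a convex free-energy argument to show that $\phi_{p,q}^0 = \phi_{p,q}^1$ for all $p$ outside a countable set, and then deduce the stochastic dominations by interpolation. Reparametrize $p \in (0,1)$ by $\beta := \log\tfrac{p}{1-p} \in \bbR$ and set
\begin{equation*}
F_n^\xi(\beta) \;:=\; \frac{1}{|E(\La_n)|}\,\log \sum_{\omega \in \{0,1\}^{E(\La_n)}} e^{\beta |\omega|}\, q^{k(\omega^\xi)}.
\end{equation*}
For each $n$, $F_n^\xi$ is convex in $\beta$, being $|E(\La_n)|^{-1}$ times the log of a moment generating function linear in $\beta$. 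Moreover, changing the boundary condition from $0$ to $1$ alters the partition function by at most a multiplicative factor $q^{|\partial\La_n|}$, so $F_n^1 - F_n^0 = O(|\partial \La_n|/|E(\La_n)|) \to 0$. I would show that both $F_n^0$ and $F_n^1$ converge to a common convex limit $F : \bbR \to \bbR$, which is therefore differentiable off a countable set $N \subset \bbR$.

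Next, compute
\begin{equation*}
\tfrac{dF_n^\xi}{d\beta}(\beta) \;=\; \frac{1}{|E(\La_n)|}\sum_{e \in E(\La_n)} \phi_{\La_n,p,q}^\xi[\omega(e)=1].
\end{equation*}
Using~\eqref{eq:SMP} and monotonic pushing of boundary conditions (together with Corollary~\ref{cor:perco_domination} for a uniform upper bound), the bulk edge-marginal $\phi_{\La_n,p,q}^\xi[\omega(e)=1]$ is sandwiched between the marginals at $e$ under $\phi_{\La_K(e),p,q}^0$ and $\phi_{\La_K(e),p,q}^1$ for any $K < {\rm dist}(e,\partial \La_n)$. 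Since the number of edges within distance $K$ of $\partial \La_n$ is $o(|E(\La_n)|)$, and since by Fact~\ref{fact:infinite_vol} both sandwich quantities converge to $\phi_{p,q}^\xi[\omega(e_0)=1]$ as $K\to\infty$, the average above converges to the orbit-averaged density $\bar\theta_\xi(p) := \tfrac{1}{d}\sum_{i=1}^{d} \phi_{p,q}^\xi[\omega(e^{(i)})=1]$, where $e^{(1)},\dots,e^{(d)}$ is one representative per orientation orbit of $\bbZ^d$. Since pointwise limits of convex functions have their derivatives converging at differentiability points of the limit, for $\beta \notin N$ we get $\bar\theta_0(p) = F'(\beta) = \bar\theta_1(p)$.

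For such $p$, the orbit-wise inequalities $\phi_{p,q}^0[\omega(e^{(i)})=1] \leq \phi_{p,q}^1[\omega(e^{(i)})=1]$ granted by~\eqref{eq:phi_ordering0} together with equality of their averages force termwise equality. By Strassen's theorem applied to~\eqref{eq:phi_ordering0} there is a coupling $P$ with $\omega^0 \sim \phi_{p,q}^0$, $\omega^1 \sim \phi_{p,q}^1$ and $\omega^0 \leq \omega^1$ $P$-a.s.; translation-invariance propagates the equal marginals to every edge $e \in E(\bbZ^d)$, so $\omega^0(e) = \omega^1(e)$ $P$-a.s.\ for all $e$, whence $\phi_{p,q}^0 = \phi_{p,q}^1$. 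This gives the countability claim. The dominations in~\eqref{eq:phi_ordering} follow by interpolation: given $p < p'$, choose $p'' \in (p,p')$ outside the countable exceptional set, then
\begin{equation*}
\phi_{p,q}^1 \;\leq_{\rm st}\; \phi_{p'',q}^1 \;=\; \phi_{p'',q}^0 \;\leq_{\rm st}\; \phi_{p',q}^0
\end{equation*}
by~\eqref{eq:phi_ordering0} applied on either side of $p''$.

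The main obstacle is the passage from convergence of $F_n^\xi$ to convergence of $dF_n^\xi/d\beta$ against the orbit-averaged infinite-volume density $\bar\theta_\xi(p)$: this combines the standard convex-analysis lemma on derivative convergence at smooth points of the limit with a careful SMP-based sandwich to handle bulk-edge marginals, while discarding the $o(|E(\La_n)|)$ boundary contribution. Everything else (convexity of $F_n^\xi$, comparability of $F_n^0$ and $F_n^1$, and Strassen's monotone coupling) is routine.
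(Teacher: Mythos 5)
Your proposal is correct and follows essentially the same route as the paper: convexity of the (reparametrised) free energy, hence differentiability outside a countable set, convergence of the derivatives to the infinite-volume edge density at differentiability points, and then equality of $\phi_{p,q}^0$ and $\phi_{p,q}^1$ via the stochastic ordering, with \eqref{eq:phi_ordering} deduced by interpolating through a good intermediate parameter. The extra details you spell out (the SMP sandwich for bulk marginals, Strassen's coupling forcing termwise equality) are exactly the steps the paper relegates to footnotes.
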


Essentially, the proposition states that the influence of decreasing the boundary conditions can not compensate that of increasing the parameter. %It ultimately boils down to a volume v. surface competition, but this is hidden in the definition of the free energy.

\begin{proof}
We only sketch the proof here as it is a very general approach. 
Define the {\em free energy} of FK-percolation with parameters~$p$ and~$q$ as
\begin{align}\label{eq:free_energy_FK_def}
	f(p,q) = \lim_{n} f_n^{\xi_n}(p,q) = \lim_{n} \tfrac{1}{n^d} \log Z_{\Lambda_n,p,q}^{\xi_n},
\end{align}
where the limit may be taken for any sequence of boundary conditions~$(\xi_n)$ and does not depend on this sequence\footnote{This is relatively standard. It is based on the simple observation that~$1 \leq Z_{\Lambda_n,p,q}^{\xi_n}/Z_{\Lambda_n,p,q}^{1} \le q^{|\partial \La_n|}$. The ``error'' term~$\tfrac{1}{n^d} \log q^{|\partial \La_n|}$ tends to~$0$, since~$|\partial \La_n| = o(n^d)$.}.

Write\footnote{This parametrisation is chosen to produce a convex function with no correction terms.}~$p = p(t) = \frac{e^{t}}{1 - e^{t}}$, with~$t \geq 0$, and set 
\begin{align}
	g_n^{\xi_n}(p,q) = f_n^{\xi_n}(p,q) +\tfrac{|E(\La_n)|}{n^d} \log(1-p)  \xrightarrow[n\to\infty]{} f(p,q) + d \log(1-p) =:g(p,q).
\end{align}
Explicit computations show that 
\begin{align}\label{eq:partialf_n}
		\frac{\rm d}{{\rm d} t}g_n^{\xi_n}(p,q) 
%		&=\frac{\rm d}{{\rm d} t} \big(f_n^{\xi_n}(p,q) +\log(1-p) \tfrac{|E(\La_n)|}{n^d} \big)\\
%		&= \frac{1}{n^d}\cdot \frac{{\rm d}p }{{\rm d} t} \cdot \Big(  \frac{1}{ Z_{\Lambda_n,p,q}^{\xi_n}}\frac{\rm d}{{\rm d} p} Z_{\Lambda_n,p,q}^{\xi_n} - \frac1{1-p} |E(\La_n)|\Big) \\
%		&= \frac{1}{n^d} p(1-p) \cdot \Big( \tfrac{1}{p(1-p)} \sum_{e\in E(\La_n)} \phi_{\Lambda_n,p,q}^{\xi_n}[e \text{ open}] \big)\\
		&= \frac{1}{n^d}  \sum_{e\in E(\La_n)}  \phi_{\Lambda_n,p,q}^{\xi_n}[e \text{ open}],
\end{align}
which is increasing in~$p$. We conclude that the functions~$t \mapsto g_n^{\xi_n}(p(t),q)$ are all convex, and therefore so is~$t \mapsto g(p(t),q)$.

As a convex function,~$t \mapsto g(p(t),q)$ has left- and right-derivatives at all points, and is differentiable at all except at most countably many points. 
% Since~$t \mapsto p(t)$ is a diffeomorphism, the same conclusion holds for~$p\mapsto f(p,q)$.
Furthermore, taking the limit as~$n\to\infty$ in~\eqref{eq:partialf_n}, we conclude\footnote{This step requires some care: the derivative should  be approximated by a finite increment, and the convexity of~$f$ should be used.} that, for all~$t$ for which~$t \mapsto g(p(t),q)$ is differentiable, 
\begin{align}\label{eq:f_edge_open}
	\frac{\rm d}{{\rm d} t }g(p,q) = \phi_{p,q}^0[e \text{ open}] = \phi_{p,q}^1[e \text{ open}] \qquad \text{ for any edge~$e$}.
\end{align}
The above, together with the stochastic ordering between~$\phi_{p,q}^0$ and~$\phi_{p,q}^1$ implies that~$\phi_{p,q}^0= \phi_{p,q}^1$.
Then,~\eqref{eq:phi_ordering} follows directly from the above and~\eqref{eq:phi_ordering0}.
\end{proof}

\begin{remark}\label{rem:f_FK_diff_p}
	Using the the notation of the proof above, 
	the differentiability of~$p \mapsto f(p,q)$ at a point~$p(t)$ is equivalent to that of~$t \mapsto g(p(t),q)$ at~$t$. 
	It follows that~$p \mapsto f(p,q)$ has left- and right-derivatives at all points
	and that these are equal for all $p$ except at at-most countably many points. 
	When they are equal, and $p \mapsto f(p,q)$ is differentiable, the infinite-volume measure is unique. 
	The converse implication may also be proved. 
\end{remark}

\section{Phase transition}

We are now ready to define the point of phase transition of FK-percolation, as we did for Bernoulli percolation. 

\begin{definition}
	Fix~$d \geq 2$ and~$q \geq 1$. Set 
	$$p_c = p_c(q) = \sup\{p: \, \phi^0_{p,q}[0\lra \infty] = 0\}.$$
\end{definition}

As in the case of Bernoulli percolation, $p_c$ separates two distinct regimes. Indeed, using Proposition~\ref{prop:phi_ordering}
and the ergodicity of $\phi^0_{p,q}$ and $\phi^1_{p,q}$, we conclude that 
\begin{itemize}
\item  for~$p<p_c$,~$\phi^1_{p,q}[0\lra \infty] = 0$  and~$\phi^1_{p,q}$-a.s.\ there exists no infinite cluster; 
\item for~$p>p_c$,~$\phi^0_{p,q}[0\lra \infty]  > 0$  and~$\phi^0_{p,q}$-a.s.\ there exists at least one infinite cluster. 
\end{itemize} 
In addition, the domination~\eqref{eq:perco_domination} by Bernoulli percolation allows us to deduce that 
$$0<p_c(q)<1 \qquad \text{for all~$q\geq1$ and~$d \geq 2$.}$$

We close this part with a useful application of the properties described above. 
% I formulate this as a proposition rather than an exercise because it's important, and might be used

\begin{proposition}\label{prop:unique_measure_subcrit}
	If~$p$ is such that~$\phi^1_{p,q}[0\lra \infty] = 0$, then~$\phi^0_{p,q}= \phi^1_{p,q}$. 
\end{proposition}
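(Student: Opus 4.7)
By \eqref{eq:phi_ordering} we already have $\phi^0_{p,q}\leq_{\rm st}\phi^1_{p,q}$ and aim to upgrade this to equality. It suffices to show $\phi^1_{p,q}[A]\leq\phi^0_{p,q}[A]$ for every increasing local event $A$; fix such an $A$ depending on the edges of some box $\Lambda_k$. The plan is to condition on an almost-surely finite random set $\mathcal{K}'$ that contains $\Lambda_k$, and to exploit the fact that its closed outer boundary screens off the boundary condition at infinity.

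Define $\mathcal{K}'=\mathcal{K}'(\omega)$ as the union of $\Lambda_k$ with all clusters of $\omega$ visiting $\Lambda_k$. The hypothesis, together with translation invariance and a union bound over the finite set $\Lambda_k$, gives $\phi^1_{p,q}[\Lambda_k\lra\infty]=0$; stochastic domination then transfers this to $\phi^0_{p,q}$, so $\mathcal{K}'$ is $\phi^\xi_{p,q}$-a.s.\ finite for $\xi\in\{0,1\}$. For any finite $K'\supseteq\Lambda_k$, the event $\{\mathcal{K}'=K'\}$ is local: it is equivalent to every vertex of $K'$ being connected to $\Lambda_k$ using only edges of $E(K')$, together with every edge of the outer boundary $\partial_e K'$ being closed. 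A direct partition-function computation, using that closing $\partial_e K'$ causes the cluster count to split as $k(\omega^\xi)=k(\omega|_{E(K')})+k((\omega|_{\mathrm{outside}})^\xi)$ and hence factorises the weight $p^{|\omega|}(1-p)^{|\omega^c|}q^{k(\omega^\xi)}$ into an ``inside'' piece that is $\xi$-independent times an ``outside'' piece that carries all the $\xi$-dependence, yields
\begin{itemize}
\item[(i)] $\phi^\xi_{p,q}[A\mid\mathcal{K}'=K']$ is independent of $\xi$ (it equals the probability of $A$ under $\phi^0_{K',p,q}$ conditioned on every vertex of $K'$ being connected to $\Lambda_k$);
\item[(ii)] $\phi^\xi_{p,q}[\mathcal{K}'=K']=C(K')\cdot\phi^\xi_{p,q}[\omega\equiv 0 \text{ on } E(K')\cup\partial_e K']$, for some constant $C(K')>0$ depending on $K'$, $p$, $q$ but not on $\xi$.
\end{itemize}

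The conclusion now falls out. The event $\{\omega\equiv 0 \text{ on } E(K')\cup\partial_e K'\}$ is decreasing, so by \eqref{eq:phi_ordering} it has larger probability under $\phi^0_{p,q}$ than under $\phi^1_{p,q}$. Together with (ii) this gives $\phi^0_{p,q}[\mathcal{K}'=K']\geq\phi^1_{p,q}[\mathcal{K}'=K']$ for every finite $K'\supseteq\Lambda_k$. Since $\mathcal{K}'$ is almost-surely finite under both measures, both $\sum_{K'}\phi^\xi_{p,q}[\mathcal{K}'=K']$ equal $1$, which forces all the termwise inequalities to be equalities. Combined with (i), the decomposition $\phi^\xi_{p,q}[A]=\sum_{K'}\phi^\xi_{p,q}[A\mid\mathcal{K}'=K']\,\phi^\xi_{p,q}[\mathcal{K}'=K']$ is $\xi$-independent, giving $\phi^1_{p,q}[A]=\phi^0_{p,q}[A]$ as required.

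The main subtle point is choosing the random set $\mathcal{K}'$ correctly: enlarging the cluster of $\partial\Lambda_k$ to contain all of $\Lambda_k$ is essential for ensuring that $A$ is fully determined by the ``inside-$K'$'' part of the configuration, and hence inherits the $\xi$-independence from the factorisation. Once this choice is in place, identities (i) and (ii) are straightforward manipulations of the FK weight, and the rest of the argument is a neat squeeze using that two nonnegative summable sequences with a common termwise inequality and equal total mass must coincide.
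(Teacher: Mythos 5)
Your argument is correct, but it is not the route the paper takes: the paper proves this proposition via Exercise~\ref{exo:influence_edge}, i.e.\ by an exploration argument giving the quantitative bound $0\le \phi^1_{\La_N,p,q}[A]-\phi^0_{\La_N,p,q}[A]\le \phi^1_{\La_N,p,q}[\La_n\lra\partial\La_N]$ for increasing events $A$ depending on $\La_n$, and then letting $N\to\infty$ and using $\phi^1_{p,q}[\La_n\lra\infty]=0$. You instead condition on the cluster-closure $\mathcal K'$ of the box, use that a closed edge-boundary $\partial_e K'$ screens the wiring at infinity so that the induced boundary condition on $K'$ is free (this is where \eqref{eq:SMP}, or rather its infinite-volume DLR version, enters), and then deduce equality of the laws of $\mathcal K'$ under $\phi^0_{p,q}$ and $\phi^1_{p,q}$ from the termwise inequality coming from the decreasing all-closed event together with the equal total mass $1$ (a.s.\ finiteness of $\mathcal K'$ under both measures, which you correctly obtain from the hypothesis plus $\phi^0_{p,q}\le_{\rm st}\phi^1_{p,q}$). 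Both proofs exploit the same phenomenon --- absence of an infinite cluster screens the boundary condition --- but they package it differently: the exercise's route produces an explicit estimate on the difference of finite-volume measures in terms of a one-arm probability, which the paper reuses later (e.g.\ in the influence bound \eqref{eq:influence_boun32} in the proof of Corollary~\ref{cor:p_c(q)}), whereas your argument is purely qualitative but self-contained and arguably slicker, with the nice ``equal total mass forces termwise equality'' squeeze replacing any quantitative control. One presentational caveat: your step (i)--(ii) is phrased as a ``partition-function computation'' on the infinite-volume measure, which does not literally make sense; to be rigorous you should either invoke the DLR property stated in the text, or perform the factorisation in a finite box $\La_N\supseteq K'\cup\partial_e K'$ (where \eqref{eq:SMP} applies verbatim and the closed boundary $\partial_e K'$ forces free induced boundary conditions on $K'$ for any $\xi$) and pass to the limit using that all events involved are local. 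This is a matter of writing, not a gap.
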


\begin{proof}
	This is a very instructive exercise, see Exercise~\ref{exo:influence_edge}.
\end{proof}

\section{References and further results}

We list here some important known results which will not be discussed in these notes.

\paragraph{Edwards--Sokal coupling.} 
One of the motivations for FK-percolation is its link to the~$q$-state Potts model, a spin model in which each vertex of a finite graph~$G$ is assigned a spin in~$\{1,\dots, q\}$. 
One way to view the coupling between FK-percolation and the Potts model on~$G$ is the following. Consider an integer~$q \geq 2$ and~$p \in (0,1)$. 
Sample a percolation configuration~$\omega$ on~$G$ according to~$\phi_{G,p,q}^0$, then assign independent spins in~$\{1,\dots, q\}$ to each  cluster of~$\omega$ --- that is, assign the same spin to all vertices in that cluster. The resulting spin configuration has the law of the~$q$-state Potts model with inverse temperature~$\beta = -\log(1-p)$.

This correspondence is especially fruitful for $q = 2$, when the corresponding Potts model is the Ising model. 
In this case, combining tools from FK-percolation with those coming from the Ising model (most notably its random-current representation)
often yields results only available for this specific value of $q$. We will not focus on them in these notes. 

\paragraph{Uniqueness of the infinite cluster.}
The argument of Burton and Keane that proves the uniqueness of the infinite cluster is very robust and also applies to FK-percolation.

\begin{theorem}[Uniqueness of the infinite cluster]\label{thm:Burton-Keane_FK}
	Fix~$d \geq 1$ and~$q \geq 1$. Then, for all~$p \in (0,1)$ and~$i \in \{0,1\}$, 
	\begin{align*}
		\phi_{p,q}^i[\text{exists no infinite cluster}] = 1 \quad \text{ or }\quad
		\phi_{p,q}^i[\text{exists exactly one infinite cluster}] = 1.
	\end{align*}
\end{theorem}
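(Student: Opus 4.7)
The plan is to adapt the Burton--Keane argument (Theorem~\ref{thm:Burton-Keane}) essentially verbatim, replacing the spatial independence of Bernoulli percolation by the \emph{finite energy} property of FK-percolation. Fix $p \in (0,1)$, $q \geq 1$ and $i \in \{0,1\}$, and write $N = N(\omega)$ for the number of infinite clusters. By Fact~\ref{fact:infinite_vol}, $\phi^i_{p,q}$ is translation-invariant and ergodic, so $N$ is $\phi^i_{p,q}$-a.s.\ equal to some constant in $\{0,1,2,\dots,\infty\}$. I would then rule out the cases $2\le N<\infty$ and $N=\infty$.

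The key replacement for spatial independence is the finite-energy property, which I would extract first: by \eqref{eq:SMP}, conditionally on $\omega$ outside a finite set $F\subset E$, the law of $\omega$ on $F$ is some $\phi_{F,p,q}^{\zeta}$, and by \eqref{eq:one_edge_open} and the sequential application of \eqref{eq:pushing_bc} every specific configuration on $F$ has conditional probability at least $\big(\tfrac{p}{p+(1-p)q}\big)^{|F|}\cdot (1-p)^{|F|} > 0$, uniformly in the exterior. To rule out $2\le N<\infty$, I would choose $k$ with $\phi^i_{p,q}[\text{all infinite clusters meet }\Lambda_k]\ge 1/2$; the latter event depends only on edges outside $\Lambda_k$, so conditioning on such a configuration and then forcing every edge of $\Lambda_k$ to be open gives positive probability to an event on which $N=1$, contradicting $N\ge 2$.

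To rule out $N=\infty$, I would introduce trifurcations exactly as in the Bernoulli case: $u$ is a trifurcation if $u$ belongs to an infinite cluster and removing $u$ splits that cluster into at least three infinite components. Assuming $N=\infty$ a.s., I would pick $k$ and configurations on $\partial \Lambda_k$ such that, with positive probability, three distinct infinite clusters touch $\partial\Lambda_k$ at three prescribed vertices $x_1,x_2,x_3$; then, conditionally on this exterior event, the finite-energy lower bound lets me prescribe an arbitrary configuration inside $\Lambda_k$ — in particular one in which $0$ is joined to $x_1,x_2,x_3$ by three edge-disjoint paths and no other primal edges of $\Lambda_k$ are open. On this event $0$ is a trifurcation, so $\phi^i_{p,q}[0\text{ is a trifurcation}]>0$. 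Translation invariance then gives $\mathbb{E}[T_k]=c|\Lambda_k|$ for some $c>0$, yielding $\phi^i_{p,q}[T_k\ge c|\Lambda_k|/2]\ge c/2$ for all $k$, which contradicts the deterministic bound $T_k\le|\partial\Lambda_k|$ from Lemma~\ref{lem:trifurcations} (whose proof is purely combinatorial and makes no use of independence, so it applies verbatim).

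The main obstacle is the finite-energy surgery: unlike in the Bernoulli case, one cannot simply say that the configuration inside $\Lambda_k$ is independent of the configuration outside, so the correct formulation is to condition on the exterior and then apply the uniform lower bound on conditional probabilities of cylinder events in $\Lambda_k$. Once this is stated carefully, the two surgeries (merging all infinite clusters to one; building a trifurcation at $0$) go through as in the Bernoulli proof, and the amenability input $|\partial\Lambda_k|=o(|\Lambda_k|)$ closes the argument.
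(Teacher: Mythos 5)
Your proposal is correct and is exactly the adaptation the paper has in mind: Theorem~\ref{thm:Burton-Keane_FK} is stated there without proof, with the remark that the Burton--Keane argument is robust enough to apply, and your substitution of the finite-energy bound (via \eqref{eq:SMP} and \eqref{eq:one_edge_open}) for spatial independence, together with ergodicity from Fact~\ref{fact:infinite_vol} and the purely combinatorial Lemma~\ref{lem:trifurcations}, is precisely how that adaptation is carried out. I see no gaps beyond the level of detail the paper itself allows in its Bernoulli proof of Theorem~\ref{thm:Burton-Keane}.
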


\paragraph{Sharpness of the phase transition.}

The sharpness result of Theorem~\ref{thm:sharpness_perco} was extended to general FK-percolation with~$q \geq 1$ in~\cite{DumRaoTas19} 
via a revolutionary use of the OSSS inequality. 

\begin{theorem}\label{thm:sharpness_FK_OSSS}
	Fix~$d\geq 2$ and~$q \ge 1$. For all~$p < p_c(q)$ there exists~$c(p) >0$ such that 
	\begin{align*}
		\phi^1_{p,q}[0\lra \partial \La_n] \leq e^{-c(p) n} \qquad \text{ for all~$n\geq 1$}.
	\end{align*}
\end{theorem}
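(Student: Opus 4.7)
The plan is to establish a sharp dichotomy between exponential decay and percolation by combining a renormalisation criterion with the sharp-threshold inequality Theorem~\ref{thm:BKKKL}.

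\textbf{Finite-size criterion.} First I would establish that there exist $\delta_0 > 0$ and $N_0$ such that if $\phi^1_{\La_{2N}, p, q}[0 \lra \partial \La_N] \leq \delta_0$ for some $N \geq N_0$, then $\phi^1_{p,q}[0\lra \partial \La_n]$ decays exponentially in $n$. This is a renormalised Peierls-type argument in blocks of side $N$: any path from $0$ to $\partial \La_n$ must traverse of order $n/N$ disjoint annular shells $\La_{2N}(x_i) \setminus \La_N(x_i)$; by the spatial Markov property~\eqref{eq:SMP} together with the pushing-of-boundary-conditions inequality~\eqref{eq:pushing_bc}, the conditional probability to cross each shell, given the configuration outside, is at most $\delta_0$ (worst case being wired), so a product estimate yields decay of the form $\delta_0^{\lfloor n/(2N)\rfloor}$.

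\textbf{Sharp threshold.} Set $f_N(p) := \phi^1_{\La_{2N}, p, q}[0 \lra \partial \La_N]$ and $\tilde p_c := \sup\{ p : \limsup_N f_N(p) \leq \delta_0\}$. The first step gives the required exponential decay for all $p < \tilde p_c$, and the monotone limit $f_N(p) \to \phi^1_{p,q}[0\lra\infty]$ readily yields $\tilde p_c \leq p_c$. For the reverse direction, I would apply Russo's formula~\eqref{eq:russo_FK} and Theorem~\ref{thm:BKKKL} to the increasing event $A_N = \{0 \lra \partial \La_N\}$ under $\phi^1_{\La_{2N}, p, q}$ to obtain
$$p(1-p)\, f_N'(p) \;\geq\; c\, f_N(p)(1-f_N(p)) \,\log\!\big(1/M_N(p)\big),$$
where $M_N(p) := \max_e \mathrm{Cov}(A_N, \omega(e))$. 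Granted a bound of the form $M_N(p) \leq C N^{-\alpha}$ whenever $f_N(p) \in [\delta_0, 1-\delta_0]$, integrating the differential inequality forces the transition window $\{p : f_N(p) \in [\delta_0, 1-\delta_0]\}$ to have length $O(1/\log N) \to 0$. Consequently every $p > \tilde p_c$ satisfies $f_N(p) \to 1$, from which a second block renormalisation constructs an a.s.\ infinite cluster under $\phi^0_{p,q}$, forcing $p \geq p_c$. Hence $\tilde p_c = p_c$, which is the theorem.

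\textbf{Main obstacle.} The hard step is controlling the maximum edge-covariance $M_N$. In Bernoulli percolation one argues pointwise that a pivotal edge $e=\{x,y\}$ must be connected by disjoint arms to $0$ and to $\partial \La_N$, and concludes a polynomial bound via BK/FKG-style arguments. For FK-percolation with $q>1$ the BK inequality is unavailable, so no such pointwise influence estimate is known in general; this is precisely the obstruction addressed in~\cite{DumRaoTas19}, where the BKKKL step is replaced by the OSSS inequality, which controls $\sum_e \mathrm{Cov}(A_N,\omega(e))$ directly in terms of the revealment probabilities of a randomised exploration algorithm for $A_N$ and sidesteps any pointwise influence bound. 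I would therefore expect the cleanest execution of the plan to substitute OSSS for Theorem~\ref{thm:BKKKL} at the sharp-threshold step while retaining the surrounding renormalisation scheme.
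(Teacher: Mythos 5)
First, a point of comparison: these notes do not actually prove Theorem~\ref{thm:sharpness_FK_OSSS}; it is stated and attributed to~\cite{DumRaoTas19}, so the only internal analogue is the two-dimensional sharp-threshold argument in the proof of Corollary~\ref{cor:p_c(q)}, which your plan resembles. As a proof of the general-dimensional statement, however, your proposal has genuine gaps, and the central one is the covariance bound you propose to take for granted. For your chosen event $A_N=\{0\lra\partial\La_N\}$ the bound $M_N(p)\le CN^{-\alpha}$ is not merely unavailable, it is false: by finite energy, an edge incident to the origin has ${\rm Cov}(A_N,\omega(e))\ge c(\delta_0,p,q,d)>0$ whenever $f_N(p)\ge\delta_0$ (on $A_N$ one may close all but one origin edge at constant cost, making that edge pivotal), so $\log(1/M_N)$ stays bounded and Theorem~\ref{thm:BKKKL} produces no shrinking window — in any dimension. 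The event must be replaced by a box-to-boundary event such as $\La_N\lra\partial\La_{2N}$, exactly as in the proof of Corollary~\ref{cor:p_c(q)}; but then bounding the influence of a single edge via \eqref{eq:exo_influence_one_arm} requires a quantitative one-arm estimate near $p_c$, which in two dimensions comes from RSW (\eqref{eq:one_arm_sd}, i.e.\ from (Con)) and has no known substitute for $q>1$, $d\ge3$. So the BKKKL route does not close, and ``substituting OSSS'' is not a patch to your scheme but a structurally different proof: in~\cite{DumRaoTas19} the revealment of the exploration algorithm is bounded self-referentially by $\tfrac1n\sum_{k<n}\phi[0\lra\partial\La_k]$, yielding the differential inequality $f_n'\ge c\,\tfrac{n}{\sum_{k<n}f_k}\,f_n(1-f_n)$, and a purely analytic lemma on this family gives exponential decay below $\tilde p_c$ and $\theta(p)\ge c(p-\tilde p_c)$ above it, with no sharp-threshold window and no finite-size criterion of your type.

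Two further steps would fail as written. Your finite-size criterion drops a volume factor: smallness of $\phi^1_{\La_{2N},p,q}[0\lra\partial\La_N]$ below a fixed $\delta_0$ only bounds the probability of crossing a shell $\La_{2N}(x)\setminus\La_N(x)$ by roughly $N^{d-1}\delta_0$, since any of the $\asymp N^{d-1}$ vertices of $\partial\La_N(x)$ may initiate the crossing; the criterion must be phrased for the block event $\La_N\lra\partial\La_{2N}$, as in Proposition~\ref{prop:finite_size_ad}. And the conclusion ``every $p>\tilde p_c$ satisfies $f_N(p)\to1$'' is impossible: by finite energy $f_N$ is bounded away from $1$ uniformly in $N$ (indeed $f_N\to\phi^1_{p,q}[0\lra\infty]$), so the supercritical half of the argument must instead produce $\theta(p)>0$ directly by integrating a derivative bound, as in the proof of Theorem~\ref{thm:sharpness_perco} or in~\cite{DumRaoTas19}, or be run on a box-to-boundary event whose probability genuinely tends to $1$ when $\theta(p)>0$.
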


At the time of writing, the super-critical sharpness of FK-percolation (in the sense of~\eqref{eq:sharp_super_crit}) remains an open problem  in dimensions greater than two.

\section{Exercises: basics of FK-percolation}

\begin{exo}\label{exo:continuity_FK}
	Fix~$d \geq 2$ and~$q \geq 1$. Prove that, for all~$p$, 
	\begin{align*}
		\lim_{u \nearrow p} \phi_{u,q}^0 = 		\lim_{u \nearrow p} \phi_{u,q}^1 = \phi_{p,q}^0\qquad   \text{ and }\qquad 
		\lim_{u \searrow p} \phi_{u,q}^0 = 		\lim_{u \searrow p} \phi_{u,q}^1 = \phi_{p,q}^1,
	\end{align*}
	in the sense that, for all~$A$ depending on finitely many edges~$\lim_{u \nearrow p} \phi_{u,q}^0[A]  = \phi_{p,q}^0[A]$.
\end{exo} 

%
%\begin{exo}\label{exo:free_energy}
%	Fix~$d \geq 2$,~$q \geq 1$ and~$p \in (0,1)$. 
%	\begin{itemize}
%	\item[(a)] Show that for any sequence of boundary conditions~$(\xi_n)_n$, the following limit exists and does not depend on the choice of sequence of boundary conditions
%	$$\displaystyle f(p,q) = \lim_{N\to\infty}  f_N^{\xi_N}(p,q) := \lim_{N\to\infty} \tfrac{1}{N^d} \log Z_{\Lambda_N,p,q}^{\xi_N}.$$
%	\item[(b)] Compute~$\partial_p f_N^{\xi_N}(p,q)$ and~$\partial_t g_N^{\xi_N}(p(t),q)$, with~$p(t)$ and~$g_N^{\xi_N}$ defined in the proof of Proposition~\ref{prop:phi_ordering}. Conclude that~$t \mapsto  g(p(t),q) :=\lim_{N\to\infty}  g_N^{\xi_N}(p,q)$ is convex. 
%	\item[(c)]	Prove that 
%	$\partial_t^- g(p,q) = \phi_{p,q}^0[e \text{ open}]$ and~$\partial_t^+ g(p,q) = \phi_{p,q}^1[e \text{ open}]$.
%	\item[(d)] Conclude that, when~$p\mapsto f(p,q)$ is differentiable,~$\phi_{p,q}^0[e \text{ open}] = \phi_{p,q}^1[e \text{ open}]$. 
%	Using~$\phi_{p,q}^0\le_{\rm st} \phi_{p,q}^1$, deduce that~$\phi_{p,q}^0= \phi_{p,q}^1$.
%	\end{itemize} 
%\end{exo} 

\begin{exo}\label{exo:influence_edge}~
\begin{itemize}
	\item[(a)] 
	Fix  a finite subgraph~$G = (V,E)$ of~$\bbZ^d$, a boundary condition~$\xi$,~$p \in (0,1)$ and~$q \geq 1$;
	write~$\phi = \phi^\xi_{G,p,q}$.
	For~$A,B \subset V$ non-empty, recall that~$A \lra B$ is the event that some point of~$A$ is connected to some point of~$B$. Prove that, for any~$e \in E$, 	
	\begin{align}\label{eq:exo_influence_one_arm}
	\phi[A\lra B \,|\, \omega(e) = 1] - 	\phi[A\lra B \,|\, \omega(e) = 0] 
	\leq 	\phi[e\lra A \text{ and } e \lra  B \,|\, \omega(e) = 1].\qquad
	\end{align}
	{\em Hint:} explore the cluster of~$e$ under~$\phi[\cdot \,|\, \omega(e) = 1]$. If it does connect~$A$ to~$B$, prove that the probability of~$A\lra B$ conditionally on the realisation of the cluster is smaller than~$\phi[A\lra B\,|\, \omega(e) = 0]$.

	\item[(b)] Consider FK-percolation on~$\bbZ^d$ with~$q > 1$ and some~$p \in (0,1)$. 
	Using the same ideas as above, prove that for any~$n <N$ and any increasing event~$A$ depending only on the edges in~$\La_n$, 
	\begin{align*}
		0\le \phi_{\La_N,p,q}^1[A] -\phi_{\La_N,p,q}^0[A]  \leq \phi_{\La_N,p,q}^1[\La_n \lra\partial\La_N] ,
	\end{align*}
	
	\item[(c)]	Deduce that if~$p$ is such that~$\phi^1_{p,q}[0\lra\infty]= 0$, then~$\phi_{p,q}^0=\phi_{p,q}^1$. 
\end{itemize}
\end{exo}

\chapter{Phase transition of planar FK-percolation}\label{ch:3dichotomy}

For the rest of the notes, we focus on two dimensional FK-percolation. Unless otherwise stated, we work on~$\bbZ^2$. 
The cluster-weight~$q \geq 1$ will be fixed, and we remove it from the notation. 
Whenever the choice of~$p$ is clear, we also remove~$p$ from the notation. 

The goal of this chapter is to show that the planar FK-percolation exhibits a sharp phase transition at its self-dual point 
and to establish a dichotomy between two types of phase transition (continuous and discontinuous). 
Only in Chapter~\ref{ch:4cont_v_discont} will we establish which type of phase transition the model undergoes. 

\section{Duality for FK-percolation}

As for Bernoulli percolation, planar FK-percolation has a convenient duality property. 
For simplicity, we will state this property in the simple case of measures on boxes~$\La_n$ with either free or wired boundary conditions. 
Write~$\La_n^*$ for the subgraph of~$(\bbZ^2)^*$ formed of the edges dual to the edges of~$\La_n$.  

\begin{proposition}[Duality]\label{prop:duality}
	Fix~$n\geq 1$,~$q\geq 1$,~$p \in (0,1)$ and~$\xi \in \{0,1\}$. 
	If~$\omega$ is sampled according to~$\phi_{\La_n,p,q}^\xi$, then~$\omega^*$ has the law~$\phi_{\La_n^*,p^*,q}^{1-\xi}$,
	where~$p^* \in (0,1)$ is defined by 
	\begin{align}\label{eq:p_dual}
		\frac{p}{1-p} \cdot \frac{p^*}{1-p^*} = q.
	\end{align}
\end{proposition}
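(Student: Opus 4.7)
The plan is to match the densities of $\omega$ and $\omega^*$ up to an $\omega$-independent multiplicative constant, and to see that the matching forces \eqref{eq:p_dual}. Using the definition of FK-percolation, the equality $|E(\La_n^*)| = |E(\La_n)|$ (one dual edge per primal edge), and the relation $|\omega^*| = |E(\La_n)| - |\omega|$ coming from $\omega^*(e^*) = 1 - \omega(e)$, I would factor out the $\omega$-independent parts to write
\begin{align*}
\phi_{\La_n,p,q}^{\xi}[\omega] &\;\propto\; \Big(\tfrac{p}{1-p}\Big)^{|\omega|} q^{k(\omega^{\xi})},\\
\phi_{\La_n^*,p^*,q}^{1-\xi}[\omega^*] &\;\propto\; \Big(\tfrac{1-p^*}{p^*}\Big)^{|\omega|} q^{k((\omega^*)^{1-\xi})},
\end{align*}
with omitted factors depending only on $n$, $p$ and $p^*$. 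The ratio of the two densities then depends on $\omega$ only through
$\big(\tfrac{p}{1-p}\cdot\tfrac{p^*}{1-p^*}\big)^{|\omega|} \cdot q^{\,k(\omega^{\xi}) - k((\omega^*)^{1-\xi})}$.

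The crux is the planar combinatorial identity
\begin{align*}
k(\omega^{\xi}) - k((\omega^*)^{1-\xi}) = -|\omega| + C(n,\xi),
\end{align*}
with $C(n,\xi)$ independent of $\omega$. I would establish it via Euler's formula $V - E + F = 1 + k$ applied to $(\La_n,\omega)$ embedded in the plane, after collapsing all primal boundary vertices into a single outer vertex when $\xi = 1$. The faces of this embedding are in natural bijection with the clusters of $(\omega^*)^{1-\xi}$: dual vertices in a common face of $\omega$ are connected via open dual edges inside that face, and the opposite wiring $1-\xi$ precisely groups the outer dual vertices as the dual cluster lying in the unbounded face. Hence $F(\omega) = k((\omega^*)^{1-\xi})$, and Euler gives the claimed identity with $C(n,\xi) = V_\xi - 1$, where $V_\xi = |V(\La_n)|$ if $\xi = 0$ and $V_\xi = |V(\La_n)| - |\partial\La_n| + 1$ if $\xi = 1$.

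Plugging the identity into the ratio yields
$\big(\tfrac{p}{1-p}\cdot\tfrac{p^*}{q(1-p^*)}\big)^{|\omega|} \cdot q^{C(n,\xi)}$,
which is $\omega$-independent exactly when $\tfrac{p}{1-p}\cdot\tfrac{p^*}{1-p^*} = q$, that is, precisely \eqref{eq:p_dual}. Since $\phi_{\La_n,p,q}^{\xi}$ and $\phi_{\La_n^*,p^*,q}^{1-\xi}$ are both probability measures, proportional densities force equality of the laws under the bijection $\omega \leftrightarrow \omega^*$.

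The main obstacle is the bookkeeping in the Euler step: the two cases $\xi\in\{0,1\}$ give different vertex counts and each needs a separate verification of the face/cluster bijection, taking care that on the dual side the outer dual vertices are either collapsed to one point (when $1-\xi = 1$) or left free (when $1-\xi = 0$), and that this exactly matches the face structure of the primal drawing. Once this is handled, the remainder of the proof is mechanical algebra.
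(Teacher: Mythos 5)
Your proof is correct and takes essentially the same route as the paper: both match the weights of $\omega$ and $\omega^*$ up to an $\omega$-independent constant and reduce everything to the cluster-count identity $k(\omega^{\xi}) - k((\omega^*)^{1-\xi}) = C(n,\xi) - |\omega|$, which is exactly \eqref{eq:dual00} (your $C(n,1)=(2n-1)^2$ agrees with the paper's constant). The only difference is cosmetic: the paper proves the identity by induction on the addition of open edges and treats only $\xi=1$ (the other case following by exchanging primal and dual roles), while you derive it from Euler's formula via the face/dual-cluster bijection for both boundary conditions.
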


More general duality relations may be obtained, that is, for more general subgraphs and more general boundary conditions. 
Most may be deduced from the above, by fixing the configuration in parts of~$\La_n$ and applying~\eqref{eq:SMP}. 
We will not detail these generalisations, and will only use them in isolated places. 

\begin{proof}
	It suffices to consider the case~$\xi = 1$. 
	A simple induction shows that
	\begin{align}\label{eq:dual00}
		|\omega| + |\omega^*| = |E| \qquad \text{ and }\qquad k(\omega^1) - k(\omega^*) = (2n-1)^2 - |\omega|.
	\end{align}
	The first relation is obvious. The second is immediate for the empty configuration and may be extended to other configurations by induction. 	Indeed, when adding an edge~$e$ to some configuration~$\omega$, 
	either the number of connected components in the primal configuration decreases by~$1$ while that in the dual remains the same, 
	or the number of connected components of the primal configuration remains the same, but that in the dual increases by one. 
	
	The duality relation follows directly from~\eqref{eq:dual00}. % \im{Maran suggests to write this out}
\end{proof}

It is immediate to check that the only point for which~$p = p^*$ is the so-called {\em self-dual} point
\begin{align}\label{eq:p_sd}
	p_{\rm sd} =	p_{\rm sd}(q) = \frac{\sqrt q}{1 + \sqrt q}.
\end{align}
In light of the example of Bernoulli percolation (Section~\ref{sec:percolation}), it is natural to then conjecture that~$p_c = p_{\rm sd}$ for all~$q \geq 1$. This was first confirmed in~\cite{BefDum12} and will be deduced through different means below. 

Hereafter, we study the behaviour of the model at this self-dual parameter; that it corresponds to the point of phase transition and that the phase transition is sharp will be consequences of our investigation. We start off with an RSW-type estimate for symmetric quads.

A quad is a simply connected domain~$\calD$ with four marked points on its boundary~$a,b,c,d$ in counter-clockwise order, with the boundary of~$\calD$ formed of two arc~$(ab)$ and~$(cd)$  of the primal lattice and two arcs~$(bc)$ and~$(da)$ of the dual lattice (with a diagonal segment of length~$1/\sqrt 2$ between each arc). The {\em alternating} boundary condition on~$(\calD, a,b,c,d)$ --- denoted~${\rm alt}$ --- is produced by conditioning that all edges of the arcs~$(ab)$ and~$(cd)$ are open, while all other edges of the lattice outside of~$\calD$ are closed. 

We say that~$(\calD, a,b,c,d)$ is symmetric if there exists an isometry of~$\bbR^2$ that maps the primal lattice on the dual so that~$\calD$ is mapped to itself, with the arcs~$(ab)$ and~$(cd)$ mapped to the dual arcs~$(bc)$ and~$(da)$. 

\begin{corollary}\label{cor:symmetric_quad}
	Fix~$q\geq 1$ and a symmetric quad~$(\calD,a,b,c,d)$. 
	Then 
	\begin{align}\label{eq:symmetric_quad}
		\phi_{\calD,p_{\rm sd},q}^{\rm alt}[(ab)\xlra{\calD} (cd)] \geq \tfrac{1}{1+q}
	\end{align}
\end{corollary}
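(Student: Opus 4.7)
My plan combines three ingredients: the topological dichotomy for the alternating boundary, the self-duality at $p_{\rm sd}$ from Proposition~\ref{prop:duality}, and the symmetry of the quad. Write $\phi := \phi^{\rm alt}_{\calD, p_{\rm sd}, q}$, $\calA = \{(ab) \xlra{\calD} (cd)\}$, and $\calA^* = \{(bc) \xlra{\calD^*} (da) \text{ in } \omega^*\}$. By planar topology, for every $\omega$ exactly one of $\calA$ and $\calA^*$ occurs, so $\phi[\calA] + \phi[\calA^*] = 1$; this is the main identity I would exploit to lower-bound $\phi[\calA]$.

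The second step is to identify the law of $\omega^*$ under $\phi$ via duality. Because the alternating BC arises by conditioning the full-lattice FK measure on the primal edges of $(ab), (cd)$ being open and on all other outside primal edges being closed, Proposition~\ref{prop:duality} tells us that $\omega^*$ is an FK-percolation measure $\nu$ on $\calD^*$ at $p_{\rm sd}$. The subtle part is to pin down its boundary conditions: every outside primal edge being closed makes all outside dual edges open, and since $\bbZ^2 \setminus \calD$ is connected, this wires together the entire boundary of $\calD^*$ — the vertices on $(bc)$ together with those on $(da)$ — into a single wired class. Using the quad's symmetry $\psi$ (an isometry swapping the primal and dual lattices and mapping $\calD$ to itself), I would transport $\nu$ back to a measure $\tilde\phi$ on $\calD$, which differs from $\phi$ only in that its two wired arcs $(ab)$ and $(cd)$ are merged into the single class $(ab) \cup (cd)$. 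Under this transport $\calA^*$ becomes $\calA$, so $\phi[\calA^*] = \tilde\phi[\calA]$.

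The third step is a partition-function comparison of $\tilde\phi$ and $\phi$. The two measures give the same unnormalized weight to configurations in $\calA$ (where the extra wiring in $\tilde\phi$ is redundant, $(ab)$ and $(cd)$ being already joined through $\omega$), whereas on $\calA^c$ the extra merging in $\tilde\phi$ reduces the cluster count by one, so unnormalized weights differ by the factor $q^{-1}$. A short manipulation of the partition functions then gives
\begin{align*}
\tilde\phi[\calA] \;=\; \frac{q}{1 + (q-1)\,\phi[\calA]}\cdot\phi[\calA] \;\leq\; q\cdot\phi[\calA],
\end{align*}
the last inequality using $q \geq 1$. Substituting $\phi[\calA^*] = \tilde\phi[\calA]$ into the dichotomy yields $1 \leq (1+q)\,\phi[\calA]$, which is \eqref{eq:symmetric_quad}.

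I expect the duality step to be the main obstacle. The naive guess is that the dual of the alternating BC should again be alternating on $\calD^*$, with $(bc)$ and $(da)$ wired as two separate classes; but the connectivity of the exterior forces them into a single class. It is precisely this asymmetry — two wired classes on the primal, one on the dual — that produces the factor of $q$ in the final bound.
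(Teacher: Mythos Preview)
Your argument is correct and follows the same route as the paper's proof: complementarity of the primal and dual crossing events, duality combined with the quad's isometry to identify $\phi[\calA^*]$ with $\tilde\phi[\calA]$ for the boundary condition with the two wired arcs merged, and then the factor-$q$ comparison between $\tilde\phi$ and $\phi$. Your partition-function computation is slightly more explicit than the paper's Radon--Nikodym bound, but the structure is identical.
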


\begin{proof}
	The event~$\omega \in \{(ab)\xlra{\calD} (cd)\}$ is the complement of~$\omega^* \in \{(bc)\xlra{\calD} (da)\}$. 
	Moreover, if~$\omega \sim \phi_{\calD,p_{\rm sd},q}^{\rm alt}$ then~$\sigma(\omega^*)$ has almost the same law, where~$\sigma$ is the isometry for which~$\calD$ is symmetric. 
	Indeed,~$\sigma(\omega^*) \sim \phi_{\calD,p_{\rm sd},q}^{\rm alt'}$ 
	where the boundary conditions~${\rm alt'}$ are identical to~${\rm alt}$, except that the two wired arcs are also wired together. 
	This difference in boundary conditions affects the weight of any configuration by a factor of at most~$q$. 
	Thus, the Radon--Nikodim derivative of~$\phi_{\calD,p_{\rm sd},q}^{\rm alt'}$ with respect to~$\phi_{\calD,p_{\rm sd},q}^{\rm alt}$ 
	is between~$1/q$ and~$q$. It follows that
	\begin{align*}
	 	\phi_{\calD,p_{\rm sd},q}^{\rm alt}[\omega^* \in \{(bc)\xlra{\calD} (da)\}]
		& = \phi_{\calD,p_{\rm sd},q}^{\rm alt'}[\omega \in \{(ab)\xlra{\calD} (cd)\}]\\
	 	&\leq q\, \phi_{\calD,p_{\rm sd},q}^{\rm alt}[\omega \in \{(ab)\xlra{\calD} (cd)\}].
	\end{align*}
	The above, together with the complementarity of the two events, proves~\eqref{eq:symmetric_quad}.
\end{proof}

\begin{remark}\label{rem:symmetric_quad}
	We will mostly use the above for quads~$(\calD,a,b,c,d)$ which are not really symmetric, but are ``better'' than symmetric. 
	That is, quads so that if~$\omega \notin \{(ab) \xlra{\calD} (cd)\}$ then~$\sigma (\omega^*) \in \{(ab) \xlra{\calD}(cd)\}$.
	The proof of Corollary~\ref{cor:symmetric_quad} applies readily to this case, as well. 
\end{remark}

\section{Statement of the dichotomy theorem}

Write~$H_n$ for the event that there exists an open circuit in~$\Lambda_{2n} \setminus \Lambda_{n}$ that surrounds~$\Lambda_{n}$;
also denote by~$H_n^*$ the same event for the dual model.

\begin{theorem}[Duminil-Copin, Sidoravicius, Tassion 2017]\label{thm:dichotomy}
	Fix~$q \geq 1$ and let~$p = p_{\rm sd} = \sqrt q/(1+\sqrt q)$. Then exactly one of the two scenarios below occurs. 
	\begin{itemize}
		\item[(Con)] There exists~$c > 0$ such that, for all~$n\geq 1$ 
		\begin{align}\label{eq:Con}
			\phi_{\Lambda_{2n}\setminus \La_n}^0[H_n]  \geq c \quad\text{ and }\quad\phi_{\Lambda_{2n}\setminus\La_n}^1[H_n^*]  \geq c.
		\end{align}
		\item[(DisCon)] There exists~$c > 0$ such that, for all~$n\geq 1$  
		\begin{align}\label{eq:DisCon}
			\phi^0[0\xlra{} \partial \La_n]  \leq e^{-cn}\quad\text{ and }\quad	\phi^1[0\xlra{*} \partial \La_n]  \leq e^{-cn}.
		\end{align}
	\end{itemize}
\end{theorem}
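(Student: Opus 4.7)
The plan is to set up a dichotomy for rectangle-crossing probabilities at $p_{\rm sd}$, seeded by Corollary~\ref{cor:symmetric_quad}, and derive (Con) or (DisCon) from the two branches via RSW and a sharp-threshold/renormalization argument, respectively.

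\textbf{Step 1 — the dichotomy.} For $n \geq 1$, consider
$$\alpha_n := \phi^0_{\La_{2n}, p_{\rm sd}, q}\big[\text{left side of } \La_{2n}\xlra{\La_{2n}}\text{right side of } \La_{2n}\big].$$
Two scenarios arise: either $\liminf_n \alpha_n > 0$ (scenario (A)), or $\alpha_n \to 0$ (scenario (B)). The seed input is Corollary~\ref{cor:symmetric_quad}, which at $p_{\rm sd}$ lower-bounds symmetric-quad crossings with alternating BC by $1/(1+q)$. We will show that (A) implies (Con) and (B) implies (DisCon).

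\textbf{Step 2 — case (A) $\Rightarrow$ (Con).} Starting from $\alpha_n \geq c_0 > 0$ along a subsequence, a box-crossing RSW argument in the spirit of the proof of Theorem~\ref{thm:RSW_perco} — combining horizontal and vertical crossings via FKG~\eqref{eq:FKGFK} and pushing free BC inwards via \eqref{eq:pushing_bc} — yields that for any aspect ratio $\rho > 0$, crossings of $\rho n \times n$ rectangles under free BC have probability bounded below uniformly in $n$. Gluing four such crossings in the annulus $\La_{2n}\setminus\La_n$ produces a primal circuit, giving $\phi^0_{\La_{2n}\setminus\La_n}[H_n] \geq c$. The dual bound $\phi^1_{\La_{2n}\setminus\La_n}[H_n^*] \geq c$ follows from Proposition~\ref{prop:duality}: under $\phi^1$ the dual configuration has the law of $\phi^0$ at the dual parameter $p_{\rm sd}^* = p_{\rm sd}$, so the same RSW construction applies on the dual lattice.

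\textbf{Step 3 — case (B) $\Rightarrow$ (DisCon).} Here one must upgrade $\alpha_n \to 0$ to genuine exponential decay. The route is a renormalization argument: once a scale $n_0$ is fixed with $\alpha_{n_0}$ small enough, the complement event — a dual crossing of the rectangle separating $\La_{n_0}$ from $\La_{2n_0}$ — has large probability under the wired dual measure, and by FKG and translates this propagates to a dual circuit in the annulus $\La_{2n_0}\setminus \La_{n_0}$ with probability bounded below. Concatenating disjoint dyadic annuli and dominating boundary effects via \eqref{eq:pushing_bc} yields $\phi^0[0\lra\partial\La_n] \leq e^{-cn}$. The passage from $\alpha_n \to 0$ to a workable rate at some fixed scale is where Theorem~\ref{thm:BKKKL} enters: the sharp-threshold phenomenon forces $p \mapsto \alpha_n(p)$ to transition from near $0$ to near $1$ in a $p$-window of width $O(1/\log n)$, which combined with monotonicity in $p$ yields the required geometric-decay seed. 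The dual exponential decay is obtained symmetrically via Proposition~\ref{prop:duality}.

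\textbf{Main obstacle.} The core difficulty is the RSW propagation in Step 2, which must carefully navigate the boundary-condition dependence of FK-percolation: the seed estimate of Corollary~\ref{cor:symmetric_quad} is under alternating BC, whereas (Con) and (DisCon) concern free and wired measures. Handling this gap — repeatedly exploiting FKG \eqref{eq:FKGFK} together with monotonicity in BC \eqref{eq:Mon} — is the new feature absent in Bernoulli percolation and constitutes the bulk of the argument in \cite{DumSidTas17}. The renormalization/sharp-threshold upgrade in Step 3 is standard in spirit once the RSW technology is in place.
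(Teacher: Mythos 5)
Your overall scaffolding (a dichotomy on crossing probabilities at $p_{\rm sd}$, seeded by Corollary~\ref{cor:symmetric_quad}) points in the right direction, but the two mechanisms you propose for the genuinely hard steps do not work, and they are precisely where the content of the theorem lies. First, your Step~1 alternative ``$\liminf_n\alpha_n>0$ or $\alpha_n\to0$'' is not a dichotomy (the sequence could oscillate), and even granting $\alpha_n\to 0$ you still need to upgrade an unquantified decay to the exponential decay of \eqref{eq:DisCon}. You delegate this upgrade to Theorem~\ref{thm:BKKKL}, but a sharp-threshold inequality controls the variation of $\alpha_n(p)$ \emph{in $p$}; it says nothing about the value at the fixed point $p=p_{\rm sd}$. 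Indeed, in the (Con) scenario the crossing probabilities at $p_{\rm sd}$ are bounded away from $0$ and $1$ for all $n$ and there is no decay at all, so no argument based on the $p$-derivative alone can separate the two scenarios at the self-dual point. (In the paper, Theorem~\ref{thm:BKKKL} is used only \emph{after} the dichotomy, in the proof of Corollary~\ref{cor:p_c(q)} in the (Con) case, where the influences can be bounded by one-arm estimates coming from \eqref{eq:Con}.) Second, your renormalisation in Step~3 is quantitatively insufficient: producing dual circuits ``with probability bounded below'' in dyadic annuli yields only polynomial decay of $\phi^0[0\lra\partial\La_n]$ (logarithmically many annuli), not $e^{-cn}$. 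Exponential decay requires failure probabilities close to $0$ at a \emph{fixed} scale under the boundary conditions \emph{unfavourable} to the dual (as in Proposition~\ref{prop:finite_size_ad}), and passing from ``primal crossings are unlikely under free boundary conditions'' to ``primal crossings are unlikely under wired boundary conditions'' is exactly the boundary-condition gap you flag in your last paragraph but never close.

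The paper closes this gap with two ingredients your proposal has no analogue of. (i) An \emph{unconditional} RSW estimate at $p_{\rm sd}$ in strips with mixed $1/0$ boundary conditions (Theorem~\ref{thm:RSWstrip}), proved via self-duality of the strip and symmetric quads; this holds in both scenarios and costs only $c^{\rho}$ for aspect ratio $\rho$. (ii) A renormalisation inequality for the single quantity $u_n=\phi^0_{\La_{10n}}[H_n]$, namely $u_{10kn}\le (C\,u_n)^k$ (Proposition~\ref{prop:finite_size_bc}), whose proof uses (i) to build the separating dual paths at cost $c^k$ uniformly in $n$. This inequality is what turns the trichotomy (uniformly positive / oscillating / slowly vanishing) into the true dichotomy: either $u_n\ge\delta$ for all $n$, which gives \eqref{eq:Con} by gluing and self-duality, or $u_{n_0}<\delta$ at one scale, in which case $u_N$ decays exponentially and \eqref{eq:DisCon} follows by an FKG two-point-function argument transferring circuit decay to one-arm decay --- all at fixed $p=p_{\rm sd}$, with no sharp threshold. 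Without a statement playing the role of (ii), your Steps~1 and~3 cannot be completed.
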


The above implies quite easily that~$p_{\rm sd}$ is the critical point, along with the sub- and super-critical sharpness of the phase transition.

%
%The above generally implies that FK-percolation has a sharp phase transition, for which either (b) or (c) occurs. Indeed, these regimes correspond to 
%\begin{itemize}
%	\item[(a)] the sub-critical phase;
%	\item[(b)] a point of  continuous phase transition;
%	\item[(c)] a point of  discontinuous phase transition;
%	\item[(d)] the super-critical phase. 
%\end{itemize}
%
%The theorem above was proved on general two dimensional lattices with certain symmetries. The proof is slightly more complicated than on the square lattice~$\bbZ^2$. 
%In the case of~$\bbZ^2$, the self-duality allows us to determine the point of phase transition for~$\bbZ^2$. 

\begin{corollary}[Phase transition on~$\bbZ^2$]\label{cor:p_c(q)}
	For each~$q \geq 1$, we have 
	\begin{align*}
		p_c = p_{\rm sd}(\bbZ^2) = \tfrac{\sqrt q}{1 + \sqrt q}.
	\end{align*}
	Furthermore, for all~$p\neq p_c$,~$\phi_p^0 = \phi_p^1$ and there exists~$c = c(p) > 0$ such that 
	\begin{align}\label{eq:sharp2DFK}
		\phi^1[0\xlra{} \partial \La_n\text{ but } 0\nxlra{} \infty] \leq e^{-cn} \qquad \text{ for all~$n\geq 1$}.
	\end{align}
	Finally,~$p \mapsto \theta(p)$ is a continuous function on~$(0,1)$, except potentially at~$p_c$. 
\end{corollary}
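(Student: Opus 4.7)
The plan is to deduce every clause from Theorem~\ref{thm:dichotomy} by splitting on the two scenarios at $p=p_{\rm sd}$, and combining with Proposition~\ref{prop:phi_ordering}, duality, and Proposition~\ref{prop:unique_measure_subcrit}. I would first identify $p_c=p_{\rm sd}$. If case (DisCon) holds, the primal bound $\phi^0[0\lra\partial\La_n]\le e^{-cn}$ gives $\theta(p_{\rm sd})=0$, so $p_c\ge p_{\rm sd}$; and the dual bound $\phi^1[0\lra^{*}\partial\La_n]\le e^{-cn}$ feeds into the Peierls summation of Proposition~\ref{prop:peierls2}, which is purely combinatorial in the law of $\omega^*$, to yield $\phi^1[\La_r\lra\infty]\to 1$ and hence (by finite energy on $\La_r$) $\phi^1_{p_{\rm sd}}[0\lra\infty]>0$; since $\phi^0_{p'}\ge_{\rm st}\phi^1_{p_{\rm sd}}$ for any $p'>p_{\rm sd}$ by~\eqref{eq:phi_ordering}, this gives $\theta(p')>0$, so $p_c\le p_{\rm sd}$. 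If case (Con) holds, the uniform lower bound $\phi^1_{\La_{2n}\setminus\La_n}[H_n^*]\ge c$ transfers via~\eqref{eq:pushing_bc} to a lower bound on the conditional probability of a dual circuit at every dyadic scale under the infinite-volume measure; conditioning sequentially from outside to inside, a.s.\ some such circuit surrounds $0$, so $\theta(p_{\rm sd})=0$ and $p_c\ge p_{\rm sd}$. For the reverse inequality I would invoke Theorem~\ref{thm:BKKKL}: (Con) together with FKG and Corollary~\ref{cor:symmetric_quad} forces crossing probabilities of long rectangles at $p_{\rm sd}$ to lie in $[c,1-c]$ uniformly in $n$, while any fixed edge has covariance $o(1)$ with the crossing (its probability to be connected to both ends of the rectangle decays polynomially by another RSW application); BKKKL then gives $\tfrac{\rm d}{{\rm d}p}\phi_p[\mathrm{Cross}_n]\gtrsim \log n$ in a neighbourhood of $p_{\rm sd}$, so integrating pushes crossings to probability $1$ for every $p>p_{\rm sd}$, and FKG-gluing of crossings over a sparse sequence of scales yields $\theta(p)>0$.

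With $p_c=p_{\rm sd}$ in hand, sub-critical sharpness on $(0,p_c)$ follows from~\eqref{eq:phi_ordering}: for $p<p_c$, pick $p'\in(p,p_c)$ so that $\phi^1_p\le_{\rm st}\phi^0_{p'}$. In case (DisCon) the right-hand side is dominated stochastically by $\phi^0_{p_{\rm sd}}$, which already has exponential decay of $\{0\lra\partial\La_n\}$; in case (Con) the sharp-threshold argument above shows that crossing probabilities at $p'$ tend to $0$, which I would upgrade to exponential decay via the renormalised Peierls estimate of Exercise~\ref{exo:p_c=1/2K}(a), now applied to $\phi^0_{p'}$. The super-critical bound~\eqref{eq:sharp2DFK} then follows by duality: for $p>p_c$ one has $p^*<p_c$, and the sub-critical bound applied to $\omega^*$ under $\phi^1_p$ yields $\phi^1_p[0\lra^{*}\partial\La_n]\le e^{-c(p)n}$; the event $\{0\lra\partial\La_n,\,0\nxlra{}\infty\}$ forces a dual circuit of diameter at least $n$ around $0$, and the Peierls summation from the byproduct remark after Proposition~\ref{prop:peierls2} converts this into the required exponential bound.

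The equality $\phi_p^0=\phi_p^1$ for $p\neq p_c$ is then an immediate application of Proposition~\ref{prop:unique_measure_subcrit}: for $p<p_c$, the sub-critical bound above gives $\phi^1_p[0\lra\infty]=0$; for $p>p_c$, the same proposition applied to $\omega^*$ at $p^*<p_c$ gives uniqueness of the dual infinite-volume measure, which transfers to the primal via the infinite-volume extension of Proposition~\ref{prop:duality}. Continuity of $\theta$ on $(0,1)\setminus\{p_c\}$ then follows from Exercise~\ref{exo:continuity_FK}: the monotone weak limits $\phi^1_u\to\phi^0_p$ as $u\nearrow p$ and $\phi^0_u\to\phi^1_p$ as $u\searrow p$, together with the uniqueness $\phi^0_p=\phi^1_p$ just established, force both one-sided limits of $\theta$ to coincide with $\theta(p)$ at every $p\neq p_c$. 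The main obstacle is the direction $p_c\le p_{\rm sd}$ in scenario (Con): converting bounded-but-not-small crossing probabilities at $p_{\rm sd}$ into strict positivity of $\theta$ immediately above $p_{\rm sd}$ is exactly where the sharp-threshold machinery of Theorem~\ref{thm:BKKKL} appears indispensable, and any route avoiding it would seem to require a substitute differential inequality in the spirit of Exercise~\ref{exo:p_c=1/2K}(b)--(c).
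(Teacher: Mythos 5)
Most of your outline coincides with the paper's proof: the (DisCon) case via the ordering \eqref{eq:phi_ordering} together with a dual circuit/Peierls argument and finite energy, uniqueness of the infinite-volume measure for $p\neq p_c$ via Proposition~\ref{prop:unique_measure_subcrit} and duality, and continuity of $\theta$ away from $p_c$ via the one-sided limits $\lim_{u\nearrow p}\theta(u)=\phi^0_p[0\lra\infty]$ and $\lim_{u\searrow p}\theta(u)=\phi^1_p[0\lra\infty]$. The genuine gap is in the (Con) case, in your treatment of $p>p_{\rm sd}$. You propose to apply Theorem~\ref{thm:BKKKL} directly to primal crossing events ``in a neighbourhood of $p_{\rm sd}$'', justifying the smallness of the edge covariances ``by another RSW application''. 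But the covariance of an edge with a crossing event is controlled by a one-arm probability at the \emph{current} parameter $u$; the polynomial bound \eqref{eq:one_arm_sd} is proved at $p_{\rm sd}$ and transfers via \eqref{eq:Mon} only to $u\le p_{\rm sd}$, because the one-arm event is increasing. For $u>p_{\rm sd}$ no RSW estimate or one-arm decay is available at this stage (and it would be false as soon as $\theta(u)>0$), so the claimed lower bound of order $\log n$ on $\tfrac{d}{dp}\phi_p[\mathrm{Cross}_n]$ cannot be justified, let alone integrated over $(p_{\rm sd},p)$; your final remark that the sharp-threshold machinery is indispensable ``above'' $p_{\rm sd}$ points at exactly the step that does not work as written. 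The paper avoids this by running the sharp-threshold argument only \emph{below} $p_{\rm sd}$ (where the influence bound \eqref{eq:influence_boun32} is legitimate) and then applying the identical argument to the \emph{dual} model, whose parameter lies below $p_{\rm sd}$ whenever $p>p_{\rm sd}$; percolation of the primal and \eqref{eq:sharp2DFK} for $p>p_{\rm sd}$ then follow from the exponential decay of dual radii by the very circuit-plus-finite-energy argument you describe in the (DisCon) case.

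A secondary but real omission is the boundary-condition mismatch in the sub-critical direction of (Con). The sharp-threshold step naturally yields smallness of $\phi^0_{\La_{4n},p}[\La_n\lra\partial\La_{2n}]$ with \emph{free} boundary conditions, whereas any renormalisation scheme for $q>1$ --- Proposition~\ref{prop:finite_size_ad}, or the renormalised Peierls argument of Exercise~\ref{exo:p_c=1/2K}(a) you invoke --- iterates only if the crossing probability is small under the unfavourable \emph{wired} boundary conditions, since conditioning on neighbouring blocks gives SMP plus monotonicity rather than independence. The paper bridges free and wired boundary conditions by conditioning on the dual circuit event $H^*_{2n}$ and using (Con) again; your sketch skips this step. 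Both gaps are fixable, but the fix is essentially the paper's route (work below $p_{\rm sd}$, dualise, and handle the free/wired comparison explicitly), so the proposal as written does not constitute a complete alternative proof.
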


As such, Theorem~\ref{thm:dichotomy} describes two possible behaviours at the critical parameter, 
which in turn correspond to two types of phase transitions. 
Indeed, (DisCon) corresponds to a discontinuous (or first order) phase transition, while (Con) corresponds to a continuous phase transition (or one of order two or higher).
For~$p < p_c$, define the {\em correlation length} as
\begin{align}
\xi(p)^{-1} = \lim_{n\to\infty} - \tfrac1n \log\phi_{\Lambda_n,p,q}^0[0\lra \partial \La_n];
\end{align}
see Exercise~\ref{exo:cor_len} for details on why the limit exists. 
For~$p > p_c$, set~$\xi(p) = \xi(p^*)$.
Corollary~\ref{cor:p_c(q)} shows that~$\xi(p) < \infty$ for all~$p\neq p_c$. 
In addition, the following features hold for the two types of phase transition. 

\begin{corollary}\label{cor:phase_transition}
If (DisCon) occurs at~$p_{\rm sd}$, we have 
\begin{itemize}
\item non-uniqueness of infinite-volume measure at criticality:~$\phi^0_{p_c}\neq	\phi^1_{p_c}$; 
\item~$p\mapsto \theta(p)$ is discontinuous at~$p_c$, as is the edge intensity~$p \mapsto \phi_p^0(e \text{ open})$;
\item~$p\mapsto f(p,q)$ is not differentiable at~$p_c$; 
\item the correlation length~$\xi(p)$ is bounded uniformly in~$p \in (0,1)$. 
\end{itemize}
Conversely, if (Con) occurs at~$p_{\rm sd}$, we have 
\begin{itemize}
\item uniqueness of infinite-volume measure at criticality:~$\phi^0_{p_c}=	\phi^1_{p_c}$;
\item~$p\mapsto \theta(p)$ and $p \mapsto \phi_p^0(e \text{ open})$ are continuous at~$p_c$. Moreover, there exist constants~$c, \alpha > 0$ such that 
\begin{align}\label{eq:one_arm_FK}
c\, n^{-1} \leq \phi_{p_c}[0\lra \partial \La_n] \leq n^{-\alpha} \qquad \text{ for all~$n\geq 1$};
\end{align}
\item~$p\mapsto f(p,q)$ is differentiable at~$p_c$.
\item the correlation length tends to infinity as~$p \to p_c$.
\end{itemize}
\end{corollary}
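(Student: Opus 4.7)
The plan is to handle (DisCon) and (Con) separately, with the items in each case flowing from a small number of central tools: planar duality (Proposition~\ref{prop:duality}), the ordering of infinite-volume measures in Proposition~\ref{prop:phi_ordering}, the left/right continuity of Exercise~\ref{exo:continuity_FK}, and the equivalence between uniqueness and differentiability of $f(\cdot,q)$ in Remark~\ref{rem:f_FK_diff_p}.

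Under (DisCon), the input is the pair of exponential bounds at $p_c=p_{\rm sd}$. The primal bound $\phi^0_{p_c}[0\lra\partial\La_n]\leq e^{-cn}$ gives $\theta(p_c)=0$ via Borel--Cantelli, while the dual bound forbids any infinite dual cluster under $\phi^1_{p_c}$; by planarity, this forces the probability of a primal left--right crossing of $[-n,n]\times[0,n]$ to tend to $1$, which together with Theorem~\ref{thm:Burton-Keane_FK} and translation invariance yields $\phi^1_{p_c}[0\lra\infty]>0$. Hence $\phi^0_{p_c}\neq\phi^1_{p_c}$. Discontinuity of $\theta$ follows at once: for $p>p_c$, Proposition~\ref{prop:phi_ordering} gives $\phi^0_p\geq_{\rm st}\phi^1_{p_c}$, so $\theta(p)\geq\phi^1_{p_c}[0\lra\infty]>0$, while $\theta(p_c)=0$. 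The discontinuity of the edge intensity and the non-differentiability of $f(\cdot,q)$ at $p_c$ are simultaneous consequences of non-uniqueness through Remark~\ref{rem:f_FK_diff_p}. For the uniform boundedness of $\xi$, monotonicity of $\phi^0_p$ in $p$ and~\eqref{eq:pushing_bc} yield $\phi^0_{\La_n,p}[0\lra\partial\La_n]\leq\phi^0_{p_c}[0\lra\partial\La_n]\leq e^{-cn}$ for $p\leq p_c$, so $\xi(p)\leq 1/c$; the identity $\xi(p)=\xi(p^*)$ then handles $p\geq p_c$.

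Under (Con), the crux is to promote the dual annulus-crossing estimate to uniqueness at $p_c$. Fix $N\gg n$ and the dyadic annuli $A_j=\La_{2^{j+1}}\setminus\La_{2^j}$ for $\lceil\log_2 n\rceil\leq j\leq\lfloor\log_2(N/2)\rfloor$. Since $H^*_j$ is a decreasing primal event depending only on edges in $A_j$ and wired boundary conditions minimise probabilities of decreasing primal events, the spatial Markov property~\eqref{eq:SMP} combined with the second inequality of~\eqref{eq:Con} yields $\phi^\xi_{A_j,p_c}[H^*_j]\geq\phi^1_{A_j,p_c}[H^*_j]\geq c$ for every boundary condition $\xi$. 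Revealing the configuration on $\La_{2^j}$ successively for increasing $j$ and applying this estimate at each scale, one obtains
\[\phi^1_{\La_N,p_c}\Big[\bigcap_j (H^*_j)^c\Big]\leq (1-c)^{\lfloor\log_2(N/n)\rfloor}\leq (n/N)^{c'}.\]
Since $\{\La_n\lra\partial\La_N\}\subset\bigcap_j(H^*_j)^c$, part (b) of Exercise~\ref{exo:influence_edge} then yields $\phi^0_{p_c}=\phi^1_{p_c}$. Differentiability of $f(\cdot,q)$ at $p_c$ follows from Remark~\ref{rem:f_FK_diff_p}. The upper bound $n^{-\alpha}$ in~\eqref{eq:one_arm_FK} is obtained by applying the same dyadic disconnection argument centred at $0$ with $\lfloor\log_2 n\rfloor$ annuli inside $\La_n$; the lower bound $c/n$ comes from Corollary~\ref{cor:symmetric_quad} applied to a symmetric quad of side length $n$, a union bound over the $\simeq n$ starting points on one of its primal arcs, and a standard RSW transfer to the infinite-volume measure using~\eqref{eq:pushing_bc}. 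Continuity of $\theta$ and of the edge intensity at $p_c$ follows from Exercise~\ref{exo:continuity_FK}: for each fixed $n$, $\phi^0_p[0\lra\partial\La_n]$ and $\phi^1_p[0\lra\partial\La_n]$ both converge to $\phi_{p_c}[0\lra\partial\La_n]\leq n^{-\alpha}$ as $p\to p_c$, and letting $n\to\infty$ gives continuity of $\theta$ at $p_c$.

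For $\xi(p)\to\infty$ as $p\to p_c$, monotonicity of $\phi^0_p$ in $p$ makes $\xi$ non-decreasing on $(0,p_c)$, and $\xi(p)=\xi(p^*)$ reduces the claim to $p\nearrow p_c$. The argument then combines the polynomial lower bound $\phi_{p_c}[0\lra\partial\La_n]\geq c/n$ with the left-continuity of Exercise~\ref{exo:continuity_FK} to contradict any hypothetical uniform exponential decay of connection probabilities for $p$ close to $p_c$. The principal technical obstacle is the step from (Con) to uniqueness: one must verify carefully that wired boundary conditions on $A_j$ really are the worst case for the decreasing event $H^*_j$ (so that~\eqref{eq:Con} supplies the relevant lower bound), and that the successive spatial-Markov conditionings on expanding inner boxes compose to produce the geometric factor $(1-c)$ at each scale without losing information. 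Once uniqueness, or its failure, is established, the remaining items amount to bookkeeping built on tools already developed in the notes.
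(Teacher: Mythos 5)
Most of your plan follows the paper's own route: the (DisCon) items via the free-energy/convexity argument behind Proposition~\ref{prop:phi_ordering} together with the monotone limits of Exercise~\ref{exo:continuity_FK}, and the (Con) items via the dyadic dual-circuit argument (uniform in boundary conditions thanks to \eqref{eq:Mon} and \eqref{eq:pushing_bc}), Exercise~\ref{exo:influence_edge}(b), and Exercise~\ref{exo:cor_len_diverges}; your direct derivation of uniqueness from $\phi^1_{\La_N}[\La_n\lra\partial\La_N]\leq (n/N)^{c'}$ is essentially the intended content of Exercise~\ref{exo:algebraic_decay} reorganised, and is fine. (One small misattribution: to get $\phi^1_{p_c}[0\lra\infty]>0$ under (DisCon) you do not need Theorem~\ref{thm:Burton-Keane_FK}; the exponential decay of dual radii already makes the probability that some dual circuit surrounds $\La_n$ tend to $0$, so a.s.\ an infinite primal cluster exists, and translation invariance gives $\phi^1_{p_c}[0\lra\infty]>0$.)

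The genuine gap is your proof of the lower bound $c\,n^{-1}\leq \phi_{p_c}[0\lra\partial\La_n]$ in \eqref{eq:one_arm_FK}. Under the alternating boundary conditions of Corollary~\ref{cor:symmetric_quad} the primal arcs are entirely open, so for every point $x$ on the arc $(ab)$ the event $\{x\xlra{\calD}(cd)\}$ coincides with $\{(ab)\xlra{\calD}(cd)\}$: the union bound over the $\simeq n$ points of that arc produces no factor $1/n$, and the resulting per-point bounds are inflated by the wiring, so they say nothing about point-to-distance-$n$ probabilities in the bulk. Moreover the transfer you invoke goes the wrong way: alternating boundary conditions dominate free ones, so a lower bound under $\phi^{\rm alt}_{\calD}$ cannot be pushed down to $\phi_{p_c}$ via \eqref{eq:pushing_bc}. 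The standard fix (and the intended solution of Exercise~\ref{exo:algebraic_decay}) is to use first the uniqueness $\phi^0_{p_c}=\phi^1_{p_c}$ you have already established: the unique infinite-volume measure is then self-dual, so, exactly as in \eqref{eq:cross_square} and the proof of Theorem~\ref{thm:poly_pc}, the $\phi_{p_c}$-probability (with no wiring anywhere) that an $(n+1)\times n$ rectangle is crossed in the long direction is at least $1/2$; only then is the union bound over the $n+1$ starting points on the short side legitimate, giving $\phi_{p_c}[0\lra\partial\La_n]\geq \tfrac{1}{2(n+1)}$. (Alternatively, run the maximising-point argument from the proof of Theorem~\ref{thm:dichotomy}.) Note also that your argument for $\xi(p)\to\infty$ leans on this $c/n$ bound and so inherits the gap as written; a weaker bound such as $c/n^{2}$, obtained directly from \eqref{eq:Con} by pushing the free boundary conditions and a union bound over the $O(n^2)$ vertices of $\La_{2n}$, would already rule out exponential decay at $p_c$ and suffice for that item.
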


Section~\ref{sec:3conseq_phase_trans} discusses how Theorem~\ref{thm:dichotomy} implies Corollaries~\ref{cor:p_c(q)} and~\ref{cor:phase_transition}. 
We then focus on the proof of  Theorem~\ref{thm:dichotomy}, the heart of this Chapter. Section~\ref{sec:RSW_stirp} proves an instrumental RSW estimate (sometimes referred to as the {\em pushing lemma}). Then, in Section~\ref{sec:dichotomy_proof}, we prove a renormalisation inequality which implies Theorem~\ref{thm:dichotomy}.

\section{Consequences for the phase transition}\label{sec:3conseq_phase_trans}

The main goal of this section is to prove the sharpness of the phase transition and compute the value of the critical point for FK-percolation on~$\bbZ^2$ using Theorem~\ref{thm:dichotomy}. There are ways of proving these facts without referring to Theorem~\ref{thm:dichotomy} (e.g. via the general sharpness result Theorem~\ref{thm:sharpness_FK_OSSS} --- see Exercise~\ref{exo:p_c_FK_Zhang} ---  or following~\cite{BefDum12}), but as we are ultimately interested in the behaviour of the model at~$p_c$, Theorem~\ref{thm:dichotomy} will be proved anyway, and the sharpness follows fairly quickly. 
Additionally, the two regimes of Theorem~\ref{thm:dichotomy} will allow us to further describe the phase transition. 

Before proving Corollaries~\ref{cor:p_c(q)} and~\ref{cor:phase_transition}, we state a so-called ``finite size criterion'' for the exponential decay of cluster radii. 

\begin{proposition}\label{prop:finite_size_ad}
	Fix $p \in (0,1)$. 	There exists~$\delta > 0$ such that, 
	\begin{align}\label{eq:finite_size_ad}
		\sup\{ \phi_{\Lambda_{4n}}^1[H_n^*] :\, n\geq 1\}  > 1 - \delta \Leftrightarrow
		\big( \exists c > 0 \text{ s.t. }\phi_{\Lambda_n}^1[0\lra\partial \Lambda_n] \leq e^{-c n}\,\,\, \forall n\geq 1 \big).
	\end{align}
	The same holds for the dual model.
\end{proposition}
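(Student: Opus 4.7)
The plan is to prove the two implications separately.

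$(\Leftarrow)$ is the easy direction. By planar duality, $H_n^{*c}$ is the event that $\partial\Lambda_n$ is connected to $\partial\Lambda_{2n}$ inside $\Lambda_{2n}\setminus\Lambda_n$. A union bound over the $O(n)$ starting points $x\in\partial\Lambda_n$, together with the pushing of boundary conditions \eqref{eq:pushing_bc} to a small wired box $\Lambda_{n/2}(x)$ around each $x$, gives
\begin{align*}
  \phi^1_{\Lambda_{4n}}[H_n^{*c}] \,\leq\, \sum_{x\in\partial\Lambda_n}\phi^1_{\Lambda_{n/2}}[0\lra\partial\Lambda_{n/2}] \,\leq\, O(n)\,e^{-cn/2},
\end{align*}
which is smaller than $\delta$ once $n$ is taken large enough.

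For the interesting direction $(\Rightarrow)$, fix $\delta>0$ (to be chosen small) and $n_0$ with $\phi^1_{\Lambda_{4n_0}}[H_{n_0}^*]>1-\delta$. Since $H_{n_0}^*$ is a decreasing event, \eqref{eq:pushing_bc} transfers the estimate to \emph{every} conditional measure: for any $\Lambda_N\supset v+\Lambda_{4n_0}$ and any configuration outside $v+\Lambda_{4n_0}$, the dual circuit around $v+\Lambda_{n_0}$ in $v+(\Lambda_{2n_0}\setminus\Lambda_{n_0})$ has conditional probability $>1-\delta$. Applied at $v=0$ this already yields the constant bound $\phi^1_{\Lambda_N}[0\lra\partial\Lambda_N]\leq\delta$, since such a circuit seals $0$ inside $\Lambda_{2n_0}$. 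But this is not yet exponential decay, because the hypothesis constrains only the scale $n_0$, whereas disjoint nested annuli around $0$ must live at different scales.

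The core step is therefore an amplification from the one-scale estimate to comparable high-probability dual-circuit estimates at every larger scale $m\geq n_0$. I would carry this out via the RSW-type machinery developed in Section~\ref{sec:RSW_stirp}: a dual circuit at scale $n_0$ contains, through each of its four sides, dual crossings of rectangles of aspect ratio close to $1$; by FKG, monotonicity and symmetry arguments (in the spirit of Corollary~\ref{cor:symmetric_quad}) these small-scale crossings can be glued around longer loops to produce a dual circuit in each annulus $\Lambda_{2m}\setminus\Lambda_m$ with probability at least $1-f(\delta)$, for some $f(\delta)\to 0$ as $\delta\to 0$ and uniformly in $m\geq n_0$.

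With uniform high-probability dual circuits available at every scale, exponential decay follows by a standard SMP/independence argument. Consider concentric disjoint annuli $A_j=\Lambda_{2\cdot 3^j n_0}\setminus\Lambda_{3^j n_0}$ and reveal the configuration from the outside in: at each step the conditional probability that $A_j$ lacks a dual circuit around $0$ is at most $f(\delta)$, uniformly in the exterior, and $\{0\lra\partial\Lambda_{2\cdot 3^k n_0}\}$ forces every $A_j$ to lack such a circuit. One can then upgrade the resulting stretched-exponential bound to genuine exponential decay in $N$ by iterating the argument at the preferred scale (repeatedly using disjoint translates of the good annulus, with independence supplied by the spatial Markov property). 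The dual statement follows identically via Proposition~\ref{prop:duality}. The hard part throughout is the amplification step: extracting uniform-in-scale high-probability dual circuits from a single-scale hypothesis is precisely the content of the RSW theory and requires careful FKG-gluing of dual crossings around increasingly long loops.
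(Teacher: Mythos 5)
Your $(\Leftarrow)$ direction is fine, and your observation that \eqref{eq:pushing_bc} makes the dual-circuit estimate valid for the conditional measure in any translate $v+\Lambda_{4n_0}$, uniformly in the configuration outside that box, is exactly the key ingredient. But the architecture you build on it has a genuine gap. The ``amplification'' you declare to be the core step --- producing a dual circuit in every annulus $\Lambda_{2m}\setminus\Lambda_m$, $m\ge n_0$, with probability at least $1-f(\delta)$ uniformly in $m$, by FKG-gluing scale-$n_0$ dual crossings around long loops --- does not work as described: gluing the $O(m/n_0)$ small-scale events, each of probability at least $1-\delta$, via \eqref{eq:FKGFK} only yields a lower bound of the form $(1-\delta)^{O(m/n_0)}$, which degenerates as $m$ grows instead of staying above $1-f(\delta)$; controlling large scales from a single-scale hypothesis requires an upper bound on the complementary primal crossing by a counting argument, not an FKG lower bound. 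Moreover, even granting the amplification, your nested annuli at geometric scales provide only $\log N$ attempts, hence polynomial decay in $N$ (not stretched-exponential as you claim), and the one-sentence ``upgrade'' you append --- disjoint translates of the same-scale annulus combined with the spatial Markov property --- is not a routine upgrade: it is the entire proof.

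Indeed, the paper's argument consists of exactly that step, applied directly at the single scale $n=n_0$ with no multi-scale input. If $0\lra\partial\Lambda_N$ with $N=8kn$, the connecting path forces the existence of a chain of $k$ points $x_1,\dots,x_k\in(n\bbZ)^2$, consecutive ones at $L^\infty$-distance $8n$, such that the increasing event $\{\Lambda_n(x_i)\lra\Lambda_{2n}(x_i)^c\}$ (the translate of $(H_n^*)^c$) occurs for each $i$, the boxes $\Lambda_{4n}(x_i)$ being disjoint. Revealing these boxes one at a time and using your own monotonicity observation, the intersection has probability at most $\big(1-\phi^1_{\Lambda_{4n}}[H_n^*]\big)^k\le\delta^k$, while the number of possible chains is at most $C^k$ for a universal constant $C$; choosing $\delta<1/(2C)$ gives $\phi^1_{\Lambda_N}[0\lra\partial\Lambda_N]\le 2^{-k}$, i.e.\ exponential decay in $N$. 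So no transfer of the hypothesis to other scales is needed at all, and the FKG-gluing step --- the part of your proposal that would actually have to carry the load --- is both unnecessary and unjustified.
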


\begin{figure}
\begin{center}
\includegraphics[height = 3.2cm]{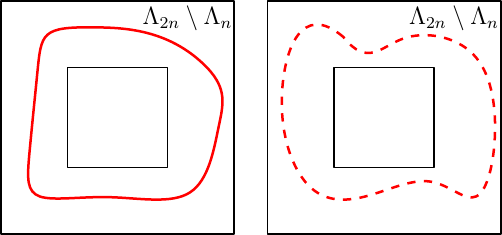}\qquad \qquad \quad
\includegraphics[height = 3.2cm]{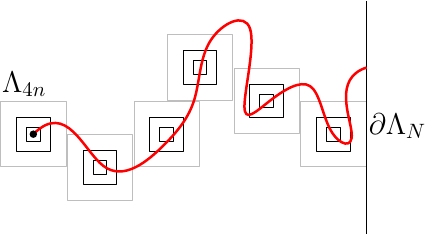}
\caption{{\em Left:} The events~$H_n$ and~$H_n^*$, respectively.
{\em Right:} If~$0$ is connected to~$\partial \La_N$, then there exists a path of at least~$\lfloor N/8n\rfloor$ disjoint but neighbouring translates of~$\La_{4n}$ by points of~$(n\bbZ)^2$ for which~$H_n^*$ fails. The black annuli are the translates of~$\La_{2n}\setminus \La_n$ for which~$H_n^*$ fails; the grey boxes are the corresponding translates of~$\La_{4n}$.}
\label{fig:Hn}
\end{center}
\end{figure}

\begin{proof}[Proof of Proposition~\ref{prop:finite_size_ad}]
	The implication from right to left is obvious since 
	\begin{align}
		1- \phi_{\Lambda_{4n}}^1[H_n^*] \leq |\partial \La_n| \phi_{\Lambda_n}^1[0\lra\partial \Lambda_n]. 
	\end{align}We focus below on the opposite implication. The idea of the proof is described in Figure~\ref{fig:Hn}.

	Fix some~$k,n \geq 1$.
	For~$N = 8kn$, the occurrence of~$0\lra\partial \Lambda_N$ implies the existence of a family of~$k$ points~$x_1,\dots, x_k \in (n\bbZ)^2$ 
	so that the translate~$\Lambda_n(x_i) \lra \Lambda_{2n}(x_i)^c$ of~$(H_n^*)^c$ occurs for each~$i = 1,\dots, k$ 
	and so that~$x_{i}$ is at a~$L^\infty$-distance~$8n$ from~$x_{i-1}$. 
	The number of such families of points may be bounded above by~$C^k$ for some fixed constant~$C$, while 
	\begin{align*}
		\phi_{\Lambda_N}^1\Big[\bigcap_{i=1}^k \{\Lambda_n(x_i) \lra \Lambda_{2n}(x_i)^c\} \Big]
		\le (1 - \phi_{\Lambda_{4n}}^1[H_n^*])^k. 
	\end{align*}
	Thus, we obtain exponential decay of~$\phi_{\Lambda_N}^1[0\lra\partial \Lambda_N]$ as soon as~$1 - \phi_{\Lambda_{4n}}^1[H_n^*] < \frac1{2C} =: \delta$. 
\end{proof}

\begin{proof}[Proof of Corollary~\ref{cor:p_c(q)}]
	The proof depends on which of the cases (DisCon) and (Con) occurs at~$p_{\rm sd}$. 
	We will prove all claims of the corollary separately in the two cases, 
	except for the continuity of~$\theta$, which will be discussed at the end of the proof. 
	\medskip 
	
	\noindent
	{\bf  (DisCon)} If (DisCon) holds for~$p_{\rm sd}$, then~\eqref{eq:phi_ordering} implies that~$\phi^1_p$ has exponential decay of cluster radii for all~$p < p_{\rm sd}$. 
	In particular,~$\phi^1_p$ contains a.s.\ no infinite cluster, which implies that~$p_c\geq p_{\rm sd}$ and~$\phi^0_p = \phi^1_p$ for all~$p < p_{\rm sd}$  (see Proposition~\ref{prop:unique_measure_subcrit}).
	
	Applying the same reasoning for the dual model proves that~$p_c \leq p_{\rm sd}$ and that for all~$p > p_{\rm sd}$ there exists a unique infinite-volume measure, for which~$\omega^*$ has exponential decay of cluster radii. The super-critical sharpness~\eqref{eq:sharp2DFK} follow directly. 	
	\medskip 
	
	\noindent
	{\bf  (Con)}
	If (Con)  holds for~$p_{\rm sd}$, a more involved argument is necessary: it requires the use of some form of sharp threshold technique (here in the form of Theorem~\ref{thm:BKKKL}) and the finite size criterion of Proposition~\ref{prop:finite_size_ad}.
	We start by proving that, for any~$p<p_{\rm sd}$,
	\begin{align}\label{eq:psd_finite_size}
		1- \phi^1_{\Lambda_{4n}, p} [H_n^*]  = \phi^1_{\Lambda_{4n}, p} [\La_n \lra \partial \La_{2n}]  \xrightarrow[n\to\infty]{} 0.
	\end{align}
	
	As explained in Exercise~\ref{exo:algebraic_decay},~\eqref{eq:Con} implies the existence of~$\alpha > 0$ such that
	\begin{align}\label{eq:one_arm_sd}
		\phi_{\La_{2n}, p_{\rm sd}}^1[0\lra \partial \La_n] \leq n^{-\alpha} \qquad \text{ for all~$n\geq 1$}.
	\end{align}
	Exercise~\ref{exo:influence_edge} allows to bound the influence of any individual edge~$e \in \La_{4n}$ on the increasing event~$\La_n \lra \partial \La_{2n}$ as follows
	\begin{align}
		{\rm Cov}(\{\La_n \lra \partial \La_{2n}\},\omega_e)
		&\leq 
		\phi^0_{\Lambda_{4n}, p} [\La_n \lra \partial \La_{2n} \,|\, \omega_e = 1] -
		\phi^0_{\Lambda_{4n}, p} [\La_n \lra \partial \La_{2n}]  \\
		&\leq \phi^0_{\Lambda_{4n}, p} [e \lra\partial \La_{2n} \text{ and }e \lra \La_{n}   \,|\, \omega_e = 1]\\
		&\leq c\, \phi^0_{\Lambda_{8n}, p} [0 \lra \partial \Lambda_{n/2}] \leq c' n^{-\alpha},\label{eq:influence_boun32}
	\end{align}
	where~$c,c' > 0$ are constants that are independent of~$n$ and uniformly positive in~$p$, away form~$0$ and~$1$;
	${\rm Cov}$ is the covariance under~$\phi^0_{\Lambda_{4n}, p}$;
	the second inequality is given by \eqref{eq:exo_influence_one_arm};
	the third is due to  \eqref{eq:Mon} and the last to~\eqref{eq:one_arm_sd} and~\eqref{eq:Mon}.
	
	Our assumption (Con) also shows that~$\sup_{n \geq 1} \phi^0_{\Lambda_{4n}, p_{\rm sd}} [\La_n \lra \partial \La_{2n}] < 1$.
	 Applying Proposition~\ref{prop:russo_FK}, Theorem~\ref{thm:BKKKL} and using~\eqref{eq:influence_boun32}, we conclude that~$p \mapsto \phi^0_{\Lambda_{4n}, p} [\La_n \lra \partial \La_{2n}]$ 
	satisfies a sharp-threshold principle below~$p_{\rm sd}$, which is to say that
	\begin{align*}
		\phi^0_{\Lambda_{4n}, p} [\La_n \lra \partial \La_{2n}] \xrightarrow[n\to \infty]{} 0 \qquad \text{ for all~$p < p_{\rm sd}$}. 
	\end{align*}
	Notice the difference in boundary conditions when compared to \eqref{eq:psd_finite_size}.
	To overcome this, we write
	\begin{align*}
		\phi^1_{\Lambda_{4n}, p} [\La_n \lra \partial \La_{2n}]  
		\leq \frac{\phi^1_{\Lambda_{4n}, p} [\La_n \lra \partial \La_{2n} \,|\, H_{2n}^*] }{\phi^1_{\Lambda_{4n}, p} [ H_{2n}^* \,|\, \La_n \lra \partial \La_{2n}]}
		\leq \frac{\phi^0_{\Lambda_{4n}, p} [\La_n \lra \partial \La_{2n}]  }{\phi^1_{\Lambda_{4n}, p} [ H_{2n}^* \,|\, \La_n \lra \partial \La_{2n}]}
%		&\leq \tfrac{1}{\pi}\phi^1_{\Lambda_{4n}, p} [\La_n \lra \partial \La_{2n}\,|\, H_{2n}^*]
%		\\
%		&\leq \tfrac{1}{\pi}\phi^0_{\Lambda_{4n}, p} [\La_n \lra \partial \La_{2n}] ,
	\end{align*}
%	where 
%	$$\displaystyle \pi = \inf_{p\leq p_{\rm sd}; n\geq 1}\phi^1_{\Lambda_{4n}, p} [ H_{2n}^* \,|\, \La_n \lra \partial \La_{2n}] \geq
%	\inf_{n\geq 1}\phi^1_{\Lambda_{4n} \setminus \La_{2n}, p_{\rm sd}} [ H_{2n}^*] > 0$$ due to our assumption that (Con) occurs at~$p_{\rm sd}$. 
%	The first inequality is obtained by simple algebra, while the last one by exploring on the outermost circuit realising~$H_{2n}^*$ and ``pushing'' boundary conditions.
	where the inequality is obtained by exploring the outermost circuit realising~$H_{2n}^*$ and ``pushing'' boundary conditions as in~\eqref{eq:pushing_bc}.
	Similarly,   we have that
	\begin{align}
		\inf_{p\leq p_{\rm sd}; n\geq 1}\phi^1_{\Lambda_{4n}, p} [ H_{2n}^* \,|\, \La_n \lra \partial \La_{2n}] \geq
	\inf_{n\geq 1}\phi^1_{\Lambda_{4n} \setminus \La_{2n}, p_{\rm sd}} [ H_{2n}^*] > 0,
	\end{align}
	 due to our assumption that (Con) occurs at~$p_{\rm sd}$. This yields~\eqref{eq:psd_finite_size}.
	
%	The last three displays combine to prove that~
%	\begin{align}
%		\phi^1_{\Lambda_{4n}, p} [H_n^*]  =   1- \phi^1_{\Lambda_{4n}, p} [\La_n \lra \partial \La_{2n}]  \xrightarrow[n\to \infty]{} 1
%		\qquad\text{ for all~$p < p_{\rm sd}$.}
%	\end{align} 
	Finally, Proposition~\ref{prop:finite_size_ad} allows us to conclude that, for all~$p < p_{\rm sd}$,~$\phi_{p}^1$ exhibits exponential decay of cluster radii \eqref{eq:sharp2DFK}, and therefore that~$\theta(p) = 0$ and~$\phi_p^0 =\phi_p^1$. 
	The same reasoning applied to the dual model proves~\eqref{eq:sharp2DFK} for~$p > p_{\rm sd}$. 
	We also conclude that~$p_c = p_{\rm sd}$.\bigskip 
	
	\noindent In closing, let us study the continuity of~$\theta$. By general monotonicity arguments, it may be proved that 
	for any~$p \in (0,1)$,
%	~$\phi^0_p = \lim_{h\searrow 0} \phi^0_{p-h}$ and~$\phi^1_p = \lim_{h\searrow 0} \phi^1_{p+h}$ (where the convergence is in the sense of Fact~\ref{fact:infinite_vol}).
%	In particular~$\lim_{u \nearrow p} \theta(u) = \phi^0_p[0\lra \infty]$ and~$\lim_{u \searrow p} \theta(u) = \phi^1_p[0\lra \infty]$.
	\begin{align}
		\lim_{u \nearrow p} \theta(u) = \phi^0_p[0\lra \infty]\quad \text{ and }\quad \lim_{u \searrow p} \theta(u) = \phi^1_p[0\lra \infty].
	\end{align}
Thus, for any point~$p$ where~$\phi^0_p = \phi^1_p$,~$\theta$ is continuous. 
%	 As such, keeping in mind Proposition~\ref{prop:phi_ordering}, 
%	we conclude that for any point~$p$ where~$\phi^0_p = \phi^1_p$,~$\theta$ is continuous. 
\end{proof}

Next, we prove the finer consequences of (Con) and (DisCon) on the phase transition, namely those listed in Corollary~\ref{cor:phase_transition}.
We do not give detailed arguments here; see the exercises for additional indications. 

\begin{proof}[Proof of Corollary~\ref{cor:phase_transition}]
Assume first that (DisCon) occurs at~$p_{\rm sd}$. Then~\eqref{eq:DisCon} directly implies that~$\phi^0_{p_c}\neq	\phi^1_{p_c}$. 

A general approach similar to that used in the proof of Proposition~\ref{prop:phi_ordering}
shows that~$p\mapsto f(p,q)$ is not differentiable at~$p_c$ and that 
$\lim_{p \nearrow p_c} \phi_p = \phi_{p_c}^0$ and~$\lim_{p \searrow p_c} \phi_p = \phi_{p_c}^1$.
This immediately implies the discontinuity at~$p_c$ of~$p\mapsto \theta(p)$ and~$p \mapsto \phi_p^0(e \text{ open})$.
It also implies that 
\begin{align}
\lim_{p \nearrow p_c} \xi(p) = \xi^0(p_c),
\end{align}
where~$\xi^0(p_c)$ is the correlation length of~$\phi_{p_c}^0$, which, by~\eqref{eq:DisCon}, is  finite. 
\medskip 

Assume now that (Con) occurs at~$p_{\rm sd}$.
Then a standard RSW construction shows that~\eqref{eq:one_arm_FK} holds for~$p = p_{\rm sd}$ (see Exercise~\ref{exo:algebraic_decay}).

The uniqueness of infinite-volume measure follows from the fact that~$\phi_{p_c}^1 [0\lra\infty] = 0$ (see Exercise~\ref{exo:influence_edge}), which in turn follows from \eqref{eq:one_arm_FK}. 
Then, by standard monotonicity arguments, we find that~$p\mapsto \theta(p)$ and~$p\mapsto  \phi_p(e \text{ open})$ are continuous and 
$p\mapsto f(p,q)$ is differentiable at~$p_c$.

Finally, Exercise~\ref{exo:cor_len_diverges} explains that~$\xi(p)$ must diverge at~$p_c$ if~$\phi^{0}_{p_c}$ does not have exponential decay of cluster radii. 
\end{proof}

\section{RSW in strips}\label{sec:RSW_stirp}

Throughout this section, we work with~$p = p_{\rm sd}$ an omit it from the notation. 
Set~${\rm Strip}_N = \bbZ \times [-N,N]$. Let~$1/0$ be the boundary conditions that are wired on the top of~${\rm Strip}_N$ and free on the bottom. 
The following RSW theorem is central to the proof of Theorem~\ref{thm:dichotomy}.

The measure~$\phi_{{\rm Strip}_N}^{1/0}$ is defined as a limit of measures on rectangles~$[-M,M]\times [-N,N]$ as~$M\to\infty$;
the boundary conditions on the lateral sides of the rectangles are irrelevant due to the finite energy property. 

\begin{theorem}[RSW in strips]\label{thm:RSWstrip}
	There exists~$c >0$ such that, for any~$\rho \geq 1$ and~$N\geq n$,
	\begin{align}\label{eq:RSWstrip}
		&\phi_{{\rm Strip}_N}^{1/0}\big[ [-\rho n,\rho n] \times [-n,n] \text{ crossed horizontally}\big]\geq c^\rho.
	\end{align}
	The same applies to crossing probabilities for the dual model.
\end{theorem}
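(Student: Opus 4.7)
The plan is to establish \eqref{eq:RSWstrip} via a Russo--Seymour--Welsh-type argument in two stages, adapted to the mixed $1/0$ boundary conditions of the strip.

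\textbf{Stage 1 (Square crossing estimate).} First I would show that there exists $c_0 > 0$ such that, for every $N \geq n$,
\begin{align*}
\phi^{1/0}_{{\rm Strip}_N}\big[[-n,n]^2 \text{ is crossed horizontally}\big] \geq c_0.
\end{align*}
The key ingredient is Corollary~\ref{cor:symmetric_quad} together with Remark~\ref{rem:symmetric_quad}. Although the $1/0$ strip measure is not literally a symmetric-quad measure, the self-duality of Proposition~\ref{prop:duality} at $p_{\rm sd}$ combined with the vertical reflection $\sigma$ exchanging top and bottom of the strip produces a near-symmetry: if $\omega \sim \phi^{1/0}_{{\rm Strip}_N}$, then $\sigma(\omega^*)$ is distributed as $\phi^{1/0}_{{\rm Strip}_N}$ up to a Radon--Nikodym factor bounded by a function of $q$ alone. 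Applying the reasoning behind Corollary~\ref{cor:symmetric_quad} to the event $\{[-n,n]^2$ is crossed horizontally in $\omega\}$ and its complement $\{[-n,n]^2$ is crossed vertically in $\omega^*\}$ then yields the square estimate.

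\textbf{Stage 2 (Extension to long rectangles).} Cover $R_\rho := [-\rho n,\rho n] \times [-n,n]$ by $O(\rho)$ overlapping translates $S_i := [(i-1)n,(i+1)n] \times [-n,n]$ of $[-n,n]^2$. By horizontal translation invariance of $\phi^{1/0}_{{\rm Strip}_N}$, the event $H_i$ of a horizontal crossing of $S_i$ has probability at least $c_0$ by Stage~1. In a suitable overlap region (enlarged, if needed, to be a square) I also require a vertical crossing event $V_i$; by a rotated/dual analogue of the Stage~1 argument, $V_i$ has probability bounded below by some $c_1 > 0$. By planarity, the simultaneous occurrence of all $H_i$ and $V_i$ forces the existence of a horizontal open crossing of the whole rectangle $R_\rho$. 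Applying the FKG inequality~\eqref{eq:FKGFK} to the intersection gives
\begin{align*}
\phi^{1/0}_{{\rm Strip}_N}[R_\rho \text{ crossed horizontally}] \geq (c_0 c_1)^{O(\rho)} \geq c^\rho,
\end{align*}
for some $c = c(c_0,c_1) > 0$ independent of $n$, $N$, $\rho$.

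\textbf{Main obstacle.} The delicate point is Stage~1. The near-symmetry between $\omega$ and $\sigma(\omega^*)$ holds only up to corrections from the top and bottom boundaries of the strip, and one must verify that localising the event to $[-n,n]^2$ (which lies well inside the strip since $N \geq n$) absorbs these corrections into a bounded multiplicative constant. This uses the spatial Markov property~\eqref{eq:SMP} together with the monotonicity of boundary conditions~\eqref{eq:Mon} to ``push'' boundary conditions between comparable configurations with only bounded Radon--Nikodym cost. The bound $c_1 > 0$ appearing in Stage~2 is established by the same duality argument applied to a rotated version of the strip setup, and the dual version of~\eqref{eq:RSWstrip} follows by applying the entire argument to $\omega^*$, whose law is of the $\phi^{0/1}$-type on the dual strip.
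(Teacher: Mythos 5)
There is a genuine gap, and it sits exactly where you flagged the ``main obstacle'': Stage~1 does not work, and the difficulty is not a boundary-correction issue that \eqref{eq:SMP} and \eqref{eq:Mon} can absorb. The symmetry you invoke is the map $\omega \mapsto \sigma(\omega^*)$ with $\sigma$ the vertical reflection of the strip; this is indeed (essentially exactly) a symmetry of $\phi^{1/0}_{{\rm Strip}_N}$, but it preserves the horizontal and vertical directions rather than exchanging them. The complement of $\{[-n,n]^2 \text{ crossed horizontally in } \omega\}$ is $\{[-n,n]^2 \text{ crossed vertically in } \omega^*\}$, and under $\sigma$ this becomes a \emph{vertical} primal crossing of the same square. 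So all your argument yields is $\phi^{1/0}_{{\rm Strip}_N}[\calC_h(\Rect(n,n))]+\phi^{1/0}_{{\rm Strip}_N}[\calC_v(\Rect(n,n))]=1$, which is the paper's Lemma~\ref{lem:duality_strip} and gives no lower bound on the horizontal term: a priori the wired top / free bottom could make vertical crossings carry essentially all of the probability. Corollary~\ref{cor:symmetric_quad} cannot be applied to the square sitting inside the strip, because that corollary needs a symmetry mapping the quad to itself while exchanging its wired arcs with its free (dual) arcs, i.e.\ exchanging the two crossing directions; no such symmetry of the strip measure exists (it is not invariant under rotation by $\pi/2$, and the vertical reflection is only a symmetry when composed with duality). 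For the same reason, the bound $c_1>0$ on the vertical crossings $V_i$ claimed in Stage~2 ``by a rotated/dual analogue'' has no justification: there is no rotated analogue of the strip setup, and duality applied to the same square only converts the unproved horizontal bound into an equally unproved vertical one.

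The paper's proof is built precisely to circumvent this lack of symmetry. It first proves Lemma~\ref{lem:cross_easy} by contradiction: if $\calC_h(\Rect(n/3,n))$ had small probability, then by Lemma~\ref{lem:duality_strip} vertical crossings of that (narrow) rectangle would be likely; conditioning on the leftmost and rightmost such crossings in two disjoint translates produces a random domain, ``narrower than a square'' with wired lateral sides, to which Corollary~\ref{cor:symmetric_quad} (via Remark~\ref{rem:symmetric_quad}, with $\sigma$ a $\pi/2$-rotation-plus-shift of that \emph{local} domain, not of the strip) applies and forces a horizontal crossing with probability at least $\tfrac14\cdot\tfrac1{1+q}$ --- a contradiction. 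This only yields horizontal crossings of rectangles $\Rect(cn,n)$ of possibly small aspect ratio, so one then needs the lengthening Lemma~\ref{lem:lengthening_crossings} (diagonal crossings plus a second symmetric-domain construction, analogous to \eqref{eq:GG}), applied at the largest scale $m$ for which $\calC_h(\Rect(m-1,n))$ is still probable; only at that maximal scale does duality guarantee $\calC_v(\Rect(m,n))\geq 1/2$, which is what replaces your missing $c_1$. Your Stage~2 gluing is then the easy final step, which the paper also performs; but as written, both uniform inputs $c_0$ and $c_1$ that your gluing needs are unproved, and they are exactly the hard content of the theorem.
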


The proof of the above is similar to the corresponding proof for Bernoulli percolation (Section~\ref{sec:RSW_perco}), with a few exceptions. 
Here, the measure is not invariant under rotations by~$\pi/2$ or vertical reflections, 
but is invariant under horizontal translations and reflections. 
It has a self-duality property which may be used to obtain a ``initial estimate'' similar to~\eqref{eq:cross_square}.

The lack of invariance under rotations by $\pi/2$ and vertical reflections will lead to the use of rectangles rather than squares, 
and to an adaptation of the notions~${\sf BottomLeft}$ and~${\sf TopRight}$.
Finally, in FK-percolation one should be aware of the effect of boundary conditions. 
The construction of the symmetric domain in the proof of~\eqref{eq:GG} is designed exactly for this purpose, and will also apply here. 

Theorem~\ref{thm:RSWstrip} is a consequence of the following lemmas. 
Write~$\Rect(m ,n) = [-m,m] \times [-n,n]$ and~$\calC_h(\Rect(m ,n))$ and~$\calC_v(\Rect(m ,n))$ for the events that~$\Rect(m ,n)$ contains a horizontal and a vertical crossing, respectively. 

\begin{lemma}(Duality)\label{lem:duality_strip}
	For any~$N \geq n$ and~$m \geq 1$, 
	\begin{align}\label{eq:duality_strip}
		\phi_{{\rm Strip}_N}^{1/0}\big[\calC_h(\Rect(m ,n))\big] + \phi_{{\rm Strip}_N}^{1/0}\big[ \calC_v(\Rect(m ,n))\big] = 1.
	\end{align}
\end{lemma}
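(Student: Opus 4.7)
The plan is a self-duality argument leveraging the top-bottom symmetry of the strip at the self-dual point $p_{\rm sd}$.

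First, I would use the standard planar-duality observation that for any configuration $\omega$ on ${\rm Strip}_N$, exactly one of the following occurs: $\Rect(m,n)$ is crossed horizontally by a primal path in $\omega$, or $\Rect(m,n)$ is crossed vertically by a dual path in $\omega^*$. Taking probabilities yields
\begin{align*}
\phi_{{\rm Strip}_N}^{1/0}\big[\calC_h(\Rect(m,n))\big] + \phi_{{\rm Strip}_N}^{1/0}\big[\omega^* \in \calC_v(\Rect(m,n))\big] = 1,
\end{align*}
so it remains to identify the second term with $\phi_{{\rm Strip}_N}^{1/0}[\calC_v(\Rect(m,n))]$.

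To do so, I would invoke Proposition~\ref{prop:duality}, applied in the thermodynamic limit $M\to\infty$ to pass from boxes $[-M,M]\times[-N,N]$ to the strip (the finite-energy property makes the lateral boundary conditions irrelevant in this limit). Since $p_{\rm sd}^* = p_{\rm sd}$, if $\omega\sim \phi_{{\rm Strip}_N}^{1/0}$ then $\omega^*$ has the law of FK-percolation on the dual strip $({\rm Strip}_N)^*$ at parameter $p_{\rm sd}$ with boundary conditions reversed: free on top and wired on bottom. I would then construct a rigid motion $\sigma$ of $\bbR^2$, obtained by composing a reflection through a horizontal axis with a half-unit translation sending the dual lattice onto $\bbZ^2$, that maps the dual strip with reversed boundary conditions onto the primal strip ${\rm Strip}_N$ with its original $1/0$ boundary conditions; hence $\sigma(\omega^*)$ shares the law of $\omega$. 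Since $\Rect(m,n)$ is invariant under reflection through the axis $y=0$, the image $\sigma(\Rect(m,n))$ is a horizontal translate of $\Rect(m,n)$, and vertical crossings are preserved under $\sigma$. Combined with horizontal translation invariance of $\phi_{{\rm Strip}_N}^{1/0}$, this yields the required equality.

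The main obstacle will be making the symmetry step precise. The dual lattice sits on a half-integer translate of $\bbZ^2$, and $({\rm Strip}_N)^*$ has one fewer row of vertices than ${\rm Strip}_N$, so the symmetry axis of the dual strip is slightly offset from that of the primal. These discrepancies are cosmetic after passing to the infinite-volume limit and invoking translation invariance in the $\bbZ$-direction, but one has to set up $\sigma$ with some care to check that it really maps the dual strip with reversed boundary conditions precisely onto the primal strip with the original boundary conditions.
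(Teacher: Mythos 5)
Your proposal is correct and follows essentially the same route as the paper: complementarity of the primal horizontal crossing with the dual vertical crossing, combined with the fact that at $p_{\rm sd}$ the dual of $\phi_{{\rm Strip}_N}^{1/0}$ is (after a vertical reflection and half-unit shift) again $\phi_{{\rm Strip}_N}^{1/0}$, so that primal and dual vertical-crossing probabilities coincide. The paper simply states the reflection/duality fact in one line, whereas you spell out its derivation from Proposition~\ref{prop:duality} in the $M\to\infty$ limit and flag the harmless half-integer offsets, which the paper glosses over.
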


\begin{lemma}(Non-degenerate horizontal crossings)\label{lem:cross_easy}
	There exists a constant~$0 < c \leq 1/2$ such that, for any~$N \geq n$
	\begin{align}\label{eq:cross_easy}
		\phi_{{\rm Strip}_N}^{1/0}\big[\calC_h(\Rect(c n ,n))\big] \geq c.
	\end{align}
\end{lemma}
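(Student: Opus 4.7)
The plan is to mimic the Bernoulli RSW proof of Section~\ref{sec:RSW_perco}, with the loss of $\pi/2$-rotational symmetry compensated by the strip's self-duality. The starting point is Lemma~\ref{lem:duality_strip} applied to the vertically symmetric square $R_0=\Rect(n,n)$: it gives $\phi[\calC_h(R_0)] + \phi[\calC_v(R_0)] = 1$, so one of the two is $\geq 1/2$.

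\emph{Easy case.} If $\phi[\calC_h(R_0)]\geq 1/2$, I would observe that any horizontal crossing $\gamma$ of $R_0$ contains a horizontal crossing of $\Rect(m,n)$ for every $m\leq n$: letting $\tau=\inf\{t:\gamma(t)\in\{m\}\times[-n,n]\}$ and $\sigma=\sup\{t<\tau:\gamma(t)\in\{-m\}\times[-n,n]\}$, the maximality of $\sigma$ forces the sub-arc $\gamma[\sigma,\tau]$ to stay in $[-m,m]\times[-n,n]$. Hence $\calC_h(R_0)\subseteq\calC_h(\Rect(m,n))$, and with $m=n/2$ we get $\phi[\calC_h(\Rect(n/2,n))]\geq 1/2$, so the lemma holds with $c=1/2$.

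\emph{Hard case.} If $\phi[\calC_v(R_0)]\geq 1/2$, I would follow the symmetric-quad construction of the Bernoulli proof. By horizontal translation invariance and FKG, both $R_0$ and $R_0+(2n,0)$ are vertically crossed with joint probability $\geq 1/4$. Condition on the rightmost vertical crossing $\Gamma_L$ of $R_0$ and the leftmost vertical crossing $\Gamma_R$ of $R_0+(2n,0)$: the region $\calQ\subset[-n,3n]\times[-n,n]$ enclosed by $\Gamma_L$, $\Gamma_R$ and the horizontal arcs on $y=\pm n$ is a topological rectangle in which the configuration is still unexplored. By \eqref{eq:SMP}, the conditional law on $\calQ$ is FK-percolation at $p_{\rm sd}$ with wired boundary on $\Gamma_L,\Gamma_R$, wired top (from the strip's $1/0$ b.c.) and free bottom. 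The vertical reflection $(x,y)\mapsto(x,-y)$ composed with primal/dual swap is a self-duality symmetry of this measure at $p_{\rm sd}$: it fixes the two vertical primal arcs $\Gamma_L,\Gamma_R$ and exchanges the wired-top arc with the free-bottom arc, placing $\calQ$ in the ``better-than-symmetric'' setting of Remark~\ref{rem:symmetric_quad} and giving $\phi[\Gamma_L\xlra{\calQ}\Gamma_R\mid\Gamma_L,\Gamma_R]\geq 1/(1+q)$. Concatenating this horizontal primal connection with the two vertical crossings produces a primal path traversing $[-n,3n]\times[-n,n]$ horizontally, and the sub-arc argument of the easy case extracts from it a horizontal crossing of $\Rect(cn,n)$ for some $c>0$ depending only on $q$.

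\emph{Main obstacle.} The hard case is the real work: one must carefully verify that the ``better than symmetric'' hypothesis of Remark~\ref{rem:symmetric_quad} really applies to $\calQ$ with its inherited mixed boundary conditions, and in particular that the vertical-reflection-plus-duality map genuinely exchanges the two horizontal boundary arcs in the right way at $p_{\rm sd}$. The remaining ingredients --- FKG on translates, conditioning on the innermost crossings, and sub-arc extraction --- are standard.
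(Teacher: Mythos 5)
Your overall plan (use the strip duality to extract cheap crossings, condition on extremal vertical crossings, and join them via a symmetric-quad bound of Corollary~\ref{cor:symmetric_quad}/Remark~\ref{rem:symmetric_quad}) is indeed the paper's plan, and your easy case is correct; but the hard case, which carries all the content, has genuine gaps. First, the conditioning is backwards: to determine the \emph{rightmost} vertical crossing of $R_0$ and the \emph{leftmost} vertical crossing of $R_0+(2n,0)$ one must explore exactly the region between them, so the configuration in your $\calQ$ is not unexplored --- worse, the revealed information there is negative (absence of further crossings), so no monotonicity argument lets you dominate an alternating-b.c.\ measure on $\calQ$. (The paper conditions on the \emph{leftmost} crossing of the left rectangle and the \emph{rightmost} crossing of the right one, which leaves the in-between region untouched.) Second, even if the connection $\Gamma_L\lra\Gamma_R$ were produced, your geometry gives no quantitative conclusion: the two squares are adjacent along $x=n$, and nothing prevents both extremal crossings from hugging that common line, in which case the connecting path has microscopic horizontal extent and crosses no rectangle of width of order $n$. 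This is precisely why the paper works with three rectangles of width $2n/3$ inside the single square $\La_n$: there $\Gamma_L$ lives in $\{x\le -n/3\}$ and $\Gamma_R$ in $\{x\ge n/3\}$, so any connection between them must cross the middle rectangle $\Rect(n/3,n)$ horizontally.

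Third --- exactly the point you flagged as the main obstacle --- your symmetry is the wrong one. The complement of $\{\Gamma_L\xlra{\calQ}\Gamma_R\}$ is a dual top--bottom crossing of $\calQ$; the vertical reflection $(x,y)\mapsto(x,-y)$ maps a top--bottom path to another top--bottom path, so composing it with duality can never convert the complementary event into the desired primal $\Gamma_L$--$\Gamma_R$ connection. Moreover that reflection does not fix the random arcs $\Gamma_L,\Gamma_R$, and the boundary conditions on $y=\pm n$ are not deterministically ``wired top, free bottom'' unless $n=N$: for $n<N$ they are random, induced by the unexplored part of the strip. The paper's map $\sigma$ is instead the rotation by $\pi/2$ composed with the shift by $(1/2,1/2)$, applied to the domain $\calD$ between $\Gamma_L$ and $\Gamma_R$; the ``better than symmetric'' property of Remark~\ref{rem:symmetric_quad} holds because $\calD$ is at most as wide ($\le 2n$) as it is tall ($=2n$), so the rotated image of a vertical dual crossing sweeps the full width of $\calD$ and connects $\Gamma_L$ to $\Gamma_R$; the unknown induced boundary conditions are then handled by monotone comparison with the alternating ones, not by an exact symmetry of the conditional law. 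As written, your hard case does not close.
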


\begin{lemma}(Lengthening crossings)\label{lem:lengthening_crossings}
	There exists a constant~$c >0$ such that, for any~$N \geq n$ and~$m \geq 1$, 
	\begin{align}\label{eq:lengthening_crossings}
		\phi_{{\rm Strip}_N}^{1/0}\big[\calC_h(\Rect(2m ,n))\big]
		\geq c\,  \phi_{{\rm Strip}_N}^{1/0}\big[ \calC_h(\Rect(m ,n))\big]^4 \phi_{{\rm Strip}_N}^{1/0}\big[ \calC_v(\Rect(m ,n))\big]^2.
	\end{align}
\end{lemma}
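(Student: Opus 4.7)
The plan is to imitate the RSW scheme for Bernoulli percolation given in the proof of Theorem~\ref{thm:RSW_perco}, adapted to the strip measure $\phi^{1/0}_{{\rm Strip}_N}$. The key ingredients at my disposal are positive association (Proposition~\ref{prop:FKFKG}), the horizontal translation and reflection invariance of $\phi^{1/0}_{{\rm Strip}_N}$ (the $1/0$ boundary condition only breaks \emph{vertical} symmetry), the self-duality estimate for symmetric quads at $p_{\rm sd}$ from Corollary~\ref{cor:symmetric_quad} together with its ``better than symmetric'' variant in Remark~\ref{rem:symmetric_quad}, and the standard topmost/bottommost crossing exploration.

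First I would cover $\Rect(2m,n) = [-2m,2m] \times [-n,n]$ by two horizontal translates of $\Rect(m,n)$, namely $A = [-2m,0]\times[-n,n]$ and $C = [0,2m]\times[-n,n]$. By horizontal translation invariance, each of these has horizontal-crossing probability equal to $\phi^{1/0}[\calC_h(\Rect(m,n))]$. The goal is to construct a left–right crossing of $\Rect(2m,n)$ by producing crossings in $A$ and $C$ that can be joined across the line $\{0\}\times[-n,n]$. As in the Bernoulli proof, I would first establish in each of $A$ and $C$ a connection of the type ``bottom half of the left side to top half of the right side'' (within the rectangle) with probability at least a constant times $\phi^{1/0}[\calC_h(\Rect(m,n))]^2 \phi^{1/0}[\calC_v(\Rect(m,n))]$. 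The exponents $2$ and $1$ come from using the horizontal reflection of the strip measure together with FKG~\eqref{eq:FKGFK} to combine two horizontal crossings (which, after reflection, are forced to land on the correct halves of the sides) with one vertical crossing that ties them together — exactly in analogy with the ``${\sf BottomLeft}\lra{\sf TopRight}$'' bound in the proof of Theorem~\ref{thm:RSW_perco}.

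Next I would carry out the reflection/gluing step of Figure~\ref{fig:RSW_perco2}, adapted to the vertical axis $\{0\}\times\bbR$. Conditionally on the two ``BL–TR'' connections occurring, I explore the topmost crossings $\Gamma_A$ in $A$ and $\Gamma_C$ in $C$; by a standard argument these are measurable with respect to the edges on or above them, so the unexplored region retains favourable (free-type) boundary conditions via~\eqref{eq:pushing_bc}. Letting $\sigma$ denote the horizontal reflection through $\{0\}\times\bbR$ (a symmetry of $\phi^{1/0}_{{\rm Strip}_N}$), I form the topological quad $\calQ$ bounded by the appropriate arcs of $\Gamma_A$, $\Gamma_C$ and their images $\sigma(\Gamma_A)$, $\sigma(\Gamma_C)$, so that the two primal-type sides are mapped by $\sigma$ onto the two dual-type sides. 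Corollary~\ref{cor:symmetric_quad} (together with Remark~\ref{rem:symmetric_quad}) then gives a primal crossing between the primal sides of $\calQ$ with probability at least $1/(1+q)$, uniformly in the realisations of $\Gamma_A,\Gamma_C$. Such a crossing, concatenated with $\Gamma_A$ and $\Gamma_C$, is a left–right crossing of $\Rect(2m,n)$. Putting everything together via FKG yields
\begin{align*}
\phi^{1/0}_{{\rm Strip}_N}\big[\calC_h(\Rect(2m,n))\big] \;\geq\; \tfrac{c}{1+q}\,\Big(\phi^{1/0}_{{\rm Strip}_N}\big[\calC_h(\Rect(m,n))\big]^2\, \phi^{1/0}_{{\rm Strip}_N}\big[\calC_v(\Rect(m,n))\big]\Big)^2,
\end{align*}
which is~\eqref{eq:lengthening_crossings}, the squaring reflecting the independent use of the ``BL–TR'' bound in each of $A$ and $C$.

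The hard part will be the reflection-quad construction in the absence of vertical symmetry. In the Bernoulli setting one reflects across a \emph{self-dual horizontal axis}; here that symmetry is unavailable, so the reflection must be performed across a vertical axis, and duality has to be invoked through Corollary~\ref{cor:symmetric_quad} on the unexplored portion of $\calQ$. For this, the exploration of $\Gamma_A$ and $\Gamma_C$ must be organised so that the conditional measure on the unexplored part of $\calQ$ dominates a free-boundary FK measure (to which Corollary~\ref{cor:symmetric_quad} applies), and the geometric position of $\Gamma_A,\Gamma_C$ must be controlled so that $\calQ$ really is ``better than symmetric'' in the sense of Remark~\ref{rem:symmetric_quad}. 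Managing these boundary-condition effects cleanly — in particular keeping the top wired arc from interfering with the symmetric-quad setup — is where most of the technical work will lie.
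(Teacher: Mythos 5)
Your overall architecture is the paper's: two side-by-side translates of $\Rect(m,n)$, a ``diagonal'' connection in each, then a gluing step in which one reflects across the vertical axis $\{0\}\times\bbR$, explores the topmost/bottommost paths realising the two diagonal connections, and applies Corollary~\ref{cor:symmetric_quad} (via Remark~\ref{rem:symmetric_quad} and the domination of alternating boundary conditions on the unexplored quad) to cross the symmetric quad with probability at least $\tfrac1{1+q}$; the part you flag as hardest is handled in the paper essentially as you describe. The genuine gap is in the step you treat as routine: the claim that in each rectangle the event ``bottom half of the left side connected to top half of the right side'', with the sides split at their \emph{midpoints}, has probability at least $c\,\phi^{1/0}_{{\rm Strip}_N}[\calC_h(\Rect(m,n))]^2\,\phi^{1/0}_{{\rm Strip}_N}[\calC_v(\Rect(m,n))]$. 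In the Bernoulli proof of Theorem~\ref{thm:RSW_perco}, the bound $\bbP_{1/2}[{\sf BottomLeft}\lra\{n\}\times[0,n]]\ge\tfrac14$ rests on invariance under \emph{vertical} reflections (up--down flips): the events ``crossing attached to the bottom half of the left side'' and ``\dots to the top half'' have equal probability and their union contains $\calC_h$. That is exactly the symmetry destroyed by the $1/0$ boundary conditions. The horizontal reflection (across a vertical line), which is the only reflection symmetry of $\phi^{1/0}_{{\rm Strip}_N}$, exchanges left and right sides \emph{at the same heights}, so it cannot force a crossing to attach to the lower half of a side; with the wired top, horizontal crossings may attach overwhelmingly to the top halves, and then your fixed-midpoint diagonal event admits no lower bound in terms of $\phi^{1/0}_{{\rm Strip}_N}[\calC_h]$.

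The paper's fix is to choose the splitting height adaptively rather than at the midpoint. With $a,b,c,d$ the corners of $\Rect(m,n)$ (counter-clockwise from top left), let $v$ be the topmost point of the left side $(ab)$ with $\phi[(av)\xlra{\Rect(m,n)}(cd)]\ge\tfrac12\phi[\calC_h(\Rect(m,n))]$; then also $\phi[(vb)\lra(cd)]\ge\tfrac12\phi[\calC_h]$, and if $w$ is the horizontal reflection of $v$ on $(cd)$, the left--right symmetry gives $\phi[(ab)\lra(wd)]=\phi[(av)\lra(cd)]\ge\tfrac12\phi[\calC_h]$. Combining these two crossings with one vertical crossing via \eqref{eq:FKGFK} yields $\phi[(vb)\lra(wd)]\ge\tfrac14\phi[\calC_h]^2\phi[\calC_v]$, which is the diagonal event your argument needs. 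Note that this adaptive choice also matters for your gluing: because $v$ and $w$ sit at the same height, the endpoints of $\Gamma_L$ and of the reflected path $\sigma(\Gamma_R)$ interlace along the boundary of the left rectangle, which is what guarantees that the two paths intersect and hence that the symmetric quad $\calQ$ is well defined. With your fixed halves replaced by this median point, the rest of your proof goes through and gives \eqref{eq:lengthening_crossings} with $c=\tfrac1{16(1+q)}$.
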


Of the three lemmas, the last one is the most important and hardest to prove. Before proving the lemmas, let us see how they imply Theorem~\ref{thm:RSWstrip}.

\begin{proof}[Proof of  Theorem~\ref{thm:RSWstrip}]
	Fix~$N \geq n$. Observe that the finite energy property implies that
	\begin{align}\label{eq:finite_energy_length}
		\phi_{{\rm Strip}_N}^{1/0}\big[\calC_h(\Rect(m + 1 ,n))\big] \geq  \big(\tfrac{p}{p + (1-p)q}\big)^2 \phi_{{\rm Strip}_N}^{1/0}\big[\calC_h(\Rect(m ,n))\big],
	\end{align}
	since for any~$\omega$ in~$\calC_h(\Rect(m ,n))$, opening at most two edges produces a configuration in~$\calC_h(\Rect(m + 1 ,n))$.
	
	Let~$c_0 > 0$ be the constant given by Lemma~\ref{lem:cross_easy} and let~$m$ be the largest value for which 
	\begin{align*}
		\phi_{{\rm Strip}_N}^{1/0}\big[\calC_h(\Rect(m-1 ,n))\big] \geq c_0.
	\end{align*} 
	Then, by Lemma~\ref{lem:cross_easy},~$m \geq c_0 n$. Moreover, 
	\begin{align}
		\phi_{{\rm Strip}_N}^{1/0}\big[\calC_h(\Rect(m,n))\big] \geq c_1 \quad \text{ and }\quad 
		\phi_{{\rm Strip}_N}^{1/0}\big[\calC_v(\Rect(m,n))\big] \geq 1/2,
	\end{align} 
	where~$c_1 = \big(\tfrac{p}{p + (1-p)q}\big)^2\, c_0$ is a uniformly positive constant --- for the first inequality, see~\eqref{eq:finite_energy_length}, for the second we use Lemma~\ref{lem:duality_strip} and the fact that~$c_0 \leq1/2$.
	
		\begin{figure}
	\begin{center}
	\includegraphics[width = .56\textwidth]{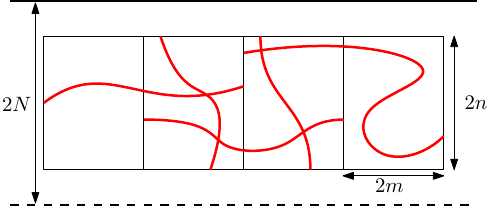}
	\caption{By combining vertical crossings of horizontal translations of~$\Rect(m ,n)$ 
	and horizontal crossings of translations of~$\Rect(2 m ,n)$, we produce horizontal crossings of~$\Rect(k m ,n)$.
	We work here in a horizontal strip with wired boundary conditions on the top and free on the bottom.}
	\label{fig:strip_lengthen}
	\end{center}
	\end{figure}
	
	Lemma~\ref{lem:lengthening_crossings} now implies that
	\begin{align*}
		\phi_{{\rm Strip}_N}^{1/0}\big[\calC_h(\Rect(2 m ,n))\big] \geq c_2
	\end{align*}
	for some uniformly positive constant~$c_2$. 
	Then, by combining horizontal and vertical crossings as in Figure~\ref{fig:strip_lengthen}, we conclude that, for any~$k\geq 2$,
	\begin{align*}
		\phi_{{\rm Strip}_N}^{1/0}\big[\calC_h(\Rect(k m ,n))\big] \geq \phi_{{\rm Strip}_N}^{1/0}\big[\calC_h(\Rect(2 m ,n))\big]^{k-1} \phi_{{\rm Strip}_N}^{1/0}\big[\calC_v(\Rect(m,n))\big]^{k-2}
		\geq (\tfrac12c_2)^k. 
	\end{align*}
	In light of the fact that~$m \geq c_0n$, this implies~\eqref{eq:RSWstrip}.

	Finally, observe that, by the duality of~$\phi_{{\rm Strip}_N}^{1/0}$, the probability in~\eqref{eq:RSWstrip} is the same for primal and dual crossings. This allows us to conclude that~\eqref{eq:RSWstrip} also holds for~$\omega^*$. 
\end{proof}

We now turn to the proof of the three lemmas. 
\begin{proof}[Proof of Lemma~\ref{lem:duality_strip}]
	Note that if~$\omega \notin \calC_h(\Rect(m ,n))$, then~$\omega^*\in \calC_v(\Rect(m ,n))$, and vice-versa. 
	Moreover, if~$\omega \sim \phi_{{\rm Strip}_N}^{1/0}$, then~$\omega^*$ has the same law as the vertical reflection of~$\omega$. 
	Thus
	\begin{align*}
		\phi_{{\rm Strip}_N}^{1/0}\big[\omega \in  \calC_v(\Rect(m ,n))\big] = \phi_{{\rm Strip}_N}^{1/0}\big[\omega^* \in  \calC_v(\Rect(m ,n))\big]
	\end{align*}
	These two observations together yield~\eqref{eq:duality_strip}.
\end{proof}

\begin{proof}[Proof of Lemma~\ref{lem:cross_easy}]
	We will prove by contradiction that 
	\begin{align*}
	\phi_{{\rm Strip}_N}^{1/0}\big[\calC_h(\Rect(n/3 ,n))\big] \geq \tfrac{1}{4(1 +  q)},
	\end{align*}
	which suffices to deduce~\eqref{eq:cross_easy}. 	
	Assume the opposite for some~$n$. Then~\eqref{eq:duality_strip} implies that 
	\begin{align*}
	\phi_{{\rm Strip}_N}^{1/0}\big[\calC_v(\Rect(n/3 ,n))\big] \geq \tfrac12.
	\end{align*}
	Consider now the translates~$\Rect_L  =\Rect(n/3 ,n) - (2n/3,0)$ and~$\Rect_R  =\Rect(n/3 ,n) + (2n/3,0)$. 
	By~\eqref{eq:FKGFK},	
	\begin{align*}
		\phi_{{\rm Strip}_N}^{1/0}\big[\calC_v(\Rect_L) \cap \calC_v(\Rect_R)\big] \geq \tfrac14.
	\end{align*}

	\begin{figure}
	\begin{center}
	\includegraphics[width = 0.45\textwidth]{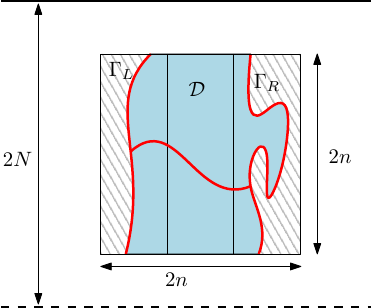}
	\caption{The vertical crossings~$\Gamma_L$ and~$\Gamma_R$ are the leftmost and rightmost crossings of their respective rectangles. 
	Identifying them only has an influence on the state of the edges on them and in the hashed areas. 
	Conditionally on~$\Gamma_L$ and~$\Gamma_R$, the part~$\calD$ of~$\Rect(n ,n)$ left between them (in light blue) is ``narrower'' than a square and has boundary conditions that are wired on its lateral sides. As such, it is crossed horizontally with probability at least~$1/(1+q)$.}
	\label{fig:3x1}
	\end{center}
	\end{figure}
	
	Define the leftmost crossing~$\Gamma_L$ on~$\Rect_L$ and the rightmost crossing~$\Gamma_R$ of~$\Rect_R$, when these two rectangles are crossed (see Figure~\ref{fig:3x1}). Let~$\calD$ be the part of~$\La_n$ between~$\Gamma_L$ and~$\Gamma_R$. Then, conditionally on~$\Gamma_L$ and~$\Gamma_R$, the boundary conditions induced on~$\calD$ by the configuration outside dominate the alternating boundary conditions (see the proof of Theorem~\ref{thm:RSW_perco} for how the conditioning on~$\Gamma_L$ and~$\Gamma_R$ affects the configuration between~$\Gamma_L$ and~$\Gamma_R$). 
	Note that~$\calD$ is not symmetric, but if~$\omega \notin \{\Gamma_L \xlra{\calD}\Gamma_R\}$ then~$\sigma (\omega^*) \in \{\Gamma_L \xlra{\calD}\Gamma_R\}$, where~$\sigma$ is the rotation of the lattice by~$\pi/2$ composed with the shift by~$(1/2,1/2)$.
	Corollary~\ref{cor:symmetric_quad} (or more precisely Remark~\ref{rem:symmetric_quad}) implies that 
	\begin{align*}
		\phi_{{\rm Strip}_N}^{1/0}\big[\Gamma_L \xlra{\calD}\Gamma_R \,\big|\, \Gamma_L,\Gamma_R \big] \geq \tfrac1{1+q}.
	\end{align*}
	Finally, notice that when the event above occurs,~$\Rect(n/3 ,n)$ is crossed horizontally. 
	Combining the last two displays we conclude that 
	\begin{align*}
		\phi_{{\rm Strip}_N}^{1/0}\big[\calC_h(\Rect(n/3 ,n))\big] \geq \tfrac{1}{4(1 +  q)},
	\end{align*}
	which contradicts our assumption. 
\end{proof}

\begin{proof}[Proof of Lemma~\ref{lem:lengthening_crossings}]
	Fix~$m$,~$n$ and~$N$ as in the statement.
	Since we work only with~$\phi_{{\rm Strip}_N}^{1/0}$, we denote this simply by~$\phi$.
	
		Write~$a,b,c,d$ for the corners of~$\Rect(m ,n)$ in counter-clockwise order, starting with the top left corner; 
		see Figure~\ref{fig:RSW_strip}, left diagram.
	Let~$v$ be the topmost point on~$(ab)$ so that 
	\begin{align}\label{eq:vmiddle}
		\phi\big[ (av) \xlra{\Rect(m ,n)} (cd)\big] \geq \tfrac12\phi\big[ \calC_h(\Rect(m ,n))\big].
	\end{align}
	Then the same lower bound holds for~$\phi[(vb) \xlra{\Rect(m ,n)} (cd)]$. 
	Write~$w$ for the horizontal reflection of~$v$, belonging to the side~$(cd)$. 
	Then, by~\eqref{eq:FKGFK}, 
	\begin{align*}
		\phi\big[ (vb) \xlra{\Rect(m ,n)} (wd)\big] 
		&\geq 
		\phi\big[ (vb) \xlra{\Rect(m ,n)} (cd)\big]
		\phi\big[ (ab) \xlra{\Rect(m ,n)} (wd)\big]
		\phi\big[ \calC_v(\Rect(m ,n))\big]\\
		&\geq
		\tfrac14\phi\big[ \calC_h(\Rect(m ,n))\big]^2 \phi\big[ \calC_v(\Rect(m ,n))\big].
	\end{align*}
	We call the event on the left-hand side above~$\calE(\Rect(m ,n))$.
	
	\begin{figure}
	\begin{center}
	\includegraphics[width = .9\textwidth]{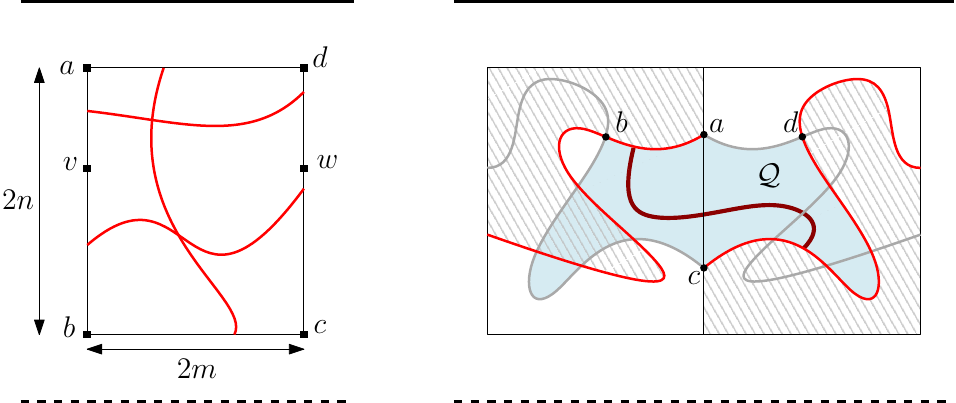}
	\caption{{\em Left:} with~$v$ chosen as in~\eqref{eq:vmiddle}, the probability of obtaining a ``diagonal'' crossing, i.e., from~$(vb)$ to~$(wd)$, is bounded away from~$0$ by a universal multiple of~$\phi\big[ \calC_h(\Rect(m ,n))\big]^2 \phi\big[ \calC_v(\Rect(m ,n))\big]$. \newline 
	{\em Right:} in two side-by-side copies of~$\Rect(m,n)$, if both the left and right rectangle contain diagonal crossings as in the left picture, then we may construct a symmetric domain~$\calQ$ with points~$a,b,c,d$ on it boundary, as depicted. If~$\calQ$ is crossed from~$(ab)$ to~$(cd)$, the left side of the left rectangle is connected to the right side of the right rectangle. 
%	The measure in the unexplored part of~$\calQ$  induced by the conditioning on~$\Gamma_L$ and~$\Gamma_R$ 
%	(the conditioning only affects the hashed part)
% 	is the measure with wired boundary conditions on~$(ab)$ and~$(cd)$, random boundary conditions on~$(bc)$ and~$(da)$, 
%	conditioned that all explored edges be open. 
%	As such, dominates the measure with alternating free and wired boundary conditions on the arcs of~$\partial \calQ$, 
%	for which the crossing probability is 
%	$1 / (1+q)$.
}
	\label{fig:RSW_strip}
	\end{center}
	\end{figure}
	
	Write~$\Rect_L$ for the translate of~$\Rect(m,n)$ by~$(-m,0)$ and~$\Rect_R$ for its translate by~$(m,0)$. 
	Then, by the above and~\eqref{eq:FKGFK}, 
	\begin{align}\label{eq:rlr}
		\phi\big[ \calE(\Rect_L) \cap \calE(\Rect_R)\big] 
		\geq \tfrac1{16}\,\phi\big[ \calC_h(\Rect(m ,n))\big]^4 \phi\big[ \calC_v(\Rect(m ,n))\big]^2.
	\end{align}
	
	Write~$\Gamma_L$ for the topmost path realising~$\calE(\Rect_L)$ and~$\Gamma_R$ for the bottommost realising~$\calE(\Rect_R)$. 
	When the events are not realised, write~$\Gamma_L = \emptyset$ and~$\Gamma_R = \emptyset$, repsectively.
	
	Let~$\sigma$ be the horizontal reflection with respect to the axis~$\{0\}\times\bbR$ composed with the shift by~$(\tfrac12,\tfrac12)$. 	
	Conditionally on~$\Gamma_L$ and~$\Gamma_R$ both different from~$\emptyset$, write~$\calQ$ for the quad obtained as follows.
	Due to the definition of~$\calE$,~$\Gamma_L$ necessarily intersects~$\sigma(\Gamma_R)$; 
	let~$b$ be the last such point of intersection along~$\Gamma_L$, when going from left to right; set~$d = \sigma(b)$. 
	Let~$a$ and~$c$ be the endpoints of~$\Gamma_L$ and~$\Gamma_R$, respectively,  on~$\{0\}\times \bbR$.
	Then~$\calQ$ is the quad delimited by the arcs~$\Gamma_L$ between~$a$ and~$b$,~$\sigma(\Gamma_R)$ between~$b$ and~$c$, 
	$\Gamma_R$ between~$c$ and~$d$ and~$\sigma(\Gamma_L)$ between~$d$ and~$a$. See Figure~\ref{fig:RSW_strip}, right diagram.  
	
	Then~$\calQ$ is invariant\footnote{This is not formally true, but a finite energy argument allows us to apply Corollary~\ref{cor:symmetric_quad}.}  under~$\sigma$, with the wired arcs~$(ab)$ and~$(cd)$ being mapped by~$\sigma$ onto the free arcs~$(bc)$ and~$(da)$. As such, we would like to argue that Corollary~\ref{cor:symmetric_quad} applies, and provides a lower bound on the probability of~$(ab) \xlra{\calQ}(cd)$ under the measure conditional on~$\Gamma_L$ and~$\Gamma_R$. 
	Notice, however, that parts of~$\calQ$ may have been explored when determining~$\Gamma_L$ and~$\Gamma_R$, which complicates our analysis (see Figure~\ref{fig:RSW_strip}, right diagram, hashed regions). 
	Still, the measure in the unexplored part of~$\calQ$  induced by the conditioning on~$\Gamma_L$ and~$\Gamma_R$ 
 	is the measure with wired boundary conditions on~$(ab)$ and~$(cd)$, random boundary conditions on~$(bc)$ and~$(da)$, 
	conditioned that all explored edges are open --- indeed, the edges inside the explored region have no influence on the measure outside, only those of the boundary of the explored region do, and they are all open. 
	As such, the measure in~$\calQ$ induced by the conditioning on~$\Gamma_L$ and~$\Gamma_R$ 
	dominates the measure with alternating free and wired boundary conditions on the arcs of~$\partial \calQ$, 
	for which Corollary~\ref{cor:symmetric_quad} applies.
	
	We conclude that 
	\begin{align}\label{eq:crossQ}
		\phi\big[ \Gamma_L \xlra{\calQ} \Gamma_R \,|\, \Gamma_L, \Gamma_R\big]  \geq \tfrac{1}{1+q},
	\end{align}
	when~$\Gamma_L \neq \emptyset$ and~$\Gamma_R \neq \emptyset$.
	Finally notice that, when the event above occurs,~$\Rect(2m ,n)$ is crossed horizontally by an open path. 
	Combining~\eqref{eq:crossQ} with~\eqref{eq:rlr} proves~\eqref{eq:lengthening_crossings}.
\end{proof}

\section{Proof of the dichotomy: Theorem~\ref{thm:dichotomy}}\label{sec:dichotomy_proof}

In this section, we work with~$p = p_{\rm sd}$ and omit it from the notation. 
By Theorem~\ref{thm:RSWstrip}, the comparison of boundary conditions and \eqref{eq:FKGFK}, we conclude that 
\begin{align}\label{eq:RSW_weak}
		\phi^1[H_n]  \geq c \qquad \text{ and }\qquad		\phi^0[H_n^*]  \geq c 
\end{align}
for all~$n$ and some constant~$c > 0$. 
Notice here that the boundary conditions are {\em favourable} to the events~$H_n$ and~$H_n^*$, respectively. 

When the boundary conditions are adverse, the events~$H_n$ and~$H_n^*$ may have much smaller probabilities. 
For~$n\geq 1$, set
\begin{align*}
	u_n =  \phi_{\Lambda_{10n}}^0[H_n].
\end{align*}
% The choice of the factor 10 in the index is dictated by the proof, namely by the use of squares that are crossed vertically. 

\begin{proposition}\label{prop:finite_size_bc}
	There exists~$\delta > 0$ such that, if 
	$u_{n_0} < \delta$ for some~$n_0\geq 1$, then 
	\begin{align}\label{eq:finite_size_bc}
		u_{10 k n_0} \leq 2^{-k } \qquad \text{ for all~$k\geq 1$}.
	\end{align}
\end{proposition}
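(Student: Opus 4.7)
I would prove this by induction on $k$, based on a multiplicative renormalisation inequality. The target is to establish, for some universal constant $C>0$,
\[
u_{10(k+1)n_0} \;\leq\; C\, u_{n_0}\, u_{10 k n_0}, \qquad k\geq 0,
\]
with the convention $u_0 := 1$. Choosing $\delta := 1/(2C)$, the hypothesis $u_{n_0}<\delta$ then forces $u_{10(k+1)n_0}\leq \tfrac12\, u_{10kn_0}$, and induction (with base case $u_{10 n_0}\leq C\,u_{n_0}\leq\tfrac12$) gives $u_{10 k n_0}\leq 2^{-k}$ as required.

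The inequality above is the heart of the argument. Working in $\Lambda_{100(k+1)n_0}$ with free boundary conditions, the strategy is to decouple $H_{10(k+1)n_0}$ into two geometrically separated events: a small-scale event at scale $n_0$ contributing the factor $u_{n_0}$, and a large-scale event at scale $10 k n_0$ contributing the factor $u_{10 k n_0}$. These are made independent via the Spatial Markov Property~\eqref{eq:SMP} and boundary-condition monotonicity~\eqref{eq:pushing_bc}. The geometric picture: a primal circuit around $\Lambda_{10(k+1)n_0}$ inside the annulus $A_{k+1}:=\Lambda_{20(k+1)n_0}\setminus \Lambda_{10(k+1)n_0}$ must both traverse a thin radial layer of thickness $\sim n_0$ (a small-scale crossing bounded by $u_{n_0}$) and complete a sub-circuit at scale $10 k n_0$ surrounding a suitable sub-domain (bounded by $u_{10 k n_0}$ via translation invariance and \eqref{eq:pushing_bc}).

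Concretely, I would explore dual paths from the outer boundary of $\Lambda_{100(k+1)n_0}$ inward, searching for a dual barrier at scale $n_0$ around a suitable inner sub-region; the event that such a barrier fails to exist is controlled by $u_{n_0}$, using Theorem~\ref{thm:RSWstrip} to combine local dual crossings into the required circular obstruction. Conditional on the dual barrier (a cycle of closed primal edges), \eqref{eq:SMP} gives inside a new configuration with free primal boundary conditions on the barrier, and in this residual region $H_{10(k+1)n_0}$ still implies a circuit event at scale $10 k n_0$ inside an appropriately-sized box, bounded by $u_{10 k n_0}$ via translation invariance and boundary-condition pushing.

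The main obstacle will be the combinatorial and geometric orchestration of the decoupling. A naive exploration risks producing one factor with the wrong sign (a $1-u_n$-type factor rather than $u_n$), or losing conditional independence between the two scales. The correct approach hinges on using \emph{dual} rather than primal barriers for the conditioning step, since a dual circuit naturally induces free primal boundary conditions in its interior, preserving the comparison with $\phi^0_{\Lambda_{10 m}}$-type measures at both scales. Theorem~\ref{thm:RSWstrip} and the finite-energy property of FK-percolation are the key auxiliary tools that allow the small-scale obstruction to be realised as a local circuit event of probability $\lesssim u_{n_0}$.
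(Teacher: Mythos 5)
Your overall architecture (a multiplicative inequality plus induction, with $\delta=1/(2C)$) is sound, and your instinct that dual circuits are the right decoupling device is correct; but the proposed proof of the key inequality $u_{10(k+1)n_0}\leq C\,u_{n_0}\,u_{10kn_0}$ has a genuine gap: the event $H_{10(k+1)n_0}$ neither decomposes into, nor implies, the two factors you want. First, a circuit surrounding $\Lambda_{10(k+1)n_0}$ does not contain a circuit at scale $10kn_0$ (nor at scale $n_0$): the events $H_m$ require circuits confined to the annuli $\Lambda_{2m}\setminus\Lambda_m$, which are not nested as $m$ varies, and an arc of the big circuit is never itself a circuit around a smaller box. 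Second, and more fundamentally, the ``small-scale crossing'' that the big circuit forces through a layer of thickness $\sim n_0$ is \emph{not} controlled by $u_{n_0}$: crossing a region of scale $n_0$ has probability bounded below by an RSW constant of order one under the relevant conditional measures, whereas $u_{n_0}$ is the probability of a \emph{circuit} at scale $n_0$ in a domain with free boundary conditions at relative distance $10$, a quantity that may be exponentially small. No event implied by $H_{10(k+1)n_0}$ can carry a factor $u_{n_0}$; and conditioning on a dual barrier around a small sub-region does not help, since the big circuit simply avoids the barrier's interior and its conditional probability remains of order $u_{10(k+1)n_0}$, not $u_{10kn_0}$.

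The paper avoids exactly this obstruction by arguing indirectly and without induction: it proves $u_{10kn}\leq(C u_n)^k$ in one stroke by bounding the \emph{same} conditional probability from both sides. One introduces the auxiliary event $\calC$ that $k$ translates of $H_n$, spaced $20n$ apart along a diameter of $\Lambda_{10kn}$, all occur, together with a dual structure $\calE\cap\tilde\calE\cap\calG$ (two long dual strip-crossings above and below the row of annuli, plus dual vertical links, built from Theorem~\ref{thm:RSWstrip} and Corollary~\ref{cor:symmetric_quad}) which encloses each small annulus in a dual circuit. The factor $u_{10kn}$ enters not because the big circuit implies anything at small scale, but because paying $u_{10kn}$ for $H_{10kn}$ creates wired-like boundary conditions in the bulk, under which each small circuit costs only a constant $\phi^1[H_n]\geq c$; this gives $\phi^0_{\Lambda_{100kn}}[\calC\mid\calE\cap\tilde\calE\cap\calG]\geq c_{\rm tot}^k\,u_{10kn}$. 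Conversely, exploring the dual circuits from outside leaves $k$ independent interior regions with free boundary conditions, each dominated by a translate of $\phi^0_{\Lambda_{10n}}$, whence $\phi^0_{\Lambda_{100kn}}[\calC\mid\calE\cap\tilde\calE\cap\calG]\leq u_n^k$. Comparing the two bounds yields the renormalisation inequality; any one-step quasi-multiplicative variant such as yours would likewise have to come from such a two-sided comparison, not from decomposing the large circuit itself.
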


This proposition, together with the finite size criterion of Proposition~\ref{prop:finite_size_ad} and some clever but fairly standard manipulations, imply the dichotomy theorem.

\begin{figure}
\begin{center}
\includegraphics[width=0.23\textwidth]{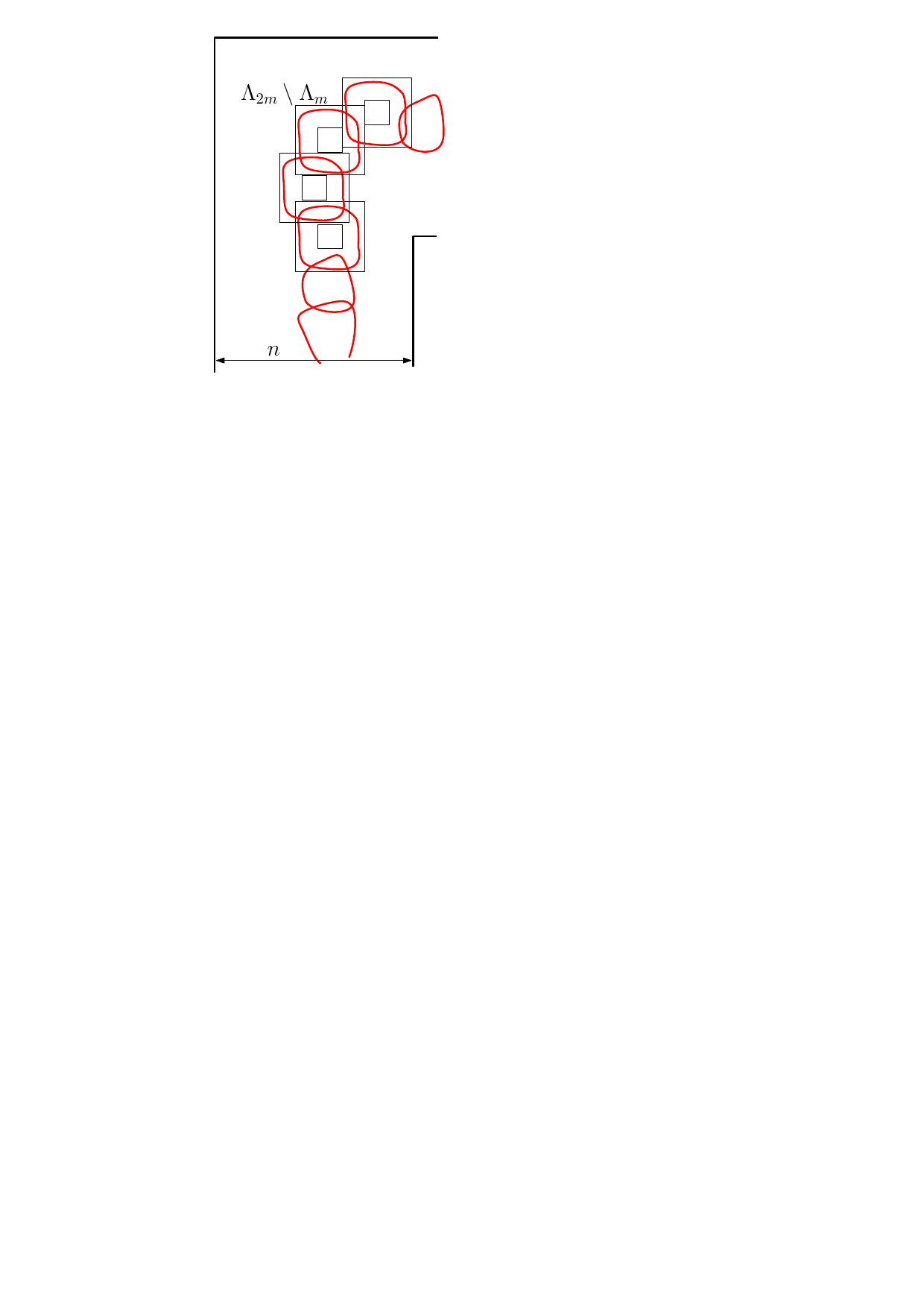}\hspace{0.15\textwidth}
\includegraphics[width=0.6\textwidth]{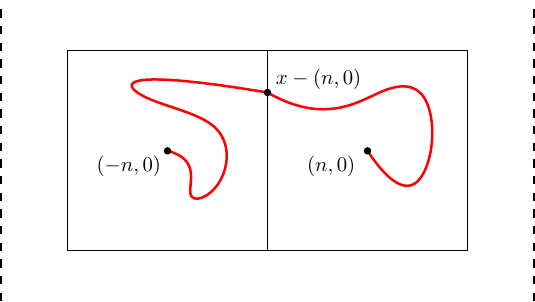}
\caption{{\em Left:} 
For~$m \leq n/20$, combining several instances of~$H_m$ produces~$H_n$. 
Comparing boundary conditions and using \eqref{eq:FKGFK}, we conclude that 
$u_n \geq \phi_{\Lambda_{2n}\setminus \La_n}^0[H_n] \geq u_n^{C n/m}$ for some universal constant~$C$. 
%Comparing boundary conditions, we may prove that~$u_ \geq u_n^{C n/m}$ for~$m \leq n/4$, where~$C$ is a universal constant.
%The same argument may be used to prove that if~$\inf_n u_n > 0$, then~$\inf_n \phi_{\Lambda_{2n}\setminus \La_n}^0[H_n] > 0$ also.
 \newline
{\em Right:} Consider the point~$x$ on the right side of~$\partial \La_n$ that is most likely to be connected to~$0$ in~$\La_n$.
Then, under the measure~$\phi_{\Lambda_{2N}}^0$, the point~$(-n,0)$ is connected to~$x - (n,0)$ with probability at least~$\tfrac{1}{|\partial \La_n|} \phi_{\Lambda_{N}}^0[0 \lra \partial \La_n]$. 
The same holds for connections between~$(n,0)$ and~$x - (n,0)$. 
When both connections occur,~$(-n,0)$ connects to~$(n,0)$.}
\label{fig:nNtoR}
\end{center}
\end{figure}

\begin{proof}[Proof of Theorem~\ref{thm:dichotomy}]
	Proposition~\ref{prop:finite_size_bc} implies that the sequence\footnote{Formally, this only works directly if we limit~$n$ to powers of~$10$. To access all values of~$n$, one may observe that for any~$n/200 \leq m \leq  n/20$,~$u_n \geq u_m^C$, where~$C$ is some fixed constant; see Figure~\ref{fig:nNtoR}.}~$(u_n)_{n\geq 1}$ is either uniformly bounded away from~$0$, or converges exponentially fast to~$0$.
	When the former occurs, a standard geometric construction (see Figure~\ref{fig:nNtoR}, left diagram) also implies that 
	$\phi_{\Lambda_{2n}\setminus \La_n}^0[H_n]$ is uniformly bounded away from~$0$. By self-duality, we deduce~\eqref{eq:Con}. 
	
	Assume now that the latter occurs, which is to say~$u_N \leq e^{-cN}$ for all~$N\geq 1$ and some~$c > 0$ independent of~$N$. We will prove~\eqref{eq:DisCon}.
	
	Consider~$n\leq N$, with~$N$ an integer multiple of~$n$. 
	Let~$x \in \partial \La_n$ be a maximiser of~$\phi_{\Lambda_{N}}^0[0 \xlra{\La_n}x]$; by symmetry, we may take~$x$ on the right side of~$\La_n$. 
	Using~\eqref{eq:FKGFK} (see also Figure~\ref{fig:nNtoR}, right diagram), we find that 
	\begin{align}
	\big(\tfrac{1}{|\partial \La_n|} \phi_{\Lambda_{N}}^0[0 \lra \partial \La_n]\big)^2 
	\leq \phi_{\Lambda_{N}}^0[0 \xlra{\La_n}x]^2 \leq \phi_{\Lambda_{2N}}^0[(-n,0) \lra (n,0)].
	\end{align}
	Furthermore, combining the event on the right-hand side above and its rotations~$C N/n$ times --- where~$C$ is some fixed constant --- we may construct~$H_N$. 
	Applying~\eqref{eq:pushing_bc} to push the free boundary conditions further, we find 
	\begin{align}
		\big(\phi_{\Lambda_{2N}}^0[(-n,0) \lra (n,0)]\big)^{C N / n} \leq  u_{N} \leq e^{-c N}.
	\end{align}
	The last two displays allow us to conclude that there exists~$c_0 > 0$ such that 
	\begin{align*}
	\phi_{\Lambda_{N}}^0[0 \lra \partial \La_n] \leq e^{ - c_0 n} \qquad \text{ for all~$N \geq n \geq 1$.}
	\end{align*}
	Taking~$N \to\infty$ in the above, we conclude~\eqref{eq:DisCon}.
\end{proof}

\begin{proof}[Proof of Proposition~\ref{prop:finite_size_bc}]
	The central element of this proof is the following {\em renormalisation inequality}. 
	There exists a constant~$C > 0$ such that for any~$n,k \geq 1$,
	\begin{align}\label{eq:renormalisation}
		u_{10k n} \leq (C\, u_n)^{k}. 
		% the choice of factor 10 is given by the space we need to fit in k copies of H_n with sufficient spacing to isolate these 
	\end{align}
	Indeed, Proposition~\ref{prop:finite_size_bc} follows from the above by taking~$\delta = 1/2C$. We now focus on proving~\eqref{eq:renormalisation}.

\begin{figure}
\begin{center}
\includegraphics[width = 0.47\textwidth, page = 6]{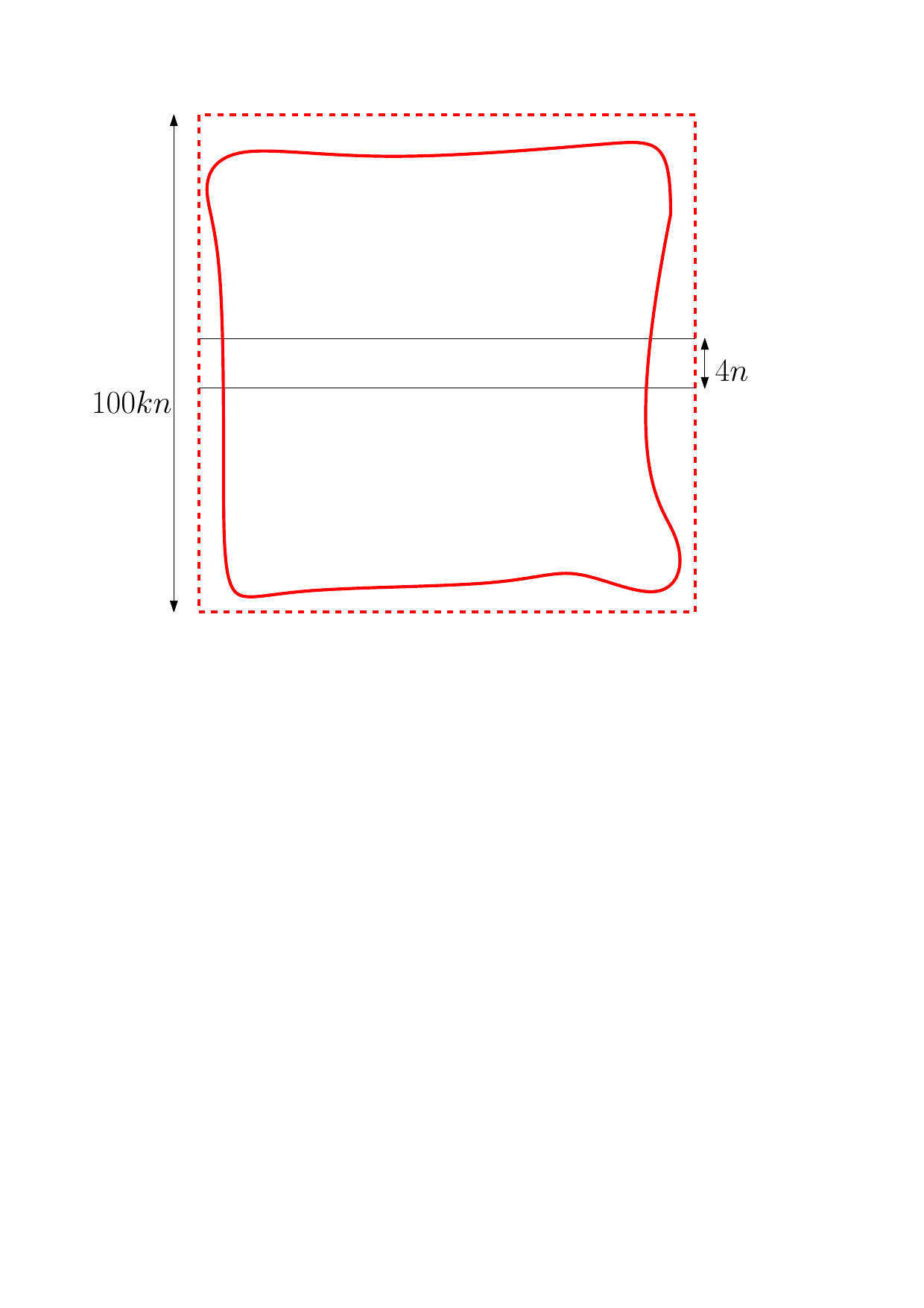}\qquad
\includegraphics[width = 0.47\textwidth, page = 2]{RSW3.pdf}
\caption{
{\em Left:} The event~$\calC \cap \calE \cap \tilde\calE  \cap \calG$ with the elements required by each event highlighted. 
Its probability may be bounded from above by the probability of the primal circuits of~$\calC$ occurring, conditionally on the dual paths. This produces an upper bound of~$u_n^k$.  
Conversely, its probability is bounded from below in several steps: first construct~$\calC$, then, conditionally on~$\calC$, the dual paths forming~$\calE\cap \tilde\calE$ and~$\calG$ appear with probability at least~$(c_\calG\,c_\calE)^k$. \newline 
{\em Right:}~Construct~$\calC$ by first requiring that~$H_{10 k n}$ occurs. Conditionally on this event,~$\calC$ occurs with probability at least~$c_\calC^k$. As such,~$\calC$ has probability at least~$c_\calC^k \, u_{10kn}$.}
\label{fig:uu}
\end{center}
\end{figure}

	The constants~$c_*$ below are positive and independent of~$n$ and~$k$.
	Figure~\ref{fig:uu} may be useful in understanding the proof. 
	Fix~$k,n \geq 1$ and assume for simplicity that~$k$ is odd. Define the event~$\calC$
	as the intersection of the translates of~$H_n$ by the~$(20 j n ,0)$ with~$j = -(k-1)/2,\dots, (k-1)/2$. 
	By a standard exploration argument,~\eqref{eq:FKGFK} and the monotonicity in boundary conditions,
	\begin{align*}
		\phi_{\Lambda_{100 k n}}^0[\calC] 
		\geq \phi_{\Lambda_{100 k n}}^0[\calC\,|\, H_{10 k n}]\, \phi_{\Lambda_{100 k n}}^0[H_{10 k n}] 
		\geq \big(\phi^1[H_{n}]\big)^k  \,  \phi_{\Lambda_{100 k n}}^0[H_{10 k n}]
		\geq c_\calC^k \,u_{10 k n},
	\end{align*}
	where~$c_\calC = \inf_n  \phi^1[H_{n}]$, which is strictly positive  by~\eqref{eq:RSW_weak}. 
	
	Write~$\calE$ for the event that there exists a dual horizontal crossing in~$[-100 k n,100 k n] \times [2n,4n]$
	and~$\tilde \calE$ for the vertical reflection of this event. 
	Then, repeated applications of~\eqref{eq:RSWstrip} at scales~$N = 100 nk (3/4)^j$, 
	producing horizontal dual crossings of translates of~$[-100 k n,100 k n] \times	[-N/2,N/2]$, imply
	\begin{align*}
		\phi_{\Lambda_{100 k n}}^0[\calE \cap \tilde\calE \,|\,\calC] 
		\geq  \prod_{j=0}^{C \log k}\exp\big(- c\cdot 100 k \cdot \big(\tfrac34\big)^j\big) \geq c_\calE^{k},
	\end{align*}
	for universal constants~$C, c_\calE >0$. 
	
	Finally, when~$\calE \cap \tilde\calE$ occurs, write~$\calG$ for the event that there exists a dual crossings
	between the topmost dual path realising~$\calE$ and the bottommost path realising~$\tilde \calE$, in each of the squares 
	$[20jn - 10n, 20j n - 2n] \times [-4n,4n]$ and~$[20jn+ 2n, 20j n + 10n] \times [-4n,4n]$ with~$j = -(k-1)/2,\dots, (k-1)/2$. 
	By pushing of boundary conditions~\eqref{eq:pushing_bc} and~\eqref{eq:FKGFK}, we conclude that 
	\begin{align*}
		\phi_{\Lambda_{100 k n}}^0[\calG \,|\, \calE \cap \tilde\calE  \cap \calC]
		\geq   \phi_{\La_{4n}}^{\rm alt}\big[\calC_v(\La_{4n})\big]^{2k}  \geq\big(\tfrac{1}{1+q}\big)^{2k},
	\end{align*}
	where the last inequality is due to~\eqref{eq:symmetric_quad}. 
	
	Combine the three displays above to deduce that 
	\begin{align*}
		\phi_{\Lambda_{100 k n}}^0[\calC \,|\, \calE \cap \tilde\calE \cap \calG  ] 
		\geq \phi_{\Lambda_{100 k n}}^0[\calC \cap \calE \cap \tilde\calE  \cap \calG] 
		\geq c_{\rm tot}^k \, u_{10 k n},
	\end{align*}
	where~$c_{\rm tot} > 0$ is a universal constant.
	
	Now, when~$\calE \cap \tilde\calE \cap \calG$ occurs, each of the disjoint annuli~$\La_{10n}\setminus \La_{2n} + (20 j n ,0)$ contains a dual circuit. These may be explored from the outside, leading to independent measures in each of their interiors, with free boundary conditions. 
	Thus 
	\begin{align*}
		\phi_{\Lambda_{100 k n}}^0[\calC \,|\, \calE \cap \tilde\calE \cap \calG  ] \leq u_n^k.
	\end{align*}
	The last two displays combine to prove the desired inequality \eqref{eq:renormalisation}. 
\end{proof}

\section{Exercises: FK-percolation on~$\bbZ^2$, fine properties}

\begin{exo}\label{exo:p_c_FK_Zhang}
	Consider FK-percolation on~$\bbZ^2$ with~$q >1$ and~$p =  \frac{\sqrt q}{1 + \sqrt q}$. Prove that
	\begin{align*}
		\phi_{[0,n]\times[0,n+1]}^\xi\big[\text{$[0,n]\times[0,n+1]$ crossed from left to right by an open path}\big] = c,
	\end{align*}
	for all~$n$, where~$\xi$ is the boundary condition where the left and right sides are wired (and wired to each other), while the top and bottom are free. 
	Why is~$c \neq 1/2$? 
	
	Assuming the sharpness of the phase transition (Theorem~\ref{thm:sharpness_FK_OSSS}) and the uniqueness of the infinite cluster (Theorem~\ref{thm:Burton-Keane_FK}), proceed as in Exercise~\ref{exo:Zhangs} to 
	prove that~$p_c(\bbZ^2) = \frac{\sqrt q}{1 + \sqrt q}$. 
	
	Why can't we conclude that the phase transition is continuous, as in the case of Bernoulli percoaltion? \smallskip	\\
		{\em Hint:} Zhang's argument applies well when there exists a unique infinite-volume measure. Assuming~$p_c < \frac{\sqrt q}{1 + \sqrt q}$, show the existence of an open interval of values of~$p$ for which both the primal and dual model have infinite clusters. Use Proposition~\ref{prop:phi_ordering} to find such a parameter for which  the infinite-volume measure is unique. 
\end{exo}

\begin{exo}\label{exo:algebraic_decay}
	Consider FK-percolation on~$\bbZ^2$ with some~$q \geq 1$,~$p = p_{\rm sd}(q)$. 
	Assume that we are in the case {\em (Con)} of Theorem~\ref{thm:dichotomy}.
	Show that there exists~$c > 0$ such that
	\begin{align*}
		c \, n^{-1} \leq \phi_{\La_{2n}}^\xi[0\lra \partial \La_n] \leq n^{-c}\qquad \text{ for all~$n\geq1$ and all boundary conditions~$\xi$}.
	\end{align*}
	Deduce that~$\phi[|\sfC_0|] = \infty$, where~$\sfC_0$ is the cluster of~$0$, and~$\phi$ is the unique infinite-volume measure (here  used as an expectation). 
\end{exo}

\begin{exo}\label{exo:cor_len}
	Fix~$p \in (0,1)$ and~$q\geq 1$. We wish to prove that 
	\begin{align}\label{eq:u_n_cor_len}
	u_n := \tfrac1n \log\phi_{\Lambda_n,p,q}^0[0\lra \partial \La_n].
	\end{align}
	converges. If~$p < p_c$, we denote the limit by~$-1/\xi(p)$.
	\begin{itemize}
	\item[(a)] Using \eqref{eq:FKGFK} and the comparison of boundary conditions, prove that for all~$m,n\geq 0$
	\begin{align}
	\phi_{\Lambda_{n+m},p,q}^0[0\lra \partial \La_{n + m}] \geq \frac{1}{C \min\{n,m\}}	\phi_{\Lambda_{n},p,q}^0[0\lra \partial \La_{n}]\phi_{\Lambda_{m},p,q}^0[0\lra \partial \La_{m}],
	\end{align}
	where~$C$ is some universal constant. 
	\item[(b)] Use this fact to prove the existence of the limit in~\eqref{eq:u_n_cor_len}. \smallskip
	\\
	{\em Hint:} Use the subadditivity lemma that states that if a sequence~$(a_n)_{n\geq0}$ satisfies~$a_{m+n} \leq a_n + a_m$ for all~$m,n \geq 1$, then~$\frac1n a_n$ converges to~$\inf_n \frac1n a_n$. 
	\item[(c)] Show that~$\phi_{p,q}^0$ exhibits exponential decay of cluster radii if and only if~$\xi(p) < \infty$. Moreover prove that then
	\begin{align}
		\phi_{p,q}^0[0 \lra \partial \La_n] = e^{- n /\xi(p) + o(n)} \qquad \text{ as~$n\to\infty$.} 
	\end{align}
	\item[(d)] For~$p > p_c$, write~$\xi(p) = \xi(p^*)$ with~$p^*$ being the dual parameter to~$p$. Observe that,~$p^* < p_c$. 
	Show that there exist universal constants~$c,C > 0$ such that 
	\begin{align}
		e^{- C n /\xi(p) } \leq \phi_{p,q}^0[0 \lra \partial \La_n \text{ but } 0 \nxlra{} \infty] \leq e^{- c n /\xi(p) }  \qquad \text{ for all~$n$ large enough}.
	\end{align}
	\item[(e)] For~$p > p_c$, prove that 
	\begin{align}
		\phi_{p,q}^0[0 \lra \partial \La_n \text{ but } 0 \nxlra{} \infty] = e^{- 2 n /\xi(p) + o(n)} \qquad \text{ as~$n\to\infty$.} 
	\end{align}
	\end{itemize}
\end{exo}

\begin{exo}\label{exo:cor_len_diverges}
	The goal of the exercise is to study the continuity of~$\xi(p)$ for~$p\leq p_c$. 
	\begin{itemize}
	\item[(a)] Prove that~$p \mapsto \xi(p)$ is increasing for~$p < p_c$. 
	\item[(b)] Prove that~$p \mapsto \xi(p)$ is continuous for~$p < p_c$. 
	\item[(c)] Assume that~$\xi^0(p_c):=\lim_{p \nearrow p_c} \xi(p) < \infty$ as~$p \nearrow p_c$. Prove then that 
	\begin{align}
		\phi_{p_c,q}^0[0 \lra \partial \La_n] = e^{- n /\xi^0(p_c) + o(n)} \qquad \text{ as~$n\to\infty$,} 
	\end{align}
	and in particular that (DisCon) occurs at~$p_c$.
	\item[(d)] Conclude that, if (Con) occurs at~$p_c$, then~$\xi(p) \to \infty$ as~$p\nearrow p_c$.
	\end{itemize} 
\end{exo}

\begin{exo}
	Consider the torus~$\bbT_N = (\bbZ/N\bbZ)^2$ as a finite graph. 
	Fix an increasing event~$A$ which is invariant under the translations by~$(1,0)$ and~$(0,1)$. 
	Notice that, for~$q \geq 1$ and~$p \in (0,1)$, we may define~$\phi_{\bbT_N,p,q}$ as the FK-percolation measure on~$\{0,\dots, N\}^2$ with periodic boundary conditions (that is, where we wire each~$(x,0)$ to~$(x,N)$ and each~$(0,y)$ to~$(N,y)$ for $0\leq x,y\leq N$). 
	
	Use~\eqref{eq:BKKKL} to prove that, for some constant~$c > 0$ depending only on~$p$ and~$q$, 
	\begin{align}
	\frac{\rm d}{{\rm d} p} \phi_{\bbT_N,p,q}[A] \geq c \, \phi_{\bbT_N,p,q}[A](1 - \phi_{\bbT_N,p,q}[A]) \log N.
	\end{align}
\end{exo}

\begin{exo}\label{exo:limit_on_torus}
	Consider FK-percolation on~$\bbZ^2$ with some~$q \geq 1$ and~$p = p_{\rm sd}$. 
	Assume that we are in the case {\em (DisCon)} of Theorem~\ref{thm:dichotomy}.
	\begin{itemize}
	\item[(a)] Using~$\phi^0 = (\phi^1)^*$, show that, for any fixed edge~$e$,~$\phi^0[e \text{ open}] < 1/2$. 
	\item[(b)] Write~$\phi_{\bbT_{2n},p,q}$ for the FK-percolation measure on the square torus of side-length~$2n$. 
	Prove that, for any fixed edge~$e$,~$\phi_{\bbT_{2n},p,q}[e \text{ open}] = 1/2$. 
	\item[(c)] Assume that the only Gibbs measures\footnote{For the purpose of these exercises, Gibbs measures should be understood as the potential limits of finite volume measures.}
	of FK-percolation on~$\bbZ^2$ 
	which are invariant under translations and rotations by~$\pi/2$ 
	are the linear combinations of~$\phi^0$ and~$\phi^1$ (this may be proved using relatively soft tools, see Exercise~\ref{exo:Gibbs_for_2DFK}). 
	Prove that 
	\begin{align*}
		\phi_{\bbT_{2n},p,q}\longrightarrow \tfrac12 \phi^0 + \tfrac12\phi^1 \qquad \text{ as~$n\to\infty$}.
	\end{align*}
	\item[(d)] Is~$\lim_n\phi_{\bbT_{2n},p,q}$ ergodic?
	\end{itemize}
\end{exo}

\begin{exo}\label{exo:Gibbs_for_2DFK}
	Fix~$q \geq 1$ and~$p \in (0,1)$. 
	Let~$\phi$ be Gibbs measure for FK-percolation on~$\bbZ^2$
	which is invariant under translations (by two linearly independent vectors) and rotations by~$\pi/2$. 
	Furthermore, assume that~$\phi$ is ergodic with respect to translations.
	\begin{itemize}
	\item[(a)]  Follow the proof of Theorem~\ref{thm:Burton-Keane} and observe that it applies to~$\phi$. 
	Conclude that~$\phi$ contains a.s. at most one infinite cluster. 
	\item[(b)]  Use the same construction as in Exercise~\ref{exo:Zhangs} to prove that 
	$\phi$-a.s.\ there exists no infinite primal cluster or~$\phi$-a.s.\ there exists no infinite dual cluster. 
	\item[(c)]	Deduce that the only Gibbs measures	of FK-percolation on~$\bbZ^2$ 
	which are invariant under translations and rotations by~$\pi/2$ 
	are the linear combinations of~$\phi^0$ and~$\phi^1$.\smallskip \\
	{\em Hint:} use an ergodic decomposition. 
	\end{itemize} 
\end{exo}

\begin{exo}
	Fix~$q \geq 1$ and~$p \in (0,1)$. 
	Let~$\phi$ be a Gibbs measure for FK-percolation on~$\bbZ^2$
	which is invariant under translations and rotations by~$\pi/2$. 
	Assume that there exists~$c > 0$ such that  
	\begin{align*}
	\phi \big[\omega \text{ contains a horizontal crossing of }[0,2n]\times [0,n]\big] &\geq c \qquad \text{ and}\\
	\phi \big[\omega^* \text{ contains a horizontal crossing of }[0,2n]\times [0,n]\big] &\geq c \qquad \text{ for all~$n\geq 1$}.
	\end{align*}
	Prove (without using Theorem~\ref{thm:dichotomy}) that
	\begin{align*}
	\phi \big[\omega \text{ contains a horizontal crossing of }[0,kn]\times [0,n]\big] \geq c^{2k} &\qquad \text{ and}\\
	 \delta < \phi [H_n] < 1 - \delta  &\qquad \text{ for all~$n\geq 1$},
	\end{align*}
	and the same for the dual, for some~$\delta > 0$. 
	Does this imply that~$\phi [0\lra \partial \La_n]  \to 0$? 
\end{exo}

\chapter{Continuous/discontinuous phase transition}\label{ch:4cont_v_discont}

Theorem~\ref{thm:dichotomy} identified two types of phase transition for FK-percolation on~$\bbZ^2$. 
The goal of this chapter is to determine which values of~$q$ correspond to which type of phase transition.

\begin{theorem}\label{thm:q<>4}
	 The phase transition of FK-percolation on~$\bbZ^2$ is continuous if~$1 \leq q \leq 4$, and discontinuous if~$q > 4$.
\end{theorem}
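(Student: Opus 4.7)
The plan is to use Theorem~\ref{thm:dichotomy} and verify, separately in each regime of $q$, which of the two alternatives (Con) or (DisCon) is realized at $p_{\rm sd}(q) = p_c$. The backbone of the argument is the Baxter--Kelland--Wu (BKW) correspondence, which rewrites the FK partition function on a suitable planar domain as a partition function of the six-vertex model with anisotropy parameter $\Delta = -\sqrt{q}/2$. Since $q=4$ corresponds exactly to $\Delta=-1$, the boundary between the ``disordered'' regime $\Delta \in [-1,1]$ (i.e.\ $q \in [0,4]$) and the ``anti-ferroelectric'' regime $\Delta<-1$ (i.e.\ $q>4$), the threshold $q=4$ in Theorem~\ref{thm:q<>4} sits precisely where the qualitative nature of the six-vertex model changes.

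First I would set up the BKW machinery on a cylinder $\mathcal{C}_{n,N} = (\bbZ/n\bbZ) \times \{0,\dots,N\}$. Starting from an FK configuration, one draws loops on the medial lattice separating primal from dual clusters; reweighting and orienting these loops produces an equivalent six-vertex configuration. The correspondence then expresses ratios of FK partition functions with various boundary conditions---in particular those encoding connection events between the top and bottom of the cylinder---as matrix coefficients of powers $V^N$ of the six-vertex transfer matrix $V$ acting on the space of arrow configurations along a horizontal row of length~$n$.

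Second I would analyze the spectrum of $V$ via the Bethe ansatz and translate back. Let $\Lambda_0(n)$ be the Perron--Frobenius eigenvalue of $V$ and $\Lambda_1(n)$ the largest subdominant eigenvalue relevant to the FK events of interest. For $1 \leq q \leq 4$, i.e.\ $\Delta \in [-1,-\tfrac12]$, the Bethe equations are expected to yield a gapless spectrum with $1 - \Lambda_1(n)/\Lambda_0(n) = O(1/n)$; via BKW this produces, after sending $N\to\infty$ and taking appropriate quotients, a polynomial lower bound on horizontal crossing probabilities of long rectangles at $p_c$. Exponential decay of connection probabilities is thereby excluded, and Theorem~\ref{thm:dichotomy} forces the scenario (Con). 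For $q > 4$, i.e.\ $\Delta < -1$, the Bethe ansatz instead delivers a strictly positive mass gap $\Lambda_1(n)/\Lambda_0(n) \to \alpha(\Delta) < 1$ as $n\to\infty$. Transporting this gap back through BKW yields exponential decay of the probability of connecting two far-apart regions inside a cylinder, from which one can extract the bound $\phi^0_{p_c}[0\lra \partial \La_n] \leq e^{-cn}$ together with its dual, i.e.\ case (DisCon).

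The hard part of this program, which the chapter explicitly defers, is the rigorous execution of the Bethe ansatz: constructing the Bethe eigenvectors, proving that the asymptotic analysis of the Bethe equations really captures the top two eigenvalues of $V$ (in the correct symmetry sectors), and showing that the gap estimates in the two regimes are uniform enough to survive the limits $n,N\to\infty$. A secondary delicate point is the BKW reduction itself: on the cylinder one must choose boundary conditions and the reference orientation of loops carefully, so that the loop reweighting yields a \emph{real} (rather than merely formally complex) six-vertex representation, and so that the FK connection events of interest correspond to clean matrix coefficients in well-controlled spectral sectors of $V$.
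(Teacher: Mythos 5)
Your overall strategy is the right one and matches the paper's: invoke the dichotomy of Theorem~\ref{thm:dichotomy} at $p_{\rm sd}$, pass to the six-vertex model with $\Delta=-\sqrt q/2$ via the BKW correspondence, and extract the answer from the transfer matrix through the Bethe ansatz, with $q=4$ corresponding to $\Delta=-1$. However, there is a genuine gap at the crucial quantitative step. You propose to use the spectral gap $1-\Lambda_1(n)/\Lambda_0(n)$ (gapless of order $1/n$ for $1\le q\le 4$, massive for $q>4$), i.e.\ information about subdominant eigenvalues or about sectors differing from the leading one by $O(1)$ up-arrows. This is exactly the kind of estimate that the rigorous Bethe-ansatz analysis does not deliver: as stressed in Section~\ref{sec:Bethe_Ansatz}, the available control on $\La^{(k)}/\La^{(0)}$ is not fine enough for $k=1$ (or any bounded $k$), and the whole proof is engineered to avoid it. The paper instead works with \emph{macroscopic} sectors $k=\alpha L$, where the estimates are robust, and phrases the input as an asymptotic of the sloped free energy: $f_{\rm 6V}(0)-f_{\rm 6V}(\alpha)$ is linear in $\alpha$ for $q>4$ and quadratic for $1\le q\le 4$ (Theorem~\ref{thm:free_energy_6V}).

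The second missing ingredient is the bridge back to the dichotomy. In the paper this is not a statement about crossing probabilities of rectangles extracted from matrix coefficients, but the two-sided inequality \eqref{eq:H_to_xi} relating $\phi_{\bbT_{L,M}}[\calE_{k}]$ (at least $k$ vertically crossing clusters on the torus/cylinder) to the critical correlation length $\xi$: the lower bound is an RSW construction of $k$ alternating primal/dual crossing strips, the upper bound an exploration of crossing clusters one by one. Taking $k=\alpha L$ and then $\alpha\to0$, the linear versus quadratic behaviour of the free energy translates into $\xi<\infty$ (hence (DisCon)) for $q>4$ and $\xi=\infty$ (hence (Con)) for $1\le q\le4$. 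Note also that the paper deliberately works on the torus, where the BKW reweighting of non-contractible loops only produces factors that are negligible at the level of free energies; your cylinder set-up with boundary conditions chosen to make connection events into clean matrix coefficients is precisely the delicate reduction that the torus formulation sidesteps. To make your proposal into a proof you would either have to establish the $k=O(1)$ gap estimates rigorously (a substantially harder Bethe-ansatz problem) or reorganise the argument around macroscopic $k$ and the correlation length, as the paper does.
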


The theorem above was proved in a series of papers via different methods. 
Originally, the continuity was proved in~\cite{DumSidTas17} and the discontinuity in~\cite{DumGagHar16}.
Alternative proofs of these two reults were obtained in~\cite{GlaLam23} and~\cite{RaySpi20}, repsectively.

We will present here a proof of both the continuity and discontinuity regimes inspired by the method of~\cite{DumGagHar16}. It consists in the explicit computation of the rate of decay of a certain event that allows us to distinguish the cases (Con) and (DisCon) of Theorem~\ref{thm:dichotomy}. The computation is done by relating critical FK-percolation to the six-vertex model, which we define below. The free energy of the six-vertex model is then estimated by applying the Bethe ansatz to its transfer matrix and computing its leading eigenvalues. 

This section is meant to highlight the links between FK-percolation and the six-vertex model, and the very different techniques that may be used to analyse them. 
A more specific take-home message is that the continuity/discontinuity of the phase transition of FK-percolation 
corresponds to the delocalisation/localisation of the height function of the corresponding six-vertex model, 
which in turn corresponds to the differentiability/non-differentiability of the free energy of the six-vertex model as a function of its ``slope'', at~$0$ slope.

The actual computation of the six-vertex free energies with different slope will not be detailed
as it does not employ the type of tools highlighted in these notes.

\section{Six-vertex model on the torus}

Let~$L,M \in 2\bbN$ and write~$\bbT_{L,M} = (\bbZ/L\bbZ) \times(\bbZ/M\bbZ)$ for the torus of width~$L$ and height~$M$. 
A six-vertex configuration on~$\bbT_{L,M}$ is an assignment of directions (or arrows) to each edge of~$\bbT_{L,M}$ with the restriction that 
any vertex has exactly two incoming and two  outgoing edges; we call this restriction the {\em ice rule}. 
As a result, there are only six possible configurations at each vertex, whence the name of the model. 

Each possibility carries a weight (see Figure~\ref{fig:6vertex}). Consider three positive parameters~$a,b,c > 0$. 
The weight of a configuration~$\vec\omega$ is 
\begin{align*}
	w_{\rm 6V}(\vec\omega) = a^{\# \text{vertices of type~$a$}} \cdot b^{\# \text{vertices of type~$b$}} \cdot c^{\# \text{vertices of type~$c$}} .
\end{align*}

It is standard to parametrise the model via 
\begin{align}\label{eq:Delta_6V}
\Delta := \frac{a^2 + b^2 - c^2}{2ab},
\end{align}
as models with equal~$\Delta$ are expected to behave similarly.
Henceforth, we focus exclusively on the case~$a = b = 1$, for which~$\Delta = 1 - c^2/2 \in (-\infty,1)$. 

\begin{figure}
\begin{center}
\includegraphics[width = 0.7\textwidth]{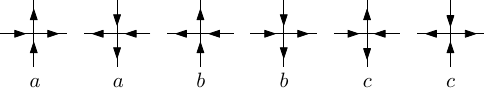}
\caption{The six possible vertex configurations obeying the ice rule. The weights are chosen to be invariant under total arrow reversal.}
\label{fig:6vertex}
\end{center}
\end{figure}

\paragraph{Preservation of arrows.}
Partition the vertical edges of the torus into horizontal rows. 
It is a direct but crucial consequence of the ice rule that, in any six-vertex configuration, the number of up-arrows is the same in each row.
We call this the {\em preservation of up-arrows}. 

Define the partition functions
\begin{align*}
	Z_{\rm 6V}^{(k)}(\bbT_{L,M}) 
	= \!\! \!\!\!\!\sum_{\substack{\vec\omega \text{ with~$L/2 + k$} \\ \text{up-arrows per row}}}  \!\!\!\!\!\!\!w_{\rm 6V}(\vec\omega) 
	\qquad \text{ and } \qquad
	Z_{\rm 6V}(\bbT_{L,M}) = \sum_{k = -L/2}^{L/2}   Z_{L,M}^{(k)}.
\end{align*}
Finally, for~$\alpha \in [-1/2,1/2]$, define the free energy of the (sloped) model as
\begin{align*}
f_{\rm 6V} = \lim_{L \to \infty} \lim_{M \to \infty} \tfrac{1}{LM} \log Z_{\rm 6V}(\bbT_{L,M}) \quad\text{ and}\quad
f_{\rm 6V}(\alpha) = \lim_{L \to \infty} \lim_{M \to \infty} \tfrac{1}{LM} \log Z_{\rm 6V}^{(\alpha L)}(\bbT_{L,M}).
\end{align*}
Simple combinatorial manipulations (see, for instance,~\cite[proof of Cor 1.4]{DumGagHar16})
show that the limits exist no matter the order in which~$L$ and~$M$ are sent to infinity. 
Moreover, they show that 
\begin{align*}
	f_{\rm 6V} =f_{\rm 6V}(0)= \lim_{L \to \infty} \lim_{M \to \infty} \tfrac{1}{LM} \log Z_{\rm 6V}^{(0)}(\bbT_{L,M}).
\end{align*}

\section{Relation to FK-percolation: BKW correspondence}\label{sec:BKW}

We next present a correspondence between critical FK-percolation on (a~$\pi/4$-rotated version of) the torus and the six-vertex model described above. It is sometimes called the Baxter--Kelland--Wu (or BKW) correspondence~\cite{BaxKelWu76}.
Figure~\ref{fig:correspondence} sums it up. 
%:
%We start by describing several the FK-percolation related to the six-vertex model on~$\bbT_{L,M}$.

% \paragraph{Main consequence of the BKW correspondence}
Notice that~$\bbT_{L,M}$ is a bipartite graph; consider a bipartite colouring in black and white of its vertices. 
Let~$\bbT_{L,M}^\bullet$ be the graph containing only the black vertices of~$\bbT_{L,M}$, with edges between vertices at a distance~$\sqrt 2$ of each other. 
Write~$\Omega_{\rm FK}$ for the set of FK-percolation configurations on~$\bbT_{L,M}^\bullet$
and~$\phi_{\bbT_{L,M}}$ for the associated FK-percolation measure (the parameters~$p$ and~$q$ will be fixed and omitted from the notation). 
As on any finite graph, the FK-percolation measure may be defined on~$\bbT_{L,M}^\bullet$ with no mention of boundary conditions; 
alternatively~$\phi_{\bbT_{L,M}}$ may be viewed as a FK-percolation measure on a rectangle with periodic boundary conditions. 

%We will work here with~$q \geq 1$ fixed and~$p = p_c(q) = \frac{\sqrt q}{1 + \sqrt q}$. Thus 
%\begin{align*}
%	w_{\rm FK}(\omega) = p^{|\omega|}(1-p)^{|E(\bbT_{L,M})| - |\omega|}q^{\#\text{ clusters}}
%	= (1-p)^{|E(\bbT_{L,M})|} \cdot \sqrt q^{|\omega|}q^{\#\text{ clusters}}.
%\end{align*}

Write~$\calE_k$ for the event that~$\omega \in \Omega_{\rm FK}$ contains at least~$k$ vertically crossing clusters, 
where clusters are counted on the cylinder obtained by cutting~$\bbT_{L,M}$ horizontally at height~$0$ --- see Figure~\ref{fig:cylinder_BKW}. 
The BKW correspondence will be used to prove the following. 

\begin{figure}
\begin{center}
\includegraphics[width = 0.4\textwidth]{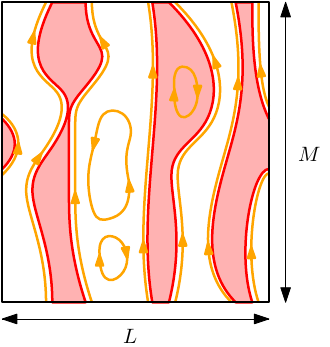}
\caption{See the torus~$\bbT_{L,M}$ as a vertical cylinder. The event~$\calE_k$ (here with~$k =3$) requires the existence of~$k$ clusters of~$\omega$ (in red) between the bottom and top of the cylinder. 
This leads to the existence of at least~$2k$ paths between the bottom and top of the torus in the loop configuration~$\omega^\circ$  (in orange). 
If all these paths are oriented upwards, the resulting six-vertex configurations will have an excess of~$2k$ up-arrows. 
Indeed, any retractible loop contributes the same number of up- and down-arrows to each row, regardless of its orientation.}
\label{fig:cylinder_BKW}
\end{center}
\end{figure}

\begin{proposition}[BKW correspondence]\label{prop:BKW}
	For~$q \geq 1$,~$p = p_c(q) = \frac{\sqrt q}{1 + \sqrt q}$ and~$c = \sqrt{2 + \sqrt{q}}$, 
	\begin{align}\label{eq:BKW1}
		f_{\rm FK} &= f_{\rm 6V} + \tfrac14\log q + \log (1+ \sqrt q) &&\text{ and } \\[.2cm]
		\phi_{\bbT_{L,M}}[\calE_{\alpha L}]^{1/LM} &= \exp\big(f_{\rm 6V}(\alpha)- f_{\rm 6V}(0) + o(1)\big) && \text{  for any~$\alpha >0$},
		\label{eq:BKW2}
	\end{align}
	where~$f_{\rm FK}$ is the finite energy of the FK-percolation defined in \eqref{eq:free_energy_FK_def} and 
	~$o(1)$ in the last line denotes a quantity converging to~$0$ as~$M \to \infty$, then~$L \to \infty$.
\end{proposition}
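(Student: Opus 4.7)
The strategy is the classical Baxter--Kelland--Wu rewriting of FK-percolation as an oriented loop model which, after a local-to-global change of variables, reproduces exactly the partition function of the six-vertex model with $a=b=1$ and the value of $c$ claimed in the statement. The key arithmetic miracle is that $p=p_c$ is precisely the parameter at which the FK weight becomes \emph{loop local}, and the identity $2\cos\lambda=\sqrt{q}$ is precisely the one which matches the six-vertex weight $c=\sqrt{2+\sqrt{q}}$.

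\textbf{Step 1: Loop representation.} Consider the medial lattice of $\bbT_{L,M}^\bullet$, whose vertices sit on the edges of $\bbT_{L,M}^\bullet$ and whose faces correspond alternately to primal and dual vertices. Given $\omega \in \Omega_{\rm FK}$, draw on each medial vertex the local arc separating the primal endpoint-pair from the dual pair. The resulting picture $\omega^\circ$ is a disjoint union of loops on the medial torus, together with some number of non-contractible loops winding around the two homology cycles. Euler's formula on the torus gives
\[
k(\omega) = \tfrac12 \ell(\omega) + |\omega| - |V(\bbT_{L,M}^\bullet)| + \text{correction}(\omega),
\]
where $\ell(\omega)$ is the total number of loops and ``correction'' counts non-retractible loops. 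Substituting in the FK weight and choosing $p = p_c$ so that $\frac{p}{\sqrt q (1-p)}=1$, the $|\omega|$-dependence cancels and we obtain
\[
\phi_{\bbT_{L,M}}[\omega] \;\propto\; (\sqrt q)^{\ell(\omega)} \cdot (\text{topological factor}),
\]
which depends only on the loop picture $\omega^\circ$, not on $\omega$ itself.

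\textbf{Step 2: Orientation and six-vertex weights.} Pick $\lambda \in (0,\pi/2]$ with $2\cos\lambda = \sqrt q$ (real for $q\leq 4$, purely imaginary otherwise). Each loop is oriented in two ways, and we weight an orientation $\vec\omega^\circ$ by $\prod_{\gamma} e^{i\lambda\, \tau(\gamma)}$, where $\tau(\gamma)$ is the (signed) total turning of $\gamma$ in units of $2\pi$. For a retractible loop $\tau=\pm 1$, so summing over the two orientations restores the factor $2\cos\lambda=\sqrt q$ per such loop. The crucial point is that $\tau(\gamma)$ is an additive sum of local turning angles $\pm\pi/2$ at each medial vertex the loop passes through. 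Reinterpreting $\vec\omega^\circ$ as an arrow configuration on the (rotated) six-vertex lattice, one checks directly that the product of local vertex weights $e^{\pm i\lambda/4}$ around the two arrows at a given medial vertex combines into weight $1$ for the four arrow-configurations of type $a,b$ and into weight $2\cos(\lambda/2)=\sqrt{2+\sqrt q}$ for the two configurations of type $c$ (where the summation over the two orientations merges the two conjugate phases). This matches precisely $a=b=1$, $c=\sqrt{2+\sqrt q}$.

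\textbf{Step 3: Partition-function identities and slope.} Summing over all $\vec\omega^\circ$ for a fixed $\omega^\circ$ reproduces the FK weight (up to an explicit prefactor per edge and per lattice vertex), so summing also over $\omega^\circ$ gives
\[
Z_{\rm FK}(\bbT_{L,M}^\bullet) \;=\; C(p_c,q)^{|V(\bbT_{L,M})|}\cdot Z_{\rm 6V}(\bbT_{L,M}),
\]
with $C(p_c,q) = (1+\sqrt q)\, q^{1/4}$ up to a multiplicative $1+o(1)$ from the topological/boundary corrections, which disappears after taking $\tfrac1{LM}\log$ and the double limit. This yields~\eqref{eq:BKW1}. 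For~\eqref{eq:BKW2}, observe that the event $\calE_k$ forces the existence of at least $2k$ non-retractible loops winding vertically around $\bbT_{L,M}$; when these are all oriented upward (as opposed to in pairs of opposite orientations) the corresponding six-vertex configuration has exactly $L/2+k$ up-arrows in each horizontal row, by the preservation-of-arrows principle. A bijection/double-counting between FK configurations realising exactly $k$ vertically-crossing clusters and six-vertex configurations with net arrow imbalance $2k$ yields
\[
\phi_{\bbT_{L,M}}[\calE_{\alpha L}] \;=\; \frac{Z_{\rm 6V}^{(\alpha L)}(\bbT_{L,M})}{Z_{\rm 6V}(\bbT_{L,M})}\cdot (1+o(1)),
\]
from which \eqref{eq:BKW2} follows by taking logarithms and the thermodynamic limit.

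\textbf{Main obstacle.} The delicate part is the bookkeeping for non-retractible loops on the torus: their turning number vanishes, so orienting them does not produce the factor $\sqrt q$ automatically but instead produces a factor $2$, and their orientations must be correlated to respect the homology class of the oriented loop soup. This is exactly what converts the loop excess into an arrow imbalance in Step~3, and one has to check that the correction from pairs of oppositely-oriented non-retractible loops is subexponential in $LM$ (so invisible at the level of free energies) while the unbalanced orientations produce the correct restricted partition function $Z_{\rm 6V}^{(\alpha L)}$. A second, more technical, point is to verify that the prefactor $C(p_c,q)^{|V|}$ identified in Step~3 matches $q^{1/4}(1+\sqrt q)$ per site exactly; this follows from a careful Euler-formula count but is the step where it is easiest to drop a factor.
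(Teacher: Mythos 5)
You follow essentially the same route as the paper: at the self-dual point the FK weight becomes a function of the loop configuration alone, loops are oriented with a phase per unit of winding so that retractible loops resum to $e^{\lambda}+e^{-\lambda}=\sqrt q$, the winding is localised at vertices so that each $c$-vertex carries $e^{\lambda/2}+e^{-\lambda/2}=\sqrt{2+\sqrt q}$, and the up-arrow imbalance is converted into vertically winding loops, hence vertically crossing clusters. The ``main obstacle'' you identify is exactly the content of the factors $(2/\sqrt q)^{\ell_0}$, $2^{\ell_h}$, $q^{-s}$ and $J_k$ in Lemmas~\ref{lem:correspondence2}--\ref{lem:correspondence3} and display \eqref{eq:correspondence4}, so the skeleton of your argument matches the paper's.

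A few bookkeeping slips are worth fixing. (a) The per-site constant: testing $w_{\rm FK}(\omega)=C\,\sqrt q^{\,\ell(\omega^\circ)}q^{s(\omega)}$ on the empty configuration ($k=LM/2$, $\ell=LM/2$, $s=0$) forces $C=\sqrt q^{\,LM/2}(1+\sqrt q)^{-LM}$, hence $f_{\rm FK}=f_{\rm 6V}+\tfrac14\log q-\log(1+\sqrt q)$ as in \eqref{eq:free_energy}; your $C(p_c,q)=(1+\sqrt q)\,q^{1/4}$ has the $(1+\sqrt q)$ factor inverted (the displayed statement \eqref{eq:BKW1} carries the same sign slip, but the paper's proof derives the minus sign, which is the correct one). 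Relatedly, your Euler relation in Step~1 fails already on the empty configuration; the correct identity is $\ell(\omega^\circ)=|\omega|+2k(\omega)-\tfrac{LM}{2}-2s(\omega)$, which the paper obtains by induction on open edges rather than by Euler's formula on the torus. (b) For \eqref{eq:BKW2} there is no bijection giving $\phi_{\bbT_{L,M}}[\calE_{\alpha L}]=\big(Z^{(\alpha L)}_{\rm 6V}/Z_{\rm 6V}\big)(1+o(1))$: the correspondence is many-to-many (each unoriented loop configuration admits $2^{\ell}$ orientations, each arrow configuration $2^{\#\{c\text{-vertices}\}}$ coherent loop configurations), and the honest relation is a two-sided sandwich with multiplicative corrections $e^{O(L+M)}$ coming from $J_k$, $2^{\ell_h}$, $\sqrt q^{-\ell_0}$ and $q^{-s}$; this is harmless only because one divides by $LM$ and takes logarithms, so your $(1+o(1))$ must be weakened to $e^{o(LM)}$. (c) Your Step~3 is internally inconsistent about a factor $2$: if $k$ crossing clusters are declared to produce imbalance $2k$, then $\calE_{\alpha L}$ should be paired with $Z^{(2\alpha L)}_{\rm 6V}$, not $Z^{(\alpha L)}_{\rm 6V}$; in the paper's convention, imbalance $k$ requires at least $2k$ vertically winding loops and hence at least $k$ crossing clusters, which is what matches \eqref{eq:BKW2}. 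None of these affects the overall strategy, but each is precisely the kind of factor your own closing remark warns is easiest to drop.
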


The rest of the section is dedicated to proving this result. 
Henceforth,~$q \geq 1$ is fixed and we set~$p =  \frac{\sqrt q}{1 + \sqrt q}$ and~$c = \sqrt{2 + \sqrt{q}}$.

\paragraph{Parameters of the correspondence.}

First, we define a parameter linking~$q$ and~$c$. Let~$\lambda$ be such that 
\begin{align}\label{eq:v}
	e^{\lambda} + e^{-\lambda} = \sqrt{q}.
\end{align} 
Notice that~$\lambda$ is real for~$q \geq 4$ and purely imaginary for~$1 \leq q < 4$. 
For $q \neq 4$, there are two possible choices for~$\lambda$ (up to sign change); we do not impose a canonical choice. 
Then,
\begin{align}\label{eq:c}
	c = e^{\frac{\lambda}2} + e^{-\frac{\lambda}2}.
\end{align}

\paragraph{Loop configurations.}
We define two more types of configurations needed in describing the BKW correspondence. 
An {\em oriented loop} on~$\bbT_{L,M}$ is an oriented cycle of~$\bbT_{L,M}$ which is edge-disjoint, non-self-intersecting and such that it turns by~$\pm \pi/2$ at every visited vertex.  
We may view oriented loops as ordered collections of edges of~$E(\bbT_{L,M})$, quotiented by cyclic permutations of the indices.  
Unoriented loops (or simply loops) are oriented loops considered up to reversal of the indices.
A (oriented) loop configuration on~$\bbT_{L,M}$ is a partition of~$E(\bbT_{L,M})$ into (oriented) loops.
Write~$\Omega_{\rm Loop}^\circ$ and~$\Omega_{\rm Loop}^{\ol}$ for the set of configurations of unoriented and oriented loops, respectively. 

Associate the following weights to unoriented and oriented loop configurations.
For an unoriented loop configuration~$\omega^{\circ}$, write~$\ell(\omega^\circ)$  for the number of loops of~$\omega^{\circ}$
and~$\ell_0(\omega^\circ)$ for the number of loops that are not retractable (on the torus) to a point.
For an oriented loop configuration~$\omega^{\ol}$, write 
%$\ell_0(\omega^\ol)$ for the number of non-retractable loops of~$\omega^\ol$ and 
$\ell_-(\omega^\ol)$ and~$\ell_+(\omega^\ol)$ for the number of retractable loops of~$\omega^\ol$ 
which are oriented clockwise and counterclockwise, respectively.
Set 
\begin{align*}
	w_{\circ} (\omega^{\circ}) &= \sqrt{q}^{\ell(\omega^\circ)} \cdot \big(\tfrac{2}{\sqrt q}\big)^{\ell_0(\omega^\circ)}  &&\text{ for all~$\omega^{\circ} \in \Omega_{\rm Loop}^{\circ}$};\\
	w_{\ol} (\omega^{\ol}) &= e^{\lambda (\ell_+(\omega^\ol) -\ell_-(\omega^\ol))} && \text{ for all~$\omega^{\ol} \in \Omega_{\rm Loop}^{\ol}$}.
\end{align*}
Additionally, for FK-percolation configurations~$\omega$ and six-vertex configurations~$\vec \omega$, recall the weights
\begin{align*}
w_{\rm FK}(\omega) = p_{\rm sd}^{|\omega|}(1-p_{\rm sd})^{|E\setminus\omega|}q^{k(\omega)}\qquad \text{ and }\qquad 
w_{\rm 6V}(\vec\omega)= c^{\#\text{type~$c$ vertices}}.
\end{align*}

\paragraph{Correspondence between configurations.}
The correspondence between configurations is best described in Figure~\ref{fig:correspondence}.

\begin{figure}
	\begin{center}
		\includegraphics[width=0.23\textwidth, page=1]{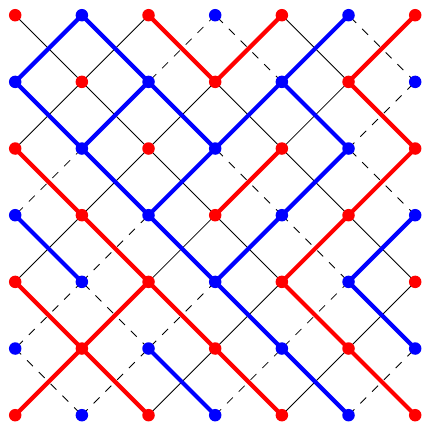}\,
		\includegraphics[width=0.23\textwidth, page=2]{maps.pdf}\,
		\includegraphics[width=0.23\textwidth, page=3]{maps.pdf}\,
		\includegraphics[width=0.23\textwidth, page=4]{maps.pdf}
	\end{center}
	\caption{The different steps in the correspondence between the FK-percolation and six-vertex models on a torus. 
	From left to right: A FK-percolation configuration and its dual, the corresponding loop configuration,
	an orientation of the loop configuration and the resulting six-vertex configuration. 
	Note that in the first picture, there exist both a primal and dual cluster winding vertically around the torus; 
	this leads to two loops that wind vertically (see second picture); 
	if these loops are oriented in the same direction (as in the third picture), 
	then the number of up arrows on every row of the six-vertex configuration is equal to~$\frac{N}2 \pm 1$. 
	}
	\label{fig:correspondence}
\end{figure}

Unoriented loop configurations are in bijection with FK-percolation configurations: associate to any FK-percolation configuration~$\omega$ the unique loop configuration whose loops do not intersect any edge of~$\omega$ or~$\omega^*$. 

An oriented loop configuration~$\omega^{\ol}$ is said to be coherent with the unoriented loop configuration containing the same loops. 
It is also said to be coherent with the six-vertex configuration whose edge-orientations are given by the orientations of the loops. 

Notice that for any unoriented loop configuration~$\omega^{\circ}$, there are $2^{\ell(\omega^\circ)}$ oriented loop configurations coherent with it.
Similarly, for any six-vertex configuration~$\vec\omega$, there are~$2^{\# \text{vertices of type~$c$}}$ oriented loop configurations 
coherent with~$\vec\omega$.

\paragraph{Correspondence for weights.}
The following lemmas relate the weights of the different configurations. 
Both results are obtained via fairly direct computations which we will only sketch; full proofs are available in~\cite{DumGagHar16}.

\begin{lemma}\label{lem:correspondence2}
	For any~$\omega \in \Omega_{\rm FK}$, 
	\begin{align}\label{eq:correspondence2}
		w_{\rm FK}(\omega) 
		= C\, \sqrt{q}^{ \ell(\omega^\circ)}q^{s(\omega)}		
		= C\, \big(\tfrac{\sqrt{q}}2 \big)^{ \ell_0(\omega^\circ)}q^{s(\omega)} \!\!\!\!\sum_{\omega^\ol\text{ coherent w. } \omega}\!\!\!\! w_{\ol} (\omega^{\ol}), 
	\end{align}
	where~$\omega^\circ$ is the loop configuration corresponding to~$\omega$ and 
	the sum is over the~$2^{\ell(\omega^\circ)}$ oriented loop configurations coherent with~$\omega^\circ$. 
	The term~$s(\omega) \in \{0,1\}$ is the indicator that~$\omega$ contains a cluster that winds around~$\bbT_{L,M}$ in both the vertical and horizontal directions.
	Finally,~$C= \frac{\sqrt q^{LM/2}}{(1+ \sqrt q)^{LM}}$.
\end{lemma}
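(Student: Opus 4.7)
The statement splits into two independent equalities: the first is a topological Euler-type identity relating loops to connectivity on the torus, and the second is a direct combinatorial sum over compatible orientations.

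For the first equality, the plan is to rewrite the FK weight and then invoke an Euler identity. Since $p/(1-p) = \sqrt q$ at $p = p_c$, and since $\bbT_{L,M}^\bullet$ has $V = LM/2$ vertices and $|E| = LM$ edges, one has
$$w_{\rm FK}(\omega) = \frac{\sqrt q^{\,|\omega|}}{(1+\sqrt q)^{LM}}\, q^{k(\omega)}.$$
The claimed first equality then reduces to the combinatorial identity
\begin{align}\label{eq:toric_euler}
2k(\omega) + |\omega| = V + \ell(\omega^\circ) + 2\,s(\omega),
\end{align}
which I would prove by induction on $|\omega|$. The base case $\omega = \emptyset$ is direct: $k = V$, $s = 0$, and $\omega^\circ$ consists of one short loop around each primal vertex, so $\ell = V$. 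For the induction step, opening a new edge $e$ flips the medial picture locally, and one checks that either (a) $e$ merges two distinct primal clusters (in which case $k$ drops by $1$ and two loops of $\omega^\circ$ fuse, so both sides of~\eqref{eq:toric_euler} drop by $1$), or (b) $e$ adds a new independent cycle inside a single cluster (so $k$ is unchanged and one loop splits into two, so both sides increase by $1$). The only toric subtlety arises in case (b) when the new cycle, together with a pre-existing one, spans both independent homology classes of $\bbT_{L,M}$: here $s$ jumps from $0$ to $1$ and an extra loop is absorbed by the non-planar topology, so the $+1$ on the $\ell$-side is cancelled by $+2$ on the $s$-side, preserving~\eqref{eq:toric_euler}.

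For the second equality, the weight $w_{\ol}$ factors over loops of $\omega^\circ$, each of which can be oriented independently. A retractable loop contributes $e^{\lambda} + e^{-\lambda} = \sqrt q$ when summed over its two orientations (by~\eqref{eq:v}), while a non-retractable loop contributes $2$ since its orientations do not enter $\ell_\pm$. Therefore
$$\sum_{\omega^\ol \text{ coherent with }\omega} w_{\ol}(\omega^\ol) = 2^{\ell_0(\omega^\circ)}\, \sqrt q^{\,\ell(\omega^\circ) - \ell_0(\omega^\circ)},$$
and multiplying by $(\sqrt q/2)^{\ell_0(\omega^\circ)}$ recovers $\sqrt q^{\,\ell(\omega^\circ)}$, which combined with the first equality completes the proof. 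The main obstacle is~\eqref{eq:toric_euler}: on the sphere this is a classical consequence of Euler's formula with no correction, but the non-trivial first homology of the torus forces the $2\,s(\omega)$ term, and correctly pinpointing the (at most one) moment at which a cluster becomes doubly-winding and loses a loop to topology is the only genuinely delicate part of the argument.
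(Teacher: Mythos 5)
Your overall route is the same as the paper's (the first equality by induction on the number of open edges, the second by summing over orientations), and the second half of your argument is correct and complete: retractable loops contribute $e^{\lambda}+e^{-\lambda}=\sqrt q$, non-retractable ones contribute $2$, compensated by the factor $(\sqrt q/2)^{\ell_0(\omega^\circ)}$. The reduction of the first equality to the identity $2k(\omega)+|\omega|=V+\ell(\omega^\circ)+2s(\omega)$ with $V=LM/2$, and your base case, are also correct. The gap is precisely in the step you flag as delicate. When the added edge $e$ has both endpoints in the same cluster and that cluster thereby comes to span two independent homology classes, what happens is that the two loop strands at the face of $e$ lie in two \emph{distinct} loops, which merge: $\Delta\ell=-1$ while $2s$ increases by $2$, giving a net $+1$ on the right that matches the $+1$ on the left. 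Your accounting, ``the $+1$ on the $\ell$-side is cancelled by $+2$ on the $s$-side'', does not balance the identity (it changes the right-hand side by $+2$ or $+3$ against $+1$ on the left), and ``an extra loop is absorbed'' leaves $\Delta\ell$ ambiguous; only $\Delta\ell=-1$ works.

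More substantively, the dichotomy your induction rests on is asserted rather than proved, and it is the whole content of the toric correction: you need (i) that re-routing two strands of a \emph{single} loop always splits it into two (for abstract curve surgeries on the torus this can fail; here it holds because each loop has its primal cluster on one fixed side, which forces the orientations of the two strands to be such that the new pairing disconnects them); (ii) that in your case (a) the strands always lie in distinct loops and $s$ cannot jump (disjoint clusters cannot carry homologically independent circuits, as such circuits would have to intersect); and (iii) the converse in case (b): two distinct loops adjacent to the same cluster merge \emph{only} when that cluster newly acquires rank-two homology --- in particular this must be impossible once $s(\omega)=1$, or the identity would break. A clean way to obtain all of this at once (indeed, to bypass the induction) is to note that the loops adjacent to a cluster $\sfC$ are the boundary circles of its regular neighbourhood $S(\sfC)$, an orientable surface with $\chi(S(\sfC))=|V_\sfC|-|E_\sfC|$ and genus $g(\sfC)\in\{0,1\}$, with $g(\sfC)=1$ exactly when $\sfC$ carries two independent homology classes; summing the boundary count $2-2g(\sfC)-\chi(S(\sfC))$ over clusters yields the identity with $s(\omega)=\sum_\sfC g(\sfC)$. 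This also pins down the correct reading of $s$, which your induction implicitly uses: a cluster whose only winding circuit is of class $(1,1)$ winds both horizontally and vertically in the naive sense, yet has $s=0$.
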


The first equality is proved by induction on the number of open edges of~$\omega$. 
The second is obtained by observing that in the sum on the right-hand side, 
every retractable loop of~$\omega^\circ$ appears with its two possible orientations, thus with a total weight of~$e^\lambda + e^{-\lambda} = \sqrt q$; non-retractable loops appear with a weight of~$2$, compensated by the term~$\big(\tfrac{\sqrt{q}}2 \big)^{ \ell_0(\omega)}$.

\begin{lemma}\label{lem:correspondence3}
	For any six-vertex configuration~$\vec\omega$,
	\begin{align}\label{eq:correspondence3}
		w_{\rm 6V}(\vec\omega) = \sum_{\omega^\ol \text{ coherent w. } \vec\omega} w_{\ol}(\omega^\ol).
	\end{align}
\end{lemma}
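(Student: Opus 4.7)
The plan is to prove the identity vertex by vertex: I would show that both sides of~\eqref{eq:correspondence3} factor as products of local contributions, one per vertex of~$\bbT_{L,M}$, and that these local contributions agree.

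First I would analyse, at a single vertex~$v$, which (pairing, orientation) data give an L--shape arrangement compatible with the six arrows of~$\vec\omega$ at~$v$. The two possible pairings at~$v$ are (N,E)\&(S,W) and (N,W)\&(S,E), and once a pairing is chosen the orientations of the two arcs are forced by the in/out pattern of the arrows at~$v$. A direct case check shows:
\begin{itemize}
\item At a vertex of type $a$ or $b$ (a ``straight-through'' pattern) exactly one of the two pairings admits a consistent orientation; the other pairing has an arc with two ``in'' or two ``out'' arrows and is excluded.
\item At a vertex of type $c$ (a source/sink pattern) both pairings admit a consistent orientation, giving exactly two coherent local configurations.
\end{itemize}
In particular, the number of oriented loop configurations coherent with~$\vec\omega$ equals~$2^{\#c\text{-vertices}}$, and they are obtained independently by making a pairing choice at each type-$c$ vertex.

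Next I would decompose the global weight~$w_{\ol}(\omega^\ol)=e^{\lambda(\ell_+(\omega^\ol)-\ell_-(\omega^\ol))}$ into local contributions. The key observation is a discrete Gauss--Bonnet fact: every retractable loop on~$\bbT_{L,M}$ has total turning~$\pm 2\pi$, with left and right $\pi/2$-turns differing by $\pm 4$, whereas every non-retractable loop has zero total turning. Therefore, if I assign to each vertex~$v$ the local weight $e^{\tau_v}$ with $\tau_v=+\lambda/4$ for each left turn and $-\lambda/4$ for each right turn at~$v$ (summed over the two arcs through~$v$), then
\begin{equation*}
w_{\ol}(\omega^\ol)=\prod_v e^{\tau_v(\omega^\ol)},
\end{equation*}
because contributions from non-retractable loops cancel and retractable loops contribute exactly~$\pm\lambda$. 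Since the pairing choices at distinct type-$c$ vertices are independent and $\tau_v$ is purely local, the sum over coherent oriented loop configurations factorises:
\begin{equation*}
\sum_{\omega^\ol\text{ coherent w. }\vec\omega}w_{\ol}(\omega^\ol)=\prod_{v}\Big(\sum_{\text{pairing at }v}e^{\tau_v}\Big).
\end{equation*}

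It remains to evaluate each local factor. A short computation of the turning angles for the forced orientations gives $\tau_v=0$ at every type-$a$ or type-$b$ vertex (the two arcs turn in opposite senses), so the local factor equals~$1$, matching the weight~$a=b=1$. At a type-$c$ vertex the two admissible pairings give local weights $e^{+\lambda/2}$ and $e^{-\lambda/2}$ (the two arcs turn in the same sense, but oppositely for the two pairings), so the local factor equals $e^{\lambda/2}+e^{-\lambda/2}=c$ by the choice~\eqref{eq:c}. Multiplying over~$v$ yields~$c^{\#c\text{-vertices}}=w_{\rm 6V}(\vec\omega)$, proving~\eqref{eq:correspondence3}. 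The main technical point, and the only place where care is required, is the sign bookkeeping in the turning-angle computation and the verification that the forced orientations at type $a/b$ vertices yield one left turn and one right turn (while type $c$ vertices yield two turns of the same sense); everything else follows from the local independence of the pairing choices.
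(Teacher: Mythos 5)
Your proof is correct and follows essentially the same route as the paper: the paper also rewrites~$w_{\ol}(\omega^\ol)$ as~$e^{\lambda\,{\rm wind}(\omega^\ol)/2\pi}$, computes the winding locally (zero total turning at type~$a$/$b$ vertices, $\pm\pi$ at type~$c$ vertices, with non-retractable loops contributing zero), and factorises the sum over coherent orientations vertex by vertex so that each~$c$-vertex yields~$e^{\lambda/2}+e^{-\lambda/2}=c$. Your write-up simply fills in the case check and the turning-angle bookkeeping that the paper leaves as a sketch.
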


The key here is to see~$w_{\ol}(\omega^\ol)$ as~$e^{\lambda {\rm wind}(\omega^\ol)/2\pi}$, where~${\rm wind}(\omega^\ol)$ is the total winding of all loops of~$\omega^{\ol}$. This winding may be computed locally: 
vertices of type~$a$ and~$b$ produce a total winding of~$0$, while vertices of type~$c$ produce a total winding of~$\pm \pi$, depending on how the loops are split at the vertex. See also Figure~\ref{fig:oriented_loop_vertices}.

\begin{figure}[t]
	\begin{center}
		\includegraphics[width=0.8\textwidth]{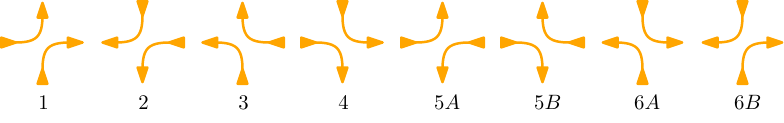}
	\end{center}
	\caption{The~$8$ different types of vertices encountered in an oriented loop configuration.}
	\label{fig:oriented_loop_vertices}
\end{figure}

\paragraph{Consequences for the partition functions: proof of Proposition~\ref{prop:BKW}.}

Summing~\eqref{eq:correspondence2} and~\eqref{eq:correspondence3} over all configurations, we easily find that 
\begin{align}\label{eq:correspondence4}
		Z_{\rm FK} \cdot \phi_{\bbT_{L,M}}\big[\big(\tfrac2{\sqrt{q}} \big)^{ \ell_0}q^{-s}\big]
		= \sum_{\omega \in \Omega_{\rm FK}} \big(\tfrac2{\sqrt{q}} \big)^{ \ell_0(\omega^\circ)}q^{-s(\omega)} \cdot w_{\rm FK}(\omega)
		= C \cdot Z_{\rm 6V},
\end{align}
with~$C$ given by Lemma~\ref{lem:correspondence2}.
This is not exactly an equality between the two partition functions, 
due to the term~$\phi_{\bbT_{L,M}}\big[\big(\tfrac2{\sqrt{q}} \big)^{ \ell_0}q^{-s}\big]$. 
Note, however, that this term is of no significance for the free energy. Indeed,~$\ell_0$ is bounded by~$L+M$, so 
\begin{align}\label{eq:free_energy}
	f_{\rm FK} 
	:= \lim_{L \to \infty} \lim_{M \to \infty} \frac{1}{LM} \log Z_{\rm FK}(\bbT_{L,M}) 
	%= \lim_{L \to \infty} \lim_{M \to \infty} \frac{1}{LM} \log Z_{\rm 6V}(\bbT_{L,M}) + \lim_{L \to \infty}\lim_{M \to \infty} \frac{1}{LM} \log C 
	=	f_{\rm 6V} + \tfrac14\log q - \log (1+ \sqrt q).
\end{align}
This proves~\eqref{eq:BKW1}. \medskip 

We turn to~\eqref{eq:BKW2}, and prove a more general fact. Fix~$k > 0$. 
For a six-vertex configuration~$\vec\omega$, write~$K(\vec\omega)$ for the excess of up-arrows on each row, 
that is, the number of up-arrows on any one row minus~$L/2$. Use the same notation for oriented loop configurations~$\omega^{\ol}$.

When~$\vec\omega$ and~$\omega^{\ol}$ are coherent,~$K(\vec\omega)=K(\omega^{\ol})$. 
In particular, the oriented loop configurations coherent with~$\{\vec\omega:\, K(\vec\omega) = k\}$ are exactly those with~$K(\omega^{\ol}) = k$. 

For an unoriented loop configuration~$\omega^\circ$, write~$J_k(\omega^\circ)$ for the number of ways in which its vertically-winding loops may be oriented to produce~$\omega^{\ol}$ with~$K(\omega^\ol) = k$.
For most~$\omega^\circ$, we have~$J_k(\omega^\circ) = 0$. 
Indeed, to have~$J_k(\omega^\circ) >0$,~$\omega^\circ$ needs to have at least~$2k$ loops winding vertically around the torus (where a loop winding vertically more than once is counted with multiplicity) --- see also Figure~\ref{fig:cylinder_BKW}. 
Finally, these loops are all counted in~$\ell_0(\omega^{\ol})$. 
Indeed, any retractable or horizontally-winding loop contributes the same number of up- and down-arrows to each row; 
inbalances come only from loops winding vertically around~$\bbT_{L,M}$. 

For an unoriented loop configuration~$\omega^\circ$, write~$\ell_h(\omega^\circ)$ for the number of non-retractable loops of~$\omega$ that do not wind vertically around the torus (in particular, they wind horizontally). 
Observe that re-orienting the retractable loops of~$\omega^{\ol}$, or those contributing to~$\ell_h$, does not change~$K(\omega^{\ol})$. 
Thus, applying~\eqref{eq:correspondence3}, then~\eqref{eq:correspondence2}, we find 
\begin{align*}
		Z_{\rm 6V}^{(k)} 
		=		\!\!\!\!\! \sum_{\omega^\ol : \, K(\omega^{\ol}) = k} 		\!\!\!\!\!w_{\ol}(\omega^\ol)
		= \sum_{\omega^\circ } J_k(\omega^\circ)  \frac{2^{\ell_h(\omega^\circ)}}{\sqrt q^{\ell_0(\omega^\circ)}} \sqrt q^{\ell ( \omega^\circ)}&\\
		= \frac{1}{C} \sum_{\omega} J_k(\omega^\circ)  \frac{2^{\ell_h(\omega^\circ)}}{\sqrt q^{\ell_0(\omega^\circ)}} q^{-s(\omega) } w_{\rm FK}(\omega) 
		&=\frac{Z_{\rm FK}}{C} \, \phi_{\bbT_{L,M}}\Big[J_k(\omega^\circ) \, \tfrac{ 2^{\ell_h(\omega^\circ)}}{\sqrt q^{ \ell_0(\omega^\circ)}}q^{-s(\omega) }\big].
\end{align*}

All terms in the expectation on the right-hand side, except~$J_k(\omega^\circ)$, are ultimately unimportant. 
Moreover, most of the contribution comes from configurations with~$J_k(\omega^\circ) = 1$. 
The important observation is that~$J_k(\omega^\circ) \neq 0$ only if there are at least~$k$ primal clusters crossing~$\bbT_{L,M}$ vertically (where clusters are  counted on the cylinder obtained by cutting~$\bbT_{L,M}$ horizontally at height~$0$). 

Recall that we are interested in~$f_{\rm 6V}(\alpha)$ which corresponds to the partition function over configurations with an excess~$K(\vec\omega)= \alpha L$.
Crude upper and lower bounds on the terms appearing in the probability above suffice to prove that 
\begin{align*}
	f_{\rm 6V}(\alpha)  =
	f_{\rm FK} + \lim_{L \to \infty} \lim_{M \to \infty} \tfrac{1}{LM} \log \phi_{\bbT_{L,M}}[\calE_{\alpha L}]
	 - \tfrac14\log q - \log (1+ \sqrt q)
\end{align*}
Together with~\eqref{eq:BKW1}, this proves~\eqref{eq:BKW2} (also observe that~$f_{\rm 6V} = f_{\rm 6V}(0)$).

\section{Solving six-vertex via the transfer matrix}\label{sec:Bethe_Ansatz}

The following theorem is obtained by estimating the leading eigenvalues of blocks of the transfer matrix of the six-vertex model via the Bethe ansatz. It will be used in our case to distinguish between continuous and discontinuous phase transitions of the FK-percolation model. 

\begin{theorem}\label{thm:free_energy_6V}
	As~$\alpha \searrow 0$, we have 
	\begin{align}\label{eq:BAf}
    	f_{\rm 6V}(0) - f_{\rm 6V}(\alpha)
    	=\begin{cases}
    		C \alpha + o(\alpha) &\text{ if~$c > 2$},\\
    		C \alpha^2 + o(\alpha^2) &\text{ if~$0 < c \leq 2$},
		\end{cases}
	\end{align} 
		where~$C = C(c) >0$ is a positive constant. 
\end{theorem}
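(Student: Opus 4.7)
The plan is to compute $f_{\rm 6V}(\alpha)$ via the row-to-row transfer matrix of the six-vertex model and analyse its leading eigenvalue as $L\to\infty$ with $k=\alpha L$ fixed. Concretely, I would introduce $V_L$ acting on $(\bbC^2)^{\otimes L}$, where each factor encodes the direction of one vertical edge in a row, and whose matrix element $\langle \sigma' | V_L | \sigma\rangle$ sums the weight $c^{\#\text{type-}c}$ (with $a=b=1$) over all admissible horizontal-edge configurations between input row $\sigma$ and output row $\sigma'$. By construction
\[Z_{\rm 6V}^{(k)}(\bbT_{L,M}) = \mathrm{tr}\bigl(V_L^M\, \Pi_k\bigr),\]
where $\Pi_k$ projects onto states with exactly $L/2+k$ up-arrows. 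The ice rule implies $[V_L,\Pi_k]=0$, so $V_L$ decomposes blockwise as $V_L = \bigoplus_k V_L^{(k)}$, and letting $M\to\infty$ first, the trace is dominated by the spectral radius $\Lambda_k(L)$ of $V_L^{(k)}$. Hence
\[f_{\rm 6V}(\alpha) = \lim_{L\to\infty} \tfrac{1}{L}\log \Lambda_{\alpha L}(L).\]

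Next, I would apply the Bethe ansatz to $V_L^{(k)}$: its eigenvectors in the $n = L/2 + k$ up-arrow sector are parametrised by tuples of rapidities $(z_1,\dots,z_n)$ solving a coupled algebraic system (the Bethe equations) depending on $c$, with eigenvalues given explicitly as symmetric functions of the $z_j$. The leading eigenvalue corresponds to a distinguished solution whose rapidities, in the thermodynamic limit $n/L \to 1/2 + \alpha$, condense with density $\rho_\alpha$ on a contour $\Gamma_\alpha$ characterised by a linear integral equation. Taking $L\to\infty$ then expresses $f_{\rm 6V}(\alpha)$ as an integral functional of $\rho_\alpha$.

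Finally, I would expand $f_{\rm 6V}(0) - f_{\rm 6V}(\alpha)$ as $\alpha\searrow 0$, and this is where the dichotomy at $c = 2$ --- equivalently $\Delta = -1$, i.e.\ $q = 4$ --- arises, and where the bulk of the technical work lies. Parametrise $c = 2\cos\gamma$ with $\gamma\in(0,\pi/2]$ in the regime $c \leq 2$, and $c = 2\cosh\gamma$ with $\gamma>0$ in the regime $c>2$. In the former (gapless, critical) case the dressed energy associated to the integral equation for $\rho_\alpha$ vanishes linearly at the boundary of the Fermi sea; moving that boundary to accommodate the extra $\alpha L$ particles costs a quadratic amount of energy, producing the $C\alpha^2 + o(\alpha^2)$ behaviour. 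In the latter (gapped, antiferroelectric) case the dressed energy has a strictly positive mass gap at the Fermi boundary, so each additional particle carries a fixed minimum energy cost, giving $C\alpha + o(\alpha)$. The hard part, and exactly the step that the paper delegates to the literature, is turning these heuristics into a rigorous asymptotic analysis uniform in $L$ --- typically via a Wiener--Hopf argument or a careful expansion of the dressed energy near its zero --- which is where all the Bethe-ansatz machinery deliberately suppressed in these notes must actually be unpacked.
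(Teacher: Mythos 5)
Your route is the same as the paper's: block-diagonalise the row-to-row transfer matrix by the conserved number of up-arrows, identify $f_{\rm 6V}(\alpha)$ with $\lim_L \tfrac1L\log$ of the Perron--Frobenius eigenvalue of the $k=\alpha L$ block, and extract the $\alpha\searrow 0$ asymptotics from the Bethe ansatz via condensation of the roots and the resulting integral equation, with the change of behaviour at $c=2$ ($\Delta=-1$, $q=4$). Like the notes, you stop at a sketch and delegate the genuinely hard steps (existence and condensation of Bethe roots, identification of the Perron--Frobenius eigenvalue, and the rigorous expansion of $\Lambda^{(\alpha L)}/\Lambda^{(0)}$) to the literature, which is exactly what the paper does by citing~\cite{DumGagHar16b,DumKozKra22}.
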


Below, we give some details of the Bethe ansatz approach and the ingredients needed to prove Theorem~\ref{thm:free_energy_6V}. 
We do not provide a full proof here, as it is highly technical and does not involve tools relevant to the rest of these notes. 
See~\cite{DumKozKra22} for a full proof.

\paragraph{Transfer matrix formalism.}

We identify the possible configurations of vertical arrows on one row by~$\{\pm\}^L$. 
Consider the matrix~$V = V_L$ defined by 
\begin{align}
V(\vec x,\vec x') = \sum_{\text{possible completions}} c^{\#\text{type~$c$ vertices}}\qquad  \forall \vec x,\vec x' \in \{\pm\}^L,
\end{align}
where the completions refer to the possible assignments of horizontal arrows between~$\vec x$ and~$\vec x'$ that obey the ice rule. 
See Figure~\ref{fig:transfer_matrix} for an example.
Formally,~$V$ is an operator acting on~$(\bbC^2)^{\otimes L}$, which may be viewed as a~$2^L$-dimensional vector space with an orthonormal basis~$(\Psi_{\vec x})_{\vec x \in \{\pm\}^L}$ indexed by the possible vertical arrow configuration on one row. 

\begin{figure}
\begin{center}
\includegraphics[width = .7\textwidth, page = 1]{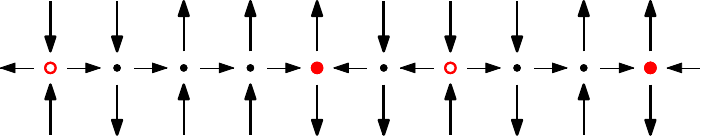}
\caption{The unique possible completion between~$\vec x$ (bottom configuration) and~$\vec x'$ (top configuration) that obeys the ice rule. The two types of $c$-vertices are marked in red; the matrix entry corresponding to the transition from $\vec x$ to~$\vec x'$ is~$c^4$.}
\label{fig:transfer_matrix}
\end{center}
\end{figure}

The transfer matrix is split into blocks corresponding to the number of up-arrows on each row (recall that this number is conserved). Indeed, if~$\vec x$ and~$\vec x'$ have a different number of up-arrows, then~$V(\vec x,\vec x') = 0$. 
Write~$V^{(k)}$ for the block with~$L/2 + k$ up-arrows. 
Then, it is immediate to check that 
\begin{align}\label{eq:Z_trace}
	Z_{\rm 6V}^{(k)}(\bbT_{L,M}) = {\rm Tr} (V^{(k)})^M.
\end{align}

Furthermore, each such block has non-negative entries and may be shown to be irreducible. 
It follows by the Perron--Frobenius theorem that each~$V^{(k)}$ has a single eigenvector of maximal eigenvalue (with all other eigenvalues being of strictly smaller modulus). We call these the Perron--Frobenius eigenvector and eigenvalue, and write~$\La^{(k)}=\La^{(k)}(L)$ for the latter. 

Then,~\eqref{eq:Z_trace} implies 
\begin{align}\label{eq:Z_trace2}
	Z_{\rm 6V}^{(k)}(\bbT_{L,M}) = (\La^{(k)}(L))^M(1 + O(e^{-\eps M})),
\end{align}
for some~$\eps = \eps(L) >0$. Since~$Z_{\rm 6V}^{(k)}(\bbT_{L,M})$ is used to determine the free energy with slope~$\alpha$, we find 
\begin{align}
	f_{\rm 6V}(\alpha) = \lim_{L\to \infty} \tfrac{1}L \log \La^{(\alpha L)}(L).
\end{align}

\paragraph{Statement of the Bethe ansatz for the six-vertex model.}

The Bethe ansatz is a general method for producing eigenvectors and eigenvalues of certain transfer matrices. 
It originates in the work of Bethe~\cite{Bet31}.  
We state it below in the context of the six-vertex model. 

Recall that~$\Delta = 1- c^2/2$ and set 
\begin{align*}
%	S(x,y) &:= e^{-ix} + e^{iy} - 2 \Delta \,.\\
	 \mu &:= \begin{cases}
		\arccos (-\Delta)& \text{ if~$c \leq 2$}\\
		 0 & \text{ if~$c >2$}
	\end{cases}\quad 
	\text{ and } \quad 
	\mathcal{D} := (-\pi + \mu, \pi - \mu).
\end{align*}
Let~$\Theta: \mathcal{D}^2 \rightarrow \mathbb{R}$ be the unique continuous function which satisfies~$\Theta(0,0) = 0$ and
\[
	\exp(-i \Theta(x,y)) = e^{i(x-y)} \cdot \frac{e^{-ix} + e^{iy} - 2 \Delta}{e^{-iy} + e^{ix} - 2 \Delta} \, .
\]
For~$z\neq 1$, set 
\begin{align*}
	L(z):= 1 + \tfrac{c^2 z}{1-z} \, , \qquad 
	M(z):= 1 - \tfrac{c^2}{1-z} \, . %\quad 
\end{align*}

\begin{theorem}[Bethe ansatz for~$V$]\label{thm:BA} 
	Fix~$0 \leq k \leq L/2$ and set $n = L/2 - k$. 
	Let~$(p_1, p_2, \dots, p_n) \in \calD^n$ be distinct and satisfy the equations
	\begin{align}\label{eq:BA}
		N p_j = 2\pi I_j - \sum_{k=1}^n	\Theta (p_j,p_k), \qquad \forall j\in \{1,\dots,n\},
		%\exp\left(i N p_j\right) = (-1)^{n-1} \exp \left( -i\sum_{k=1}^n \Theta(p_j,p_k) \right)  \quad \forall j \in \{1, 2, \dots, n\}. 
	\end{align}
	where~$I_j = j-\frac{n+1}2$ for~$j\in \{1,\dots,n\}$.
	Set  
	$\psi = \sum_{|\vec x|=n}\psi(\vec x)\, \Psi_{\vec x}$, 
	the sum is over~$\vec x \in \{\pm\}^L$ with~$n$ up arrows
	and~$\psi(\vec x)$ is given by
	$$\psi(\vec x) 
		:= \sum_{\sigma \in \mathfrak{S}_n} A_\sigma \prod_{k=1}^n \exp\left(i p_{\sigma(k)}x_k \right), \quad\text{where}\quad
		A_\sigma := \varepsilon(\sigma)\,  \prod_{1 \leq k < \ell \leq n} e^{i p_{\sigma(k)}}\  (e^{-{i} p_{\sigma(k)}}+e^{{\rm i } p_{\sigma(\ell)}}-2\Delta),
	$$
	for~$\sigma$ an element of the symmetry group~$\mathfrak{S}_n$. 
	Then~\begin{align}
		V\psi = \Lambda \psi,
	\end{align} 
	where 
	\begin{align*}%\label{eq:BA_eigenvalue}
		\Lambda = 		
		\begin{cases}
			\displaystyle \prod_{j=1}^n L(e^{ip_j}) + \prod_{j=1}^n M(e^{ip_j})\quad &\text{if all~$p_1,\dots,p_n$ are non zero,} \vspace{3pt}\\
			\displaystyle
			\Big[2+ c^2 (N-1) + c^2 \sum_{j\neq \ell} \partial_x \Theta (0,p_{j}) \Big] \cdot \prod_{j \neq \ell} M(e^{ip_j})
			 &\text{if~$p_\ell = 0$ for some~$\ell$.}
		\end{cases}
	\end{align*}
\end{theorem}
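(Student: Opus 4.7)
The plan is to verify the relation $V\psi = \La \psi$ component-wise in the coordinate basis, exploiting the ``free plus two-body scattering'' structure that is characteristic of the Bethe ansatz. First I would reduce to the sector with $n = L/2 - k$ up-arrows (which $V$ preserves) and parameterize basis states $\Psi_{\vec x}$ by ordered positions $1 \le x_1 < \cdots < x_n \le L$ of the up-arrows along a row. A direct case analysis of the six vertex types, using $a = b = 1$ and the ice rule, shows that $V(\vec x', \vec x)$ vanishes unless each up-arrow of $\vec x'$ is obtained from one of $\vec x$ by shifting $0$ or $1$ positions to the right. The ice rule leaves exactly two admissible ``global sectors'' of horizontal arrows (all arrows initially to the right, or all initially to the left), and summing over these sectors expresses $V(\vec x', \vec x)$ as an elementary product of $c$-weights.

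Next I would evaluate $(V\psi)(\vec x')$ when $\vec x'$ is in \emph{generic position}, that is, with no two adjacent up-arrows. In this regime the transfer matrix acts diagonally on each plane wave $\prod_k e^{i p_{\sigma(k)} x_k}$: each up-arrow contributes independently a factor $L(e^{i p_{\sigma(k)}})$ (if it moves one step to the right) or $M(e^{i p_{\sigma(k)}})$ (if it stays), and summing over the two global sectors of horizontal arrows yields $\prod_j L(e^{i p_j}) + \prod_j M(e^{i p_j})$ as the eigenvalue factor. Since this expression is symmetric in the $p_j$, the same factor appears for every $\sigma$, so the generic components of $V\psi$ and $\La \psi$ match automatically.

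The core algebraic step is the cancellation of the ``boundary terms'' arising when $\vec x'$ has two up-arrows at positions $x'_k$ and $x'_k + 1$. These correction terms pair across permutations $\sigma$ and $\sigma \circ \tau_k$ (where $\tau_k$ is the transposition of $k, k+1$), and they cancel \emph{precisely} when the ratio $A_{\sigma \tau_k}/A_\sigma$ equals the two-body scattering amplitude $S(p_{\sigma(k)}, p_{\sigma(k+1)}) = -e^{-i \Theta(p_{\sigma(k)}, p_{\sigma(k+1)})}$. A direct computation from the definition of $A_\sigma$ verifies exactly this ratio (the sign flip coming from $\varepsilon(\sigma)$, the rest from the factor $e^{i p_{\sigma(k)}}(e^{-i p_{\sigma(k)}} + e^{i p_{\sigma(\ell)}} - 2\Delta)$). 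Once this cancellation is in place the ansatz is a \emph{local} eigenfunction of $V$; enforcing cyclic periodicity $\psi(x_2, \ldots, x_n, x_1 + L) = \psi(x_1, \ldots, x_n)$ translates translation around the torus into a product of $n-1$ consecutive scatterings, giving $e^{i p_j L} = \prod_{k \neq j} S(p_j, p_k)$ for each $j$. Taking logarithms and using the definition of $\Theta$ reproduces the Bethe equations~\eqref{eq:BA} with the half-integer quantum numbers $I_j = j - (n+1)/2$, which is the choice that makes the solution non-degenerate (distinct $p_j$) and the ansatz non-vanishing.

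The main obstacles are twofold. First, the verification of the scattering cancellation, although essentially mechanical, is a lengthy algebraic manipulation in which one must carefully match the ``missing'' contributions of the boundary vertex configurations against the products of $L$ and $M$ factors; the bookkeeping is the technical heart of the proof. Second, the degenerate case $p_\ell = 0$ requires special handling, since $L(1)$ is singular. Here one differentiates the Bethe equation at $p_\ell = 0$ to trade the singular factor for a derivative, producing the substitute eigenvalue $\bigl[2 + c^2(N-1) + c^2 \sum_{j \neq \ell} \partial_x \Theta(0, p_j)\bigr] \prod_{j \neq \ell} M(e^{i p_j})$ stated in the theorem; the term $2 + c^2(N-1)$ comes from the residual contribution of the $\ell$-th ``flat'' mode, while the sum of $\partial_x \Theta$ encodes the infinitesimal scattering with the remaining particles.
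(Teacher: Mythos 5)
The paper itself does not prove Theorem~\ref{thm:BA}; it defers to~\cite{DumGagHar16b}, so your proposal can only be measured against the standard coordinate--Bethe-ansatz verification carried out there. Your overall architecture is the right one (act with $V$ on the ansatz sector by sector, cancel the contact terms via the ratio $A_{\sigma\tau_k}/A_\sigma$ being the two-body scattering amplitude, use the equations \eqref{eq:BA} to handle the wrap-around around the torus, and treat $p_\ell=0$ by a limiting argument). But the combinatorial description of $V$ on which you build everything is wrong, and the error is not cosmetic. With $a=b=1$ the entry $V(\vec x,\vec x')$ does \emph{not} vanish unless each up-arrow moves by $0$ or $1$ positions: once an up-arrow leaves its column through a $c$-vertex it can slide along an arbitrarily long stretch of horizontal arrows at weight $1$ before re-entering, so the correct statement is that the positions of consecutive rows \emph{interlace}, with each displaced arrow costing $c^2$ regardless of how far it travels. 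It is precisely the geometric series over these arbitrary displacements, resummed against the plane wave $e^{ipx}$, that produces the rational functions $L(z)=1+\tfrac{c^2z}{1-z}$ and $M(z)=1-\tfrac{c^2}{1-z}$; under your $0$-or-$1$ rule each particle could only contribute a linear polynomial in $e^{ip_j}$ and you could never recover the stated $\Lambda$. Relatedly, the choice between $L$ and $M$ is not made ``independently'' per particle according to whether it ``moves one step or stays'': it is dictated globally by the state of the seam horizontal edge (equivalently the two diagonal entries of the monodromy matrix), within one sector every particle contributes an $L$-factor and within the other every particle contributes an $M$-factor. If the choice were per-particle you would obtain $\prod_j\bigl(L(e^{ip_j})+M(e^{ip_j})\bigr)$ rather than $\prod_j L(e^{ip_j})+\prod_j M(e^{ip_j})$.

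Two smaller points. The theorem is conditional: the $p_j$ are \emph{given} as solutions of \eqref{eq:BA}, and in the verification these equations must be invoked as a hypothesis to make the terms in which a particle crosses the seam (from site $L$ to site $1$) match the bulk computation; your presentation runs the logic backwards, deriving \eqref{eq:BA} from periodicity, which is fine as motivation but is not the statement to be proved. For the degenerate case, the mechanism is not ``differentiating the Bethe equation'': both $L$ and $M$ have simple poles at $z=1$, and since $L(z)+M(z)=2-c^2$ the singular parts cancel between the two products; the stated eigenvalue is what survives this cancellation (computed either by letting $p_\ell\to0$ or by acting on the ansatz with $p_\ell=0$ directly), with the $\partial_x\Theta(0,p_j)$ terms arising from expanding the scattering factors to first order in $p_\ell$. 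As written, your sketch would need these repairs before the ``mechanical'' bookkeeping you describe could even be set up.
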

\medskip 

In words, the above explains how a solution to~\eqref{eq:BA} may be used to produce an eigenvector and eigenvalue for~$V$. 
The proof of the above is an intricate, but ultimately direct computation; a full proof may be found in~\cite{DumGagHar16b}.
It is expected that there exists exactly one solution to~\eqref{eq:BA}, which produces the Perron--Frobenius eigenvector and eigenvalue of~$V^{(k)}$. 

Let us assume that one may find solutions~$p_1,\dots, p_n$ to~\eqref{eq:BA}, 
and that their empirical distribution defined as~$\rho_N = \frac1N\sum_{i=1}^n \delta_{p_i}$ converges to a measure~$\rho(x) dx$ on~$\calD$ admitting a density --- this is sometimes called {\em condensation} of the Bethe roots.
Then~\eqref{eq:BA} implies that 
\begin{align}\label{eq:BA_cont}
	2 \pi \rho(x) &= 1 + \displaystyle \int_\calD \partial_x \Theta (x,y) \rho(y) dy		\qquad \forall x\in \calD
\end{align}
and 
\begin{equation}\label{eq:ag}
\lim_{L\rightarrow\infty}\tfrac1L\log \Lambda = \int_\calD \log \left|M\left(e^{ix}\right)\right| \rho(x) dx.
\end{equation}
The condition~\eqref{eq:BA_cont} allows one to determine~$\rho$, which may then be injected in~\eqref{eq:ag} to obtain asymptotics for~$\Lambda$.

We will not prove Theorem~\ref{thm:BA} here, nor how it applies to extract Theorem~\ref{thm:free_energy_6V}. 
We limit ourselves to the following observations. 
\begin{itemize}
	\item The difficulties in applying this method are: prove that~\eqref{eq:BA} has solutions and that these solutions do condensate as~$L\to\infty$. Show that the resulting vector is not null, and therefore that the resulting~$\La$ is an eigenvalue. Prove that~$\La$ is the Perron--Frobenius eigenvalue of the corresponding block of~$V$. Finally, when this is done, one needs to compute sufficiently precise estimates on the resulting eigenvalue, using the continuous version of the Bethe equations~\eqref{eq:BA_cont} and~\eqref{eq:ag}.
	\item The Bethe ansatz changes form at~$\Delta = -1$, corresponding to~$c =2$. Outside of this value, it depends smoothly on~$\Delta$. 
	\item These difficulties were overcome in~\cite{DumGagHar16b} and~\cite{DumKozKra22}. Proving that~$\La$ is indeed the Perron--Frobenius eigenvalue was done by considering the model (for finite~$L$) at the ``trivial'' cases~$\Delta = 0$ and~$\Delta = -\infty$, where the computations are explicit, then proving that~$\Delta \mapsto (\psi,\La)$ is an analytic function, hence preserving the positivity of~$\psi$. The condensation is also proved using this type of evolution in~$\Delta$: it is proved that~$\psi(\Delta)$ can not continuously ``escape'' a region of condensation.
	\item The estimates obtained for~$\La^{(k)}/\La^{(0)}$ are not sufficiently fine to apply to~$k=1$ (or small values of~$k$). 
	For our purpose~$k = \alpha L$ suffices, where the estimates are more robust. 
	\item The ultimate computation of~$\La^{(k)}/\La^{(0)}$ proves~\eqref{eq:BAf}.
\end{itemize}

\section{Deducing the type of phase transition}

A direct consequence of Proposition~\ref{prop:BKW} and Theorem~\ref{thm:free_energy_6V} is the following estimate on~$\phi_{\bbT_{L,M}}[\calE_{\alpha L}]$.

\begin{corollary}\label{cor:long_vertical_cross}
	We have
	\begin{align}\label{eq:long_vertical_cross}
		\lim_{L\to \infty} \lim_{M\to \infty}\tfrac{1}{LM} \,\log\phi_{\bbT_{L,M}}[\calE_{\alpha L}] 
		= 
		\begin{cases}
    		- C \alpha + o(\alpha) &\text{ if~$q > 4$},\\
    		- C \alpha^2 + o(\alpha^2) &\text{ if~$1 \leq q \leq 4$},
		\end{cases}
	\end{align}
	with~$C = C(\sqrt{2 +\sqrt q})>0$ given by Theorem~\ref{thm:free_energy_6V}.
\end{corollary}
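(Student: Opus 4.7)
The plan is to simply combine the two main inputs, Proposition~\ref{prop:BKW} and Theorem~\ref{thm:free_energy_6V}, with the dictionary between the six-vertex parameter $c$ and the FK-cluster weight $q$.

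First, I would take logarithms in \eqref{eq:BKW2} and read it as
\[
\lim_{L\to\infty}\lim_{M\to\infty}\tfrac{1}{LM}\log\phi_{\bbT_{L,M}}[\calE_{\alpha L}] \;=\; f_{\rm 6V}(\alpha)-f_{\rm 6V}(0),
\]
where the $o(1)$ error vanishes under the successive limits $M\to\infty$ then $L\to\infty$ in the order prescribed by Proposition~\ref{prop:BKW}. In particular the right-hand side of \eqref{eq:long_vertical_cross} will just be the negative of the quantity $f_{\rm 6V}(0)-f_{\rm 6V}(\alpha)$ controlled by Theorem~\ref{thm:free_energy_6V}.

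Next I would translate the regimes. The BKW choice is $c=\sqrt{2+\sqrt q}$, so $c>2$ is equivalent to $\sqrt q>2$, i.e.\ $q>4$, while $c\le 2$ is equivalent to $1\le q\le 4$ (and $c>0$ is automatic in this range). Plugging these into Theorem~\ref{thm:free_energy_6V} yields
\[
f_{\rm 6V}(0)-f_{\rm 6V}(\alpha) \;=\; \begin{cases} C\alpha + o(\alpha), & q>4,\\ C\alpha^2+o(\alpha^2), & 1\le q\le 4,\end{cases}
\]
with $C=C(\sqrt{2+\sqrt q})>0$. Inserting this into the preceding display gives exactly \eqref{eq:long_vertical_cross}.

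There is no real obstacle here: the work has already been done in Proposition~\ref{prop:BKW} (which handed us a clean equality between the FK connectivity probability $\phi_{\bbT_{L,M}}[\calE_{\alpha L}]$ and the six-vertex sloped free energy $f_{\rm 6V}(\alpha)$) and in Theorem~\ref{thm:free_energy_6V} (which gave the small-$\alpha$ asymptotics of that sloped free energy via the Bethe ansatz). The corollary is a one-line substitution, and the only thing to verify is the elementary $q\leftrightarrow c$ dictionary determining which of the two asymptotic regimes applies.
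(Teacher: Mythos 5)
Your proposal is correct and is precisely the paper's argument: the corollary is stated as a direct consequence of Proposition~\ref{prop:BKW} and Theorem~\ref{thm:free_energy_6V}, obtained by taking logarithms in \eqref{eq:BKW2} (with the $o(1)$ vanishing in the prescribed order of limits) and translating the regimes via $c=\sqrt{2+\sqrt q}$, so that $c>2$ corresponds to $q>4$ and $0<c\le 2$ to $1\le q\le 4$. Nothing is missing.
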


This corollary implies directly Theorem~\ref{thm:q<>4}.

\begin{proof}[Proof of Theorem~\ref{thm:q<>4}] 
Fix~$q\ge1$. Define the correlation length  at criticality as
\begin{align}
	\xi^{-1}=\xi^{-1}(p_c,q) 
	= - \lim_n \tfrac{1}n \log \phi^{0}_{\La_n}[0 \lra \partial \La_n]
	= - \lim_n \tfrac{1}n \log \phi^{0}[0 \lra \partial \La_n].
\end{align}
The equality of the two limits may be obtained by standard arguments, see Exercise~\ref{exo:cor_len}.
Also define
\begin{align}
	\pi(n) = \phi^{0}_{\La_n}[(0,-n/2) \lra (0,n/2)].
\end{align}
Then, a straightforward analysis shows that~$\frac1n\log \pi(n) \to -\xi^{-1}$.

We claim that there exists a universal constant~$c > 0$ such that, for all~$4 \leq 2k \leq L \leq M$, 
\begin{align}\label{eq:H_to_xi}
	c^{k^2 M/L} \pi(L/4k)^{4k^2 M/L } \leq \phi_{\bbT_{L,M}}[\calE_{k}] \leq  L^k e^{- (k-2) M / (\xi +o(1)) }.
\end{align}
with~$o(1)$ denoting a quantity converging to~$0$ as~$M\to\infty$. 

For the lower bound, cut up the cylinder in vertical rectangles~${\rm Rect}_j = [j\frac{L}{2k}, (j+1)\frac{L}{2k}]\times [0,M]$ for~$j = 1,\dots, 2k$.
Using RSW techniques\footnote{Here,~\eqref{eq:RSWstrip} does not apply directly, but one may easily prove the same estimate in periodic rather than~$1/0$ boundary conditions}, we may construct~$k$ dual clusters crossing the cylinder vertically in the rectangles~${\rm Rect}_j$ with~$j$ even with a probability of at least~$c^{k^2 M/L}$ for some constant $c >0$.
Conditionally on the existence of these clusters (or, more generally, on any configuration in the rectangles with~$j$ even),
the probability that each of the rectangles~${\rm Rect}_j$ with~$j$ odd contains a vertical crossing is at least~$\pi(L/4k)^{4k M/L}$. 
Indeed, combining~${4k M/L}$ translates of~$\{(0,-\frac{L}{8k}) \lra (0,\frac{L}{8k})\}$ using~\eqref{eq:FKGFK}, one obtains a crossing of~${\rm Rect}_j$.
The lower bound in~\eqref{eq:H_to_xi} follows. 

For the upper bound, assume that~$\calE_{k}$ occurs. 
Start from an arbitrary point and explore the closest primal clusters to the left and right of this point that cross the cylinder vertically. 
When conditioning on the value of these clusters, the measure in the rest of the cylinder has free boundary conditions.
Moreover, the unexplored part contains at least~$k-2$ other crossing clusters. 
We explore these clusters one by one, from left to right. 
After each cluster is explored, the probability that another cluster exists is at most~$L \phi^{0}[0 \lra \partial \La_{M}]$.
This proves the upper bound in~\eqref{eq:H_to_xi}. 
%\im{you may need to also close the top and bottom of the rectangle, which would add an exponential in~$L$ on the right side.}

When taking~$k = \alpha L$, then taking~$M \to \infty$,~$L\to \infty$ and then~$\alpha \to 0$,~\eqref{eq:H_to_xi} translates to 
\begin{align}
	\xi^{-1} - o(1) 
	\leq \limsup_{\substack{M\to \infty\\ L \to \infty}} -\frac{1}{\alpha L M}\log \phi_{\bbT_{L,M}}[\calE_{\alpha L}] 
	\leq \xi^{-1} +o(1),% O( \alpha).
\end{align}
where $o(1)$ denotes a quantity tending to $0$ as $\alpha \to0$. 
When combining the above with~\eqref{eq:long_vertical_cross}, we conclude that~$\xi(p_c,q) = \infty$ if and only if~$1 \leq q \leq 4$.
Theorem~\ref{thm:dichotomy} allows us to conclude. 
\end{proof}

%\section{A word on height functions}
%
%\begin{itemize}
%\item How to define height function from~$6V$ on plane (gradient)
%\item Two types of behaviour: localised/log deloc expected (proved by Piet)
%\item This result also allows one to determine for which values of~$c \geq 1$ the height function is localized / deloc  ($c\geq 1$ is a technical condition needed for RSW for~$|h|$). 
%\end{itemize} 

\chapter{Rotational invariance of the critical phase}\label{ch:5rotational_inv}

\newcommand{\onearmbound}{\alpha_{\rm arm}}

This chapter aims to give an overview of the recent result of~\cite{DumKozKra20} which states that the critical FK-percolation on~$\bbZ^2$ is asymptotically rotationally invariant. 
Henceforth, fix~$q \in [1,4]$ and omit it from the notation.

\section{Results: rotational invariance and universality}\label{sec:5intro}

\paragraph{Loop topology.}
To state our results, we first need to describe how the large scale geometry of percolation configurations should be understood. 
Recall from Section~\ref{sec:BKW} that a percolation configuration~$\omega$ on~$\bbZ^2$ may be encoded by the loop configuration~$\omega^\circ$. The latter is formed of two types of loops: those with primal edges on the inside and those with primal on the outside (we will always consider cases where~$\omega^\circ$ contains no infinite paths). In this chapter, we will consider~$\omega^\circ$ as an oriented loop configuration, with each loop being oriented so that the primal lies on its right side --- this is not the same orientation as that used in the BKW correspondence of Section~\ref{sec:BKW}. 
We will also identify~$\omega^\circ$ and~$\omega$ to avoid overburdening notation. 

For two loops~$\gamma$ and~$\gamma'$ of oriented loop configurations define the loop distance between them as 
\begin{align}\label{eq:loop_dist}
	d(\gamma, \gamma') = \inf \|\gamma - \gamma'\|_{\infty},
\end{align}
where the infimum is over all orientation-preserving parametrizations of~$\gamma$ and~$\gamma'$ by~$\bbS^1$. 

For two families of oriented loops~$\calF$ and~$\calF'$, define the Camia--Newman distance~\cite{CamNew06} by 
\begin{align}
	d_{\rm CN}(\calF,\calF') \leq \epsilon
\end{align}
if for each loop~$\gamma$ of~$\calF$ contained in~$\La_{1/\epsilon}$ of diameter greater than~$\epsilon$ 
there exists a loop~$\gamma' \in \calF'$ such that~$d(\gamma,\gamma') < \epsilon$, and vice versa. 
%It appears as though the types of loops pay an important role here, but keep in mind that each loop is of the opposite type than the smallest loop containing it. Thus, the type of all loops of a given family may be determined from that of a single loop. As such, the Camia--Newman distance is usually viewed as a distance simply between families of loops. 

Finally, if~$\phi$ and~$\phi'$ are probability measures producing oriented families of loops (or equivalently percolation configurations on some rescaled lattice~$\delta \bbZ^2$), set         
\begin{align}
	{\bf d}_{\rm CN}(\phi, \phi') \leq \epsilon
\end{align}
if there exists a coupling~$P$ of~$\phi$ and~$\phi'$ producing configurations~$\calF$ and~$\calF'$
such that
\begin{align}\label{eq:CN_dist_measures}
	P[d_{\rm CN}(\calF, \calF') < \epsilon] > 1-\epsilon.
\end{align} 

With the distance defined above, we can formally mention a central conjecture in two-dimensional percolation. 

\begin{conjecture}\label{conj:CLE}
	For~$q \in [1,4]$, the critical FK-percolation measures~$\phi_{\delta \bbZ^2, p_c,q}$ on the rescaled lattice~$\delta \bbZ^2$ converge, as~$\delta \to 0$, to the nested, full plane CLE$_\kappa$ measure with 
	\begin{align*}
		\kappa(q)=4\pi/\arccos(-\tfrac{\sqrt q}2).
	\end{align*}
\end{conjecture}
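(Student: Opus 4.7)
The plan is to prove this via the three-step scheme that is standard for conformally invariant scaling limits: establish tightness, promote sub-sequential limits to \emph{conformally} invariant objects, then identify the limit by a uniqueness theorem for CLE. The chapter's main results (asymptotic rotational invariance and universality across isoradial graphs) handle a key portion of the second step; the rest must be imported.

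First I would establish tightness in the Camia--Newman topology. This should follow from RSW-style polynomial bounds analogous to~\eqref{eq:one_arm_FK} combined with polynomial bounds on multi-arm events in annuli, uniform in the mesh size~$\delta$. Such bounds ensure that the number of macroscopic loops crossing any fixed annulus is tight and that no loop degenerates in the limit, so that sub-sequential scaling limits~$\phi_\infty$ exist and are supported on locally finite collections of non-degenerate loops in~$\bbR^2$.

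Next I would use the results of this chapter to promote each sub-sequential limit to one that is invariant under the full similarity group of~$\bbR^2$: translation invariance is inherited from the discrete model; asymptotic rotational invariance is the content of~\cite{DumKozKra20}; and scale invariance comes by using the universality across isoradial graphs (Section~\ref{sec:5isoradial}) to compare~$\phi_{\delta\bbZ^2,p_c}$ with~$\phi_{\delta'\bbZ^2,p_c}$ at two scales and letting both tend to~$0$. The hardest step, and what I expect to be the genuine obstacle, is upgrading similarity invariance to full \emph{conformal} invariance; for~$q=2$ Smirnov did this with a discrete holomorphic observable, but no such exact observable is known for generic~$q\in(1,4)$. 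A plausible strategy is a Kemppainen--Smirnov style precompactness-and-characterization argument for the loops viewed as interfaces: verify the uniform bottleneck/Condition~G estimate (again an RSW/multi-arm input), so that sub-sequential limit interfaces are described by a conformal-class-measurable driving law, and then couple interfaces on~$\delta\bbZ^2$ and on~$\delta\cdot\phi(\bbZ^2)$ for a generic conformal~$\phi$, using that any such~$\phi$ can be locally approximated by similarities with controlled distortion and that the similarity invariance already proved makes these local comparisons propagate globally.

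Once conformal invariance is in hand, I would finish by invoking the Sheffield--Werner classification of conformal loop ensembles: any conformally invariant collection of non-crossing simple loops satisfying the domain-Markov/restriction property inherited from~\eqref{eq:SMP} is a CLE$_\kappa$ for a unique $\kappa\in(8/3,8)$. Simplicity of the loops is itself a nontrivial input at the critical point (it rules out pinch-points and uses that $q\le 4$ corresponds to a continuous transition, cf.\ Theorem~\ref{thm:q<>4}). The specific value $\kappa(q)=4\pi/\arccos(-\tfrac{\sqrt q}{2})$ would then be pinned down by matching a single critical exponent --- for instance, the one-arm exponent governing~$\phi_{p_c}[0\lra\partial\La_n]$ in~\eqref{eq:one_arm_FK}, which for CLE$_\kappa$/SLE$_\kappa$ is a known explicit function of~$\kappa$ and which for FK-percolation is accessible via six-vertex/Bethe-ansatz computations in the spirit of Chapter~\ref{ch:4cont_v_discont}.
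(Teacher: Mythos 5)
The statement you are asked about is Conjecture~\ref{conj:CLE}: the paper offers no proof of it, and indeed states explicitly that ``at the time of writing no general proof is available for the existence of the scaling limit, nor for the implication between scale, rotational and conformal invariance,'' the only exceptions being~$q=2$ and a variant of~$q=1$. What you have written is therefore a research program, not a proof, and you have in fact put your finger on the open part yourself: the passage from invariance under similarities to full conformal invariance. The coupling you sketch --- approximating a conformal map locally by similarities and ``propagating'' the comparison --- is exactly the mixing-between-scales heuristic the paper mentions as conjectural; the Kemppainen--Smirnov framework gives precompactness of interfaces and that subsequential limits are Loewner chains with some driving process, but identifying that driving process (equivalently, proving conformal invariance) requires a conformally covariant martingale observable, which is unknown for generic~$q\in(1,4)$. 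RSW and multi-arm estimates alone do not supply it.

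Two further steps are flawed as stated. First, your derivation of scale invariance is vacuous: $\phi_{\delta\bbZ^2,p_c}$ and $\phi_{\delta'\bbZ^2,p_c}$ are the \emph{same} measure up to a homothety, so comparing them yields nothing about a fixed sub-sequential limit; a sub-sequential limit taken along a sequence $\delta_n\to0$ is not automatically invariant under all dilations, and making it so is part of proving existence of the full limit (the universality results of Section~\ref{sec:5isoradial} compare different \emph{lattices} at the same mesh, not different meshes of the same lattice). Second, the identification step is off: for $q\in[1,4)$ one has $\kappa(q)=4\pi/\arccos(-\tfrac{\sqrt q}{2})\in(4,6]$, so the conjectural limiting loops are \emph{not} simple (they touch themselves and each other), and the Sheffield--Werner axiomatic characterization you invoke concerns simple CLEs with $\kappa\in(8/3,4]$; continuity of the phase transition for $q\le4$ (Theorem~\ref{thm:q<>4}) does not imply simplicity of the limiting loops. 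So even granting conformal invariance, pinning down the limit requires a characterization adapted to non-simple loop ensembles, together with the exponent-matching you describe.
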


We will not describe here what CLE$_\kappa$ is, but simply mention that it is a measure on families of loops, which depends on a parameter~$\kappa >0$. For details, see~\cite{Wer04,Wer09,Wer09a,She09,SheWer12}. A similar conjecture may be expressed in finite domains. 
A fundamental property of CLE$_\kappa$, and conjecturally of scaling limits of critical statistical mechanics models, is that they exhibit {\em conformal invariance}. 

From the perspective of discrete models, the convergence to CLE$_\kappa$ should be viewed as the most precise information on the critical phase. In particular, it may be used to determine different critical exponents, such as the one-arm exponent~$\alpha_1$ and the influence exponent~$\iota$, which in turn determine (almost) all critical exponents via the so-called scaling relations~\cite{DumMan20}.

\paragraph{Rotational invariance.}
The goal of this chapter is to prove that FK-percolation on~$\bbZ^2$, when rescaled, becomes asymptotically rotationally invariant. 
As a consequence, any subsequential scaling limit of its loop representation is invariant (in law) under rotations. 

\begin{theorem}\label{thm:rotation_invariance}
	Fix~$q\in[1,4]$ and~$\theta \in [0,2\pi]$. Write~$\phi_{e^{i\theta}\delta \bbZ^2}$
	for the critical FK-percolation measure on the rescaled lattice~$\delta \bbZ^2$, rotated by~$\theta$. Then, for any~$\delta >0$,
   \begin{align*}
	{\bf d}_{\rm CN}(\phi_{\delta \bbZ^2},\phi_{e^{i\theta}\delta \bbZ^2}) \leq C \delta^c,
    \end{align*}
    where~$c,C > 0$ are universal constants. 
\end{theorem}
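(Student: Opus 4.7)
\textbf{Proof plan for Theorem~\ref{thm:rotation_invariance}.} The strategy is to interpolate between $\bbZ^2$ and its rotation $e^{i\theta}\bbZ^2$ through a family of isoradial graphs, using the star--triangle (Yang--Baxter) transformation as the atomic move. First I would introduce the class of doubly-periodic isoradial graphs equipped with the critical FK-percolation weights determined by their rhombus angles, and establish a uniform RSW-type estimate on this class by adapting the arguments of Chapter~\ref{ch:3dichotomy}: crossings of rectangles of bounded aspect ratio have probability bounded away from~$0$ and~$1$, uniformly in the isoradial parameters. Such estimates yield polynomial one-arm and two-arm bounds and, crucially, uniform control on how much a single local perturbation can move a macroscopic loop.

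The algebraic input is that the star--triangle move, which swaps a ``Y'' on an isoradial graph with a ``$\Delta$'' with appropriately permuted rhombus angles, preserves the joint law of the boundary connections of the affected region, hence may be realised as a coupling on the associated loop representations that deforms each loop by~$O(1)$. The geometric core of the argument is then that entire \emph{tracks} of rhombi can be moved across the lattice by chaining such swaps: by sliding tracks one after another one can transform~$\bbZ^2$ into an isoradial lattice whose embedding realises~$e^{i\theta}\bbZ^2$ at large scales. Composing all these couplings, one obtains a coupling of $\omega\sim\phi_{\delta\bbZ^2}$ and $\omega'\sim\phi_{e^{i\theta}\delta\bbZ^2}$ under which each loop of~$\omega$ is matched with a loop of~$\omega'$ differing only by the cumulative displacement produced by the sequence of swaps.

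To convert this into a Camia--Newman estimate, I would, for a loop~$\gamma$ of~$\omega$ intersecting~$\La_{1/\delta}$, decompose its total displacement as a sum over all star--triangle moves acting on an edge it uses. At each such move the contribution is~$O(1)$, but it is only nonzero when a local two-arm event occurs in a small neighbourhood of the affected edge; the RSW bounds from the first step give a polynomially small probability for this. A second-moment (or entropy) summation over the~$O(\delta^{-2})$ many moves then yields a cumulative displacement bounded by~$\delta^{-1+c}$ with $c>0$ universal, so after rescaling by~$\delta$ every macroscopic loop is matched within distance~$\delta^{c}$. Finally, a union bound over the polynomially-many loops of diameter~$\geq \eps$ contained in~$\La_{1/\eps}$, together with the one-arm estimate to discard smaller loops, yields ${\bf d}_{\rm CN}(\phi_{\delta\bbZ^2},\phi_{e^{i\theta}\delta\bbZ^2}) \leq C\delta^c$.

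\emph{Main obstacle.} The genuine difficulty is controlling the cumulative displacement in the third paragraph: a trivial bound gives displacements of order~$\delta^{-2}$, which is useless at scale~$\delta^{-1}$. The gain comes from the fact that most star--triangle moves do not actually affect a given interface, and quantifying this requires delicate RSW-based second-moment estimates for the number of ``active'' moves, together with a careful bookkeeping of how the track-sliding scheme interacts with a typical loop. This is where the bulk of the technical work of~\cite{DumKozKra20} lies, and it is what the subsequent sections of this chapter must develop.
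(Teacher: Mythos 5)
There is a genuine gap, and it sits exactly where you locate the ``main obstacle'': your displacement estimate is not just technically delicate, it is false as stated. When a track exchange acts next to the top (or a lateral extremum) of a macroscopic cluster, it moves that extremum by $O(1)$ with probability of order one --- not only on a polynomially rare two-arm event --- and the expected increment is an a priori \emph{non-zero} drift determined by the local (IIC-type) environment. A macroscopic loop is hit by order $\delta^{-1}$ such exchanges, so the cumulative displacement is a priori \emph{linear} in the scale, not $\delta^{-1+c}$; no RSW or second-moment argument can beat this, because nothing in the qualitative toolbox forces the mean increment to vanish. This is precisely why the coupling argument of this chapter only yields Theorem~\ref{thm:linear}, i.e.\ closeness of $\phi_{\delta\bbL(\beta)}$ and $\phi_{\delta\bbL(\alpha)}\circ M_{\beta,\alpha}$ up to an explicit linear map $M_{\beta,\alpha}$ encoding the drift, and why showing $M_{\pi/2,\alpha}={\rm id}$ is the hard part: in these notes it is only obtained \emph{after} rotational invariance, in Section~\ref{sec:drift0}, via the drift identity of Proposition~\ref{prop:drift_RT} and a bootstrapping argument (and in the original approach via Bethe-ansatz computations). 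Your plan implicitly assumes the conclusion of that step.

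The geometric target of your track-sliding scheme is also not attainable. Track exchanges keep you inside the family of rectangular isoradial lattices $\bbL(\pmb\alpha)$, whose vertical tracks always have transverse angle $0$; none of these embeddings is (even asymptotically) a rotated copy $e^{i\theta}\bbZ^2$, so no chain of star--triangle moves transforms $\bbZ^2$ into the rotated lattice. The paper's route to rotations is indirect: each $\bbL(\alpha)$ is symmetric under the orthogonal reflection $S_{\alpha/2}$ across $e^{i\alpha/2}\bbR$, and Theorem~\ref{thm:linear} transports this into an asymptotic symmetry $T_\alpha=M_{\pi/2,\alpha}^{-1}\circ S_{\alpha/2}\circ M_{\pi/2,\alpha}$ of $\phi_{\bbZ^2}$ (Proposition~\ref{prop:Tinv}). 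One then shows, using RSW, an ellipse-crossing argument and the $\pi/2$-rotation symmetry special to $\bbZ^2$, that each $T_\alpha$ is an orthogonal reflection whose axis angle varies continuously and non-constantly in $\alpha$; composing two reflections then produces rotations through a set of angles with non-empty interior, hence all rotations. Without this reflection-composition mechanism (or an independent proof that the drift vanishes), your coupling cannot reach $\phi_{e^{i\theta}\delta\bbZ^2}$ at all.
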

In light of the above, we say that~$\phi_{ \bbZ^2}$ is {\em asymptotically rotationally invariant}. 
The supposed conformal invariance of the scaling limit of FK-percolation may be viewed as a result of two more basic invariances: with respect to rotations and homotheties. Indeed, conformal transformations are locally compositions of rotations and homotheties, and mixing arguments between local and global features suggest that these two types of symmetries imply overall conformal invariance. 
This points to a three part program for proving Conjecture~\ref{conj:CLE}, and more generally conformal invariance of scaling limits of 2D statistical mechanics models: invariance under homotheties, invariance under rotations and combining the two to deduce conformal invariance. 
It may be argued that the invariance of the scaling limit under rotations is the most mysterious step in this program. 
Indeed, any scaling limit is, by construction, invariant under  homotheties; the last step should be the result of mixing between scales,  something that is expected in most models and their scaling limits.

At the time of writing no general proof is available for the existence of the scaling limit, nor for the implication between scale, rotational and conformal invariance. The only exceptions are for the FK-Ising model ($q = 2$)~\cite{Smi10,CheSmi12} and for a variant of Bernoulli percolation ($q = 1$)~\cite{Smi01,CamNew06}, 
where the existence of a conformally-invariant scaling limit was proved by different means. 

Our proof of rotational invariance is intimately related to a universality result (see Theorem~\ref{thm:universalCNSS} below), which is naturally stated in the context of FK-percolation on isoradial graphs.

\paragraph{Universality on rectangular isoradial lattices.}	
We describe now an inhomogeneous FK-percolation model on some distorted embedding of the square lattice~$\bbZ^2$. 
These notions may appear strange at first, but will be shown to fit in the more general framework of isoradial graphs (see Section~\ref{sec:5isoradial}). 

For~$\alpha \in (0,\pi)$ let~$\bbL(\alpha)$ be the embedding of~$\bbZ^2$ in which horizontal edges have length~$2 \cos\alpha$ 
and vertical edges have length~$2\sin \alpha$, rotated by an angle~$\alpha/2$. 
Consider the FK-percolation model on~$\bbL(\alpha)$ with cluster-weight~$q$ and different edge-parameters~$p_{\rm hor}$ and~$p_{\rm vert}$ for the ``horizontal'' and ``vertical'' edges (that is, the edges of lengths~$2 \cos\alpha$ and~$2\sin \alpha$) respectively, given by
\begin{align}
	\frac{p_{\rm hor}}{1-p_{\rm hor}} = q \frac{1-p_{\rm vert}}{p_{\rm vert}}  = \sqrt{q} \frac{\sin( r \alpha)}{\sin( r (\pi-\alpha))}
	\qquad \text{ with } r := \tfrac{1}{\pi} \cos^{-1} \left( \tfrac{\sqrt{q}}{2} \right).
\end{align}
Write~$\phi_{\bbL(\alpha)}$ for the unique infinite-volume measure on~$\bbL(\alpha)$ with the parameters above --- the uniqueness of the measure will be discussed below.
We call~$\bbL(\alpha)$ an {\em isoradial rectangular lattice}, and~$\phi_{\bbL(\alpha)}$ its associated FK-percolation model.

Notice the duality relation~\eqref{eq:p_dual}  between~$p_{\rm hor}$ and~$p_{\rm vert}$, which implies that~$\phi_{\bbL(\alpha)}$ is self-dual. 
In the particular case of~$\alpha = \pi/2$,~$\phi_{\bbL(\pi/2)}$ is the critical FK-percolation on~$\bbZ^2$, rotated by~$\pi/4$. 
It was proved in \cite{DumLiMan18} that~$\phi_{\bbL(\alpha)}$ shares the qualitative features of critical FK-percolation on~$\bbZ^2$, most importantly an RSW property (see Theorem~\ref{thm:RSW_iso}). 

The models~$\phi_{\bbL(\alpha)}$ for different values of~$\alpha$ may be related via a series of star-triangle transformation (see Section~\ref{sec:star-triangle}); features that are stable under these transformations may then be transferred from~$\phi_{\bbL(\pi/2)}$ to all other~$\phi_{\bbL(\alpha)}$. This strategy was used in \cite{GriMan14,DumLiMan18} for RSW estimates, and will be used here to prove the following universality result. 

\begin{theorem}[Universality for isoradial rectangular lattices]\label{thm:universalCNSS}
	Fix~$q\in[1,4]$ and $\alpha \in (0,\pi)$.  There exist constants~$c,C > 0$ such that, for all~$\delta>0$,%	Fix~$q\in[1,4]$. There exist constants~$c,C > 0$ such that,  for all~$\alpha \in (0,\pi)$ and~$\delta>0$,
	\begin{align}\label{eq:universalCNSS}
		{\bf d}_{\rm CN}(\phi_{\delta\bbL(\alpha)}, \phi_{\delta\bbL(\pi/2)}) < C \delta^c.
    \end{align}
\end{theorem}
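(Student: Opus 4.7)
The plan is to construct an explicit coupling between $\phi_{\bbL(\alpha)}$ and $\phi_{\bbL(\pi/2)}$ using star-triangle transformations, and then estimate the displacement of macroscopic loops under this coupling. Star-triangle transformations are local moves on the edges of a triangle (respectively a star) that preserve FK-percolation, provided the parameters satisfy certain algebraic relations; in the isoradial framework these relations are automatically encoded by the rhombus angles, so one may interpolate between $\bbL(\alpha)$ and $\bbL(\pi/2)$ through a sequence of moves, each of which locally modifies the lattice together with its FK-percolation measure in a law-preserving way. This is the same mechanism that \cite{DumLiMan18} used to transfer RSW between isoradial lattices, and those RSW estimates will be the main analytic input available throughout the argument.

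First, I would organise the interpolation as a "staircase": a concrete schedule of star-triangle moves that, when applied in a region of diameter slightly larger than $1/\delta$ (after rescaling), transforms $\delta\bbL(\alpha)$ into $\delta\bbL(\pi/2)$. Tracking a macroscopic loop $\gamma$ through this schedule, each move either leaves $\gamma$ untouched or performs a small local surgery on it, moving each affected vertex by $O(\delta)$. Because the number of moves affecting any given region of diameter $R$ is of order $(R/\delta)^2$, a naive bound on the cumulative displacement would be much too large; the point is that typical moves cancel each other out.

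The key technical step is to bound the total displacement of $\gamma$ by $\delta^c R$ for some universal $c>0$. This is done by showing that the successive displacements of a marked point of $\gamma$ behave like a martingale with small bounded increments, and applying a concentration estimate of Azuma--Hoeffding type. The ``martingale'' property relies on the fact that, at each step, the effect of a star-triangle move on $\gamma$ is governed by the local configuration, whose law is controlled uniformly along the whole interpolation by the isoradial RSW theorem of \cite{DumLiMan18}. In particular, RSW provides polynomial upper bounds on arm events at all scales and for all intermediate lattices in the interpolation; these translate into quantitative control on the probability that many consecutive surgeries conspire to push $\gamma$ in the same direction.

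The main obstacle is to control the topology: a star-triangle move could in principle merge two distinct loops or split one loop into two, which would break the loop-by-loop correspondence needed for the Camia--Newman distance. To handle this, I would establish that, with probability at least $1-O(\delta^c)$, every loop of diameter at least $\delta$ in the configuration on $\delta\bbL(\alpha)$ has a canonical image in the configuration on $\delta\bbL(\pi/2)$ at loop-distance at most $\delta^c$. This uses a separation-of-scales argument: small loops are irrelevant for $d_{\mathrm{CN}}$, and for each macroscopic loop one exploits four-arm-type estimates (again uniform in the interpolation thanks to RSW) to show that the surgeries near $\gamma$ do not alter its topology. Combining the displacement bound with the loop-correspondence bound and the definition \eqref{eq:CN_dist_measures} yields~\eqref{eq:universalCNSS}; Theorem~\ref{thm:rotation_invariance} then follows by applying Theorem~\ref{thm:universalCNSS} at two judiciously chosen angles $\alpha,\alpha'$ whose associated lattices $\bbL(\alpha),\bbL(\alpha')$ differ by the rotation~$\theta$.
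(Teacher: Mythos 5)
Your coupling-and-surgery scheme is essentially the paper's starting point, but it has a genuine gap at its crux: the claim that the successive displacements of a marked point of a macroscopic loop ``behave like a martingale'' is exactly what cannot be obtained from RSW. The RSW/IIC-type control of local environments does give you that the increments produced by successive track exchanges are (up to small errors) i.i.d.\ with a well-defined law -- that part of your plan works and is carried out in Section~\ref{sec:universality_isoradial_lin}. What it does \emph{not} give is that the mean of these increments vanishes. A priori each exchange pushes the extremum of a cluster by an increment with expectation $({\rm Drift}_{\rm vert},{\rm Drift}_{\rm lat})\neq 0$, and since order $N/\delta$-many exchanges affect a given cluster, the cumulative effect is a deterministic distortion of order $N$, not $\delta^c N$; Azuma--Hoeffding only controls fluctuations around this drift. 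Asserting zero drift is equivalent to asserting that the isoradial embedding exactly compensates the inhomogeneity of the weights, which is the whole content of Theorem~\ref{thm:universalCNSS} and requires an input beyond RSW and the mere existence of star-triangle moves.

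This is why the paper's route is indirect: by your argument one only obtains Theorem~\ref{thm:linear}, i.e.\ closeness of $\phi_{\delta\bbL(\beta)}$ and $\phi_{\delta\bbL(\alpha)}\circ M_{\beta,\alpha}$ for an explicit linear map $M_{\beta,\alpha}$ encoding the unknown drift. One then first deduces rotational invariance of $\phi_{\bbL(\pi/2)}$ (Section~\ref{sec:deducing_rot_inv}), crucially using the \emph{extra} symmetry $S_0$ of the square lattice, which is not available for general $\bbL(\alpha)$ and which your sketch does not exploit; and only afterwards kills the drift (Section~\ref{sec:drift0}) via the identity ${\rm Drift}_{\rm lat}(\beta,\beta/2)={\rm Drift}_{\rm vert}(\beta,\beta/2)$ of Proposition~\ref{prop:drift_RT} (this is where the star-triangle structure of the isoradial embedding genuinely enters), Lemma~\ref{lem:rotation_inv_alpha}, and a density/continuity argument in $\alpha$ based on Lemma~\ref{lem:M_cont}. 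An alternative way to close your gap would be an exact-integrability input (Bethe-ansatz asymptotics as in \eqref{eq:eigenvalues_alpha}), but some such additional ingredient is unavoidable: the martingale property cannot simply be asserted. Your final deduction of Theorem~\ref{thm:rotation_invariance} from Theorem~\ref{thm:universalCNSS} is fine in spirit, though the mechanism is the transfer of the reflection symmetry $S_{\alpha/2}$ of $\bbL(\alpha)$ to $\bbL(\pi/2)$ and composition of two reflections, rather than two lattices differing by the rotation itself.
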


In light of the above, we say that~$\phi_{\delta\bbL(\alpha)}$ and~$\phi_{\delta\bbL(\pi/2)}$ are {\em asymptotically similar}.

Observe that Theorem~\ref{thm:universalCNSS} readily implies Theorem~\ref{thm:rotation_invariance}.
Indeed, for any~$\alpha \in (0,\pi)$,~$e^{{i}\alpha/2}\mathbb R$ is an axis of symmetry for~$\bbL(\alpha)$, and therefore for~$\phi_{\bbL(\alpha)}$.
Then~\eqref{eq:universalCNSS} implies that~$\phi_{\bbL(\pi/2)}$ is  asymptotically invariant under the reflection with respect to~$e^{{i}\alpha/2}\mathbb R$. 
The composition of the reflections with respect to the horizontal axis and to~$e^{{i}\alpha/2}\mathbb R$ produce the rotation by~$\alpha$, 
and we deduce the asymptotic invariance of~$\phi_{\bbL(\pi/2)}$ by said rotation. 

As such, it would be tempting to assume that we will first prove Theorem~\ref{thm:universalCNSS}, then deduce Theorem~\ref{thm:rotation_invariance}. This will not be the case in these notes. 
We will rather start by proving the following weaker version of  Theorem~\ref{thm:universalCNSS}.

\begin{theorem}[Universality up to linear deformation]\label{thm:linear}
	For every~$q\in[1,4]$ and~$\alpha, \beta \in (0,\pi)$, 
	there exist constants~$c,C > 0$ and an invertible linear map~$M_{\beta,\alpha}:\bbR^2 \to \bbR^2$ such that 
	\begin{align}\label{eq:linear}
		{\bf d}_{\rm CN}\big[ \phi_{\delta\bbL(\beta)}, \phi_{\delta\bbL(\alpha)}\circ M_{\beta,\alpha}\big] \leq C\,\delta^c 
		\quad \text{ for all~$\delta > 0$}.
	\end{align}
\end{theorem}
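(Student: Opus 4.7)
The plan is to use star--triangle (Yang--Baxter) transformations at the critical isoradial parameters, which locally modify a FK-percolation configuration while preserving the law (introduced formally in Section~\ref{sec:star-triangle}). I take $M_{\beta,\alpha}=R_{\beta/2}\circ D_{\beta,\alpha}\circ R_{-\alpha/2}$, where $R_\theta$ denotes rotation by $\theta$ and $D_{\beta,\alpha}:(x,y)\mapsto\bigl(\tfrac{\cos\beta}{\cos\alpha}x,\tfrac{\sin\beta}{\sin\alpha}y\bigr)$; this is the linear map that conjugates the two rhombus patterns, so that $M_{\beta,\alpha}^{-1}(\bbL(\beta))$ has the same combinatorial structure as $\bbL(\alpha)$ up to linear distortion. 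The aim is then to build, via star--triangle moves, a measure-preserving coupling between $\phi_{\delta\bbL(\alpha)}$ and the FK-measure on $M_{\beta,\alpha}^{-1}(\delta\bbL(\beta))$ under which macroscopic loops are preserved, up to a displacement of order $\delta^c$.

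The core construction is a deterministic sequence of star--triangle transformations that converts a large square region of $\bbL(\alpha)$ into one matching $M_{\beta,\alpha}^{-1}\bbL(\beta)$ outside a boundary layer of negligible width. I would use the standard \emph{track} description of isoradial graphs, in which rhombi are grouped into infinite straight chains of common angle (two families for a rectangular lattice), and elementary star--triangle moves correspond to swaps of neighbouring tracks. The needed deformation amounts to shifting each track by an amount depending linearly on its position, which can be carried out by a bounded number of track-swap passes through the region. The composition gives an explicit coupling whose graph-level displacement is precisely $M_{\beta,\alpha}$ up to mesoscopic fluctuations, with the FK-measure transported exactly by the measure-preserving property of each individual move.

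It then remains to show that the coupling sends macroscopic loops of $\omega$ to nearby macroscopic loops of the transformed configuration. Each elementary transformation displaces any intersecting loop by $O(1)$ and can, in principle, break or merge loops at microscopic pinch-points; a priori the $O((R/\delta)^2)$ transformations acting on a region of diameter $R$ could destroy the large-scale picture. The decisive input is the RSW theory for isoradial rectangular lattices of~\cite{DumLiMan18}, to be recalled in Section~\ref{sec:5isoradial}, which provides uniform crossing and polynomial arm estimates at all scales and uniformly in $\alpha\in(0,\pi)$. These imply that the pivotal edges along any loop of diameter $\geq\delta^c$ are sparse, so a union bound over all elementary transformations still leaves a high probability that such a loop survives and is displaced by at most $\delta^c$ from its $M_{\beta,\alpha}$-image, which bounds ${\bf d}_{\rm CN}$ as required. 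The main obstacle is the quantitative balancing in this last step: the loss from the union bound over $O(\delta^{-2})$ transformations must be absorbed into the polynomial gain from the one-arm estimate, which in turn relies on the uniformity in $\alpha$ of the isoradial RSW/arm bounds; once this uniformity is granted, the remainder is a careful geometric accounting of how track-swap displacements accumulate into the claimed linear map on loops.
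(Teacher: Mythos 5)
There is a genuine gap, and it sits exactly where the real work of the theorem lies. You take for granted that the linear map realised by the star--triangle/track-exchange coupling is the deterministic geometric conjugation $R_{\beta/2}\circ D_{\beta,\alpha}\circ R_{-\alpha/2}$ of the two embeddings. This is not justified, and it is in fact the wrong map: under the track-exchange process the lattice deforms deterministically, but the boundaries of clusters also move \emph{relative} to the lattice by random local increments, and each boundary point is affected by order $\delta^{-1}$ such moves, so these relative displacements accumulate to a macroscopic effect governed by their mean. The correct statement (this is \eqref{eq:Mbetaalpha} in the paper) is that $M_{\beta,\alpha}$ is expressed through the drifts ${\rm Drift}_{\rm vert}$, ${\rm Drift}_{\rm lat}$ of cluster-extremum increments under the half-plane IIC measures of Proposition~\ref{prop:IIC_def}; these are a priori unknown quantities, and proving that the extremal coordinates concentrate around the drifted trajectory requires the resampling construction of Section~\ref{sec:5MarkovChain} (to make the increments i.i.d.\ up to controllable errors), the universal half-plane three-arm bound \eqref{eq:3hp_arm}--\eqref{eq:3hp_arm3} to control overlap and almost-extremum errors, and a random-walk estimate (Proposition~\ref{prop:stability_meso}). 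Your candidate map cannot be the outcome of any correct version of this argument: since it is generically not a similarity, its validity together with the paper's Theorem~\ref{thm:universalCNSS} (which gives the identity map between isoradial embeddings) would force asymptotic invariance of the limit under a non-similarity linear map, contradicting \eqref{eq:RSW_iso} by the quad-crossing argument used in Proposition~\ref{prop:T}.

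The quantitative mechanism you propose for error control also does not work. Star--triangle couplings never break or merge the connections of retained vertices, so macroscopic loops do not disappear at pinch-points; the difficulty is purely geometric tracking of where they go. A union bound over the elementary moves weighted by sparsity of pivotal edges cannot yield a displacement bound of $\delta^{c}$: every track exchange adjacent to a loop genuinely moves its boundary by one lattice step, and with $\Theta(\delta^{-1})$ passes the worst-case displacement is $\Theta(1)$, i.e.\ macroscopic. One needs cancellation after subtracting the drift, which is exactly what the IIC resampling and Lemma~\ref{lem:IIC} provide, and one then needs a device to upgrade control of a few extremal coordinates of mesoscopic clusters to control of whole loop shapes; the paper does this via homotopy classes relative to a lattice of nails (Section~\ref{sec:5homotopy} and Proposition~\ref{prop:homotopy_to_CN}), another ingredient absent from your outline. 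So while your skeleton (track exchanges, measure preservation, uniform isoradial RSW) matches the paper's, the two decisive steps --- identifying $M_{\beta,\alpha}$ through the IIC drift rather than through the embedding geometry, and the concentration-plus-homotopy argument replacing the union bound --- are missing.
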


The statement should be understood as~$\omega \sim \phi_{\delta\bbL(\beta)}$ and~$\omega' \sim  \phi_{\delta\bbL(\alpha)}$ may be coupled so that the loop representations of~$\omega$ and~$M_{\beta,\alpha}^{-1}(\omega')$ are close\footnote{or equivalently such that~$M_{\beta,\alpha}(\omega)$ and~$\omega'$ are close.} for~$d_{\rm CN}$ with high probability. 
Theorem~\ref{thm:universalCNSS} is equivalent to Theorem~\ref{thm:linear} with the input that~$M_{\pi/2,\alpha} = {\rm id}$.

Once Theorem~\ref{thm:linear} is proved, we will show the asymptotic rotational invariance of~$\phi_{\bbL(\pi/2)}$ (i.e. Theorem~\ref{thm:rotation_invariance}) by an argument similar to the one described above --- see Section~\ref{sec:deducing_rot_inv}.
Then, in Section~\ref{sec:drift0}, we will use  Theorem~\ref{thm:rotation_invariance} to prove that~$M_{\pi/2,\alpha} = {\rm id}$ for all~$\alpha \in (0,\pi)$, thus deducing Theorem~\ref{thm:universalCNSS}. 

One may also prove in a more direct way that~$M_{\pi/2,\alpha} = {\rm id}$, deduce Theorem~\ref{thm:universalCNSS}, then  Theorem~\ref{thm:rotation_invariance}, using different methods. 
We prefer the more convoluted strategy described above for a series of reasons. 

Let us start by mentioning that the proofs below blend two types of arguments: qualitative features of critical FK-percolation (essentially the RSW property and its consequences)
and some form of exact integrability. 
Exact integrability is commonly considered equivalent to invariance under the star-triangle transformation (a.k.a.~$Z$-invariance), 
but is mostly used in much stronger forms, such as the explicit computation of certain partition functions. 
Our aim here is to isolate as much as possible the use of exact integrability, and to limit it to the star-triangle invariance. 

Theorem~\ref{thm:linear} only uses qualitative features of critical FK-percoaltion and the star-triangle transformation.
It does not claim that the isoradial embedding (i.e. the exact ratio between the lengths of the  horizontal and vertical edges of~$\bbL(\alpha)$) 
ensures universality in the sense of~\eqref{eq:universalCNSS}. 
A separate argument is needed for the latter, which necessarily involves exact integrability. 

Indeed, the more direct route to Theorem~\ref{thm:universalCNSS} mentioned above involves proving that~$M_{\pi/2,\alpha} = {\rm id}$ 
using explicit computations of certain partition functions of the six-vertex model based on the Bethe ansatz (similar to those of Section~\ref{sec:Bethe_Ansatz}). 
As for~$\phi_{\bbL(\pi/2)}$, any FK-percolation model~$\phi_{\bbL(\alpha)}$ may be related to an inhomogeneous six-vertex model on~$\bbZ^2$, as in Section~\ref{sec:BKW}. 
Write~$\phi_{\bbT_{L,M}(\alpha)}$ for the corresponding model on the~$L\times M$ torus (that is, the torus with~$L$ columns and~$M$ rows)
and recall the definition of~$\calE_k$ from Section~\ref{sec:BKW}.
The Bethe ansatz also applies to the latter model (the eigenvectors are actually independent of~$\alpha$) and one may show that 
\begin{align}\label{eq:eigenvalues_alpha} 
	\frac{L}{M k^2 \sin(\alpha) }\log \phi_{\bbT_{L,M}(\alpha)}[\calE_{k}] = (1+o(1))\frac{L}{M k^2}\log \phi_{\bbT_{L,M}(\pi/2)}[\calE_{k}] 
\end{align}
as~$M \to \infty$, then~$L\to\infty$ and~$k/L \to 0$, in a certain regime of values of~$k$. 

It is most instructive to think of~\eqref{eq:eigenvalues_alpha} for~$k = 1$. 
It suggest that the asymptotic of the probability under~$\phi_{\bbL(\alpha)}$ for  an interface to cross vertically a tall cylinder of {\em Euclidean} dimensions~$L\times M$ is independent of the angle~$\alpha$.
As such, it is not entirely surprising that~\eqref{eq:eigenvalues_alpha} ultimately may be used to prove that~$M_{\pi/2,\alpha} = {\rm id}$.
Unfortunately, this implication is technically challenging, especially since we are unable to prove~\eqref{eq:eigenvalues_alpha} for~$k = 1$, but need to take~$k \to \infty$ as~$L \to\infty$. 
Still, it should be mentioned that the first version of \cite{DumKozKra20} did use the strategy outlined above, and a more detailed sketch may be found there. 

In the present approach, we avoid the use of~\eqref{eq:eigenvalues_alpha}, or any form of exact integrability other than the star-triangle transformation. 
That~$M_{\pi/2,\alpha} = {\rm id}$ will actually follow form a subtle interplay between Theorem~\ref{thm:linear}, an additional symmetry of~$\phi_{\bbL(\pi/2)}$,
and the way that the  star-triangle transformation acts on isoradial graphs. 
In addition to ultimately being shorter and more self-contained, 
this argument illustrates that the star-triangle transformation alone implies that the isoradial embedding is the ``correct'' embedding to ensure universality. 
The author finds the latter quite remarkable.

\paragraph{Some additional remarks}
We close this section with some additional remarks and extensions of the results listed above.

\begin{remark}\label{rem:quantum_FK}
	The constants~$c$ and~$C$ in Theorem~\ref{thm:universalCNSS} may actually be chosen uniformly in~$\alpha$.
	Thus, the universality result extends to the model obtained when taking~$\alpha \to 0$, 
	 sometimes called the quantum (or continuum) FK-percolation model 
    (see~\cite[Sec.~9.2]{Gri10} for a description and~\cite[Sec. 5]{DumLiMan18} for more details on this step). 
    
    To simplify the exposition, we will not focus on the independence of the constants~$c,C$ on~$\alpha$ in this manuscript. 
\end{remark}

\begin{remark}\label{rem:universality_iso}
The universality result of Theorem~\ref{thm:universalCNSS} extends beyond isoradial rectangular lattices. 
Indeed, it is proved in~\cite{HanMan24+} that it extends to all bi-periodic isoradial graphs.

Additionally, the universality principle suggests that any bi-periodic critical planar FK-percolation model lies in the same universality class as~$\phi_{\bbL(\pi/2)}$, and therefore~\eqref{eq:universalCNSS} should also apply to it, if properly embedded. 

The author expects that the a refinement of~\cite{HanMan24+} may incorporate all isoradial graphs (even non-periodic), but is unlikely to go beyond that. 
Indeed, Theorem~\ref{thm:universalCNSS} and its extension~\cite{HanMan24+} makes crucial use of the star-triangle transformation, 
for which isoradial graphs are a stable and natural class. 
It would be interesting to find additional transformations that allow to exit the class of isoradial graphs, 
while preserving the large scale geometry of the percolation model.
We mention here the related works of~\cite{Che20} on more general embeddings for the Ising model.
\end{remark}

\paragraph{Structure of the chapter.}
The following section contains background on isoradial graphs, the associated FK-percolation model and the star-triangle transformation. 
It introduces some of the notions used in the following sections, as well as the results of \cite{GriMan14, DumLiMan18}, which are a precursor to those presented here. 
Theorem~\ref{thm:linear} is proved in Section~\ref{sec:universality_isoradial_lin}. 
This is the most technical part of Chapter~\ref{ch:5rotational_inv}; it is self-contained and may be skipped in a first reading. 
Theorems~\ref{thm:rotation_invariance} and \ref{thm:universalCNSS} are proved in Sections~\ref{sec:deducing_rot_inv} and~\ref{sec:drift0} respectively.

 %%% Stopped proofreading HERE 20 feb

\section{FK-percolation on isoradial graphs; the star-triangle transformation}\label{sec:5isoradial}

This section contains a brief introduction to isoradial graphs and the FK-percolation associated to them. 

Isoradial graphs were introduced by Duffin in~\cite{Duf68} in the context of discrete complex analysis, 
and later appeared in the physics literature in the work of Baxter~\cite{Bax78} under the name~$Z$-invariant graphs.
They have been studied extensively, in particular in the context of statistical mechanics; we refer to~\cite{CheSmi12,KenSch05, Mer01, GriMan14,DumLiMan18} for literature on the subject.

\subsection{Isoradial graphs}\label{sec:isoradial_graphs}

A rhombic tiling~$G^\diamond$  is a tiling of the plane by rhombi of edge-length~$1$. Any such graph is bipartite, and we may divide its vertices in two sets of non-adjacent vertices~$V_{\bullet}$ and~$V_{\circ}$. 
The \emph{isoradial graph}~$G$ associated to~$G^\diamond$ is the graph with vertex set~$V_{\bullet}$ and edge-set given by the diagonals of the faces of~$G^\diamond$ between vertices of~$V_{\bullet}$. If the roles of~$V_{\bullet}$ and~$V_{\circ}$ are exchanged, we obtain the dual of~$G$, which is also isoradial. 
The term isoradial was introduced by Kenyon~\cite{Ken02} and refers to the fact that each face of~$G$ is inscribed in a circle of radius~$1$.
The rhombic tiling~$G^\diamond$ is called the {\em diamond graph} of~$G$.

A {\em train-track} (or simply track) of~$G$ is a bi-infinite sequence of adjacent faces~$(r_i)_{i\in \bbZ}$ of the diamond graph~$G^\diamond$, 
with the edges shared by each~$r_i$ and~$r_{i+1}$ being parallel. The angle formed by any such edge with the horizontal axis
is called the {\em transverse angle} of the track.

Isoradial graphs considered in this paper are of a very special type, see Figure~\ref{fig:isoradial lattice unaltered}.
They will all be isoradial embeddings of the square lattice; moreover we assume that all rhombi of~$G^\diamond$ have bottom and top edges that are horizontal. 
A consequence of this assumption is that the diamond graph contains {\em horizontal tracks}~$t_i$ with transverse angles~$\alpha_i \in (0,\pi)$ and {\em vertical tracks}~$s_j$, all of which have transverse angle~$0$. Each track of one category intersects all tracks of the other category but no track of the same category. 

%This angle, denoted~$\alpha_i$ of the track~$t_i$ is the angle made between the horizontal line and the non-horizontal segments of the diamonds (see the figure).  
For a sequence of track angles~$\pmb\alpha=(\alpha_i)_{i\in \mathbb Z}\in(0,\pi)^\bbZ$, denote by~$\mathbb  L(\pmb\alpha)$ the graph whose horizontal tracks have transverse angles~$\alpha_i$, in increasing vertical order.
When~$\alpha_i=\alpha$ for every~$i$, simply write~$\bbL(\alpha) = \bbL(\balpha)$.
Note that~$\bbL(\alpha)$ has identical rectangular faces and is rotated such that~$e^{{i}\alpha/2}\mathbb R$ acts as an axis of symmetry.  
In particular,~$\bbL(\tfrac\pi2)$ is simply a rescaled and rotated (by an angle of~$\pi/4$) version of~$\mathbb Z^2$.
These are indeed the isoradial rectangular lattices described in Section~\ref{sec:5intro}.

\begin{figure}[t]
  \centering
  \includegraphics[width=0.70\textwidth]{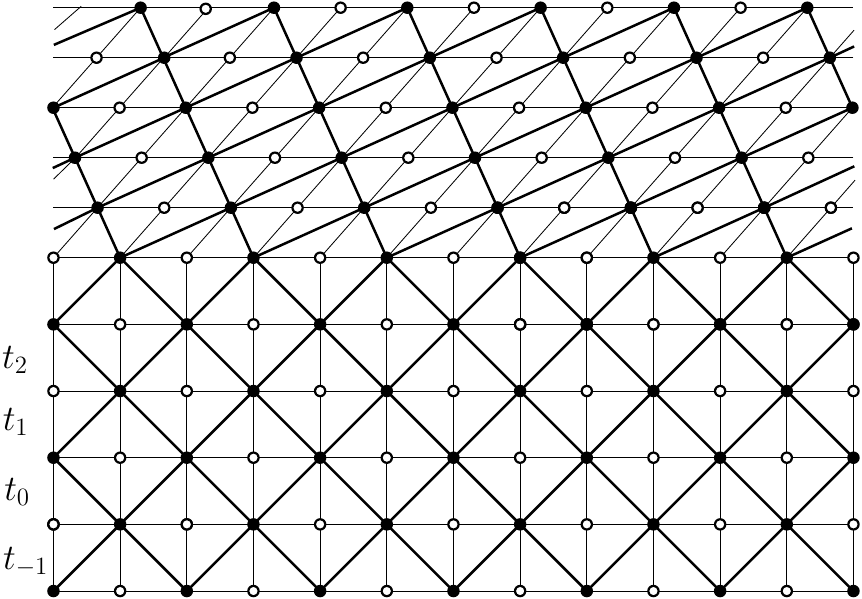}
  \caption{An example of a graph~$\bbL(\pmb\alpha)$, where~$\alpha_i$ is equal to~$\tfrac\pi2$ for~$i\le 3$, and some angle~$\alpha$ above. The diamond graph is drawn in light black lines; the solid and hollow dots are the vertices of~$V_\bullet$ and~$V_\circ$, respectively. The actual isoradial graph~$G$ is drawn in thicker black lines. % Both the lower and upper parts are portions of rectangular lattices; the former is simply a rotated square lattice.
  }
 \label{fig:isoradial lattice unaltered}
\end{figure}

When considering isoradial graphs~$G = (V,E)$, we keep the notation~$\La_n = [-n,n]^2$ and identify it with the subgraph spanned by the vertices of~$V$ contained in~$\La_n$. We write~$\partial \La_n$ for the set of vertices~$v \in V \cap \La_n$ that have a neighbour in~$V \cap \La_n^c$. 
%Finally, write~$t_{i-1}^+ = t_i^-$ for the vertices between~$t_{i-1}$ and~$t_i$.

\subsection{FK-percolation on isoradial graphs}

The isoradial embedding of a graph~$G = (V,E)$ produces different edge-weights for the edges of~$G$ as functions of their length, or more commonly the subtended angle. Indeed, if~$e$ is an edge of~$G$ and~$\theta_e$ is the angle of the rhombus of~$G^\diamond$ containing~$e$ and not bisected by~$e$ (see Figure~\ref{fig:subtended_angle}), we set
\begin{equation}\label{eq:isoraial_p_e}
	p_e:=\begin{cases}\displaystyle \frac{\sqrt{q}\sin( r (\pi - \theta_e))}{\sin( r \theta_e)+\sqrt{q}\sin( r (\pi - \theta_e))} 
    &\text{if } q<4, \\[5mm]
\qquad\ \ \displaystyle  \frac{2\pi - 2\theta_e}{2\pi-\theta_e} &  \text{if } q = 4,\\[5mm]
\displaystyle \frac{\sqrt{q}\sinh( r (\pi - \theta_e))}{\sinh( r \theta_e)+\sqrt{q}\sinh( r (\pi - \theta_e))} 
    &\text{if }q>4,\end{cases}
\end{equation}
where~$r := \tfrac{1}{\pi} \cos^{-1} \left( \tfrac{\sqrt{q}}{2} \right)$ for~$q\le 4$ and $r := \tfrac{1}{\pi} \cosh^{-1} \left( \tfrac{\sqrt{q}}{2} \right)$ for~$q>4$ --- the case~$q>4$ is not relevant for this presentation, but we give the formula to emphasise that the weights exist for all~$q \geq 1$ and change nature at~$q = 4$.
Notice that when~$\theta_e = \pi/2$ we find 
\begin{equation}
	p_e:=\tfrac{\sqrt q}{1 + \sqrt q}.
\end{equation}

\begin{figure}[t]
  \centering
  \includegraphics[width=0.3\textwidth]{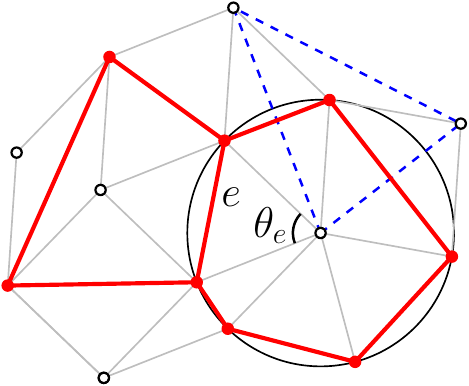}
  \caption{The edge~$e$ and its subtended angle~$\theta_e$; the red edges are those of~$G$ and the grey ones are those of the diamond graph. The dual graph (blue edges, hollow vertices) is also isoradial.}
  \label{fig:subtended_angle}
\end{figure}

For $q \geq 1$, define an infinite-volume FK-percolation measure~$\phi_G$ on~$G$ as any probability measure on~$\{0,1\}^E$ with the following property. 
For any finite set~$F \subset E$ and any configuration\footnote{The conditioning in the right-hand side of \eqref{eq:RCM_def2} is degenerate, but may be properly defined using standard manipulations; we leave this to the reader.}~$\omega_0$ on~$F^c$,
\begin{equation}\label{eq:RCM_def2}
	\phi_{G}[\omega \text{ on } F\,|\, \omega = \omega_0 \text{ on~$F^c$}]
	=\frac{1}{Z^{\omega_0}(F)} \Big(\prod_{e\in F}p_e^{\omega_e}(1-p_e)^{1-\omega_e} \Big) q^{k_F(\omega)},
\end{equation}
where~$k_F(\omega)$ is the number of connected components of~$\omega$ that intersect~$F$ and~$Z^{\omega_0}(F)$ is a normalising constant. 

A similar definition may be formulated on finite isoradial graphs with arbitrary boundary conditions. The basic properties of FK-percolation (see Section~\ref{ch:2intro_FK}) extend readily to this setting. 
Notice that the lattices~$\bbL(\alpha)$ and the associated parameters for FK-percolation given by~\eqref{eq:isoraial_p_e} are indeed those mentioned in Section~\ref{sec:5intro}.

\paragraph{Duality.}

Recall that if~$G$ is an isoradial graph, then so is its dual~$G^*$. 
Assuming that~$G$ possesses a unique infinite-volume FK-percolation measure, we have that 
\begin{align}
	\text{ if~$\omega \sim \phi_{G}$, then~$\omega^* \sim \phi_{G^*}$.}
\end{align}
In other words, the dual of an isoradial measure on some graph~$G$ is the isoradial measure on the dual graph~$G^*$. 
We stated the result here only for infinite-volume measures, but similar statements may be formulated in finite volume.

\subsection{RSW property and consequences}\label{sec:IIC}

As in the case of the square lattice, it was proved in~\cite{DumLiMan18} that, for~$q \in [1,4]$, there exists a unique infinite-volume measure on a large variety of isoradial graphs and that it has similar properties to the critical FK-percolation on~$\bbZ^2$. 
Most importantly to us, the RSW estimates of Theorem~\ref{thm:RSW_iso} also hold for the isoradial rectangular lattice which we will use below. 

\begin{theorem}[RSW on isoradial rectangular lattices]\label{thm:RSW_iso}
	For any~$1\le q\le 4$ and~$\rho,\ep>0$, there exists~$c=c(\rho,\ep)>0$ 
	such that for any~$\pmb\alpha=(\alpha_i:i\in\bbZ) \in (0,\pi)^\bbZ$ containing at most two values, 
	any~$n\geq1$ and any event~$A$ depending on the edges at distance at least~$\eps n$ from the rectangle~$R:=[0,\rho n]\times[0,n]$
    \begin{equation}\label{eq:RSW_iso}    	
    	c\le \phi_{ \bbL(\pmb\alpha)}\big[\{0\} \times [0,n] \xlra{\, R\,\,} \{\rho n\} \times [0,n]\,\big|\,A\big]\le 1-c.\tag{RSW}
    \end{equation}
\end{theorem}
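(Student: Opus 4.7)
The plan is to transport the RSW estimate from the critical square lattice $\bbL(\pi/2)$ (where it is already known) to any isoradial rectangular lattice $\bbL(\pmb\alpha)$ with at most two track angles, using the star--triangle transformation as the main mechanism. The starting input is the RSW estimate for critical FK-percolation on $\bbZ^2$: since $q\in[1,4]$, Theorem~\ref{thm:q<>4} places us in case (Con) of Theorem~\ref{thm:dichotomy}, and the arguments of Section~\ref{sec:RSW_stirp} yield uniform box-crossing estimates for $\phi_{\bbL(\pi/2)}$ at every scale and every aspect ratio. Since $\bbL(\pi/2)$ is a rotated/rescaled copy of $\bbZ^2$, the same holds there.

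The core tool is the star--triangle (or Yang--Baxter) transformation, whose defining property on isoradial graphs is that the FK-percolation weights in \eqref{eq:isoraial_p_e} are chosen precisely so that the local Y--$\Delta$ move preserves connection probabilities of the ambient measure. Composing such local moves, one obtains a \emph{track exchange}: an operation that swaps two adjacent parallel tracks of different transverse angles, globally preserving the joint law of connections outside the strip occupied by the two tracks. I would first treat the homogeneous case $\pmb\alpha\equiv\alpha$: starting from $\bbL(\pi/2)$, insert finitely many tracks of angle $\alpha$ and then move them one by one, via track exchanges against the $\pi/2$-tracks, to any desired vertical positions. Iterating this construction in a large enough region around the rectangle $R$, and keeping track of the (bounded) linear deformation this induces at the level of the embedding, transfers the RSW estimate to $\bbL(\alpha)$ --- up to adjusting the aspect ratio $\rho$ by a bounded factor, which is harmless since the conclusion allows any $\rho$. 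Duality and \eqref{eq:FKGFK} then upgrade single crossings to circuits in annuli at all scales.

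For the two-value case $\pmb\alpha\in\{\alpha_1,\alpha_2\}^\bbZ$, I would use track exchange to move all $\alpha_2$-tracks intersecting a large neighbourhood of $R$ above (or below) all $\alpha_1$-tracks in that neighbourhood; the resulting lattice agrees with a homogeneous $\bbL(\alpha_1)$ in the zone that matters for the crossing of $R$, and crossings there already have uniformly positive probability by the previous step. Each individual track exchange preserves the measure, so the crossing probabilities are the same in the original and the rearranged lattice. The uniformity of the bound in the positions of the $\alpha_2$-tracks is what makes the ``at most two values'' assumption crucial: the total number of track swaps can be controlled by the number of tracks within distance $O(n)$ of $R$, and each swap is measure-preserving, so no entropy is lost in the process.

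Finally, the conditioning on a far event $A$ is handled by the standard decoupling trick: the unconditional RSW plus \eqref{eq:FKGFK} produce, with probability bounded below uniformly, an open primal circuit and a dual circuit in the annular region between $R$ and the support of $A$, both contained in the $\eps n$-neighbourhood of $R$. Exploring the outermost such circuit decouples $A$ from the configuration inside, by the spatial Markov property \eqref{eq:SMP} and the monotone pushing of boundary conditions \eqref{eq:pushing_bc}; one obtains the same lower bound (with a worse constant) on the crossing probability of $R$ conditionally on $A$, and the matching upper bound follows by applying the same argument in the dual. I expect the main technical obstacle to be the bookkeeping in the track-exchange step: one must check that the number and geometry of star--triangle moves required to bring a generic two-angle configuration into an ``aligned'' one in a ball of radius $n$ around $R$ is controlled uniformly in $\pmb\alpha$ and $n$, and that the induced linear deformation of the embedding stays quasi-isometric with constants depending only on $\alpha_1$ and $\alpha_2$. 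This is the content of the track-exchange lemmas of~\cite{GriMan14,DumLiMan18}, and their careful proof is where the bulk of the work lies.
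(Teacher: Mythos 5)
You should first note that these notes do not actually prove Theorem~\ref{thm:RSW_iso}: it is imported from~\cite{DumLiMan18} (see the sentence preceding the statement and Remark~\ref{rem:RSW_uniform_angles}), and the route you sketch --- transporting RSW from $\bbL(\pi/2)$ by star--triangle/track-exchange moves, first to a homogeneous $\bbL(\alpha)$, then to two-angle lattices, with the conditioning on a distant event handled by primal/dual circuits in the $\eps n$-collar --- is indeed the strategy of~\cite{GriMan14,DumLiMan18}. Two caveats, one of which is a genuine gap. First, the crux is not where you place it: the statement ``each swap is measure-preserving, so no entropy is lost'' does not by itself transport crossing events. A track exchange preserves connections between vertices off the two exchanged tracks, but the embedding changes and, as a track sweeps through the rectangle, the endpoints/height of a crossing can be eroded step by step; showing that a (slightly shrunken) crossing survives $O(n^2)$ exchanges with uniformly positive probability is a probabilistic argument using the extra randomness of the star--triangle coupling and RSW-type inputs in the intermediate mixed lattices. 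You do acknowledge deferring this to the track-exchange lemmas of~\cite{GriMan14,DumLiMan18}, so as a sketch this is acceptable, but it is the bulk of the proof, not bookkeeping.

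Second, and this is the real gap: the theorem asserts $c=c(\rho,\eps)$ \emph{uniformly over all} two-valued angle sequences $\pmb\alpha$, in particular as the angles tend to $0$ or $\pi$, and this uniformity is used later (Remark~\ref{rem:quantum_FK}, and the proof of Proposition~\ref{prop:T}(iii) relies precisely on \eqref{eq:RSW_iso} being uniform in $\alpha$ to rule out degenerating $M_{\pi/2,\alpha}$). Your argument, as written, produces constants ``depending only on $\alpha_1$ and $\alpha_2$'': the quasi-isometry between $\bbL(\alpha)$ and $\bbL(\pi/2)$, and the aspect-ratio corrections you invoke, blow up as $\alpha\to 0$ or $\pi$ (horizontal edges of length $2\cos\alpha$ versus vertical edges of length $2\sin\alpha$), so the transported crossing estimate degenerates exactly in the regime the theorem covers. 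Remark~\ref{rem:RSW_uniform_angles} flags that this uniformity requires additional arguments beyond the naive transport (this is handled in~\cite[Sec.~5]{DumLiMan18} in connection with the quantum limit). So your proposal proves the angle-dependent version of \eqref{eq:RSW_iso} but not the statement as claimed; to close the gap you would need a separate argument controlling crossings in rectangles of fixed Euclidean shape when the rhombi degenerate, rather than relying on a bounded linear deformation of the embedding.
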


In the above $\phi_{ \bbL(\pmb\alpha)}$ denotes any infinite-volume FK-percolation measure on $\bbL(\pmb\alpha)$.
The statement is actually about finite volume measure with arbitrary boundary conditions; it is stated using the infinite-volume setting to avoid introducing additional notation.  

When discussing isoradial graphs, it is convenient to think of edges as closed segments in the plane~$\bbR^2 = \bbC$.
Then, (open) paths are piecewise linear paths running along (open) edges; they are not required to end at vertices of the graph.

\begin{remark}\label{rem:RSW_uniform_angles}
Notice that the bounds in~\eqref{eq:RSW_iso} are uniform in the angles of the lattice $\bbL(\pmb\alpha)$;
in particular, we have uniform RSW estimates for $\phi_{ \bbL(\alpha)}$ as $\alpha \to 0$. 
This fact requires additional arguments than RSW estimates which only hold for angles $\alpha$ bounded away from $0$ and $\pi$.
A full proof may be found in~\cite{DumLiMan18}.

We limit Theorem~\ref{thm:RSW_iso} to lattices $\bbL(\pmb\alpha)$ with at most two different angles for the horizontal tracks as this statement follows directly from~\cite{DumLiMan18} and suffices for our purposes. An RSW estimate holding uniformly for all rectangular square lattice $\bbL(\pmb\alpha)$ with no restriction on the angle sequence may also be derived from~\cite{DumLiMan18} but requires additional work. 
\end{remark}

\begin{center}
{\bf Henceforth, fix $q \in [1,4]$. \\
For isoradial graphs $G$, $\phi_G$ will denote an infinite-volume measure on $G$ with parameter $q$ and edge-weights given by \eqref{eq:isoraial_p_e}. } 
\end{center}
For all practical purposes, one may consider the infinite-volume measure to be unique. 

\paragraph{Unique infinite-volume measure.} 
It is a direct consequence of \eqref{eq:RSW_iso} that, for all $1 \leq q \leq 4$ and $\bbL(\pmb\alpha)$ an isoradial rectangular lattice as in the statement of Theorem~\ref{thm:RSW_iso}, there exists a unique infinite-volume FK-percolation measure $\phi_{\bbL(\pmb\alpha)}$.
Furthermore, under this measure, neither the primal nor the dual configurations contain infinite clusters. 

\paragraph{Mixing.}
A second consequence of~\eqref{eq:RSW_iso} that we will use repeatedly  is the mixing property.

\begin{proposition}[Mixing property]\label{prop:mixing}
For every~$\ep>0$, there exist~$C_{\rm mix},c_{\rm mix}\in(0,\infty)$ such that for any~$\pmb\alpha=(\alpha_i:i\in\bbZ) \in (0,\pi)^\bbZ$ containing at most two values, 
every~$r\le R/2$, every event~$A$ depending on edges in~$\Lambda_r$, and every event~$B$ depending on edges outside~$\Lambda_R$, we have that 
\begin{align}
	\big|\phi_{\bbL(\pmb\alpha)}[A\cap B]-\phi_{\bbL(\pmb\alpha)}[A]\phi_{\bbL(\pmb\alpha)}[B]\big|\le C_{\rm  mix}(r/R)^{c_{\rm mix}}\phi_{\bbL(\pmb\alpha)}[A]\phi_{\bbL(\pmb\alpha)}[B].
\end{align}
\end{proposition}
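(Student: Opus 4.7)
The plan is to use Theorem~\ref{thm:RSW_iso} together with the spatial Markov property of FK-percolation to find, with high probability, a dual circuit in the annulus $\Lambda_R\setminus\Lambda_r$ that separates $A$ from $B$; the circuit then decouples the inside from the outside via spatial Markov, since the primal edges it crosses are all closed and therefore induce free boundary conditions on both sides.

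The first step is a standard consequence of Theorem~\ref{thm:RSW_iso}: for any event $\calF$ depending on edges outside $\Lambda_R\setminus\Lambda_r$,
\[
\phi_{\bbL(\pmb\alpha)}\big[\text{no dual circuit in }\Lambda_R\setminus\Lambda_r\text{ surrounding }\Lambda_r\,\big|\,\calF\big]\le C(r/R)^{c},
\]
for constants $C,c>0$ depending only on $q$. The proof partitions $\Lambda_R\setminus\Lambda_r$ into $K\sim\log(R/r)$ disjoint dyadic sub-annuli separated by buffers; in each one, Theorem~\ref{thm:RSW_iso} combined with \eqref{eq:FKGFK} produces a dual circuit around $\Lambda_r$ with conditional probability at least some $c_0>0$, uniformly in any distant conditioning. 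Chaining these estimates by exploring from outside inward via the spatial Markov property yields the geometric bound $(1-c_0)^K\le C(r/R)^c$.

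Next, let $\Gamma$ denote the outermost dual circuit in $\Lambda_R\setminus\Lambda_r$ surrounding $\Lambda_r$ (set to $\emptyset$ if none exists). The event $\{\Gamma=\gamma\}$ is measurable with respect to the edges in $(\Lambda_R\setminus\Lambda_r)\cap\mathrm{Ext}(\gamma)$ together with the primal crossings of $\gamma$. By the spatial Markov property, conditionally on $\{\Gamma=\gamma\}$ the configuration on $\mathrm{Int}(\gamma)$ is FK-percolation on $\mathrm{Int}(\gamma)$ with free boundary conditions on $\gamma$, a distribution depending only on $\gamma$; moreover, the configurations on $\mathrm{Int}(\gamma)$ and on $\mathrm{Ext}(\gamma)\supset\Lambda_R^c$ are conditionally independent given $\Gamma=\gamma$, because the closed crossings preclude any wiring through $\gamma$. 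Writing $f(\gamma):=\phi_{\mathrm{Int}(\gamma),0}[A]$, this yields
\[
\phi[A\cap B\mid\Gamma=\gamma]=f(\gamma)\cdot\phi[B\mid\Gamma=\gamma],\qquad\phi[A\mid\Gamma=\gamma]=f(\gamma),
\]
so that, after absorbing the contribution of $\{\Gamma=\emptyset\}$ via the circuit lemma applied conditionally on $A$ and on $B$,
\[
\big|\phi[A\cap B]-\phi[A]\phi[B]\big|\le\big|\mathrm{Cov}_\phi\big(f(\Gamma)\ind_{\Gamma\neq\emptyset},\ind_B\big)\big|+C(r/R)^c\phi[A]\phi[B].
\]

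The main technical obstacle is to bound this residual covariance: $f(\Gamma)\ind_{\Gamma\neq\emptyset}$ is supported on edges of $\Lambda_R$, while $\ind_B$ is supported on edges of $\Lambda_R^c$, and FK-percolation is not a product measure across this interface. The resolution is to symmetrise the construction by introducing a second dual circuit $\Gamma'$ taken innermost in $\Lambda_R\setminus\Lambda_{R_1}$ around $\Lambda_{R_1}$, where $R_1:=\lfloor\sqrt{rR}\rfloor$; both $\Gamma\subset\Lambda_{R_1}\setminus\Lambda_r$ and $\Gamma'\subset\Lambda_R\setminus\Lambda_{R_1}$ exist with probability at least $1-C(r/R)^{c/2}$ by the circuit lemma applied to annuli of aspect ratio $\sqrt{R/r}$. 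Conditioning on $(\Gamma,\Gamma')$ and applying spatial Markov on both sides factorises the joint probability as $\phi[A\cap B\mid\Gamma=\gamma,\Gamma'=\gamma']=f(\gamma)\,g(\gamma')$ with $g(\gamma'):=\phi_{\mathrm{Ext}(\gamma'),0}[B]$, whence the mixing inequality reduces to the near-independence of $\Gamma$ and $\Gamma'$ under $\phi$. This last step is obtained by iterating the same construction through $\sim\log(R/r)$ dyadic intermediate scales: at each scale, an independent buffer dual circuit exists with uniformly positive conditional probability by the circuit lemma, and conditioning on it makes the inner and outer configurations conditionally independent via spatial Markov. Accumulating these logarithmically many independence reductions yields the bound with exponent $c_{\rm mix}$ a positive multiple of the RSW exponent $c$.
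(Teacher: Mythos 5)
The first half of your construction is fine: \eqref{eq:RSW_iso} (being uniform in distant conditioning) does produce a dual circuit in $\Lambda_R\setminus\Lambda_r$ with probability $1-C(r/R)^c$ even conditionally on $A$ and $B$, and, conditionally on the outermost such circuit $\Gamma=\gamma$, the spatial Markov property does make the configurations in $\mathrm{Int}(\gamma)$ and $\mathrm{Ext}(\gamma)$ conditionally independent, with inside law $\phi^0_{\mathrm{Int}(\gamma)}$. The genuine gap is the term you are left with, $\mathrm{Cov}\big(f(\Gamma)\ind_{\Gamma\neq\emptyset},\ind_B\big)$ with $f(\gamma)=\phi^{0}_{\mathrm{Int}(\gamma)}[A]$: nothing in your sketch controls it, and it is where the entire content of the proposition sits. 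To bound it one needs that $\gamma\mapsto f(\gamma)$ is nearly constant (up to a factor $1\pm C(r/R)^{c_{\rm mix}}$) over the circuits that actually occur; for increasing $A$ this value is sandwiched, via \eqref{eq:pushing_bc}, between $\phi^{0}_{\Lambda_{R/2}}[A]$ and $\phi^{1}_{\Lambda_{R/2}}[A]$, so what is required is precisely the statement that free and wired boundary conditions at scale $R$ change the probability of an arbitrary event at scale $r$ only by a multiplicative factor $1+C(r/R)^{c}$ — a statement of exactly the same strength as the proposition itself. Your argument implicitly assumes it rather than proving it.

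The proposed remedies do not supply the missing mechanism. ``Near-independence of $\Gamma$ and $\Gamma'$'' is not a simpler statement: the two circuits live in the \emph{adjacent} annuli $\Lambda_{R_1}\setminus\Lambda_r$ and $\Lambda_R\setminus\Lambda_{R_1}$, with no scale separation, and random variables supported on adjacent regions are not nearly independent in general; what would have to decorrelate are the functionals $f(\Gamma)$ and $g(\Gamma')$, which is again the mixing one is trying to prove. Likewise, inserting a buffer circuit $\Gamma''$ at an intermediate scale and invoking conditional independence given $\Gamma''$ only relocates the problem: writing $\mathrm{Cov}(X,Y)=\bbE[\mathrm{Cov}(X,Y\mid\Gamma'')]+\mathrm{Cov}\big(\bbE[X\mid\Gamma''],\bbE[Y\mid\Gamma'']\big)$, the first term is controlled by the no-circuit probability, but the second is a covariance of two functions of the same circuit and has exactly the structure you started from; iterating over $\log(R/r)$ scales yields no contraction unless at each step one shows the conditional expectations are nearly constant in the circuit — which is, once more, the missing ratio-stability. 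This is why the proof the paper points to (the square-lattice argument of~\cite[Proposition~2.9]{DumMan20}, transported verbatim to $\bbL(\pmb\alpha)$) is organised differently: by the spatial Markov property, $\phi[A\cap B]=\bbE\big[\ind_B\,\phi^{\xi}_{\Lambda_R}[A]\big]$ with $\xi$ the random boundary condition induced on $\Lambda_R$, so it suffices to prove that $\phi^{\xi}_{\Lambda_R}[A]$ varies by at most a factor $1+C(r/R)^{c}$ over all boundary conditions $\xi$ and all events $A$ depending on $\Lambda_r$ (a Radon--Nikodym bound on the restriction to $\Lambda_r$), and this uniform ratio estimate is established by an inductive coupling over dyadic annuli using RSW uniformly in boundary conditions. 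Your dual-circuit step would be one ingredient of such an argument, but by itself it does not give the multiplicative decoupling asserted in the proposition.
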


\begin{proof}
	The argument is identical to the one on the square lattice, see e.g.~\cite[Proposition~2.9]{DumMan20}.
\end{proof}

\paragraph{IIC measure.}

Fix two angles~$\alpha, \beta \in (0,\pi)$. 
Write~$\bbL_{\rm mix}$ for the lattice~$\bbL(\balpha)$ with~$\balpha = (\alpha_i)_{i\in\bbZ}$ with~$\alpha_i = \alpha$ for~$i$ even and~$\alpha_i = \beta$ for~$i$ odd.
The~\eqref{eq:RSW_iso} property applies to these lattices, and that allows us to define the half-plane IIC measure. This construction is tedious, but relatively standard, so we will only state the results here, and direct the reader to~\cite{Kes86,Oul22} or Exercise~\ref{exo:IIC} for details on the proofs. 

Recall that the horizontal tracks of~$\bbL_{\rm mix}$ are denoted~$(t_k)_{k\in \bbZ}$ and the vertical ones~$(s_k)_{k\in \bbZ}$.
Consider the origin of~$\bbR^2$ to be the vertex between~$t_0$ and~$t_1$ and between~$s_0$ and~$s_1$; we will assume it to be a primal vertex.
The cell~$(i,j)$ is the set of primal and dual vertices contained between~$s_{2i-1}$ and~$s_{2i+1}$ and between~$t_{2j-1}$ and~$t_{2j+1}$. 
We associate to the cell~$(i,j)$ its lower left lattice point (due to our definition, this is a primal point) --- see Figure~\ref{fig:IIC}.

Define the unit vectors\footnote{We choose to work here in the basis~$(e_{\rm vert},e_{\rm lat})$ as the two directions are well adapted to the lattice~$\bbL_\beta$ --- this is ultimately visible in Proposition~\ref{prop:drift_RT}. The proof would however also work in any basis containing~$e_{\rm vert}$, as explained in Exercise~\ref{exo:drift_u}.}   of~$\bbR^2$~$e_{\rm vert} = (0,1)$ and~$e_{\rm lat} = (\sin\beta,-\cos \beta)$.
For a finite cluster~$\sfC$ of a configuration~$\omega$ on~$\bbL_{\rm mix}$, 
consider the leftmost cell of maximal vertical coordinate intersected by~$\sfC$.
Write~${\rm Top}(\sfC)$ for its associated lattice point. 
Write~${\rm Bottom}(\sfC)$ for the lattice point of the rightmost cell of~$\bbL_{\rm mix}$ of minimal vertical coordinate intersected by~$\sfC$.
Consider the lattice points~${\rm Left}(\sfC)$ and~${\rm Right}(\sfC)$ corresponding to the cells intersected by~$\sfC$ of minimal, respectively maximal, scalar product with~$e_{\rm lat}$. 
When multiple choices are possible, let~${\rm Left}(\sfC)$ be the bottommost such point and~${\rm Right}(\sfC)$ be the topmost. 
Finally, write 
\begin{align}\label{eq:TBLR}
    {\rm T}(\sfC) &= \langle {\rm Top}(\sfC), e_{\rm vert} \rangle;&
    {\rm B}(\sfC) &= \langle {\rm Bottom}(\sfC), e_{\rm vert} \rangle;\nonumber\\
    {\rm L}(\sfC) &= \langle {\rm Left}(\sfC), e_{\rm lat} \rangle;&
    {\rm R}(\sfC) &= \langle {\rm Right}(\sfC), e_{\rm lat} \rangle.
\end{align}
We call these the extremal coordinates of~$\sfC$ with respect to~$(e_{\rm vert},e_{\rm lat})$.

\begin{figure}
\begin{center}
\includegraphics[width=0.65\textwidth]{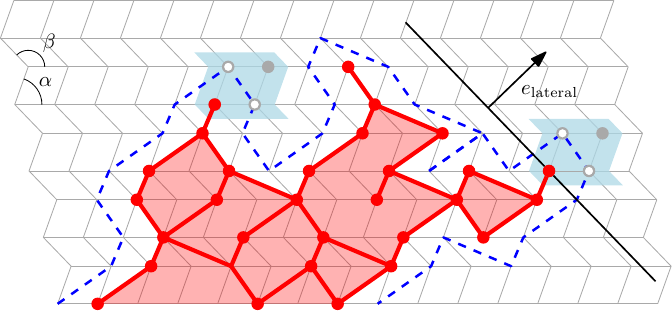}
\caption{A red cluster~$\sfC$ with the cells containing~${\rm Top}(\sfC)$ and~${\rm Right}(\sfC)$ marked in blue. 
Note that there are three topmost cells visited by~$\sfC$, the marked one is the leftmost.}
\label{fig:IIC}
\end{center}
\end{figure}

Write~$x_{n}$ for the primal vertex of~$\bbL_{\rm mix}$ between~$t_{2n}$ and~$t_{2n+1}$ and between~$s_0$ and~$s_1$. 
Similarly, write~$y_n$ for the primal vertex between~$t_{0}$ and~$t_{1}$ and between~$s_{2n}$ and~$s_{2n+1}$. 
Write~$\sfC_{x}$ for the cluster of the vertex~$x$ . 
The IIC measure is given by the following proposition. 

\begin{proposition}(IIC measure)\label{prop:IIC_def}
	The following limits exist and are called the half-plane IIC measures in the lower, upper, right and left half-planes, respectively: 
	\begin{align*}
	&\phi^{\rm IIC,T}_{\bbL_{\rm mix}}:= \lim_{n \to - \infty}	\phi_{\bbL_{\rm mix}} \big[\cdot\,\big|\, {\rm Top}(\sfC_{x_n}) = 0 \big] 
	,\qquad&&
	\phi^{\rm IIC,B}_{\bbL_{\rm mix}}:= \lim_{n \to \infty}	\phi_{\bbL_{\rm mix}} \big[\cdot\,\big|\, {\rm Bottom}(\sfC_{x_n}) = 0 \big],
	\\
	&	\phi^{\rm IIC,L}_{\bbL_{\rm mix}}:= \lim_{n \to \infty}	\phi_{\bbL_{\rm mix}} \big[\cdot\,\big|\, {\rm Left}(\sfC_{y_n}) = 0 \big] 
	\quad \text{ and } \quad&& 
	\phi^{\rm IIC,R}_{\bbL_{\rm mix}}:= \lim_{n \to -\infty}	\phi_{\bbL_{\rm mix}} \big[\cdot\,\big|\, {\rm Right}(\sfC_{y_n}) = 0 \big].
	\end{align*}
\end{proposition}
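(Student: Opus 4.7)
The plan is to establish the four limits by a standard IIC-type construction, using the RSW estimate of Theorem~\ref{thm:RSW_iso} and the mixing property of Proposition~\ref{prop:mixing} as the essential inputs. By symmetry it suffices to treat $\phi^{\rm IIC,T}_{\bbL_{\rm mix}}$; the other three cases are handled by analogous arguments in the appropriate direction. Throughout, write $E_n := \{{\rm Top}(\sfC_{x_n}) = 0\}$ and fix a local event $A$ depending on edges inside $\Lambda_r$. The goal is to show that $\phi_{\bbL_{\rm mix}}[A \mid E_n]$ is a Cauchy sequence as $n \to -\infty$, from which the existence of the limit follows.

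First I would unpack the event $E_n$ into geometrically meaningful pieces. The condition ${\rm Top}(\sfC_{x_n}) = 0$ can be written as the intersection of a \emph{primal arm event} (the cluster of $x_n$ reaches the distinguished cell associated to the origin) and a \emph{dual capping event} (a dual path prevents $\sfC_{x_n}$ from extending into any cell strictly higher or into any higher leftward cell at the same height). The capping event can be localised: it is determined by the edges in a half-strip lying above and slightly to the right of the origin, which can be covered by finitely many boxes of radius~$O(r)$. Inside $\Lambda_{r}$, the dual capping contributes only a bounded number of ``half-arm'' constraints.

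Next I would compare $\phi[A \mid E_n]$ and $\phi[A \mid E_m]$ for $m \ll n \ll -r$. Write $E_n = E_n^{\rm loc} \cap E_n^{\rm far}$, where $E_n^{\rm loc}$ depends only on edges inside $\Lambda_{R}$ (for some intermediate scale $r \ll R \ll |n|$) and records the realisation of the cluster of $x_n$ after it enters $\Lambda_R$ from below together with the local capping near~$0$, while $E_n^{\rm far}$ collects the remaining one-arm constraint from~$x_n$ up to $\partial \Lambda_R$. Conditioning on the outermost open arm realising the long connection and pushing boundary conditions, the measure inside $\Lambda_R$ given $E_n$ is a \emph{free-boundary} FK-measure conditioned on $E_n^{\rm loc}$, together with some wiring along the exterior of the explored arm. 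Applying Proposition~\ref{prop:mixing} (both directly, and in its form comparing conditional probabilities via ratios as in \cite{DumMan20}) shows that the conditional law of the configuration in $\Lambda_r$ given $E_n$ differs from that given $E_m$ by a factor $1 + O((r/R)^{c_{\rm mix}})$, uniformly in the far-away details. Sending first $|n|,|m| \to \infty$ and then $R \to \infty$ yields a Cauchy sequence, whose limit defines $\phi^{\rm IIC,T}_{\bbL_{\rm mix}}[A]$; this is extended by Carath\'eodory to a probability measure since the construction is consistent for nested events.

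The main obstacle is that $E_n$ is not a monotone event and combines two arms (one primal, one dual) emanating at \emph{different} scales: a macroscopic primal arm of length $|n|$ from below, and a microscopic dual capping near the origin. To ensure that pushing boundary conditions around the explored primal arm does not distort the local capping event, one needs a quantitative ``separation of arms'' estimate in the half-plane -- namely that conditionally on the arm reaching $\partial \Lambda_R$ from $x_n$, the landing point on $\partial \Lambda_R$ is well-separated from the dual capping with uniformly positive probability. This is a standard consequence of \eqref{eq:RSW_iso} combined with FKG and the finite-energy property (see Exercise~\ref{exo:IIC} and \cite{Kes86,Oul22} for the scheme on $\bbZ^2$), and is what makes the mixing step applicable to the ratio $\phi[E_n^{\rm loc} \cap A]/\phi[E_n^{\rm loc}]$. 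Once this separation estimate is secured, the Cauchy argument above goes through verbatim, and the same reasoning, carried out relative to the appropriate basis vector among $\{e_{\rm vert}, e_{\rm lat}\}$ and sign of~$n$, yields the three remaining IIC measures.
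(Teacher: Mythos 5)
Your overall strategy---RSW input, a multi-scale comparison of the conditioned measures, then a Cauchy argument in $n$---is the same family of argument as the paper's, which deduces Proposition~\ref{prop:IIC_def} from the uniform ratio-mixing statement of Lemma~\ref{lem:IIC}, itself proved by the scheme of Exercise~\ref{exo:IIC} (in the spirit of Kesten's IIC). However, the two steps on which your comparison rests do not hold as stated. First, the factorisation $E_n=E_n^{\rm loc}\cap E_n^{\rm far}$ with $E_n^{\rm loc}$ measurable with respect to $\La_R$ and $E_n^{\rm far}$ a ``one-arm constraint from $x_n$ to $\partial\La_R$'' is incorrect: the event $\{{\rm Top}(\sfC_{x_n})=0\}$ forbids the cluster from intersecting any cell above height $0$, or any cell at height $0$ to the left of the origin, \emph{anywhere} in the plane, and whether a far-away open path violates this depends on whether it is attached to $\sfC_{x_n}$ through edges inside $\La_R$. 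The event simply does not split into an inside event and an outside event; this is precisely why Lemma~\ref{lem:IIC} conditions on the full trace $\sfC_x\cap\La_R^c=\calC$ of the cluster outside $\La_R$ rather than on an arm reaching $\partial\La_R$. Relatedly, ``exploring the outermost arm and pushing boundary conditions'' is a monotonicity device that is not available here, as you yourself note the conditioning is non-monotone.

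Second, Proposition~\ref{prop:mixing} cannot deliver the claimed $1+O((r/R)^{c_{\rm mix}})$ comparison of the conditional laws given $E_n$ and $E_m$. Mixing compares unconditioned correlations of two events supported at well-separated scales, whereas $E_n$ has probability tending to $0$ and carries a non-trivial component at scale $r$ (the primal touch of the origin capped by dual arms, i.e.\ a half-plane three-arm structure), living at the same scale as the local event $A$; there is no separation of supports to exploit. What is actually needed is a ratio-mixing statement for the \emph{conditioned} measures, uniform over all outside conditionings---exactly Lemma~\ref{lem:IIC}---and its proof is not ``mixing plus one separation-of-arms estimate'': it requires a renewal/coupling argument across dyadic scales (the flower-domain Markov chain of Exercise~\ref{exo:IIC}, or equivalently Kesten-style arm separation together with quasi-multiplicativity), in which the residual conditioning probability is controlled at each scale by the half-plane three-arm bound \eqref{eq:3hp_arm}. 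Your appeal to Exercise~\ref{exo:IIC} at the ``separation'' step in effect imports this entire machinery, so as written the key comparison is asserted rather than proved; once Lemma~\ref{lem:IIC} is granted, the averaging/Cauchy conclusion you draw is indeed the same final step as in the paper.
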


The measures above refer to the {\em local} environment around the extrema (in the sense of~\eqref{eq:TBLR}) of large clusters. 
Indeed, it may be shown that extrema of a typical large cluster are distributed according to~$\phi^{\rm IIC,*}_{\bbL_{\rm mix}}$, even when the cluster is conditioned on (reasonable) large scale features. 
This is a manifestation of a wider principle of mixing between the local and large scale features of the model and is a consequence of the~\eqref{eq:RSW_iso} property. 

%The following lemma is the main tool when proving such mixing estimates, even for the proof of Proposition~\ref{prop:IIC_def}.

\begin{lemma}\label{lem:IIC}
	There exist constants~$C,c_{\rm IIC}>0$ such that the following holds for all~$r \leq R/2$. 
	For any configuration~$\omega_0$ on~$\La_R^c$, 
	any union~$\calC$ of clusters of~$\omega_0$ with some~$x \in \calC$,
	%any decreasing event~$B$ for the clusters of~$\omega_0$ other than those in~$\calC$	
	and any event~$A$ depending on~$\Lambda_r$. , 
	\begin{align*}
        \big|\,	\phi^{\rm IIC,T}_{\bbL_{\rm mix}}[A]-\phi_{\bbL_{\rm mix}}\big[A\,\big|\, \omega = \omega_0\text{ on } \La_R^c,\,\sfC_x \cap \La_R^c = \calC,\, {\rm Top}(\sfC_x)=0\big]\,\big|\,&\le C(r/R)^{c_{\rm IIC}},
    \end{align*}
    as long as the conditioning is non-degenerate\footnote{The conditioning due on $\{\omega = \omega_0\text{ on } \La_R^c\}$ is always degenerate, but should be understood as imposing certain boundary conditions on the restriction of the measure to $\La_R$. Here, by non-degenerate we mean that there exists at least one configuration in $\La_R$ such that $\sfC_x \cap \La_R^c = \calC$ and ${\rm Top}(\sfC_x)=0$. In particular, all clusters of $\calC$ should touch $\La_R$.}. 
    Moreover, the same holds for all other directions~${\rm Bottom}(\cdot)$,~${\rm Left}(\cdot)$ and~${\rm Right}(\cdot)$, with the corresponding IIC measure. 
\end{lemma}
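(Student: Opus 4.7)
The strategy is to show that both the IIC measure $\phi^{\rm IIC,T}_{\bbL_{\rm mix}}$ and the conditional measure on the right-hand side are well-approximated by a common ``intermediate'' description that only retains information from a scale comparable to $R$ through the \emph{macroscopic} feature that a cluster arrives at $0$ from below $\La_R$ with $0$ as its topmost point. Everything finer than this enters only through a mixing error of order $(r/R)^{c_{\rm mix}}$, and everything coarser is averaged out by taking $n \to -\infty$ or by the specific structure of $\calC$.

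As a first step, I would establish an alternative, finite-$R$ characterization of $\phi^{\rm IIC,T}_{\bbL_{\rm mix}}$: for any local event $A$ supported in $\La_r$,
\begin{align}
    \phi^{\rm IIC,T}_{\bbL_{\rm mix}}[A]=\phi_{\bbL_{\rm mix}}[A \mid H_R] + O\big((r/R)^{c}\big),
\end{align}
where $H_R$ is the event ``there exists a cluster intersecting $\partial\La_R$ only below $0$ (in the $e_{\rm vert}$ direction) whose topmost point is exactly $0$ in the sense of \eqref{eq:TBLR}''. The proof of this characterization mirrors the construction of the IIC in Proposition~\ref{prop:IIC_def}: the conditioning on ${\rm Top}(\sfC_{x_n})=0$ for very negative $n$ factors, up to a mixing error, into (i) an event at scale $R$ (the cluster reaches $\partial\La_R$ from below) and (ii) a local event near $0$ (that $0$ is the top of this incoming cluster). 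The RSW estimate of Theorem~\ref{thm:RSW_iso} ensures that $\phi_{\bbL_{\rm mix}}[H_R]$ decays only polynomially in $R$, so the mixing bound on its complementary event is meaningful.

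Next, I would apply the spatial Markov property \eqref{eq:SMP} to the conditional measure in the statement. Conditionally on $\{\omega=\omega_0 \text{ on }\La_R^c\}$ and $\{\sfC_x\cap\La_R^c=\calC\}$, the distribution inside $\La_R$ is a FK-percolation measure with boundary conditions induced by $(\omega_0,\calC)$ on $\partial\La_R$ (wiring together exactly the points where clusters of $\calC$ touch $\partial\La_R$, with the extra constraint that the cluster of $x$ must connect those designated boundary vertices). The further conditioning on ${\rm Top}(\sfC_x)=0$ is then a \emph{local} event in $\La_r$ together with a connectivity event in the intermediate annulus $\La_R\setminus\La_r$. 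I would then run an outward exploration from $\La_r$ to discover the innermost circuit in $\omega$ or $\omega^*$ surrounding $\La_r$ inside $\La_{\sqrt{rR}}$; by RSW, such a circuit exists with probability $1-O((r/R)^c)$. Conditional on this circuit, the measure inside $\La_{\sqrt{rR}}$ no longer depends on the precise boundary data $(\omega_0,\calC)$ on $\partial\La_R$ --- only on the topological requirement that a cluster reaches the circuit from outside in a manner compatible with $H_R$. Applying the same circuit-exploration argument to $\phi_{\bbL_{\rm mix}}[\,\cdot\,\mid H_R]$ yields the same ``standardized'' measure on $\La_r$, and comparing the two gives the claimed bound.

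The hard part is the interplay between the macroscopic conditioning $\{\sfC_x\cap\La_R^c=\calC\}$ and the local conditioning $\{{\rm Top}(\sfC_x)=0\}$: the two are not independent, and a naive use of Proposition~\ref{prop:mixing} treats them as unrelated. The resolution is precisely the two-step exploration sketched above --- first resolving the local structure realizing ${\rm Top}(\sfC_x)=0$, then the global connections to $\calC$ --- together with the observation that the required \emph{compatibility} of these two pieces (that the cluster realizing the top at $0$ actually connects through $\La_R\setminus\La_r$ to the prescribed arcs on $\partial\La_R$) is a crossing event in the annulus whose probability, up to multiplicative constants uniform in $(\omega_0,\calC)$, is controlled by RSW. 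The statements for the other three directions follow by applying the same argument after the appropriate reflection/rotation of the lattice, since $\bbL_{\rm mix}$ is invariant under the relevant symmetries up to a swap of the two angles $\alpha,\beta$.
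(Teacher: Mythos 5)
Your screening step fails under the conditioning, and this is exactly where the real difficulty of the lemma sits. Under $\phi_{\bbL_{\rm mix}}[\,\cdot\mid \omega=\omega_0 \text{ on }\La_R^c,\ \sfC_x\cap\La_R^c=\calC,\ {\rm Top}(\sfC_x)=0]$, the cluster $\sfC_x$ connects the cell of $0$ (inside $\La_r$) to $\calC\subset\La_R^c$, so it crosses the annulus $\La_{\sqrt{rR}}\setminus\La_r$; consequently any primal circuit surrounding $\La_r$ in that annulus would intersect $\sfC_x$, hence belong to it, and would contain points in cells strictly higher than that of $0$, contradicting ${\rm Top}(\sfC_x)=0$, while any dual circuit would disconnect $\sfC_x\cap\La_r$ from $\calC$. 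So the innermost monochromatic circuit you propose to explore exists with conditional probability zero, not $1-O((r/R)^c)$: the RSW estimate you invoke holds for the unconditioned measure, whereas the conditioning event is precisely a half-plane three-arm event, of probability of order $(r/R)^2$ by \eqref{eq:3hp_arm}, so unconditioned estimates cannot be imported. The same issue undermines your resolution of the ``hard part'': the compatibility between the local structure realizing the top at $0$ and the global connection to $\calC$ is not a crossing event of uniformly positive probability controlled by \eqref{eq:RSW_iso}; it is a polynomially small three-arm-type event, and what must actually be proved is that its conditional probability, given the information revealed at an intermediate scale, is the same up to uniform multiplicative constants for all admissible $(\omega_0,\calC)$ --- a separation-of-arms / quasi-multiplicativity statement, not an RSW crossing bound. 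For the same reason your step 1 does not follow from Proposition~\ref{prop:mixing}: the event ${\rm Top}(\sfC_{x_n})=0$ is not supported outside $\La_R$ (it constrains every scale, since the cluster must nowhere exceed height $0$), so the conditioning does not factor into a scale-$R$ event times a local event near $0$.

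For comparison, the paper's proof (sketched in Exercise~\ref{exo:IIC}) handles this by a multi-scale renewal argument rather than a single screening circuit: at each dyadic scale one explores the interfaces crossing the annulus and the resulting flower domain $\calF_j$ around $0$, with boundary data $\xi_j$ recording both the wirings and the connections to $\calC$; the pair $(\calF_j,\xi_j)$ is a Markov chain in the scale index under the conditioned measure, and the conditioning is absorbed at each step by comparing the conditional probability of $\{\sfC_x\cap\La_R^c=\calC,\ {\rm Top}(\sfC_x)=0\}$ to $\pi_3^{\rm T}$ times a connection probability, uniformly in the explored data. Uniform ratio mixing of this chain at every scale, iterated over $\log_2(R/r)$ scales, is what produces the polynomial error $(r/R)^{c_{\rm IIC}}$. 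Your outline would need to be rebuilt along these lines; as written, the central screening and compatibility claims are not correct.
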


In the above, the constants~$C,c_{\rm IIC}>0$ only depend on the RSW property, and therefore are uniform over the choice of lattice~$\bbL_{\rm mix}$. As mentioned above, Lemma~\ref{lem:IIC} is a consequence of~\eqref{eq:RSW_iso} in the spirit of previous IIC constructions~\cite{Kes86,Jar03,BasSap17}; its proof is sketched in Exercise~\ref{exo:IIC}. Proposition~\ref{prop:IIC_def} follows from Lemma~\ref{lem:IIC}.

%\begin{proof}
%	We only sketch this proof and only focus on the top of the cluster. 
%	The idea is to explore all interfaces of clusters from~$\partial \La_R$ to the inside of the box. 
%	With high probability, all interfaces except two will exit~$\La_R$ before reaching~$\La_{\sqrt{Rr}}$.
%	This is to say that no cluster of~$\omega_0$ except those in~$\calC$ will be connected to~$\La_{\sqrt{Rr}}$, 
%	and that all clusters of~$\calC$ will merge before reaching~$\La_{\sqrt{Rr}}$. 
%	Note that the clusters of~$\calC$ are conditioned to merge within~$\La_R$ and to reach~$0$. 
%
%	At this stage, the unexplored connected component of~$0$ is bounded by a single primal arc and a single dual arc. 
%	From here, the proof follows the classical arguments of~\cite{BasSap17,GarPetSch13,Jar03,Kes86}.
%	One may explore iteratively the {\em flower domains}\footnote{The flower domain between~$\La_{2^k}$ and~$\La_{2^{k-1}}$ is the connected component of~$0$ in the complement of the set of interfaces starting on~$\partial \La_{2^k}$ and stopped when they reach~$\partial \La_{2^k}$  or~$\La_{2^{k-1}}$. By {\em exploring} the flower domain, we mean conditioning on this set of interfaces and all the configuration outside of the connected component of~$0$ in their complement.} between~$\La_{2^k}$ and~$\La_{2^{k-1}}$ for~$k = \log \sqrt{Rr},\dots, \log{r}$ and prove that the chain formed of these domains mixes at every step with positive probability. 
%\end{proof}

\paragraph{Arm exponents.}	
Arm exponents generally describe the speed of decay of certain connection probabilities in critical FK-percolation. 
The RSW property allows to obtain bounds for such exponents (but does not generally prove their existence), and compute some exactly. We state here the bounds necessary for our arguments. 

\begin{proposition}\label{prop:arms_bounds_iso}
	There exist constants~$C, \onearmbound > 0$ such that, 
	for any~$\pmb\alpha=(\alpha_i:i\in\bbZ) \in (0,\pi)^\bbZ$ containing at most two values
	and any~$r < R$
	\begin{align}
	\phi_{\bbL(\pmb\alpha)} [\La_r \longleftrightarrow \La_R^c] &\leq C (r/R)^{\onearmbound}, \label{eq:one_arm_bound_iso}\\
	\tfrac1C (r/R)^{2}\leq \phi_{\bbL(\pmb\alpha)} [\exists \sfC \text{ with }{\rm Top}(\sfC) \in \La_r \text{ and }\sfC \cap \La_R^c \neq \emptyset] &\leq C (r/R)^{2}. \label{eq:3hp_arm}
	\end{align}
	The second line is also valid for~${\rm Bottom}$ instead of~${\rm Top}$.
	It also holds that
	\begin{align}
	\tfrac1C (r/R)^{2}\leq \phi_{\bbL_{\rm mix}} [\exists \sfC \text{ with }{\rm Left}(\sfC) \in \La_r \text{ and }\sfC \cap \La_R^c \neq \emptyset] &\leq C (r/R)^{2} \qquad \text{and} \label{eq:3hp_arm2} \\
	\phi_{\bbL(\pmb\alpha)} [\exists \sfC \text{ with }{\rm Left}(\sfC) \in \La_r \text{ and }\sfC \cap \La_R^c \neq \emptyset] &\leq C (r/R)^{1+c}, \label{eq:3hp_arm3}
	\end{align}
	for some~$c > 0$. The same with~${\rm Right}$ instead of~${\rm Left}$. 
\end{proposition}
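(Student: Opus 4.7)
The plan is to derive all four bounds from Theorem~\ref{thm:RSW_iso} together with its consequences (finite-energy and the mixing Proposition~\ref{prop:mixing}), using the by-now-standard machinery of arm exponents for models satisfying RSW. In particular, the central ingredient is the universality of the half-plane three-arm exponent, which equals~$2$ in any critical planar model satisfying RSW.

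For \eqref{eq:one_arm_bound_iso}, I would use a standard renormalization argument. By \eqref{eq:RSW_iso} and \eqref{eq:FKGFK}, each annulus $\La_{2^{k+1}}\setminus \La_{2^k}$ contains a dual circuit surrounding $\La_{2^k}$ with probability at least a universal $c_0>0$. Via Proposition~\ref{prop:mixing}, these events at geometric scales are approximately independent, so the probability that none occurs across the $\lfloor \log_2(R/r)\rfloor$ dyadic annuli between $\La_r$ and $\La_R$ decays like $(1-c_0')^{\log_2(R/r)}$, which is exactly of the form $(r/R)^{\onearmbound}$.

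For the upper bound of \eqref{eq:3hp_arm}, I would apply a union bound
\[
\phi_{\bbL(\pmb\alpha)}\big[\exists\sfC:{\rm Top}(\sfC)\in\La_r,\,\sfC\cap\La_R^c\neq\emptyset\big]
\le \sum_{v\in V\cap \La_r} \phi_{\bbL(\pmb\alpha)}\big[{\rm Top}(\sfC_v)=v,\,\sfC_v\cap\La_R^c\neq\emptyset\big].
\]
The event on the right forces a three-arm half-plane event at $v$ extending to scale $R$: one primal arm leaving~$v$ downward and reaching~$\La_R(v)^c$, together with two dual arms in the upper half-plane emanating from near $v$ which block the cluster from extending above $v$ and enforce the leftmost-topmost property. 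Since $e_{\rm vert}$ is a natural symmetry direction for $\bbL(\pmb\alpha)$ (every horizontal line is preserved by the lattice translations), the standard RSW-based proof of the half-plane three-arm exponent equal to~$2$ applies, giving $O(R^{-2})$ per vertex; summing over the $O(r^2)$ vertices in $\La_r$ produces the bound $(r/R)^{2}$. For the matching lower bound, I would explicitly construct such a configuration: fix a vertex~$v_0$ on the top edge of $\La_r$, require a primal path from~$v_0$ down to $\partial\La_R$ together with dual crossings in a half-annulus of radii $1$ and $r$ above~$v_0$ that certify the top property; by \eqref{eq:RSW_iso} and \eqref{eq:FKGFK} this event has probability at least $c(r/R)^2$.

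For \eqref{eq:3hp_arm2}, the proof proceeds exactly as for \eqref{eq:3hp_arm}, provided one verifies that $\bbL_{\rm mix}$ admits a symmetry (the alternation of angles $\alpha$ and $\beta$ has been chosen precisely for this) making the direction $e_{\rm lat}$ play the role played by $e_{\rm vert}$ in \eqref{eq:3hp_arm}; the universal half-plane three-arm argument then transfers verbatim. Finally, for \eqref{eq:3hp_arm3} on a general $\bbL(\pmb\alpha)$ the direction $e_{\rm lat}$ no longer matches a lattice symmetry, so the three-arm half-plane bound cannot be invoked directly. However, the event still implies a two-arm event in a tilted half-plane (one primal arm going in direction $+e_{\rm lat}$, one dual arm on the $-e_{\rm lat}$ side), and combining the one-arm bound \eqref{eq:one_arm_bound_iso} at an intermediate scale $\sqrt{rR}$ with a gluing argument producing the extra dual arm (using RSW and Proposition~\ref{prop:mixing}) yields an improvement of the exponent from $1$ to $1+c$. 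The main obstacle is this last step: without the half-plane symmetry, making the exponent strictly larger than~$1$ requires some form of quantitative separation-of-arms argument, which is the most delicate part of the proof.
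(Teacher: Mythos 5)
Your treatment of \eqref{eq:one_arm_bound_iso} is fine, but the core of the proposition --- the exponent $2$ in \eqref{eq:3hp_arm}--\eqref{eq:3hp_arm2} --- cannot be obtained the way you propose. The equality of the half-plane three-arm exponent with $2$ is a \emph{universal-exponent} statement: it is not a consequence of RSW plus translation invariance along the boundary line. Invariance of $\bbL(\pmb\alpha)$ under horizontal translations only equidistributes the \emph{horizontal} coordinate of ${\rm Top}(\sfC)$; since the angle sequence $\pmb\alpha$ is arbitrary, the lattice has no vertical period, and nothing prevents the height of the extremum from concentrating on a few rows, which would ruin the per-vertex bound $O(R^{-2})$ (one only gets $O(R^{-1})$ per row this way). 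This is precisely why the paper first proves \eqref{eq:3hp_arm} and \eqref{eq:3hp_arm2} on the biperiodic lattice $\bbL_{\rm mix}$, via a torus argument: exponential tails for the number of clusters of diameter $\geq R$ and for the number of their ``almost-extrema'' (a multi-valued map argument), then equidistribution using the \emph{two} independent translations of the torus, then mixing; and then transfers the Top/Bottom bound to general $\bbL(\pmb\alpha)$ by track exchanges (star-triangle) combined with arm separation. Your lower bound is also flawed on its own terms: a dual crossing of a half-annulus of radii $1$ and $r$ above $v_0$ only blocks the cluster locally, so a cluster of diameter $R$ can still rise above height $r$ farther away and the constructed event does not imply ${\rm Top}(\sfC)\in\La_r$; and even for the correct three-arm event, RSW/FKG gluing yields a lower bound $(r/R)^{C}$ with an unspecified large $C$, never the sharp exponent $2$ --- the matching lower bound again comes from the counting/translation-invariance argument. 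Likewise, for \eqref{eq:3hp_arm2} there is no symmetry of $\bbL_{\rm mix}$ exchanging $e_{\rm vert}$ and $e_{\rm lat}$ for generic angles; what the argument uses is biperiodicity, not a reflection.

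For \eqref{eq:3hp_arm3} your sketch also misses the mechanism. The one-arm bound only gives the exponent $\onearmbound$, which is not known to be close to $1$, so inserting it at an intermediate scale cannot produce $1+c$; and an upper bound of exponent $1$ for a ``two-arm event in a tilted half-plane'' runs into exactly the same symmetry obstruction as above, since the lateral direction is not a translation direction of $\bbL(\pmb\alpha)$. The paper's route is different: by arm separation one reduces the Left-extremum event to a three-arm event in the plane slit along $\bbR_+\times\{0\}$; the key point is that the (suitably normalised) slit event is designed so that its horizontal translates by $(kr,0)$, $0\leq k< R/r$, are pairwise exclusive, so horizontal translation invariance alone gives probability at most $r/R$; the extra factor $(r/R)^{c}$ is then gained by quasi-multiplicativity and an RSW cost at each scale. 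You correctly flagged this step as the delicate one, but the ingredients you propose would not close it.
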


The events above are called the one arm- and the half-plane three-arm event, respectively, from~$\La_r$ to a distance~$R$. 
The direction of the half-plane depends on which extremum of the cluster we consider. 
	
We will not provide the proof of Proposition~\ref{prop:arms_bounds_iso}, but mention that it is a consequence of~\eqref{eq:RSW_iso} and certain symmetries of the lattice. 
The interested reader may consult Exercise~\ref{exo:algebraic_decay} for the bound~\eqref{eq:one_arm_bound_iso} on the one-arm probability and Exercises~\ref{exo:3_arms} and~\ref{exo:3_arms3} 
for the three-arm probabilities \eqref{eq:3hp_arm}, \eqref{eq:3hp_arm2} and \eqref{eq:3hp_arm3}. A full proof is available in~\cite{DumKozKra20}.

It is remarkable that the exponent of the half-plane three-arm probability in \eqref{eq:3hp_arm} and \eqref{eq:3hp_arm2} may be computed exactly; 
we say that the three-arm half-plane exponent is {\em universal}, in that it does not depend on~$q \in [1,4]$. 
Notice that~\eqref{eq:3hp_arm2} only holds for~$\bbL_{\rm mix}$, while for a generic lattices we only have the weaker bound \eqref{eq:3hp_arm3} at this stage. 
This is because the proofs of~\eqref{eq:3hp_arm} and~\eqref{eq:3hp_arm2} use the invariance of the lattice under translations orthogonal to the direction of the extremum. 
All lattices~$\bbL(\pmb\alpha)$ are invariant under horizontal translations, which allows us to deduce the exponent for~${\rm Top}$ and~${\rm Bottom}$, but not for~${\rm Left}$ or~${\rm Right}$. The lattices~$\bbL_{\rm mix}$ are invariant under two non-collinear translations, which allow to extend~\eqref{eq:3hp_arm} to the lateral extrema. 

The bound \eqref{eq:3hp_arm3} requires a more involved argument. While Theorem~\ref{thm:universalCNSS} ultimately implies that the exponent for \eqref{eq:3hp_arm3} is also equal to two, it is not possible to prove this at this stage. We mentioned this bound here, as it is necessary for the proof of Theorem~\ref{thm:linear}.
% Indeed, imagine that Theorem~\ref{thm:linear} holds, but that~$M_{\beta,\alpha}\neq {\rm id}$. Then the probabilities in \eqref{eq:3hp_arm} under~$\phi_{\bbL_{t}}$ would have the same exponent as a three-arm in a cone of~$\phi_{\bbL_{0}}$ with opening different from~$\pi$. 

\subsection{Star-triangle and track-exchange transformations}\label{sec:star-triangle}

\paragraph{Star-triangle transformation.} 

The \emph{star-triangle transformation}, also known as the \emph{Yang--Baxter relation}
is a relation between statistical mechanics models indicative of the integrability of the system --- see~\cite{Bax89}.
We will use it here in the context of FK-percolation on isoradial graphs.
%We do not plan to do a full review on this transformation,
%and rather focus on the context of the random-cluster model on isoradial graphs with isoradial edge-weights.

\begin{figure}
  \centering
  \includegraphics[width=0.6\textwidth]{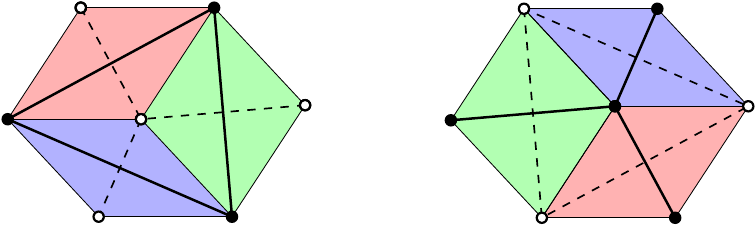}
  \caption{The three rhombi together with the drawing, on the left, of the triangle (in which case the dual graph is a star) and, on the right, of the star (in which case the dual graph is a triangle).}
  \label{fig:diamond}
\end{figure}

Fix some isoradial graph~$G$. A triangle~$ABC$ is a face of the graph bounded by three edges; 
a star~$ABCO$ is a vertex~$O$ of degree three, together with its neighbours~$A,B,C$.
In the diamond graph, the three edges forming a triangle or star correspond to three rhombi that form a hexagon --- see Figure~\ref{fig:diamond}.
For a given triangle~$ABC$ in~$G$, there exists a unique way to rearrange these three rhombi to form a star~$ABCO$; 
and vice-versa. 
This process of changing the graph as above is called the star-triangle transformation; it transforms isoradial graphs into other isoradial graphs.
Its crucial feature is that it preserves connection probabilities in the FK-percolation measures associated to the isoradial graphs.

\begin{lemma}\label{lem:stt}
	Let~$G$ be an isoradial graph and~$ABC$ be a triangle in~$G$. 
	Write~$G'$ for the graph obtained by replacing the triangle~$ABC$ with the corresponding star~$ABCO$.
	Then the connections between all vertices of~$G$ under~$\phi_G$ 
	and the corresponding vertices of~$G'$ under~$\phi_{G'}$ have the same law.
\end{lemma}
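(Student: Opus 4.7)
The plan is to reduce the lemma to a purely local computation inside the hexagonal region where $G$ and $G'$ differ, and then to verify the classical Yang--Baxter (a.k.a.\ star--triangle) identity for the random-cluster model with the isoradial parameters \eqref{eq:isoraial_p_e}. Let $H$ denote the hexagonal region bounded by the three rhombi being flipped: $G$ and $G'$ coincide outside $H$, while inside $H$ they contain respectively the three triangle edges of $ABC$ and the three star edges of $ABCO$ (together with the extra vertex $O$). By the spatial Markov property~\eqref{eq:SMP}, conditioning $\phi_G$ and $\phi_{G'}$ on the configuration outside $H$ produces measures on the edges inside $H$ whose dependence on the exterior reduces to the partition $\xi$ of $\{A,B,C\}$ that records which of these three vertices are connected through the exterior. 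It therefore suffices to show that, for every such boundary partition $\xi$, the two conditional measures, pushed forward to the partition $\pi$ of $\{A,B,C\}$ induced by the edges inside $H$, coincide.

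Next, I would enumerate: each local measure has only $2^3=8$ configurations, each carrying Boltzmann weight $\prod_e p_e^{\omega_e}(1-p_e)^{1-\omega_e}\,q^{k(\omega^\xi)}$. Grouping configurations by the induced partition $\pi$ (of which there are exactly five) yields quantities $T_\pi(\xi)$ and $S_\pi(\xi)$ on the triangle and on the star respectively, and the claim reduces to
\[
    T_\pi(\xi)\,Z_S(\xi) \;=\; S_\pi(\xi)\,Z_T(\xi) \qquad \text{for all }\pi,\xi,
\]
where $Z_T(\xi)=\sum_\pi T_\pi(\xi)$ and analogously for $Z_S(\xi)$. The $S_3$-symmetry permuting $A,B,C$ trims the number of independent identities to a short list in the triangle parameters $p_a,p_b,p_c$ (with subtended angles $\theta_a,\theta_b,\theta_c$ satisfying $\theta_a+\theta_b+\theta_c=\pi$) and the star parameters $p_a',p_b',p_c'$ (where the rhombic flip replaces each $\theta_e$ by $\pi-\theta_e$).

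The main obstacle is the algebraic verification of these identities under the prescription \eqref{eq:isoraial_p_e}. Using $\theta_a+\theta_b+\theta_c=\pi$, they collapse to product-to-sum trigonometric relations for $\sin(r\theta_a),\sin(r\theta_b),\sin(r\theta_c)$ (with $\sinh$ replacing $\sin$ when $q>4$, or affine formulas when $q=4$); this is precisely the Yang--Baxter calibration that singles out the isoradial weights, and which would generically fail for any other parametrisation compatible only with self-duality. A clean organisation is to handle first the fully free boundary condition $\xi_0$, for which only two independent quantities survive (the all-separated and the all-connected partitions), yielding simultaneously the ratio $Z_T(\xi_0)/Z_S(\xi_0)$ and one master trigonometric identity; the other four boundary conditions follow by additivity, since passing from $\xi_0$ to a coarser $\xi$ only multiplies $q^{k(\omega^\xi)}$ by factors dictated by $\pi$. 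Once the local equality of pushforward measures is secured, the statement of the lemma follows by integrating against the exterior configuration: for any pair of vertices $x,y$ distinct from $O$, the event $\{x\lra y\}$ depends on the exterior and on $\pi$ only, so its probabilities under $\phi_G$ and $\phi_{G'}$ agree.
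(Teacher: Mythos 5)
Your overall strategy is sound and is in fact the classical route: localise via the spatial Markov property to the hexagon where $G$ and $G'$ differ, note that the exterior enters only through the partition $\xi$ of $\{A,B,C\}$ it induces, and reduce the lemma to the proportionality, for each $\xi$, of the two weight vectors indexed by the induced partition $\pi$ of $\{A,B,C\}$ (your observation that it suffices to check the free boundary condition, since changing $\xi$ multiplies both $T_\pi$ and $S_\pi$ by the same factor $q^{-m(\pi,\xi)}$, is correct). This differs from the paper, which proves the lemma by exhibiting the explicit star--triangle coupling $\omega\mapsto\omega'$ and noting that it preserves all connections among $A,B,C$ while mapping $\phi_G$ to $\phi_{G'}$; the distributional identity you aim at is essentially the content of the third exercise of the isoradial chapter. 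The coupling buys more (it is what is iterated later to track clusters through many transformations), while your argument is the minimal verification of the lemma itself.

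There is, however, a concrete error in the step where the whole content of the lemma sits. For a triangular face $ABC$ of an isoradial graph, the three subtended angles $\theta_{AB},\theta_{BC},\theta_{CA}$ are the angles of the three rhombi at the circumcentre of the face (a dual vertex), so they satisfy $\theta_{AB}+\theta_{BC}+\theta_{CA}=2\pi$, not $\pi$ as you assert; it is the \emph{star} edges, with subtended angles $\pi-\theta_{BC}$, etc., whose angles sum to $\pi$. With your constraint the Yang--Baxter identity is simply false: for $q=1$ (so $r=1/3$, $y(\theta)=\sin\bigl(\tfrac{\pi-\theta}{3}\bigr)/\sin\bigl(\tfrac{\theta}{3}\bigr)$) and $\theta_A=\theta_B=\theta_C=2\pi/3$ one gets $y\approx 0.532$ and $y^3+3y^2\approx 1=q$ as required, whereas the symmetric point of your constraint, $\theta=\pi/3$, gives $y\approx 1.88$ and $y^3+3y^2\approx 17\neq 1$. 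So the trigonometric verification you defer --- which is the heart of the lemma, since the reduction to it is routine --- would fail as you set it up; it goes through only with the angle sum $2\pi$ for the triangle (equivalently, after substituting $y'_*=q/y_*$, with the identity $y_Ay_By_C+y_Ay_B+y_By_C+y_Cy_A=q$ of the paper's exercise). Relatedly, your claim that for free boundary conditions ``only two independent quantities survive'' overlooks the three single-pair partitions, which for generic unequal angles give genuinely distinct weights; proportionality must be checked for them too (by relabelling they reduce to one further identity, but they cannot be dropped).
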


Note that the lemma above does not state anything about connections involving the vertex~$O$, 
which is natural as this vertex does not exist in~$G$.  
% This apparently obvious fact turns out to be essential below. 

A convenient way to use Lemma~\ref{lem:stt} --- which also provides a proof --- is via an explicit coupling.
The idea to couple the configurations before and after applying a star-triangle transformation appeared already in~\cite{GriMan13,GriMan13a,GriMan14,DumLiMan18}.
That a coupling that preserves connections as described below exists is essentially equivalent to Lemma~\ref{lem:stt}; 
the novelty of~\cite{GriMan13,GriMan13a,GriMan14,DumLiMan18,DumKozKra20} and the present chapter 
is to follow the geometry of large clusters while applying a large number of such transformations. 

%
%The star-triangle transformation was first used to prove that the laws of connections between vertices of a graph~$G$ with a triangle~$ABC$ and the graph~$G'$ obtained from~$G$ with the star~$ABCO$ instead of~$ABC$ are the same, except for the additional vertex~$O$ in~$G'$. The fact that  the star-triangle transformation can be used to construct a coupling between the random-cluster models on~$G$ and~$G'$ was proved in several places, see for instance~\cite{DumLiMan18}. The first observation that this could be done goes back to the work of~\cite{GriMan13,GriMan13a,GriMan14}, even though the identification that the star-triangle transformation was preserving the partition functions of models on isoradial graphs with isoradial weights goes long back.

\begin{definition}[Star-triangle coupling] 
Consider an isoradial graph~$G$ containing a triangle~$ABC$ and let~$G'$ be the graph with the star~$ABCO$ instead. 
Introduce the {\em star-triangle coupling} between~$\omega\sim\phi_{G}$ and~$\omega'\sim\phi_{G'}$ defined as a random map~$\omega\mapsto \omega'$ as follows (see Figure~\ref{fig:simple_transformation_coupling}):
\begin{itemize}
\item For every edge~$e$ which does not belong to~$ABCO$,~$\omega'_e=\omega_e$,
\item If two or three of the  edges of~$ABC$ are open in~$\omega$, then all the edges in~$ABCO$ are open in~$\omega'$,
\item If exactly one of the edges of~$ABC$ is open in~$\omega$, say~$BC$, then the edges~$BO$ and~$OC$ are open in~$\omega'$, and the third edge of the star is closed in~$\omega'$,
\item If no edge of~$ABC$ is open in~$\omega$, then~$\omega'_{OABC}$ has 
\begin{itemize}
\item no  open edge with probability equal to~$q^2\tfrac{1-p_{OA}}{p_{OA}}\tfrac{1-p_{OB}}{p_{OB}}\tfrac{1-p_{OC}}{p_{OC}}$, 
\item the edge~$OA$ is open and the other two closed with probability~$q\tfrac{1-p_{OB}}{p_{OB}}\tfrac{1-p_{OC}}{p_{OC}}$, 
\item similarly with cyclic permutations for~$B$ and~$C$.
\end{itemize}
\end{itemize}
In the above,~$p_{e}$ is the parameter given by~\eqref{eq:isoraial_p_e} for edges~$e$ of~$G$ or~$G'$. The random choices in the last point are made independently of~$\omega$.
\end{definition}

\begin{figure}
  \centering
  \includegraphics[width=0.5\textwidth]{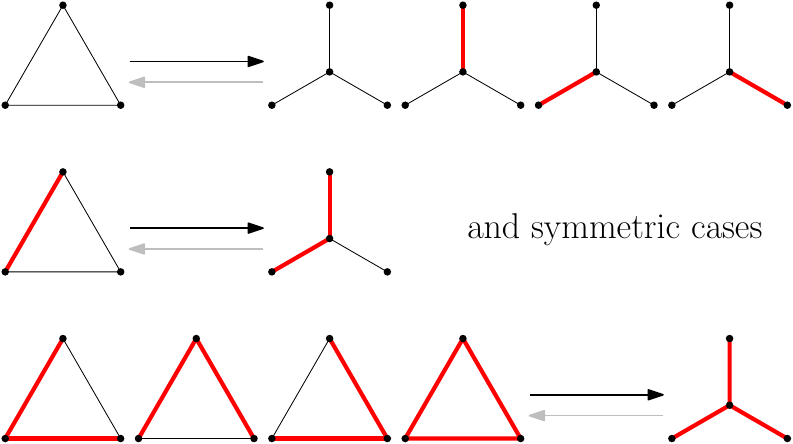}
  \caption{A picture of the transformations in the star-triangle coupling. In the first line, the outcome is chosen at random according to the probabilities listed in the definition. The reverse map (grey arrows) has a random outcome in the last line.}
  \label{fig:simple_transformation_coupling}
\end{figure}

Let us make a few observations concerning the coupling.
First, note that the transformation uses extra randomness in one case and that it is not a deterministic matching of the different configurations.
Second, the coupling preserves the connectivity between the vertices, except at the vertex~$O$. 
Third, in the coupling, given~$\omega'$, the edges of~$ABC$ in~$\omega$ are sampled as follows:
\begin{itemize}
\item If there is one or zero edge of~$ABCO$ that is open in~$\omega'$, then none of the edges in~$ABC$ is open in~$\omega$,
\item If exactly two of the edges in~$ABCO$ are open in~$\omega'$, say~$AO$ and~$BO$, then the edge~$AB$ is the only edge of~$ABC$ that is open in~$\omega$,
\item If all the edges of~$ABCO$ are open in~$\omega'$, then 
\begin{itemize}
\item all the edges of~$ABC$ are open in~$\omega$ with probability~$\tfrac1q\tfrac{p_{AB}}{1-p_{AB}}\tfrac{p_{BC}}{1-p_{BC}}\tfrac{p_{CA}}{1-p_{CA}}$, 
\item~$AB$ and~$BC$  are open and~$CA$ is closed with probability equal to~$\tfrac1q\tfrac{p_{AB}}{1-p_{AB}}\tfrac{p_{BC}}{1-p_{BC}}$, 
\item similarly with cyclic permutations.
\end{itemize}\end{itemize}

\paragraph{Track-exchange operator.} 
The previous star-triangle operator gives rise to a track-exchange operator defined as follows. 
For~$\bbL=\bbL(\pmb\alpha)$ and~$i\in\bbZ$, let~$\bbL'=\bbL(\pmb\alpha')$ be the lattice obtained by exchanging the tracks~$t_i$ and~$t_{i-1}$,
that is, exchanging~$\alpha_i$ and~$\alpha_{i-1}$ in the sequence~$\pmb\alpha$. 

\begin{fact}
	There exists a random map~$\bfT_i$ from percolation configurations on~$\bbL$ to percolation configurations on~$\bbL'$ such that 
	\begin{itemize}
	\item If~$\omega\sim \phi_{\bbL}$, then~$\bfT_i (\omega)\sim \phi_{\bbL'}$;
	\item~$\omega$ and~$\bfT_i(\omega)$ are equal for all edges outside of~$t_i$ and~$t_{i-1}$; 
	\item vertices outside of~$t_i \cap t_{i-1}$ are connected in the same way in~$\omega$ as in~$\bfT_i(\omega)$;
	\item the modification is local in the sense that there exists~$c > 0$ such that, when~$\omega \sim \phi_{\bbL}$, 
	$\bfT_i(\omega)(e)$ is determined\footnote{By that, we mean that one may produce a random sample~$\tilde\bfT_i(\omega)$ determined by 
	$\omega$ on~$\La_r(e)$ so that~$\tilde\bfT_i(\omega)(e) = \bfT_i(\omega)(e)$ with high probability. }	
	 by~$\omega$ on~$\La_r(e)$ with probability at least~$1-e^{-cr}$.
	\end{itemize}
\end{fact}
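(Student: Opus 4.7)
The plan is to construct $\bfT_i$ as a composition of elementary star-triangle moves carried out along the strip spanned by the two tracks $t_{i-1}$ and $t_i$. At each vertical column $s_j$ one must exchange the two rhombi of $G^\diamond$ stacked at the intersection $s_j\cap(t_{i-1}\cup t_i)$, so that afterwards the angle $\alpha_{i-1}$ sits above $\alpha_i$ rather than below. Following the scheme developed in~\cite{GriMan14,DumLiMan18}, this local swap can be realised as a short, bounded-size sequence of star-triangle coupling transformations $\sigma_j$ on a hexagonal neighbourhood of $s_j\cap(t_{i-1}\cup t_i)$, possibly enlarged by borrowing one or two rhombi from the adjacent columns $s_{j-1}, s_{j+1}$. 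Each individual coupling move is an instance of the star-triangle coupling associated with Lemma~\ref{lem:stt}.

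One then sets $\bfT_i$ to be the composition of the $\sigma_j$ over $j\in\bbZ$. Because the neighbourhoods modified by $\sigma_j$ and $\sigma_{j'}$ are disjoint whenever $|j-j'|$ is large, the moves essentially commute, and with some care the whole composition is well-defined as a random map on $\{0,1\}^{E(\bbL)}$. The law-preservation $\bfT_i(\omega)\sim \phi_{\bbL'}$ then follows from the fact that each star-triangle preserves the FK-measure (Lemma~\ref{lem:stt}) together with the commutation. The two bullets about $\omega$ and $\bfT_i(\omega)$ agreeing outside $t_i\cup t_{i-1}$, and about connections between vertices outside $t_i\cap t_{i-1}$ being preserved, follow directly from the corresponding properties of a single star-triangle, applied at each column and composed. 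For the exponential locality, each $\sigma_j$ is determined by $\omega$ in a bounded neighbourhood of $s_j\cap(t_{i-1}\cup t_i)$, so $\bfT_i(\omega)(e)$ depends a priori only on the $\sigma_j$ whose support meets $e$, which lie in columns at bounded distance of $e$. The ``empty triangle'' case of the star-triangle coupling introduces fresh randomness that can, however, create dependency chains linking $\sigma_j$ to $\sigma_{j\pm 1}$, $\sigma_{j\pm 2}$, and so on; the probability that such a chain extends one more column is bounded away from $1$ by a universal constant, so a standard geometric argument yields $\bbP[\bfT_i(\omega)(e)\text{ is not determined by } \omega|_{\La_r(e)}]\le e^{-cr}$.

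The main obstacle is the first, combinatorial step: making explicit the local sequence $\sigma_j$ that exchanges two adjacent horizontal tracks in $G^\diamond$. Since $\bbL(\pmb\alpha)$ contains no triangular faces, star-triangle moves are not directly available on two stacked column-rhombi, and one must borrow auxiliary rhombi from neighbouring columns, perform a carefully arranged sequence of star-triangle moves, and verify that the resulting rhombic tiling is exactly $\bbL(\pmb\alpha')^\diamond$. This geometric book-keeping has essentially been carried out in the precursors~\cite{GriMan14,DumLiMan18} in the context of RSW estimates, and its adaptation to the present setting is the technical heart of the construction.
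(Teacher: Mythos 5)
Your overall strategy --- realise $\bfT_i$ as a composition of star-triangle couplings, deduce law-preservation from Lemma~\ref{lem:stt}, preservation of connections from the coupling, and exponential locality from a geometric decay of dependence --- is exactly the one the notes have in mind (the Fact is not proved here; it is deferred to~\cite{DumKozKra20}). But the combinatorial core of your sketch, which you yourself flag as the main obstacle, is not just unfinished book-keeping: as proposed it fails. A star-triangle move requires a hexagon made of three rhombi, i.e.\ three pairwise-crossing train tracks. In $\bbL(\pmb\alpha)$ horizontal tracks never cross one another and vertical tracks never cross one another, so the diamond graph contains no such hexagon anywhere: the third rhombus needed at a column $s_j$, the one with edge directions $\alpha_{i-1}$ and $\alpha_i$, does not exist in the tiling and cannot be ``borrowed from the adjacent columns'', which only contain rhombi of types $(0,\alpha_{i-1})$ and $(0,\alpha_i)$. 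Consequently there is no bounded-size sequence $\sigma_j$ of star-triangle moves performing the swap locally at one column, and hence no essentially commuting family of column moves to compose over $j\in\bbZ$.

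The construction that actually works (Grimmett--Manolescu, as used in~\cite{GriMan14,DumLiMan18,DumKozKra20}) introduces the missing crossing rhombus of directions $(\alpha_{i-1},\alpha_i)$ externally --- one lets $t_{i-1}$ and $t_i$ cross at one end of a (large finite) piece of the strip --- and then sweeps this rhombus across the strip, one star-triangle coupling per vertical track, removing it at the far end; the net effect on the strip is the exchange of the two tracks. Consecutive moves share the travelling rhombus, so they are performed sequentially and have overlapping supports, not disjoint ones. The locality of the fourth bullet is then not a consequence of disjoint supports plus short dependency chains, but of the fact that the information carried along the sweep (the states of the two edges transported by the crossing rhombus) is forgotten at a geometric rate as the rhombus advances; this same estimate is what allows one to pass from finite pieces of the strip to the full lattice and make $\bfT_i$ well defined on $\bbL$, a limiting step your proposal does not address. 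So your plan is in the right spirit, but the step you treat as routine geometric book-keeping is precisely where the construction differs from what you wrote, and the commutation/disjointness picture underlying your locality argument needs to be replaced by the sequential-sweep mechanism above.
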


The exact nature of the coupling between~$\omega$ and~$\bfT_i(\omega)$ is not important. 
One such coupling may be obtained by compositions of the star-triangle transformation; we direct the reader to~\cite{DumKozKra20} for a full description. Other such couplings may also be constructed, see for instance~\cite{PelRom15}. 

We finish this section by a useful corollary of the track-exchange operation.

\begin{corollary}\label{cor:same_law}
	If~$\pmb\alpha$ and~$\pmb\beta$ are sequences of angles containing at most two angles each and satisfing~$\alpha_i=\beta_i$ for~$a\le i\le b$, 
	then the law of the boundary condition on the strip between~$t_a$ and~$t_b$ (including these tracks) induced by the configuration outside of this strip 
	is the same in~$\phi_{\bbL(\pmb\alpha)}$ and~$\phi_{\bbL(\pmb\beta)}$. 
\end{corollary}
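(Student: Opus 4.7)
The plan is to use the track-exchange operator $\bfT_i$ to transport a sample of $\phi_{\bbL(\pmb\alpha)}$ into a sample of $\phi_{\bbL(\pmb\beta)}$ via a sequence of exchanges $\bfT_i$, each acting on a pair of tracks $t_{i-1}, t_i$ both strictly outside $[a,b]$ (either both above $t_b$ or, symmetrically, both below $t_a$). By the first listed property of $\bfT_i$, each such exchange maps $\phi_{\bbL(\pmb\alpha)}$ to $\phi_{\bbL(\pmb\alpha^{(i)})}$, where $\pmb\alpha^{(i)}$ differs from $\pmb\alpha$ only in having $\alpha_{i-1}$ and $\alpha_i$ swapped. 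The heart of the argument is to show that the boundary condition induced on the strip $[t_a, t_b]$ by the configuration outside is preserved pointwise by each such exchange; granting this, an iteration yields a coupling of $\phi_{\bbL(\pmb\alpha)}$ and $\phi_{\bbL(\pmb\beta)}$ under which the boundary condition on the strip is almost surely identical, which is exactly the claim.

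To establish the single-step preservation, assume (without loss of generality) that both exchanged tracks lie strictly above $t_b$. Then all below-strip edges are left unchanged, so the partition of $t_a$-vertices induced by below-strip paths is trivially preserved, and no $t_a$-vertex can be outside-strip-connected to a $t_b$-vertex, since such a path would have to cross the strip. It therefore remains to show that the partition of $t_b$-vertices induced by above-strip connectivity is preserved. For this I would unpack $\bfT_i$ into its construction as a composition of star-triangle moves, each applied inside a hexagon of the diamond graph contained in $t_{i-1}\cup t_i$. Each such move modifies only the three edges internal to the hexagon; by Lemma~\ref{lem:stt} and the explicit star-triangle coupling described above it, the equivalence classes induced by hexagon-internal connections on the three outer vertices $A,B,C$ of the hexagon are preserved by the move. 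Decomposing the above-strip subgraph into the hexagon and its complement, this implies that the connectivity between any two $t_b$-vertices via above-strip paths is unchanged by a single star-triangle, and hence by the full $\bfT_i$.

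The iteration is straightforward when $\pmb\alpha$ and $\pmb\beta$ differ on finitely many indices outside $[a,b]$: finitely many adjacent swaps suffice. The general case is handled by approximation: take $\pmb\alpha^{(N)}$ that agrees with $\pmb\beta$ on $[-N,N]$ and with $\pmb\alpha$ elsewhere; apply the finite-difference case to compare $\phi_{\bbL(\pmb\alpha)}$ and $\phi_{\bbL(\pmb\alpha^{(N)})}$, and let $N \to \infty$, using the uniqueness of the infinite-volume measure (Theorem~\ref{thm:RSW_iso}) and the mixing property (Proposition~\ref{prop:mixing}) to ensure that the probability of any local event in the BC is stable under the limit. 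The principal obstacle is the single-step claim: the third property listed for $\bfT_i$ only gives preservation of \emph{full}-lattice connectivity between vertices outside $t_i\cap t_{i-1}$, whereas the boundary condition is defined by connectivity restricted to outside-strip edges. Closing this gap requires descending one level, to the star-triangle building blocks themselves, and verifying that each such coupling acts entirely within a region that lies strictly above (or below) the strip, so that any connection among strip-boundary vertices it creates or destroys is realized by paths staying in the outside-strip region.
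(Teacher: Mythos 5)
Your single-step analysis is sound, and your instinct to descend to the star--triangle level is exactly right: the deterministic property of the star--triangle coupling (the connectivity pattern induced on the three outer vertices of a hexagon by the edges inside it is preserved) is what upgrades the stated full-lattice connection-preservation of $\bfT_i$ to preservation of connectivity \emph{within} the half-plane above (or below) the strip, which is what the induced boundary condition actually records. This is also the mechanism the paper implicitly relies on when it asserts that exchanges of tracks outside the strip do not alter connections between points of $t_a$ and $t_b$, and you spell it out more carefully than the paper does.

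The gap is in the global scheme. A track exchange only permutes the existing tracks, so it cannot change the multiset of transverse angles; two angle sequences that merely ``differ at finitely many indices'' are in general not reachable from one another by adjacent swaps. For instance, if $\pmb\alpha \equiv \pi/2$ while $\pmb\beta$ equals $\pi/2$ on $[a,b]$ and $\pi/3$ elsewhere, no composition of the operators $\bfT_i$ applied to $\bbL(\pmb\alpha)$ ever produces a lattice containing a track of angle $\pi/3$. Hence your ``finite-difference case'', and with it the comparison of $\phi_{\bbL(\pmb\alpha)}$ with $\phi_{\bbL(\pmb\alpha^{(N)})}$ where $\pmb\alpha^{(N)}$ agrees with $\pmb\beta$ on $[-N,N]$, is simply not available: those two lattices are typically not related by any sequence of track exchanges. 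The paper's proof never compares $\pmb\alpha$ with such a hybrid directly; it introduces \emph{two} auxiliary sequences $\pmb\alpha^{(R)}$ and $\pmb\beta^{(R)}$ built so that they contain the same tracks --- roughly, $\pmb\beta^{(R)}$ is obtained from $\pmb\alpha^{(R)}$ by sliding the finite blocks of $\pmb\alpha$-tracks adjacent to the strip past compensating blocks of $\pmb\beta$-tracks placed at distance of order $R$, all exchanges taking place outside the strip --- and only then lets $R\to\infty$, using the uniqueness of the infinite-volume measure to identify the limits of $\phi_{\bbL(\pmb\alpha^{(R)})}$ and $\phi_{\bbL(\pmb\beta^{(R)})}$ on the relevant connection events with $\phi_{\bbL(\pmb\alpha)}$ and $\phi_{\bbL(\pmb\beta)}$. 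Your limiting step (uniqueness plus mixing) is in the right spirit, but without this device of compensating far-away blocks the track-exchange comparison you want to iterate never gets off the ground, so your argument needs to be repaired along these lines.
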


A consequence of this proposition is that the laws of the configurations under~$\phi_{\bbL(\pmb\alpha)}$ and~$\phi_{\bbL(\pmb\beta)}$ restricted to the strip between~$t_a$ and~$t_b$ are the same. 

\begin{proof}
Recall that the track-exchange operator preserves the connection properties between vertices that are not between the tracks being exchanged. 
From this, one may deduce that for every event~$A$ concerning connections between vertices on the boundary of the strip, contained above and below the strip
\begin{align}\label{eq:same_law_proof}
    \phi_{\bbL(\pmb\alpha)}[A]
    =\lim_{R\rightarrow\infty}\phi_{\bbL(\pmb\alpha^{(R)})}[A]
    =\lim_{R\rightarrow\infty}\phi_{\bbL(\pmb\beta^{(R)})}[A]
    =\phi_{\bbL(\pmb\beta)}[A],
\end{align}
where 
\begin{align*}
    \pmb\alpha^{(R)}
    :=\begin{cases}
    	\alpha_i&\text{ if }|i|\le R,\\
    	\beta_{i-R+b}&\text{ if }i>R,\\
    	\beta_{i+R+a}&\text{ if }i<-R,\end{cases}\quad\text{and}\quad
    \pmb\beta^{(R)}
    :=\begin{cases}
        \beta_i&\text{ if }|i|\le R,\\
        \alpha_{i-R+b}&\text{ if }R<i\leq2R-b,\\
        \alpha_{i+R+a}&\text{ if }-2R-a\leq i<-R,\\
        \beta_i&\text{otherwise}.
    \end{cases}
\end{align*}
In the first  and last equalities of~\eqref{eq:same_law_proof}, we used the notion of measurability and the uniqueness of the infinite-volume measure. The second equality is due to the properties of the track-exchange operator,  
specifically the fact that the track exchanges involved in transforming~$\bbL(\pmb\alpha^{(R)})$ into~$\bbL(\pmb\beta^{(R)})$ 
do not affect the points on~$t_a$ and~$t_b$, and therefore do not change the connections between these points. 
\end{proof}

\section{Universality up to linear transformation}\label{sec:universality_isoradial_lin}

The goal of this section is to prove Theorem~\ref{thm:linear}. 
To start, let us sketch an approach to proving the more powerful Theorem~\ref{thm:universalCNSS}, 
so as to see exactly why  Theorem~\ref{thm:linear} is indeed more accessible than  Theorem~\ref{thm:universalCNSS}.

The idea behind proving Theorem~\ref{thm:universalCNSS} is to progressively transform~$\bbL(\beta)$ into~$\bbL(\alpha)$ using track exchanges, as in Figure~\ref{fig:transformations}. 
Throughout the transformation, we will follow the evolution of the configurations, specifically of their large clusters. 
As track exchanges do not break primal nor dual connections, large clusters survive the transformations, but their boundaries may be progressively altered. 
We would like to argue that these alterations act as i.i.d.\ modifications of the cluster boundaries with~$0$ expectation. 
As a result, the cluster boundaries move throughout the transformation, but only by~$o(N)$, and all large clusters remain essentially unchanged. 

There are three major difficulties in the above reasoning: 
encode the cluster boundary in a tractable way, prove that the modifications are i.i.d.\ (up to small errors) and that they are of~$0$ expectation. 
The latter is the most delicate point, as it essentially states that the isoradial embedding is the exact embedding that compensates for the inhomogeneity of the model. 
Ignoring this point, one expects the cluster boundaries to move according to a random walk with a (potentially non-zero) {\rm drift}. 
This leads us to Theorem~\ref{thm:linear}, and also allows us to explicitly identify the linear transformation~$M_{\beta,\alpha}$, which accounts for the drift. 
In the canonical basis of~$\bbR^2$, write 
\begin{align}\label{eq:Mbetaalpha}
M_{\beta,\alpha} 
= \begin{pmatrix}
1 &  \frac{{\rm Drift}_{\rm lat} +  {\rm Drift}_{\rm vert} \cos \beta }{\sin \beta}\cdot \frac{\sin\alpha + \sin\beta}{(\sin \alpha -{\rm Drift}_{\rm vert})\sin\beta}\\[8pt]
0 & 1  + {\rm Drift}_{\rm vert} \cdot \frac{\sin\alpha + \sin\beta}{(\sin \alpha -{\rm Drift}_{\rm vert})\sin\beta}
\end{pmatrix},
%= \begin{pmatrix}
%1 &  {\rm Drift}_{\rm lat} \cdot\frac{\sin\alpha + \sin\beta}{(\sin \alpha -{\rm Drift}_{\rm vert})}\\[8pt]
%0 &  \frac{\sin \alpha}{\sin\beta} \cdot \frac{\sin\beta + {\rm Drift}_{\rm vert} }{(\sin \alpha -{\rm Drift}_{\rm vert})}
%\end{pmatrix},
%= \begin{pmatrix}
%1 &  {\rm Drift}_{\rm lat} \cdot \frac{1 + {\rm Drift}_{\rm vert}/ \sin \beta}{\sin\alpha - {\rm Drift}_{\rm vert}}\\[8pt]
%0 & 1  + {\rm Drift}_{\rm vert} \cdot \frac{1 + {\rm Drift}_{\rm vert}/ \sin \beta}{\sin\alpha - {\rm Drift}_{\rm vert}}
%\end{pmatrix},
\end{align} 
where~${\rm Drift}_{\rm lat}$ and~${\rm Drift}_{\rm vert}$ are the expected increment produced by one transformation to the top- and right-extremum of a large cluster (a proper definition is given in~\eqref{eq:drift}; see also Remark~\ref{rem:M_origin} for the relation between the drift and~$M_{\beta,\alpha}$).

We mention the transformation~$M_{\beta,\alpha}$ here so as to emphasise that it does not depend on the different quantities chosen in the proof of Theorem~\ref{thm:linear}. 
Indeed, the drift is determined by the local environment around the extrema of an incipient infinite cluster (see Section~\ref{sec:IIC}). 
In particular, we will prove that it is continuous in~$\alpha$ and~$\beta$, which will be useful later on. 

\begin{lemma}\label{lem:M_cont}
	The map~$(\beta,\alpha) \mapsto M_{\beta,\alpha}$ is continuous and 
	$M_{\beta,\alpha}$ is invertible for all~$\alpha,\beta \in (0,\pi)$.
\end{lemma}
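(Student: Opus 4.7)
The plan is to unwind the definition of $M_{\beta,\alpha}$ in terms of the two drifts and then analyse the latter as expectations under the half-plane IIC measures on $\bbL_{\rm mix}$. Recall that, according to~\eqref{eq:Mbetaalpha}, the matrix $M_{\beta,\alpha}$ depends on $(\alpha,\beta)$ only through the explicit trigonometric expressions and through ${\rm Drift}_{\rm vert}={\rm Drift}_{\rm vert}(\alpha,\beta)$ and ${\rm Drift}_{\rm lat}={\rm Drift}_{\rm lat}(\alpha,\beta)$. Thus both assertions reduce to (i) the continuity of $(\alpha,\beta)\mapsto({\rm Drift}_{\rm vert},{\rm Drift}_{\rm lat})$ and (ii) quantitative bounds on ${\rm Drift}_{\rm vert}$ ensuring that the denominators in~\eqref{eq:Mbetaalpha} do not vanish.

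For continuity, I would argue as follows. The edge weights~\eqref{eq:isoraial_p_e} depend continuously (in fact analytically) on the subtended angle, which implies that finite-volume FK-percolation measures on a bounded region of $\bbL_{\rm mix}$ depend continuously on $(\alpha,\beta)$. The half-plane IIC measure $\phi^{\rm IIC,*}_{\bbL_{\rm mix}}$ is obtained as the limit in Proposition~\ref{prop:IIC_def}; moreover, Lemma~\ref{lem:IIC} provides a quantitative rate of convergence whose constants depend only on the RSW property~\eqref{eq:RSW_iso}, and hence (by Remark~\ref{rem:RSW_uniform_angles}) are uniform in $(\alpha,\beta)$ ranging over any compact subset of $(0,\pi)^2$. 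A standard $\ep/3$ argument then transfers the continuity of finite-volume measures to continuity of $\phi^{\rm IIC,*}_{\bbL_{\rm mix}}$ on events in a bounded region. Since the drifts are expectations of the signed one-step displacements of a cluster extremum (an a~priori unbounded observable), the last step is to truncate using the tail estimate given by the half-plane three-arm bound~\eqref{eq:3hp_arm2}; this controls the probability that a single track exchange displaces the extremum by more than $k$, allowing the expectation to be computed as a uniformly convergent sum of bounded-range contributions.

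For invertibility, since $M_{\beta,\alpha}$ is upper triangular with $1$ in position $(1,1)$, a direct computation gives
\begin{equation*}
\det M_{\beta,\alpha} \;=\; 1 + {\rm Drift}_{\rm vert}\cdot\frac{\sin\alpha+\sin\beta}{(\sin\alpha-{\rm Drift}_{\rm vert})\sin\beta}
\;=\; \frac{\sin\alpha\,(\sin\beta + {\rm Drift}_{\rm vert})}{(\sin\alpha - {\rm Drift}_{\rm vert})\,\sin\beta}.
\end{equation*}
Since $\sin\alpha,\sin\beta>0$ on $(0,\pi)$, both the finiteness and the non-vanishing of this determinant reduce to the strict bound $|{\rm Drift}_{\rm vert}|<\min(\sin\alpha,\sin\beta)$. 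The absolute bound $|{\rm Drift}_{\rm vert}|\le \max(\sin\alpha,\sin\beta)$ is immediate from the fact that a single track exchange between rows of transverse angles $\alpha$ and $\beta$ moves the top cell of any cluster by at most one cell height. To upgrade it to a strict inequality I would use a finite-energy/RSW argument: with probability bounded away from $0$ uniformly in $(\alpha,\beta)$ in any compact subset of $(0,\pi)^2$, the local configuration around the extremum is such that the exchange does not move the top cell at all, while both signed displacements occur with positive probability; this rules out the extreme cases $|{\rm Drift}_{\rm vert}|\in\{\sin\alpha,\sin\beta\}$.

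The main obstacle is the continuity step, namely passing from continuity of finite-volume measures in the parameters to continuity of an expectation of an unbounded observable under the infinite-volume IIC measure. All the ingredients are in place (RSW uniformly in the angles, the quantitative IIC approximation of Lemma~\ref{lem:IIC}, and the three-arm tail bound~\eqref{eq:3hp_arm2}), but weaving them together cleanly requires some care. The algebra leading to the determinant formula above, as well as the geometric bound on ${\rm Drift}_{\rm vert}$, are then elementary.
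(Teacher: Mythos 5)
Your proposal follows essentially the same route as the paper: continuity of the drifts is obtained by writing them as expectations under the half-plane IIC measures and approximating locally uniformly by finite-volume conditioned expectations, using the fact that the IIC-mixing estimate of Lemma~\ref{lem:IIC} (hence its constants) is uniform in the angles; invertibility is read off the explicit formula~\eqref{eq:Mbetaalpha} and reduced to a strict bound on ${\rm Drift}_{\rm vert}$ proved by finite energy. Two small corrections are worth making. First, the truncation step is not needed for the quantity that matters here: the vertical increment of the top of the IIC cluster under $\bfS_{t+1}\circ\bfS_t$ is deterministically bounded, namely $-2\sin\beta \le \Delta_t^{\rm IIC}{\rm T} \le 2\sin\alpha$ (one cell of height $\sin\alpha+\sin\beta$, shifted by $\sin\alpha-\sin\beta$), which is exactly what lets one pass the locally uniform limit through the expectation. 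Second, your reduction of invertibility to $|{\rm Drift}_{\rm vert}|<\min(\sin\alpha,\sin\beta)$ asks for more than is needed and more than your finite-energy argument can deliver: as your own determinant computation shows, non-degeneracy of $M_{\beta,\alpha}$ is equivalent to ${\rm Drift}_{\rm vert}\neq\sin\alpha$ and ${\rm Drift}_{\rm vert}\neq-\sin\beta$, i.e.\ to the asymmetric strict inequalities $-\sin\beta<{\rm Drift}_{\rm vert}<\sin\alpha$; these follow from the deterministic bound above together with finite energy (which rules out the two endpoints of the a.s.\ range), whereas ruling out, say, ${\rm Drift}_{\rm vert}=\sin\beta$ when $\sin\beta<\sin\alpha$ would require a quantitative estimate and is irrelevant for the lemma.
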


The proof of Lemma~\ref{lem:M_cont} is deferred to Section~\ref{sec:5equality_drifts}, when all the necessary notions will have been introduced. 
The rest of the section is dedicated to proving Theorem~\ref{thm:linear}. 
This is the most technical part of Chapter~\ref{ch:5rotational_inv}; the structure of the proof is described at the end of Section~\ref{sec:5sketch}, after having introduced the relevant notation.

\subsection{Setting of the proof of Theorem~\ref{thm:linear}}\label{sec:5sketch}

We will now fix some notation necessary for the proof of Theorem~\ref{thm:linear} and sketch some of its elements. 
Fix two angles~$\alpha$ and~$\beta$ and~$N \geq 1$. We will be interested in the ``large'' loops contained in the {\em observation window}~$[-N,N]\times [0,N]$ for configurations~$\omega$ and~$\omega'$ sampled according to~$\phi_{\bbL(\beta)}$ and~$\phi_{\bbL(\alpha)}$, respectively. 
More specifically, our goal is to construct a coupling between~$\omega$ and~$\omega'$ 
so that the loops of~$\omega$ of diameter larger than~$\eps N$ contained in~$[-N,N]\times [0,N]$ 
may be paired up with loops of~$\omega'$ which are close for the distance~\eqref{eq:loop_dist} and vice-versa --- here~$\eps > 0$ denotes some arbitrarily small quantity, ultimately chosen as a small negative power of~$N$. 

The exact choice of~$\eps$, as well as the proper definition of diameter of a cluster will be given below. For this section, however, we limit ourselves to a rough description. Hereafter, call clusters of diameter at least~$\eps N$ {\em macroscopic}. 

\paragraph{Lattices and track exchanges.}
Write~$\bbL_0 = \bbL(\balpha)$ for the sequence~$\balpha$ given by~$\alpha_i = \beta$ if~$i \leq K := \lceil N/\sin \beta \rceil$ and~$\alpha_i = \alpha$ otherwise. 

Define the lattices~$(\bbL_t)_{t\geq 0}$ inductively as follows. For~$t \geq 0$  set 
\begin{align}\label{eq:5S_transformation}
\bbL_{t+1} = \begin{cases}
	\bfT_1 \circ \bfT_3 \circ \bfT_5 \circ \dots (\bbL_t) \qquad &\text{ if~$t$ odd}\\
	\bfT_2 \circ \bfT_4 \circ \bfT_6 \circ \dots (\bbL_t) \qquad &\text{ if~$t$ even}.
\end{cases}
\end{align}
Write~$\bfS_t$ for the transformation applied when passing from~$\bbL_t$ to~$\bbL_{t+1}$. See Figure~\ref{fig:transformations} for an illustration. 

\begin{figure}
\begin{center}
\includegraphics[width = 0.33\textwidth, page = 1]{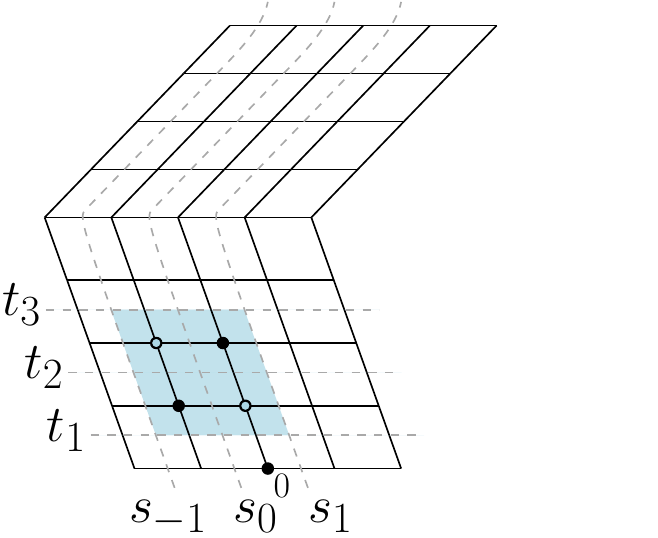}\hspace{-0.12\textwidth}
\includegraphics[width = 0.33\textwidth, page = 3]{transformations.pdf}\hspace{-0.14\textwidth}
\includegraphics[width = 0.33\textwidth, page = 6]{transformations.pdf}\hspace{-0.12\textwidth}
\includegraphics[width = 0.33\textwidth, page = 8]{transformations.pdf}
\caption{From left to right: The initial lattice~$\bbL_0$, with tracks of angle~$\beta$ at the bottom and~$\alpha$ at the top; the blue region is the cell~$(0,1)$ of the lattice. 
Applying~$\bfS_0 \circ \bfS_1$ transforms this~$\bbL_0$ into~$\bbL_2$; the mixed block is red.
After more transformations, a block of angle~$\alpha$ start appearing at the bottom (blue), which by time~$K+K'$ covers the whole window~$[-N,N]\times [0,N]$.}
\label{fig:transformations}
\end{center}
\end{figure}

It may appear as though we apply an infinite number of track exchanges when passing from~$\bbL_{t}$ to~$\bbL_{t+1}$, but this is not actually the case. 
Indeed, the transformations~$\bfT_i$ are non-trivial only then the tracks~$t_{i-1}$ and~$t_{i}$ have different transverse angles. 
When passing from~$\bbL_0$ to~$\bbL_1$, only the tracks~$t_{K+1}$ and~$t_{K}$ are exchanged. 

Generally, each lattice~$\bbL_t$ for~$t \leq K$ is formed of a{\em~$\beta$-block} of transverse angle~$\beta$; 
above which is a {\em mixed block} of alternating tracks of angles~$\alpha$ and~$\beta$, 
above which is an infinite block of tracks of angle~$\alpha$ called the{\em~$\alpha$-block}. 

For~$t > K$, the~$\beta$-block disappears and a block of tracks of angle~$\alpha$ starts forming above~$t_0$. Above this block, there is a mixed block with alternating tracks of angles~$\alpha$ and~$\beta$ (in total there are~$2K$ such tracks), above which we find again a block of tracks of angle~$\alpha$. 
A quick computation shows that the lower boundary of the mixed block is at height~$(K -t)\sin\beta$ for~$t \leq K$ and at height~$(t-K)\sin\alpha$ for~$t > K$. 

Set~$K'$ to be the smallest even integer larger than~$N/\sin \alpha$. Then, in~$\bbL_{K+K'}$, the window~$[-N,N]\times [0,N]$ is fully contained in the bottom~$\alpha$-block of the lattice. 

\paragraph{The configuration chain.}
To the lattices~$(\bbL_t)_{t\geq 0}$, we will associate a Markov chain of configurations~$(\omega_t)_{t\geq0}$ with the property that 
\begin{align*}
	\omega_t \sim \phi_{\bbL_t}\qquad  \text{ for all~$t \geq 0$}. 
\end{align*}
As such, we obtain a {\em coupling} of the laws~$(\phi_{\bbL_t})_{t\geq 0}$. In this coupling, the loops of diameter at least~$\eps N$ of~$\omega_0$ may be paired up with loops of~$\omega_{K+K'}$ that are close in the sense of~\eqref{eq:loop_dist}. 
Corollary~\ref{cor:same_law} states that~$\omega_0 \cap ([-N,N]\times [0,N])$ has the law of a configuration sampled according to~$\phi_{\bbL(\beta)}$, while~$\omega_{K+K'}\cap ([-N,N]\times [0,N])$ has the law~$\phi_{\bbL(\alpha)}$.

The actual Markov chain~$(\omega_t)_{t\geq0}$ is a little complicated to define (see Section~\ref{sec:5MarkovChain}), but the following construction offers a good understanding of it. 
Sample~$\omega_0$ according to~$\phi_{\bbL_0}$. 
Since the track exchanges appearing in~$\bfS_t$ act on disjoint tracks, they may be performed simultaneously to~$\omega_t$
and we write 
\begin{align}\label{eq:omega_t_simple}
	\omega_{t+1} = \bfS_t(\omega_t) \qquad \text{ for all~$t\geq 0$}.
\end{align}

\paragraph{Dynamics of macroscopic clusters.}
Notice that for~$t$ even,~$\bfS_t$ does not affect vertices of~$t_j\cap t_{j-1}$ with~$j$ odd. 
For~$t$ odd, it does not affect the vertices in~$t_j\cap t_{j-1}$ with~$j$ even. 
In particular, the connections between any such vertices are preserved when applying~$\bfS_t$. 
As a consequence, any primal or dual cluster of diameter at least~$2$ of~$\omega_t$ has a naturally associated cluster in~$\omega_{t+1}$. 

Clusters may still disappear throughout the process~$(\omega_t)_{t\geq 0}$, but they do so by progressively shrinking to a single point, then disappearing when that point is erased from the lattice. Conversely, new small clusters may appear and potentially grow in size. We will argue below that new clusters do not grow to size~$\eps N$ by time~$K+K'$, nor do initial macroscopic clusters disappear by this time (except on events of vanishing probability). 
This will allow us to follow the evolution of all macroscopic clusters of~$\omega_0$, and pair them to macroscopic clusters of the final configuration~$\omega_{K+K'}$. 

In addition, we would like to argue that the contours of macroscopic clusters are affected throughout the process by i.i.d.\ increments, with a potential drift. Making sense of this statement is challenging as the contour of a cluster is a complicated object. To simplify things, we will only follow the extremal coordinates (see \eqref{eq:TBLR}) of {\em mesoscopic clusters} --- by mesoscopic clusters, we mean clusters of diameter at least~$\eta N$ for some~$0 <\eta \ll \eps$. 
We will eventually prove that controlling the evolution of the extremal coordinates of mesoscopic clusters suffices to control the shape of macroscopic clusters in the sense of~\eqref{eq:loop_dist} (see Section~\ref{sec:5homotopy}). 
% We do this via the concept of {\em homotopy information} of a configuration~$\omega_t$, which we define in Section~\ref{sec:5MarkovChain}.

Finally, we would like to argue that each transformation affects the extremal coordinates of mesoscopic clusters in (almost) i.i.d.\ ways. 
As will be apparent below, the increments of the extremal coordinates of the mesoscopic clusters of~$\omega_t$ when applying~$\bfS_t$ depend on the local environments in~$\omega_t$ around these points. 
These local environments will be shown to have a local law independent of the large scale features of~$\omega_t$, in the spirit of the IIC construction of Section~\ref{sec:IIC}. This will guarantee that the increments are identical in law (up to errors vanishing as~$N\to\infty$) and that they have expectations ${\rm Drift}_{\rm vert}$ and ${\rm Drift}_{\rm lat}$ given by the IIC measures. 

That the increments are i.i.d.\ in time is more challenging. While we expect some approximate form of independence to hold in the chain~$(\omega_t)_{t\geq0}$ defined above, we are unable to prove it. To circumvent this problem, we will perform a {\em resampling} when passing from~$\omega_t$ to~$\omega_{t+1}$, which will ensure that the large scale information of~$\omega_t$ is preserved, but the local environments around the extrema of mesoscopic clusters are resampled.

\paragraph{Structure of the proof.}
In Section~\ref{sec:5MarkovChain} we define the notion of mesoscopic cluster and the actual configuration chain~$(\omega_t)_{t\geq0}$. This includes an explicit procedure for the resampling of the configuration around the extrema of mesoscopic clusters.
 
The goal of Sections~\ref{sec:control_error} and~\ref{sec:stability_meso} is to prove the stability of mesoscopic clusters --- Proposition~\ref{prop:stability_meso} ---  which states that, at first order, 
the extremal coordinates of mesoscopic clusters move during the process~$(\omega_t)_{t\geq0}$ as dictated by the drift. 

Section~\ref{sec:5homotopy} explains how the knowledge of the extremal coordinates of mesoscopic clusters allows to determine the shape of macroscopic clusters.
The key here is to consider the homotopy classes of contours of macroscopic clusters in the punctured plane obtained by removing the mesoscopic clusters. It will be a consequence of our definition of~$(\omega_t)_{t\geq0}$ and of the stability of mesoscopic clusters that these homotopy classes are preserved throughout the process. 

Finally, in Section~\ref{sec:5linear_proof}, we put together the elements of the previous sections in order to prove Theorem~\ref{thm:linear}. 

Section~\ref{sec:5equality_drifts} contains certain additional properties of the matrix~$M_{\beta,\alpha}$, namely the proof of Lemma~\ref{lem:M_cont} 
and relation between the vertical and lateral drift, which will be crucial when proving Theorem~\ref{thm:universalCNSS}.

\subsection{Actual coupling: resampling of extrema}\label{sec:5MarkovChain}
 
In this section, we define the actual Markov chain~$(\omega_t)_{t\geq 0}$. Write~$\bbP$ for the probability measure governing this chain. 
For simplicity, assume that~$K$ and~$K'$ are even. 

Fix some~$0<\eta < 1$; the choice of~$\eta$ will be explained in Section~\ref{sec:5linear_proof}. 
We will use the term ``universal constant'' to mean a positive constant independent of~$N$ and~$\eta$. 

A cluster~$\sfC$ in some configuration~$\omega$ on some lattice~$\bbL_t$ is called~{\em~$\eta$-mesoscopic} (or simply mesoscopic) 
if 
\begin{align}\label{eq:meso}
	&\eta  N \leq {\rm T}(\sfC)-{\rm B}(\sfC) \leq \eta^{1/2} N \qquad \text{and}\nonumber\\
	&{\rm T}(\sfC) \leq  N, \qquad 	{\rm L}(\sfC) \geq - N, \qquad {\rm R}(\sfC) \leq  N.
\end{align}
we call~${\rm T}(\sfC)-{\rm B}(\sfC)$ the {\em vertical diameter} of~$\sfC$. 
%A cluster~$\sfC$ on in some configuration~$\omega$ on some lattice~$\bbL_t$ is called~$\eta$-mesoscopic (or simply mesoscopic) 
%if~$\|{\rm Top}(\sfC)-{\rm Bottom}(\sfC)\| \geq \eta N$
%and if both~${\rm Top}(\sfC)$ and~${\rm Bottom}(\sfC)$ are contained in the window~$[-N,N] \times [0,N]$. 
Write~${\rm Meso}(\omega)$ for the set of mesoscopic clusters of~$\omega$ and 
\begin{align}\label{eq:calH_def}
	\calH(\omega) = \big({\rm T}(\sfC), {\rm B}(\sfC), {\rm L}(\sfC), {\rm R}(\sfC)\big)_{\sfC \in {\rm Meso}(\omega)}
\end{align}
for the extremal coordinates of all mesoscopic clusters. 

Sample~$\omega_0 \sim \phi_{\bbL_0}$. For~$t \geq0$ even, assuming~$\omega_0,\dots, \omega_t$ already defined, 
sample a configuration~$\omega_{t+1/2}$ on~$\bbL_{t}$ as follows. 
The goal here is to have~$\omega_{t+1/2} \sim \phi_{\bbL_t}$.

Recall that~$\bbL_{\rm mix}$ denotes the lattice~$\bbL(\balpha)$ with~$\balpha = (\alpha_i)_{i\in\bbZ}$, where~$\alpha_i = \alpha$ for~$i$ even and~$\alpha_i = \beta$ for~$i$ odd.
First, sample i.i.d.\ configurations~$(\zeta_j^{{\rm Top}})_{j\in \bbN}$,~$(\zeta_j^{{\rm Bottom}})_{j\in \bbN}$, 
$(\zeta_j^{{\rm Left}})_{j\in \bbN}$ and~$(\zeta_j^{{\rm Right}})_{j\in \bbN}$
 according to the  lower, upper, right and left half-plane IIC measures~$\phi^{\rm IIC,T}_{\bbL_{\rm mix}}$, $\phi^{\rm IIC,B}_{\bbL_{\rm mix}}$, $\phi^{\rm IIC,L}_{\bbL_{\rm mix}}$ and~$\phi^{\rm IIC,R}_{\bbL_{\rm mix}}$, respectively (see Proposition~\ref{prop:IIC_def}). 

%Recall the mesoscopic cluster coordinates~$({\rm Top}(\sfC), {\rm Bottom}(\sfC))_{\sfC \in {\rm Meso}(\omega_t)}$ of~$\omega_t$. 
Fix~$R = \eta^C N$, where~$C > 0$ is a sufficiently large universal constant --- we will assume that~$(C -1)\onearmbound -2 \geq 1$, where~$\onearmbound > 0$ is given by~\eqref{eq:one_arm_bound_iso}. Define the extremum boxes of~$\omega_t$ as~$\La_{ R } (x)$ for 
$x \in \{{\rm Top}(\sfC),\,{\rm Bottom}(\sfC),\,{\rm Left}(\sfC),\,{\rm Right}(\sfC)\,:\, \sfC \in {\rm Meso}(\omega_t)\}$.
We say that an {\em overlap error} occurs at step~$t$ if any of the following occurs: 
\begin{itemize}
	 \item two extremum boxes intersect;
	 \item two distinct clusters of vertical diameter at least~$\eta N/2 - 2R$ intersect the same extremum box; 
	 \item there exists an extremum box that intersects two distinct blocks of the lattice (i.e.,~$\alpha$-block,~$\beta$-block or mixed block).
\end{itemize}
If an overlap error occurs, set~$\omega_{t+1/2} = \omega_t$.

If no overlap error occurs, set~$\omega_{t+1/2} = \omega_t$ everywhere except in the extremum boxes contained in the mixed block. 
We will now describe how to sample~$\omega_{t+1/2}$ in each of these boxes. 
We will do this in such a way that the mesoscopic clusters of~$\omega_t$ and~$\omega_{t+1/2}$ are identical outside of the extremum boxes
and have the same extrema in~$\omega_t$ and~$\omega_{t+1/2}$. 

Enumerate the boxes in an arbitrary way and sample them sequentially. 
Suppose that the configuration~$\omega_{t+1/2}$ in the boxes numbered~$1\dots, i-1$ has been sampled and let us describe how to sample it in the~$i^{\rm th}$ box. 
Assume that this box is centred at a point~$x = {\rm Top}(\sfC)$ for some~$\sfC \in {\rm Meso}(\omega_t)$; 
the same procedure applies if~$x$ is of the type~${\rm Bottom}(\sfC)$,~${\rm Left}(\sfC)$ or~${\rm Right}(\sfC)$.

To start, let us describe the law~$\calL$ of the configuration in~$\La_{ R } (x)$ under~$\phi_{\bbL_t}$, 
conditionally on the previously sampled edges,
on the fact that the mesoscopic clusters of~$\omega$ and~$\omega_{t}$ are identical outside of the extremum boxes
and on their extrema being the same in~$\omega$ and~$\omega_{t}$.
It is~$\phi_{\La_R(x)}^\xi$, where~$\xi$ are the  (random) boundary conditions given by the configuration outside\footnote{the boundary conditions are random, as other un-sampled boxes may influence the boundary conditions induced by the outside configuration on~$\La_{ R } (x)$.}, 
conditioned that all pieces of~$\sfC$ in~$\omega_t\setminus \La_{ R } (x)$ are connected in~$\omega$ inside~$\La_{ R } (x)$, 
but not to any other points of~$\La_{ R } (x)^c$,
and that~${\rm Top}(\sfC) = x$.
Indeed, this conditioning ensures that~$\sfC \cap \La_R(x)^c$ remains the restriction to~$\La_R(x)^c$ of a mesoscopic cluster of~$\omega$ 
with same extremal coordinates as in~$\omega_t$. 
Moreover, the resampling in~$\La_{ R } (x)$ may not produce additional mesoscopic clusters, 
since~$\sfC$ is the only cluster of vertical diameter at least~$\eta N/2 - 2R$ intersecting~$\La_{ R } (x)$.

We sample~$\omega_{t+1/2}$ in~$\La_{ R } (x)$ according to~$\calL$, as follows. 
Let~$r = \eta^C R$ for some large fixed constant~$C > 0$ --- formally, we need~$C \geq 3/c_{\rm IIC}$, where~$c_{\rm IIC} >0$ is the constant given by Lemma~\ref{lem:IIC}.
Write~$d_{\rm TV}$ for the total variation distance between the restriction of~$\calL$ to~$\La_{ r }(x)$ and the law of~$\zeta_i^{{\rm Top}}$ inside~$\La_{ r }$, translated by~$x$.
%\begin{itemize}
%	\item[(a)] the law of the configuration~$\omega_{t+1/2}$ inside~$\La_{ r }(x)$ 
%	conditionally on the already sampled part, 
%	on the fact that the mesoscopic clusters of~$\omega_t$ and~$\omega_{t+1/2}$ are equal outside of the extremum boxes 
%	and on~$\calH(\omega_{t+1/2}) = \calH(\omega_t)$ and 
%	\item[(b)] the law of~$\zeta_i^{{\rm Top}}$ inside~$\La_{ r }$, translated by~$x$.
%\end{itemize}

Sample a Bernoulli variable~${\rm err}_i$ of parameter~$d_{\rm TV}$
and sample~$\omega_{t+1/2}$ inside~$\La_{ R }(x)$ according to~$\calL$
so that,  
\begin{align}
{\rm err}_i = 0 \quad\Longrightarrow \quad \omega_{t+1/2} = \zeta_i^{{\rm Top}} + x \text{ on~$\La_r(x)$},
\end{align}
where~$\zeta_i^{{\rm Top}} + x$ is the translate of~$\zeta_i^{{\rm Top}}$ by~$x$.
This is indeed possible due to the definition of~$d_{\rm TV}$. 
We say that a {\em coupling error} occurs at step~$t$ if there exists~$i$ with~${\rm err}_i = 1$.

This concludes the construction of~$\omega_{t+1/2}$. Notice that~$\omega_{t+1/2}$ has the same law as~$\omega_t$, namely~$\phi_{\bbL_t}$. 
Set
\begin{align*}
	\omega_{t+1} = \bfS_t(\omega_{t+1/2}) \qquad \text{ and  } \qquad \omega_{t+2} = \bfS_{t+1}(\omega_{t+1}).
\end{align*}

We have thus constructed the Markov chain $(\omega_t)_{t\geq 0}$ and have ensured that $\omega_t \sim \phi_{\bbL_t}$ for all $t$. 
Keep in mind the fact that the resampling is done only at even time steps. 

\subsection{Controlling the errors}\label{sec:control_error}

For~$t \geq 0$ even and~$\sfC$ a mesoscopic cluster of~$\omega_t$, 
write~$\tilde\sfC$ for the corresponding cluster in~$\omega_{t+2}$; note that~$\tilde\sfC$ is a well-defined object, but may be non-mesoscopic. 
We say~${\rm Ext}(\sfC)$ for some~${\rm Ext} \in \{ {\rm Top}, {\rm Bottom}, {\rm Left}, {\rm Right}\}$ is  {\em frozen} if it is at a distance at least two from the mixed block, otherwise we call it {\em active}.
That~${\rm Top}(\sfC)$ and~${\rm Bottom}(\sfC)$ are frozen is determined by~${\rm T}(\sfC)$ and~${\rm B}(\sfC)$, respectively, and hence may be read off~$\calH(\omega_t)$. The same is not true for ${\rm Left}(\sfC)$ and ${\rm Right}(\sfC)$. 

Define the increments of the extrema of mesoscopic clusters as
\begin{align*}
	\Delta_t {A}(\sfC)& =  {A}(\tilde \sfC) - {A}(\sfC)\qquad \text{ for~$A \in \{{\rm T,B,L,R}\}$}.
\end{align*}
If~${\rm Top}(\sfC)$ is frozen, then~$\Delta_t {\rm T}(\sfC) = 0$. The same holds for~${\rm Bottom}(\sfC)$. 
For the left and right extrema, this is not the case as, even when~${\rm Left}(\sfC)$ or~${\rm Right}(\sfC)$ are frozen,
a point which is almost extremal in the direction~$e_{\rm lat}$ and which is not frozen may move and become~${\rm Left}(\tilde\sfC)$ or~${\rm Right}(\tilde\sfC)$, respectively. 
However, the increment of all extrema is zero if both~${\rm Top}(\sfC)$ and~${\rm Bottom}(\sfC)$ are frozen and on the same side of the mixed block.

%
%If~${\rm Top}(\sfC)$ is not {\em frozen}, write 
%\begin{align}
%	\Delta_t {\rm Top}(\sfC) =  {\rm Top}(\tilde \sfC) - {\rm Top}(\sfC) \in \bbR^2. % \qquad \text{ and }\qquad
%%	\Delta_t {\rm Bottom}(\sfC) =  {\rm Bottom}(\tilde \sfC) - {\rm Bottom}(\sfC), 
%\end{align}
Define the same quantities for the IIC. 
More precisely, if~$\zeta_i^{{\rm Top}}$ is the IIC configuration used in the sampling of~$\omega_{t+1/2}$ around the top of~$\sfC$, 
and if we write~$\sfC^{\rm IIC}$ for the IIC cluster of~$\zeta_i^{{\rm Top}}$ and~$\tilde\sfC^{\rm IIC}$ for the corresponding IIC cluster of 
$(\bfS_{t+1} \circ \bfS_{t})(\zeta_i^{{\rm Top}})$, set 
\begin{align}
	\Delta_t^{\rm IIC} {\rm T}(\sfC) =  \langle {\rm Top}(\tilde \sfC^{\rm IIC}) - {\rm Top}(\sfC^{\rm IIC}), e_{\rm vert}\rangle.
\end{align}
When applying~$\bfS_{t}$ to~$\bbL_{\rm mix}$, each track of angle~$\beta$ is exchanged with the track of angle~$\alpha$ above it. 
As such, the vertices at the bottom of the  tracks of angle~$\beta$ in~$\bbL_{\rm mix}$ remain unchanged, and we can make sense of the image of any cluster containing at least one such point. 
In particular, there exists a well-defined notion of image of~$\sfC^{\rm IIC}$ after application of~$\bfS_t$. 
The same holds for~$\bfS_{t+1}$. Notice that~$\bfS_t(\bbL_{\rm mix})$ is simply a translate of~$\bbL_{\rm mix}$ and that the effect of~$\bfS_{t+1}$ on~$\bfS_t(\bbL_{\rm mix})$ 
is to again exchange each track of angle~$\beta$ with the track above it. 
As such, it would be tempting to think that the effect of the two transformations on the top of the IIC is identical: that is not the case, as the cells of~$\bbL_{\rm mix}$ are distinct from those of~$\bfS_t(\bbL_{\rm mix})$, which affects the definition of~${\rm Top}(\cdot)$. 
It is, however, the case that~$\bbL_{\rm mix}$ and~$(\bfS_{t+1} \circ \bfS_t)(\bbL_{\rm mix})$ are identical (up to translation), including in their partition in cells.

If no IIC configuration was used in the sampling of~${\rm Top}(\sfC)$ (for instance, because the top was frozen or because an overlap error occurred), 
then define~$\Delta_t^{\rm IIC} {\rm T}(\sfC)$ using some arbitrary~$\zeta_i^{{\rm Top}}$, different from the ones used for other clusters. 
Define~$\Delta_t^{\rm IIC} {\rm B}(\sfC)$,~$\Delta_t^{\rm IIC} {\rm L}(\sfC)$ and~$\Delta_t^{\rm IIC} {\rm R}(\sfC)$ analogously. 

If~${\rm Top}(\sfC)$ is active, set 
\begin{align}
	\Delta_t^{\rm err} {\rm T}(\sfC) =  \Delta_t {\rm T}(\sfC)  - \Delta_t^{\rm IIC} {\rm T}(\sfC);
\end{align}
if it is frozen, set~$\Delta_t^{\rm err} {\rm T}(\sfC) =0$. Define~$\Delta_t^{\rm err} {\rm B}(\sfC)$,~$\Delta_t^{\rm err} {\rm L}(\sfC)$ and~$\Delta_t^{\rm err} {\rm R}(\sfC)$ in the same way. 

Throughout this process, the variables~$\Delta_t^{\rm IIC} {A}(\cdot)$ are i.i.d., and therefore their sums behaves like independent random walks. 
The error part may affect this dynamics, but will be proved to be small. We do so by a crude~$L^1$-bound.

\begin{proposition}\label{prop:error_exists}
	There exist universal constants~$c,C >0$ such that, for all~$\eta \geq N^{-c}$
	\begin{align}\label{eq:error_exists}
		\bbP\big[\exists \sfC \in {\rm Meso}(\omega_t) \text{ and } A \in \{{\rm T,B,L,R}\}\,: \, \Delta_t^{\rm err} {A}(\sfC) \neq 0 \big] \leq C \eta.
	\end{align}
\end{proposition}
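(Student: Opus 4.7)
The plan is to decompose the event $\{\exists \sfC \in {\rm Meso}(\omega_t),\, A : \Delta_t^{\rm err} A(\sfC) \neq 0\}$ into three sources of error at step $t$: overlap errors (any of the three conditions in the definition occurring), coupling errors (${\rm err}_i = 1$ at some box), and failures of locality of the track-exchange operators $\bfS_t, \bfS_{t+1}$ at scale $r/4$. I will bound the probability of each by $C\eta$. First I verify that outside the union of these three bad events, necessarily $\Delta_t^{\rm err} A(\sfC) = 0$ for every mesoscopic $\sfC$ and every active extremum $A$ (the frozen case being zero by definition): the absence of an overlap error means the resampling procedure executes; the absence of a coupling error at the box around $x = A(\sfC)$ yields $\omega_{t+1/2} = \zeta_i^A + x$ throughout $\La_r(x)$; the locality of $\bfS_t, \bfS_{t+1}$ at scale $r/4$, combined with the fact that the IIC extremum only moves by $O(1)$ lattice steps under $\bfS_{t+1} \circ \bfS_t$, ensures that the image extremum of $\tilde \sfC$ is determined by $\omega_{t+1/2}$ on $\La_r(x)$ alone, whence it coincides with the IIC image.

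Overlap errors are bounded via Proposition~\ref{prop:arms_bounds_iso}. Partition $[-N,N] \times [0,N]$ into $O((N/R)^2)$ boxes of side $2R$ with $R = \eta^C N$. For ``two distinct mesoscopic tops (resp.\ left extrema) in the same box'', each requires a half-plane three-arm event to distance of order $\eta N$, of probability $\lesssim (R/\eta N)^2$ by~\eqref{eq:3hp_arm} (resp.\ $(R/\eta N)^{1+c}$ by~\eqref{eq:3hp_arm3}); exploring one cluster before the other, the joint per-box probability has exponent strictly greater than $2$, so summation yields $O(\eta^{C c_0})$ for some $c_0 > 0$. The case ``two distinct large clusters touching the same extremum box'' is handled analogously using the one-arm bound~\eqref{eq:one_arm_bound_iso}. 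For ``extremum box crosses two blocks of $\bbL_t$'', the $R$-neighborhood of block boundaries has area $\lesssim NR$, so by~\eqref{eq:3hp_arm} the expected number of mesoscopic tops inside it is $\lesssim (NR/R^2) \cdot (R/\eta N)^2 = \eta^{C-2}$. Choosing $C$ large enough makes all three contributions $O(\eta)$.

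Coupling errors are controlled by Lemma~\ref{lem:IIC}: conditionally on the configuration outside $\La_R(x)$ and on the constraints defining the local law $\calL$, the total variation distance between $\calL$ on $\La_r(x)$ and the translated IIC is $\leq C (r/R)^{c_{\rm IIC}} = C \eta^{C c_{\rm IIC}}$. Since the expected number of mesoscopic extrema in the window is $O(\eta^{-2})$ by~\eqref{eq:3hp_arm}, the expected total number of coupling errors is $\lesssim \eta^{C c_{\rm IIC} - 2}$, which is $O(\eta)$ thanks to the prescribed $C \geq 3/c_{\rm IIC}$. Finally, the track-exchange locality property yields $\bbP[\bfS_t(\omega)(e) \text{ is not determined by } \omega \text{ on } \La_{r/4}(e)] \leq e^{-c r/4}$; summed over the $O(\eta^{-2} R^2)$ edges near mesoscopic extrema, this contribution is negligible before $\eta$ as long as $r = \eta^{2C} N$ exceeds some positive power of $N$, which is precisely the range $\eta \geq N^{-c'}$ for a sufficiently small universal $c' > 0$.

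The main technical obstacle is the ``two distinct mesoscopic extrema in one box'' estimate: to beat the square of the expected number of mesoscopic clusters ($\eta^{-2}$) one needs the joint probability of two disjoint half-plane three-arm events to decay with a combined exponent strictly greater than $2$. This is delicate for the lateral extrema, for which~\eqref{eq:3hp_arm3} only provides exponent $1+c$ on a generic $\bbL(\pmb\alpha)$ instead of the sharp $2$ available on $\bbL_{\rm mix}$ via~\eqref{eq:3hp_arm2}, and must be handled by a careful cluster exploration respecting the block structure of $\bbL_t$.
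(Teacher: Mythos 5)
There is a genuine gap in your first paragraph, at the step where you claim that ``the image extremum of $\tilde\sfC$ is determined by $\omega_{t+1/2}$ on $\La_r(x)$ alone, whence it coincides with the IIC image''. This is false: an extremum such as ${\rm Top}(\tilde\sfC)$ is a \emph{global} maximum over a cluster of diameter at least $\eta N \gg r$, and the transformation $\bfS_{t+1}\circ\bfS_t$ acts on the entire mixed block, not only near $x$. Hence a second point of $\sfC$ (an ``almost-top''), lying at lateral distance at least $r/2$ from ${\rm Top}(\sfC)$ but at height within $O(1)$ of ${\rm T}(\sfC)$, may itself be pushed upward and overtake the image of the original top, in which case $\Delta_t {\rm T}(\sfC)\neq\Delta_t^{\rm IIC}{\rm T}(\sfC)$ even though no overlap error, no coupling error and no locality failure has occurred. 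The same phenomenon can occur for the IIC cluster. The paper treats this as a separate error scenario (its case (3.b), illustrated in Figure~\ref{fig:controlling_err}): by a union bound over the position of the extremum and of the almost-extremum, using the half-plane three-arm bounds \eqref{eq:3hp_arm} (and, for lateral extrema, a combination of \eqref{eq:3hp_arm2} and \eqref{eq:3hp_arm3}), its probability is of order $\eta^{-2}\sum_{k\geq r}k^{-2}=\eta^{-2}r^{-1}=\eta^{-2-C}/N$. Note that this term, together with the $\eta^{-2}e^{-cr}$ locality term, is the real source of the hypothesis $\eta\geq N^{-c}$; in your write-up the constraint on $\eta$ only enters through the (much weaker) locality estimate, which is a symptom of the missing error source.

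Apart from this omission, your decomposition and the treatment of the other two sources follow the paper's proof: the overlap errors are bounded as in the paper (the paper counts boxes carrying a half-plane three-arm event plus one extra primal arm, which is the same ``combined exponent strictly larger than $2$'' device you invoke, and handles boxes straddling two blocks by the $O(N/R)$ count), and the coupling-error bound via Lemma~\ref{lem:IIC} together with Proposition~\ref{prop:mesoscopic_number} and the choice $C\geq 3/c_{\rm IIC}$ is exactly the paper's argument. To repair the proof you must add the overtaking scenario to your list of bad events and bound it as above; your three listed bad events do not cover it.
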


The rest of this section is dedicated to the proof of Proposition~\ref{prop:error_exists}. 
To start, we state a bound on the number of mesoscopic clusters of any configuration~$\omega_t$.

\begin{proposition}\label{prop:mesoscopic_number}
	There exists~$C > 0$ such that, for all~$N$ and~$0<\eta < 1$ and all~$t \geq0$, 
	\begin{align}\label{eq:mesoscopic_number}
		\phi_{\bbL_t}\big[\omega \text{ has more than~$\tfrac{\la}{\eta^{2}}$~$\eta$-mesoscopic clusters}\big] 
		\leq C e^{-\la/C} \qquad \text{ for all~$\la \geq 1$}
	\end{align}
\end{proposition}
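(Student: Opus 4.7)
The plan is to combine a first-moment bound with a factorial-moment estimate. Let $M = M(\omega)$ denote the number of $\eta$-mesoscopic clusters. I would first establish $\bbE_{\phi_{\bbL_t}}[M] \leq C/\eta^2$, and then upgrade this to $\bbE_{\phi_{\bbL_t}}[M^k] \leq (Ck/\eta^2)^k$ for every integer $k \geq 1$, from which the exponential tail follows by Markov's inequality applied with $k \asymp \lambda$.

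The first-moment bound is a direct application of the three-arm half-plane bound \eqref{eq:3hp_arm}. Every mesoscopic cluster $\sfC$ has a unique top ${\rm Top}(\sfC)$, and the constraints in \eqref{eq:meso} force this top to lie in a region containing only $O(N^2)$ lattice vertices. For each such vertex $x$, applying \eqref{eq:3hp_arm} at scales $r = 1$ and $R = \eta N/2$ gives
\[
\phi_{\bbL_t}\bigl[x = {\rm Top}(\sfC)\text{ for some } \sfC \text{ with } {\rm T}(\sfC) - {\rm B}(\sfC) \geq \eta N\bigr] \leq C(\eta N)^{-2}.
\]
Since $\bbL_t$ has at most two angles in its track sequence, the constants from Proposition~\ref{prop:arms_bounds_iso} apply uniformly in $t$, and summing over the $O(N^2)$ candidate tops yields the claim. (The upper bound $\eta^{1/2}N$ on the vertical diameter in \eqref{eq:meso} is not needed at this stage.)

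For the factorial-moment estimate, I would expand $M^k$ as a sum over $k$-tuples $(x_1, \ldots, x_k)$ of candidate tops and classify tuples by the minimum pairwise distance $\rho$. For well-separated tuples ($\rho \geq \eta^{1/2}N$), the mixing property of Proposition~\ref{prop:mixing} applied at scale $\rho$ decouples the $k$ top-events up to a bounded multiplicative factor, contributing at most $C^k (\bbE_{\phi_{\bbL_t}}[M])^k \leq (C/\eta^2)^k$. For tuples with a pair of tops closer than $\eta^{1/2}N$, one combines the three-arm bound \eqref{eq:3hp_arm} with the one-arm bound \eqref{eq:one_arm_bound_iso} to gain a polynomial factor for each close pair; the number of ``clustered'' configurations balanced against this gain keeps the contribution at most $(Ck/\eta^2)^k$. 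Markov's inequality then gives $\phi_{\bbL_t}[M \geq \lambda/\eta^2] \leq (Ck/\lambda)^k$, and optimising over $k \asymp \lambda/(eC)$ produces the required $Ce^{-\lambda/C}$.

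The main obstacle lies in the decoupling step of the factorial-moment bound: the event ``$x$ is the top of a mesoscopic cluster'' is not local, since to certify that $x$ is truly the top requires controlling the cluster out to distance $\eta^{1/2}N$. Mixing must therefore be invoked at a scale strictly above this, and a careful accounting of the polynomial mixing errors is required so that they do not swamp the target bound. A clean implementation proceeds by iteratively exploring mesoscopic clusters in the order of their tops; at each stage, the configuration revealed so far affects only a region of diameter $O(\eta^{1/2}N)$, and Proposition~\ref{prop:mixing} ensures that the conditional probability of finding one more mesoscopic top is at most a bounded multiple of its unconditional value, the bounded multiplicative error being absorbed into the universal constant $C$.
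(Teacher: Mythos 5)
Your first-moment bound is correct and is close in spirit to what the paper does, but the upgrade to an exponential tail is where the genuine gap lies. The factorial-moment estimate requires controlling $k$-tuples of tops, and the close-pair contribution is not controlled by the ingredients you cite. Take $k=2$ and two vertices $x_1,x_2$ at distance $O(1)$: the only bound your toolbox (\eqref{eq:3hp_arm} together with \eqref{eq:one_arm_bound_iso}) yields for the event that both are tops of (necessarily distinct) mesoscopic clusters is of order $(\eta N)^{-\alpha_{\rm arm}}$, coming from one arm from a unit box to distance $\eta N$; summing over the $\sim N^2$ such pairs gives a contribution growing like a power of $N$ for fixed $\eta$, whereas the target second moment $C\eta^{-4}$ must be uniform in $N$. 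What is really needed for close pairs is that \emph{two distinct} clusters cross from scale $|x_1-x_2|$ to scale $\eta N$ with probability decaying with exponent close to $2$; no such multi-cluster estimate is among the paper's stated tools, and there is no BK inequality for FK-percolation to produce it cheaply. Your fallback, iterative exploration plus Proposition~\ref{prop:mixing}, does not close the gap either: the event that $x$ is the top of a mesoscopic cluster is determined only by a region whose lateral extent can be of order $N$ (the definition \eqref{eq:meso} caps the vertical diameter by $\eta^{1/2}N$ but the lateral coordinates only by $\pm N$), and Proposition~\ref{prop:mixing}, which compares an event in $\La_r$ with an event outside $\La_R$, does not give the uniform bound ``conditional probability of one more mesoscopic top $\le C$ times its unconditional value'' given an arbitrary explored configuration; even granting such a bound, a constant multiplicative loss per revealed cluster does not by itself yield $\bbE[M^k]\le(Ck/\eta^{2})^k$.

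The paper avoids all of this by counting at scale $\eta N$ rather than at the vertex scale: every $\eta$-mesoscopic cluster must cross one of the $O(\eta^{-2})$ annuli $\La_{\eta N/4}(x)\setminus\La_{\eta N/8}(x)$ with $x\in\tfrac{\eta N}{8}\bbZ^2$ in the window, so the mesoscopic count is at most $\sum_x {\bf N}_x$, where ${\bf N}_x$ is the number of clusters crossing the annulus at $x$. Because \eqref{eq:RSW_iso} is uniform in boundary conditions, each ${\bf N}_x$ has an exponential tail uniformly in the configuration outside, so along sub-grids of spacing larger than $\eta N$ the ${\bf N}_x$ can be dominated by i.i.d.\ exponential-tail variables, and the tail of a sum of $O(\eta^{-2})$ such variables gives \eqref{eq:mesoscopic_number} directly. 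If you coarse-grain your count in this way, the close-pair problem disappears (two nearby tops are simply absorbed into the same ${\bf N}_x$) and no multi-point arm estimate or moment expansion is needed.
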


This is a direct consequence of the RSW theory and will come to no surprise to the experts.

\begin{proof}[Proof of Proposition~\ref{prop:mesoscopic_number}]
	Fix~$N$,~$\eta$ and~$t$. 
	Write~${\bf N}_{\rm meso}$ for the number of~$\eta$-mesoscopic clusters. 
	Notice that any such cluster needs to cross an annulus~$\La_{\eta N/4}(x) \setminus \La_{\eta N/8}(x)$ for some~$x \in \frac{\eta N}8\bbZ^2 \cap [-N,N]\times[0,N]~$ from the inside to the outside. For any such~$x$, write~${\bf N}_x$ for the number of such crossing clusters. 
	Then we have 
	\begin{align}\label{eq:N_macro_sum}
		{\bf N}_{\rm meso} \leq \sum_x {\bf N}_x,
	\end{align}
	where the sum is over all~$x$ as above. There are~$O(\eta^{-2})$ terms in the sum. 
	
	It is a standard consequence of~\eqref{eq:RSW_iso} that the variables~${\bf N}_x$ have exponential tails. 
	Moreover, since~\eqref{eq:RSW_iso} is uniform in boundary conditions, the variables~${\bf N}_x$ may even be bounded by variables~$\tilde {\bf N}_x$ with exponential tails and which are independent for points at mutual distance larger than~$\eta N$. 
	The conclusion follows by splitting the sum in~\eqref{eq:N_macro_sum} into a bounded number of sums, each of which is bounded by i.i.d.\ variables~$\tilde{\bf  N}_x$ with exponential tails. 
\end{proof}

We now turn to the proof of Proposition~\ref{prop:error_exists} which is also based on the RSW theory and on the IIC-mixing estimate of Lemma~\ref{lem:IIC}.
There are no major difficulties here, but the proof is tedious due to the multiple sources of error possible.

\begin{proof}[Proof of Proposition~\ref{prop:error_exists}]
	To have~$\Delta_t^{\rm err} {A}(\sfC) \neq 0$ for some~$\sfC \in {\rm Meso}(\omega_t)$ and~$A \in \{{\rm T,B,L,R}\}$, one of the following must occur: 
	\begin{itemize}
	\item[(1)] an overlap error occurs at step~$t$,
	\item[(2)] a coupling error occurs at step~$t$ or 
	\item[(3)] we have~$\Delta_t {\rm T}(\sfC) \neq \Delta_t^{\rm IIC} {\rm T}(\sfC)$ for some~$\sfC \in {\rm Meso}(\omega_t)$, even though the environment in~$\La_r({\rm Top}(\sfC))$ is given by the corresponding IIC configuration, or the same for~${\rm B}, {\rm L}$ or~${\rm R}$ instead of~${\rm T}$. 
	\end{itemize}
	We analyse each scenario separately. \smallskip 
	
	\noindent{\bf (1)}
	A relatively straightforward analysis based on~Proposition~\ref{prop:arms_bounds_iso} shows that 
	\begin{align}\label{eq:overlap_err}
		\bbP[\text{overlap error occurs at step~$t$}] \leq C \eta, 
	\end{align}
	for some universal constant~$C >0$. 
	Indeed, pave our observation window by overlapping boxes~$\{\La_{4R}(z): z \in R\bbZ^2 \cap [-N,N] \times [0,N]\}$. 
	For an overlap error to occur, at least one of the following needs to occur in $\omega_t$:
	\begin{itemize}
	\item there exists a box~$\La_{4R}(z)$ with three arms in a half-plane and at least one an additional primal arm to a distance~$\eta N/2 - 8R$, or 
	\item there exists a box~$\La_{4R}(z)$ that intersects two distinct blocks of the lattice and has three arms in a half-plane to a distance~$\eta N - 4R$.
	\end{itemize} 
	In the above, by ``three-arm in a half-plane'' we mean one of the events in \eqref{eq:3hp_arm} or \eqref{eq:3hp_arm2}.
	There are~$O(N/R)^2$ potential boxes in the first category and~$O(N/R)$ in the second. 
	By a union bound and Proposition~\ref{prop:arms_bounds_iso}, we conclude\footnote{Bounding the probability of the events comprising three arms in the upper or lower half-plane is a straightforward application of~\eqref{eq:3hp_arm}. For ``lateral'' arm events, a more careful approach is needed which  combines~\eqref{eq:3hp_arm2} and~\eqref{eq:3hp_arm3}. A detailed analysis available in~\cite{DumKozKra20}.}
	that~\eqref{eq:overlap_err} holds for appropriately chosen constants.\smallskip 
	
	\noindent{\bf (2)} Lemma~\ref{lem:IIC} implies that
\begin{align}
	\bbP[{\rm err}_i = 1] = d_{\rm TV} \leq C\big(\tfrac{r}{R}\big)^{c_{\rm IIC}} \leq C \eta^{3},
\end{align}
where the last inequality is due to the choice of~$r$. 
Combining the above with Proposition~\ref{prop:mesoscopic_number}, we conclude that 
\begin{align}\label{eq:coupling_err}
	\bbP[\text{coupling error occurs at step~$t$}] \leq C \eta
\end{align}
for some universal constant~$C > 0$.\smallskip

	\noindent{\bf (3)}
	This type of error may appear for two reasons (we describe them in the context of the top of clusters): 
	\begin{itemize}
	\item[(3.a)] the configuration in~$\La_{r/2}({\rm Top}(\sfC))$ after applying~$\bfS_{t+1}\circ \bfS_t$ is not identical to the one given by the IIC or 
	\item[(3.b)] the top of the cluster~$\sfC$ or that of the corresponding IIC cluster shifts by more than~$r/2$ to the left or to the right when applying~$\bfS_{t+1}\circ \bfS_t$, thus exiting~$\La_{r/2}({\rm Top}(\sfC))$.
	\end{itemize}
	Scenario (3.a) is very unlikely to occur. Indeed, for any given cluster~$\sfC$, scenario (3.a) has a probability bounded above by~$C e^{-c r}$ for universal constants~$C,c$ due to the way that the start-triangle transformations combine to form track exchanges. More generally, it is implied by the fact that the track exchanges are {\em local} transformations. Performing a union bound over the mesoscopic clusters of~$\omega_t$ and using Proposition~\ref{prop:mesoscopic_number}, we conclude that the probability of (3.a) is bounded above by~$C \eta^{-2}e^{-c r}$ for potentially modified universal constants~$C,c$.
	
	For scenario (3.b) to occur, an {\em almost-top} of some mesoscopic cluster~$\sfC$ needs to exist at a distance at least~$r/2$ from~${\rm Top}(\sfC)$ in~$\omega_t$, or the equivalent for the IIC clusters --- see Figure~\ref{fig:controlling_err}. 
	By a union bound on the possible positions of~${\rm Top}(\sfC)$ and that of the almost-top, using~\eqref{eq:3hp_arm}, 
	we find\footnote{
	This is not entirely true:~\eqref{eq:3hp_arm} applies for vertical extrema, but for lateral extrema a more delicate analysis which combines~\eqref{eq:3hp_arm2} and~\eqref{eq:3hp_arm3} is necessary. A detailed proof is available in~\cite{DumKozKra20}.}
	that the probability of such an event is bounded above by a multiple of
	$$ \eta^{-2}  \sum_{k\geq r}k^{-2} = \eta^{-2} r^{-1}.$$
	Recall that~$r$ was fixed to be equal to~$\eta^C N$ for some fixed constant~$C$.
	Thus, the probability above is bounded by~$\eta^{-2-C}/N$. 
	
	\begin{figure}
	\begin{center}
	\includegraphics[width = .8\textwidth]{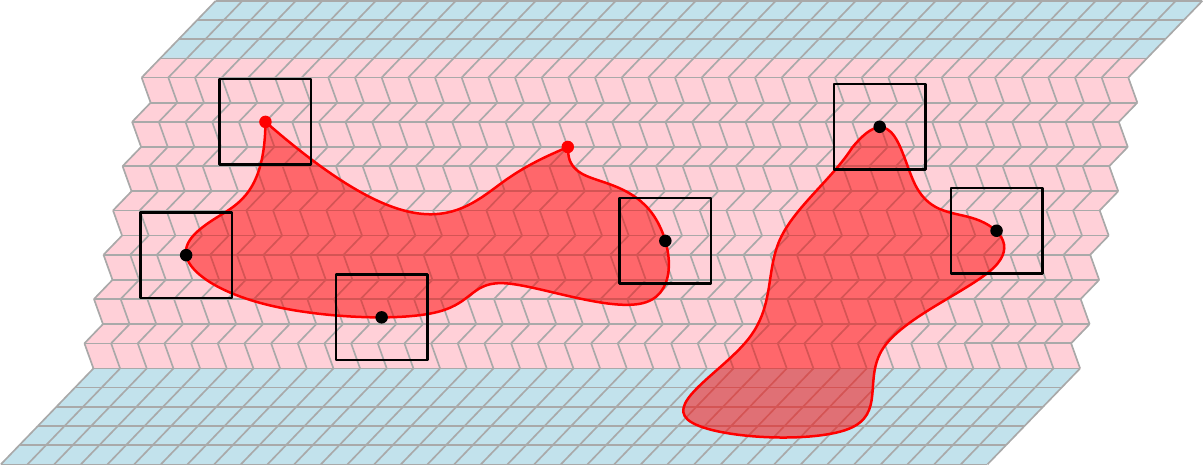}
	\caption{A step in the transformations~$\bbL_t$. The mixed block is pink, the two other ones are blue; 
	notice that the bottom block is of angle~$\alpha$, which indicates that~$t > K$.
	The two red clusters are mesoscopic; the extrema are marked, as are the extremum boxes. 
	All extrema of the left cluster are active, for the right one, only the top and right are active, the other are frozen. 
	\newline
	Notice the potential error for the move of the top of the left cluster: there exists a second point (marked in red) in the cluster
	that may move up and overtake the top.}
	\label{fig:controlling_err}
	\end{center}
	\end{figure}

	Summing the different potential sources of error we find that 
	\begin{align*}
		\bbP\big[\exists \sfC \in {\rm Meso}(\omega_t): \, \Delta_t^{\rm err} {\rm T}(\sfC) \neq 0 \text{ or }\Delta_t^{\rm err} {\rm B}(\sfC) \neq 0\big]&\\ 
		\leq C (\eta + C \eta^{-2} e^{-c r} + \eta^{-2-C}/N) 
		&\leq 3 C \eta,
	\end{align*}
	provided that~$\eta \geq N^{-c_0}$ for some fixed (small) constant~$c_0 >0$. 
\end{proof}

\subsection{Stability of mesoscopic clusters}\label{sec:stability_meso}

Write\footnote{The pre-factor~$1/2$ appears since~$\Delta_t^{\rm IIC}{\rm T}$ is an increment over two time steps of the process~$(\omega_t)_{t\geq 0}$.}
\begin{align}\label{eq:drift}
 	{\rm Drift}_{\rm vert} = \tfrac12 \bbE[\Delta_t^{\rm IIC}{\rm T}] = \tfrac12 \bbE[\Delta_t^{\rm IIC}{\rm B}] 
	\quad \text{ and }\quad
	{\rm Drift}_{\rm lat} =  \tfrac12\bbE[\Delta_t^{\rm IIC}{\rm L}] =  \tfrac12\bbE[\Delta_t^{\rm IIC}{\rm R}].
\end{align} 
The equality between these two quantities is a simple consequence of symmetry. 
Moreover, a simple argument\footnote{The non-strict inequalities are immediate. Indeed, 
applying~$\bfS_{1} \circ \bfS_{0}$ to~$\phi^{\rm IIC,T}_{\bbL_{\rm mix}}$ the top of the IIC cluster may increase or decrease by at most one cell in the vertical direction. The height of a cell of~$\bbL_{\rm mix}$ is~$\sin\alpha + \sin\beta$ and the transformation~$\bfS_{1} \circ \bfS_{0}$ shifts all cells vertically by~$\sin\alpha - \sin\beta$.
Thus~$-2\sin \beta \leq \Delta_t^{\rm IIC}{\rm T}\leq 2\sin \alpha$ a.s. The finite energy property allows to derive the strict inequalities. The same holds for ${\rm Bottom}$.}  based on the finite energy property shows that~$-\sin\beta < {\rm Drift}_{\rm vert} < \sin \alpha$.

Fix a mesoscopic cluster~$\sfC$ of~$\omega_0$.
For~$t >0$,  assuming that the cluster remains mesoscopic in each~$\omega_s$ with~$0\leq s \leq t$, write~$\sfC_t$ for the cluster in~$\omega_t$ and set
\begin{align}
	{A}_t(\sfC)  = {A}(\sfC_t) \qquad \text{ for~$A \in \{{\rm T,B,L,R}\}$}. 
\end{align}

Write~$\tau_{\rm start}^{\rm T}(\sfC)$ for the first time that~${\rm Top}(\sfC_t)$ is not frozen. 
Write~$\tau_{\rm meso}(\sfC)$ for the first time~$t$ that~$\sfC_t$ ceases to be mesoscopic. 
Finally, set~$\tau_{\rm end}^{\rm T}(\sfC)$ for the first time~$t > \tau_{\rm start}^{\rm T}(\sfC)$ such that, either~$\sfC_t$ is not mesoscopic or~${\rm Top}(\sfC_t)$ is frozen again. 
The evolution of~${\rm T}_t(\sfC)$ is then stationary up to~$\tau_{\rm start}^{\rm T}(\sfC)$ and after~$\tau_{\rm end}^{\rm T}(\sfC)$ (the latter is by convention if~$\tau_{\rm end}^{\rm T}(\sfC) = \tau_{\rm meso}(\sfC)$). 

Apply the same construction for~${\rm Bottom}(\sfC)$ to define~$\tau_{\rm start}^{\rm B}(\sfC)$ and~$\tau_{\rm end}^{\rm B}(\sfC)$. 
Note that $\tau_{\rm start}^{\rm T}(\sfC)$, $\tau_{\rm start}^{\rm B}(\sfC)$,
$\tau_{\rm end}^{\rm T}(\sfC)$, $\tau_{\rm end}^{\rm B}(\sfC)$ and~$\tau_{\rm meso}(\sfC)$ are determined by the process $(\calH(\omega_t))_t$, with the first two even determined by its initial value:
\begin{align}
\tau_{\rm start}^{\rm T}(\sfC) = K -\frac{ {\rm T}_0(\sfC)}{\sin\beta} \quad\text{ and }\quad \tau_{\rm start}^{\rm B}(\sfC) = K - \frac{{\rm B}_0(\sfC)}{\sin\beta}.
\end{align}
Here, and throughout the rest of the proof, we ignore integer parts in the definition of time-steps. 
We do not define the same stopping times for~${\rm Left}(\sfC)$ and~${\rm Right}(\sfC)$, as they would not be determined by $(\calH(\omega_t))_t$; we will work around this minor difficulty. 

Assuming that the steps of~$({\rm T}_t(\sfC), {\rm B}_t(\sfC), {\rm L}_t(\sfC), {\rm R}_t(\sfC))$ are exactly given by~${{\rm Drift}}_v$ and~${{\rm Drift}}_h$, respectively, and that~$\tau_{\rm meso}(\sfC)= \infty$, we would find that, for~$A \in \{{\rm T}, {\rm B}\}$
\begin{align}\label{eq:tau_drift}
A_t(\sfC) &= A_0(\sfC) + \big((t \wedge \tau_{\rm end}^{{\rm Drift}, A} (\sfC) - \tau_{\rm start}^A(\sfC))\vee 0\big)\cdot {\rm Drift}_{\rm vert},\\[.5em] 
 \text{ where}\qquad
%{\rm T}_t(\sfC) &= T_0(\sfC) + \big((t \wedge \tau_{\rm end}^{{\rm Drift}, {\rm T}} (\sfC) - \tau_{\rm start}^{\rm T}(\sfC))\vee 0\big) {{\rm Drift}}_v \text{ and }\\
%{\rm B}_t(\sfC) &= B_0(\sfC) + \big((t \wedge \tau_{\rm end}^{{\rm Drift}, {\rm B}} (\sfC) - \tau_{\rm start}^{\rm B}(\sfC))\vee 0\big) {{\rm Drift}}_v 
	\tau_{\rm end}^{{\rm Drift}, A} (\sfC) &= {A}_0(\sfC) \frac{1 + {\rm Drift}_{\rm vert} /\sin \beta}{\sin\alpha - {\rm Drift}_{\rm vert}} + K % \qquad \text{ for~$A \in \{{\rm T}, {\rm B}\}$}
%	\text{ and } 
%	\tau_{\rm end}^{{\rm Drift}, {\rm B}} (\sfC) = {\rm B}_0(\sfC) \frac{1 + {\rm Drift}_{\rm vert} /\sin \beta}{\sin\alpha - {\rm Drift}_{\rm vert}} + K 
\end{align} 
is the equivalent~$\tau_{\rm end}^{A}$ under the assumption that~$A_t$ moves deterministically with an increment of~${\rm Drift}_{\rm vert}$ at every step when~$A$ is active. 
%In particular, the top of~$\sfC$ would be active between
%$\tau_{\rm start}^{\rm T}(\sfC)$
%and~$\tau_{\rm end}^{{\rm Drift}, {\rm T}}(\sfC)$,
%so for a total of~${\rm T}_0(\sfC) \frac{\sin\alpha + \sin\beta}{(\sin \alpha -{\rm Drift}_{\rm vert})\sin\beta}$  transformations. 

For~${\rm L}_t(\sfC)$ and~${\rm R}_t(\sfC)$ the dynamics, even under the assumption that their steps are deterministic, is not that straightforward. This is because it is not clear when exactly~${\rm Left}(\sfC_t)$ and~${\rm Right}(\sfC_t)$ are frozen or active. 
Nevertheless, we can expect that
\begin{align*}
\big|{\rm L}_t(\sfC) - {\rm L}_0(\sfC) -  \big((t \wedge \tau_{\rm end}^{{\rm Drift}, {\rm T}} (\sfC) - \tau_{\rm start}^{\rm T}(\sfC))\vee 0\big)\cdot {\rm Drift}_{\rm lat} \big| \leq 
C \big({\rm T}_0(\sfC) -  {\rm B}_0(\sfC)\big),
%C \big(\tau_{\rm start}^{\rm T}(\sfC) -   \tau_{\rm start}^{\rm B}(\sfC) \big),
\end{align*}
for some constant~$C$. This is because the ambiguity in whether~${\rm Left}(\sfC_t)$ is frozen or active only occurs when~${\rm Top}(\sfC_t)$ and~${\rm Bottom}(\sfC_t)$ are in different blocks of~$\bbL_t$, and this only occurs for~$O({\rm T}_0(\sfC) -  {\rm B}_0(\sfC))$ steps of the process. The same holds for~${\rm R}_t(\sfC)$. 

With this in mind, for constants~$c,C> 0$ to be fixed below, we say that a cluster~$\sfC \in {\rm Meso}(\omega_0)$ is {\em~$c,C$-stable} if 
\begin{align}\label{eq:stable_C}
	&\tau_{\rm meso}(\sfC) \geq K+K' \qquad \text{ and, for all~$0 \leq t \leq K+K'$ and~$A \in \{{\rm T,B,L,R}\}$ }\nonumber\\[.5em]
	&\big|A_t(\sfC) - A_0(\sfC) -  \big((t \wedge \tau_{\rm end}^{{\rm Drift}, {\rm T}} (\sfC) - \tau_{\rm start}^{\rm T}(\sfC))\vee 0\big)\cdot {\rm Drift}_* \big| \leq C \eta^c N ,
\end{align}
where~${\rm Drift}_*$ is~${\rm Drift}_{\rm vert}$ for~$A = {\rm T,B}$ and~${\rm Drift}_{\rm lat}$ for~$A = {\rm L,R}$. Note that we use~$\tau_{\rm end}^{{\rm Drift}, {\rm T}}(\sfC)$ for all directions, even the bottom --- this will prove irrelevant as the difference between~$\tau_{\rm end}^{{\rm Drift}, {\rm T}}(\sfC)$ and~$\tau_{\rm end}^{{\rm Drift}, {\rm B}}(\sfC)$ will be absorbed in the error term~$C \eta^c N$. 

We now state the key result which states that the dynamics only affects the extrema of mesoscopic clusters by the linear map~$M_{\beta,\alpha}$. 
This will be later shown to imply that the actual shapes of macroscopic clusters are only affected through~$M_{\beta,\alpha}$. 
We cannot expect the statement below to apply to all mesoscopic clusters of~$\omega_0$, since clusters that are barely mesoscopic in~$\omega_0$ may cease being mesoscopic during the process. For that reason, we introduce the more restrictive notion of~${\rm Meso}^+(\omega_0)$.

Fix some large constant~$C_{M}>0$ (to be precise~${C_{M}} = 2 + 2 \max\{|{\rm Drift}_{\rm vert}|, |{\rm Drift}_{\rm lat}|\}$ is sufficiently large). 
Denote by~${\rm Meso}^+(\omega_0)$ the set of clusters~$\sfC$ of~$\omega_0$ such that
\begin{align}\label{eq:Meso^+}
	C_{M}\eta N	\leq {\rm T}_0(\sfC)  - {\rm B}_0(\sfC)   \leq \tfrac{\sqrt \eta}{C_{M}} N, \quad
	{\rm T}_0(\sfC)\leq \tfrac{N}{C_{M}}, \quad &{\rm L}_0(\sfC)\geq - \tfrac{N}{C_{M}}\text{ and } {\rm R}_0(\sfC)\leq \tfrac{N}{C_{M}}.\qquad
\end{align}

\begin{proposition}\label{prop:stability_meso}
	There exist constants~$c,C >0$ such that, for~$N$ large enough and~$\eta > N^{-c}$  
	\begin{align}\label{eq:stable}
	\bbP[\text{all~$\sfC \in {\rm Meso}^+(\omega_0)$ are~$c,C$-stable}] \geq 1 - C \eta^{c}.
	\end{align}
\end{proposition}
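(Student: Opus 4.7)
The plan is to fix a single cluster $\sfC \in {\rm Meso}^+(\omega_0)$, bound the probability that it fails to be $(c,C)$-stable by $O(\eta^{c'})$ for some $c' > 2+c$, and then union-bound over the $O(\eta^{-2})$ clusters of ${\rm Meso}^+(\omega_0)$, whose number is controlled by Proposition~\ref{prop:mesoscopic_number}. For each fixed $\sfC$ and each extremum $A \in \{{\rm T,B,L,R}\}$, the strategy is to decompose the one-step increment as $\Delta_t A(\sfC) = \Delta_t^{\rm IIC} A(\sfC) + \Delta_t^{\rm err} A(\sfC)$ from Section~\ref{sec:control_error}, control the IIC term by concentration of an i.i.d.\ sum, and the error term by the aggregate bound of Proposition~\ref{prop:error_exists} together with Markov's inequality.

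The IIC term is the easier part. By construction of the coupling in Section~\ref{sec:5MarkovChain}, the samples $\zeta_i^{A}$ are drawn i.i.d.\ across clusters and time steps; hence for each fixed $\sfC$ and $A$, the sequence of increments $(\Delta_t^{\rm IIC} A(\sfC))_t$ consists of uniformly bounded i.i.d.\ variables of mean $2{\rm Drift}_*$. A Hoeffding-type inequality, together with a union bound over times $t \le K+K' = O(N)$, gives
\[
    \Big|\sum_{s \le t}\Delta_s^{\rm IIC} A(\sfC) - n_A(t) \cdot {\rm Drift}_*\Big| \le \sqrt{N\log(1/\eta)} \le \eta^{c_1} N
\]
(the last inequality uses $\eta > N^{-c}$ for $c$ small enough) outside an event of probability at most $\eta^{c_1'}$ for any prescribed $c_1' > 0$, where $n_A(t)$ is the number of active steps for $A$ up to time $t$.

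For the error term, Proposition~\ref{prop:error_exists} and a sum over the $O(N)$ time-steps give that the expected total number of $(t,\sfC,A)$ triples producing a nonzero error is $O(N\eta)$. This aggregate bound must be converted into a per-cluster bound, which is the main obstacle: this is done by revisiting the proof of Proposition~\ref{prop:error_exists} and observing that each source of error, when restricted to a specific cluster $\sfC$, carries a refined probability controlled by the half-plane arm bounds of Proposition~\ref{prop:arms_bounds_iso} applied at the extrema of $\sfC$. Summing over time and applying Markov's inequality yields $\sum_t |\Delta_t^{\rm err} A(\sfC)| \le \eta^{c_2} N$ except on an event of probability at most $\eta^{c_2}$, since each individual error is $O(1)$.

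It remains to verify $\tau_{\rm meso}(\sfC) \ge K+K'$. Combining the two controls above, both ${\rm T}_t(\sfC)$ and ${\rm B}_t(\sfC)$ evolve with the same drift ${\rm Drift}_{\rm vert}$ during their active phase, so the vertical diameter ${\rm T}_t(\sfC) - {\rm B}_t(\sfC)$ is conserved up to fluctuations of size $\eta^{c} N$ whenever both extrema are simultaneously active or simultaneously frozen. During the asymmetric ``lag'' period of length $O(({\rm T}_0(\sfC) - {\rm B}_0(\sfC))/\sin\beta)$ when only one of ${\rm Top}(\sfC_t)$, ${\rm Bottom}(\sfC_t)$ is active, the diameter changes only by a multiplicative factor that stays strictly bounded away from $0$ and $\infty$, thanks to the strict inequalities $-\sin\beta < {\rm Drift}_{\rm vert} < \sin\alpha$ coming from the finite-energy property; choosing $C_M$ in \eqref{eq:Meso^+} sufficiently large, this keeps the diameter inside $[\eta N, \sqrt{\eta}N]$ throughout, and the lateral extrema stay inside $[-N,N]$ by the same drift-plus-fluctuation argument. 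A final union bound over directions $A$ and over $\sfC$ concludes. The delicate point throughout is the calibration of the constants: the per-cluster error must decrease faster than $\eta^2$ to survive the union bound, which forces one to exploit the universal exponent $2$ in \eqref{eq:3hp_arm}--\eqref{eq:3hp_arm2} in the error estimates.
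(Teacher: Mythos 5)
Your treatment of the IIC part (Hoeffding for the bounded i.i.d.\ increments, deviation $N^{2/3}\leq \eta^{c}N$, union bound over the polynomially many clusters) and your endgame (conservation of the vertical diameter up to fluctuations, the lag period for the lateral extrema of length $O(({\rm T}_0-{\rm B}_0)/\sin\beta)$, the buffer $C_M$, the strict bounds $-\sin\beta<{\rm Drift}_{\rm vert}<\sin\alpha$) all match the intended argument. The genuine gap is in the error term. Your plan requires, for each fixed $\sfC$, a failure probability $o(\eta^{2})$ in order to survive the union bound over the $O(\eta^{-2})$ clusters of ${\rm Meso}^+(\omega_0)$, but the bound you actually produce is $\eta^{c_2}$ for some small $c_2$ --- which does not beat $\eta^{2}$, so your own conclusion is inconsistent with your own requirement. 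More importantly, no ``refined, per-cluster'' version of Proposition~\ref{prop:error_exists} of that strength is available: the three-arm exponent $2$ of \eqref{eq:3hp_arm}--\eqref{eq:3hp_arm2} is already what produces the factor $\eta$ per time step in \eqref{eq:error_exists}, and it cannot be upgraded to a per-cluster, all-time bound of order $o(\eta^{2})$. Concretely, a coupling error at a given extremum box has probability of order $\eta^{3}$ at each of the $K+K'\asymp N$ steps, and since $\eta\geq N^{-c}$ with $c$ small we have $N\eta^{3}\gg 1$; so the probability that a \emph{fixed} cluster suffers at least one error during the process is not small at all, and the event ``$\sum_t|\Delta_t^{\rm err}A(\sfC)|\leq \eta^{c_2}N$'' cannot be given probability better than roughly $1-\eta^{1-c}$ per cluster by Markov. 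A union bound over $\eta^{-2}$ clusters then fails.

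The way around this --- and the route the paper takes --- is to not localise the error control at all. Since every step at which some error occurs displaces the extremal coordinates of \emph{any} given cluster by at most a universal constant, one has the deterministic bound $|A_t(\sfC)-A_t^{\rm IIC}(\sfC)|\leq C\cdot{\rm TotErr}$ simultaneously for all $\sfC$ and all $A$, where ${\rm TotErr}$ is the total number of time steps at which some error occurs anywhere. Summing \eqref{eq:error_exists} over the $O(N)$ steps gives $\bbE[{\rm TotErr}]=O(\eta N)$, so a single application of Markov's inequality yields $\bbP[{\rm TotErr}>C\eta^{c}N]\leq \eta^{c}$, and this one global event controls the error contribution of every cluster at once; no union bound over clusters is needed for the error part. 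The union bound over clusters is only used for the IIC random-walk fluctuations, where the probabilities are superpolynomially small, exactly as in your first step. With this replacement your argument goes through; as written, the per-cluster error estimate is a step that would fail.
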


\begin{remark}\label{rem:M_origin}
	The link between the definition \eqref{eq:stable_C} of stable cluster and the map~$M_{\beta,\alpha}$ may appear mysterious. 
	To understand this, consider~$x =(x_1,x_2) \in \bbR \times \bbR_+$ such that 
	\begin{align}
	\langle x,e_{\rm vert}\rangle  =  {\rm T}_0(\sfC) \qquad\text{and}\qquad
	\langle x,e_{\rm lat}\rangle   =  {\rm R}_0(\sfC)
	\end{align} 
	for some cluster~$\sfC \in {\rm Meso}^+(\omega_0)$. 
	Assume in addition that~$\sfC$ is~$c,C$-stable and write~$y =(y_1,y_2)$ for the unique point such that 
   \begin{align}
   	\langle y,e_{\rm vert}\rangle  =  {\rm T}_{ K+K'}(\sfC)\qquad\text{and}\qquad
	\langle y,e_{\rm lat}\rangle   =  {\rm R}_{ K+K'}(\sfC). 
   \end{align}
	Then a straightforward computation show that 
	\begin{align}
	y_1 &= x_1 +    \big({\rm Drift}_{\rm lat} \tfrac{1}{\sin\beta}  +  {\rm Drift}_{\rm vert} \tfrac{\cos\beta}{\sin\beta} \big)\cdot x_2 \tfrac{\sin\alpha + \sin\beta}{(\sin \alpha -{\rm Drift}_{\rm vert})\sin\beta} + O(\eta^c N) \qquad
	\text{ and } \\
	y_2 &= x_2 + {\rm Drift}_{\rm vert} \cdot x_2 \tfrac{\sin\alpha + \sin\beta}{(\sin \alpha -{\rm Drift}_{\rm vert})\sin\beta}+ O(\eta^c N)
	\end{align}
	which translates to~$y^{T} = M_{\beta,\alpha}x^{T} + O(\eta^c N)$. 
	
	Indeed, the term~$x_2 \frac{\sin\alpha + \sin\beta}{(\sin \alpha -{\rm Drift}_{\rm vert})\sin\beta} = \tau_{\rm end}^{{\rm Drift}, {\rm T}} (\sfC) - \tau_{\rm start}^{{\rm T}} (\sfC)$ is the approximate number of transformations that affect the cluster~$\sfC$. 
	For the vertical coordinate, observe that, on average, each transformation pushes the cluster up by~${\rm Drift}_{\rm vert}$.
	Similarly, due to the relation between the canonical and~$({\rm T}_{ t}(\sfC),{\rm R}_{ t}(\sfC))$-coordinates, 
	each transformations, on average, pushes the cluster to the right by~${\rm Drift}_{\rm lat} \tfrac{1}{\sin\beta}  +  {\rm Drift}_{\rm vert} \tfrac{\cos\beta}{\sin\beta}$. 
\end{remark}

% The use of the constant~$C_M$ in the restrictions of \eqref{eq:Meso^+} is so that the clusters of~${\rm Meso}^+(\omega_0)$ do not exit the observation window during our process due to the drift. 

\begin{proof}[Proof of Proposition~\ref{prop:stability_meso}]
	For any cluster~$\sfC \in {\rm Meso}(\omega_0)$ and~$A \in \{{\rm T}, {\rm B}\}$, 
	as long as~$t \leq \tau_{\rm end}^A(\sfC)$,
	\begin{align}\label{eq:Ext_t}
		{A}_t(\sfC) =  	{A}_0(\sfC)  + 
		  \sum_{\tau_{\rm start}^A(\sfC) \leq s < t}\Delta_s^{\rm IIC} {A}(\sfC) + \sum_{\tau_{\rm start}^A(\sfC) \leq s < t} \Delta_s^{\rm err} {A}(\sfC).
	\end{align}
	(Formally, the above is only valid for~$t$ even, and the sum is over~$s$ even.) Set
	\begin{align}
		{A}^{\rm IIC}_t(\sfC) =  	{A}_0(\sfC)  +  \sum_{\tau_{\rm start}^A(\sfC) \leq s < t}\Delta_s^{\rm IIC} {A}(\sfC). 
	\end{align}
	Extend the notation to~$A  \in \{{\rm L}, {\rm R}\}$ by 
	\begin{align}\label{eq:Ext_t}
		{A}^{\rm IIC}_t(\sfC) =  	{A}_0(\sfC)  +  \sum_{\tau_{\rm start}^{\rm T}(\sfC) \leq s < t}\Delta_s^{\rm IIC} {A}(\sfC).
	\end{align}
	Notice that in the latter definition, we use~$\tau_{\rm start}^{\rm T}(\sfC)$ as a starting time, even though~${\rm L}_t(\sfC)$ may be stationary beyond this time. 
	Also, for technical reasons, we continue~$	{A}^{\rm IIC}_t(\sfC)$ beyond~$\tau_{\rm end}^{.}(\sfC)$ with IIC increments which are irrelevant for the true process~${A}_t(\sfC)$.
	
	Define 
	\begin{align}
	{\rm TotErr} = \sum_{t = 0}^{K+K'} \ind_{\{\exists \sfC \in {\rm Meso}(\omega_t) \text{ and } A \in \{{\rm T,B,L,R}\}\,: \, \Delta_t^{\rm err} {A}(\sfC) \neq 0 \}}
	\end{align}
	for the total number of steps at which errors occur. 
	Recall that the variables~$\Delta_t^{\rm err} {A}$ for~$A \in \{{\rm T,B,L,R}\}$ are a.s.\ bounded. 
	Thus, for some universal constant~$C$,
	\begin{align}\label{eq:TotErr}
		|	{A}_t(\sfC) -  {A}_t^{\rm IIC}(\sfC) |\leq C \cdot {\rm TotErr} \qquad  \text{ for all~$0 \leq t \leq \tau_{\rm end}^A(\sfC)$ and~$A \in \{{\rm T},{\rm B}\}$}.
	\end{align}
	Summing~\eqref{eq:error_exists} and applying the Markov inequality, we find 
	\begin{align}\label{eq:TotErr2}
		\bbP\big[ {\rm TotErr} \leq C\, \eta^c N \big] \geq 1- \eta^c
	\end{align}
	for some universal constants~$c,C > 0$.
	
	The coordinates~${A}_t^{\rm IIC}(\sfC)$ are independent random walks with constant drift. 
	Define the event
	\begin{align*}
		{\rm GoodRW} = 
		\big\{&\forall \sfC \in {\rm Meso}^+(\omega_0), \, A \in \{ {\rm T}, {\rm B}, {\rm L}, {\rm R}\}  \text{ and } \tau_{\rm start}^{*}(\sfC)\leq t\leq K+K', \text\,\\
		&|{A}_t^{\rm IIC}(\sfC) -{A}_0^{\rm IIC}(\sfC) - (t - \tau_{\rm start}^{*}(\sfC)) \cdot {\rm Drift}_{A}| \leq N^{2/3} % \text{ for~$A = {\rm T}, {\rm B}$ and } \\
%		&|{A}_t^{\rm IIC}(\sfC) -{A}_0^{\rm IIC}(\sfC)-  {\rm Drift}_{h}(t - \tau_{\rm start}^{\rm T}(\sfC))| \leq N^{2/3}
%		\text{ for~$A = {\rm L}, {\rm R}$} 
	\big\},
	\end{align*}
	where\footnote{As opposed to the notion of stability~\eqref{eq:stable_C}, we choose to work here with~$\tau_{\rm start}^{\rm T}$  for~${\rm T}^{\rm IIC}_t(\sfC)$  and~$\tau_{\rm start}^{\rm B}$ for~${\rm B}^{\rm IIC}_t(\sfC)$. This extra precision is to ensure that the vertical diameter of~$\sfC$ does not degenerate throughout the process.}~$* = {\rm T}$ if~$A \in \{ {\rm T}, {\rm L}, {\rm R}\}$ and~$* = {\rm B}$ if~$A= {\rm B}$,
	and~${\rm Drift}_{A} =  {\rm Drift}_{\rm vert}$ when~$A \in \{{\rm T,B}\}$ and~$ {\rm Drift}_{A} =  {\rm Drift}_{\rm lat}$ when~$A \in \{{\rm L,R}\}$.
		
	Standard random walk estimates %\im{Source} 
	combined with crude bounds on~$|{\rm Meso}^+(\omega_0)|$ using~\eqref{eq:mesoscopic_number} imply that 
	\begin{align}\label{eq:GoodRW}
		\bbP\big[ {\rm GoodRW}\big] \geq 1 - C \eta.
	\end{align}
	for any~$\eta > N^{-c}$ and~$c,C > 0$ some universal constants.

	Finally, define 
	\begin{align*}
		{\rm NoDieIIC} = \big\{& \forall \sfC \in {\rm Meso}^+(\omega_0) \text{ and } 0\leq t\leq K+K',\\
		& 2 \eta  N \leq {\rm T}_t^{\rm IIC}(\sfC) - {\rm B}_t^{\rm IIC}(\sfC) \leq \tfrac12 \eta^{1/2} N, \\
		&{\rm T}_t^{\rm IIC}(\sfC) \leq \tfrac1{2} N, \, 	{\rm L}_t^{\rm IIC}(\sfC) \geq -\tfrac1{2} N \text{ and } {\rm R}_t^{\rm IIC}(\sfC) \leq \tfrac1{2} N
		  \big\} 
	\end{align*}
	In other words,~${\rm NoDieIIC}$ is the event that the IIC coordinates 
	${\rm T}_t^{\rm IIC}, {\rm B}_t^{\rm IIC}, {\rm L}_t^{\rm IIC}, {\rm R}_t^{\rm IIC}$ satisfy the conditions of~\eqref{eq:meso}, 
	with twice more restrictive constants, for all~$\sfC \in {\rm Meso}^+(\omega_0)$ and all~$0\leq t\leq K+K'$.
	Due to the conditions~\eqref{eq:Meso^+} for~$\sfC$ to be in~${\rm Meso}^+(\omega_0)$
	 --- specifically the choice of~$C_{M}$ as sufficiently large --- 
	we find that, whenever~${\rm GoodRW}$ occurs, so does~${\rm NoDieIIC}$.

	Assume henceforth that both~${\rm GoodRW}$ and~${\rm NoDieIIC}$ occur and that~${\rm TotErr} \leq C\, N^{1 - c}$ and that~$N$ is sufficiently large. 
	We will prove that the even in \eqref{eq:stable} occurs. 

%	We will first prove that all clusters of~${\rm Meso}^+(\omega_0)$ remain mesoscopic throughout the process, then that they are stable in the sense of \eqref{eq:stable_C}. \smallskip 	
	We start by proving that
	\begin{align}\label{eq:meso+survive}
		\text{for all~$\sfC \in{\rm Meso}^+(\omega_0)$,~$\tau_{\rm meso}(\sfC)> K+K'$}.
	\end{align}
	Indeed, for any~$\sfC \in{\rm Meso}^+(\omega_0)$,~${\rm T}_t(\sfC)$ and~${\rm B}_t(\sfC)$ are close to their IIC counterparts due to~\eqref{eq:TotErr} and our assumption on~${\rm TotErr}$. Factoring in the occurrence of~${\rm NoDieIIC}$, we conclude that, for all~$0\leq t\leq K+K'$,
	\begin{align} \label{eq:meso+survive2}
		 \eta  N \leq {\rm T}_t(\sfC) - {\rm B}_t(\sfC) \leq \eta^{1/2} N  \quad\text{ and }\quad {\rm T}_t(\sfC) \leq  N.
	\end{align}
	The lateral coordinates~${\rm L}_t(\sfC)$ and~${\rm R}_t(\sfC)$ require slightly more attention, due to the uncertain times when they evolve. 
	However, we know they are stationary before~$\tau_{\rm start}^{\rm T}(\sfC)$ and after~$\tau_{\rm end}^{\rm T}(\sfC)$ and 
	we know they are evolving at all times between~$\tau_{\rm start}^{\rm B}(\sfC)$ and~$\tau_{\rm end}^{\rm B}(\sfC)$.
	The upper bound on~${\rm T}_t(\sfC) - {\rm B}_t(\sfC)$ in~\eqref{eq:meso+survive2} 
	ensures that there are at most~$O(\sqrt \eta N)$ steps when~${\rm L}_t^{\rm IIC}(\sfC)$ moves, but~${\rm L}_t(\sfC)$ does not, 
	and the same for~${\rm R}_t(\sfC)$. 
%	The upper bound on~${\rm T}_0(\sfC) - {\rm B}_0(\sfC)$ in~\eqref{eq:Meso^+} and the occurrence of~${\rm GoodRW}$ 	
%	ensure that there are at most~$O(\sqrt \eta N)$ steps when~${\rm L}_t^{\rm IIC}(\sfC)$ moves, but~${\rm L}_t(\sfC)$ does not, 
%	and the same for~${\rm R}_t(\sfC)$. 
	This adds an error term when comparing the lateral coordinates of the clusters~$\sfC \in {\rm Meso}^+(\omega_0)$ to their IIC equivalents, 
	but this error is sufficiently small that it allows us to deduce from~${\rm NoDieIIC}$ that
	\begin{align}
	{\rm L}_t(\sfC) \geq - N \text{ and } {\rm R}_t(\sfC) \leq  N \quad \text{ for all~$0\leq t\leq K+K'$,}
	\end{align}
	which, together with \eqref{eq:meso+survive2}, implies \eqref{eq:meso+survive}.\smallskip 
	
	We now turn to the stability of the clusters of~${\rm Meso}^+(\omega_0)$. 
	By the definition of~$\tau_{\rm end}^{\rm T} (\sfC)$, we have that 
	\begin{align}
		{\rm T}_{\tau_{\rm end}^{\rm T} (\sfC) }(\sfC) = (\tau_{\rm end}^{\rm T} (\sfC)  - K)\sin\alpha.
	\end{align}
	Comparing~${\rm T}_{\tau_{\rm end}^{\rm T} (\sfC) }(\sfC)$ to~${\rm T}^{\rm IIC}_{\tau_{\rm end}^{\rm T} (\sfC) }(\sfC)$, which in turn 
	is close to~${\rm T}_0(\sfC) +(\tau_{\rm end}^{\rm T} (\sfC)  - \tau_{\rm start}^{\rm T}(\sfC)) \cdot {\rm Drift}_{\rm vert}$, we conclude that 
	\begin{align*}
		{\rm T}_0(\sfC) + (\tau_{\rm end}^{\rm T} (\sfC) - \tau_{\rm start}^{\rm T}(\sfC)) \cdot {\rm Drift}_{\rm vert} - (\tau_{\rm end}^{\rm T} (\sfC)  - K)\sin\alpha = O(N^{1-c}).
	\end{align*}
	Recall that~$(K - \tau_{\rm start}^{\rm T}(\sfC)) \sin \beta = {\rm T}_0(\sfC)$, which implies that 
	\begin{align}\label{eq:tau_Drift_RW}
		\tau_{\rm end}^{\rm T} (\sfC) = {\rm T}_0(\sfC) \frac{1 + {\rm Drift}_{\rm vert}/ \sin \beta}{\sin\alpha - {\rm Drift}_{\rm vert}} + K + O(N^{1-c})
		= \tau_{\rm end}^{{\rm Drift, T}}(\sfC)+ O(N^{1-c}).
	\end{align}
	Finally, since~${\rm T}_t(\sfC)$ is stationary after~$\tau_{\rm end}^{\rm T}(\sfC)$, for all~$0 \leq t\leq K+K'$,
	\begin{align*}
		\big|{\rm T}_t(\sfC) - {\rm T}_0(\sfC)&   -  \big((t \wedge \tau_{\rm end}^{{\rm Drift}, {\rm T}} (\sfC) - \tau_{\rm start}^{\rm T}(\sfC))\vee 0\big){\rm Drift}_{\rm vert}\big|\\
		\leq& 
		\big|{\rm T}_{t\wedge \tau_{\rm end}^{\rm T}} (\sfC)  -	{\rm T}_{t\wedge \tau_{\rm end}^{\rm T}}^{\rm IIC}(\sfC)\big|  \\
		&+\big|{\rm T}_{t\wedge \tau_{\rm end}^{\rm T}}^{\rm IIC}(\sfC) - {\rm T}_0(\sfC) - \big((t \wedge \tau_{\rm end}^{\rm T} (\sfC) - \tau_{\rm start}^{\rm T}(\sfC))\vee 0\big){\rm Drift}_{\rm vert}\big|  \\
		&+\big|(\tau_{\rm end}^{\rm T}(\sfC) -  \tau_{\rm end}^{{\rm Drift},{\rm T}}(\sfC)){\rm Drift}_{\rm vert}\big|.
	\end{align*}
	Each of the three terms in the right-hand side are~$O(N^{1-c})$ due to~${\rm TotErr} \leq C\, N^{1 - c}$, the occurrence of~${\rm GoodRW}$ and~\eqref{eq:tau_Drift_RW}, respectively. 

	The same computation applies to~${\rm B}_t(\sfC)$,~${\rm L}_t(\sfC)$ and~${\rm R}_t(\sfC)$, with the only difference that there is an additional error due to the transformations performed
	between~$\tau_{\rm start}^{\rm T} (\sfC)$ and~$\tau_{\rm start}^{\rm B} (\sfC)$ and between~$\tau_{\rm end}^{\rm B} (\sfC)$ and~$\tau_{\rm end}^{\rm T} (\sfC)$. 
	However, the number of such transformations is bounded due to the upper bound on~${\rm T}_t (\sfC) -{\rm B}_t (\sfC)$ in \eqref{eq:meso+survive2}.
	We conclude that 
	\begin{align*}
		\big|{A}_t(\sfC) - {A}_0(\sfC)    -  \big((t \wedge \tau_{\rm end}^{{\rm Drift}, {\rm T}} (\sfC) - \tau_{\rm start}^{\rm T}(\sfC))\vee 0\big)\cdot {\rm Drift}_*\big|
		=O(N^{1-c})	+ O(\sqrt \eta N)
	\end{align*}
	for all~$\sfC \in {\rm Meso}^+(\omega_0)$ and~$A \in \{{\rm T},{\rm B},{\rm L},{\rm R}\}$, where~${\rm Drift}_*$ refers to the drift in the direction associated to~$A$ . 
	
	Thus, when~${\rm TotErr} \leq C\, N^{1 - c}$ and~${\rm GoodRW}$ occurs, all clusters~$\sfC \in{\rm Meso}^+(\omega_0)$ are stable in the sense of~\eqref{eq:stable_C}, provided that~$c,C >0$ are chosen appropriately. Finally,~\eqref{eq:TotErr2} and~\eqref{eq:GoodRW} ensure that the above events occur with probability at least~$1 - C \eta^c$. This implies the desired conclusion.  
\end{proof}

\subsection{Homotopy classes}\label{sec:5homotopy}

Fix some~$\delta > 0$ and~$N \geq 1$.  
For a configuration~$\omega$ on an isoradial rectangular lattice~$\bbL$ with constant angle, 
a nail at scales~$(\delta,N)$ for a point~$x \in \delta^{1/2}N \bbZ^2$ is any cluster~$\sfC$ of~$\omega$ such that 
\begin{align}
	|{\rm T}(\sfC)-\langle x,e_{\rm vert}\rangle  | &\leq \delta N, &&
	&|{\rm B}(\sfC)-\langle x,e_{\rm vert}\rangle  | &\leq \delta N,\\
	|{\rm L}(\sfC)-\langle x,e_{\rm lat}\rangle  | &\leq \delta N	 &\text{ and }&
	&|{\rm R}(\sfC)-\langle x,e_{\rm lat}\rangle  | &\leq \delta N.\label{eq:nails}
\end{align}
% We will identify the nails and the loops surrounding them in the loop representation of~$\omega$. 

A~$(\delta,N)$-lattice of nails is the choice of a nail~$\sfN(x)$ for each point~$x \in (\delta^{1/2}N )\cdot (\bbZ^2 \cap [-\delta^{-1/4}, \delta^{-1/4}] \times [0, \delta^{-1/4}] )$. Note that the lattice of nails roughly occupies the window $[-\delta^{1/4} N ,\delta^{1/4} N] \times [0,\delta^{1/4} N]$, rather than the full observation window~$[-N,N] \times [0,N]$. This is for technical reasons that will be apparent below. 
A configuration may contain no lattice of nails or several such lattices. See Figure~\ref{fig:lattice_nails}.

\begin{figure}
\begin{center}
\includegraphics[width = 0.7\textwidth]{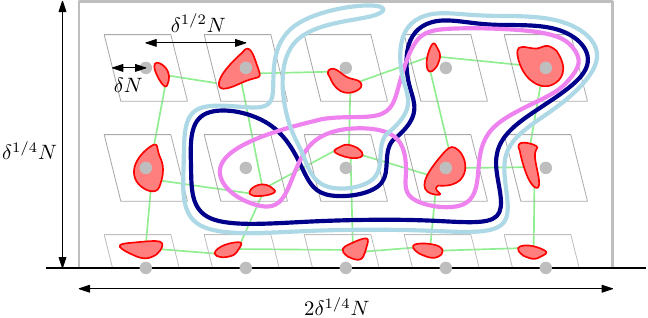}
\caption{A lattice of nails (in red) is formed of one nail for each point~$x$ in the grey lattice, within the rectangular window. The grey rhombi designate the regions in which nails should be contained, according to~\eqref{eq:nails}. The blue, light-blue and pink loops are all non-trivial. They all surround the same nails, but the homotopy class of the pink loop is different form that of the two blue loops: the homotopy class is given by the order in which the loops intersect the light green segments. 
The two blue loops are at large~$d$-distance in the sense of~\eqref{eq:loop_dist} in spite of having same homotopy class. This is because the light blue loop has a long arm wiggling between nails, without surrounding any of them; such a behaviour is unlikely due to~\eqref{eq:RSW_iso}.}
\label{fig:lattice_nails}
\end{center}
\end{figure}

For a fixed lattice of nails, a loop~$\ell$ in the loop representation of~$\omega$ is considered non-trivial if it surrounds at least two, but not all of the nails of the lattice. 
Consider the punctured plane~$\bbR^2 \setminus (\bigcup_x \sfN(x))$, where we mean that we remove the interior of the outer contour of each nail. 
Notice that any non-trivial loop is contained in~$\bbR^2 \setminus (\bigcup_x \sfN(x))$ and has a certain homotopy class. 

It will be important below to have a standard representation of homotopy classes in such a punctured plane, as we will compare homotopy classes with respect to different families of nails. We do this as follows. 
Write~$\vec E$ for the oriented edges of the  graph~$\bbZ^2 \cap [-\delta^{-1/4}, \delta^{-1/4}] \times [0, \delta^{-1/4}]~$ (each unoriented edge of the graph corresponds to two oriented edges). 
Fix a point~$\sfn_x$ in each nail~$\sfN(x)$ of the nail corresponding to~$x$. Identify~$\vec e = (xy)$ with the oriented segment between~$\sfn_x$ and~$\sfn_y$.

Let~$\calW$ be the set of finite words on the alphabet~$\vec E$ and denote the empty word by~$\emptyset$. Define the equivalence relation~$\sim$ on~$\calW$ generated by~$(u_i)_{1\le i\le p} \sim (v_j)_{1\le j\le q}$ if 
\begin{itemize}
	\item~$p=q$ and there exists~$k\in[1,p]$ such that~$u_1\ldots u_p = v_k \ldots v_p v_1\ldots v_{k-1}$ or 
	\item~$p=q+2$,~$u_1\ldots u_p = v_1 \ldots v_q$ and~$u_{q+2}$ is the same as the edge~$u_{q+1}$ but with the opposite orientation.
\end{itemize}
Define the set of {\em reduced words} as the quotient~${\calC\calW} := \calW / \sim$. 

Recall that the loops of~$\omega$ are oriented so as to have primal edges are on their right. 
Write~$w_0(\ell) \in \calW$ for the word formed of the sequence of edges~$\vec e$ crossed by~$\ell$ from left to right, 
and~$w(\ell) = w_\sfN(\ell)$ for the reduced word corresponding to~$w_0(\ell)$.

It is standard to check that this indeed encodes the homotopy class of every non-trivial loop and that~$w_\sfN(\ell)$ does not depend on the choice of points~$\sfn_x$. It may depend on the choice of the lattice of nails; however we will eventually prove that this only occurs with low probability (see Proposition~\ref{prop:homotopy_to_CN}).

\begin{definition}%[Homotopy distance]
We say that two percolation configurations~$\omega$ and~$\omega'$ are {\em homotopically similar} at scales~$(\delta,N)$ 
if they both contain lattices of nails~$\sfN$ and~$\sfN'$ at scales~$(\delta,N)$ such that
there exists a bijection~$\psi$ between the non-trivial loops of~$\omega$ 
with respect to~$\sfN$ and non-trivial loops of~$\omega'$ with respect to~$\sfN'$ with 
\begin{align}
	w_\sfN(\ell) = w_{\sfN'}(\psi(\ell)),
\end{align}
for all non-trivial loops~$\ell$ of~$\omega$. 
\end{definition}

The same notions above also apply to piece-wise linear deformations of percolation configurations. 

\begin{proposition}\label{prop:homotopy_to_CN}
	Let~$\bbL = \bbL(\beta)$ and~$\bbL' = \bbL(\alpha)$ be two isoradial rectangular lattices with constant angles 
	$\alpha, \beta \in (0,\pi)$ and~$M$ be an invertible linear transformation of~$\bbR^2$. 
	There exist constants~$C,c >0$ such that the following holds for all~$\delta > 0$ and~$N \geq \delta^{-C}$. 
	If~$\bbP$ is a coupling of~$\phi_{\bbL}$ and~$\phi_{\bbL'}$ such that 
	\begin{align*}
		\bbP\big[\text{$M(\omega)$ and~$\omega'$ are homotopically similar at scales~$(\delta,N)$}] \geq 1- \delta,
	\end{align*}
	then
	\begin{align}\label{eq:homotopy_to_CN}
		{\bf d}_{\rm CN}\big[ \phi_{\delta^c\bbL}\circ M^{-1}, \phi_{\delta^c\bbL'}\big] \leq C\,\delta^c.
	\end{align}
\end{proposition}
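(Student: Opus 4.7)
The plan is to show that homotopic similarity at scales~$(\delta,N)$ together with RSW-type regularity of both configurations forces the loops of~$M(\omega)$ and~$\omega'$ to be pointwise close (in the sense of~\eqref{eq:loop_dist}) after rescaling. The Camia--Newman bound~\eqref{eq:homotopy_to_CN} will then follow essentially by unwinding definitions.

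First I would rescale by a factor~$\delta^c$ for a small~$c>0$ to be fixed at the end. After rescaling, the observation window $[-\delta^{1/4}N,\delta^{1/4}N]\times [0,\delta^{1/4}N]$ where the nails live becomes of order~$1$ (this is why we need~$N\ge\delta^{-C}$), the spacing $\delta^{1/2}N$ between nail positions becomes a small quantity~$\rho\ll 1$, and each nail~$\sfN(x)$ sits in a box of side~$\delta N\ll \rho$ around its reference point. The goal becomes: with $P$-probability at least $1-O(\delta^c)$, every loop of $M(\omega)$ (resp.\ $\omega'$) of diameter at least~$\delta^c$ contained in~$\La_{\delta^{-c}}$ can be matched to a loop of~$\omega'$ (resp.\ $M(\omega)$) within~$d$-distance~$\delta^c$, cf.~\eqref{eq:CN_dist_measures}.

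Next, on the event that the lattices~$\sfN,\sfN'$ of nails exist and the bijection~$\psi$ is supplied by the hypothesis, the key geometric claim is: if~$w_\sfN(\ell)=w_{\sfN'}(\psi(\ell))$, then~$d(\ell,\psi(\ell))\le C\rho$. Indeed, the reduced word records the cyclic sequence of oriented edges of the nail grid~$\vec E$ crossed by the loop; two loops with identical reduced words traverse the nail grid through the same sequence of ``corridors'' between consecutive nails. Since each nail has diameter~$\le \delta N$ while the corridors have width~$\rho$, the two loops agree up to~$O(\rho)$ in Hausdorff distance, provided neither loop makes a long excursion inside a corridor without either leaving it or surrounding a nail. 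Ruling out such excursions is the content of the third step.

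The no-wiggling step uses~\eqref{eq:RSW_iso} and its consequence~\eqref{eq:one_arm_bound_iso}. A corridor is a region of diameter~$O(\rho)$ containing no nail in its interior; if a loop of~$\omega$ (or of~$M(\omega)$, which is a linear image of a percolation configuration, so the same crossing estimates apply up to a bounded distortion from~$M$) made a long excursion of diameter~$\ge\delta^c$ inside such a corridor, this would realise an arm event to distance~$\delta^c$ inside a corridor containing no nail at scale~$\rho$, hence in particular a one-arm event to distance~$\delta^c/\rho$ times the lattice mesh. A union bound over the $O(\rho^{-2})$ corridors in the window, combined with~\eqref{eq:one_arm_bound_iso}, gives a failure probability bounded by $O(\rho^{-2}(\delta^c/\rho)^{-\onearmbound})$, which is~$O(\delta^c)$ once~$c$ is chosen small enough. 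In the same vein, one must verify that (i) the existence of the nail lattices themselves holds with high probability (this follows from the IIC-type construction of Section~\ref{sec:IIC} at each grid point~$x$, coupled with mixing~Proposition~\ref{prop:mixing} between distinct nails), and (ii) every loop of diameter~$\ge\delta^c$ in the rescaled window is non-trivial with respect to~$\sfN$, which holds because such a loop must surround at least two grid points once the rescaling is correct.

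The main obstacle is the geometric step: translating combinatorial agreement of reduced words into pointwise closeness of loops. The complication is that~$w_\sfN$ is a purely topological invariant in the punctured plane, so in principle two loops with the same word may differ wildly. What saves us is that the punctures (nails) are placed on a dense grid, so the non-trivial homotopy classes realisable by loops constrained not to make large arm-type excursions are essentially parametrised by their sequence of corridor crossings, which in turn prescribes their geometric path up to~$\rho$. Turning this intuition into a clean quantitative statement, and carefully book-keeping the exponents to match the $\delta^c$ loss in~\eqref{eq:homotopy_to_CN}, will be the main technical effort.
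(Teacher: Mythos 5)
Your overall architecture is the same as the paper's: reduce the Camia--Newman bound to showing that each non-trivial loop is close to a canonical (taut) representative of its homotopy class read off from its word, and rule out long excursions that weave between nails; the paper does this by introducing the reference family~$\omega_{\rm ref}$ (so that homotopic similarity plus linearity of~$M$ gives $M(\omega_{\rm ref})=\omega'_{\rm ref}$) and then bounding the probability of ``thin crossings''. The genuine gap is in your no-wiggling step, which is the quantitative heart of the proof. The event to exclude is that a loop contains an arc of large diameter~$D$ whose cluster surrounds none of the nail regions~$\sfD(x)$. This is \emph{not} a one-arm event and \eqref{eq:one_arm_bound_iso} does not bound it: the arc belongs to a macroscopic cluster, and by \eqref{eq:RSW_iso} connections of diameter~$D$ somewhere in the window occur with probability bounded away from~$0$ at every scale; the only thing that is unlikely is that such a long arc fails to surround any of the roughly~$D/(\sqrt{\delta}\,N)$ nail regions it runs past. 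That is exactly what Lemma~\ref{lem:homotopy_to_CN} provides: exploring the crossing from one side, each adjacent region~$\sfD(x)$ lies in unexplored territory and is surrounded by the crossing cluster with uniformly positive probability, with enough independence that the probability that none is surrounded is at most~$(1-c)^{D/\sqrt{\delta}N}$, exponentially small in the number of regions passed, which beats the polynomially many rectangles in the paving. Your per-corridor bound~$(\rho/\delta^c)^{\onearmbound}$ has no justification from \eqref{eq:one_arm_bound_iso}, and even granting it the bookkeeping does not close: the number of corridors is dictated by the nail spacing (about~$\delta^{-1/2}$, independent of~$c$), and taking~$c$ smaller only weakens the factor~$(\rho/\delta^c)^{\onearmbound}$, so the union bound is small only if~$\onearmbound>2$, which is not available ($\onearmbound$ is just some small RSW exponent).

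Two secondary points. Your step (ii) — that every loop of diameter at least~$\delta^c$ in the window is non-trivial — is not deterministic: a large loop could be long and thin and weave between the nails without surrounding any grid point, and excluding this is again the thin-crossing estimate, not a consequence of the rescaling. Also, the existence of the lattices of nails is part of the hypothesis of the proposition (homotopic similarity presupposes it), so it should not be re-proved here; where it is needed (Lemma~\ref{lem:lattice_nails_exists}, inside the proof of Theorem~\ref{thm:linear}) it follows directly from \eqref{eq:RSW_iso}, not from the IIC construction of Section~\ref{sec:IIC}.
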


The rest of the section is dedicated to the proof of the above. The idea is explained in Figures~\ref{fig:lattice_nails} and~\ref{fig:nails2}.

We start off with a helpful consequence of the \eqref{eq:RSW_iso} property. Assume~$\delta$ and~$N$ fixed. 
For a point of~$x \in \sqrt\delta N \bbZ^2$, let~$\sfD(x)$ be the rhombus-shaped region of~$\bbR^2$ defined as 
\begin{align}
	\sfD(x) = \big\{ y \in \bbR^2:\, 
	|\langle y - x,e_{\rm vert}\rangle  |\leq \delta N \text{ and }
	|\langle y - x,e_{\rm lat}\rangle  | \leq \delta N \big\}.
\end{align}
These are the regions containing the nails at scales~$(\delta, N)$.
A rectangle~$[0,4R] \times [0,R]$ is said to contain a thin  crossing if it contains a primal or dual cluster~$\sfC$ that crosses the rectangle vertically, but surrounds none of the regions~$\sfD(x):x \in \sqrt\delta N \bbZ^2$. 
The same notion applies to crossing  of~$[0,R] \times [0,4R]$ in the horizontal direction. 

\begin{lemma}\label{lem:homotopy_to_CN}
	There exist constants~$c_0,C_0 > 0$ such that, for any isoradial square lattice~$\bbL$ with constant angle, 
	any~$\delta > 0$,~$N\geq 1$ any~$R \geq \sqrt \delta N$ 
	\begin{align}
		\phi_{\bbL}\big[ \text{$[0,4R] \times [0,R]$ contains a thin crossing}\big]\leq C_0 (\sqrt \delta N / R)^{c_0}.
	\end{align}
\end{lemma}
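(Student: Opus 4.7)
The strategy is to reduce the thin-crossing event to a classical RSW-type estimate for vertical crossings of tall thin rectangles. The key geometric observation is the following: if $\sfC$ realises a thin vertical crossing of $[0,4R] \times [0,R]$, then the filled-in set $\hat\sfC := \sfC \cup (\text{bounded components of } \bbR^2 \setminus \sfC)$ is simply connected (by definition) and, by the thin-crossing property, avoids every rhombus $\sfD(x)$ with $x \in \sqrt{\delta} N \bbZ^2$. Since the rhombi are placed on a grid of spacing $\sqrt{\delta} N$ while having side-length $2\delta N \ll \sqrt{\delta} N$, any simply connected subset of $\bbR^2 \setminus \bigcup_x \sfD(x)$ that crosses the rectangle vertically must thread its way between the rhombi without enclosing any of them; this forces its horizontal extent at each height $y \in [0,R]$ to be bounded by $O(\sqrt{\delta} N)$. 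In particular, $\hat\sfC$ is contained in a horizontally-wandering vertical strip of width $O(\sqrt{\delta} N)$.

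Given this confinement, a thin vertical crossing is essentially a primal or dual vertical crossing of a tall, thin, horizontally-wandering strip. If the strip were fixed, say a rectangle $[a, a + 3\sqrt{\delta} N] \times [0, R]$, then a standard iteration of the RSW property (Theorem~\ref{thm:RSW_iso}), in the spirit of the renormalised Peierls bound of Proposition~\ref{prop:finite_size_ad}, yields that the vertical crossing probability is at most $C(\sqrt{\delta} N / R)^{c_0}$: at each of the $\log_2(R/\sqrt{\delta} N)$ doubling vertical scales, RSW produces a dual circuit inside the strip with probability bounded below by a universal constant $c > 0$, and these circuits decorrelate across scales (via Proposition~\ref{prop:mixing}), so the combined absence of all of them yields the polynomial decay with exponent $c_0 = -\log_2(1-c)$.

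To handle that the confining strip may wander horizontally with height, I would use a union bound over the $O(R/\sqrt{\delta} N)$ possible horizontal starting positions of the thin crossing at the bottom of the rectangle, together with an iterated RSW argument. The iteration can be arranged so as to boost the exponent $c_0$ arbitrarily high, absorbing the polynomial union-bound loss and yielding the desired estimate $C_0(\sqrt{\delta} N / R)^{c_0}$. The main obstacle will be the geometric claim of the first paragraph: rigorously justifying on the isoradial lattice that the discrete simply connected set $\hat\sfC$ avoiding the periodic array of rhombi is confined to a narrow winding strip. This requires a careful topological argument and some bookkeeping of boundary effects along $\partial([0,4R]\times[0,R])$, as well as handling the case in which $\hat\sfC$ branches into multiple thin fingers (each of which must itself be analysed separately).
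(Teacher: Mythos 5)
There is a genuine gap, and it is in the very first step: your geometric reduction misreads what a ``thin crossing'' is. The definition requires the crossing cluster~$\sfC$ to \emph{surround} none of the regions~$\sfD(x)$, i.e.\ no rhombus lies inside a circuit of~$\sfC$ (inside the filled-in hull); it does \emph{not} require~$\sfC$ to avoid the rhombi as sets, and a thin crossing may pass straight through many of them. Moreover, even the avoidance version of your claim would not give confinement: the rhombi~$\sfD(x)$ have diameter of order~$\delta N$ while their centres sit on a grid of spacing~$\sqrt\delta N \gg \delta N$, so they are tiny, sparse islands, and a simply connected set avoiding all of them can perfectly well span the full horizontal extent of~$[0,4R]\times[0,R]$ (e.g.\ a thin horizontal slab at a height between two rows of rhombi). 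A wide, tree-like crossing cluster surrounds nothing, so ``thin'' in the sense of the lemma carries no geometric narrowness whatsoever, and the whole subsequent strategy --- confining the cluster to a wandering strip of width~$O(\sqrt\delta N)$ and then running an RSW/Peierls bound for vertical crossings of thin rectangles --- has no valid starting point. (Even granting confinement, the union bound over ``horizontally-wandering'' strips is not a finite union over fixed rectangles and would need its own control.)

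The content of the lemma is probabilistic, not geometric: one must show that a vertical crossing cluster \emph{does} surround some rhombus with high probability. The paper's route is to explore the successive vertical crossings of the rectangle from left to right; once the left boundary~$\Gamma$ of a crossing cluster is revealed, the region to its right is unexplored, and one can select of order~$R/(\sqrt\delta N)$ rhombi~$\sfD(x)$ lying within distance~$2\sqrt\delta N$ of~$\Gamma$ but entirely in the unexplored region. By~\eqref{eq:RSW_iso} (with its uniformity in boundary conditions), the cluster of~$\Gamma$ surrounds each such rhombus with uniformly positive probability, and these events dominate i.i.d.\ Bernoulli variables; hence a given crossing cluster is thin with probability at most~$(1-c)^{R/\sqrt\delta N}$, which combined with the exponential tail on the number of vertical crossings gives the stated (in fact stretched-exponential) bound. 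If you want to salvage your write-up, replace the confinement claim by this exploration-plus-RSW surrounding argument.
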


\begin{proof}[Proof of Lemma~\ref{lem:homotopy_to_CN}]
	We only sketch this. Explore the successive vertical crossings of~$[0,4R] \times [0,R]$ from left to right. 
	Whenever such a crossing~$\Gamma$ is revealed, the region to its right is unexplored. We can then identify~$R/\sqrt \delta N$ regions~$\sfD(x)$  
	that are within distance~$2\sqrt \delta N$ of~$\Gamma$, but lie entirely in the unexplored region; see Figure~\ref{fig:surrounding_nail}.
	Then \eqref{eq:RSW_iso} states that each such region is surrounded by the cluster of~$\Gamma$ with probability at least a uniformly positive constant. Furthermore, the events that each of these regions is surrounded may be bonded by independent Bernoulli variables of uniformly positive parameter. 
	
	\begin{figure}
	\begin{center}
	\includegraphics[width = 0.65\textwidth]{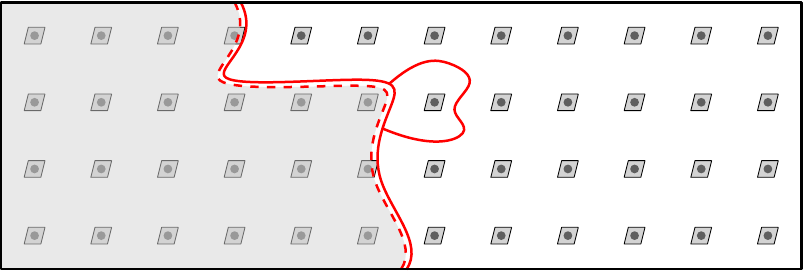}
	\caption{The left boundary~$\Gamma$ of a vertically crossing primal cluster; the conditioning only affects the grey area. 
	Under this conditioning, the cluster of~$\Gamma$ surrounds each one of the grey rhombi~$\sfD(x)$ closest to it with uniformly positive probability. }
	\label{fig:surrounding_nail}
	\end{center}
	\end{figure}
	
	We conclude that, for any such~$\Gamma$, the probability that the cluster of~$\Gamma$ forms a thin crossing is bounded above by~$(1-c)^{R/\sqrt \delta N}$ for some constant~$c>0$. 
	Combining this with a exponential tail of the number of crossings provides the desired bound. 
\end{proof}

\begin{proof}[Proof of Proposition~\ref{prop:homotopy_to_CN}]
	%We only sketch this proof; a more detailed proof may be found in~\cite{DumKozKra20}. 
	For simplicity, we consider first the case where~$M = {\rm id}$. 
	For a configuration~$\omega$ and a lattice of nails~$\sfN$, we may define a reference configuration\footnote{To be precise,~$\omega_{\rm ref}$ is not actually a percolation configuration, but only a family of oriented loops which might not correspond to a percolation configuration.}~$\omega_{\rm ref}$ 
	depending only on the homotopy information of~$\omega$ and which is homotopically similar to~$\omega$ at scales~$(\delta,N)$.
	It is constructed as follow; see also Figure~\ref{fig:nails2}.
	
	The nails~$\sfN_{\rm ref}$ are single primal vertices, namely those closest to the points of~$(\delta^{1/2}N )\cdot (\bbZ^2 \cap [-\delta^{-1/4}, \delta^{-1/4}] \times [0, \delta^{-1/4}] )$.
	For each non-trivial loop~$\ell$ of~$\omega$, place a loop in~$\ell_{\rm ref} \in \omega_{\rm ref}$ formed of straight line segments between the midpoints of the edges of~$(\delta^{1/2}N )\cdot (\bbZ^2 \cap[-\delta^{-1/4}, \delta^{-1/4}] \times [0, \delta^{-1/4}])$
	so that 
	\begin{align}
	w_\sfN(\ell) = w_{\sfN_{\rm ref}}(\ell_{\rm ref}).
	\end{align}
	
	We will prove that, with high probability, when~$\omega \sim \phi_{\bbL(\alpha)}$, 
	 \begin{align}\label{eq:omegaomega_ref}
		{d}_{\rm CN}\big[\tfrac{1}{\delta^{3/8} N} \omega, \tfrac{1}{\delta^{3/8} N} \omega_{\rm ref}\big] \leq \delta^{1/16}.
	\end{align}
	Applying\footnote{We actually bound the probability of~\eqref{eq:omegaomega_ref} for the configurations shifted vertically by~$-\frac12\delta^{1/4}N$, but this has no influence on our conclusion, as the measures are translationally invariant.} this to~$\omega$ and~$\omega'$ directly implies~\eqref{eq:homotopy_to_CN}, since~$\omega_{\rm ref} =\omega'_{\rm ref}$. 
	
	% In~$\tfrac{1}{\delta^{3/8} N} \omega$, nails are separated by a distance~$\delta^{1/8}$ and cover a~$2\delta^{-1/8}\times \delta^{-1/8}$ rectangular window. 
	Assume that~\eqref{eq:omegaomega_ref} fails.
	Then, there exists a loop~$\ell \in \omega$ contained in~$[-\delta^{1/4}N,\delta^{1/4}N] \times [0,\delta^{1/4}N]~$ 
	so that 
	\begin{align}\label{eq:long_arm}
		d\big(\tfrac{1}{\delta^{3/8} N}\ell, \tfrac{1}{\delta^{3/8} N}\ell_{\rm ref}\big) \geq \delta^{1/16}.
	\end{align}
	This requires~$\ell~$ to contain a long ``arm'' of diameter~$\delta^{7/16} N$ that avoids all nails of~$\sfN$ --- 
	see Figure~\ref{fig:nails2}.
	
	In particular, if take~$R =\frac12\delta^{7/16} N$ and if we pave the rectangle~$[-\delta^{1/4}N,\delta^{1/4}N]\times [0,\delta^{1/4}N]$ with the translates of~$[0,4R] \times [0,R]$ and~$[0,R] \times [0,4R]$ by points of~$R\bbZ^2$, 
	then \eqref{eq:long_arm} implies that there exists at least one such rectangle that contains a thin crossing.
	There are~$O(\delta^{-3/8})$ such rectangles, so Lemma~\ref{lem:homotopy_to_CN} and the union bound imply that 
	\begin{align}\label{eq:homotopy_to_CN0}
		\phi_{\bbL(\alpha)} \Big[{d}_{\rm CN}\big(\tfrac{1}{\delta^{3/8} N} \omega, \tfrac{1}{\delta^{3/8} N} \omega_{\rm ref}\big) \leq \delta^{1/16}\Big]
		\leq C_1  \delta^{-3/8}	 \exp(-c_0 \delta^{-1/16})
		\leq C_2\delta^{c_2}.
	\end{align}
	For universal constants~$C_1,C_2,c_2 > 0$. 
	This implies~\eqref{eq:homotopy_to_CN}, as explained above.
	
	\begin{figure}
	\begin{center}
	\includegraphics[width = .8\textwidth, page = 2]{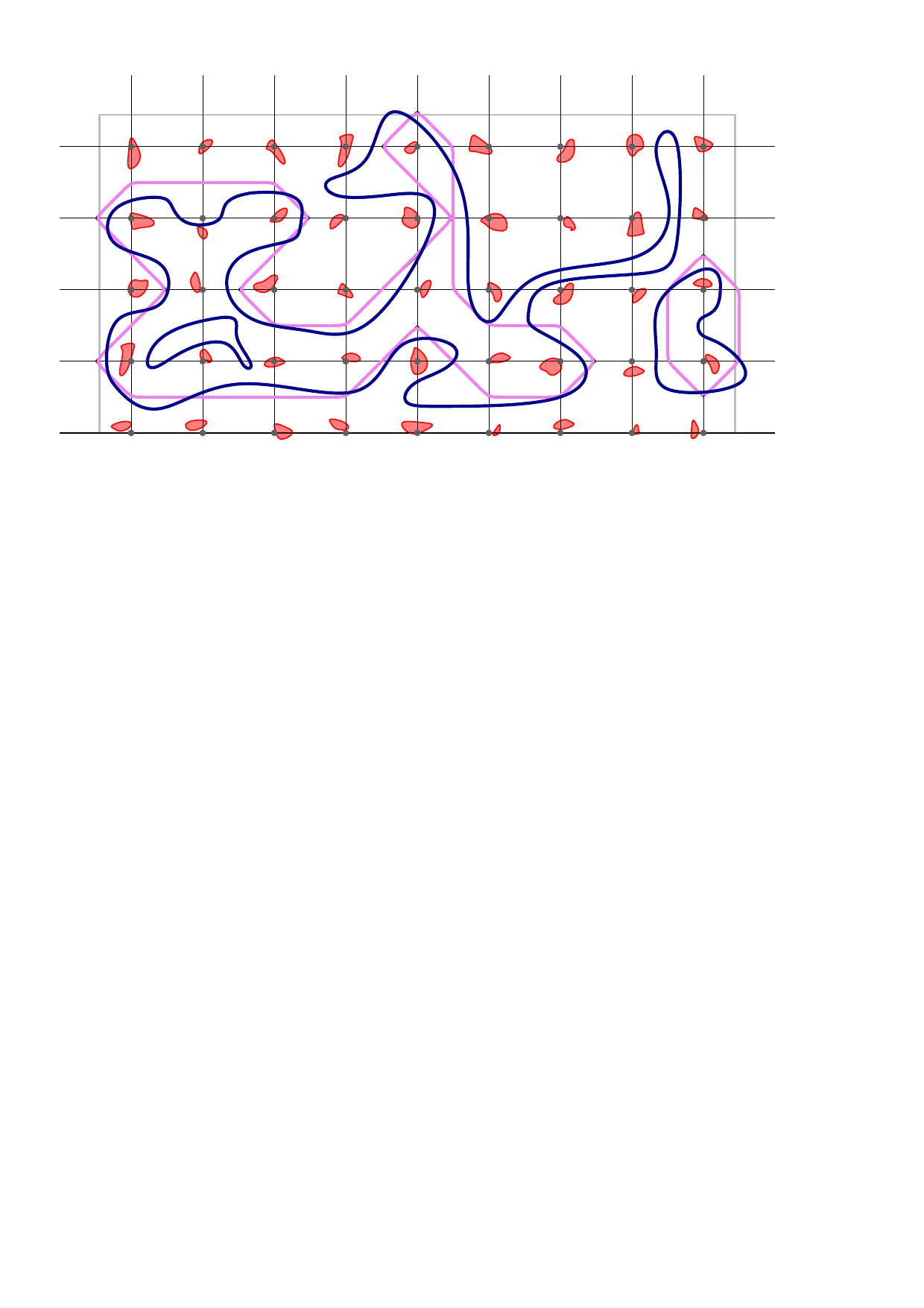}
	\caption{A configuration containing a lattice of nails~$\sfN$ in red, and several blue loops in a rescaled configuration~$\tfrac{1}{\delta^{3/8} N} \omega$. Of the blue loops, one is trivial, the other two have corresponding pink loops in~$\omega_{\rm ref}$. The large blue loop~$\ell$ is not close to~$\ell_{\rm ref}$ because of the long grey ``arm'' --- here we assume that this arm has length at least~$\delta^{1/16}$. The probability of such an arm occurring may be bounded by \eqref{eq:RSW_iso}.}
	\label{fig:nails2}
	\end{center}
	\end{figure}

	When~$M \neq {\rm id}$, we may still prove~\eqref{eq:homotopy_to_CN0} for both~$\omega$ and~$\omega'$. 
	It then suffices to observe that, since~$M$ is linear,~$M(\omega_{\rm ref}) = \omega_{\rm ref}'$.	
\end{proof}

\subsection{Proof of Theorem~\ref{thm:linear}}\label{sec:5linear_proof}

We are finally ready for the proof of Theorem~\ref{thm:linear}. 
Fix~$\alpha,\beta \in (0,\pi)$. 

In light of Proposition~\ref{prop:homotopy_to_CN}, our goal is to prove that for~$N$ large enough and~$\eta \geq N^{-c_\eta}$ for some small constant~$c_{\eta}>0$, we may couple~$\omega\sim \phi_{\bbL(\beta)}$ and~$\omega\sim \phi_{\bbL(\alpha)}$ so that
$M_{\beta,\alpha}(\omega)$ and~$\omega'$ are homotopically similar at scales~$(\eta,N)$. 
The coupling will be that of Section~\ref{sec:5MarkovChain}, as explained below. 

We call a cluster~$\sfC$ of~$\omega$ a nail$^+$ at~$x \in (\delta^{1/2}N )\cdot (\bbZ^2 \cap [-\delta^{-1/4}, \delta^{-1/4}] \times [0, \delta^{-1/4}])$
if 
\begin{align*}
	|{\rm T}(\sfC)-\langle x,e_{\rm vert}\rangle  | &\leq \tfrac{\delta}{2C_{M}} N, &&
	&|{\rm B}(\sfC)-\langle x,e_{\rm vert}\rangle  | &\leq\tfrac{\delta}{2C_{M}} N,\\
	|{\rm L}(\sfC)-\langle x,e_{\rm lat}\rangle  | &\leq \tfrac{\delta}{2C_{M}} N	 &\text{ and }&
	&|{\rm R}(\sfC)-\langle x,e_{\rm lat}\rangle  | &\leq \tfrac{\delta}{2C_{M}} N.
\end{align*}
with~$C_M$ chosen as in the definition of~${\rm Meso}^+$.
We consider here the more restricted notion of nail$^+$ rather than just nail, 
for the same reason that we considered~${\rm Meso}^+(\omega_0)$ rather~${\rm Meso}(\omega_0)$ in Proposition~\ref{prop:stability_meso}:
a buffer is needed to offset the small variations that may occur during the process~$(\omega_t)_{0 \leq t \leq K+K'}$.

\begin{lemma}\label{lem:lattice_nails_exists}
	There exist constants~$C,c >0$ such that, for all~$\delta,\eta >0$ with~$\eta <\delta^2$,~$\delta$ small enough and any~$N$ large enough
	\begin{align}\label{eq:existence_nails}
	\phi_{\bbL(\beta)}[\text{there exists a lattice of nails$^+$ of~${\rm Meso}^+$ at scales~$(\delta,N)$}] \ge 1 - C\eta^{c}  \delta^{-1/2} .
	\end{align}
\end{lemma}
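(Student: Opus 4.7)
The plan is to reduce the claim to a single-point estimate and then apply a union bound over the $\lesssim \delta^{-1/2}$ grid points. Specifically, I would show that for each grid point $x$,
\[
\phi_{\bbL(\beta)}\big[\,\text{no nail}^+\text{ of }{\rm Meso}^+\text{ exists at }x\,\big] \le C'\eta^{c_1}
\]
for constants $c_1,C'>0$ independent of $x$; summing over the grid then yields \eqref{eq:existence_nails}.

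To produce a nail$^+$ of ${\rm Meso}^+$ at $x$, I would tile the rhombus of allowed positions $\sfD(x)$ (of side $\delta N/(2C_{M})$ in the $(e_{\rm vert},e_{\rm lat})$ directions) with disjoint sub-rhombi $B_1,\dots,B_K$ of side $\sqrt\eta N/(2C_{M})$. Since $\eta<\delta^2$, such sub-rhombi fit inside $\sfD(x)$ and their number is $K \asymp (\delta/\sqrt\eta)^2$. For each $i$, let $F_i$ denote the event that $B_i$ contains a primal cluster $\sfC\subset B_i$ whose vertical diameter lies in $[C_{M}\eta N,\sqrt\eta N/C_{M}]$. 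Such a cluster may be forced by imposing (i) a primal vertical crossing of an inner sub-rhombus of $B_i$ with vertical extent in the required window, together with (ii) a dual circuit in the outer annulus of $B_i$ separating this crossing from $\partial B_i$. Combining Theorem~\ref{thm:RSW_iso} and the FKG inequality~\eqref{eq:FKGFK}, one obtains $\phi_{\bbL(\beta)}[F_i]\ge c_0>0$, with $c_0$ independent of $\eta,\delta,N,i$ (one uses here that $\eta$ is small enough that $C_M^2\eta<\sqrt\eta$, so the vertical-diameter window is not degenerate). When $F_i$ occurs, the cluster $\sfC\subset B_i \subset \sfD(x)$ automatically satisfies the nail$^+$ condition on all four extremal coordinates, while the imposed diameter places $\sfC$ in ${\rm Meso}^+$; the remaining constraints ${\rm T}(\sfC)\le N/C_{M}$, ${\rm L}(\sfC)\ge -N/C_M$, ${\rm R}(\sfC)\le N/C_M$ follow from $x$ lying in the $\delta^{-1/4}N$-window together with the choice of $C_{M}$ and smallness of $\delta$.

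The final step is to turn the bounds $\phi_{\bbL(\beta)}[F_i]\ge c_0$ into a bound on $\phi_{\bbL(\beta)}[\bigcap_i F_i^c]$. The events $F_i$ are supported on disjoint sub-rhombi, so quasi-independence is provided by Proposition~\ref{prop:mixing}. I would partition $\{1,\dots,K\}$ into $O(1)$ sub-families such that within each sub-family the boxes are at mutual distance $\ge M_0\sqrt\eta N$, for $M_0$ large enough that $C_{\rm mix}M_0^{-c_{\rm mix}}\le c_0/2$. Iterating Proposition~\ref{prop:mixing} within each sub-family gives
\[
\phi_{\bbL(\beta)}\Big[\bigcap_{i\in\text{sub-fam.}}F_i^c\Big]\le (1-c_0/2)^{|\text{sub-fam.}|},
\]
and multiplying over the $O(1)$ sub-families yields $\phi_{\bbL(\beta)}[\bigcap_i F_i^c]\le \exp(-c_0'(\delta/\sqrt\eta)^2)$. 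The main obstacle is converting this exponential-in-$(\delta/\sqrt\eta)^2$ bound into the polynomial $C'\eta^{c_1}$: this is the meaning of ``$\delta$ small enough'' in the statement, which must ensure $(\delta/\sqrt\eta)^2\gtrsim \log(1/\eta)$. Under the hypothesis $\eta<\delta^2$ together with $\delta$ small enough, this inequality holds and gives $\exp(-c_0'(\delta/\sqrt\eta)^2)\le \eta^{c_1}$ for some $c_1>0$, completing the single-point estimate and hence, via the union bound, the lemma.
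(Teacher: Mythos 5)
Your overall structure is the same as the paper's (a per-point estimate followed by a union bound over the $O(\delta^{-1/2})$ grid points, with RSW used to manufacture a candidate cluster), but the quantitative heart of your per-point estimate has a genuine gap. You gain independence only \emph{spatially}, at the single scale $\sqrt\eta N$: your number of trials is $K\asymp(\delta/\sqrt\eta)^2$, so your failure bound is $\exp(-c_0'\delta^2/\eta)$. The claim that ``$\eta<\delta^2$ together with $\delta$ small enough'' forces $(\delta/\sqrt\eta)^2\gtrsim\log(1/\eta)$ is false: the lemma is asserted for \emph{all} $\eta<\delta^2$, and taking, say, $\eta=\delta^2/2$ gives $\delta^2/\eta=2$ while $\log(1/\eta)\to\infty$, so your bound degenerates to a constant strictly less than $1$ and the union bound over $\delta^{-1/2}$ points yields nothing. (``$\delta$ small enough'' in the statement is a threshold on $\delta$ alone — it is used to ensure the grid points satisfy the global ${\rm Meso}^+$ window constraints — not a coupling between $\delta$ and $\eta$.)

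The missing idea is to exploit the \emph{scales} allowed by the ${\rm Meso}^+$ diameter window rather than space at a fixed scale. Around the single point $x$, run the circuit-plus-dual-circuit construction in nested dyadic annuli at every scale $2^j$ with $C_M^2\eta N\lesssim 2^j\lesssim \sqrt\eta N/C_M^2$: a success at \emph{any} one of these scales produces a cluster contained in $\sfD(x)$ (this is where $\eta<\delta^2$, i.e.\ $\sqrt\eta<\delta$, enters — it guarantees the candidate clusters fit in the rhombus, not that there are many trials) whose vertical diameter lies in $[C_M\eta N,\sqrt\eta N/C_M]$. There are $\asymp\log(1/\eta)$ such scales, each succeeding with probability bounded below uniformly in the configuration outside the annulus, by the conditional form of \eqref{eq:RSW_iso}; peeling scales from the outside in (or using Proposition~\ref{prop:mixing} across well-separated scales) gives a per-point failure probability at most $(1-c_0)^{c\log(1/\eta)}=\eta^{c_1}$, which is exactly the bound the paper's one-line RSW argument relies on, and the union bound then gives \eqref{eq:existence_nails}. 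Your same-scale tiling would in fact suffice for the particular choice $\eta=N^{-c_\eta}$, $\delta=N^{-c_\eta^2}$ made in the proof of Theorem~\ref{thm:linear}, but it does not prove the lemma as stated.
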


\begin{proof}
	This is a direct consequence of~\eqref{eq:RSW_iso}. 
	Observe that, for given~$\delta$, there are at most~$O(\delta^{-1/2})$ points in the lattice~$(\delta^{1/2}N )\cdot (\bbZ^2 \cap [-\delta^{-1/4}, \delta^{-1/4}] \times [0, \delta^{-1/4}])$. 
	Furthermore, for~$\delta$ small enough, any such point~$x$ satisfies
	\begin{align}
		\langle x,e_{\rm vert}\rangle \leq \tfrac{N}{C_{M}} \quad \text{ and }\quad 
		- \tfrac{N}{C_{M}}\leq	\langle x,e_{\rm lat}\rangle \leq \tfrac{N}{C_{M}}, 
	\end{align}
	which is to say that it is contained in the window covered by the definition of~${\rm Meso}^+$.
	Finally, for each such point~$x$, since~$\sqrt \eta \leq \delta$,~\eqref{eq:RSW_iso} implies that 
	\begin{align}
		\phi_{\bbL(\beta)}[ \text{there exists a nail$^+$~$\sfN(x) \in {\rm Meso}^+$ at~$x$}] \geq 1 - C_0\, \eta^{-c_0}. 
	\end{align}
	for universal constants~$c_0,C_0$. Performing a union bound yields the result. 
\end{proof}

\begin{proof}[Proof of Theorem~\ref{thm:linear}]
%We are finally ready to choose~$\eta$ and~$\delta$. 
For~$N\geq 1$, write~$\eta = N^{-c_\eta}$ and~$\delta = N^{-c_\eta^2}$, for some small~$c_\eta >0$ to be determined below. 
In particular, we assume~$c_\eta < 1/2$ so that Lemma~\ref{lem:lattice_nails_exists} applies. 

Write~${\rm Stability} = {\rm Stability}(\eta,N)$ for the event in Proposition~\ref{prop:stability_meso}, 
with the constants~$c,C$ provided by the proposition. 
By choosing~$c_\eta > 0$ small enough Lemma~\ref{lem:lattice_nails_exists} and Proposition~\ref{prop:stability_meso} ensure that 
\begin{align}
	\bbP[\text{$\omega_0$ contains a lattice of nails$^+$ of~${\rm Meso}^+(\omega_0)$ at scales~$(\delta,N)$}] &\ge 1 - N^{-c}\,\,\text{ and }
	% Here~$c = \frac14 c_0 c_\eta$, there~$c_0$ is the constant given in Lemma~\ref{lem:lattice_nails_exists}, and we assume that~$c_\eta \leq c_0/2$. 
	\nonumber\\
	\bbP[\text{Stability}(\eta,N)]& \ge 1 - N^{-c},
	\label{eq:stability+}
\end{align}
for all~$N$ large enough, where~$c$ is some constant which depends on $c_\eta$, but not on $N$.

Assume henceforth that both of the events above occur and that~$N$ is large. 
Fix a lattice of nails$^+$~$\sfN_0 \subset {\rm Meso}^+(\omega_0)$. 
By~$\text{Stability}(\eta,N)$, these clusters survive throughout the process~$(\omega_t)_{0\leq t\leq K+K'}$;
denote by~$\sfN_t$ their collection in~$\omega_t$.
Furthermore, due to~$\text{Stability}(\eta,N)$ and the definition of  nail$^+$, 
$\sfN_t$ is a lattice of nails, up to a piecewise-linear transformation, for every~$t \leq K+K'$.
As such, it makes sense to encode the homotopy information of~$\omega_t$ with respect to~$\sfN_t$, in the same way as for~$\omega_0$. 

We will now argue that this homotopy information remains unchanged throughout the process. 
More precisely, any non-trivial loop~$\ell_0$ surrounding a primal or dual cluster has a corresponding loop~$\ell_t$ in~$\omega_t$ and 
$w_{\sfN_t}(\ell_t) = w_{\sfN_0}(\ell_0)$. 
Indeed, when passing from~$\omega_t$ to~$\omega_{t+1/2}$, as~$\ell_t$ has diameter larger than~$\eta N$, 
it either avoids all extremal boxes or~$\omega_t = \omega_{t+1/2}$. 
In both cases, we conclude that~$w_{\sfN_{t+1/2}}(\ell_{t+1/2}) = w_{\sfN_t}(\ell_t)$.
Furthermore, when passing from~$\omega_{t+1/2}$ to~$\omega_{t+1}$, the fact that track exchanges do not break loops ensures that the homotopy class of any non-trivial loop is preserved. As such~$w_{\sfN_{t+2}}(\ell_{t+2}) = w_{\sfN_{t+1}}(\ell_{t+1}) = w_{\sfN_{t}}(\ell_{t})$.

Finally, as explained in Remark~\ref{rem:M_origin}, the effect of the drift on the lattice~$\sfN_0$ is described by~$M_{\beta,\alpha}$.
More precisely, for all~$x\in (\delta^{1/2}N )\cdot (\bbZ^2 \cap [-\delta^{-1/4}, \delta^{-1/4}] \times [0, \delta^{-1/4}])$ 
\begin{align}
%	\big|\langle y, e_{\rm vert} \rangle - \langle M_{\beta,\alpha}(x), e_{\rm vert} \rangle \big|&\leq \tfrac{\delta}{2C_{M}} N + C N^{1-2c} \quad\text{ and } \\
%	\big|\langle y, e_{\rm lat} \rangle - \langle M_{\beta,\alpha}(x), e_{\rm lat} \rangle \big| &\leq  \tfrac{\delta}{2C_{M}} N + C N^{1-2c} \qquad 
	\big|\langle y, e_{*} \rangle - \langle M_{\beta,\alpha}(x), e_{*} \rangle \big| &\leq  \tfrac{\delta}{2C_{M}} N + C N^{1-c} \qquad 
	 \forall y \in \sfN_{K+K'}(x) \text{ and } * \in \{{\rm vert},{\rm lat}\}.
\end{align}
By again assuming that~$c_\eta$ is sufficiently small and~$N$ large, we conclude that 
$M_{\beta,\alpha}^{-1}(\sfN_{K+K'})$ is a~$(\delta,N)$-lattice of nails.
Then~$\omega_0$ and~$M_{\beta,\alpha}^{-1}(\omega_{K+K'})$ are homotopically similar at scales~$(\delta,N)$. 
Applying Corollary~\ref{cor:same_law}, Proposition~\ref{prop:homotopy_to_CN} and using~\eqref{eq:stability+} we conclude that 
\begin{align}
	{\bf d}_{\rm CN}\big[ \phi_{\delta\bbL(\alpha)}, \phi_{\delta\bbL(\beta)}\circ M_{\beta,\alpha} \big] \leq C\,\delta^c \leq C N^{-c'},
\end{align}		
for all~$N$ large enough, where~$C,c,c' >0$ are universal constants. The assumption on~$N$ may be removed by altering the constants. 
\end{proof}

\subsection{An equality of drifts}\label{sec:5equality_drifts}

As an addition to Theorem~\ref{thm:linear}, we make an apparently obvious but crucial observation about the value of the drift vectors. 

\begin{proposition}\label{prop:drift_RT}
	For any~$0  < \beta < \pi$,
	\begin{align}\label{eq:drift_RT}
		{\rm Drift}_{\rm lat}(\beta,\beta/2) = {\rm Drift}_{\rm vert}(\beta,\beta/2).
	\end{align}
\end{proposition}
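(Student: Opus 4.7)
The plan is to exploit a special geometric symmetry that arises precisely when $\alpha=\beta/2$. The diamond graph of $\bbL_{\rm mix}(\beta/2,\beta)$ uses rhombi whose edges lie in only three directions, namely $\{0,\beta/2,\beta\}$. This set is invariant under the reflection $\sigma$ across the line $e^{i\beta/2}\bbR$: $\sigma$ swaps $0\leftrightarrow\beta$ and fixes $\beta/2$. A direct calculation from the definitions shows that $\sigma(e_{\rm vert})=e_{\rm lat}$, which is precisely the relation needed so that $\sigma$ should exchange the roles of ``Top'' and ``Right'' (respectively ``Bottom'' and ``Left'') extrema.

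My first step would be to transfer this geometric symmetry into a symmetry of the FK-percolation measure. The reflected rhombic tiling $\sigma(\bbL_{\rm mix}(\beta/2,\beta))$ is again a rhombic tiling using the same three edge directions; the rhombi of type $\{0,\beta\}$ are preserved, while rhombi of type $\{0,\beta/2\}$ become rhombi of type $\{\beta/2,\beta\}$. Because $\sigma$ is an isometry, subtended angles are preserved, so the edge weights given by \eqref{eq:isoraial_p_e} are identical on both tilings. By the classical theory of rhombic tilings (Kenyon--Schlenker), any two rhombic tilings with matching boundary and matching edge-direction set are related by a sequence of hexagonal flips, i.e.\ star-triangle transformations. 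Applying Lemma~\ref{lem:stt} repeatedly within a large (but finite) window around the extremum gives a measure-preserving correspondence between critical FK-configurations on $\sigma(\bbL_{\rm mix}(\beta/2,\beta))$ and on $\bbL_{\rm mix}(\beta/2,\beta)$ that preserves connections outside the flip region. Composing this correspondence with $\sigma$ yields a map $\Phi$ on IIC configurations under which $\phi^{\rm IIC,T}_{\bbL_{\rm mix}}$ corresponds (up to translation) to $\phi^{\rm IIC,R}_{\bbL_{\rm mix}}$.

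The second step is to transport the track-exchange dynamics through $\Phi$. Under $\sigma$, the horizontal tracks of $\bbL_{\rm mix}(\beta/2,\beta)$ become tracks of transverse angle $\beta$, so the composition $\bfS_{t+1}\circ\bfS_t$ becomes an ``oblique'' track exchange on $\sigma(\bbL_{\rm mix}(\beta/2,\beta))$ that moves cells precisely in the direction $e_{\rm lat}$ by the same amount that the original dynamics moves them in the direction $e_{\rm vert}$. Conjugating by the star-triangle correspondence of Step~1, one checks that on the original lattice this oblique exchange produces the same expected increment on $\mathrm{Right}(\sfC^{\rm IIC})$ as the actual lateral track exchange used in the definition \eqref{eq:drift} of ${\rm Drift}_{\rm lat}$. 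Taking expectations then yields
\[
\bbE_{\phi^{\rm IIC,T}}[\Delta^{\rm IIC}\mathrm{T}] \;=\; \bbE_{\phi^{\rm IIC,R}}[\Delta^{\rm IIC}\mathrm{R}],
\]
which by \eqref{eq:drift} is exactly ${\rm Drift}_{\rm vert}={\rm Drift}_{\rm lat}$.

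The hardest part is clearly this second step: the conjugation of the dynamics is not a literal equality of operations, but only an equality in expectation of extremum increments. Making this rigorous requires showing that the star-triangle transformations used to restore $\sigma(\bbL_{\rm mix}(\beta/2,\beta))$ to $\bbL_{\rm mix}(\beta/2,\beta)$ can be chosen to act only on a region of diameter $O(1)$ around the extremum, so that they commute in the appropriate sense with the two track exchanges comprising $\bfS_{t+1}\circ\bfS_t$. Since the drift depends only on the local IIC environment around a single cell and since both the track exchanges and the restoring moves are finite compositions of star-triangle transformations, this reduces to a finite (but delicate) combinatorial verification that the two dynamics produce the same average displacement of the relevant extremum.
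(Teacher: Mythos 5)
The geometric observation that the reflection $S_{\beta/2}$ maps $e_{\rm vert}$ to $e_{\rm lat}$ is correct, but your Step~1 has a genuine gap on which everything else rests. Star-triangle transformations preserve the train-track data of an isoradial graph: the set of tracks, their transverse angles, and which pairs of tracks cross; equivalently, each flip merely translates three rhombi, so every rhombus keeps its pair of edge directions. Flip-connectivity in the sense of Kenyon--Schlenker therefore requires the two rhombic tilings to have the same de Bruijn line arrangement, not merely the same set of edge directions. That is exactly what fails here: in $\bbL_{\rm mix}(\beta/2,\beta)$ every rhombus has direction-pair $\{0,\beta/2\}$ or $\{0,\beta\}$ (tracks of transverse angle $\beta/2$ or $\beta$ only ever cross tracks of transverse angle $0$), whereas in the reflected lattice $\sigma(\bbL_{\rm mix}(\beta/2,\beta))$ the rhombi have direction-pairs $\{\beta,\beta/2\}$ and $\{\beta,0\}$, so a positive density of rhombi are of a type that does not occur at all in $\bbL_{\rm mix}$, and tracks of transverse angle $\beta$ now cross tracks of transverse angle $\beta/2$. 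Consequently no sequence of star-triangle transformations --- not even one confined to a bounded window around the extremum --- can turn the reflected lattice back into $\bbL_{\rm mix}$, and the map $\Phi$ identifying $\phi^{\rm IIC,T}_{\bbL_{\rm mix}}$ with $\phi^{\rm IIC,R}_{\bbL_{\rm mix}}$ does not exist by this route. (Identifying the FK model on the reflected lattice with the one on $\bbL_{\rm mix}$ is essentially the universality one is ultimately trying to prove, and this proposition is an input to that proof, so assuming such an identification would be circular.) Step~2 inherits the problem and is, moreover, only asserted: the equality of expected increments of the Top and Right extrema is precisely the content of the proposition, and it is an identity of expectations under the IIC measure, not a finite combinatorial check.

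For comparison, the paper proves the statement by a global double-counting argument rather than a local symmetry. One takes a finite graph $\bbL$ consisting of a $\beta$-block ($R$ horizontal tracks of angle $\beta$ crossed by $2R$ vertical tracks of angle $0$) bordered along its top and right by $r$ tracks of angle $\alpha$, and transforms it into the graph $\bbL'$ where those $\alpha$-tracks border the bottom and left, in two different ways: by sliding the $\alpha$-tracks down through the $\beta$-block (horizontal track exchanges, governed by ${\rm Drift}_{\rm lat}(\beta,\alpha)$ for the right extremum), or by sliding them leftwards (vertical track exchanges, for which the right extremum of the cluster plays the role of a top and the relevant drift is ${\rm Drift}_{\rm vert}(\beta,\beta-\alpha)$). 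Since both routes produce the same law of the final configuration, the expected displacement of ${\rm R}(\sfC)$ of the cluster of $[-N,N]\times\{0\}$ must coincide, which yields a relation between ${\rm Drift}_{\rm lat}(\beta,\alpha)$ and ${\rm Drift}_{\rm vert}(\beta,\beta-\alpha)$; setting $\alpha=\beta/2$ gives \eqref{eq:drift_RT}. If you want an argument closer to your symmetry intuition, the variant in Exercise~\ref{exo:drift_RT} (sliding a mixed block laterally and comparing the resulting linear map with $M_{\beta,\alpha}$) is the rigorous form of it; either way, some global comparison of two transformation schemes replaces the local reflection argument you propose.
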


The above is actually a surprisingly profound fact. It ultimately shows that the isoradial embedding of the lattices~$\bbL(\alpha)$ (and more generally of bi-periodic graphs) is the right embedding of inhomogeneous FK-percolation to ensure universality in the sense of Theorem~\ref{thm:universalCNSS}. Indeed, except in this proposition, there is no use of the exact formula linking isoradial embeddings to the inhomogeneity of the FK-percolation parameters. 
Behind~\eqref{eq:drift_RT} lies the fact that the line exchanges producing the drift are composed of star-triangle transformations, which do not depend on the direction of the tracks being exchanged. 

\begin{proof} 
	Fix~$0 < \alpha  < \beta < \pi$. 
	We will prove a more general statement, which will imply~\eqref{eq:drift_RT} when taking~$\alpha = \beta /2$. 
	Several scales will appear in the construction below: consider integers~$r \ll N \ll R$, with~$r = N^{1-c}$ for some small constant~$c$.
	
	Consider the finite isoradial graph~$\bbL$ containing a block of~$R$ horizontal tracks with transverse angle~$\beta$ crossed by~$2R$ vertical tracks of transverse angle~$0$. 
	We call this the~$\beta$-block. Position it so that~$0$ is the central point on the bottom part of this block. 
	Complete the block by~$r$ horizontal tracks of transverse angle~$\alpha$ that run along its top and right side. 
	See Figure~\ref{fig:vTR}.
	
	\begin{figure}
	\begin{center}
	\includegraphics[width = .98\textwidth]{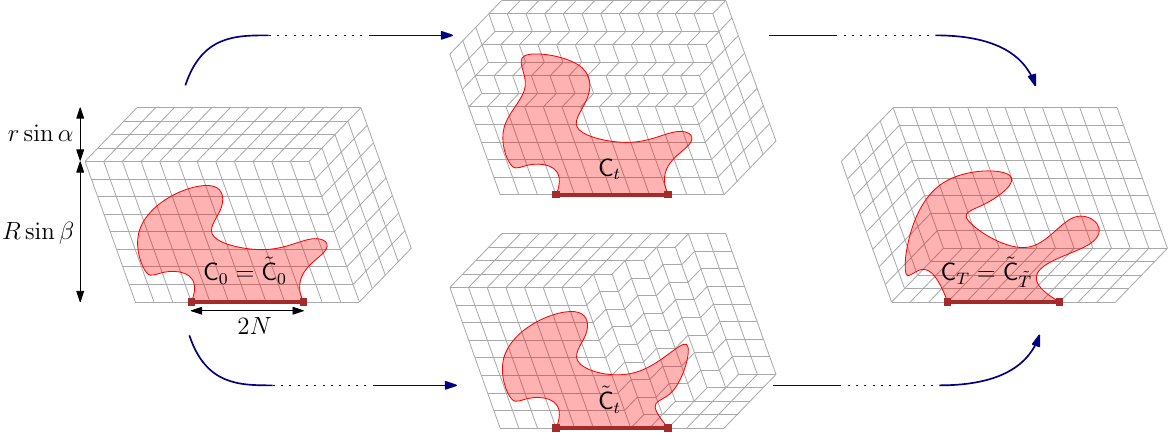}
	\caption{From~$\bbL$ on the left, there are two series of transformations leading to~$\bbL'$ on the right. 
	On the top, we proceed by exchanging horizontal tracks, bringing them down progressively. This produces a sequence of lattices~$\bbL_{t}$, and a sequence~$\sfC_t$ of transformations of the cluster of the bottom red segment. 
	On the bottom, we proceed by progressively exchanging vertical tracks. This leads to sequences~$\tilde \bbL_t$ and~$\tilde \sfC_t$.
	The result of the two processes is the same, which is to say that the laws of the final clusters~$\sfC_T$ and~$\tilde\sfC_{\tilde T}$ are the same.}
	\label{fig:vTR}
	\end{center}
	\end{figure}

	Let~$\bbL'$ be the graph obtained by switching the position of the tracks of angle~$\alpha$ with the~$\beta$-block. In~$\bbL'$, the tracks of angle~$\alpha$ run on the left and bottom side of the~$\beta$-block. 
	
	There are two ways to pass from~$\bbL$ to~$\bbL'$ using compositions of star-triangle transformations. 
	As illustrated in Figure~\ref{fig:vTR}, we may define~$\bbL = \bbL_0,\dots, \bbL_{T} = \bbL'$, by essentially ``sliding'' the tracks of angle~$\alpha$ down through the~$\beta$-block. 
	To pass from~$\bbL_{t}$ to~$\bbL_{t+1}$ we perform a series of finite track exchanges, similarly to those in the transformation~$\bfS_t$ appearing in~\eqref{eq:5S_transformation}. 
	For~$r < t \leq  T-r$, the lattice~$\bbL_t$ contains a mixed block of~$2r$ tracks which slides down as~$t$ increases. 
	To pass from~$t$ to~$t+1$, exactly~$r$ track exchanges are performed, each consisting of~$2N$ sequential start-triangle transformations. 
	Let~$(\omega_t)_{0\leq t\leq T}$ be the Markov chain of configurations on the lattices~$\bbL_t$ defined as in Section~\ref{sec:5MarkovChain}, with~$\eta = N^{1-2c} \ll r$.

	A second way to  transform~$\bbL$ into~$\bbL'$ is to slide the tracks of angle~$\alpha$ from the right to the left; 
	write~$\tilde \bbL_0,\dots, \tilde \bbL_{\tilde T}$ for this set of transformations and~$(\tilde\omega_t)_{0 \leq t\leq \tilde T}$ for the corresponding chain of configurations. 
	
	The distortion of the contours of the large clusters in each configuration may be controlled in the same way as in Theorem~\ref{thm:linear}. 
	In particular, if we write~$\sfC_t$ and~$\tilde\sfC_{t}$ to be the cluster of the interval~$[-N,N] \times \{0\}$ in~$\omega_t$ and~$\tilde\omega_t$, respectively, 
	we find that 
	\begin{align}\label{eq:drift_vh}
	 	\lim_{R \to \infty} \bbE\big[{\rm R}(\sfC_{T}) - {\rm R}(\sfC_{0})\big] 
		&= %\frac{r\sin\beta}{\sin\beta + {\rm Drift}_{\rm vert}(\beta,\alpha)} 
		\frac{2r(\sin \alpha+ \sin\beta)}{ \sin \alpha+ \sin\beta + {\rm Drift}_{\rm vert}(\beta,\alpha)}
		\cdot {\rm Drift}_{\rm lat}(\beta,\alpha) + o(r)  \text{ and } \nonumber\\
	 	\lim_{R \to \infty} \bbE\big[{\rm R}(\tilde\sfC_{\tilde T}) - {\rm R}(\tilde\sfC_{0})\big]
%		& = % \frac{r\sin(\pi-\beta)}{\sin(\pi-\beta) + {\rm Drift}_{\rm vert}(\pi -\beta,\pi-\beta + \alpha)} 
%		\frac{2r(\sin (\pi-\beta+\alpha)+ \sin(\pi-\beta))}{ \sin (\pi-\beta+\alpha)+ \sin(\pi-\beta) + {\rm Drift}_{\rm vert}(\pi-\beta,\pi-\beta+\alpha)}
%		\cdot {\rm Drift}_{\rm vert}(\pi-\beta,\pi-\beta+\alpha)+ o(r) \nonumber\\
		& = %\frac{r\sin\beta}{\sin\beta + {\rm Drift}_{\rm vert}(\beta,\beta-\alpha)} 
		\frac{2r(\sin (\beta - \alpha)+ \sin\beta)}{ \sin (\beta-\alpha)+ \sin\beta + {\rm Drift}_{\rm vert}(\beta,\beta-\alpha)}
		\cdot {\rm Drift}_{\rm vert}(\beta,\beta-\alpha)+ o(r).
	\end{align}
	% \im{I hope that the number of transformations affecting any one cluster is correct}
	In the above, the fractions are the approximate number of transformations that affect any mesoscopic cluster. 
	In the second line, the track-exchanges effectively occur between tracks of angle~$\pi-\beta$ and~$\alpha + \beta$ and the rightmost point of~$\tilde\sfC_t$ acts as a topmost point from the point of view of the track exchanges. We used the horizontal symmetry to state that~${\rm Drift}_{\rm vert}(\pi -\beta,\pi -\beta + \alpha) =  {\rm Drift}_{\rm vert}(\beta, \pi - \beta - \alpha)$; note however that~${\rm Drift}_{\rm lat}(\pi -\beta,\pi -\beta + \alpha) =- {\rm Drift}_{\rm lat}(\beta, \pi - \beta - \alpha)$, as the direction of the vector used for reference is reversed. 
	
	Finally, notice that~${\rm R}(\sfC_{0})$ and~${\rm R}(\tilde\sfC_{0})$ have the same law, as do~${\rm R}(\sfC_{T})$ and~${\rm R}(\tilde\sfC_{\tilde T})$. 
	Combining this with~\eqref{eq:drift_vh} and taking~$N \to\infty$, we find
	\begin{align*}
		\frac{(\sin \alpha+ \sin\beta)\cdot {\rm Drift}_{\rm lat}(\beta,\alpha) }{ \sin \alpha+ \sin\beta + {\rm Drift}_{\rm vert}(\beta,\alpha)}	
	=
		\frac{(\sin (\beta - \alpha)+ \sin\beta)\cdot {\rm Drift}_{\rm vert}(\beta,\beta-\alpha)}{ \sin (\beta-\alpha)+ \sin\beta + {\rm Drift}_{\rm vert}(\beta,\beta-\alpha)}
	\end{align*}
	Taking~$\alpha = \beta/2$ in the above, we conclude~\eqref{eq:drift_RT}.
\end{proof}

\begin{remark}\label{rem:drift_RT2}
	A different proof of Proposition~\ref{prop:drift_RT} may be obtained by transforming~$\bbL_\beta$ into~$\bbL_\alpha$ using a different set of transformations, namely those in Figure~\ref{fig:drift_RT2}. This method is detailed in Exercise~\ref{exo:drift_RT}. 	
\end{remark}

\begin{proof}[Proof of Lemma~\ref{lem:M_cont}]
	We start by the proof of continuity. 
	We will focus here on the vertical drift; the proof is identical for the lateral drift. 
	Fix two angles~$\alpha, \beta$. 
	Let~$\bbL_{\rm mix}(\alpha,\beta)$ be the mixed lattice with angles~$\alpha,\beta$. 
	Also, define~$\bbL_{\rm mix}(\alpha,\beta) \cap \La_R~$ for the restriction of this lattice to~$\La_R$, 
	with the addition of the rhombi needed to perform~$\bfS_1 \circ \bfS_0$. 
	
	Fix~$\eps > 0$. 
	Due to Lemma~\ref{lem:IIC}, specifically to the fact that it is uniform in the angles~$\alpha$ and~$\beta$ 
	(outside of a neighbourhood of~$0$ and~$\pi$) and to the locality of the star-triangle transformation, 
	for~$R$ large enough
	we may couple the application of 
	$\bfS_1 \circ \bfS_0$ to~$\phi^{\rm IIC,T}_{\bbL_{\rm mix}(\alpha,\beta)}$ 
	and to~$\phi^1_{\bbL_{\rm mix}(\alpha,\beta) \cap \La_R}[\cdot \,|\,{\rm Top}(\sfC_{x_n}) = 0 ]$ so that the increments 
	$\Delta_t^{\rm IIC} {\rm T}$ and~$\tilde \Delta_t^{\rm IIC} {\rm T}$ for the top of the IIC and of~$\sfC_{x_n}$, respectively,  satisfy
	\begin{align*}
		\bbP [\Delta_t^{\rm IIC} {\rm T} \neq \tilde \Delta_t^{\rm IIC} {\rm T}] \leq \eps. 
	\end{align*}
	Moreover, the choice of~$R$ may be uniform in a vicinity of~$(\alpha,\beta)$. 
	
	Since~$\Delta_t^{\rm IIC} {\rm T}$ is bounded, we conclude that 
	$(\alpha,\beta)\mapsto \phi^{\rm IIC,T}_{\bbL_{\rm mix}(\alpha,\beta)} [\Delta_t^{\rm IIC} {\rm T}]$ is 
	the locally uniform limit of 
	$(\alpha,\beta)\mapsto \phi^1_{\bbL_{\rm mix}(\alpha,\beta) \cap \La_R}[\Delta_t^{\rm IIC} {\rm T}\,|\,{\rm Top}(\sfC_{x_n}) = 0 ]$.
	The latter is a continuous function, and therefore so is the former.

	Finally, we turn to the invertibility of~$M_{\beta,\alpha}$. By the explicit formula~\eqref{eq:Mbetaalpha}, 
	this is equivalent to 
	\begin{align}
		{\rm Drift}_{\rm vert}  \neq -\sin\beta.
	\end{align}
	As explained in Section~\ref{sec:stability_meso}, the finite energy property and a deterministic bound on the increments of the IIC yield~${\rm Drift}_{\rm vert}  > -\sin\beta$.
%	The increments~$\Delta_t^{\rm IIC} {\rm T}$ are deterministically bounded by~$-\sin \beta$,
%	and the finite energy property is enough to deduce that~${\rm Drift}_{\rm vert}  > -\sin\beta$.
\end{proof}

\section{Rotational invariance and universality: proofs of Theorems~\ref{thm:rotation_invariance} and~\ref{thm:universalCNSS}}\label{sec:conclusion_rot_inv}

With Theorem~\ref{thm:linear} now proved, we turn to our two main results: 
the asymptotic rotational invariance of~$\phi_{\bbL(\pi/2)}$ (Theorem~\ref{thm:rotation_invariance}) 
and the universality among lattices~$\bbL(\alpha)$ with~$\alpha \in (0,\pi)$  (Theorem~\ref{thm:universalCNSS}).
The latter may be simply formulated as~${\rm Drift}_{\rm lat}(\pi/2,\alpha)  = {\rm Drift}_{\rm vert}(\pi/2,\alpha) = 0$ for all~$\alpha \in (0,\pi)$. 
We will start by proving the asymptotic rotational invariance of~$\phi_{\bbL(\pi/2)}$ 
without Theorem~\ref{thm:universalCNSS}, then use it to deduce that~${\rm Drift}_{\rm lat}(\pi/2,\alpha)  = {\rm Drift}_{\rm vert}(\pi/2,\alpha) = 0$ for a dense set of angles~$\alpha \in (0,\pi)$, 
which will in turn imply Theorem~\ref{thm:universalCNSS}.

\subsection{Rotational invariance: proof of Theorem~\ref{thm:rotation_invariance}}\label{sec:deducing_rot_inv}

Write~$S_{\frac\alpha2}$ for the orthogonal reflection with respect to~$e^{{i}\alpha/2}$.
We start off with a result about the invariances of~$\phi_{\delta\bbL(\beta)}$ for general angles~$\beta$. 
Recall that~$\phi_{\bbL(\alpha)}$ is invariant under~$S_{\frac\alpha2}$, and since 
$\phi_{\bbL(\beta)}$ and~$\phi_{\bbL(\alpha)}$ are related via Theorem~\ref{thm:linear}, it follows that 
$\phi_{\bbL(\beta)}$ is asymptotically invariant under~$S_{\frac\alpha2}$ conjugated with~$M_{\beta,\alpha}$.

\begin{proposition}\label{prop:Tinv}
	For any~$\alpha, \beta \in (0,\pi)$, there exists constants~$c,C > 0$ such that
	\begin{align}\label{eq:autoT}
		{\bf d}_{\rm CN}\big[\phi_{\delta\bbL(\beta)},\, \phi_{\delta\bbL(\beta)}\circ M_{\beta,\alpha}^{-1}\circ S_{\alpha/2}\circ M_{\beta,\alpha}  \big] 	 
		\leq C\, \delta^c \qquad \text{for all~$\delta > 0$}.
	\end{align}
\end{proposition}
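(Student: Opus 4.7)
The plan is to chain two applications of Theorem~\ref{thm:linear} through the exact symmetry of $\phi_{\bbL(\alpha)}$ with respect to the axis $e^{i\alpha/2}\bbR$. More precisely, the observation is that $\bbL(\alpha)$ is invariant (as a weighted graph) under the reflection $S_{\alpha/2}$, so
\begin{align*}
\phi_{\delta\bbL(\alpha)} = \phi_{\delta\bbL(\alpha)}\circ S_{\alpha/2}
\qquad\text{for every }\delta>0.
\end{align*}
I would first apply Theorem~\ref{thm:linear} to obtain a coupling of $\omega\sim\phi_{\delta\bbL(\beta)}$ and $\omega'\sim\phi_{\delta\bbL(\alpha)}$ such that, with probability at least $1-C\delta^c$, the (loop representations of the) configurations $M_{\beta,\alpha}(\omega)$ and $\omega'$ are at $d_{\rm CN}$-distance at most $C\delta^c$.

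Next, I would push this through $S_{\alpha/2}$. Since $S_{\alpha/2}$ is an isometry of $\bbR^2$ that preserves the law of $\phi_{\delta\bbL(\alpha)}$, it also preserves the $d_{\rm CN}$-distance between the two families of loops. Hence $(S_{\alpha/2}\circ M_{\beta,\alpha})(\omega)$ and $S_{\alpha/2}(\omega')$ are close in $d_{\rm CN}$, with $S_{\alpha/2}(\omega')\sim\phi_{\delta\bbL(\alpha)}$. I would then apply Theorem~\ref{thm:linear} a second time, now in the reverse direction, to produce a further coupling between $S_{\alpha/2}(\omega')$ and a configuration $\omega''\sim\phi_{\delta\bbL(\beta)}$, for which $M_{\beta,\alpha}(\omega'')$ is close to $S_{\alpha/2}(\omega')$ with high probability. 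Applying the linear map $M_{\beta,\alpha}^{-1}$ to both, and using that $M_{\beta,\alpha}^{-1}$ is a fixed invertible linear map (in particular, it is Lipschitz with a constant depending only on $\alpha,\beta$ by Lemma~\ref{lem:M_cont}), the loop distance is distorted by at most a multiplicative constant. This yields a coupling of $(M_{\beta,\alpha}^{-1}\circ S_{\alpha/2}\circ M_{\beta,\alpha})(\omega)$ and $\omega''\sim\phi_{\delta\bbL(\beta)}$ that are close in $d_{\rm CN}$ with probability $1-C'\delta^{c'}$.

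Combining the two couplings via a union bound (and possibly shrinking $c$ to absorb the Lipschitz constants of $M_{\beta,\alpha}^{\pm 1}$ and $S_{\alpha/2}$), one obtains
\begin{align*}
{\bf d}_{\rm CN}\big[\phi_{\delta\bbL(\beta)},\,\phi_{\delta\bbL(\beta)}\circ M_{\beta,\alpha}^{-1}\circ S_{\alpha/2}\circ M_{\beta,\alpha}\big]\le C\,\delta^c,
\end{align*}
as claimed. The main subtlety is bookkeeping: one must verify that the distortions introduced by the two linear maps $M_{\beta,\alpha}$ and $M_{\beta,\alpha}^{-1}$ only change $d_{\rm CN}$-thresholds by bounded multiplicative constants (this follows from Lemma~\ref{lem:M_cont}, which guarantees invertibility and continuity, hence a uniform bound on the operator norms of $M_{\beta,\alpha}^{\pm 1}$ for fixed $\alpha,\beta$), and that the coupling of the two instances of Theorem~\ref{thm:linear} can indeed be glued through the intermediate $\phi_{\delta\bbL(\alpha)}$-configuration by a standard composition of couplings. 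I do not expect any genuine difficulty beyond this careful verification; the real content is Theorem~\ref{thm:linear} together with the exact reflection symmetry of the isoradial rectangular lattice $\bbL(\alpha)$.
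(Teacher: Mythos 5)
Your proposal is correct and follows exactly the route the paper intends: the paper gives no detailed proof, stating only that the claim is an immediate consequence of Theorem~\ref{thm:linear} combined with the exact invariance of $\phi_{\bbL(\alpha)}$ under $S_{\alpha/2}$, which is precisely your chain of couplings. Your additional bookkeeping (gluing the two couplings through the intermediate $\phi_{\delta\bbL(\alpha)}$-configuration and absorbing the bounded distortion of $M_{\beta,\alpha}^{\pm1}$ and $S_{\alpha/2}$ into the constants $c,C$, justified by Lemma~\ref{lem:M_cont}) is exactly the verification the paper leaves to the reader.
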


The proposition is an immediate consequence of the argument described above. We will not give a detailed proof.

We now turn to the proof of Theorem~\ref{thm:rotation_invariance}. For~$\alpha \in (0,\pi)$ write  
\begin{align*}
	T_\alpha = M_{\frac{\pi}2,\alpha}^{-1}\circ S_{\alpha/2}\circ M_{\frac{\pi}2,\alpha}.
\end{align*}
In light of~\eqref{eq:autoT}, we will say that~$\phi_{\bbL(\frac{\pi}2)}$ is {\em asymptotically invariant} under~$T_\alpha$.
Since this is the case for all~$\alpha \in (0,\pi)$, we conclude that~$\phi_{\bbL(\frac{\pi}2)}$ is asymptotically invariant 
with respect to the group generated by~$\{T_\alpha \,:\, \alpha \in (0,\pi)\}$.
To prove the asymptotic rotation invariance of~$\phi_{\bbL(\frac{\pi}2)}$, we will show that the group generated by~$\{T_\alpha \,:\, \alpha \in (0,\pi)\}$ contains all rotations.
To start, we list some properties of~$T_{\alpha}$.

\begin{proposition}\label{prop:T}
	\begin{itemize}
		\item[(i)] For each~$\alpha \in (0,\pi)$,~$T_\alpha$ has eigenvalues~$1$ and~$-1$
		with eigenvectors~$M_{\frac{\pi}2,\alpha}^{-1}  e^{{i}\alpha/2}$ and~$M_{\frac{\pi}2,\alpha}^{-1}  e^{{i}(\alpha/2 + \pi/2)}$, respectively;
		\item[(ii)]~$\alpha \mapsto T_\alpha$ is continuous over~$(0,\pi)$;
		\item[(iii)]~$\alpha \mapsto T_\alpha$ is not constant.
	\end{itemize}
\end{proposition}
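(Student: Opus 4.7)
The plan is to handle (i) and (ii) as essentially formal consequences of the definition of $T_\alpha$ and of Lemma~\ref{lem:M_cont}, and to reserve most effort for (iii), where I will exploit the fact that $M_{\pi/2,\pi/2}$ is the identity together with the continuity established in (ii).

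For (i), I would observe that $S_{\alpha/2}$ is the orthogonal reflection with respect to the line $e^{i\alpha/2}\mathbb{R}$; as such it has eigenvalue $+1$ on $e^{i\alpha/2}$ (the axis) and eigenvalue $-1$ on $e^{i(\alpha/2+\pi/2)}$ (the orthogonal direction). Since $T_\alpha = M_{\pi/2,\alpha}^{-1}\circ S_{\alpha/2}\circ M_{\pi/2,\alpha}$ is obtained by conjugating $S_{\alpha/2}$ through the invertible map $M_{\pi/2,\alpha}$ (invertibility being part of Lemma~\ref{lem:M_cont}), its spectrum coincides with that of $S_{\alpha/2}$, and a vector $v$ is an eigenvector of $T_\alpha$ for eigenvalue $\lambda$ if and only if $M_{\pi/2,\alpha}(v)$ is an eigenvector of $S_{\alpha/2}$ for that same eigenvalue. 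This yields the claimed eigenvectors.

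For (ii), the continuity of $\alpha\mapsto M_{\pi/2,\alpha}$ is precisely the content of Lemma~\ref{lem:M_cont}; invertibility gives continuity of $\alpha\mapsto M_{\pi/2,\alpha}^{-1}$ via Cramer's formula, and $\alpha\mapsto S_{\alpha/2}$ is plainly continuous in its matrix representation. The continuity of $T_\alpha$ then follows by composition.

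For (iii), I would proceed by contradiction: suppose $T_\alpha$ is a constant map $T$ of $\mathbb{R}^2$. Specializing to $\alpha=\pi/2$ and using that $M_{\pi/2,\pi/2}=\mathrm{id}$ (which follows from the definition of $M_{\beta,\alpha}$ in \eqref{eq:Mbetaalpha} since both drifts vanish when $\beta=\alpha$), one obtains $T=T_{\pi/2}=S_{\pi/4}$. By (i), the $+1$-eigenline of $T_\alpha$ is spanned by $M_{\pi/2,\alpha}^{-1}e^{i\alpha/2}$, so constancy forces
\begin{equation*}
M_{\pi/2,\alpha}(e^{i\pi/4}) \parallel e^{i\alpha/2} \qquad \text{for every } \alpha\in(0,\pi).
\end{equation*}
Now let $\alpha\to\pi/2$. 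By (ii) and the relation $M_{\pi/2,\pi/2}=\mathrm{id}$, the left-hand side tends to $e^{i\pi/4}$, so the direction of $e^{i\alpha/2}$ would have to approach $e^{i\pi/4}$ through a constant direction for $\alpha$ near $\pi/2$. Since $\alpha\mapsto e^{i\alpha/2}$ is injective in a neighbourhood of $\pi/2$, this is impossible, yielding the desired contradiction.

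The substantive step is (iii), and the only genuine obstacle there is verifying the normalization $M_{\pi/2,\pi/2}=\mathrm{id}$; this pins down the value of the hypothetical constant $T$ and makes the continuity argument near $\alpha=\pi/2$ effective. Parts (i) and (ii) are linear-algebraic and require no input beyond Lemma~\ref{lem:M_cont}.
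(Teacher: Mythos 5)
Parts (i) and (ii) are fine and are exactly the paper's argument: conjugation by the invertible map $M_{\frac{\pi}2,\alpha}$ transports the eigenpairs of $S_{\alpha/2}$, and continuity follows from Lemma~\ref{lem:M_cont}. Your normalisation $M_{\frac\pi2,\frac\pi2}={\rm id}$ is also legitimate (track exchanges between tracks of equal angle are trivial, so both drifts vanish), but it is not where the difficulty of (iii) lies.

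The proof of (iii) has a genuine gap: the contradiction you extract near $\alpha=\pi/2$ does not exist. Constancy of $T_\alpha$ forces the $+1$-eigendirection $M_{\frac\pi2,\alpha}^{-1}e^{i\alpha/2}$ to be the \emph{fixed} direction $e^{i\pi/4}$, i.e.\ $M_{\frac\pi2,\alpha}(e^{i\pi/4})\parallel e^{i\alpha/2}$; but this in no way forces the direction $e^{i\alpha/2}$ to be constant in $\alpha$ --- the matrix $M_{\frac\pi2,\alpha}$ itself varies with $\alpha$, so its image of the fixed vector $e^{i\pi/4}$ is free to track $e^{i\alpha/2}$. In fact, combining your parallelism condition (for both eigendirections) with the fact that $M_{\frac\pi2,\alpha}$ fixes the horizontal axis (see \eqref{eq:Mbetaalpha}) pins down $M_{\frac\pi2,\alpha}=\begin{pmatrix}1 & \cos\alpha\\ 0 & \sin\alpha\end{pmatrix}$, and this family is continuous on $(0,\pi)$, invertible, and equal to the identity at $\alpha=\pi/2$: all of your soft constraints (continuity, injectivity of $\alpha\mapsto e^{i\alpha/2}$, the value at $\pi/2$) are simultaneously satisfied, so no contradiction can follow from them. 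The missing idea is quantitative and probabilistic, and it lives at $\alpha\to 0$, not at $\alpha\to\pi/2$: under the forced form of $M_{\frac\pi2,\alpha}$, the preimage $M_{\frac\pi2,\alpha}^{-1}{\rm Square}_\alpha$ is a rectangle of aspect ratio $\tan(\alpha/2)\to 0$, so Theorem~\ref{thm:linear} would give $\lim_{\alpha\to0}\liminf_{\delta\to0}\phi_{\delta\bbL(\alpha)}[\calC_h({\rm Square}_\alpha)]=1$, contradicting the fact that the RSW bound \eqref{eq:RSW_iso} (Theorem~\ref{thm:RSW_iso}) is uniform in the angle $\alpha$. Some input of this kind --- degeneration of the forced $M_{\frac\pi2,\alpha}$ as $\alpha\to0$ played against uniform-in-$\alpha$ crossing estimates --- is indispensable; pure linear algebra plus continuity cannot rule out constancy of $T_\alpha$.
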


\begin{proof}
	\noindent(i) It suffices to observe that~$M_{\frac{\pi}2,\alpha}$ acts as a change of basis, and maps the vectors 
		$M_{\frac{\pi}2,\alpha}^{-1} e^{{i}\alpha/2}$ and~$M_{\frac{\pi}2,\alpha}^{-1}  e^{{i}(\alpha/2 + \pi/2)}$ onto the eigenvectors of~$S_{\alpha/2}$. \smallskip

		\noindent(ii) This is a direct consequence of the continuity of~$\alpha \mapsto M_{\frac{\pi}2,\alpha}$, which was proved in Lemma~\ref{lem:M_cont}. \smallskip

		\noindent(iii) 
		Let us proceed by contradiction and assume that~$T_\alpha = T_{\pi/2} = S_{\pi/4}$ for all~$\alpha$. 
		The gist of this point is the following. Under the assumption above, the linear maps~$M_{\frac{\pi}2,\alpha}$ should become increasingly degenerate when~$\alpha \to 0$, which contradicts the uniformity of~\eqref{eq:RSW_iso} in the angle. Other options exist for proving that~$M_{\frac{\pi}2,\alpha}$ does not degenerate, such as an explicit analysis of the drift. 
		We include a formal derivation of this point for completeness.

		Under the assumption that~$T_\alpha = T_{\pi/2} = S_{\pi/4}$ and due to point (i),~$M_{\frac{\pi}2,\alpha}$ maps~$e^{{i}\pi/4}$  onto a multiple of~$e^{{i}\alpha/2}$ 
		and~$e^{{i}3\pi/4}$  onto a multiple of~$e^{{i}(\alpha/2 + \pi/2)}$.
		Moreover, recall from~\eqref{eq:Mbetaalpha} that~$M_{\frac{\pi}2,\alpha}$ acts as the identity on the horizontal axis. 
		This completely determines~$M_{\frac{\pi}2,\alpha}$: 
		\begin{align}\label{eq:degenerate_A}
			M_{\frac{\pi}2,\alpha} = \begin{pmatrix}1 & \cos \alpha \\0 & \sin \alpha \end{pmatrix}.
		\end{align}
		
		%\im{We did not introduce here~$\calC$ for quads, which is why I will change the way it's written. In the paper, use~$\calC$, as in the comments}
		Write~${\rm Square}_\alpha$ for the square~$\{a e^{{i}\alpha/2} + b e^{{i}(\alpha/2 + \pi/2)}: 0\leq a,b\leq 1 \}$ 
		and~$\calC_h({\rm Square}_\alpha)$ for the event that the~${\rm Square}_\alpha$ contains a ``horizontal'' crossing, 
		that is a crossing between~$\{b e^{{i}(\alpha/2 + \pi/2)}: 0\leq b\leq 1 \}$ and~$\{e^{{i}\alpha/2} + b e^{{i}(\alpha/2 + \pi/2)}: 0\leq b\leq 1 \}$.
		The same notation applies to other rectangles. 
		Theorem~\ref{thm:linear} implies that 
		\begin{align}
			\big|\phi_{\delta \bbL(\alpha)}[\calC_h({\rm Square}_\alpha)] - \phi_{\delta \bbL(\pi/2)}[\calC_h(M_{\pi/2,\alpha}^{-1}{\rm Square}_\alpha)]\big| \to 0 \text{ as~$\delta \to 0$}. 
		\end{align}
		Notice now that, due to~\eqref{eq:degenerate_A},~$M_{\pi/2,\alpha}^{-1}{\rm Square}_\alpha$ 
		is a rectangle of aspect ratio~$\tan(\alpha/2)$, which tends to~$0$ as~$\alpha \to 0$.
		This implies that its crossing probability under~$\phi_{\delta \bbL(\pi/2)}$ may be made arbitrarily close to~$1$.
		Thus, we find that
		\begin{align*}
			\lim_{\alpha \to 0}\liminf_{\delta \to 0}\phi_{\delta \bbL(\alpha)}[\calC_h({\rm Square}_\alpha)]  = 1.
		\end{align*}
		This contradicts~\eqref{eq:RSW_iso}, particularly the fact that this property is uniform over~$\alpha$. 
%		Write~${\rm Square}_\alpha$ for the square~$\{a e^{{i}\alpha/2} + b e^{{i}(\alpha/2 + \pi/2)}: 0\leq a,b\leq 1 \}$, 
%		seen as a quad\footnote{Recall from Corollary~\ref{cor:symmetric_quad} the notion of quad and quad crossings.} with the four marked points in the corners of the square, starting with~$0$ and distributed in counter-clockwise order. 
%		Theorem~\ref{thm:linear} implies that 
%		\begin{align}
%			\big|\phi_{\delta \bbL(\alpha)}[\calC({\rm Square}_\alpha)] - \phi_{\delta \bbL(\pi/2)}[\calC(M_{\pi/2,\alpha}^{-1}{\rm Square}_\alpha)]\big| \to 0 \text{ as~$\delta \to 0$}. 
%		\end{align}
%		Notice now that, due to~\eqref{eq:degenerate_A},~$M_{\pi/2,\alpha}^{-1}{\rm Square}_\alpha$ 
%		is a rectangle of aspect ratio~$\tan(\alpha/2)$, which tends to~$0$ as~$\alpha \to 0$.
%		This implies that its crossing probability under~$\phi_{\delta \bbL(\pi/2)}$ may be made arbitrarily close to~$1$.
%		Thus, we find that
%		\begin{align*}
%			\lim_{\alpha \to 0}\liminf_{\delta \to 0}\phi_{\delta \bbL(\alpha)}[\calC({\rm Square}_\alpha)]  = 1.
%		\end{align*}
%		This contradicts the RSW property~\eqref{eq:RSW_iso}, particularly the fact that this property is uniform over~$\alpha$. 
\end{proof}

We are now in a position to prove the asymptotic rotational invariance of~$\phi_{\bbL(\frac{\pi}2)}$.

\begin{proof}[Proof of Theorem~\ref{thm:rotation_invariance}]
	As stated in Proposition~\ref{prop:Tinv},~$\phi_{\bbL(\frac{\pi}2)}$ is asymptotically invariant under all~$T_\alpha$ with~$\alpha \in (0,\pi)$. 
	In addition, it is also invariant under the vertical reflection~$S_0$, or equivalently with respect to the rotation by~$\pi/2$ 
	--- as opposed to~$S_{\pi/4}$,~$S_0$ is not part of~$\{T_\alpha\,:\,\alpha \in (0,\pi)\}$.
	As a consequence, it is asymptotically invariant under all transformation in the group generated by~$\{T_\alpha\,:\,\alpha \in (0,\pi)\}$ and~$S_0$. 
	We will prove that this group contains all rotations. 
		
	First, let us show that for all~$\alpha \in (0,\pi)$,~$T_\alpha$ is an {orthogonal} reflection. 
	Fix~$\alpha$ and let~$u,v \in \bbC$ be the eigenvectors of~$T_\alpha$ of Euclidean norm~$1$ and eigenvalues~$1$ and~$-1$, respectively, 
	contained in the upper half-plane.
	We will proceed by contradiction. 
	Suppose that the angle between~$u$ and~$v$ is different from~$\pi/2$; without loss of generality we may assume that it is strictly below~$\pi/2$. 
	%\im{We did not introduce here~$\calC$ for quads, which is why I will change the way it's written. In the paper, use~$\calC$, as in the comments}
	Define the ellipse
	$$
		Q = 	\{x\, u  +y \, v : x,y \in \bbR,\, x^2 + y^2 \le 1 \},
%	\{r\, u \sin \theta +r \, v\cos \theta : \theta \in [0,2\pi),\, r\in [0,1) \}
	$$
	with~$u,v,-u,-v$ on its boundary (see Figure~\ref{fig:ellipse}). 
	Denote by~$(u,v), \dots, (-v,u)$ the boundary arcs delimited by these points. 
	Notice that~$T_\alpha$ maps~$Q$ onto itself, with the points~$u$,~$v$,~$-u$ and~$-v$ mapped to~$u$,~$-v$,~$-u$ and~$v$, respectively. 

	As a consequence of the asymptotic invariance of~$\phi_{\bbL(\frac{\pi}2)}$ under~$T_\alpha$, 
	the probabilities of crossing~$Q$ form~$(u,v)$ to~$(-u,-v)$ and from~$(v,-u)$ to~$(-v,u)$ are asymptotically equal: 
	\begin{align}
				\big|\phi_{\delta\bbL(\frac{\pi}2)}\big[(u,v)\xlra{Q} (-u,-v)\big]-\phi_{\delta\bbL(\frac{\pi}2)}\big[(v,-u)\xlra{Q} (-v,u)\big]\big| 
		\xrightarrow[\delta \to 0]{} 0.
	\end{align}
	As~$\phi_{\bbL(\frac{\pi}2)}$ is also invariant under the rotation\footnote{ which may be seen as~$S_{0} \circ S_{\pi/4}$.} of angle~$\pi/2$, we conclude that 
	the probability of crossing~$Q$ from~$(u,v)$ to~$(-u,-v)$ is asymptotically close to that of its rotation by~$\pi/2$:
	\begin{align}
		\big|\phi_{\delta\bbL(\frac{\pi}2)}\big[(u,v)\xlra{Q} (-u,-v)\big] 
			- \phi_{\delta\bbL(\frac{\pi}2)}\big[(e^{{i}\frac{\pi}2}u,e^{{i}\frac{\pi}2}v)\xlra{e^{{i}\frac{\pi}2}Q} (-e^{{i}\frac{\pi}2}u,-e^{{i}\frac{\pi}2}v)\big]\big| 
			\xrightarrow[\delta \to 0]{} 0.
	\end{align}
	Combining the above, we conclude that 
	\begin{align}\label{eq:ellipse}
		\big|\phi_{\delta\bbL(\frac{\pi}2)}\big[(v,-u)\xlra{Q} (-v,u)\big]
			- \phi_{\delta\bbL(\frac{\pi}2)}\big[(e^{{i}\frac{\pi}2}u,e^{{i}\frac{\pi}2}v)\xlra{e^{{i}\frac{\pi}2}Q} (-e^{{i}\frac{\pi}2}u,-e^{{i}\frac{\pi}2}v)\big]\big| 
			\to 0,
	\end{align}
as~$\delta \to 0$.
	
	Observe now (see Figure~\ref{fig:ellipse}) that
	\begin{align}
	\big\{(e^{{i}\frac{\pi}2}u,e^{{i}\frac{\pi}2}v)\xlra{e^{{i}\frac{\pi}2}Q} (-e^{{i}\frac{\pi}2}u,-e^{{i}\frac{\pi}2}v)\big\} \subset 
		\big\{(v,-u)\xlra{Q} (-v,u) \big\}.
	\end{align}
	Furthermore,~\eqref{eq:RSW_iso} implies that 
	\begin{align}
		\liminf_{\delta \to 0}\phi_{\delta\bbL(\frac{\pi}2)}\big[(v,-u)\xlra{Q} (-v,u) \big] - \phi_{\delta\bbL(\frac{\pi}2)}\big[(e^{{i}\frac{\pi}2}u,e^{{i}\frac{\pi}2}v)\xlra{e^{{i}\frac{\pi}2}Q} (-e^{{i}\frac{\pi}2}u,-e^{{i}\frac{\pi}2}v)\big] >0
	\end{align}	
	This contradicts~\eqref{eq:ellipse}, and therefore 
	invalidates our assumption that~$T_\alpha$ is not an orthogonal reflection.\smallskip
	
	\begin{figure}
    	\begin{center}
        	\includegraphics[width = 0.44\textwidth]{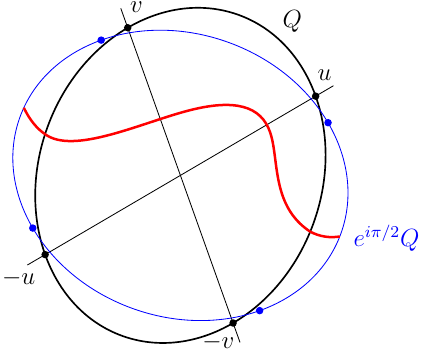}
        	\caption{If the angle between~$u$ and~$v$ is assumed strictly below~$\pi/2$, then~$Q$ is an ellipse.
        	The rotation by~$\pi/2$ of~$(u,v)\xlra{Q} (-u,-v)$ is realised by the red path; it has strictly lower probability than~$(v,-u)\xlra{Q} (-v,u)$.}
        	\label{fig:ellipse}
    	\end{center}
	\end{figure}

	We are now ready to conclude. 
	Write~$\theta(\alpha)$ for the angle between the real axis and the eigenvector~$M_{\frac{\pi}2,\alpha}^{-1} e^{{i}\alpha/2}$	of~$T_\alpha$. 
	Then the composition of~$S_0$ with~$T_\alpha$ is the rotation of angle~$2\theta(\alpha)$, 
	and we conclude that~$\phi_{\bbL(\pi/2)}$ is asymptotically invariant under this rotation. 
	
	From  Proposition~\ref{prop:T} (ii) and (iii), we conclude that~$\{\theta(\alpha): \, \alpha \in (0,\pi)\}$ 
	has a non-empty interior, whence we deduce that~$\phi_{\bbL(\pi/2)}$ is asymptotically invariant under any rotation. 
% 		
%	As written, the proof above does not show that the constants in Theorem~\ref{thm:rotation_invariance} 
%	may be taken independent of the angle of rotation. 
%	Notice however that the above shows that~$\{\theta(\alpha): \, \alpha \in (\ep,\pi - \ep)\}$ 
%	also has a non-empty interior for~$\ep > 0$ small enough. 
%	One may then use Remark~\ref{rem:uniform_angles6} deduce the asymptotic rotational invariance uniformly in the angle of rotation. 
\end{proof}

\begin{remark}
	In the proof above, we used a special symmetry of the lattice~$\bbL(\pi/2)$, namely that with respect to the vertical reflection~$S_0$ 
	(or equivalently with respect to the rotation by~$\pi/2$). 
	The invariance of~$\bbL(\pi/2)$ with respect to~$S_{\pi/4}$, which corresponds to the generic invariance of~$\bbL(\alpha)$ with respect to~$S_{\alpha/2}$, 
	is not sufficient to conclude. 
	The best conclusion one could obtain without using the invariance under~$S_0$ is that the orbit of a given point under the group generated by~$\{T_\alpha: \, \alpha \in (0,\pi)\}$ 
	is an ellipse with axis~$e^{\pm \pi/4}\bbR$. 
	It is the vertical symmetry~(which is specific to~$\bbL(\pi/2)$ and has no correspondence for other  lattices~$\bbL(\alpha)$) 
	that allows to conclude that this ellipse is actually a circle. 
\end{remark}

\subsection{Universality: proof of Theorem~\ref{thm:universalCNSS}}\label{sec:drift0}

We turn to the proof of Theorem~\ref{thm:universalCNSS}, or equivalently to the fact that~$M_{\frac{\pi}2,\alpha} = {\rm id}$ for all~$\alpha$. 
Recall that~$\phi_{\bbL(\beta)}$ is said to be asymptotically rotationally invariant if, for any~$\theta \in [0,2\pi)$,
\begin{align*}
	{\bf d}_{\rm CN}(\phi_{\delta \bbL(\beta)},\phi_{e^{i\theta}\delta \bbL(\beta)}) \to 0 \quad\text{ as } \delta \to 0. 
\end{align*}
Similarly, say that~$\phi_{\bbL(\alpha)}$ and~$\phi_{\bbL(\beta)}$ are asymptotically similar if 
\begin{align*}
	{\bf d}_{\rm CN}(\phi_{\delta \bbL(\alpha)},\phi_{\delta \bbL(\beta)}) \to 0 \quad\text{ as } \delta \to 0. 
\end{align*}
Theorem~\ref{thm:linear} in particular states that~$\phi_{\bbL(\alpha)}$ and~$\phi_{\bbL(\beta)}$ are asymptotically similar if~$M_{\beta,\alpha} = {\rm id}$.

The key to the proof of this section is the following lemma. 

\begin{lemma}\label{lem:rotation_inv_alpha}
	Fix~$\beta \in (0,\pi)$ and assume that~$\phi_{\bbL(\beta)}$ is asymptotically rotationally invariant. 
	Then~$M_{\beta,\beta/2}  = {\rm id}$. 
\end{lemma}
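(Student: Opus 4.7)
The plan is to combine three ingredients: Theorem~\ref{thm:linear} with $\alpha=\beta/2$, the true $S_{\beta/4}$-symmetry of $\phi_{\bbL(\beta/2)}$, and the assumed asymptotic rotational invariance of $\phi_{\bbL(\beta)}$. First I would chain these three inputs via the (approximate) triangle inequality for ${\bf d}_{\rm CN}$ to conclude that $\phi_{\bbL(\beta)}$ is asymptotically invariant under the linear involution
\[
T \; := \; M^{-1}\circ S_{\beta/4}\circ M,\qquad\text{where } M := M_{\beta,\beta/2}.
\]
This proceeds exactly as in Proposition~\ref{prop:Tinv}, but with the roles of the ``symmetric'' and ``non-symmetric'' lattices exchanged.

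Next I would show that $T$ is necessarily an \emph{orthogonal} reflection. As $T$ is conjugate to $S_{\beta/4}$, it is an involution of determinant $-1$ with eigenvectors $u:=M^{-1}e^{{i}\beta/4}$ (eigenvalue $+1$) and $v:=M^{-1}e^{{i}(\beta/4+\pi/2)}$ (eigenvalue $-1$); orthogonality of $T$ is equivalent to $\langle u,v\rangle=0$. I would transplant the RSW/ellipse argument used in the proof of Theorem~\ref{thm:rotation_invariance} to show that each $T_\alpha$ there is an orthogonal reflection: if $\langle u,v\rangle\neq 0$, the ellipse with vertices $\pm u,\pm v$ gives contradictory crossing probabilities when one combines asymptotic $T$-invariance, asymptotic rotational invariance, and~\eqref{eq:RSW_iso}. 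Since the hypothesis of full asymptotic rotational invariance is strictly stronger than the $\pi/2$-rotation invariance used there, the adaptation is essentially verbatim.

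Finally, I would translate orthogonality into $M=\mathrm{id}$. By~\eqref{eq:Mbetaalpha} with $\alpha=\beta/2$, together with Proposition~\ref{prop:drift_RT} (which gives ${\rm Drift}_{\rm lat}={\rm Drift}_{\rm vert}=:D$) and the identity $\tfrac{1+\cos\beta}{\sin\beta}=\cot(\beta/2)$, the matrix $M$ is upper triangular of the form $\begin{pmatrix}1 & a\\ 0 & \lambda\end{pmatrix}$, satisfying the key algebraic relation $a=(\lambda-1)\cot(\beta/2)$. Writing $b:=a/\lambda$ and $\mu:=1/\lambda$, a direct computation gives
\[
\langle u,v\rangle \; = \; \tfrac{1}{2}\sin(\beta/2)\bigl(b^2+\mu^2-1\bigr)-b\cos(\beta/2),
\]
so the orthogonality condition reads $b^2+\mu^2-1=2b\cot(\beta/2)$. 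Substituting $b=(1-\mu)\cot(\beta/2)$ collapses everything to $(1-\mu)(1+\mu)\bigl(1+\cot^2(\beta/2)\bigr)=0$. Since $\mu>0$ (the finite-energy bound $D>-\sin\beta$ recalled in Section~\ref{sec:stability_meso} ensures $\lambda>0$), this forces $\mu=1$, hence $a=0$, and $M=\mathrm{id}$, as required.

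The main obstacle is really conceptual: checking that the ellipse/RSW argument from Theorem~\ref{thm:rotation_invariance} transfers cleanly to our asymptotic $T$-symmetry of $\phi_{\bbL(\beta)}$. This should be automatic because the present rotational-invariance hypothesis is stronger. The final algebra is routine, but the point that makes the computation work is that Proposition~\ref{prop:drift_RT} supplies precisely the second scalar relation (beyond orthogonality of $T$) needed to pin down both entries of $M$; without the equality of the two drifts, orthogonality of $T$ alone would only cut out a one-parameter family of admissible $(a,\lambda)$.
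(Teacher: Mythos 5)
Your proof is correct, and it follows the paper's skeleton up to a point — asymptotic invariance of $\phi_{\bbL(\beta)}$ under $T=M^{-1}\circ S_{\beta/4}\circ M$ via Proposition~\ref{prop:Tinv}, and the special structure of $M=M_{\beta,\beta/2}$ forced by the drift equality of Proposition~\ref{prop:drift_RT} — but the way you extract $M={\rm id}$ from these inputs genuinely differs from the paper. The paper observes that, with equal drifts, $1$ and $e^{i\beta/4}\cdot e^{i\beta/4}=e^{i\alpha}$ (for $\alpha=\beta/2$) are eigenvectors of $M$ with eigenvalues $1$ and $\lambda$, so that $T$ maps the $S_{\alpha/2}$-symmetric rhombus $R=\{x+ye^{i\alpha}:x,y\in[-1,1]\}$ to a rhombus stretched by $\lambda$ along one diagonal and by $\lambda^{-1}$ along the other; combining asymptotic $T$-invariance with asymptotic reflection invariance (which follows from the rotational-invariance hypothesis together with the exact $S_{\beta/2}$-symmetry) and the uniform \eqref{eq:RSW_iso} bounds on crossing probabilities of $R$ versus $T(R)$ then forces $\lambda=1$ directly. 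You instead recycle the ellipse/RSW argument from the proof of Theorem~\ref{thm:rotation_invariance} to show that $T$ is an \emph{orthogonal} reflection (legitimate here, since the hypothesis gives in particular asymptotic invariance under rotation by $\pi/2$, which is all that argument uses), and then close with linear algebra: orthogonality of the eigenvectors $M^{-1}e^{i\beta/4}$, $M^{-1}e^{i(\beta/4+\pi/2)}$ gives one scalar equation, the drift equality gives the second ($a=(\lambda-1)\cot(\beta/2)$), and the two together collapse to $(1-\mu)(1+\mu)(1+\cot^2(\beta/2))=0$, hence $\lambda=1$ — I checked the inner-product computation and the substitution, and they are right. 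What your route buys is economy of new geometric input (no fresh quad comparison; you reuse an already-established template) at the price of a slightly longer explicit computation; the paper's route avoids coordinates altogether and exhibits the obstruction geometrically as a degenerating rhombus. One small point to tighten: positivity of $\lambda$ (equivalently $\mu>0$, needed to exclude $\mu=-1$) uses both bounds $-\sin\beta<{\rm Drift}_{\rm vert}<\sin\alpha$ recalled in Section~\ref{sec:stability_meso}, not only the lower one you cite — the upper bound is what keeps the denominator $\sin\alpha-{\rm Drift}_{\rm vert}$ positive.
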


\begin{proof}
	Fix such a value of~$\beta$ and set~$\alpha = \beta/2$. 
	Then, due to~\eqref{eq:drift_RT} and to the special choice of~$\alpha$, 
	we have~${\rm Drift}_{\rm vert}(\beta,\alpha) = {\rm Drift}_{\rm lat}(\beta,\alpha)$. 
	When injected in~\eqref{eq:Mbetaalpha} and after basic computation, we find that~$M_{\beta,\alpha}$ has the special form 
	\begin{align*}
	M_{\beta,\alpha} 
	= \begin{pmatrix}
	1 &  \frac{v}{\sin \alpha}\\[8pt]
	0 & 1  + \frac{v}{\cos \alpha}
	\end{pmatrix}
%\quad \text{ and }\quad 
%	M_{\beta,\alpha}^{-1}
%= \begin{pmatrix}
%1 & - \frac{v \cos \alpha}{\sin \alpha(v + \cos \alpha)}\\[8pt]
%0 &  \frac{\cos \alpha}{v + \cos \alpha}
%\end{pmatrix}
\qquad \text{where }v=  \frac{{\rm Drift}_{\rm lat}(\beta,\alpha)(1 +2\cos\alpha)}{2(\sin \alpha -{\rm Drift}_{\rm lat}(\beta,\alpha))}.
\end{align*}
	In particular, we notice that the vectors~$1$ and~$e^{{i}\alpha}$ are eigenvectors with eigenvalues~$1$ and~$\lambda:=1  + \frac{v}{\cos \alpha} > 0$, respectively. 
	
	Recall from Proposition~\ref{prop:Tinv} that~$\phi_{\bbL(\beta)}$ is asymptotically invariant under~$T:= M_{\beta,\alpha}^{-1} \circ S_{\alpha/2} \circ M_{\beta,\alpha}$.
	Since~$S_{\alpha/2}$ exchanges the vectors~$1$ and~$e^{{i}\alpha}$, 
	the transformation~$T$ maps the vector~$1$ to~$\lambda^{-1}e^{{i}\alpha}$ and~$e^{{i}\alpha}$ to~$\la$. 
	
	Consider the rhombic region~$R = \{x + y e^{{i}\alpha}: \,x,y\in [-1,1]\}$ as a quad with four points~$a,b,c,d$ at its corners, 
	in counter-clockwise order, starting with the top left corner. By the asymptotic invariance of~$\phi_{\delta\bbL(\beta)}$ with respect to~$T$, 
	\begin{align*}
	\lim_{\delta\to0} \phi_{\delta\bbL(\beta)}\big[(ab) \xlra{R} (cd)\big] - \phi_{\delta\bbL(\beta)}\big[(T(a)T(b)) \xlra{T(R)} (T(c)T(d))\big]  = 0. 
	\end{align*}
	Additionally, observe that~$R$ is stable under~$S_{\alpha/2}$. 
	Moreover,~$\phi_{\delta\bbL(\beta)}$ is invariant under~$S_{\beta/2}$ and is assumed asymptotically rotationally invariant,
	which implies that it is asymptotically invariant under all reflections. We conclude that
	\begin{align*}
	\lim_{\delta\to0} \phi_{\delta\bbL(\beta)}\big[(ab) \xlra{R} (cd)\big] - \phi_{\delta\bbL(\beta)}\big[(bc) \xlra{R} (da)\big]  = 0. 
	\end{align*}

	Combining the above, we find that 
	\begin{align*}
	\lim_{\delta\to0} \phi_{\delta\bbL(\beta)}\big[(bc) \xlra{R} (da)\big] - \phi_{\delta\bbL(\beta)}\big[(T(a)T(b)) \xlra{T(R)} (T(c)T(d))\big]  = 0. 
	\end{align*}
	
	\begin{figure}
	\begin{center}
	\includegraphics[width = 0.55\textwidth]{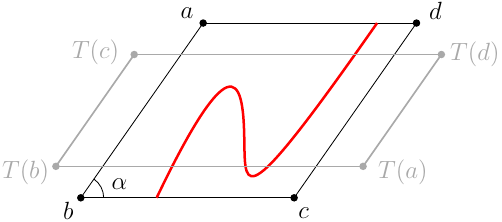}
	\caption{The rhombi~$R$ and~$T(R)$ under the assumption that~$\la > 1$. Any crossing in~$R$ between the arcs~$(bc)$  and~$(da)$ produces a crossing in~$T(R)$ between~$(T(a)T(b))$ and~$(T(c)T(d))$. However, the latter crossing is strictly easier to achieve, uniformly in the scale~$\delta$ of the lattice.}
	\label{fig:two_rhombi}
	\end{center}
	\end{figure}

	This is only possible if~$\la = 1$.
	Indeed, the quad~$T(R)$ may easily be identified as~$T(R) = \{\la y + x \la^{-1} e^{{i}\alpha}: \,x,y\in [-N,N]\}$.
	If~$\la > 1$, then~$T(R)$ is wider and shorter than~$R$ (see Figure~\ref{fig:two_rhombi}) and~\eqref{eq:RSW_iso} implies that 
\begin{align*}
	\limsup_{\delta\to0} \phi_{\delta\bbL(\beta)}\big[(bc) \xlra{R} (da)\big] - \phi_{\delta\bbL(\beta)}\big[(T(a)T(b)) \xlra{T(R)} (T(c)T(d))\big]  < 0. 
\end{align*}
	Conversely, if~$\la < 1$,~\eqref{eq:RSW_iso} shows that
\begin{align*}
	\liminf_{\delta\to0} \phi_{\delta\bbL(\beta)}\big[(bc) \xlra{R} (da)\big] - \phi_{\delta\bbL(\beta)}\big[(T(a)T(b)) \xlra{T(R)} (T(c)T(d))\big]  > 0. 
\end{align*}	
	We conclude that~$\la = 1$, which translates to~$M_{\beta,\alpha}  = {\rm id}$. 	
\end{proof}

\begin{proof}[Proof of Theorem~\ref{thm:universalCNSS}]
	Define the set 
	$$\calR = \{\alpha \in (0,\pi) : \,\phi_{\bbL(\alpha)} \text{ asymptotically similar to~$\phi_{\bbL(\pi/2)}$}\}.$$

	Theorem~\ref{thm:rotation_invariance} states that for any~$\alpha \in \calR$,~$\phi_{\bbL(\alpha)}$ is asymptotically rotationally invariant.
	In light of Lemma~\ref{lem:rotation_inv_alpha},~$\calR$ is stable by~$\alpha \mapsto \alpha/2$. 
	Moreover, by horizontal symmetry,~$\calR$ is stable by~$\alpha \mapsto \pi- \alpha$. 
	Repeatedly applying  these transformations shows that~$\calR$ is dense in~$(0,\pi)$. 
	
	Fix~$\alpha \in \calR$. That~$\phi_{\bbL(\pi/2)}$ and~$\phi_{\bbL(\alpha)}$ are asymptotically similar yields~$M_{\frac{\pi}2, \alpha} = {\rm id}$. 
	Due to the continuity of~$\alpha \mapsto M_{\frac{\pi}2,\alpha}$ (see Lemma~\ref{lem:M_cont})
	and to~$\calR$ being dense in~$(0,\pi)$, we conclude that~$M_{\frac{\pi}2, \alpha} = {\rm id}$ for all~$\alpha \in (0,\pi)$. 
	Apply  Theorem~\ref{thm:linear} to conclude. 
\end{proof}

\section{Exercises: isoradial FK-percolation}

\begin{exo}
	Show that if~$G$ is an isoradial graph, two train tracks intersect at most once and no train track intersects itself. 
\end{exo}

\begin{exo}
	Fix a simply connected union of rhombi~$G^\diamond$ that are part of a full-plane rhombic tiling. 
	Show that there exists a full-plane isoradial graph~$\calG$ such that 
	$\calG^\diamond$ contains~$G^\diamond$ as a subgraph
	and all tracks of~$\calG$ that do not intersect~$G^\diamond$ have transverse angles~$0$ or~$\pi/2$. 
	
	{\em Hint:} try to first complete~$G^\diamond$ into a convex union of rhombi.
\end{exo}

\begin{exo}
	The star-triangle transformation may be stated without referring to isoradial graphs as follows. 
	Fix the triangle and star graphs~$ABC$ and~$ABCO$, respectively, with boundary formed of the vertices~$A,B,C$. 
	Consider the FK-percolation measures~$\phi_{ABC}^\xi$ and~$\phi_{ABCO}^\xi$ on these graphs, with cluster weight~$q \geq 1$, 
	boundary conditions~$\xi$ and inhomogeneous parameters~$p_A,p_B,p_C$ and~$p_A',p_B',p_C'$
	on the edges~$BC$,~$CA$,~$AB$ and~$AO$,~$BO$ and~$CO$, respectively, as below. 
	
	\begin{center}
	\includegraphics[width = 0.5\textwidth]{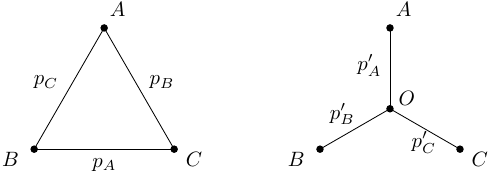}
	\end{center}
	
	Show that, for any boundary conditions~$\xi$, the connection probabilities between~$A$,~$B$ and~$C$ are the same in the two measures
	if and only if 
	\begin{align}
	y_A y_B y_C + y_A y_B + y_B y_C + y_C y_A = q \quad \text{ and } \quad y_* y_*' = q,& \\
	\text{ where }y_* = \tfrac{p_*}{1-p_*} \text{ and }y_*' = \tfrac{p_*'}{1-p_*'}\text{ for~$* \in \{A,B,C\}$}.& 
	\end{align}
	Show that there exists a unique continuous function~$\theta \mapsto y(\theta)$ on~$[0,\pi]$ satisfying 
	\begin{align}
	y(\theta_A)y(\theta_B) y(\theta_C) + y(\theta_A) y(\theta_B) + y(\theta_B) y(\theta_C) + y(\theta_C) y(\theta_A) = q  \text{ and } y(\theta)y(\pi-\theta) = q,\\
	\text{ for all~$\theta,\theta_A,\theta_B,\theta_C \in [0,\pi]$ with~$\theta_A + \theta_B + \theta_C = 2\pi$}.
	\end{align}
	Show that it is the one obtained from~\eqref{eq:isoraial_p_e}.
\end{exo}

\begin{exo}\label{exo:3_arms}
	The goal of this exercise is to prove~\eqref{eq:3hp_arm} and~\eqref{eq:3hp_arm2}. 
	We start by considering the case of~$\bbL_{\rm mix}$ for two arbitrary angles~$\alpha$,~$\beta$.  
	
	Write~$\bbT_N$ for the torus formed of~$N\times N$ cells of~$\bbL_{\rm mix}$. 
	Let~$\phi_{\bbT_N}$ be the FK-percolation measure on the torus with edge-weights given by~\eqref{eq:isoraial_p_e}.
	Fix constants~$R = \eps N$ and~$r < R$, where~$\eps > 0$ is some fixed quantity; all constants below depend on~$\eps$, 
	but should not depend on~$r, R$ or~$N$.  
	\begin{itemize}
		\item[(a)] Using~\eqref{eq:RSW_iso}, show that, uniformly in~$N$, the number of clusters in~$\bbT_N$ 
		with diameter at least~$R$ has an exponential tail. 
		\item[(b)] A point~$x \in \sfC$ with~$\sfC$ being a cluster of diameter at least~$R$ and
		\begin{align}\label{eq:almost_extremum}
		\langle x, e_{*}\rangle  \geq \max\{ \langle y, e_{*}\rangle : y \in \sfC\} - 2 
		\qquad  \text{ for some~$e_* \in \{\pm e_{\rm vert}, \pm e_{\rm lat}\}$}
		\end{align}
		 is called an {\em almost-extremum}.
		 Show that the number of almost-extrema has exponential tail, uniformly in~$R$. \smallskip\\
		{\em Hint:} Assuming that a configuration~$\omega$ has ``many'' almost-extrema, but ``few'' large clusters, 
		produce configurations~$\omega'$ by locally modifying~$\omega$ around some almost-extrema 
		so that each large cluster of~$\omega'$ has at most~$K$ almost-extrema (where~$K$ is some fixed number, say~$K = 100$).
		Observe that there are many ways to produce such configurations~$\omega'$ from~$\omega$, 
		but it is relatively ``easy'' to reconstruct~$\omega$ from~$\omega'$. \\
		We call this argument a ``multi-valued map'' principle. 
		\item[(c)] Use the periodicity of the torus to prove that the probability that~$\La_r$ contains an almost-extremum is of order~$(r/R)^2$.
		\item[(d)] Use mixing to transfer this estimate to the full-plane measure~$\phi_{\bbL_{\rm mix}}$.		
	\end{itemize}
	This proves~\eqref{eq:3hp_arm} and~\eqref{eq:3hp_arm2} for lattices~$\bbL_{\rm mix}$.
	
	Let us now deduce~\eqref{eq:3hp_arm} for a lattice~$\bbL(\balpha)$ for some sequence~$\balpha = (\alpha_j)_{j\in \bbZ}$ containing at most two values.
	For some fixed~$R$, write~$\bbL$ for the lattice with alternating angles~$\alpha_0,\alpha_1$ for tracks~$t_{-c_0R},\dots, t_{c_0R}$, angles~$\alpha_j$ for tracks~$t_{c_0+j}$ with~$j>0$ and for tracks~$t_{-c_0R+j}$ for~$j<0$, where~$c_0 = \frac2{\sin\alpha_0 + \sin\alpha_1}$.
	Write~$\bbL_{\rm mix}$ for the mixed lattice with angles~$\alpha_0$ and~$\alpha_1$. 
	Then~$\bbL$ is identical to~$\bbL_{\rm mix}$ for all tracks intersecting~$\La_R$. 
	\begin{itemize}
		\item[(e)] 
		Write \begin{align}
			{\rm Arm}_{\rm T}(r,R) &= \{\exists \sfC \text{ with }{\rm Top}(\sfC) \in \La_r \text{ and }\sfC \cap \La_R^c \neq \emptyset\}
		\qquad \text{ and }\\
		\overline{\rm Arm}_{\rm T}(r,R) &= \{\exists \sfC \text{ with }{\rm Top}(\sfC) \in \La_r \text{ and }\sfC \cap \bbR \times (-\infty,-R] \neq \emptyset\}.
		\end{align}
		Using~\eqref{eq:RSW_iso}, prove that there exists a universal constant~$c >0$ such that
		\begin{align}
			c\, \phi_{\bbL_{\rm mix}} [{\rm Arm}_{\rm T}(r,R)] 
			\leq \phi_{\bbL} \big[\overline{\rm Arm}_{\rm T}(r,2R)\big] 
			\leq \phi_{\bbL_{\rm mix}} [{\rm Arm}_{\rm T}(r,R)].
		\end{align}
		{\em Hint:} you may use the arm-separation principle, which states that with positive probability under~$\phi_{\bbL} [\cdot\,|\,{\rm Arm}_{\rm T}(r,R)]$ there exists a unique cluster~$\sfC$ realising the event~${\rm Arm}_{\rm T}(r,R)$, that~$\sfC$ is contained in~$\La_{2R}$ and surrounds the interval~$[-R/2,R/2] \times \{-R/2\}$. You may also try to prove the separation of arms. 
		\item[(f)] Using track exchanges, prove that 
		\begin{align}
			 \phi_{\bbL} \big[\overline{\rm Arm}_{\rm T}(r,2R)\big] = 	\phi_{\bbL'} \big[\overline{\rm Arm}_{\rm T}(r,2R)\big],
		\end{align}
		where~$\bbL'$ is a lattice identical to~$\bbL(\balpha)$ for all tracks intersecting~$\La_R$.
		\smallskip \\
		{\em Hint:} if~$t_{-N}$ is the lowest track included in the strip~$\bbR \times (-2R,2R]$, only perform track exchanges between tracks~$t_{-N},\dots, t_{-1}$ and~$t_2,t_3,\dots$.
		\item[(g)] Conclude that 
		\begin{align}
			c\, \phi_{\bbL_{\rm mix}} [{\rm Arm}_{\rm T}(r,R)] 
			\leq \phi_{\bbL(\balpha)} [{\rm Arm}_{\rm T}(r,R)] 
			\leq C \phi_{\bbL_{\rm mix}} [{\rm Arm}_{\rm T}(r,R)].
		\end{align}
	for universal constants~$c,C > 0$. 
	\item[(h)] Observe that, before proving that~$M_{\alpha, \beta} = {\rm id}$, there is no good reason to expect the lateral half-plane three-arm exponent to be equal to~$2$ on lattices~$\bbL(\balpha)$.
	\end{itemize}
\end{exo}

\begin{exo}\label{exo:3_arms3}
	The goal of this exercise is to prove~\eqref{eq:3hp_arm3}. Fix a lattice~$\bbL = \bbL(\balpha)$ for some sequence~$\balpha = (\alpha_j)_{j\in \bbZ}$ containing at most two values.
	
	For~$r < R$, let~${\rm Arm}_{\rm slit}(r,R)$ be the event that there exists three connections~$\gamma_1$,~$\gamma_2$ and~$\gamma_3$ in~$\bbL \setminus (\bbR_+\times \{0\})$
	between~$\La_r$ and~$\partial \La_R$, in counter-clockwise order, with~$\gamma_1,\gamma_3 \in \omega^*$ and~$\gamma_2 \in \omega$. 
	\begin{itemize}
		\item[(a)] Let~${\rm Arm}'_{\rm slit}(r,R)$ be the event that, in $\bbL \setminus (\bbR_+\times \{0\})$ there exist
		a dual connection between the top and bottom of $\La_R$ 
		and a primal connection between $\La_r$ and $\{-2R\}\times [-R,R]$ contained in $([-2R,R]\times [-R,R]) \setminus \La_r$.
%		arms~$\gamma_1$,~$\gamma_2$ and~$\gamma_3$ in~$\omega^*$,~$\omega$ and~$\omega^*$, respectively, all contained in~$\bbL \setminus (\bbR_+\times \{0\})$,
%		with~$\gamma_1, \gamma_3 \subset \La_R \setminus \La_r$, having one endpoint on~$\partial\La_r$ and the other on the top and bottom of~$\La_R$, respectively, 
%		and with~$\gamma_2 \in ([-2R,R]\times [-R,R]) \setminus \La_r$, with one point on~$\partial\La_r$ and the other on~$\{-2R\}\times [-R,R]$.
%		Furthermore, require that $\gamma_1$ and $\gamma_3$ belong to the same dual cluster of $\La_R\setminus (\bbR_+\times \{0\})$.
		Prove that there exists some constant~$c > 0$ independent of~$r$ and~$R$ such that 
		\begin{align}
			\phi_{\bbL}[{\rm Arm}'_{\rm slit}(r,R)] \geq c \, \phi_{\bbL}[{\rm Arm}_{\rm slit}(r,R)].
		\end{align}
		{\em Hint:} use the arm-separation principle; see also Exercise~\ref{exo:3_arms} (e). 
		\item[(b)] Show that,
		\begin{align}
			\phi_{\bbL}[{\rm Arm}'_{\rm slit}(r,R)] \leq r/R.
		\end{align}
		{\em Hint:} at most one translate of the event~${\rm Arm}'_{\rm slit}(r,R)$ by~$(kr,0)$ for~$0 \leq k < R/r$ occurs. 
		\item[(c)] Use \eqref{eq:RSW_iso} to prove the existence of universal constants~$c,C > 0$ such that 
		\begin{align}
		\phi_{\bbL} [\exists \sfC \text{ with }{\rm Left}(\sfC) \in \La_r \text{ and }\sfC \cap \La_R^c \neq \emptyset] 
		\leq C (r/R)^{c}\phi_{\bbL}[{\rm Arm}_{\rm slit}(r,R)].
		\end{align}
		{\em Hint:} use the quasi-multiplicativity of arm events and show that at each ``scale'' between~$r$ and~$R$, 
		there is a positive probability to have configurations allowing the event on the right-hand side, but not that on the left-hand side. 
	\end{itemize}
	Conclude \eqref{eq:3hp_arm3}.
\end{exo}	

\begin{exo}
	Assuming that~$M_{\pi/2,\alpha} = {\rm id}$, follow the proof of 
	Theorem~\ref{thm:linear} and show that all constants may be chosen uniform in~$0 < \alpha < \pi/2$. 
	Note that there are~$O(\alpha^{-1}N)$ transformations in the process, 
	so one should show that the probability of error at each step of the process is~$O(\alpha N^{-c})$ for some constant~$c >0$.
\end{exo}

\begin{exo}\label{exo:IIC}
	The goal is to prove Lemma~\ref{lem:IIC} and Proposition~\ref{prop:IIC_def}. 
	Consider~$r < R$,~$\omega_0$ and~$\calC$ as in the statement of  Lemma~\ref{lem:IIC}. 
	Write 
	$$\phi[\cdot\,|\,\omega_0,\, \calC] := \phi_{\bbL_{\rm mix}}[\cdot\,|\, \omega = \omega_0\text{ on } \La_R^c,\,\sfC_x \cap \La_R^c = \calC,\, {\rm Top}(\sfC_x)=0].$$
	This is a measure on configurations in $\La_R$; the conditioning on $\omega_0$ is degenerate, but should be understood simply as boundary conditions. 
	For simplicity assume~$R = 2^K$ and~$r = 2^k$ for integers~$k < K$. 
	
	For a configuration~$\omega$, define the following notions. 
	Write~$\calE_j$ for the union of~$\La_{2^{K-j}}^c$ and all the primal/dual interfaces starting on~$\partial \La_{2^{K-j}}$ towards the inside of~$\La_{2^{K-j}}$, 
	explored up to the first time they exit~$\La_{2^{K-j}}$ or enter~$\La_{2^{K-j-1}}$. Write~$\calF_j$ for the connected component of~$0$ in~$\calE_j^c$. 
	The boundary of~$\calF_j$ is formed of alternating primal and dual arcs, also called petals; 
	we call~$\calF_j$ the flower domain in~$\La_{2^{K-j}} \setminus \La_{2^{K-j-1}}$ explored from the outside.

	Write~$\xi_j$ for the boundary condition on~$\calF_j$ induced by the configuration outside of~$\calF_j$; 
	it encodes how the primal petals of~$\calF_j$ are wired together outside of~$\calF_j$.
	We will abuse notation, and also encode in~$\xi_j$ which primal petals are connected to which parts of~$\calC$.	
	Say~$\xi_j$ is {\em simple} if it contains a single wired component that is connected to~$\La_R^c$ --- this component may be formed of several petals. 
	
	Set~$J= K -k-1$. When referring to the marginals in~$\La_r$ notice that
	$$\phi[\cdot\,|\,\omega_0,\, \calC,\,  \omega \text{ on }\calE_J] = \phi_{\calF_{J}}^{\xi_J}[\cdot \,|\,{\rm Top}(\sfC_{\xi_J})=0],$$
	if~$\xi_J$ is simple, where~$\sfC_{\xi_J}$ is the cluster of the single wired component of~$\xi_J$ that is connected to~$\La_R^c$.

	\begin{itemize}
		\item[(a)]  	Argue that~$(\calF_j,\xi_j)_{j = 0,\dots,J}$ is a Markov chain when~$\omega$ is sampled using~$\phi[\cdot|\omega_0,\, \calC]$; write~$P$ for the law of this chain.
	Show that if~$\xi_j$ is simple, then all following~$(\xi_\ell)_{\ell \geq j}$ are also simple. 
	
	\item[(b)] Show that the Markov chain {\em mixes} in that, for all~$j \leq J-3$,
	\begin{align}\label{eq:exo_IIC_mix1}
		c \leq \frac{P[\calF_{j+3} = G \text{ and } \xi_{j+3} = \eta\,|\, \calF_{j} = F\text{ and } \xi_{j} = \zeta]}
		{P[\calF_{j+3} = G\text{ and } \xi_{j+3} = \eta\,|\, \calF_{j} = F'\text{ and } \xi_{j} = \zeta']}
		\leq C,
	\end{align}
	for universal constants~$c,C > 0$, any potential realisations~$F, \zeta$ and~$F,\zeta'$ of~$\calF_{j}, \xi_{j}$, 
	and any realisation~$G, \eta$ of~$\calF_{j+3}, \xi_{j+3}$ which contains only two petals, with the primal petal having a ``large'' opening and lying ``deep'' in the lower half-plane. 
	Also show that 
	\begin{align}\label{eq:exo_IIC_mix2}
		\sum_{G,\eta}P[(\calF_{j+3}, \xi_{j+3}) = (G,\eta)\,|\, \calF_{j} = F\text{ and } \xi_{j} = \zeta] \geq c,
	\end{align}
	for some universal constant~$c >0$, where the sum is over~$(G,\eta)$ as described above. 
	The notions of ``large'' and ``deep'' should be defined. 
	
	\item[(c)]
	From~\eqref{eq:exo_IIC_mix1} and~\eqref{eq:exo_IIC_mix2}, deduce that, for any~$A$ depending only on~$\La_r$, 
	$$\Big| \frac{\phi[A|\omega_0,\, \calC]}{\phi[A|\omega_0',\, \calC']}-1\Big| \leq C (r/R)^c,$$
	for universal constants~$c,C>0$. 
	Use this to show Proposition~\ref{prop:IIC_def} and Lemma~\ref{lem:IIC}.\medskip 
	
	\noindent{\em Hint:} The main difficulty above is proving~\eqref{eq:exo_IIC_mix1} and~\eqref{eq:exo_IIC_mix2}. 
	Assuming~$\calF_{j} = F$ and~$\xi_{j} = \zeta$, start off by sampling the flower domain~$\calF_{\rm in}$ in~$\La_{2^{K-j-2}} \setminus \La_{2^{K-j-3}}$ explored from the outside 
	and the flower domain~$\calF_{\rm out}$ in~$\La_{2^{K-j-1}} \setminus \La_{2^{K-j-2}}$ explored from the inside, along with all edges between them, under the measure  
	$\phi_{F}^{\zeta}$.
	Use mixing to argue that this exploration is somewhat independent of~$F$ and~$\zeta$. 
	Use~\eqref{eq:RSW_iso} to argue that, with positive probability, both flower domains contain a single primal petal which is large and lies deep in the lower half-plane, 
	and that the two primal petals are connected in the lower half-plane. We say that~$(\calF_{\rm in},\calF_{\rm out})$ is {\em safe} in this case. 
	
	Notice that~$(\calF_{\rm in},\calF_{\rm out})$ sampled above are not sampled according to~$P[\cdot\, |\, \calF_{j} = F\text{ and } \xi_{j} = \zeta]$, 
	as we have not conditioned on any event inside~$\calF_j$. 
	To account for this conditioning, one should factor in the probability of~$\{\sfC_x \cap \La_R^c = \calC,\, {\rm Top}(\sfC_x)=0\}$ given~$\calF_{\rm in}$ and~$\calF_{\rm out}$. 
	Show that 
	\begin{align}
		&\phi[ \sfC_x \cap \La_R^c = \calC,\, {\rm Top}(\sfC_x)=0\,|\,\omega \text{ on }  \calE_{j},\, \calF_{\rm in},\, \calF_{\rm out} ]\\
		&\qquad \leq c \, \pi_3^{\rm T} (r,2^{K-j-3})\, 
		\phi[\text{all clusters of~$\calC$ intersect~$\La_{2^{K-j-1}}$} \,|\, \calE_{j}]
	\end{align}
	for some constant~$c > 0$ and any safe~$(\calF_{\rm in},\calF_{\rm out})$.
	Conversely, show that 
	\begin{align}
	&\phi[\sfC_x \cap \La_R^c = \calC,\, {\rm Top}(\sfC_x)=0 \,|\, \omega \text{ on }  \calE_{j}] \\
	&\qquad \leq C \pi_3^{\rm T} (r,2^{K-j-3}) \,
		\phi[\text{all clusters of~$\calC$ intersect~$\La_{2^{K-j-1}}$} \,|\, \calE_{j}]
	\end{align}
	for some~$C > 0$, where~$\pi_3^{\rm T}(r,R)$ is the probability of half-plane three-arm event between scales~$r$ and~$R$ defined in~\eqref{eq:3hp_arm}. 
	From this,  conclude~\eqref{eq:exo_IIC_mix1} and~\eqref{eq:exo_IIC_mix2}.
	\end{itemize}
\end{exo}

\begin{figure}
\begin{center}
\includegraphics[width = 1.0\textwidth]{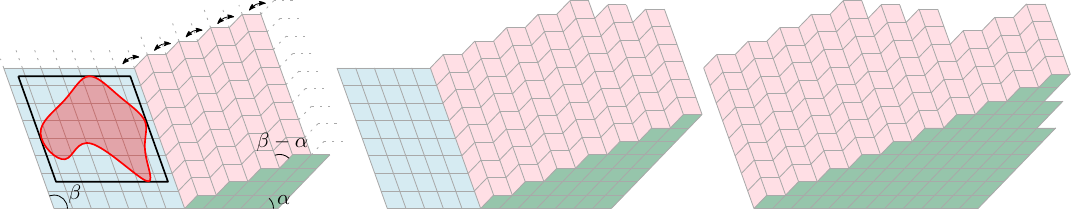}
\caption{A different set of track exchanges than those of Section~\ref{sec:5sketch} allow to transform~$\bbL_\beta$ into~$\bbL_\alpha$. 
Consider the lattice~$\bbL_0$ on the left which is formed of three blocks, which we call the~$\beta$-block (light blue), the~$\alpha$-block (green) and the mixed block (purple). The observation window~$[-N,N] \times [0,N]$ is fully contained in the~$\beta$-block. 
Applying semi-infinite track exchanges between the tracks of the mixed block makes the interfaces between the mixed- and~$\alpha$- and~$\beta$-blocks move to the left. After sufficiently many such transformations, the observation window will be fully contained in the~$\alpha$-block.
Use the same system of coordinates~$e_{\rm vert}$ and~$e_{\rm lat}$ as in the proof of Theorem~\ref{thm:linear} to track the movements of clusters;
in this system, the drifts are~$-{\rm Drift}_{\rm lat}(\beta,\beta- \alpha)$ and~${\rm Drift}_{\rm vert}(\beta,\beta- \alpha)$, respectively. }
\label{fig:drift_RT2}
\end{center}
\end{figure}

\begin{exo}(Different proof for the equality of drifts)\label{exo:drift_RT}
	Fix~$\alpha,\beta \in (0,\pi)$ with~$\alpha < \beta$. The goal of the exercise is to use a different strategy than that of Theorem~\ref{thm:linear} to transform~$\bbL_{\beta}$ into~$\bbL_\alpha$. 
	This will ultimately offer an alternative proof of Proposition~\ref{prop:drift_RT}.
	
	Consider the lattices~$(\bbL_t)_{t\geq 0}$ described in Figure~\ref{fig:drift_RT2}. We assume that we work at a scale~$N$ and follow the evolution of clusters in the observation window~$[-N,N]\times[0,N]$. We use the same vocabulary as in the proof of Theorem~\ref{thm:linear}. 
	\begin{itemize}
	\item[(a)] Argue that, for a mesoscopic cluster~$\sfC$ in the mixed block at some time~$t$,  
	\begin{align}
		&\bbE[\Delta_t{\rm T}(\sfC)] = -{\rm Drift}_{\rm lat}(\pi-\beta,\pi-\beta+ \alpha) +O(N^{1-c}) = {\rm Drift}_{\rm lat}(\beta,\beta - \alpha) +O(N^{1-c})
		\quad\text{and}\\
		&	\bbE[\Delta_t{\rm R}(\sfC)] = {\rm Drift}_{\rm vert}(\pi-\beta,\pi-\beta+ \alpha) +O(N^{1-c})= {\rm Drift}_{\rm vert}(\beta,\beta - \alpha) +O(N^{1-c}).\label{eq:drift_RT}
	\end{align}
	\item[(b)] Write the drift in cartesian coordinates corresponding to the movement in~\eqref{eq:drift_RT} as
	\begin{align}
	d = (d_1,d_2) = \big({\rm Drift}_{\rm vert}(\beta,\beta- \alpha)\tfrac{ 1}{\sin\beta} +  {\rm Drift}_{\rm lat}(\beta,\beta- \alpha)\tfrac{\cos\beta}{\sin\beta} \,,\, {\rm Drift}_{\rm lat}(\beta,\beta- \alpha)\big).
	\end{align}
	\item[(c)] Consider a point~$x(0) = (x_1(0),x_2(0))$ in the~$\beta$-block of~$\bbL_0$. Define the deterministic evolution
	\begin{align}
	x_{t+1} = \begin{cases}
			x_t &\text{ if~$x_t$ is in the~$\alpha$ or~$\beta$-blocks of~$\bbL_t$}, \\
			x_t + d \quad& \text{ if~$x_t$ is in the mixed block of~$\bbL_t$}.
			\end{cases}
	\end{align}
	Argue that, the total time spent by~$(x_t)_{t\geq 0}$ in the mixed block is~$ x_2(0)\cdot {\tau}$ where
	\begin{align}
		{\tau} =  \frac{\sin(\beta - \alpha/2)}{\sin( \alpha/2) \sin\beta + {\rm Drift}_{\rm vert}(\beta,\beta- \alpha)\sin( \alpha/2) -  {\rm Drift}_{\rm lat}(\beta,\beta- \alpha)\sin(\beta - \alpha/2)}.
	\end{align}
	Conclude that final position of~$x(t)$ is 
	\begin{align}
	x(0) + x_2(0)\Big(\frac{{\rm Drift}_{\rm vert}(\beta,\beta- \alpha) +  {\rm Drift}_{\rm lat}(\beta,\beta- \alpha) \cos\beta}{\sin\beta} \cdot {\tau}\,,\,   {\rm Drift}_{\rm lat}(\beta,\beta- \alpha)  \cdot {\tau}\Big).
	\end{align}
	Write \begin{align}
		\tilde M_{\beta,\alpha} = 
	\begin{pmatrix}
	1 & \frac{{\rm Drift}_{\rm vert}(\beta,\beta- \alpha) +  {\rm Drift}_{\rm lat}(\beta,\beta- \alpha) \cos\beta}{\sin\beta} \cdot {\tau} \\
	0 & 1 +     {\rm Drift}_{\rm lat}(\beta,\beta- \alpha)  \cdot {\tau}
	\end{pmatrix}.
	\end{align}\smallskip
	
\noindent{\em Hint:} Beware that the angle of the interface between the mixed and~$\alpha$-block is~$\alpha/2$. 
	\item[(d)] In the same way as in Theorem~\ref{thm:linear},  show that
	\begin{align}
		{\bf d}_{\rm CN}\big[ \phi_{\delta\bbL(\beta)}, \phi_{\delta\bbL(\alpha)}\circ \tilde M_{\beta,\alpha}\big] \to 0 \text{ as~$\delta \to 0$}.
	\end{align}
	\item[(e)] Using  Theorem~\ref{thm:linear} and the above, argue that~$\phi_{\bbL(\alpha)}$ is asymptotically invariant under~$M_{\beta,\alpha}^{-1}\cdot \tilde M_{\beta,\alpha}$ and conclude that~$M_{\beta,\alpha}= \tilde M_{\beta,\alpha}$.\smallskip \\
	{\em Hint:} The triangular structure of~$M_{\beta,\alpha}$ and~$\tilde M_{\beta,\alpha}$ comes into play. 
	\item[(e)] When~$\alpha = \beta/2$ notice that 
	\begin{align}
		{\tau}
%		& =  \frac{\sin(3\alpha/2)}{ \sin( \alpha/2) \sin(2\alpha) + {\rm Drift}_{\rm vert}(\beta,\alpha)\sin( \alpha/2) -  {\rm Drift}_{\rm lat}(\beta,\alpha)\sin(3\alpha/2) }\\ &= 
%		\frac{\frac{\sin(3\alpha/2)}{\sin (\alpha/2)}\sin \alpha}{ \sin( \alpha) \sin(2\alpha) + {\rm Drift}_{\rm vert}(\beta,\alpha)\sin( \alpha) -  {\rm Drift}_{\rm lat}(\beta,\alpha)\frac{\sin(3\alpha/2)}{\sin (\alpha/2)}\sin \alpha }\\ &
=	\frac{\sin \alpha + \sin\beta}{ \sin\alpha \sin\beta + ({\rm Drift}_{\rm vert}(\beta,\alpha) -  {\rm Drift}_{\rm lat}(\beta,\alpha))\sin \alpha -  {\rm Drift}_{\rm lat}(\beta,\alpha) \sin\beta}
	\end{align}
	{\em Hint:} use the trigonometric identity~$\frac{\sin(3\alpha/2)}{\sin (\alpha/2)}\sin \alpha = \sin \alpha + \sin(2\alpha)$. 
	\item[(f)] From point (e), conclude that~${\rm Drift}_{\rm vert}(\beta,\beta/2) =  {\rm Drift}_{\rm lat}(\beta,\beta/2)$.
		\end{itemize}  
\end{exo}

\begin{exo}\label{exo:drift_u}(Drift in arbitrary direction)
	Fix a unit vector~$\vec u \in \bbR^2$ distinct from~$\pm e_{\rm vert}$. 
	\begin{itemize}
	\item[(a)] Following the steps of Section~\ref{sec:IIC}, define a drift~${\rm Drift}_{\vec u}$ in the direction~$\vec u$, in the same way that~${\rm Drift}_{\rm lat}$ is the drift in the direction~$e_{\rm lat}$. 
	\item[(b)] Check that the proof of Theorem~\ref{thm:linear} also applies using~$\vec u$ instead of~$e_{\rm lat}$, and conclude that 
	\begin{align}
		{\bf d}_{\rm CN}\big[ \phi_{\delta\bbL(\beta)}, \phi_{\delta\bbL(\alpha)}\circ \tilde M_{\beta,\alpha}\big] \to 0 \text{ as~$\delta \to 0$},
	\end{align}
	where 
	\begin{align}
\tilde M_{\beta,\alpha} 
= \begin{pmatrix}
1 &  \frac{{\rm Drift}_{\vec u} +  {\rm Drift}_{\rm vert} \cos \theta }{\sin \theta}\cdot \frac{\sin\alpha + \sin\beta}{(\sin \alpha -{\rm Drift}_{\rm vert})\sin\beta}\\[8pt]
0 & 1  + {\rm Drift}_{\rm vert} \cdot \frac{\sin\alpha + \sin\beta}{(\sin \alpha -{\rm Drift}_{\rm vert})\sin\beta}
\end{pmatrix}
	\end{align}
	and~$\theta$ is the angle between the vertical axis and~$-\vec u$.
\item[(c)] Conclude that~$\tilde M_{\beta,\alpha} =  M_{\beta,\alpha}$ and therefore that~$\frac{{\rm Drift}_{\vec u} +  {\rm Drift}_{\rm vert} \cos \theta }{\sin \theta}$ does not depend on~$\theta$. 
	\end{itemize} 
\end{exo}

\bibliographystyle{alpha}
\bibliography{biblicomplete}

\newcommand{\etalchar}[1]{$^{#1}$}
\begin{thebibliography}{DCKK{\etalchar{+}}22}

\bibitem[AB87]{AizBar87}
M.~Aizenman and D.~J. Barsky.
\newblock Sharpness of the phase transition in percolation models.
\newblock {\em Comm. Math. Phys.}, 108(3):489--526, 1987.

\bibitem[Bax78]{Bax78}
R.~J. Baxter.
\newblock Solvable eight-vertex model on an arbitrary planar lattice.
\newblock {\em Philos. Trans. Roy. Soc. London Ser. A}, 289(1359):315--346,
  1978.

\bibitem[Bax89]{Bax89}
Rodney~J. Baxter.
\newblock {\em Exactly solved models in statistical mechanics}.
\newblock Academic Press Inc. [Harcourt Brace Jovanovich Publishers], London,
  1989.
\newblock Reprint of the 1982 original.

\bibitem[BD12]{BefDum12}
V.~Beffara and H.~{Duminil-Copin}.
\newblock The self-dual point of the two-dimensional random-cluster model is
  critical for {$q\geq 1$}.
\newblock {\em Probab. Theory Related Fields}, 153(3-4):511--542, 2012.

\bibitem[Bet31]{Bet31}
Hans Bethe.
\newblock Zur theorie der metalle.
\newblock {\em Zeitschrift f{\"u}r Physik}, 71(3-4):205--226, 1931.

\bibitem[BK89]{BurKea89}
R.~M. Burton and M.~Keane.
\newblock Density and uniqueness in percolation.
\newblock {\em Comm. Math. Phys.}, 121(3):501--505, 1989.

\bibitem[BKK{\etalchar{+}}92]{BouKahKal92}
Jean Bourgain, Jeff Kahn, Gil Kalai, Yitzhak Katznelson, and Nathan Linial.
\newblock The influence of variables in product spaces.
\newblock {\em Israel J. Math.}, 77(1-2):55--64, 1992.

\bibitem[BKW76]{BaxKelWu76}
R.~J. Baxter, S.~B. Kelland, and F.~Y. Wu.
\newblock Equivalence of the potts model or whitney polynomial with an ice-type
  model.
\newblock {\em Journal of Physics A: Mathematical and General}, 9(3):397--406,
  1976.

\bibitem[BR10]{BolRio10}
B{\'{e}}la Bollob{{\'a}}s and Oliver Riordan.
\newblock Percolation on self-dual polygon configurations.
\newblock In {\em An irregular mind}, volume~21 of {\em Bolyai Soc. Math.
  Stud.}, pages 131--217. J{\'a}nos Bolyai Math. Soc., Budapest, 2010.

\bibitem[BS17]{BasSap17}
Deepan Basu and Artem Sapozhnikov.
\newblock {Kesten's incipient infinite cluster and quasi-multiplicativity of
  crossing probabilities}.
\newblock {\em Electronic Communications in Probability}, 22:1 -- 12, 2017.

\bibitem[Che24]{Che20}
Dmitry Chelkak.
\newblock Ising model and s-embeddings of planar graphs.
\newblock {\em Annales scientifiques de l'{{\'E}}cole {N}ormale
  {S}up{\'e}rieure}, 57(5):1271--1346, 2024.

\bibitem[CN06]{CamNew06}
Federico Camia and Charles~M. Newman.
\newblock Two-dimensional critical percolation: the full scaling limit.
\newblock {\em Comm. Math. Phys.}, 268(1):1--38, 2006.

\bibitem[CS12]{CheSmi12}
D.~Chelkak and S.~Smirnov.
\newblock Universality in the 2{D} {I}sing model and conformal invariance of
  fermionic observables.
\newblock {\em Invent. Math.}, 189(3):515--580, 2012.

\bibitem[DC20]{Dum20}
Hugo Duminil-Copin.
\newblock Lectures on the {I}sing and {P}otts models on the hypercubic lattice.
\newblock In {\em Random Graphs, Phase Transitions, and the Gaussian Free
  Field}, pages 35--161. Springer International Publishing, 2020.

\bibitem[DCGH{\etalchar{+}}18]{DumGagHar16b}
Hugo Duminil-Copin, Maxime Gagnebin, Matan Harel, Ioan Manolescu, and Vincent
  Tassion.
\newblock The {B}ethe ansatz for the six-vertex and {XXZ} models: {A}n
  exposition.
\newblock {\em Probability Surveys}, 15:102--130, 2018.

\bibitem[DCGH{\etalchar{+}}21]{DumGagHar16}
Hugo Duminil-Copin, Maxime Gagnebin, Matan Harel, Ioan Manolescu, and Vincent
  Tassion.
\newblock Discontinuity of the phase transition for the planar random-cluster
  and {P}otts models with $q>4$.
\newblock {\em Annales scientifiques de l'{{\'E}}cole {N}ormale
  {S}up{\'e}rieure}, 54(6), 2021.

\bibitem[DCKK{\etalchar{+}}20]{DumKozKra20}
Hugo Duminil-Copin, Karol~Kajetan Kozlowski, Dmitry Krachun, Ioan Manolescu,
  and Mendes Oulamara.
\newblock Rotational invariance in critical planar lattice models.
\newblock 2020.
\newblock preprint arXiv:2012.11672.

\bibitem[DCKK{\etalchar{+}}22]{DumKozKra22}
Hugo Duminil-Copin, Karol~Kajetan Kozlowski, Dmitry Krachun, Ioan Manolescu,
  and Tatiana Tikhonovskaia.
\newblock On the six-vertex model's free energy.
\newblock {\em Communications in Mathematical Physics}, 395(3):1383--1430,
  2022.

\bibitem[DCLM18]{DumLiMan18}
Hugo Duminil-Copin, Jhih-Huang Li, and Ioan Manolescu.
\newblock Universality for the random-cluster model on isoradial graphs.
\newblock {\em Electronic Journal of Probability}, 23, 2018.

\bibitem[DCM22]{DumMan20}
Hugo Duminil-Copin and Ioan Manolescu.
\newblock Planar random-cluster model: scaling relations.
\newblock {\em Forum of Mathematics, Pi}, 10:e23, 2022.

\bibitem[DCRT19]{DumRaoTas19}
Hugo Duminil-Copin, Aran Raoufi, and Vincent Tassion.
\newblock Sharp phase transition for the random-cluster and {P}otts models via
  decision trees.
\newblock {\em Ann. of Math. (2)}, 189(1):75--99, 2019.

\bibitem[DCST17]{DumSidTas17}
Hugo Duminil-Copin, Vladas Sidoravicius, and Vincent Tassion.
\newblock Continuity of the phase transition for planar random-cluster and
  {P}otts models with {$1 \leq q \leq 4$}.
\newblock {\em Comm. Math. Phys.}, 349(1):47--107, 2017.

\bibitem[DCT20]{DumTas19}
Hugo Duminil-Copin and Vincent Tassion.
\newblock Renormalization of crossing probabilities in the planar
  random-cluster model.
\newblock {\em Moscow Mathematical Journal}, 20(4):711--740, 2020.

\bibitem[DT16a]{DumTas16}
H.~{Duminil-Copin} and V.~Tassion.
\newblock A new proof of the sharpness of the phase transition for {B}ernoulli
  percolation and the {I}sing model.
\newblock {\em Communications in {M}athematical {P}hysics}, 343(2):725--745,
  2016.

\bibitem[DT16b]{DumTas16a}
H.~{Duminil-Copin} and V.~Tassion.
\newblock A new proof of the sharpness of the phase transition for {B}ernoulli
  percolation on $\mathbb{Z}^d$.
\newblock {\em Enseignement Math\'ematique}, 62(1-2):199--206, 2016.

\bibitem[Duf68]{Duf68}
R.~J. Duffin.
\newblock Potential theory on a rhombic lattice.
\newblock {\em J. Combinatorial Theory}, 5:258--272, 1968.

\bibitem[{Dum}13]{Dum13}
H.~{Duminil-Copin}.
\newblock {\em Parafermionic observables and their applications to planar
  statistical physics models}, volume~25 of {\em Ensaios Matematicos}.
\newblock Brazilian Mathematical Society, 2013.

\bibitem[GG06]{GraGri06}
B.~T. Graham and G.~R. Grimmett.
\newblock Influence and sharp-threshold theorems for monotonic measures.
\newblock {\em Ann. Probab.}, 34(5):1726--1745, 2006.

\bibitem[GG11]{GraGri11}
Benjamin Graham and G.~Grimmett.
\newblock Sharp thresholds for the random-cluster and {I}sing models.
\newblock {\em Ann. Appl. Probab.}, 21(1):240--265, 2011.

\bibitem[GL23]{GlaLam23}
Alexander Glazman and Piet Lammers.
\newblock Delocalisation and continuity in {2D}: loop {O(2)}, six-vertex, and
  random-cluster models.
\newblock 2023.
\newblock preprint arXiv:2306.01527.

\bibitem[GM90]{GriMar90}
G.~R. Grimmett and J.~M. Marstrand.
\newblock The supercritical phase of percolation is well behaved.
\newblock {\em Proc. Roy. Soc. London Ser. A}, 430(1879):439--457, 1990.

\bibitem[GM13a]{GriMan13}
G.~R. Grimmett and Ioan Manolescu.
\newblock Inhomogeneous bond percolation on square, triangular and hexagonal
  lattices.
\newblock {\em Ann. Probab.}, 41(4):2990--3025, 2013.

\bibitem[GM13b]{GriMan13a}
G.~R. Grimmett and Ioan Manolescu.
\newblock Universality for bond percolation in two dimensions.
\newblock {\em Ann. Probab.}, 41(5):3261--3283, 2013.

\bibitem[GM14]{GriMan14}
G.~R. Grimmett and Ioan Manolescu.
\newblock Bond percolation on isoradial graphs: criticality and universality.
\newblock {\em Probab. Theory Related Fields}, 159(1-2):273--327, 2014.

\bibitem[GPS13]{GarPetSch13}
Christophe Garban, G{{\'a}}bor Pete, and Oded Schramm.
\newblock Pivotal, cluster, and interface measures for critical planar
  percolation.
\newblock {\em J. Amer. Math. Soc.}, 26(4):939--1024, 2013.

\bibitem[Gri06]{Gri06}
G.~Grimmett.
\newblock {\em The random-cluster model}, volume 333 of {\em Grundlehren der
  Mathematischen Wissenschaften [Fundamental Principles of Mathematical
  Sciences]}.
\newblock Springer-Verlag, Berlin, 2006.

\bibitem[Gri10]{Gri10}
G.~Grimmett.
\newblock {\em Probability on graphs}, volume~1 of {\em Institute of
  Mathematical Statistics Textbooks}.
\newblock Cambridge University Press, Cambridge, 2010.
\newblock Random processes on graphs and lattices.

\bibitem[HM24]{HanMan24+}
Ulrik~Thinggaard Hansen and Ioan Manolescu.
\newblock Universality of large-scale geometry for critical random-cluster
  models.
\newblock {\em in preparation}, 2024.

\bibitem[J{\'a}r03]{Jar03}
Antal~A. J{\'a}rai.
\newblock {Incipient infinite percolation clusters in 2D}.
\newblock {\em The Annals of Probability}, 31(1):444 -- 485, 2003.

\bibitem[Ken02]{Ken02}
R.~Kenyon.
\newblock The {L}aplacian and {D}irac operators on critical planar graphs.
\newblock {\em Invent. Math.}, 150(2):409--439, 2002.

\bibitem[Kes80]{Kes80}
H.~Kesten.
\newblock The critical probability of bond percolation on the square lattice
  equals {${1\over 2}$}.
\newblock {\em Comm. Math. Phys.}, 74(1):41--59, 1980.

\bibitem[Kes86]{Kes86}
H.~Kesten.
\newblock The incipient infinite cluster in two-dimensional percolation.
\newblock {\em Probab. Theory Related Fields}, 73(3):369--394, 1986.

\bibitem[KKL88]{KahKalLin88}
Jeff~D. Kahn, Gil Kalai, and Nathan Linial.
\newblock The influence of variables on boolean functions.
\newblock {\em [Proceedings 1988] 29th Annual Symposium on Foundations of
  Computer Science}, pages 68--80, 1988.

\bibitem[KS05]{KenSch05}
Richard Kenyon and Jean-Marc Schlenker.
\newblock Rhombic embeddings of planar quad-graphs.
\newblock {\em Trans. Amer. Math. Soc.}, 357(9):3443--3458 (electronic), 2005.

\bibitem[KST23]{KSTas23}
Laurin K{\"o}hler-Schindler and Vincent Tassion.
\newblock Crossing probabilities for planar percolation.
\newblock {\em Duke Mathematical Journal}, 172(4):809--838, 2023.

\bibitem[Lin02]{Lin02}
T.~Lindvall.
\newblock {\em Lectures on the Coupling Method}.
\newblock Dover Books on Mathematics Series. Dover Publications, Incorporated,
  2002.

\bibitem[Men86]{Men86}
M.~V. Menshikov.
\newblock Coincidence of critical points in percolation problems.
\newblock {\em Dokl. Akad. Nauk SSSR}, 288(6):1308--1311, 1986.

\bibitem[Mer01]{Mer01}
Christian Mercat.
\newblock Discrete {R}iemann surfaces and the {I}sing model.
\newblock {\em Comm. Math. Phys.}, 218(1):177--216, 2001.

\bibitem[Oul22]{Oul22}
Mendes Oulamara.
\newblock {\em Random geometry and free energy of critical planar lattice
  models}.
\newblock PhD thesis, 2022.
\newblock 2022UPASM008.

\bibitem[PR15]{PelRom15}
{R.} Peled and {D.} Romik.
\newblock Bijective combinatorial proof of the commutation of transfer matrices
  in the dense {$O(1)$} loop model.
\newblock {\em S{\'e}minaire Lotharingien de Combinatoire}, 73(B73b), 2015.

\bibitem[RS20]{RaySpi20}
Gourab Ray and Yinon Spinka.
\newblock A short proof of the discontinuity of phase transition in the planar
  random-cluster model with $q>4$.
\newblock {\em Communications in Mathematical Physics}, 378(3):1977--1988,
  2020.

\bibitem[Rus78]{Rus78}
L.~Russo.
\newblock A note on percolation.
\newblock {\em Z. Wahrscheinlichkeitstheorie und Verw. Gebiete}, 43(1):39--48,
  1978.

\bibitem[She09]{She09}
Scott Sheffield.
\newblock Exploration trees and conformal loop ensembles.
\newblock {\em Duke Math. J.}, 147(1):79--129, 2009.

\bibitem[Smi01]{Smi01}
Stanislav Smirnov.
\newblock Critical percolation in the plane: conformal invariance, {C}ardy's
  formula, scaling limits.
\newblock {\em C. R. Acad. Sci. Paris S\'{e}r. I Math.}, 333(3):239--244, 2001.

\bibitem[Smi10]{Smi10}
Stanislav Smirnov.
\newblock Conformal invariance in random cluster models. {I}. {H}olomorphic
  fermions in the {I}sing model.
\newblock {\em Ann. of Math. (2)}, 172(2):1435--1467, 2010.

\bibitem[SW78]{SeyWel78}
P.~D. Seymour and D.~J.~A. Welsh.
\newblock Percolation probabilities on the square lattice.
\newblock {\em Ann. Discrete Math.}, 3:227--245, 1978.
\newblock Advances in graph theory (Cambridge Combinatorial Conf., Trinity
  College, Cambridge, 1977).

\bibitem[SW12]{SheWer12}
Scott Sheffield and Wendelin Werner.
\newblock Conformal loop ensembles: the {M}arkovian characterization and the
  loop-soup construction.
\newblock {\em Ann. of Math. (2)}, 176(3):1827--1917, 2012.

\bibitem[Van22]{Van22}
Hugo Vanneuville.
\newblock Sharpness of {B}ernoulli percolation via couplings.
\newblock 2022.
\newblock preprint arXiv:2201.08223.

\bibitem[Wer04]{Wer04}
Wendelin Werner.
\newblock Random planar curves and {S}chramm-{L}oewner evolutions.
\newblock In {\em Lectures on probability theory and statistics}, volume 1840
  of {\em Lecture Notes in Math.}, pages 107--195. Springer, Berlin, 2004.

\bibitem[Wer09a]{Wer09}
Wendelin Werner.
\newblock Lectures on two-dimensional critical percolation.
\newblock In {\em Statistical mechanics}, volume~16 of {\em IAS/Park City Math.
  Ser.}, pages 297--360. Amer. Math. Soc., Providence, RI, 2009.

\bibitem[Wer09b]{Wer09a}
Wendelin Werner.
\newblock {\em Percolation et mod{\`e}le d'{I}sing}, volume~16 of {\em Cours
  Sp\'{e}cialis\'{e}s [Specialized Courses]}.
\newblock Soci\'{e}t\'{e} Math\'{e}matique de France, Paris, 2009.

\end{thebibliography}
\end{document}